\definecolor{r}{rgb}{.9,0.1,.3}
\definecolor{verdeosc}{rgb}{0,0.6,0}
\protected\def\ignorethis#1\endignorethis{}
\let\endignorethis\relax
\def\TOCstop{\addtocontents{toc}{\ignorethis}}
\def\TOCstart{\addtocontents{toc}{\endignorethis}}
\DeclareMathAlphabet{\pazocal}{OMS}{zplm}{m}{n}
\newcommand{\Aa}{\mathcal{A}}
\newcommand{\Tt}{\mathcal{T}}
\newcommand{\domain}{\operatorname{domain}}
\newcommand{\R}{\mathbf{R}}
\newcommand{\uup}{\phi^\textnormal{upper}}
\newcommand{\ulow}{\phi^\textnormal{lower}}
\newcommand{\unup}{\phi_n^\textnormal{upper}}
\newcommand{\unlow}{\phi_n^\textnormal{lower}}
\newcommand{\ddx}{\partial_x}
\newcommand{\Ee}{\pazocal{E}}
\newcommand{\Ff}{\mathcal F}
\newcommand{\Gg}{\mathcal{G}}
\newcommand{\Rr}{\mathcal R}
 \newcommand{\Cc}{\mathcal C}
 \newcommand{\Dd}{\mathcal{D}}
 \newcommand{\Hh}{\mathcal{H}}
 \newcommand{\Ll}{\mathcal{L}}
  \newcommand{\Pairs}{\mathcal{P}}
\newcommand{\Pp}{\mathcal{P}}
  \newcommand{\Myout}{M^Y_\textnormal{outer}}
  \newcommand{\Myin}{M^Y_\textnormal{inner}}
  \newcommand{\Ss}{\mathcal S}
 \newcommand{\RR}{\mathbf{R}}  
 \newcommand{\ZZ}{\mathbf{Z}}  
 \newcommand{\BB}{\mathbf{B}}  
 \renewcommand{\SS}{\mathbf{S}}  
    \newcommand{\dist}{\operatorname{dist}}
 \newcommand{\area}{\operatorname{area}}
 \newcommand{\eps}{\epsilon}
 \newcommand{\Omegain}{\Omega^\textnormal{in}}
 \newcommand{\Omegaout}{\Omega^\textnormal{out}}
 \newcommand{\Tan}{\operatorname{Tan}}
 \newcommand{\KK}{\mathbf{K}}
 \newcommand{\NN}{\mathbf{N}}
\newcommand{\uu}{\mathbf u}
\renewcommand{\vv}{\mathbf v}
\newcommand{\ee}{\mathbf e}
  \newcommand{\waist}{\operatorname{waist}}
    \newcommand{\cin}{C_{\rm in}}
        \newcommand{\cout}{C_{\rm out}}
\newcommand{\pdf}[2]{\frac{\partial #1}{\partial #2}}
\newcommand{\partialin}{\partial_\textnormal{inner}}
\newcommand{\partialout}{\partial_\textnormal{outer}}
\newcommand{\Gammain}{\Gamma_\textnormal{in}}
\newcommand{\Gammaout}{\Gamma_\textnormal{out}}
\newtheorem*{theorem*}{Theorem}
\newtheorem{theorem}{Theorem}[section]
\newtheorem{conjecture}[theorem]{Conjecture}
\newtheorem{lemma}[theorem]{Lemma}
\newtheorem{corollary}[theorem]{Corollary}
\newtheorem*{corollary*}{Corollary}
\newtheorem{proposition}[theorem]{Proposition}
\newtheorem{definition}[theorem]{Definition}
\newtheorem{claim}{Claim}
\newtheorem*{claim*}{Claim}
\theoremstyle{definition}
\newtheorem{remark}[theorem]{Remark}
\newcommand{\graph}{\operatorname{graph}}
\newcommand{\yin}{y^\textnormal{inner}}
\newcommand{\yout}{y^\textnormal{outer}}
\newcommand{\ynin}{y^\textnormal{inner}_n}
\newcommand{\ynout}{y^\textnormal{outer}_n}
\def\pproof#1{\@ifnextchar[\opargproof
{\opargproof[\it Proof of #1.]}}
\def\opargproof[#1]{\par\noindent {\bf #1 }}
\newcommand{\Mup}{M^\textnormal{upper}}
\newcommand{\Mlow}{M^\textnormal{lower}}
\title[Translating Annuli]{Translating Annuli for Mean Curvature Flow}
\author[D. Hoffman]{\textsc{D. Hoffman}}
\address{David Hoffman\newline
 Department of Mathematics\newline
 Stanford University \newline
   Stanford, CA 94305, USA\newline
{\sl E-mail address:} {\bf dhoffman@stanford.edu}}
\author[F. Martin]{\textsc{F. Martín}}
\address{Francisco Martín\newline
Departamento de Geometría y Topología  \newline
Instituto de Matemáticas  de Granada (IMAG) \newline
Universidad de Granada\newline
18071 Granada, Spain\newline
{\sl E-mail address:} {\bf fmartin@ugr.es}
}
\author[B. White]{\textsc{B. White}}
\address{Brian White\newline
Department of Mathematics \newline
 Stanford University \newline 
  Stanford, CA 94305, USA\newline
{\sl E-mail address:} {\bf bcwhite@stanford.edu}
}
\begin{document}

\date{25 August, 2023.  Revised 24 July, 2024}
\subjclass[2010]{Primary 53E10, 53C21, 53C42}
\keywords{mean curvature flow, translators.}
\thanks{F. Martín  was partially supported by the MICINN grant PID2020-116126-I00, by the IMAG--Maria de Maeztu grant CEX2020-001105-M / AEI / 10.13039/501100011033 and  by the Regional Government of Andalusia and ERDEF grant P20-01391.
B. White was partially supported by grants from the Simons Foundation
(\#396369) and from the National Science Foundation (DMS~1404282, DMS~1711293).}
\thanks{The authors would like to thank Leonor Ferrer for helpful suggestions. They  would also like to thank the reviewer for the careful reading and helpful suggestions.}

\begin{abstract}
We construct a family $\Aa$ of complete, properly embedded, annular translators $M$
such that $M$ lies in a slab and is invariant under reflections in the vertical coordinate planes.
For each $M$ in $\Aa$, $M$ is asymptotic as $z\to -\infty$ to four vertical planes $\{y=\pm b\}$
and $\{y=\pm B\}$ where $0<b\le B <\infty$.  We call $b$ and $B$ the {\bf inner width} and the
{\bf (outer) width} of $M$.  We show that for each $b\ge \pi/2$ and each $s>0$, there
is an $M\in \Aa$ with inner width $b$ and with necksize $s$.
(We also show that there are no translators with inner width $< \pi/2$ having the properties of the
examples we construct.)
\end{abstract}

\maketitle

\setcounter{tocdepth}{1}

\tableofcontents

\section{Introduction}\label{introduction}

A (normalized) {\bf translator} is a surface $M$ in $\RR^3$ such that 
\[
    t\mapsto M - t\ee_3
\]
is a mean-curvature flow.  This is equivalent to the condition that the mean curvature vector
at each point of $M$ is equal to $(-\ee_3)^\perp$. 

A vertical plane is a translator, and it is complete.  There are also graphical translators, i.e., complete,
translating surfaces that are graphs $z=z(x,y)$ over open subsets of $\RR^2$.  They have been completely classified.
In particular, every graphical translator is a grim reaper surface (tilted or untilted), a $\Delta$-wing, or a bowl soliton.
See Figure~\ref{graphs-figure}.  These surfaces are described in Section~\ref{graphs-section}.
The graphical examples are all simply connected.
\begin{figure}[h]\label{graphs-figure}
{\includegraphics[width=3cm]{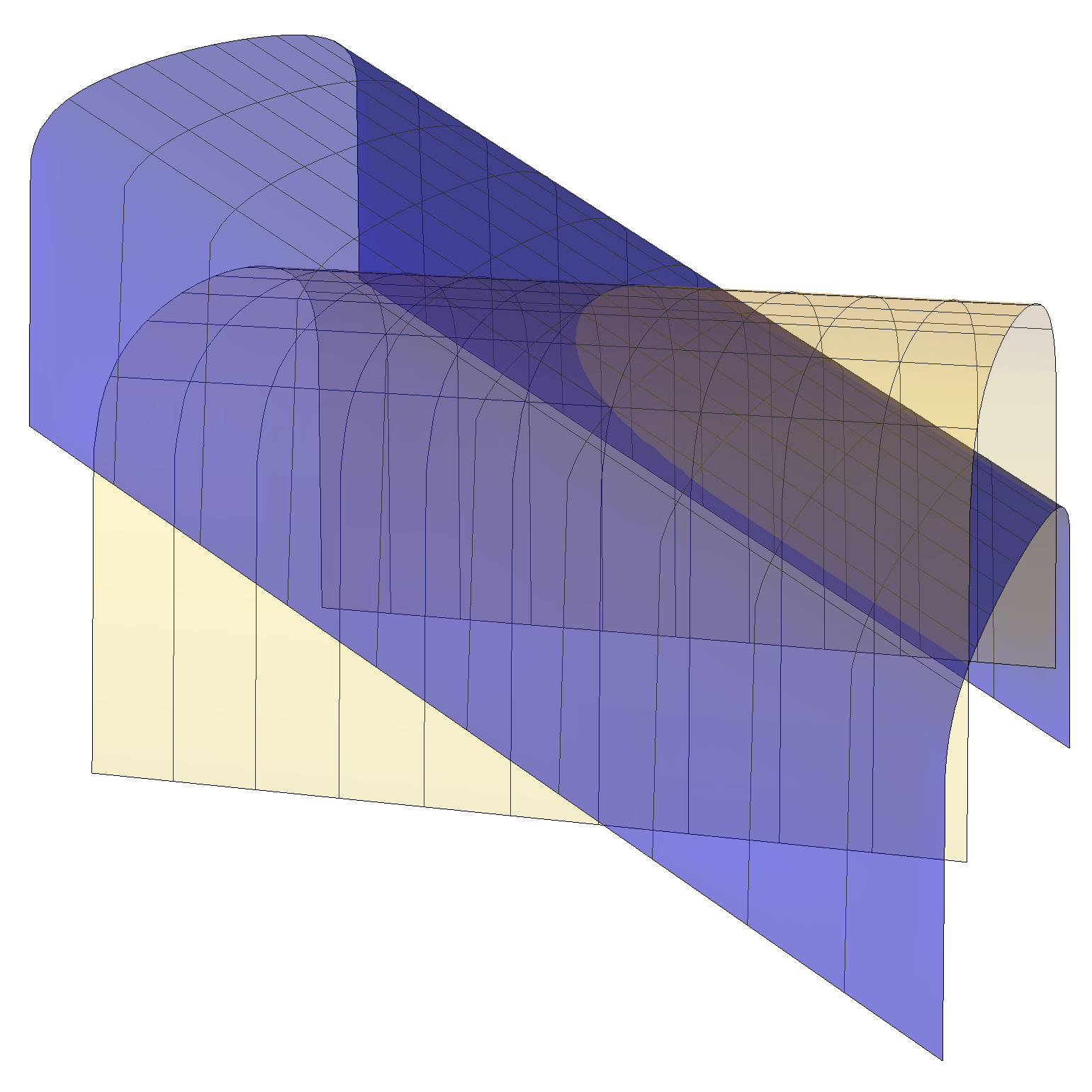}}
{\includegraphics[width=3.5cm]{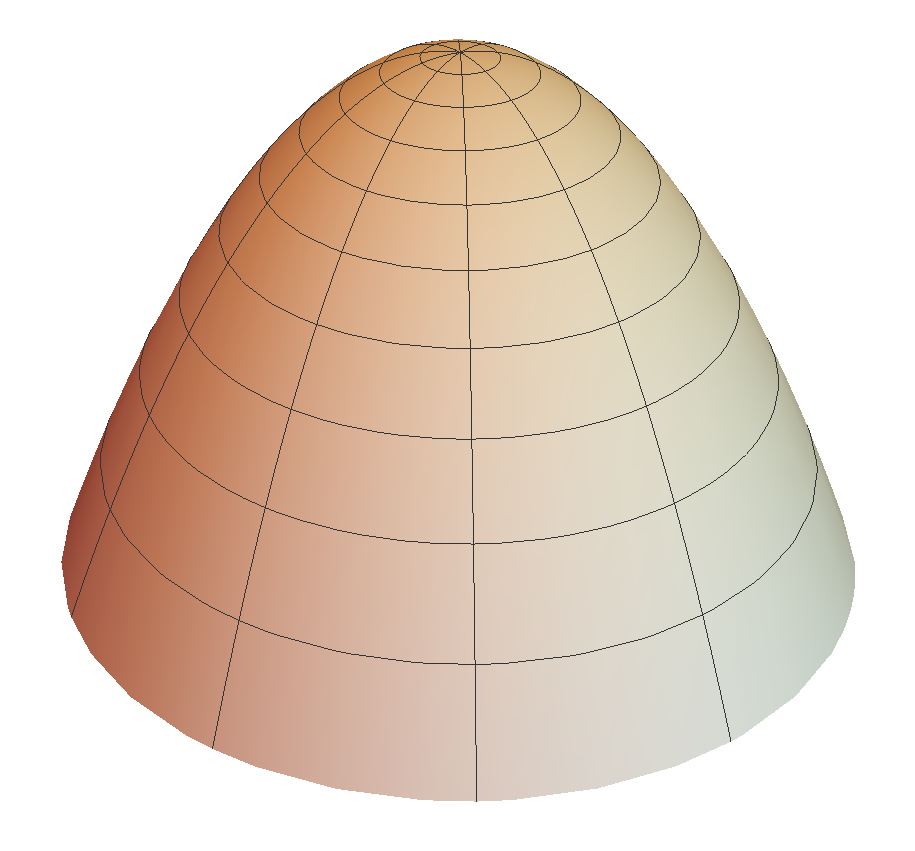}}
{\includegraphics[width=3.9cm]{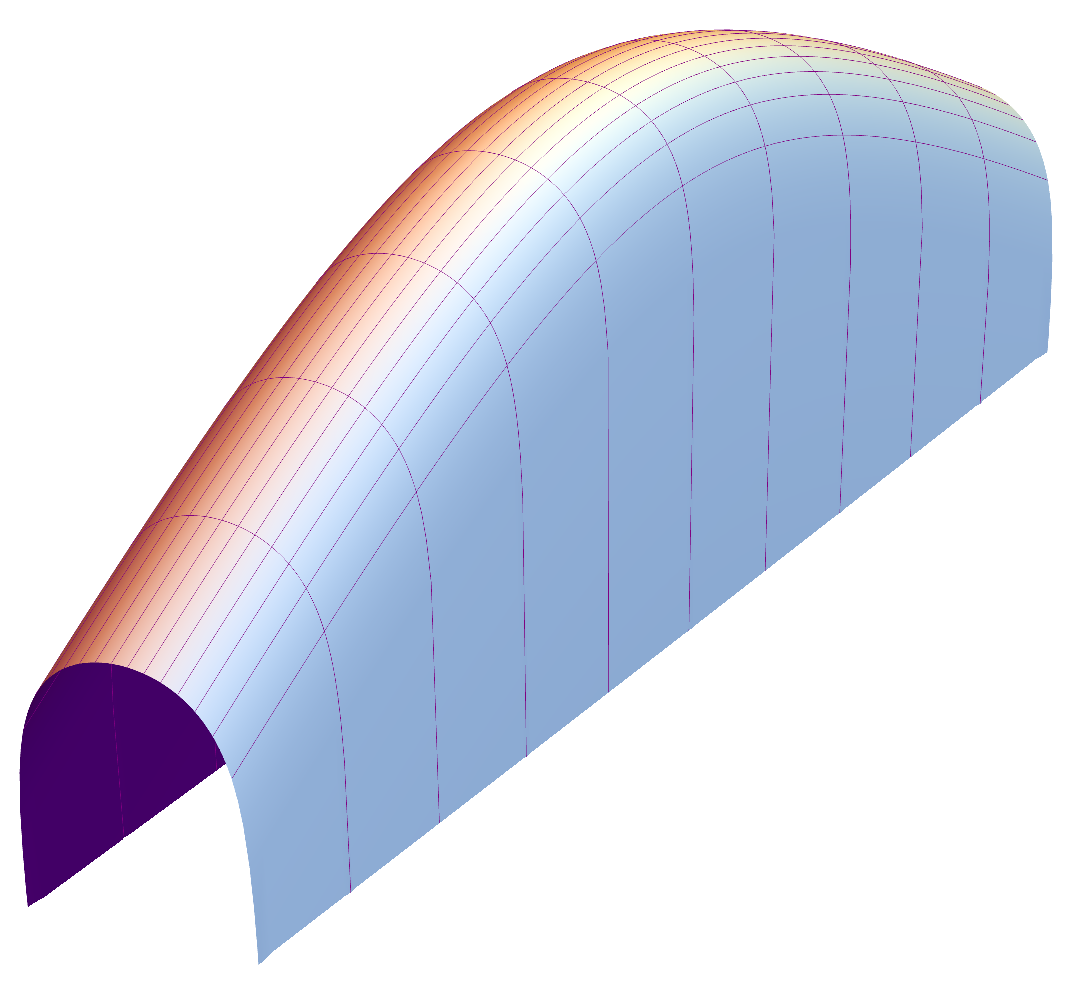}}
\caption{\small From left to right: Two  grim reaper surfaces ($b=\pi/2$ in yellow and $b>\pi/2$ in blue), a bowl soliton, and a $\Delta$-wing.}
\end{figure}

The next simplest examples are the rotationally-invariant translating annuli (also known as translating catenoids):
for every $R>0$, there is a complete translating annulus $M$ that is rotationally invariant about $Z$ (the $z$-axis)
and whose distance from $Z$ is $R$.  Furthermore, it is unique up to a vertical translation.
See~\cite{CSS}.
\begin{figure}[h]
{\includegraphics[width=4cm]{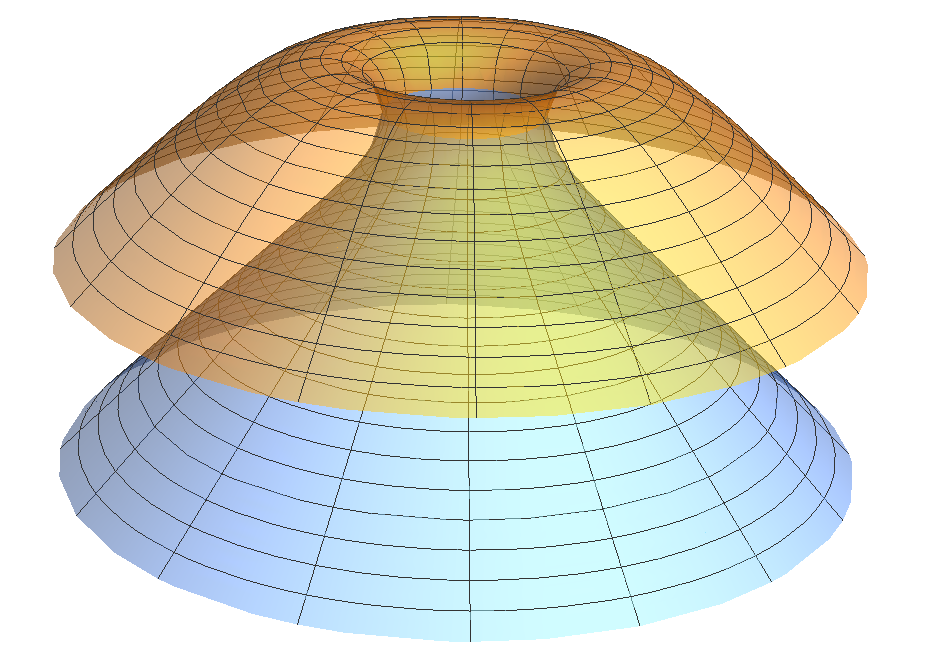}}
\caption{\small A translating catenoid of revolution.}
\label{catenoid}
\end{figure}

It is natural to wonder whether there are other complete, translating annuli.   That is, are there
 complete translating annuli that are not surfaces of revolution?
In this paper, we show that there is a large family of such annuli.

Gluing is a powerful tool that has been successfully used
in many geometric problems, including construction of
 translators~\cite{del-pino, nguyen09, nguyen13, nguyen-survey, smith21}.
It would be natural to try to use gluing to construct non-rotational translating annuli by
connecting two $\Delta$-wings (or two untilted grim reaper surfaces), one slightly above
the other, by a small catenoidal neck.  
(Grim reaper surfaces and $\Delta$-wings are shown in Figure~\ref{graphs-figure}
 and discussed in Section~\ref{graphs-section}.) 
We do in fact construct surfaces that fit that description (Figure~\ref{fig:capped-uncapped}, left).
However, much to our surprise, our method (a continuity method) also produced other surfaces,
  ``uncapped annuloids'',  that exhibit strikingly different behavior
   (Figure~\ref{fig:capped-uncapped}, right).
We believe that such uncapped annuloids could not arise from any gluing method.

We define an {\bf annuloid}\index{annuloid} to be a complete, properly embedded translator $M$ such that
\begin{enumerate}
\item\label{annuloid-def-1} $M$ is an annulus.
\item\label{annuloid-def-2} $M$ lies in a slab $\{|y|\le B'\}$.
\item\label{annuloid-def-3} $M$ is symmetric with respect to the vertical coordinate planes.
\item\label{annuloid-def-4} {$M$ is disjoint from the $z$-axis, $Z$.}
\item\label{annuloid-def-5} $M+(0,0,z)$ converges smoothly as $z\to \infty$ to four planes $\{y=\pm b\}$ and $\{y=\pm B\}$ for some
   $0\le b\le B<\infty$.
\item\label{annuloid-def-6} $M - (0,0,z)$ converges as $z\to\infty$ to the empty set.
\end{enumerate}
We define the {\bf width} $B(M)$  of $M$ to be the number $B$.
\index{$B(M)$ (width of $M$)}\index{width}
 (One can prove that $B$ is also the smallest $B'$ such that~\eqref{annuloid-def-2} holds; see Corollary~\ref{outermost-corollary}.)
    We define the {\bf inner width} $b(M)$  
    of $M$ to be the number $b$.  
    \index{$b(M)$}\index{inner width}

In this paper, we prove existence of a large family of annuloids.
In particular, we prove 

\begin{theorem}\label{theorem-1}
For each $b\ge \pi/2$ and for each $0<s< \infty$,
there exists an annuloid with inner width $b$ and with necksize $s$.
\end{theorem}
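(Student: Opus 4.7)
The plan is to use a continuity method, as the authors hint in the introduction. Fix $b\ge\pi/2$, realize annuloids with inner width $b$ as a one-parameter family, and show that the induced necksize map covers all of $(0,\infty)$. By the prescribed symmetries, it suffices to construct the surface in the quadrant $\{x,y\ge 0\}$ with free (perpendicular) boundary on the two coordinate planes. Two families of barriers are at hand: the graphical translators (a grim reaper for $b=\pi/2$, a $\Delta$-wing for $b>\pi/2$) from above and the rotationally invariant translating catenoids of \cite{CSS} from below. Together these should give simultaneous control of the topology (forcing the neck not to pinch) and of the asymptotic width $b$ (forcing the top to open like a $\Delta$-wing of inner width exactly $b$).

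For each value of a continuity parameter $t$ (essentially the target necksize), I would solve a Dirichlet problem for the translator equation in a large ball $B_R$ inside the quadrant, with boundary data that matches a $\Delta$-wing of inner width $b$ on the upper portion of $\partial B_R$ and a translating catenoid of parameter $t$ on the lower portion, together with the perpendicular free-boundary condition on the coordinate planes. Classical barrier/comparison arguments and Schauder estimates give a smooth compact solution $M_{t,R}$ in the symmetric class. Uniform curvature estimates for translators together with the barriers then yield, as $R\to\infty$, a smooth complete symmetric translator $M_t$, properly embedded in the quadrant, asymptotic as $z\to\infty$ to four planes $\{y=\pm b\}$ and $\{y=\pm B(t)\}$, and satisfying $M_t-(0,0,z)\to\varnothing$ as $z\to\infty$.

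The remaining task is to verify that $M_t$ is a genuine annuloid in the sense of the paper and that the map $t\mapsto \text{necksize}(M_t)$ is continuous and surjective onto $(0,\infty)$. One must rule out neck-pinching (to keep the topology $S^1\times\RR$ and the necksize positive), rule out loss of the inner planes $\{y=\pm b\}$ in the upper asymptotic limit (to pin down the inner width), and, in the uncapped regime, verify the bottom-end condition. Continuity of the necksize function should follow from an implicit-function argument on the linearized translator operator in the symmetric class (Fredholm of the appropriate index after quotienting by vertical translation), together with a compactness argument for the space of annuloids with fixed $b$ and uniformly controlled necksize. Surjectivity is then obtained by analyzing two limiting degenerations: as $t\to 0$, one expects the neck to collapse and $M_t$ to converge to a graphical translator above a vanishing catenoid, while as $t\to\infty$, $M_t$ should resemble a translating catenoid of large radius whose upper end opens onto $\Delta$-wing asymptotics. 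The hardest single step is expected to be the compactness/non-degeneracy analysis: preserving the topology and the inner width through the limit and navigating the transition between the capped and uncapped regimes emphasized in the introduction, which is where the delicate interplay of the $\Delta$-wing barrier and the catenoidal barrier must do the essential work.
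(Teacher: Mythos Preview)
Your proposal has genuine gaps that make it unlikely to succeed as written, and it diverges substantially from the paper's actual route.

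First, the Dirichlet/barrier setup is not well-posed for producing annular topology. A Dirichlet problem for the translator equation in a ball $B_R$ yields a graph, not an annulus; and the rotationally invariant translating catenoids of \cite{CSS} are not contained in any slab, so they cannot serve as lower barriers for surfaces that must lie in $\{|y|\le B\}$. There is no evident way to splice $\Delta$-wing data ``above'' with catenoid data ``below'' on $\partial B_R$ and obtain a connected annulus inside. The paper avoids this entirely: it works with \emph{compact} translating annuli $M\in\Rr$ bounded by a pair of nested rectangles in $\{z=0\}$ (inner $[-a,a]\times[-b,b]$, outer $[-A,A]\times[-B,B]$). Existence for every necksize in $(0,a)$ is obtained by a path-lifting argument (Sections~\ref{gap-section}--\ref{connected-section}): one varies the rectangles along a one-parameter family, uses that the boundary map $\partial:\tilde\Cc\to\tilde\Pp$ is Fredholm of index $0$, applies Sard--Smale so that the preimage is a $1$-manifold, and analyzes its ends via the Gap Theorem~\ref{th:gap} and the Dichotomy Lemma~\ref{dichotomy-lemma}. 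Only then does one let $a\to\infty$.

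Second, and more seriously, your plan has no mechanism to pin down the \emph{inner} width. A $\Delta$-wing barrier from above controls at best the outer width $B(M)$, not $b(M)$; nothing in your outline forces the limit to be asymptotic to $\{y=\pm b\}$ rather than to some smaller pair of planes. In the paper this is the single hardest step: it occupies Sections~\ref{vertical-section}--\ref{down-behavior-section} and culminates in Theorem~\ref{b-B-theorem}. The key ingredients are a slope inequality for $\ulow$ (Theorem~\ref{slopes-theorem}), the inner/outer decomposition $\Myin,\Myout$ with a uniform tangent-plane bound on $\{f_M=0\}$ (Theorem~\ref{slopey-theorem}), and a quantitative estimate (Lemma~\ref{fundamental-lemma}) showing $\yin\to b$ and $\yout\to B$ far below $\graph(\ulow)$. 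None of this is captured by barrier comparison with catenoids or $\Delta$-wings. Finally, your expected large-necksize degeneration is incorrect: as $x(M_n)\to\infty$ the surfaces do \emph{not} resemble translating catenoids; by Theorem~\ref{infinite-neck-theorem} they converge (after vertical translation) to a pair of untilted grim reaper surfaces over $\RR\times(b,b+\pi)$ and $\RR\times(-(b+\pi),-b)$, with $B\to b+\pi$. Any continuity argument must be compatible with that endpoint behavior.
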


The restriction $b\ge \pi/2$ is not arbitrary.
All of our examples have ``finite type" (as defined in Section~\ref{finite-type-section}.)
There are no finite-type annuloids with inner width $<\pi/2$.  See Theorem~\ref{th:b>pi2}.

{The annuloids in Theorem~\ref{theorem-1} are obtained as follows.  Using a path-lifting argument,
we prove an analogous existence theorem for compact translating annuli bounded by pairs of nested rectangles.
As the lengths of the rectangles tend to infinity, the rectangles converge to four parallel lines.  We prove that
suitable vertical translates of the compact annuli converge to annuloids (without boundary).
}

To make Theorem~\ref{theorem-1} precise, one needs to define necksize.
There are various natural definitions, such as:
 the length of the shortest homotopically nontrivial curve in $M$,
or the radius of the smallest ball containing a nontrivial curve, or the distance from the $z$-axis, $Z$, to the surface.
   Our existence result is true for any of those definitions.
For small necks, the different  notions of necksize are essentially equivalent.  
(See, for example, Corollary~\ref{neck-notions-corollary}.)
However, the following definition turns out to be most convenient notion of necksize.  In particular,  for large necks, it
is more suitable than the other notions.

\begin{definition} \label{def:x(M)}
If $M$ is a surface, we let $x(M)$ be the distance from $Z$ to $M\cap \{y=0\}$.
\end{definition}
One can think of $x(M)$ as measuring the size of the neck in the $x$-direction. 
See Figure~\ref{fig:x(M)}.
\index{$x(M)$}
\index{neck-size}

Conditions~\eqref{annuloid-def-4}, \eqref{annuloid-def-5}, and~\eqref{annuloid-def-6}
 in the definition of  annuloids imply that if $M$ is an annuloid, then $x(M)>0$.
We do not know whether there is a {\bf unique}  annuloid with a given inner width and a given
necksize.
However,  the family of annuloids we construct behaves, in some ways, like a nice $2$-parameter family,
 as indicated by the following theorem.
 (See also Theorems~\ref{complete-existence-theorem},
  \ref{connected-family-theorem}, \ref{properness-theorem}, and~\ref{b-B-x-continuity-theorem}.)
\begin{figure}[htbp]
\begin{center}
\includegraphics[width=.75\textwidth]{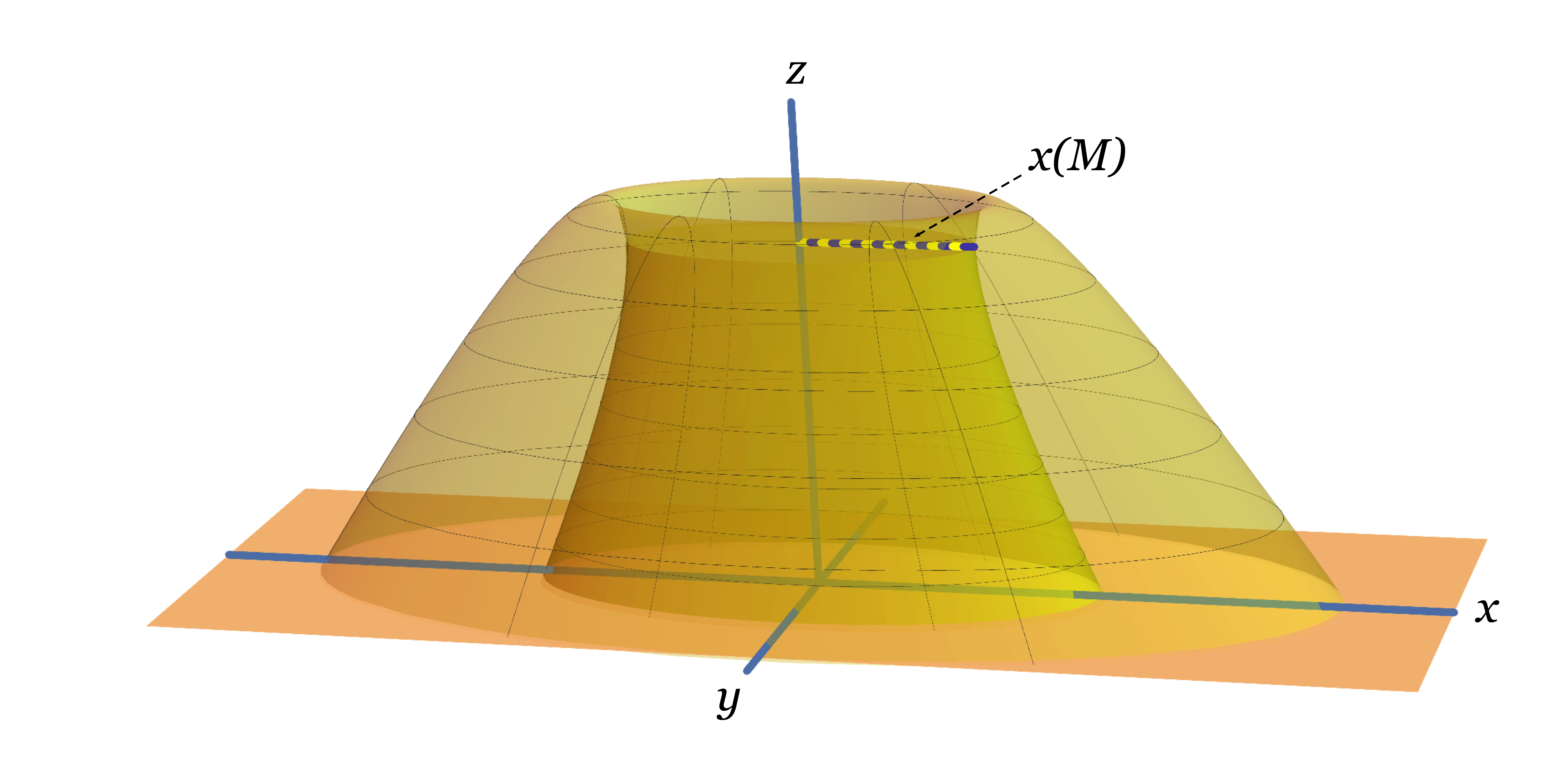}
\caption{\small The distance $x(M)$ for a translating annulus.
This annulus is bounded by a pair of nested convex curves.}
\label{fig:x(M)}
\end{center}
\end{figure}

\begin{theorem} \label{theorem-2}
There is a family $\Aa$ of annuloids in $\RR^3$ such that
the map $M\in \Aa \mapsto B(M)$ is continuous, and such that  the
map
\begin{align*}
&\Phi:   \Aa \to [\pi/2, \infty)\times (0,\infty), \\
&\Phi(M)  = ( b(M), x(M))
\end{align*}
is continuous, proper, and surjective.

For $b\ge \pi/2$, let 
\[ 
   \Aa(b) := \{M\in \Aa: b(M)=b\}.
\]
There is a closed, connected subset $\Ff=\Ff(b)$ of $\Aa(b)$ such that 
$M\mapsto x(M)$ is a proper, surjective map from $\Ff$ onto $(0,\infty)$.
\end{theorem}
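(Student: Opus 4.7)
The plan is to take $\Aa$ to be the family produced by the continuity method used for Theorem~\ref{theorem-1}. In that construction one first solves a boundary-value problem for compact translating annuli bounded by pairs of nested convex curves (long thin rectangles), then sends the rectangles off to four parallel lines while tracking suitable vertical translates; every subsequential smooth limit is an annuloid. Define $\Aa$ to be the set of all annuloids obtained this way, closed up under smooth convergence on compact subsets. This makes $\Aa$ automatically sequentially closed in the ambient space of complete translators.

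Continuity of $B$, $b$, and $x$ on $\Aa$ is a consequence of the smoothness of the asymptotic geometry of annuloids. By condition~\ref{annuloid-def-5}, each $M\in\Aa$ admits a smooth asymptotic description as $z\to\infty$ in terms of its four limiting planes, and this convergence is uniform on appropriate ends (using the finite-type property from Section~\ref{finite-type-section}). Hence smooth convergence $M_n\to M$ on compact sets forces $b(M_n)\to b(M)$ and $B(M_n)\to B(M)$. The map $x(M)=\dist(Z,M\cap\{y=0\})$ is continuous because the equatorial section $M\cap\{y=0\}$ varies smoothly under smooth convergence of embedded surfaces.

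Surjectivity of $\Phi$ onto $[\pi/2,\infty)\times(0,\infty)$ is precisely Theorem~\ref{theorem-1}: every target pair is realized by an annuloid in $\Aa$. The bulk of the work, and the main obstacle, is properness: given $M_n\in\Aa$ with $\Phi(M_n)\to(b_\infty,x_\infty)$ in $[\pi/2,\infty)\times(0,\infty)$, one must extract a smooth subsequential limit $M_\infty\in\Aa$ with $\Phi(M_\infty)=(b_\infty,x_\infty)$. Three possible degenerations must be ruled out: (i) the outer width $B(M_n)$ blowing up, (ii) the curvature concentrating, and (iii) the neck escaping to infinity or pinching off so that the annular topology is lost. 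Item~(i) is controlled by an a priori upper bound for $B$ in terms of $b$ (a barrier argument that should be established earlier in the paper); item~(ii) by uniform curvature estimates for translators in a fixed slab; and item~(iii) is blocked by the two-sided bound on $x_n$ combined with the coordinate-plane symmetry of $M_n$. Closedness of $\Aa$ under smooth limits then delivers $M_\infty\in\Aa$.

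For the connected subset $\Ff(b)$, apply the path-lifting step of Theorem~\ref{theorem-1} with the inner width of the approximating boundary data fixed so that every resulting annuloid has $b(M)=b$. This produces a continuous one-parameter path in $\Aa(b)$ along which the necksize sweeps out $(0,\infty)$; taking the closure gives a closed connected $\Ff(b)\subseteq\Aa(b)$ on which $x$ is continuous, surjective onto $(0,\infty)$, and proper by the same a priori estimates used in the proof of properness of $\Phi$.
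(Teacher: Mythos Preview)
Your outline captures the overall strategy, but there is a genuine gap in the continuity of $b$ and $B$. You write that ``smooth convergence $M_n\to M$ on compact sets forces $b(M_n)\to b(M)$ and $B(M_n)\to B(M)$,'' invoking the asymptotic description at $z\to-\infty$. This does not follow: the limiting planes live at $z=-\infty$, and smooth convergence on compact subsets gives no direct control there. In the paper this is the most delicate point. One first proves that for $M\in\Aa(b,B,\hat x)$ one actually has $b(M)=b$ and $B(M)=B$ (Theorem~\ref{b-B-theorem}); that identification rests on the slope bound for $\ulow$ (Theorem~\ref{slopes-theorem}), the sideways graph functions $\yin,\yout$ (Theorem~\ref{y-graph-theorem}), the inner/outer decomposition and its gradient estimate (Theorems~\ref{slopey-theorem}, \ref{R-lambda-theorem}, \ref{planes-theorem}), and the fundamental estimate controlling $\yin,\yout$ far below the graph of $\ulow$ (Theorem~\ref{fundamental-theorem}). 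Only with this in hand can one run the continuity argument (Theorem~\ref{b-B-x-continuity-theorem}): a slope comparison gives $\limsup b_n\le b(M)$, and a diagonalization back to the compact approximants in $\Rr$ (the properness argument, Theorem~\ref{properness-theorem}) pins down equality. Your appeal to ``uniform convergence on ends'' skips all of this.

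There is a second, smaller gap in your construction of $\Ff(b)$. The path-lifting is carried out for compact annuli in $\Cc_{\Gamma(\cdot)}$ with bounded $a$; it produces, for each compact interval $I\subset(0,\infty)$, a compact connected set $\Gg^I_n$ (not necessarily a path after perturbation and limit). Passing to $a\to\infty$ yields, for each $I$, a compact connected $\Ff^I\subset\Aa(b)$ with $x(\Ff^I)=I$, but these need not fit together as a single path. The paper does not claim $\Ff(b)$ is a path; instead it invokes an abstract extraction principle for proper maps to intervals (Theorem~\ref{connected-extraction-theorem}) to obtain one closed connected $\Ff(b)$ with $x(\Ff(b))=(0,\infty)$. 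Your ``take the closure of a one-parameter path'' is not justified and should be replaced by this connected-set argument.
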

\index{$\Aa$}

The behavior as necksize tends to $0$ or to infinity is described by the following theorem.
(See Theorems~\ref{infinite-neck-theorem}~and~\ref{small-neck-theorem}.)

\begin{theorem}
 \label{theorem-3}
Suppose that $M_n\in \Aa$ and that $b(M_n)\to b<\infty$.
\begin{enumerate}
\item\label{intro-3-1} If $x(M_n)\to 0$, then $M_n$ converges with multiplicity $2$
 to the translating graph 
 \[
    f_b: \RR\times (-b,b)\to \RR
 \]
 with $f_b(0,0)=0$ and $Df_b(0,0)=0$.  The convergence is smooth away
 from the origin.  
 { After suitable rescaling, the surfaces converge to a catenoid.}
 \item\label{intro-3-2} If $x(M_n)\to\infty$, then $M_n-(0,0,\zeta_n)$ converges
   smoothly to a pair of grim reaper surfaces, one over $\RR\times (b,b+\pi)$
   and the other over $\RR\times (-(b+\pi),-b)$. Here
   \[ 
     \zeta_n:= \max\{z: (0,y,z)\in M_n\}.
   \]
\end{enumerate}
\end{theorem}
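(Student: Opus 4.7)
The plan is to treat both statements as compactness-plus-classification arguments. By standard convergence results for translators, uniform local curvature and area bounds imply subsequential smooth convergence with integer multiplicity (possibly away from a small singular set), so the task reduces to (i) establishing the required bounds, (ii) identifying the smooth limit via the known classification of graphical translators, and, for (1), (iii) performing an additional rescaling near the neck to see the catenoid.

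For part (1), I would first show that on any compact $K \subset \RR^3 \setminus Z$ the surfaces $M_n$ have uniformly bounded curvature and area: the slab containment together with the symmetries control area, and any curvature blowup at a point away from $Z$ would, after rescaling, produce a non-trivial eternal flow which is incompatible with the annular-in-slab structure of $M_n$ and the hypothesis $x(M_n)\to 0$ (any neck pinch is forced to sit near $Z$). Subsequential smooth convergence then yields a smooth limit translator $M_\infty$ on $\RR^3\setminus Z$. The multiplicity is $2$ because over any $(x,y)$ with $x > x(M_n)$ and $|y|<b(M_n)$ the annulus $M_n$ has exactly two sheets, which merge in the limit. A removable-singularity argument extends $M_\infty$ smoothly across $Z$; then the classification of graphical translators together with the inner-width constraint $b(M_\infty) = b$ forces $M_\infty = f_b$, normalized so that its maximum is attained at the origin. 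For the catenoid statement, rescale $M_n$ about a point of $M_n\cap\{y=0\}$ nearest to $Z$ by $\lambda_n := 1/x(M_n) \to \infty$. The rescaled mean curvature is $\lambda_n^{-1}(-\ee_3)^\perp\to 0$, so a smooth subsequential limit is minimal. The unit-size neck survives, and annular topology plus slab containment (giving finite total curvature and two planar ends) force the limit to be a catenoid, by classical uniqueness for complete embedded minimal annuli.

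For part (2), set $M_n' := M_n - (0,0,\zeta_n)$, so that the maximum of $M_n'$ on $\{x=0\}$ is at height $0$. Uniform curvature and area bounds on compact sets follow as in (1): any hypothetical neck-pinch singularity sits at $x$-coordinate comparable to $x(M_n) \to \infty$ and so cannot appear in a fixed compact region. Extract a smooth subsequential limit $M_\infty$. Because the neck of $M_n'$ escapes to infinity in the $x$-direction, in any compact set the two ``outer'' branches of $M_n'$ (near $y=\pm b(M_n)$) separate, so $M_\infty$ is a disjoint union of translators in the half-spaces $\{y>0\}$ and $\{y<0\}$. Reflection symmetry and the fact that $M_n'$ is graphical in these regions for large $n$ imply that each component of $M_\infty$ is a complete graphical translator. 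By the classification of graphical translators (grim reapers, $\Delta$-wings, bowls) and the asymptotic plane $\{y = b\}$, it must be an untilted grim reaper, which has width exactly $\pi$, giving the strips $\RR\times(b, b+\pi)$ and $\RR\times(-(b+\pi), -b)$.

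The main obstacle in both parts is establishing the uniform curvature bound on the region of interest. I would do this by a standard blowup-and-contradiction argument: a sequence of points with diverging curvature would, upon parabolic rescaling, produce an eternal ancient solution to mean curvature flow sitting in a slab (or half-space) and carrying the annular topology of $M_n$; the classification of such eternal objects, combined with $x(M_n)\to 0$ in case (1) or $x(M_n)\to \infty$ in case (2), rules these out precisely where they would obstruct the desired compactness. Once these bounds are in hand, the identification of the limits via the classification of graphical translators is straightforward.
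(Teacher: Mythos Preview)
Your compactness-plus-classification strategy is the right shape, and your catenoid rescaling argument in part~(1) is essentially what the paper does. But in both parts you skip the step that carries the real content: identifying the \emph{exact} limit, not just that it is some graphical translator.

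In part~(1), after extracting a limit $\graph(f_\beta)$ you assert ``the inner-width constraint $b(M_\infty)=b$'' without justification. A~priori one only gets $\beta\le b$ (and even this needs work: one must first know that the ``lower'' sheet of $M_n$ lives in $\{|y|\le b_n\}$, which the paper extracts from the structure of the waist via the Morse--Rad\'o bound $\mathsf{N}(F_\vv|M)\le 2$). The reverse inequality $\beta\ge b$ is where the paper spends its effort: it shows that $M'+(0,0,z)$ converges to the planes $\{y=\pm\beta\}$ with multiplicity~2, and then invokes a structural result (Theorem~\ref{planes-theorem}, based on the inner/outer decomposition and the slope bound of Theorem~\ref{slopey-theorem}) to force the outer planes to be $\{y=\pm B\}$, giving $\beta=b=B$. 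Nothing in your outline substitutes for this.

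In part~(2), you assert that each component of the limit is asymptotic to the plane $\{y=b\}$ and hence is an untilted grim reaper over $\RR\times(b,b+\pi)$. But why should the inner edge sit at $y=b$, and why should the width be exactly $\pi$? The paper must first prove $B(M_n)\to b+\pi$ (this uses the prong analysis and a half-slab theorem, Theorem~\ref{chini-theorem-2}) and then $y(M_n)\to b$ (via Theorem~\ref{b-B-criterion}); only then does the disjointness of $M_n$ from $\{x=0,\,|y|<y(M_n)\}$ force the limit into the slabs $\{b\le|y|\le b+\pi\}$. You also do not address why the limit consists of a \emph{single} pair of reapers rather than several; the paper again uses Theorem~\ref{planes-theorem} to pin the multiplicity at~1. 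These identifications are the substance of the theorem, and your outline does not supply them.
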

By the classification of graphical translators (Theorem~\ref{classification-theorem}),
 the graph of $f_b$ in Assertion~\eqref{intro-3-1} is an untilted grim reaper surface if $b=\pi/2$
and a $\Delta$-wing if $b>\pi/2$.

It is natural to ask what happens if we fix necksize and let the inner width tend to infinity.

\begin{conjecture}\label{rotational-conjecture}
Suppose that $M_n$ are annuloids in $\Aa$ such that $x(M_n)=R$ and $b(M_n)\to\infty$.
Then the $M_n$ converge to the rotationally symmetric
translating annulus whose neck is a horizontal circle of radius $R$.
\end{conjecture}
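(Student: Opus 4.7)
The plan is compactness-plus-uniqueness. First, I would extract a smooth subsequential limit after normalizing the height of the neck. The condition $x(M_n)=R$ keeps the neck at uniformly bounded distance from $Z$, and standard curvature estimates for translators away from the asymptotic region give uniform $C^k$ bounds on compact sets. Passing to a subsequence, $M_n\to M_\infty$ smoothly on compacta, where $M_\infty$ is a complete, properly embedded translator invariant under reflection in $\{x=0\}$ and $\{y=0\}$, disjoint from $Z$, and with $x(M_\infty)=R$. Because $B(M_n)\ge b(M_n)\to\infty$, the outer asymptotic planes of $M_n$ recede to infinity, so $M_\infty$ is not contained in any slab; in particular it is not itself of finite type. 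A barrier argument, trapping the $M_n$ above and below by rotationally symmetric translating catenoids of suitable neck radii outside a large cylinder around $Z$, would then show that $M_\infty$ is a connected, properly embedded translating annulus whose two ends are asymptotic to downward-opening bowl solitons.

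The heart of the proof, and the main obstacle, is to upgrade the two discrete reflective symmetries of $M_\infty$ to full rotational symmetry about $Z$. I would attempt Alexandrov's moving-planes method in its rotating-half-plane variant: for each angle $\theta$, consider the half-plane $H_\theta$ bounded by $Z$ making angle $\theta$ with $\{y=0\}$, and rotate it starting from an initial position where reflection carries the relevant portion of $M_\infty$ strictly to one side. At the first rotation angle producing interior contact between $M_\infty$ and its reflected image, the strong maximum principle and the Hopf boundary-point lemma force equality, hence reflective symmetry across $H_\theta$ for every $\theta$, yielding rotational invariance. The delicate step is establishing enough asymptotic control on $M_\infty$ at infinity to initiate the moving-plane process, since translators lack the polynomial decay available for minimal surfaces; the bowl-soliton asymptotics identified in the previous step would furnish the required comparison.

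Once rotational symmetry of $M_\infty$ is secured, the uniqueness theorem of Clutterbuck--Schn\"urer--Schulze~\cite{CSS} identifies $M_\infty$ as the unique rotationally symmetric translating annulus whose neck has radius $R$. Since every subsequential limit coincides with this same surface, the full sequence $M_n$ converges to it, proving the conjecture.
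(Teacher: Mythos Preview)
The statement you are addressing is labeled a \emph{Conjecture} in the paper, not a theorem: the paper offers no proof. The only commentary the authors give is the heuristic remark immediately following the conjecture, namely that for small $R$ each $M_n$ resembles two copies of $\graph(f_{b_n})$ joined by a small neck, and that $\graph(f_{b_n})$ converges to a bowl soliton as $b_n\to\infty$; this is put forward as evidence of plausibility, not as an argument.

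Your proposal is therefore not competing with any proof in the paper. As a strategy it is reasonable in outline, and you are candid that the Alexandrov step is the crux. Two points deserve emphasis. First, the compactness step is not entirely routine: the curvature estimate in the paper (Theorem~\ref{curvature-bound-theorem}) controls $|A(M_n,p)|\min\{1,\dist(p,Z)\}$, which gives smooth subsequential limits away from $Z$, but you must still rule out neck pinching or multiplicity jumps near $Z$; the fact that $x(M_n)=R>0$ helps, but a clean statement that the limit is a connected embedded annulus with $x(M_\infty)=R$ needs justification. Second, and more seriously, the rotating-half-plane Alexandrov argument for translators with catenoidal or bowl-type ends is not available in the literature in the generality you need. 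The difficulty you flag---initiating the reflection at infinity---is genuine: unlike the classical minimal or constant-mean-curvature cases, one does not have a Weierstrass-type expansion or a Simon-type asymptotic theorem guaranteeing that $M_\infty$ is $C^1$-close to a rotationally symmetric model outside a compact set, and without such control the moving-plane process cannot be started. Your sketch is a plausible research plan, but it does not close this gap, and neither does the paper; that is precisely why the statement is recorded as a conjecture.
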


In Conjecture~\ref{rotational-conjecture}, 
if $R>0$ is small, then, by Theorem~\ref{theorem-3}, $M_n$ resembles two copies of $\graph(f_{b_n})$ joined by a neck.
The fact that $\graph(f_{b_n})$ converges smoothly to a rotationally symmetric surface (a bowl soliton)
makes Conjecture~\ref{rotational-conjecture} plausible.

\TOCstop
\section*{Behavior away from the $z$-axis}
\TOCstart

Let $M\in \Aa$.  We know that as $z\to -\infty$, the surface looks like $4$ planes,
and as $z\to\infty$, it looks like the empty set.

What does it look like as $x\to\infty$?  (The symmetric behavior occurs as $x\to -\infty$.)
We show that $M\cap\{x>x(M)\}$ has two connected components, $\Mup$ and $\Mlow$.
For large $x$, $\Mlow$ looks like a downward-tilted grim reaper surface over $\RR\times (-b,b)$.
\begin{figure}[htbp]
\begin{center}
\includegraphics[width=.45\textwidth]{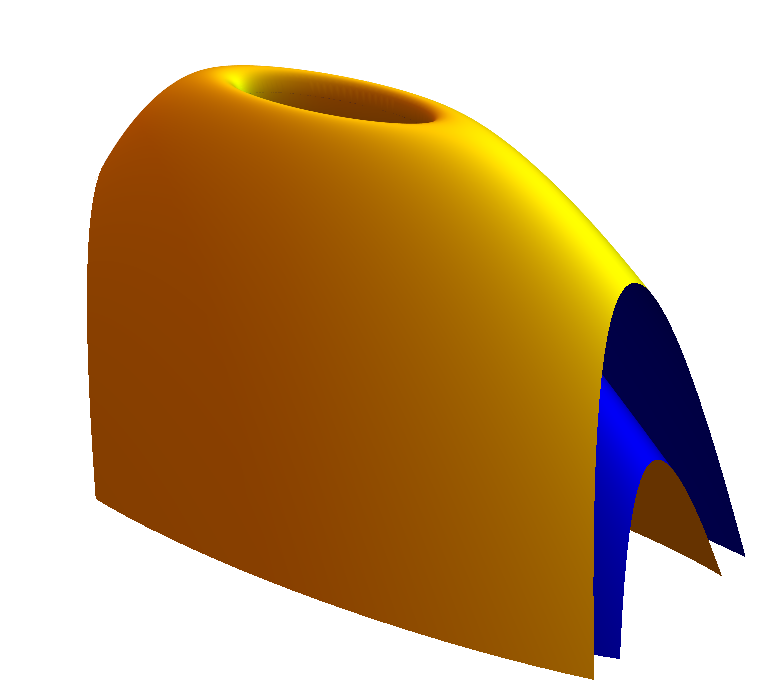}\includegraphics[width=.45\textwidth]{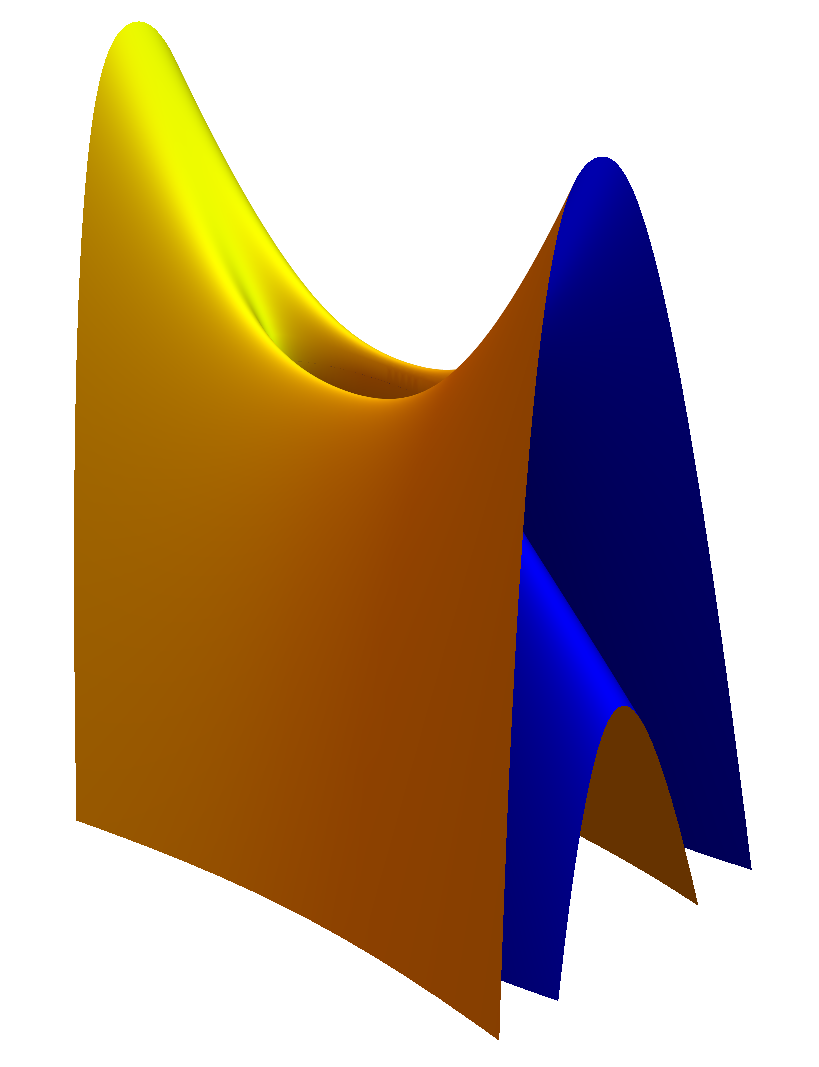}
\caption{\small A {\bf capped} example (left) and an {\bf uncapped} one (right).}
\label{fig:capped-uncapped}
\end{center}
\end{figure}

Also, for large $x$, $\Mup$ looks like a tilted grim reaper  surface $G$ over $\RR\times (-B,B)$.
But (assuming $B>\pi/2$) there are two kinds of $M$:
\begin{enumerate}
\item For some $M$, $G$ is  a downward-tilted grim reaper surface over $\RR\times (-B,B)$.  In this case,
we say that $M$ is {\bf capped}.
\item For other $M$, $G$ is an upward-tilted grim reaper surface over $\RR\times (-B,B)$.
In this case, we say that $M$ is {\bf uncapped}.
\end{enumerate}
Equivalently (assuming $B>\pi/2$), $M$ is capped if and only $z(\cdot)|M$ is bounded,
and $M$ is uncapped if and only if $z(\cdot)|M$ is unbounded.
See \S\ref{capping-section}.


If $B(M)>b(M)$, then $M$ is uncapped
 (by Theorem~\ref{main-theorem-concluded}\eqref{uncapped-item}).
However, the converse is not true: there are examples of uncapped $M$ with $b(M)=B(M)$.
   See Corollary~\ref{uncapped-but-squeezed}.

The following theorem 
describes the relation between necksize and capping (see Theorem~\ref{cutoffs-theorem})
when $b>\pi/2$. 

\begin{theorem}\label{necksize-limits-theorem}
Suppose $b>\pi/2$.
There are constants $0<c\le d< \infty$ (depending on $b$) with the following properties:
\begin{enumerate}
\item\label{necksize-capped-item} If $M\in \Aa(b)$ and $x(M)< c$, then $M$ is capped (and thefore $b(M)=B(M)$).
\item\label{necksize-uncapped-item}
 If $M\in \Aa(b)$ and $x(M)\ge d$, then $b(M)<B(M)$ (and therefore $M$ is uncapped).
\end{enumerate}
\end{theorem}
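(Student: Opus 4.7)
Both parts will be proved by contradiction, by combining the two blow-up/collapse regimes supplied by Theorem~\ref{theorem-3} with the structure of the asymptotic sheets $\Mup$ and $\Mlow$ recalled above.

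For part~\eqref{necksize-uncapped-item} (large necks force $b<B$), assume toward a contradiction that for every $d$ there is some $M\in\Aa(b)$ with $b(M)=B(M)=b$ and $x(M)\ge d$. Pick a sequence $M_n\in\Aa(b)$ with $b(M_n)=B(M_n)=b$ and $x(M_n)\to\infty$. Since $B(M_n)=b$, each $M_n$ lies in the slab $\{|y|\le b\}$, and hence so does each translate $M_n-(0,0,\zeta_n)$. By Theorem~\ref{theorem-3}\eqref{intro-3-2}, these translates converge smoothly to the pair of grim reaper surfaces over $\RR\times(b,b+\pi)$ and $\RR\times(-(b+\pi),-b)$. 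Smooth convergence preserves the slab inclusion, so the limit must lie in $\{|y|\le b\}$. But the grim reaper over $\RR\times(b,b+\pi)$ is contained in $\{y>b\}$, a contradiction. Thus some finite $d=d(b)$ works.

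For part~\eqref{necksize-capped-item} (small necks force capping), assume toward a contradiction that for every $c>0$ there is an uncapped $M\in\Aa(b)$ with $x(M)<c$. Pick uncapped $M_n\in\Aa(b)$ with $x(M_n)\to 0$. By Theorem~\ref{theorem-3}\eqref{intro-3-1}, the $M_n$ converge with multiplicity $2$ to the $\Delta$-wing $\graph(f_b)$, smoothly away from the origin. Since $M_n$ is uncapped, the upper sheet $M_n^{\textnormal{upper}}$ at large $x$ is an \emph{upward}-tilted grim reaper over $\RR\times(-B(M_n),B(M_n))$, while the lower sheet $M_n^{\textnormal{lower}}$ at large $x$ is a \emph{downward}-tilted grim reaper over $\RR\times(-b,b)$; in particular the two sheets diverge linearly in $x$ at infinity, with rates determined by the widths $B(M_n)$ and $b$.

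The plan now is to show that this asymptotic splitting is incompatible with the multiplicity-$2$ collapse to a single graph. Concretely, I would: (i) use smooth convergence on compact sets to show that for each fixed large $L$, the height gap between $M_n^{\textnormal{upper}}$ and $M_n^{\textnormal{lower}}$ over $\{|x|\le L\}\cap\{|y|\le b-\epsilon\}$ tends to $0$; (ii) use the uncapped structure to show that there exists $L_n\to\infty$ such that on $\{|x|\ge L_n\}$ the upper sheet is well-approximated by the upward-tilted grim reaper of width $B(M_n)$, whose tilt is bounded below in terms of $b$ (since $B(M_n)\ge b>\pi/2$); (iii) combine these via the translator maximum principle, using the tilted grim reaper as a barrier, to derive that $B(M_n)$ cannot escape to $\infty$ and must converge to $b$; (iv) finally, once $B(M_n)\to b$, use the smooth convergence to $\graph(f_b)$ to contradict the fact that $\graph(f_b)$ is a simply connected graph, since the limiting upper and lower sheets with opposite asymptotic tilts must leave the graphical region before they can reconnect.

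The main obstacle is step~(iii)--(iv): extracting a precise contradiction between ``upper sheet upward-tilted'' and ``multiplicity-$2$ convergence to the single $\Delta$-wing $\graph(f_b)$.'' The difficulty is that the multiplicity-$2$ convergence is smooth only on compact subsets away from the origin, whereas the uncapped vs.\ capped dichotomy is a statement about behavior as $|x|\to\infty$. Bridging these two scales requires a barrier/maximum-principle argument that controls $M_n^{\textnormal{upper}}$ uniformly on annular regions whose size grows with~$n$, which is the heart of the matter.
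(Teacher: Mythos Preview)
Your argument for part~\eqref{necksize-uncapped-item} is correct and close in spirit to the paper's: the paper observes (via its analysis of prongs) that $B(M_n)\to b+\pi$ whenever $x(M_n)\to\infty$, which is exactly what your slab contradiction detects.

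For part~\eqref{necksize-capped-item}, however, there is a genuine gap. You correctly set up the contradiction and invoke multiplicity-$2$ convergence to $\graph(f_b)$, but your plan (i)--(iv) never closes, and you yourself flag the ``bridging scales'' problem as unresolved. The missing ingredient is a Morse--Rad\'o type lemma the paper proves separately (Corollary~\ref{capped-critical}): for any $M\in\Aa$ with $B(M)>\pi/2$, the function $\uup$ (the height of the upper branch of $M\cap\{y=0\}$) has \emph{at most one} critical point. Consequently either $\uup$ is strictly increasing (uncapped) or it has a single maximum (capped). This collapses the global capped/uncapped dichotomy to a purely local test: $M$ is capped as soon as $(\uup)'(x_0)<0$ at \emph{some} $x_0>x(M)$.

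With that lemma in hand, your difficulty evaporates. Fix any $x_0>0$. Since $b>\pi/2$, the limit $f_b$ is a $\Delta$-wing and $\partial_x f_b(x_0,0)<0$. Smooth convergence away from the origin gives $(\unup)'(x_0)\to\partial_x f_b(x_0,0)<0$, so $(\unup)'(x_0)<0$ for all large $n$, forcing $M_n$ to be capped --- a contradiction. No barrier argument over growing annular regions is needed; the Morse--Rad\'o count does all the work of propagating local information at $x_0$ to the asymptotic behavior at infinity.
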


In the case $b=\pi/2$, there is a $d<\infty$ for which~\eqref{necksize-uncapped-item} holds.
But (in that case) we do not know whether $B(M)=\pi/2$ for all sufficiently small neck sizes. 
Indeed, we do not know if there are any annuloids with $B(M)=\pi/2$.
We do know that for every $\eps>0$, there is a $c=c(\eps)$ such that if $M\in \Aa(\pi/2)$
and if $x(M)<c$, then $B(M)< \pi/2+ \eps$.  
See Theorems~\ref{small-neck-theorem}\eqref{small-neck-5} and~\ref{cutoffs-theorem}.

\TOCstop
\section*{Prongs}
\TOCstart

Consider an $M\in \Aa(b)$ with $x(M)$ very large.
Then (according to Theorem~\ref{theorem-3}), near $Z$ (the $z$-axis), $M$ looks like a pair of grim reaper surfaces.
What does it look like away from $Z$?
In particular, what does it look like in the region where $x\sim x(M)$? 
That is perhaps the most interesting region, since $x(M)$ is the value of $t$ 
for which the topology of $M\cap \{|x|\le t\}$
undergoes a change.

To answer that question, consider a sequence $M_n\in \Aa(b)$ with $x(M_n)\to\infty$.
We show that the surfaces 
\[
  M - (x(M_n),0,0)
\]
converge smoothly, perhaps after passing to a subsequence, to a simply connected translator $\Sigma$ that we 
call a {\bf prong}.
\begin{figure}[htbp]
\begin{center}
\includegraphics[width=.45\textwidth]{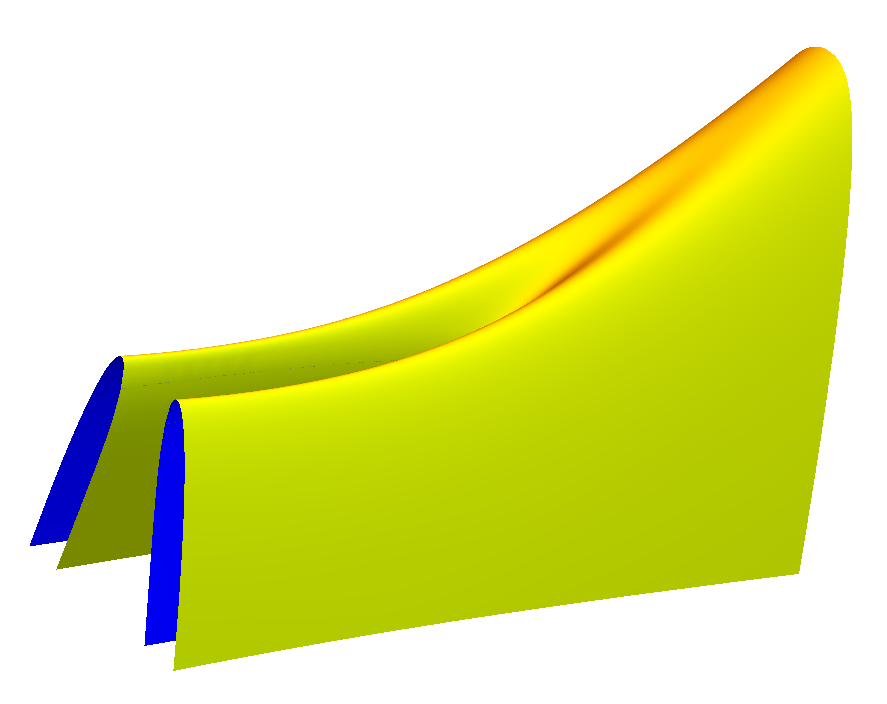}\includegraphics[width=.45\textwidth]{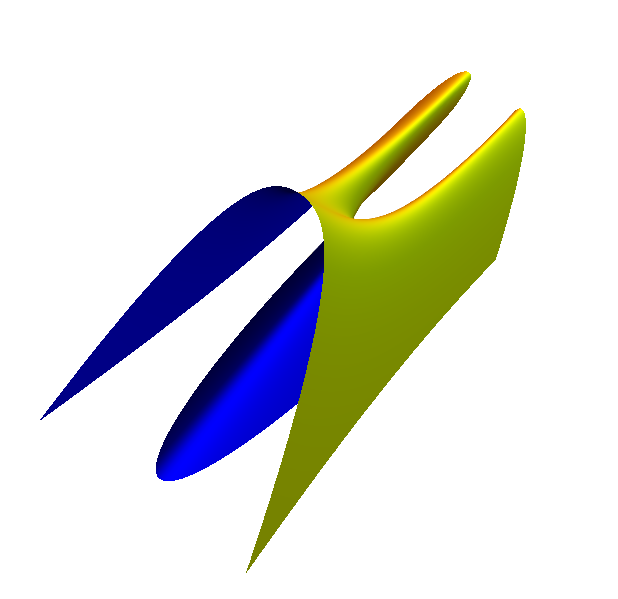}
\caption{\small Two different views of a prong.}
\label{fig:prongs}
\end{center}
\end{figure}

As $z\to\infty$, $\Sigma+(0,0,z)$ converges to the planes $\{y=\pm b\}$ and $\{y=\pm B\}$, where $B=b+\pi$.

The portion of $\Sigma$ with $\{x>0\}$ behaves just like an uncapped annuloid.  In particular, $\Sigma\cap \{x>0\}$
has two components, $\Sigma^\textnormal{upper}$ and $\Sigma^\textnormal{lower}$.   
Where $x$ is large, $\Sigma^\textnormal{upper}$ looks like an upward-tilted grim reaper surface over $\RR\times (-b-\pi,b+\pi)$
and $\Sigma^\textnormal{lower}$ looks like a downward-tilted grim reaper surface over $\RR\times (-b,b)$.

Where $x$ is very negative, $\Sigma$ looks like a pair of untilted grim reaper surfaces over
$\RR\times (-b-\pi,-b)$ and $\RR\times (b,b+\pi)$.  

The prong $\Sigma$ projects diffeomorphically (under $(x,y,z)\mapsto (y,z)$) onto an open subset of the $yz$-plane.  Thus it is the sideways graph of a function $x=x(y,z)$.

Prongs are discussed in Section~\ref{large-neck-section}.
The $x=x(y,z)$ sideways graph property is proved in Section~\ref{final-graphicality-section}.

\TOCstop
\section*{Singularity Models}
\TOCstart

All of the examples of complete translators produced in this paper have finite topology and finite entropy,
and thus could conceivably arise by blowing up (at a singularity) the mean curvature flow of 
an initially smooth, closed surface.   
   If the closed surface is embedded, then, by recent work of Bamler
and Kleiner~\cite{bamler-kleiner}, the bowl soliton is the only
translator (other than a vertical plane) that can arise as a blow-up.
Whether any of the new examples in this paper can arise as blow-ups of immersed surfaces
 is an open question that seems very challenging.

The paper~\cite{annuloid-survey} discusses how the construction of annuloids is related to the way $\Delta$-wings
arise as limits of compact translating graphs.

\section{Graphical Translators}\label{graphs-section}

As mentioned in the introduction, graphical translators have been completely classified.  In this section, we describe the classification.
See also Figure~\ref{graphs-figure}.   For more information about graphical translators, see the survey article~\cite{himw-survey}.

There is a unique entire translating graph, the {\bf bowl soliton}. It is a surface of revolution. Here, and below, `unique' means unique up to
translations in $\RR^3$ and rotation about a vertical axis.  

Every other properly embedded translating graph is defined on a strip of width $2b\geq \pi$. 
We may take the strip to be $\RR\times (-b,b)$. 
As $z\rightarrow-\infty$, the surface is asymptotic  to the planes $\{y=\pm b\}$. Up to the  Euclidean motions mentioned above, here is a
complete list of the graphical translators defined on strips.

\begin{enumerate}
\item For $b=\pi/2$, the {\bf untilted grim reaper surface} is the graph
\begin{align*}
&u:\RR\times(-\pi/2,\pi/2)\rightarrow \RR, \\
&u(x,y)=\log(\cos y).
\end{align*}
The grim reaper surface $u$ is ruled by horizontal lines. It is symmetric with respect to reflection in the  vertical coordinate planes. 

\item\label{u_b} For $b>\pi/2$, the {\bf tilted grim reaper surface} 
\[
u_b:\RR\times(-b,b)\rightarrow \RR,
\]
is produced from the untilted grim reaper surface by  dilation  followed by rotation around the $y$-axis. 
Let $\sigma_b=\frac{2b}{\pi}$ and $s(b)=\sqrt{\sigma_b^2-1}$.  Then
\[
u_b(x,y)=\sigma_b^2 \log (\cos (y/\sigma_b)) +s(b)x.   
\]
The angle of rotation is $\theta_b= \arctan s(b)$, and  the dilation is by a factor of $\sigma_b$. 
The tilted  grim reapers are  ruled by horizontal lines making an angle of $\theta_b$ with the horizontal plane. 
Note that $u_b(x,y)\equiv u_b(x,-y)$.  
Note also that $u_b$  is the unique translator on $\RR\times (-b,b)$ such that
\[
u_b(0,0)=0 \,\, \mbox{and} \,\, \frac{\partial u_b}{\partial x}\equiv s(b).
\]
When $b=\pi/2$,  $u_{\pi/2}$ is the untilted grim reaper surface.
We let $w_b(x,y)=u_b(-x,y)$.  Thus $w_b$ is the unique translator on $\RR\times (-b,b)$ such that
\[
  w_b(0,0)=0 \quad\text{and}\quad \pdf{w_b}{x}\equiv -s(b).
\]
\index{$u_b$}
\index{grim reaper surface}
\noindent The grim reaper surfaces (tilted and untilted)  are   the complete translating graphs that are intrinsically flat.
\item For each $b\ge \pi/2$, there is a unique graphical translator 
\[
   f_b: \RR\times (-b,b)\to \RR
\]
such that
\[
  f_b(0,0)=0 \quad\text{and}\quad Df_b(0,0)=0.
\]
When $b=\pi/2$, $f_{\pi/2}$ is the untilted grim reaper surface. 
When $b>\pi/2$, $D^2f_b$ is everywhere negative definite.  In this case, $\graph(f_b)$ is called a
  {\bf $\Delta$-wing}.
It is invariant under reflection in the vertical coordinate planes: $f_b(x,-y)= f_b(x,y)=f_b(-x,y)$. 
The function $f_b$ attains its maximum value $0$ at the origin.
 The $\Delta$-wings have strictly positive curvature, as does the bowl soliton. 
 \end{enumerate}
\index{$\Delta$-wing}
\index{$f_b$}
 \begin{theorem}\label{classification-theorem}\cite{graphs}*{Theorem~7.1}
Every complete graphical translator is a grim reaper surface, a $\Delta$-wing, or a bowl soliton.
\end{theorem}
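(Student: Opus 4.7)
The plan is to classify complete translating graphs $u\colon \Omega\to\RR$ in two stages: first determine the admissible domains $\Omega\subset\RR^2$, then pin down $u$ on each such domain via asymptotic and uniqueness arguments. I would first show that $\Omega$ must be convex and, up to an isometry of $\RR^2$, either all of $\RR^2$ or a strip $\RR\times(-b,b)$ with $b\ge \pi/2$. Convexity of $\Omega$ follows from the fact that horizontal slices $M\cap\{z=c\}$ of the graph evolve by planar mean curvature flow (this is just the translator property), so by Gage--Hamilton--Grayson the bounded level sets become and remain convex; an appropriate blow-down transfers convexity to $\Omega$ itself. Alternatively, one can invoke the Spruck--Xiao convexity theorem for translating graphs in $\RR^3$. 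To rule out halfplanes, wedges, and bounded convex domains I would use vertical translates of grim reapers and bowls as barriers, forcing a contradiction at a boundary corner or a compact boundary component. The bound $2b\ge\pi$ on strip widths follows by comparison with the untilted grim reaper surface.

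In the entire case $\Omega=\RR^2$ I would invoke X.-J.~Wang's theorem that every convex entire translator in $\RR^3$ is a surface of revolution, combined with the uniqueness of the rotationally symmetric bowl soliton up to vertical translation, to identify $u$ as the bowl. In the strip case $\Omega=\RR\times(-b,b)$ I would analyze the asymptotic slope $s_\pm:=\lim_{x\to\pm\infty}\partial_x u(x,0)$. Since $v=\partial_x u$ satisfies a linear elliptic equation and is controlled by the grim reaper barriers at $y=\pm b$, these limits exist and are finite. Translating $u(\cdot+t_n,\cdot)-u(t_n,0)$ and passing to subsequential limits produces $x$-translation-invariant translators on the strip, which a one-dimensional ODE analysis identifies as tilted grim reapers of slopes $\pm s(b)$; hence $|s_\pm|=s(b)$. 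If $s_+=s_-$, a sliding/maximum-principle argument between $u$ and the tilted grim reaper of that slope forces $u=u_b$ or $u=w_b$. Otherwise $v$ vanishes somewhere; after establishing the symmetry $u(x,-y)=u(x,y)$ (by uniqueness of asymptotic data together with the evenness of that data in $y$) and translating so the critical point lies at the origin, the uniqueness asserted in item~(3) identifies $u$ as the $\Delta$-wing $f_b$.

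The main obstacle is the last uniqueness step in the strip case: deducing $u$ from its asymptotic slope data at $x=\pm\infty$. The strip is noncompact in both directions and the graph blows up as $y\to\pm b$, so the sliding argument must be executed carefully, exploiting the asymptotic planarity at $y=\pm b$ and strict monotonicity of the tilted grim reaper family in its slope parameter, both to initiate the slide from one end of the strip and to terminate it without the barrier touching $u$ prematurely along the noncompact boundary. A secondary delicate point is the argument that $v$ attains its limits uniformly in $y$, which is needed to justify that the blow-down limit translators are genuinely $x$-invariant rather than merely $x$-quasi-periodic.
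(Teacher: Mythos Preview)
The paper does not give a proof of this theorem; it is quoted from \cite{graphs} and used as a black box throughout. So there is no in-paper argument to compare your proposal against.

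For what it is worth, your outline is broadly in the spirit of the proof in \cite{graphs}: one first reduces to convex graphs (via Spruck--Xiao), handles the entire case by X.-J.~Wang's rigidity, and then treats strips. However, in \cite{graphs} the strip case is not done by extracting asymptotic slopes $s_\pm$ and sliding against a tilted reaper. Instead, the $\Delta$-wings are first constructed as limits of compact graphs, and uniqueness on a given strip is proved by a direct comparison/foliation argument showing that a complete translating graph over $\RR\times(-b,b)$ is determined by its value and gradient at a single point. Your asymptotic-slope route is plausible, but the step you flag as delicate is genuinely so: knowing that $\partial_x u(x,0)\to s_\pm$ does not by itself force the translated limits to be $x$-invariant, and without that you cannot identify them as tilted reapers short of invoking the very classification you are trying to prove. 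The argument in \cite{graphs} sidesteps this circularity by building the uniqueness machinery independently of any asymptotic analysis.
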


The $\Delta$-wings and grim reaper surfaces are related as follows. 
The function 
\[
f_b(x + t,y) -f_b(t,0)
\]
converges to $u_b(x,y)$ as $t\to -\infty$  and to  $w_b(x,y)$ as $t\to +\infty$.

As $b\rightarrow\infty$, the tilted grim reaper surfaces converge to the vertical plane $\{x=0\}$ and the $\Delta$-wings  converge to the bowl soliton.

\section{Bounds on Area and on Curvature}

The surfaces constructed in this paper will all be either compact translators bounded by a pair of nested convex curves
in a horizontal plane or limits of such compact translators.  The following theorem
gives area and curvature bounds that hold for all such surfaces.
Here $Z$ is the $z$-axis, and $\rho_Z$ is
  rotation by $\pi$ around $Z$.

The following lemma for translators is an analog of the convex hull property for
minimal surfaces in Euclidean space.

\begin{lemma}
Suppose $M$ is a compact translator.
If $\vv$ is a horizontal vector and if $F_\vv(p):=\vv\cdot p$, then
\begin{align*}
\min_M F_\vv &= \min_{\partial M}F_\vv, \\
\max_M F_\vv &= \max_{\partial M}F_\vv.
\end{align*}
If $z=\phi(x,y)$ is the equation of a bowl soliton
and if $\Phi(x,y,z)= z - \phi(x,y)$, then
\begin{align*}
\min_M \Phi &= \min_{\partial M}\Phi, \\
\max_M \Phi &= \max_{\partial M}\Phi.
\end{align*}
\end{lemma}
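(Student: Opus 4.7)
The plan is to treat both parts uniformly via the observation that in each case $\RR^3$ is foliated by translators whose common value of the function labels each leaf: for Part~1, the level sets $\{F_\vv=c\}$ are the vertical planes with normal $\vv/|\vv|$, each a translator; for Part~2, the level sets $\{\Phi=c\}$ are the vertical translates $\{z=\phi+c\}$ of the bowl soliton, each a translator. Since $\partial M\subset M$, one inequality holds trivially in each case; the content of the lemma is that no interior point of $M$ can exceed $\max_{\partial M}F$ (or lie below $\min_{\partial M}F$).

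The main step is a strong-maximum-principle argument. Suppose $F|_M$ attains a local maximum at an interior point $p_0\in M\setminus\partial M$ with value $c$. Then $\nabla_M F(p_0)=0$, so $\nabla F(p_0)$ is normal to $M$; equivalently, $M$ and the leaf $\Sigma_c:=\{F=c\}$ share a tangent plane at $p_0$, and near $p_0$ the surface $M$ lies on the side $\{F\le c\}$ of $\Sigma_c$. Writing both $M$ and $\Sigma_c$ locally as graphs over their common tangent plane, each graph satisfies the quasilinear uniformly elliptic translator PDE, and the strong maximum principle for that equation forces the two graphs to coincide in a neighborhood of $p_0$. An open/closed argument then shows that the entire connected component $M_0$ of $M$ containing $p_0$ is contained in $\Sigma_c$, so $F\equiv c$ on $M_0$ and the desired equality is trivial on $M_0$ (with the observation that a compact $M_0$ without boundary cannot be embedded in the noncompact leaf $\Sigma_c$, so $M_0$ meets $\partial M$). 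The argument for minima is identical.

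For Part~1 there is also a clean direct verification. Using $\vec H=-\ee_3^\perp=-(\ee_3\cdot\nu)\nu$ and $\vv\cdot\ee_3=0$, one computes
\[
\Delta_M F_\vv \;=\; \vec H\cdot\vv \;=\; -(\ee_3\cdot\nu)(\vv\cdot\nu) \;=\; \ee_3\cdot\nabla_M F_\vv,
\]
so $F_\vv$ lies in the kernel of the linear uniformly elliptic operator $\LL f:=\Delta_M f-\ee_3\cdot\nabla_M f$ (no zeroth-order term), and the classical weak maximum principle applies immediately. The analogous computation for $\Phi$ reduces to verifying $\mathrm{tr}(D^2\phi|_{T_pM})=-1$, which need not hold away from the bowl soliton itself, which is why the foliation/geometric argument is the natural route for Part~2.

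The only real obstacle is the invocation of the strong maximum principle at an interior tangency between $M$ and a leaf $\Sigma_c$: once expressed as graphs over a common tangent plane, both solve the same quasilinear second-order uniformly elliptic equation, so standard Hopf-type results apply. The remaining open/closed extension and the reduction to graphs are routine.
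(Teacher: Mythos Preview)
Your proposal is correct and follows the same route as the paper: the paper's proof is the single sentence ``The lemma follows immediately from the maximum principle,'' and your write-up simply unpacks what that means---foliating $\RR^3$ by translators (vertical planes for $F_\vv$, translated bowl solitons for $\Phi$) and invoking the strong maximum principle at an interior tangency, together with the linear elliptic identity $\Delta_M F_\vv=\ee_3\cdot\nabla_M F_\vv$ for the first part. There is nothing to add.
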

\index{$F_\vv$}

\begin{proof}
The lemma follows immediately from the maximum principle.
\end{proof}

\begin{corollary}\label{bounded-corollary}
Suppose that $M_i$ is a sequence of compact translators.
\begin{enumerate}
\item If the $\partial M_i$ lie in a bounded subset of $\RR^3$, then the $M_i$
lie in a bounded subset of $\RR^3$.
\item If $\lambda_i\to \infty$ and if the dilated boundaries $\lambda_i(\partial M_i)$ lie in a bounded
subset of $\RR^3$, then the surfaces $\lambda_iM_i$ lie in a bounded subdset of $\RR^3$.
\end{enumerate}
\end{corollary}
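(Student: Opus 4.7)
The plan is to deduce both assertions directly from the preceding lemma, using horizontal linear functionals as barriers in the horizontal directions and a bowl-soliton comparison as a barrier in the vertical direction. The main observation is that each $M_i$ is already a genuine compact translator, so the lemma applies to it without any rescaling of the equation.

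For the first assertion, I would fix $R>0$ with $\partial M_i\subset B_R(0)$ for every $i$. Applying the lemma with $\vv=\ee_1$ and $\vv=\ee_2$ gives $|x|,|y|\le R$ on every $M_i$. Next, fix any bowl soliton $z=\phi(x,y)$, normalized so that $\phi(0,0)=0$. Since $\phi$ is continuous on $\RR^2$, it is bounded by some $C=C(R)$ on $[-R,R]^2$. On $\partial M_i$ one then has $|z|\le R$ and $|\phi|\le C$, so $|\Phi|\le R+C$; by the lemma, the same bound holds on $M_i$, and combining with $|\phi|\le C$ on $M_i$ (already ensured by the horizontal bound) yields $|z|\le R+2C$ on $M_i$. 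Thus all $M_i$ sit inside a common box.

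For the second assertion, the same argument applies to each compact translator $M_i$ individually, now with $\partial M_i\subset B_{R/\lambda_i}(0)$. One obtains $|x|,|y|\le R/\lambda_i$ on $M_i$ and $|z|\le R/\lambda_i+2\,C(R/\lambda_i)$, where $C(r):=\max\{|\phi(x,y)|:|x|,|y|\le r\}$. After multiplying by $\lambda_i$, the bounds on $\lambda_i M_i$ become $|x|,|y|\le R$ and $|z|\le R+2\lambda_i C(R/\lambda_i)$. Since $\phi(0,0)=0$ and $0$ is a smooth critical point of $\phi$, one has $C(r)=O(r^2)$ as $r\to 0$, so $\lambda_i C(R/\lambda_i)=O(R^2/\lambda_i)$, which is uniformly bounded in $i$ (and in fact tends to $0$).

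There is essentially no substantive obstacle; the corollary is almost pure bookkeeping on top of the lemma. The only place that calls for a moment of care is in the second assertion, where the bowl-defect bound $2C(R/\lambda_i)$ on $M_i$ is multiplied by $\lambda_i$ during rescaling. That potential blow-up is ruled out by the quadratic behavior of a bowl soliton at its vertex.
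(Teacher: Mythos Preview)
Your proof is correct and is exactly the argument the paper has in mind: the corollary is stated without proof in the paper, as an immediate consequence of the preceding lemma, and you have simply written out the details (horizontal linear functionals to bound $x,y$; the bowl-soliton comparison $\Phi$ to bound $z$; and, for part~(2), the quadratic vanishing of the bowl soliton at its vertex to absorb the factor~$\lambda_i$). There is nothing to add.
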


\begin{proposition}\label{simply-connected-proposition}
Suppose that $M_i$ is a sequence of simply connected translators.
\begin{enumerate}
\item If $M_i$ converges smoothly to an embedded surface $M$,
then $M$ is simply connected.
\item If $\lambda_i\to\infty$ and if $\lambda_iM_i$ converges to an
embedded surface $M$, then $M$ is simply connected.
\end{enumerate}
\end{proposition}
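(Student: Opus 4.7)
The plan is to treat both parts by contradiction simultaneously. Suppose the limit $M$ contains an embedded loop $\gamma$ that is not null-homotopic in $M$. Using smooth convergence (respectively smooth convergence of the rescalings $\lambda_i M_i \to M$), I would first produce, for every large $i$, an embedded loop $\gamma_i \subset M_i$ (resp.\ $\gamma_i \subset \lambda_i M_i$) close to $\gamma$. Because $M_i$---and hence every rescaling $\lambda_i M_i$---is simply connected, each $\gamma_i$ bounds an embedded compact disk $D_i$.

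The next step is to trap all of the $D_i$ inside a fixed compact subset of $\RR^3$. I would apply the preceding convex-hull lemma to $D_i$. In case~(1) this immediately gives that the horizontal projection of $D_i$ lies in the horizontal convex hull of $\gamma_i$ and that $D_i$ is sandwiched vertically between two translates of a bowl soliton passing through $\gamma_i$. In case~(2) the scaled surface $\lambda_i M_i$ is no longer a translator but satisfies the scaled mean-curvature equation $\vec H = -\ee_3^{\perp}/\lambda_i$; the same tangency argument, now with vertical planes and with scaled bowl solitons $\{z = \lambda_i \phi(x/\lambda_i, y/\lambda_i)\}$, which solve the same scaled equation, yields identical barriers. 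Since the $\gamma_i$ stay in a bounded set, all the $D_i$ lie in one fixed compact set $K$.

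Inside $K$, the convergence $M_i \cap K \to M \cap K$ (resp.\ $\lambda_i M_i \cap K \to M \cap K$) is smooth and, because the limit is an embedded surface, of multiplicity one; so for large $i$, $D_i$ can be written as the normal graph of a small $C^\infty$ perturbation over an open subset $D_i' \subset M \cap K$. Each $D_i'$ is then a topological disk whose boundary is a small perturbation of $\gamma$, so $\gamma$ is isotopic in $M$ to a null-homotopic loop and is therefore itself null-homotopic. This contradicts the choice of $\gamma$.

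The main obstacle is the uniform confinement step: one must verify that the convex-hull and bowl-soliton barriers survive the rescaling in case~(2), i.e., that the maximum principle for the scaled mean-curvature equation still supplies a two-sided vertical barrier. Once the $D_i$ are trapped in a common compact set, the transfer of disk topology under smooth multiplicity-one convergence is routine.
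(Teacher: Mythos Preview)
Your proposal is correct and follows essentially the same approach as the paper: lift a simple closed curve in $M$ to curves $\gamma_i$ in the approximating surfaces, use simple connectivity to bound disks $D_i$, invoke the convex-hull/bowl-soliton barriers (the paper's Corollary~\ref{bounded-corollary}, which already covers both the unscaled and rescaled cases) to trap the $D_i$ in a fixed compact set, and then pass the disks to the limit via smooth multiplicity-one convergence. The paper's write-up is terser---it simply asserts that the $D_i$ converge smoothly to a disk $D\subset M$ with $\partial D=C$---whereas you spell out the normal-graph step; your worry about whether the barriers survive rescaling in case~(2) is exactly what part~(2) of Corollary~\ref{bounded-corollary} is for.
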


\begin{proof}
To prove (1), let $C$ be a simple closed curve in $M$. Then $C$ is the smooth limit
of simple closed curves $C_i$ in $M_i$.  Since $M_i$ is simply connected,
$C_i$ bounds a disk $D_i$ in $M_i$.  By Corollary~\ref{bounded-corollary}, the $D_i$ lie in a bounded
subset of $\RR^3$. Thus the $D_i$ converge smoothly to a disk $D$ in $M$ with
$\partial D=C$.  The proof of (2) is essentially the same.
\end{proof}

\begin{theorem}\label{curvature-bound-theorem} 
There are finite constants $c_1$ and $c_2$ with the following properties.
Suppose that $M$  is a compact, embedded translator bounded by a pair of convex curves in horizontal planes.
More generally, suppose that $M$ is a smooth limit of such surfaces.
For $p\in M$ and $r\ge 0$, let
\[
    M(p,r):= \{q\in M: \dist_M(q,p)\le r\}
\]
and let $R(M,p)$ be the infimum of $r>0$ such 
that $M(p,r)$ contains a homotopically non-trivial curve in $M$. 
Then
\begin{enumerate}[\upshape(1)]
\item\label{area-item} $\area(M\cap \BB(p,r))\le c_1r^2$ for all balls $\BB(p,r)$.
\item\label{curvature-item} The 2nd fundamental form satisfies
\[
   |A(M,p)|\, \min \{1, R(M,p), \dist_M(p, \partial M)\} \le c_2,
\]
where $\dist_M$ denotes geodesic distance in $M$.
\item\label{in-addition-item} If, in addition, $M$ is an annulus disjoint from $Z$ and invariant under $\rho_Z$,
\begin{align*}
   R(M,p) &\ge \dist(p,Z)\text{, and} \\
  R(M,p) &\ge  x(M):=\dist(Z, M\cap\{y=0\}),
\end{align*}
and thus
\begin{align*}
  |A(M,p)|\, \min \{1, \dist(p, Z \cup \partial M)\} &\le c_2, \\
  |A(M,p)|\, \min \{1, x(M), \dist_M(p, \partial M)\} &\le c_2.
\end{align*}
\end{enumerate}
\end{theorem}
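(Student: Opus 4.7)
The plan is to prove the three assertions in order: (1) via monotonicity, (2) via a blow-up argument built on (1), and (3) by a topological winding-number argument. For (1), every translator has $|H|=|(-\ee_3)^\perp|\le 1$ pointwise, and the convex hull lemma above localizes $M$ in a bounded slab determined by the heights of its boundary curves. Applying Huisken's monotonicity for the associated mean curvature flow (or the density monotonicity for surfaces with bounded mean curvature), the ratio $r^{-2}\area(M\cap\BB(p,r))$ is controlled by a constant depending only on the bound $|H|\le 1$, yielding a universal $c_1$. Area is continuous under smooth convergence, so the bound passes immediately to smooth limits.

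For (2), I argue by contradiction. If the bound fails, there exist surfaces $M_n$ in the class and points $p_n\in M_n$ with
\[
Q_n:=|A(M_n,p_n)|\min\{1,R(M_n,p_n),\dist_{M_n}(p_n,\partial M_n)\}\to\infty.
\]
A standard point-picking lets us assume that the rescaled surfaces $\tilde M_n:=\lambda_n(M_n-p_n)$, with $\lambda_n:=|A(M_n,p_n)|$, satisfy $|A(\tilde M_n,0)|=1$ and $|A|\le C$ on every fixed intrinsic ball, while $R(\tilde M_n,0)=\lambda_n R(M_n,p_n)\to\infty$ and $\dist_{\tilde M_n}(0,\partial\tilde M_n)\to\infty$. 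The rescaled mean curvature is bounded by $1/\lambda_n\to 0$, so any subsequential smooth limit $\hat M$ is a complete minimal surface in $\RR^3$ with $|A(\hat M,0)|=1$. By (1), $\hat M$ inherits the quadratic area bound, so $\hat M$ is properly embedded. Since $R(\tilde M_n,0)\to\infty$, arbitrarily large intrinsic balls in $\tilde M_n$ are simply connected; arguing as in the proof of Proposition~\ref{simply-connected-proposition}, $\hat M$ is simply connected. A complete, properly embedded, simply connected minimal surface in $\RR^3$ with quadratic area growth must be a plane, contradicting $|A(\hat M,0)|=1$.

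For (3), I use that $M$ is an annulus disjoint from $Z$, so $\pi_1(M)\cong\ZZ$, and that, using the asymptotic configuration of the four planes together with the $\rho_Z$-symmetry, the generator of $\pi_1(M)$ has nonzero winding number around $Z$ in $\RR^3\setminus Z$. Suppose $\gamma\subset M(p,r)$, so $\gamma\subset \BB(p,r)$. If $r<\dist(p,Z)$, then $\BB(p,r)$ is contractible and disjoint from $Z$, forcing $\gamma$ to have zero winding---a contradiction; hence $R(M,p)\ge\dist(p,Z)$. For the second inequality, nonzero winding forces $\gamma$ to cross $\{y=0\}$ at two points with oppositely signed $x$-coordinates, each of which lies in $M\cap\{y=0\}$ and hence at Euclidean distance $\ge x(M)$ from $Z$. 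The extrinsic diameter of $\gamma$ is therefore at least $2x(M)$, so $2r\ge 2x(M)$, giving $R(M,p)\ge x(M)$.

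The main obstacle is the blow-up step in (2): one must simultaneously verify that the limit $\hat M$ is complete, properly embedded, simply connected, and has quadratic area growth in order to invoke the rigidity that forces $\hat M$ to be a plane. The area bound (1) combined with the $R(\tilde M_n,0)\to\infty$ analogue of Proposition~\ref{simply-connected-proposition} supplies all these ingredients. A subtler preliminary in (3) is verifying that the generator of $\pi_1(M)$ actually has nonzero winding around $Z$, which uses the specific asymptotic four-plane configuration together with the $\rho_Z$-symmetry.
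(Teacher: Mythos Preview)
Your blow-up argument for (2) is the same as the paper's, and your argument for (3) is also essentially the paper's once the winding-number fact is established. The gap is in how you establish that fact. You write that the generator of $\pi_1(M)$ has nonzero winding around $Z$ ``using the asymptotic configuration of the four planes together with the $\rho_Z$-symmetry,'' and flag this as the subtle preliminary. But assertion~(3) assumes only that $M$ is an annulus disjoint from $Z$ and invariant under $\rho_Z$; no asymptotic structure is hypothesized. The primary case of the theorem is a \emph{compact} annulus bounded by two convex curves in a horizontal plane, which has no four-plane end at all; and for the complete annuloids, the four-plane asymptotics are proved much later in the paper, relying on this very curvature estimate. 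So your justification is unavailable in the compact case and circular in the complete case.

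The paper supplies the missing step as Lemma~\ref{topology-lemma-1}, using only the $\rho_Z$-symmetry: choose any $p\in M$ and any path $\gamma$ in $M$ from $p$ to $\rho_Z p$; then $\int_\gamma d\theta$ (with $d\theta=(x\,dy-y\,dx)/(x^2+y^2)$) is an odd multiple of $\pi$, so the closed curve $\gamma\cup\rho_Z\gamma$ has odd winding number about $Z$ and is therefore homotopically nontrivial in $\RR^3\setminus Z$. This shows the inclusion $M\hookrightarrow\RR^3\setminus Z$ is $\pi_1$-injective, after which your diameter and ball arguments for $R(M,p)\ge x(M)$ and $R(M,p)\ge\dist(p,Z)$ go through exactly as you wrote them. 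A minor remark on (1): the area bound does not follow from $|H|\le 1$ alone (stacking many nearly flat sheets gives bounded $H$ but unbounded density), so the parenthetical ``density monotonicity for surfaces with bounded mean curvature'' is not enough; you do need the translator structure via Huisken monotonicity for the flow $t\mapsto M-t\ee_3$, as the paper's reference~\cite{white-entropy} does.
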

\index{$A(M,p)$}
\index{$R(M,p)$}
\index{$M(p,r)$}

\begin{proof} 
It suffices to prove Assertions~\eqref{area-item} and~\eqref{curvature-item} for compact $M$, since the general case follows trivially.

The existence of $c_1$ is proved in \cite{white-entropy}.

Suppose that Assertion~\eqref{curvature-item} fails. Then there exist compact examples $M_i$, points $p_i\in M_i$,
and radii $r_i$ 
such that
\[
    0 < r_i < \min\{1, R(M_i,p_i), \dist(p_i, \partial M_i) \} 
\]
and
\[
  |A(M_i, p_i)| \,  r_i \to \infty
\]
Note that $M_i(p_i,r_i)$ is a compact subset of $M_i\setminus \partial M_i$.
Let $q_i$ be a point in $M_i(p_i,r_i)$ that maximizes
\begin{equation*}
  f_i(q):=|A(M_i,q)|\, (r_i - \dist(q,p_i)).
\end{equation*}
Then $f_i(q_i)\to\infty$ since $f_i(p_i)\to\infty$.

Let $\rho_i=r_i-\dist(p_i,q_i)$.  Then
\[
   |A(M_i,\cdot)| \le 2 |A(M_i,q_i)|
\]
on $M_i(q_i,\rho_i/2)$.

Now dilate $M_i-q_i$ by $|A(M_i,q_i)|$ to get $M_i'$.
After passing to a subsequence, the $M_i'$ converge smoothly to a $M'$ such
that $M'$ is a smoothly embedded minimal surface and
\begin{align*}
|A(M',0)| &=1 , \\
\dist(0,\partial M') &=\infty, \\
R(M',0) &= \infty.
\end{align*}
Thus $M'$ is complete, properly embedded (it is the limit of embedded surfaces)
 and simply connected (by Proposition~\ref{simply-connected-proposition}).
  By  Assertion~\eqref{area-item} of this theorem, $M'$ has quadratic area growth. Thus it is a plane, a contradiction,
 because $|A(M',0)|=1$.   Hence Assertion~\eqref{curvature-item} is proved.

To prove Assertion~\eqref{in-addition-item}, let $r>R(M,p)$.  Then there is a simple closed curve $C$ in $M(p,r)$ that is homotopically nontrivial in $M$.
By elementary topology (see Lemma~\ref{topology-lemma-1}), $C$ is homotopically nontrivial in $\RR^3\setminus Z$.
Let $q^+$ be a point in $C\cap \{y=0,\, x>0\}$ and $q^-$ be a point in $C\cap \{y=0,\, x< 0\}$.
Then $x(q^+)\ge x(M)$ and $x(q^-)\le -x(M)$, so
\begin{align*}
2 x(M)
&\le
|q^+ - q^-|  \\
&\le
\dist_M(q^+,p) + \dist_M(p,q^-) \\
&\le 2r.
\end{align*}
Thus,
\begin{equation}\label{r-x-of-M}
    x(M) \le r.
\end{equation}

Since
\[
   C \subset M(p,r) \subset \overline{\BB(p,r)},
\]
and since $C$ is homotopically nontrivial in $\RR^3\setminus \ZZ$, 
it follows that $\BB(p,r)\cap Z$ is nonempty, so
\begin{equation}\label{Z-r}
    \dist(p,Z) \le r.
\end{equation}
\
The inequalities~\eqref{r-x-of-M} and~\eqref{Z-r} hold for all $r>R(M,p)$.
Hence they hold for $r=R(M,p)$.
\end{proof}

\begin{lemma}[Topology Lemma]\label{topology-lemma-1}
Suppose that  $M$ is an annulus in $\RR^3\setminus Z$ that is invariant under $\rho_Z$.
Then the inclusion of $M$ into $\RR^3\setminus Z$ induces a monomorphism of first homology.

If, in addition, the  annulus $M$  is a translator and if $V$ is a vertical plane, then $M\cap V$ does not contain a closed curve.
\end{lemma}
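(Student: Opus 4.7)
The plan for the first assertion is to use the angular projection $\pi:\RR^3\setminus Z\to S^1$, defined by $\pi(x,y,z)=(x+iy)/|x+iy|$, which is a homotopy equivalence; hence the inclusion-induced map $\iota_*:H_1(M)\to H_1(\RR^3\setminus Z)$ is injective exactly when $(\pi\circ\iota)_*:H_1(M)\to H_1(S^1)\cong\ZZ$ is nonzero, i.e.\ when $\pi|_M$ is not null-homotopic. Suppose for contradiction that $\pi|_M$ is null-homotopic; then it lifts to a continuous map $\widetilde\pi:M\to\RR$ along the universal cover $\RR\to S^1$. The key observation is that $\pi$ intertwines $\rho_Z$ with the antipodal involution $a$ of $S^1$: $\pi\circ\rho_Z=a\circ\pi$. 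Both $\widetilde\pi\circ\rho_Z$ and $\widetilde a\circ\widetilde\pi$ (with $\widetilde a(\theta)=\theta+\pi$) are lifts of $a\circ\pi$, so on the connected surface $M$ they differ by a deck translation $2\pi k$; hence $\widetilde\pi(\rho_Z p)=\widetilde\pi(p)+(2k+1)\pi$. Iterating and invoking $\rho_Z^2=\mathrm{id}$ gives $\widetilde\pi(p)=\widetilde\pi(p)+2(2k+1)\pi$, which is absurd.

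For the second assertion, I would argue by contradiction, assuming that a simple closed curve $C\subset M\cap V$ exists. First I would verify $[C]=0$ in $H_1(\RR^3\setminus Z)$: if $V\cap Z=\emptyset$, then $V$ is a simply-connected subset of $\RR^3\setminus Z$ and $C$ bounds a disk in $V$; if instead $V\supset Z$, then $V\setminus Z$ consists of two simply-connected half-planes, and the connected curve $C$, being disjoint from $Z$, lies entirely in one of them, where it is again null-homotopic. The first assertion then gives $[C]=0$ in $H_1(M)$, and because $M$ is an annulus, the simple closed curve $C$ must bound a closed disk $D\subset M$.

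Now I would apply the convex-hull lemma proved just above: choosing a unit horizontal normal $\vv$ to $V$ and the constant $c$ with $V=\{F_\vv=c\}$, the identity $F_\vv\equiv c$ on $\partial D=C$ forces $F_\vv\equiv c$ on $D$, that is $D\subset V$. Since $D$ has nonempty interior in both $M$ and $V$, and since a vertical plane is itself a translator, unique continuation for the (analytic) translator equation propagates this coincidence: the connected surface $M$ must lie in $V$. This contradicts the $\rho_Z$-invariance of $M$: if $V\not\supset Z$ then $\rho_Z(V)\neq V$ and $M\subset V\cap\rho_Z(V)$ has dimension at most one; if $V\supset Z$ then $\rho_Z|_V$ is reflection across $Z$, which swaps the two components of $V\setminus Z$, whereas the connected set $M\subset V\setminus Z$ must lie in only one. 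The step I expect to demand the most care is the unique-continuation conclusion $M\subset V$, since it relies on analyticity of translators together with the strong maximum principle in the translator metric; handling a potentially non-simple closed curve $C$ is a minor secondary issue that I would address by extracting an embedded loop from a component of the one-dimensional analytic set $M\cap V$.
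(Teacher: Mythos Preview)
Your proof is correct and follows the same overall strategy as the paper. For the first assertion, the paper is slightly more concrete: it picks $p\in M$, a path $\gamma$ in $M$ from $p$ to $\rho_Z p$, and computes directly that $\int_{\gamma\cup\rho_Z\gamma} d\theta$ is an odd multiple of $2\pi$, so $\gamma\cup\rho_Z\gamma$ is a nontrivial loop in $\RR^3\setminus Z$. Your lifting argument is the abstract version of this same idea (the half-turn forced by $\rho_Z$ obstructs a lift of $\pi|_M$), and is equally valid. For the second assertion your argument matches the paper's step for step---null-homotopy of $C$ in $\RR^3\setminus Z$, hence in $M$ by the first part, then $D\subset V$ via the maximum principle for $F_\vv$, then unique continuation---and you supply details (the case split on whether $Z\subset V$, and the explicit contradiction from $M\subset V$ via $\rho_Z$-invariance) that the paper leaves implicit.
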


\begin{proof}
Since $H_1(M;\ZZ)= H_1(\RR^3\setminus Z; \ZZ)=\ZZ$, It suffices to prove that $M$ contains a closed curve that is homotopically nontrivial in $\RR^3\setminus Z$.
Let $p\in M$ and $\gamma$ be a oriented curve in $M$ from $p$ to $\rho_Zp$.
Then
\[
  \int_\gamma\,d\theta
\]
is an odd multiple of $\pi$, where
\[
  d\theta:= \frac{x\,dy - y\,dx}{x^2 + y^2}.
\]
If $C=\gamma\cup \rho_Z\gamma$,
then
\[
  \int_C d\theta = \int_\gamma d\theta+\int_{\rho_Z}d\theta=2\int_\gamma d\theta.
\]
Thus $\int_C\theta$ is an odd multiple of $2\pi$, and therefore $C$ is homotopically
 nontrivial in $\RR^3\setminus \ZZ$.
 
To prove the second statement, note that a closed curve in $V$ disjoint from $Z$ is homotopically trivial in $\RR^3\setminus Z$.
(Indeed, it is homotopically trivial in $V$ if $Z\not\subset V$ and in $V\setminus Z$ if $Z\subset V$.)
If $M\cap V$ contained a closed curve $S$, it would be homotopically trival in $\RR \setminus Z$, and therefore $S$ would bound
a disk in $M$.  By the maximum principle, that disk would lie in $V$, which is impossible.  (By unique continuation, all of $M$ would lie in $V$.)
\end{proof}

\section{Morse-Rad\'{o} Theory}\label{morse-rado-section}

Let $M$ be a translator.  There are a number of standard foliations $\Ff$ of $\RR^3$ or of open subsets
 $W$ of $\RR^3$ by translators.  For example, $\Ff$ could be a family of parallel vertical planes.
Bounding the number of  points of tangency of $M$ with the leaves of $\Ff$ is the subject of Morse-Rad\'{o} Theory,
 a powerful tool
that plays a major role in this paper.  
In this section, we recall the basic facts in Morse-Rad\'{o} Theory.
More details about the results included in this section can be found in~\cite{morse-rado}.

Recall \cite{ilmanen_1994} that $M\subset \RR^3$ is a translator if and only if it is minimal with respect to the translator metric
\begin{equation}\label{translator-metric}
  g = e^{-z}(dx^2+dy^2+dz^2).
\end{equation}

\index{translator metric}

\begin{definition}\label{critical-point-definition}
Let $M$ be an embedded or immersed minimal surface in a Riemannian $3$-manifold $N$.
Let $\Ff$ be a foliation of an open subset $W$ of $N$ by minimal surfaces.
Let $\hat M$ be the union of the components of $M\cap W$ that are not contained in leaves of $\Ff$.
A {\bf critical point} of $M$ with respect to $\Ff$ is an interior point $p$ of $\hat M$ at which $M$ is tangent
to the leaf of $\Ff$ through $p$.  The {\bf multiplicity} of the critical point
is the order of contact of $M$ and the leaf.  We let
\[
   \mathsf{N}(\Ff|M)
\]
be the total number of interior critical points, counting multiplicity.
\end{definition}

\newcommand{\Nthing}{\mathsf{N}(\mathcal{F}|M)}
 \index{$\Nthing$}

Of course the case of interest in this paper is when $N$ is $\RR^3$ with the translator metric.

The first important fact about $\mathsf{N}(\Ff|M)$ is that it depends lower semicontinuously on $\Ff$ and on $M$:

\begin{theorem}[\cite{morse-rado}, Corollary 40]\label{semicontinuity-theorem}
Suppose that $g_i$ are Riemannian metrics on a $3$-manifold $N$ that converge smoothly to a Riemannian metric $g$.
Suppose that $M_i$ are $g_i$-minimal surfaces that converges smoothly to a $g$-minimal surface $M$.
Suppose $\Ff_i$ are $g_i$-minimal foliations of open subsets $W_i$ of $N$ such that the leaves of $\Ff_i$
converge smoothly to the leaves of a $g$-minimal foliation $\Ff$ of an open subset $W$ of $N$.  Then
\[
   \mathsf{N}(\Ff|M) \le \liminf \mathsf{N}(\Ff_i|M_i).
\]
In particular, if $p$ is a critical point of $(\Ff,M)$, then $p$ is a limit of critical points $p_i$ of $(\Ff_i,M_i)$.
\end{theorem}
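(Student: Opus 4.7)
My plan is to localize near each critical point of $(\Ff,M)$ and convert the counting problem into a Brouwer-degree calculation that is stable under smooth perturbation. Fix an interior critical point $p\in M$ of $(\Ff,M)$ of multiplicity $m$. Since the statement is local, I may shrink to a neighborhood $U\subset N$ of $p$ on which $\Ff|_U$ is given by the level sets of a smooth submersion $\rho: U\to\RR$ with $\rho(p)=0$. Let $L$ be the leaf through $p$ and set $h:=\rho|_{M\cap U}$. Then $p$ is a critical point of $h$, and its multiplicity as a critical point of $(\Ff,M)$ equals the order of contact of $M$ with $L$ at $p$, which in turn equals $m+1$ minus one, i.e. $h$ vanishes to order exactly $m+1$ at $p$.

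The next step is to show that the index of $\nabla h$ at $p$ (as a vector field on $M$) equals $-m$. Because $L$ is $g$-minimal and $M$ is $g$-minimal, the vertical height $h$ satisfies a second-order linear elliptic equation on $M$ near $p$ (obtained by linearizing the minimal-surface operator of the foliation around $L$). Choosing isothermal coordinates $w$ on $M$ centered at $p$, the Hartman--Wintner / Bers unique-continuation theorem yields an expansion
\begin{equation*}
  h(w) = \operatorname{Re}(a\,w^{m+1}) + o(|w|^{m+1}), \qquad a\in\CC\setminus\{0\},
\end{equation*}
from which $\nabla h$ has an isolated zero at $w=0$ of Brouwer degree $-m$. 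In particular the zero set $\{h=0\} = M\cap L$ near $p$ consists of $2(m+1)$ analytic arcs meeting equiangularly at $p$.

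Now I pass to the perturbed data. Choose a small closed geodesic disk $D\subset M$ around $p$ containing no other critical point of $(\Ff,M)$ and on whose boundary $\nabla h\ne 0$. Using the smooth convergences $g_i\to g$, $M_i\to M$, and $\Ff_i\to\Ff$, I produce, for $i$ large, disks $D_i\subset M_i$ converging smoothly to $D$ and local defining functions $\rho_i$ for $\Ff_i$ converging smoothly to $\rho$; set $h_i:=\rho_i|_{M_i\cap U}$, so that $h_i\to h$ in $C^k(D)$ for every $k$. Then $\nabla h_i$ converges uniformly to $\nabla h$ on $\partial D_i$, and homotopy invariance of Brouwer degree gives
\begin{equation*}
  \sum_{q\in\operatorname{Crit}(h_i)\cap D_i}\operatorname{index}(\nabla h_i,q) \;=\; \operatorname{index}(\nabla h,p) \;=\; -m
\end{equation*}
for all large $i$. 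Applying the same Hartman--Wintner expansion at each zero $q$ of $\nabla h_i$ shows that $\operatorname{index}(\nabla h_i,q)=-\operatorname{mult}(q;\Ff_i,M_i)$, so the multiplicities of critical points of $(\Ff_i,M_i)$ in $D_i$ sum to at least $m$, and those critical points cluster at $p$. Doing this simultaneously at any finite collection of critical points of $(\Ff,M)$ and then letting the collection exhaust the set of critical points gives the desired semicontinuity $\mathsf{N}(\Ff|M)\le\liminf_i\mathsf{N}(\Ff_i|M_i)$, together with the clustering statement.

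\textbf{Main obstacle.} The serious technical step is the polynomial expansion in the second paragraph, because it is what rigidly ties the \emph{geometric} multiplicity (order of contact of two minimal surfaces) to the \emph{topological} index of $\nabla h$. One needs (i) to identify the correct linear elliptic PDE satisfied by $h$, which uses the minimality of both $M$ and the leaves of $\Ff$ (not just of $M$), and (ii) a unique-continuation-with-asymptotics result for solutions of such equations with smooth but not necessarily analytic coefficients. Once this rigidity is in hand, the rest is a standard degree-theoretic stability argument, and the smoothness of the convergences $g_i\to g$, $M_i\to M$, $\Ff_i\to\Ff$ is used only to get $C^2$-convergence of $\nabla h_i$ to $\nabla h$ on $\partial D$.
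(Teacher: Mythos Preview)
The present paper does not prove this theorem; it is quoted from \cite{morse-rado} (Corollary~40 there) and used as a black box. Your outline is correct and is the standard route to such a lower-semicontinuity statement: localize near an interior critical point of multiplicity $m$, use the Hartman--Wintner/Bers asymptotic expansion to obtain $h(w)=\operatorname{Re}(aw^{m+1})+o(|w|^{m+1})$ in conformal coordinates on $M$, read off $\operatorname{index}(\nabla h,p)=-m$, and then invoke homotopy invariance of Brouwer degree under the smooth convergences $h_i\to h$. Since each critical point of $(\Ff_i,M_i)$ contributes index $-\operatorname{mult}\le -1$, the multiplicities in $D_i$ sum to exactly $m$, and summing over disjoint disks gives the inequality. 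This is exactly the mechanism that underlies the Morse--Rad\'o theory in the cited reference.

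One small technical remark on your phrasing. The function that literally satisfies a linear second-order elliptic equation to which Hartman--Wintner applies is the \emph{graph height} $u$ of $M$ over the tangent leaf $L$: both $u$ and the zero function solve the same quasilinear minimal-graph equation for the metric $g$, so their difference $u$ solves a linear elliptic equation with smooth coefficients obtained by differencing (coefficients depending on both surfaces, not just on $L$). Your $h=\rho|_{M}$ is $u$ precomposed with the inverse of the graph parametrization, hence agrees with $u$ up to a local diffeomorphism near $p$; it therefore has the same order of vanishing, the same local nodal picture, and the same gradient index. So your description ``$h$ satisfies a linear elliptic equation obtained by linearizing around $L$'' is slightly imprecise, but the argument is unaffected---this is a matter of wording, not a gap.
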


A {\bf minimal foliation function} on $N$ is a continuous function $F$
from an open subset of $W$ of $N$ to an open interval $I\subset\RR$ such that for
each $t\in I$,
\begin{enumerate}[\upshape {}]
\item $F^{-1}(t)$ is a minimal surface, and
\item $F^{-1}(t)$ is in the closures of $\{F>t\}$ and  of $\{F<t\}$.
\end{enumerate}
If $F$ is a minimal foliation function on $N$, we let $\mathsf{N}(F|M)= \mathsf{N}(\Ff|M)$, where $\Ff$ is the foliation whose leaves
are the level sets of $F$.
\index{minimal foliation function}

\newcommand{\nboogm}{$N(F|M)$ (number of critical points, counting multiplicity of $F|M$)}
\index{\nboogm}

In the following theorem, if $S$ is a set, then $|S|$ denotes the number of elements in the set.

\newcommand{\numberofelements}{$|S|$ (number of elements of the set $S$)}
\index{\numberofelements}

\begin{theorem}[\cite{morse-rado}, Theorem 4]\label{morse-rado-theorem}
Let $F:W\subset N\to I$ be a minimal foliation function on an open subset $W$ of a Riemannian $3$-manifold $N$.
Let $M$ be a minimal surface in $N$ homeomorphic to a $2$-manifold-with-boundary.
Suppose that $F|M\cap W$ is proper,
 that the set $Q$ of local minima of $F|\partial M$ is finite, that $M\cap W$
has finite genus, and that $(\partial M)\cap \{F<t\}$ is empty for some $t\in I$.
Then
\[
\mathsf{N}(F|M) \le   |Q| - |A| - \chi(M\cap W),
\]
where  $A$ is the set of local maxima or minima of $F|\partial M$
that are not local maxima or minima of $F|M$, and where $\chi(\cdot)$ denotes Euler Characteristic.

Equivalently, 
\[
\mathsf{N}(F|M) \le  |S| - |T| - \chi(M\cap W),
\]
where $S$ is the set of local minima of $F|\partial M$ that are also local minima of $F|M$, and where
$T$ is the set of local maxima of $F|\partial M$ that are not local maxima of $F|M$.
\end{theorem}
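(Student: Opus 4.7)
The plan is to apply Morse theory on the surface-with-boundary $M\cap W$, using $F$ as a Morse function, and to track the Euler characteristic of the sublevel sets $M_t:=M\cap W\cap\{F\le t\}$ as $t$ increases through $I$. First I would reduce to the case in which no component of $M\cap W$ is contained in a leaf of $\Ff$, since such components are excluded from $\hat M$ by Definition~\ref{critical-point-definition} and contribute nothing to $\mathsf{N}(F|M)$. The strong maximum principle for minimal surfaces (applied to $M$ and the leaves of $\Ff$ in the translator metric) then rules out any interior local extremum of $F|M\cap W$: such an extremum would force the component containing it to lie in a leaf, contradicting the reduction. Combined with the properness of $F|M\cap W$ and the hypothesis that $\partial M\subset\{F\ge t^*\}$ for some $t^*\in I$, one gets $M_t=\varnothing$ for all $t<t^*$, which supplies the initial value $\chi(M_t)=0$.

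Next I would perturb $F$ to a nearby minimal foliation function $\tilde F$ so that $\tilde F|M$ is Morse in the interior and $\tilde F|\partial M$ is Morse on $\partial M$. By Theorem~\ref{semicontinuity-theorem}, $\mathsf{N}(F|M)\le\liminf_n \mathsf{N}(\tilde F_n|M)$ along any approximating sequence $\tilde F_n\to F$, and a genericity check shows that $|Q|-|A|$ for $\tilde F_n$ converges to (or bounds from above) $|Q|-|A|$ for $F$, so it suffices to prove the identity in the Morse case. In that case the maximum-principle argument above forces every interior critical point of $\tilde F|M$ to be a nondegenerate saddle of index $1$; let $s=\mathsf{N}(\tilde F|M)$ count these. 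A degenerate tangency of $F$ with a leaf of multiplicity $m$ splits under this perturbation into exactly $m$ nondegenerate saddles of $\tilde F$, matching the multiplicity convention.

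I would then classify each critical point $q$ of $\tilde F|\partial M$ by the sign of the derivative of $\tilde F$ along the inward normal to $\partial M$. Boundary minima partition into a set $Q^+$ on which $\tilde F$ increases inward (which coincides with $S$, being exactly the boundary points that are also local minima of $\tilde F|M$) and a set $Q^-$ on which $\tilde F$ decreases inward. A boundary maximum on which $\tilde F$ decreases inward would be a local maximum of $\tilde F|M$, excluded by the max principle; hence every boundary maximum lies in $T$, and $T$ equals the set $Q'$ of all boundary maxima. A short local normal-form analysis at each critical level yields: crossing an interior saddle attaches a $1$-handle and drops $\chi$ by $1$; crossing a point of $Q^+$ glues in a fresh half-disk along a single arc and raises $\chi$ by $1$; crossing a point of $Q^-$ attaches a lens-shaped half-disk whose intersection with the previous sublevel set is a single interior arc, so $\chi$ is unchanged; crossing a point of $Q'$ joins two previously separated prong regions by a $1$-handle and drops $\chi$ by $1$. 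Summing these contributions from $t<t^*$ (where $M_t=\varnothing$) to $t\to\sup I$ (where $M_t=M\cap W$) gives
\[
\chi(M\cap W)=-s+|Q^+|-|Q'|.
\]
Since $|Q|-|A|=(|Q^+|+|Q^-|)-(|Q^-|+|Q'|)=|Q^+|-|Q'|=|S|-|T|$, both forms of the stated inequality follow.

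The main obstacle is the perturbation step: constructing approximating minimal foliation functions $\tilde F_n\to F$ for which $\tilde F_n|M$ and $\tilde F_n|\partial M$ are simultaneously Morse, and verifying that the counts $(Q_n,A_n,S_n,T_n)$ behave correctly in the limit so that the Morse-case equality passes to an inequality for $F$ via Theorem~\ref{semicontinuity-theorem}. A secondary but combinatorially delicate point is the local topological analysis at $Q^-$ minima, where one must verify that the attached lens-shaped half-disk meets the previous sublevel set in exactly one interior arc (so $\chi$ is unchanged); this depends on the precise normal form at a boundary minimum where the inward normal derivative of $\tilde F$ is strictly negative.
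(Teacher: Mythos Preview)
The paper does not prove this theorem; it is quoted from \cite{morse-rado}. Your Morse-theoretic outline is the right framework, but there is a genuine error in the treatment of boundary maxima. You assert that a boundary maximum of $\tilde F|\partial M$ on which $\tilde F$ decreases in the inward conormal direction is ``excluded by the max principle,'' and hence that $T$ coincides with the set $Q'$ of \emph{all} boundary maxima. This is false. The strong maximum principle rules out \emph{interior} local extrema of $F|M$ (at such a point $M$ is tangent to a leaf and unique continuation forces it into the leaf), but at a boundary point $q$ where the inward conormal derivative of $F$ is strictly negative, $M$ is \emph{not} tangent to the leaf through $q$ and no maximum principle applies; $q$ is then a local maximum of $F|M$ and lies precisely in $Q'\setminus T$. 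Such points do occur. The repair is easy: the local model at such a point is $F=-x^2-y$ on the half-plane $\{y\ge 0\}$, and one checks that the sublevel set is a half-disk both before and after crossing the critical level, so $\chi$ is unchanged rather than dropping by $1$. Adding this fourth case to your handle-attachment list gives $\chi(M\cap W)=-s+|Q^+|-|T|$ (not $-|Q'|$), and the stated inequality follows.

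On the perturbation step you flag as the main obstacle: the proof in \cite{morse-rado} does not perturb the foliation to a Morse situation. One cannot in general perturb freely within the class of \emph{minimal} foliations, and even when one can, controlling the behaviour of $|S|$ and $|T|$ in the limit is exactly as delicate as you suspect. Instead, the cited argument works directly with $F$, using Rad\'o's local description of the level curve at an interior tangency of multiplicity $m$ (it consists of $2(m+1)$ smooth arcs meeting at the point) to compute the Euler-characteristic jump across each critical level without any genericity reduction. This sidesteps both obstacles you raise.
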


\begin{remark} \label{re-one}
 In practice, one sometimes encounters $F$ and $M$ that satisfy all but one of the hypotheses of Theorem~\ref{morse-rado-theorem}, namely the hypothesis that the set of local minima of $F|\partial M$ is finite. 
 In particular, that hypothesis will fail if $F$ is constant on one or more arcs of $\partial M$. One can handle such examples as follows. Suppose $F$ is not constant on any connected component of $\partial M$.
  Let $\tilde M$ be obtained from $M$ by identifying each arc of $\partial M$ on which $F$ is constant to a point. 
  Let $\tilde F$ be the function on $\tilde M$ corresponding to $F$  on $M$.
   If the set $\tilde Q$ of local minima of $\tilde F|\partial \tilde M$ is finite,
then
\begin{align*}
\mathsf{N}(F|M)
&\le   |\tilde Q| - |\tilde A| - \chi(M\cap W),
\end{align*}
where $\tilde A$ is the set of local minima and local maxima of $\tilde F| \partial \tilde M$ that are not local minima or local maxima     
      of $\tilde F|M$. 
Equivalently, 
\[
\mathsf{N}(F|M) \le   |\tilde S| - |\tilde T| - \chi(M\cap W),
\]
where $\tilde S$ is the set of local minima of $\tilde F|\partial \tilde M$ that are also
local minima of $\tilde F|\tilde M$, and $\tilde T$ is the set of local maxima
of $\tilde F| \partial \tilde M$ that are not local maxima of $\tilde F|\tilde M$.
\end{remark}

We now describe the main examples of $g$-minimal foliation functions that we will use (where $g$ is the translator metric).
First, if $\vv$ is a horizontal unit vector in $\RR^3$, then
 the function 
\begin{equation}\label{definition-F_v}
\begin{aligned}
&F_\vv: \RR^3\to \RR, \\
&F_\vv(p) = \vv\cdot p
\end{aligned}
\end{equation}
is a $g$-minimal foliation function.
Second, suppose that $U$ is $\RR^2$ or an open strip in $\RR^2$ and that $h:U\to\RR$ is a function
whose graph is a complete translator.  Then
\begin{equation}\label{general-H}
\begin{aligned}
   &H: U\times \RR \to \RR, \\
   &H(x,y,z) = z - h(x,y)
\end{aligned}
\end{equation}
is a $g$-minimal foliation function.

The complete translators $M$ that we construct in this paper all have the following properties
 (see Theorem~\ref{complete-existence-theorem}):
\begin{enumerate}[\upshape (i)]
\item\label{2-bound-item} For each horizontal unit vector $\vv$, $\mathsf{N}(F_\vv|M)\le 2$.
\item\label{8-bound-item} If $H$ is as in~\eqref{general-H}, 
  then $\mathsf{N}(H|M)\le 8$.  
\end{enumerate}
(It is not hard to show that $\mathsf{N}(H|M)\le 4$ in~\eqref{8-bound-item}, but we do not need that fact.)

In the next section, we present a few useful general facts about translators that satisfy bounds such 
as~\eqref{2-bound-item} and~\eqref{8-bound-item}.

\section{Translators of Finite Type}\label{finite-type-section}
As mentioned in Section~\ref{morse-rado-section},
 translators can be seen as minimal surfaces in $\R^3$ endowed with the metric  
$g={\rm e}^{-z} (dx^2+dy^2+dz^2).$  For minimal surfaces in Euclidean $3$-space, there is a special class of surfaces that have very 
interesting properties not shared by general minimal surfaces: the surfaces with finite total curvature. 
In particular, for each such surface $M$,
 there is a positive integer $k$ such that $$\mathsf{N}(\Ff|M) \leq k,$$ for any foliation $\Ff$ of $\R^3$ by minimal 
surfaces (i.e., for any foliation by parallel planes).
 This motivates our definition of translator of finite type, which plays a similar role in this setting.
\begin{definition}\label{finite-type-definition}
We say that a translator $M$ in $\RR^3$ is {\bf of finite type}
provided there are finite numbers $c$, $K$, and $k$ such that
\begin{enumerate}
\item\label{area-item-def} $\area(M\cap\BB)\le cr^2$ for every ball $\BB$ of radius $r$.
\item\label{curvature-item-def} For every $p\in M$, 
\[
    |A(M,p)| \, \min\{1, \dist(p,\partial M)\} \le K.
\] 
\item\label{Fv-item} $\mathsf{N}(F_\vv|M)\le k$ for each horizontal unit vector $\vv$.
\item\label{H-item} $\mathsf{N}(H|M)\le k$ for every function $H(x,y,z)=z-h(x,y)$ whose level sets
are grim reaper surfaces (tilted or untilted).
\end{enumerate}
\end{definition}
We do not require any regularity at $\partial M$.

(Actually, condition~\eqref{Fv-item} is redundant: it is implied by condition~\eqref{H-item}. See Remark~\ref{redundancy}.)

This paper studies translators that are compact minimal annuli bounded by pairs of nested rectangles, as well as
translating annuli without boundary obtained as limits of such compact examples.
  All such translators are of finite type.
  (See Theorems~\ref{easy-bound-R}, \ref{x-critical-theorem}, and~\ref{complete-existence-theorem}.)

\begin{remark}\label{entropy-remark}
Condition~\eqref{area-item-def} in Definition~\ref{finite-type-definition} is equivalent (by a simple calculation) 
to the condition that $M$ have finite entropy.
See~\cite{white21}*{Theorem~9.1}.
\end{remark}

\begin{remark}\label{general-position-remark}
Let $\Hh$ be the collection of all functions $H$ as in condition~\eqref{H-item} of Definition~\ref{finite-type-definition}.  
By lower semicontinuity (Theorem~\ref{semicontinuity-theorem}), to prove
$\mathsf{N}(H|M)\le k$ for all $H\in \Hh$, it suffices to prove it for a dense set of $H\in \Hh$.  Likewise,
to prove $\mathsf{N}(F_\vv|M)\le k$ for all horizontal unit vectors $\vv$, it suffices to prove it for a dense
set of such $\vv$.
\end{remark}

We are mainly interested in translators without boundary. 
For a surface without boundary,
$\dist(p,\partial M)=\infty$, so the curvature bound in~\eqref{curvature-item-def} of Definition~\ref{finite-type-definition} simplifies to
\[
  |A(M,p)| \le K
\]
for all $p\in M$.

Translators of finite type have a nice compactness property.
If $M_n$ is a sequence of such surfaces satisfying \eqref{area-item-def}--\eqref{H-item} in Definition~\ref{finite-type-definition}, with $c$, $k$, and $K$ independent of $n$,
then, after passing to a subsequence, the $M_n$ will converge smoothly to a limit translator $M$
satisfying the bounds~\eqref{area-item-def} and~\eqref{curvature-item-def}.
(The convergence is smooth away from $\partial M$.)
 By lower semicontinuity (Theorem~\ref{semicontinuity-theorem}), $M$ will also
satisfy the bounds~\eqref{Fv-item} and~\eqref{H-item}.

\begin{theorem}\label{translates-theorem}
Suppose that $M\subset \RR^3$ is a translator 
of finite type.  If $p_i$ is a divergent sequence and if $\dist(p_i, \partial M)\to \infty$, then,
after passing to a subsequence, $M-p_i$ converges smoothly to
 a limit surface $M'$.  Furthermore, any such limit $M'$ is a union of 
vertical planes and translating graphs (grim reaper surfaces, $\Delta$-wings, and bowl solitons).

If the $p_i$ lie in a vertical plane containing $Z$, then $M'$ is a union of vertical planes and grim reaper surfaces.

If the $p_i=(0,0,z_i)$ are in $Z$, then $M'$ is a union of vertical planes.
\end{theorem}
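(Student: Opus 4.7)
First, I would extract a smooth subsequential limit. The finite-type area and curvature bounds, together with $\dist(p_i,\partial M)\to\infty$, give uniform bounds on $|A|$ and area of $(M-p_i)\cap\Br$ on every fixed ball $\Br$ for $i$ large. Standard smooth compactness for minimal surfaces in the translator metric~\eqref{translator-metric} then yields a complete embedded translator $M'$ without boundary, and lower semicontinuity of Morse-Rad\'o counts (Theorem~\ref{semicontinuity-theorem}) transfers the bounds $\mathsf{N}(F_\vv|M')\le k$ and $\mathsf{N}(H|M')\le k$ to $M'$, so $M'$ is of finite type.

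Second, I would classify each connected component $N$ of $M'$. Using the bound $\mathsf{N}(F_\vv|N)\le k$ for every horizontal $\vv$ together with completeness, embeddedness, and the area and curvature bounds, one argues that the locus in $N$ of points with vertical tangent plane is either all of $N$ (so $N$ is a vertical plane) or empty (so $N$ is a graph over a horizontal domain). The classification Theorem~\ref{classification-theorem} then identifies the graphical pieces as grim reaper surfaces, $\Delta$-wings, or bowl solitons. I expect this plane-versus-graph dichotomy to be the main technical obstacle, since it requires ruling out finite-type components on which vertical tangent planes occur on a non-empty proper subset.

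For the two special cases, I would use a critical-point-escape argument. For a grim reaper foliation function $H(x,y,z)=z-u(x,y)$, the tangent plane to a level set depends only on $(x,y)$; so for $p_i=(a_i,b_i,c_i)$, the critical points of $H|(M-p_i)$ correspond, via $q\mapsto q+p_i$, to critical points on $M$ of the shifted grim reaper foliation $\tilde H_i(x,y,z):=z-u(x+a_i,y+b_i)$. When $u$ is invariant under the horizontal shift $(a_i,b_i)$, one has $\tilde H_i=H$ for all $i$, so the finite-type bound $\mathsf{N}(H|M)\le k$ applies uniformly. Since this finite critical set is bounded, its translates by $-p_i$ escape every compact region, giving $\mathsf{N}(H|M')=0$.

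For $p_i\in P$ (a vertical plane containing $Z$), this applies to every grim reaper foliation $H$ whose ruling direction lies in $P$. A direct tangency computation shows that bowls, $\Delta$-wings, and grim reapers with rulings transverse to $P$ each contribute at least one interior tangency with a leaf of some such $H$ --- at the apex for the graphical cases, and at an isolated common-tangency point for transverse grim reapers --- contradicting $\mathsf{N}(H|M')=0$. Hence $M'$ consists of vertical planes and grim reapers ruled in $P$. For $p_i\in Z$, the horizontal shift vanishes, so $\tilde H_i=H$ holds for \emph{every} grim reaper foliation $H$; the same tangency argument now rules out grim reapers as well, leaving only vertical planes.
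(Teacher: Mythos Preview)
Your escape argument in the third paragraph is exactly the key idea, but you fail to apply it where it matters most: to $F_\vv$ in the general case. The foliation by level sets of $F_\vv$ is invariant under \emph{all} translations (the leaves are parallel vertical planes), so the critical set of $F_\vv|(M-p_i)$ is precisely the translate by $-p_i$ of the finite critical set of $F_\vv|M$; since $p_i$ diverges, this set escapes every compact region, and lower semicontinuity gives $\mathsf{N}(F_\vv|M')=0$ for every horizontal unit vector $\vv$. This is how the paper proceeds: it removes the finite critical set $V$ of $F_\vv|M$, observes that $(M\setminus V)-p_i$ converges to the same $M'$, and notes $\mathsf{N}(F_\vv|(M\setminus V))=0$. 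Once $\mathsf{N}(F_\vv|M')=0$ for all horizontal $\vv$, every non-planar component of $M'$ has no vertical tangent plane, and the Spruck--Xiao result \cite{spruck-xiao}*{Corollary~1.2} immediately yields that it is a graph; the classification theorem finishes. Your ``main technical obstacle'' dissolves once you notice that your own argument for $H$ applies verbatim (indeed more simply) to $F_\vv$.

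Your treatment of the two special cases is then essentially the paper's. The paper uses only the untilted family $H_c(x,y,z)=z-\log\cos(y-c)$ (with $P=\{y=0\}$ after rotation), and in the vertical-plane case it only rules out bowl solitons and $\Delta$-wings via their apex; the theorem as stated allows arbitrary grim reapers in $M'$ there, so your attempt to also exclude grim reapers with rulings transverse to $P$ claims more than is needed.
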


Of course, for a surface without boundary, the condition $\dist(p_i,\partial M)\to \infty$ is vacuously true.

\begin{proof}
We may assume that $M$ is connected.  We may also assume that $M$ is not a vertical plane
(as the theorem is trivial in that case).
The curvature and area bounds
 imply smooth subsequential convergence to a limit translator $M'$
  that is properly immersed and without boundary.
Let $\vv$ be a horizontal unit vector.  By hypothesis, $F_\vv|M$ has a finite set $V$ of critical points.
Since $p_i$ diverges in $\RR^3$, the surfaces 
\[
  M_i:= (M\setminus V) - p_i
\]
converge smoothly to the same limit surface $M'$.  
By lower semicontinuity, 
\[
\mathsf{N}(F_\vv|M')
\le
\liminf \mathsf{N}(F_\vv| M_i).
\]
But
\[
 \mathsf{N}(F_\vv|M_i) = \mathsf{N}(F_\vv|(M\setminus V)) = 0.
\]
Thus
\begin{equation}\label{ZERO}
 \mathsf{N}(F_\vv|M')=0.
\end{equation}
Let $\Sigma$ be a component of $M'$ that is not a vertical plane.  By~\eqref{ZERO} (which holds
for all horizontal unit vectors $\vv$), the surface $\Sigma$ has no
points at which the tangent plane is vertical.
According to \cite{spruck-xiao}*{Corollary 1.2}, any complete connected translator with no vertical tangent 
planes is a translating graph. By Theorem~\ref{classification-theorem}, a translating graph is a grim reaper surface, a $\Delta$-wing,
or a bowl soliton.  

Now suppose that the $p_i$ lie in a vertical plane containing $Z$.
By rotating, we may assume that the plane is the plane $\{y=0\}$, and thus that $p_i=(x_i,0,z_i)$.

For $c\in \RR$, let
\[
h_c(x,y) = \log(\cos(y-c))
\]
be the untilted grim reaper surface over $\RR\times (c-\pi/2,c+\pi/2)$, and let
\begin{align*}
&H_c: \RR\times (c-\pi/2,c+\pi/2) \times \RR \to \RR, \\
&H_c(x,y,z) = z - h_c(x,y).
\end{align*}

Note that $H_c|M$ has a finite set $C$ of critical points.  Then
  $M_i':=(M_i\setminus C) - p_i$
converges to $M'$ and $\mathsf{N}(H_c|M_i')=0$, so 
\begin{equation}\label{zero}
  \mathsf{N}(H_c|M')=0.
\end{equation}

Now suppose that $\Sigma$ is a bowl soliton or a $\Delta$-wing.  Then $z(\cdot)|\Sigma$ attains its maximum
at a single point $p:=(\bar x, \bar y, \bar z)$.  Note that $p$ is a critical point of $H_{\bar y}|\Sigma$.
Therefore $\Sigma$ cannot be a component of $M'$ by~\eqref{zero}.

Now suppose that $p_i=(0,0,z_i)\in Z$.
We must show that if $\Sigma$ is a translating graph, then $\Sigma$ is not a component of $M'$.
We have already proved it when $\Sigma$ is a $\Delta$-wing or bowl soliton. 
Thus suppose that $\Sigma$ is a grim reaper surface.  By rotating, we can assume that $\Sigma$
is a grim reaper surface over $I\times \RR$ for some interval $I$.
Note that $H_0|\Sigma$ has a critical point.  Thus, by~\eqref{zero}, $\Sigma$ cannot be a component of $M'$.
\end{proof}

{
\begin{corollary}\label{translates-corollary}
If $M$ is a complete translator of finite type 
that lies in a slab $\{|y|\le B\}$ and if $p_i=(x_i,y_i,z_i)$ is a sequence of points in $M$
with $(x_i,y_i)$ bounded and with $|z_i|\to \infty$, then, after passing to a subsequence,
$\nu(M,p_i)$ converges to $\ee_2$ or to $-\ee_2$.
\end{corollary}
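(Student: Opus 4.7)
The plan is to apply Theorem~\ref{translates-theorem} to the sequence $p_i$ and use the slab bound together with the finite-type Morse--Rad\'o bounds to force the tangent plane at $0$ of the limit surface $M'$ to have normal $\pm\ee_2$. After passing to a subsequence I may assume $(x_i,y_i)\to(x_\infty,y_\infty)$ and, by Theorem~\ref{translates-theorem}, that $M-p_i$ converges smoothly to a surface $M'$ whose components are vertical planes and translating graphs. Passing the inclusion $M-p_i\subset\{|y+y_i|\le B\}$ to the limit gives $M'\subset\{|y+y_\infty|\le B\}$; in this horizontal slab, bowl solitons are excluded, vertical-plane components must have the form $\{y=c\}$, and (since every non-bowl translating graph is defined on a strip of width at least $\pi$) every translating-graph component must have its strip aligned with the $y$-direction. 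Since $p_i\in M$, $0\in M'$; let $\Sigma$ be the component of $M'$ through $0$. If $\Sigma=\{y=c\}$ then $c=0$, $\nu(M',0)=\pm\ee_2$, and smooth convergence of normals yields the corollary, so it remains to exclude the case that $\Sigma$ is a translating graph.

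The first step is to exclude a horizontal limiting normal $\nu(M',0)=\pm\vv$ with $\vv\ne\pm\ee_2$. I would apply the argument from the proof of Theorem~\ref{translates-theorem} to the function $F_\vv(p)=\vv\cdot p$: by finite type the critical set of $F_\vv|M$ is finite and bounded in $\RR^3$, and since $\nabla F_\vv$ is constant this critical set translates rigidly under $p\mapsto p-p_i$, so its translate diverges in $z$. Consequently $\mathsf{N}(F_\vv|M')=0$. Since $\{\vv\cdot(x,y)=c\}$ does not fit in a $y$-slab for $\vv\ne\pm\ee_2$, no component of $M'$ is a leaf of the $F_\vv$-foliation, so any critical point of $F_\vv|M'$ would be in $\hat M'$; a critical point of $F_\vv|M'$ at $0$ is therefore a contradiction. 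Thus $\nu(M',0)$ is either $\pm\ee_2$ or has nonzero vertical component, and in the latter case $\Sigma$ is a translating graph.

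To exclude translating-graph components through $0$, I would prove the key lemma that $\mathsf{N}(H|M')=0$ for every grim reaper foliation function $H=z-h(x,y)$. A point $q$ is a critical point of $H|(M-p_i)$ iff $p':=q+p_i\in M$ satisfies $\nu(M,p')\parallel\nabla H(p'-p_i)$; since $\nabla H$ is independent of $z$ and $(x_i,y_i)\to(x_\infty,y_\infty)$, this is, for large $i$, a perturbation of the condition defining the critical set of a fixed limiting grim reaper foliation function on $M$, which is finite and bounded in $\RR^3$ by finite type. Translating back by $-p_i$ with $|z_i|\to\infty$ then sends these critical points out of every compact subset of $\RR^3$, and Theorem~\ref{semicontinuity-theorem} gives $\mathsf{N}(H|M')=0$. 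Case by case: if $\Sigma$ is a $\Delta$-wing the unique maximum $(\bar x,\bar y,\bar z)$ of $z|\Sigma$ is a critical point of $H_{\bar y}(x,y,z)=z-\log\cos(y-\bar y)$ on $\Sigma$; if $\Sigma$ is an untilted or tilted grim reaper in standard orientation, a slightly rotated grim reaper foliation $H_\theta(x,y,z)=z-\log\cos(\cos\theta\, x+\sin\theta\, y)$ with small $\theta\ne 0$ has exactly one interior critical point on $\Sigma$ arising from the unique solution of the tangent-plane matching equations in the common domain. Each subcase yields a critical point of $H|M'$ contradicting the lemma.

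The main technical obstacle is verifying that the critical sets of the translated grim reaper foliation functions on $M$ remain uniformly bounded in $\RR^3$: the form of $\nabla H$ forces $(x,y)$-boundedness of each such critical point, but a priori a critical point could escape with $|z|\to\infty$ in $M$ and, when translated by $-p_i$, produce a spurious critical point of $H|M'$. I expect that this escape is precluded by combining the slab bound on $M$, the uniform curvature bound from finite type, and a further application of Theorem~\ref{translates-theorem} to the escaping sequence: the latter classifies the blow-up along the escape as another union of vertical planes and standard-orientation translating graphs, and analyzing the admissible normal directions forces them to be $\pm\ee_2$, which is incompatible with the grim-reaper tangent-plane condition required for the escape. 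Executing this compactness-plus-classification step cleanly is where the bulk of the technical work lies.
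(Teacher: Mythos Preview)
Your approach is workable in outline but far more involved than necessary, and you yourself flag that the final step (uniform boundedness in $\RR^3$ of the critical sets of the translated grim reaper foliation functions on $M$) is not fully justified. The paper sidesteps all of this with a one-line observation: instead of translating $M$ by the full $p_i=(x_i,y_i,z_i)$, translate only by $(0,0,z_i)$. This sequence lies on the $z$-axis $Z$, so the last clause of Theorem~\ref{translates-theorem} applies directly: after passing to a subsequence, $M-(0,0,z_i)$ converges smoothly to a union of vertical planes. Those planes lie in the slab $\{|y|\le B\}$, hence each is of the form $\{y=c\}$ with normal $\pm\ee_2$. Since $(x_i,y_i)$ is bounded, the points $p_i-(0,0,z_i)=(x_i,y_i,0)$ subconverge to a point of the limit surface, and smooth convergence of normals gives $\nu(M,p_i)\to\pm\ee_2$.

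Equivalently: because $(x_i,y_i)$ is bounded, the surfaces $M-p_i$ and $M-(0,0,z_i)$ differ by a horizontal translation converging to $(x_\infty,y_\infty,0)$, so their limits coincide up to this fixed translation. You were already reaching for the $Z$-case of Theorem~\ref{translates-theorem} in your last paragraph to resolve the technical obstacle; the point is that it resolves the entire corollary at the outset, making the intermediate exclusion of $\Delta$-wings and grim reapers through $0$ unnecessary.
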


\begin{proof}
By Theorem~\ref{translates-theorem}, $M-(0,0,z_i)$
 converges smoothly (after passing to a subsequence) to a union
of vertical planes. Since those planes are contained in the slab $\{|y|\le B\}$, they are all normal
to $\ee_2$. The assertion follows immediately.
\end{proof}

\begin{theorem}\label{new-finite-type-theorem}
Suppose $M$ is a translator.  Let $W$ be the set of horizontal unit vectors $\vv$
such that $\mathsf{N}(F_\vv|M)>0$.  Then $W$ is an open subset of the equator $E$.  

Now suppose that $M$ has finite type, lies in a slab $\{|y|<B\}$, and has no boundary.  Then
\begin{enumerate}
\item\label{new-finite-1} If $\vv\in \partial W$, then $\vv=\pm \ee_2$.
\item\label{new-finite-2} If $\mathsf{N}(x(\cdot)|M)=0$, then each component of $M$ is either  a plane parallel to $\{y=0\}$,
or a $\Delta$-wing, or grim reaper surface.
\end{enumerate}
\end{theorem}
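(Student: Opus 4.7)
The plan is to derive openness of $W$ from lower semicontinuity of $\mathsf{N}$, prove assertion~(1) by a blow-down analysis using Theorem~\ref{translates-theorem}, and deduce assertion~(2) from assertion~(1) using the $\vv\mapsto -\vv$ symmetry of $W$.

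To establish openness, suppose $\vv\in W$ and $\vv_i\to \vv$. Theorem~\ref{semicontinuity-theorem} (with $M$ fixed and with the leaves of $F_{\vv_i}$ converging smoothly to those of $F_\vv$) gives $\liminf \mathsf{N}(F_{\vv_i}|M)\ge \mathsf{N}(F_\vv|M)>0$, so $\vv_i\in W$ for all large $i$.

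For assertion~(1), I would argue by contradiction: assume $\vv\in\partial W$ with $\vv\ne\pm\ee_2$.  Then $\mathsf{N}(F_\vv|M)=0$, and one can pick $\vv_i\to \vv$ in $W$ and critical points $p_i\in M$ of $F_{\vv_i}|M$, so $\nu(M,p_i)=\pm\vv_i$.  The first observation is that since $\vv\ne\pm\ee_2$, every leaf $\{\vv\cdot q=c\}$ of $F_\vv$ extends unboundedly in the $y$-direction and therefore cannot be contained in the slab $\{|y|<B\}$; in particular no component of $M$ is a leaf of $F_\vv$, so any interior tangency of $M$ with such a leaf is an honest critical point in the sense of Definition~\ref{critical-point-definition}.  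Now split on boundedness of $\{p_i\}$.  If $\{p_i\}$ is bounded, a subsequence converges to some $p\in M$ with $\nu(M,p)=\pm\vv$, giving a critical point of $F_\vv|M$ and contradicting $\mathsf{N}(F_\vv|M)=0$.  If $\{p_i\}$ diverges, Theorem~\ref{translates-theorem} yields a smooth subsequential limit $M'$ of $M-p_i$ that is a union of vertical planes and translating graphs.  At $0\in M'$ the normal equals $\pm\vv$, which is horizontal; translating graphs have non-horizontal normals, so the component of $M'$ through $0$ is a vertical plane.  The $y$-coordinates of the $p_i$ are bounded by the slab hypothesis, so $M'$ itself lies in a slab of bounded $y$-width, and the only vertical planes fitting there are of the form $\{y=c\}$, with normal $\pm\ee_2$.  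Hence $\vv=\pm\ee_2$, a contradiction.

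For assertion~(2), note that $F_{-\vv}=-F_\vv$, so $\mathsf{N}(F_{-\vv}|M)=\mathsf{N}(F_\vv|M)$ and $W$ is symmetric under $\vv\mapsto -\vv$.  The hypothesis $\mathsf{N}(x(\cdot)|M)=0$ means $\ee_1\notin W$, hence also $-\ee_1\notin W$.  Combined with assertion~(1), $W$ is an open subset of $E\cong S^1$ whose boundary lies in $\{\pm\ee_2\}$; the only such subsets are $\emptyset$, the two open arcs of $E\setminus\{\pm\ee_2\}$, or their union.  Since $\pm\ee_1\notin W$ and each arc contains one of $\pm\ee_1$, we must have $W=\emptyset$, i.e., $\mathsf{N}(F_\vv|M)=0$ for every horizontal unit vector $\vv$.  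Now let $C$ be a component of $M$.  If some $p\in C$ has horizontal normal $\pm\vv$, then $p$ would contribute to $\mathsf{N}(F_\vv|M)$ unless $C$ lies in the leaf $\{\vv\cdot q=\vv\cdot p\}$; by unique continuation for the translator equation and properness of $M$, $C$ then equals that vertical plane, and the slab constraint forces $\vv=\pm\ee_2$, so $C=\{y=c\}$.  If instead $C$ has no vertical tangent plane at all, then by Spruck--Xiao \cite{spruck-xiao}*{Corollary~1.2} and Theorem~\ref{classification-theorem}, $C$ is a grim reaper surface, a $\Delta$-wing, or a bowl soliton; the slab rules out the bowl, and a simple check shows that any strip orientation other than $\RR\times(-b,b)$ would extend unboundedly in $y$ and so is also excluded.

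The hardest step will be the unbounded case in assertion~(1): it is there that the slab hypothesis and Theorem~\ref{translates-theorem} combine to pin down the orientation of the vertical plane appearing in the blow-down limit, forcing $\vv=\pm\ee_2$.
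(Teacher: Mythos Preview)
Your proof is correct and follows essentially the same route as the paper's: openness from lower semicontinuity, assertion~(1) by passing to a limit of $M-p_i$ and reading off the normal at the origin, and assertion~(2) from the $\vv\mapsto-\vv$ symmetry plus Spruck--Xiao and the classification.

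The one noteworthy difference is in assertion~(1). You split into bounded and unbounded $\{p_i\}$ and, in the unbounded case, invoke Theorem~\ref{translates-theorem} to classify the components of the limit $M'$, observing that graphs have no horizontal normals. The paper instead handles both cases at once by a purely lower-semicontinuity argument: since $\vv\notin W$, one has $\mathsf{N}(F_\vv|M-p_i)=\mathsf{N}(F_\vv|M)=0$, hence $\mathsf{N}(F_\vv|M')=0$; but $\nu(M',0)=\vv$, so the component of $M'$ through $0$ must be a leaf of $F_\vv$, i.e., a vertical plane. This avoids the case split and does not require the full structure theorem for $M'$; your approach, on the other hand, makes the geometry of the limit more explicit. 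Both are short and valid.
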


\begin{proof}
The openness of $W$ is because $\mathsf{N}(F_\vv|M)$ is a lower-semicontinuous function of $\vv$
   (Theorem~\ref{semicontinuity-theorem}.)
To prove Assertion~\eqref{new-finite-1}, suppose $\vv\in \partial W$. 
 Then $\vv\notin W$ but there exists a sequence of points $\vv_n\in W$
converging to $\vv$.  Let $p_n\in M$ with $\nu(M,p_n)=\vv_n$.
Then, after passing to a subsequence, $M_n'=M-p_n$ converges to a translator $M'$.
Since $\vv\notin W$, $\mathsf{N}(F_\vv|M_n')=\mathsf{N}(F_\vv|M)=0$ and therefore
 $\mathsf{N}(F_\vv|M')=0$ by lower-semicontinuity,  But $\nu(M',0)=\vv$, so the component of $M'$ containing $0$
 is a plane. (Otherwise $0$ would be a critical point of $F_\vv$ with positive multiplicity.)  Since $M'$ lies
 in the slab $\{|y|\le 2B\}$, $\vv=\pm \ee_2$.  This completes the proof of Assertion~\eqref{new-finite-1}.
 
Note that $\mathsf{N}(F_\vv|M)=\mathsf{N}(F_{-\vv}|M)$, so $W$ is invariant under $\vv\mapsto -\vv$.
Thus, by Assertion~\eqref{new-finite-1}, $\partial W$ is either the empty set or $\{\ee_2, -\ee_2\}$.   Hence, $W$ is one of the following:
$\emptyset$, $E$, or $E\setminus \{\ee_2, -\ee_2\}$. 
If $\mathsf{N}(x(\cdot)|M)=0$, i.e., if $\mathsf{N}(F_{\ee_1}|M)=0$, then $\ee_1\notin W$ so $W$ is empty.
By  the Spruck-Xiao Theorem~\cite{spruck-xiao}*{Corollary~1.2}, $M$ consists of vertical planes and graphs.
By Theorem~\ref{classification-theorem}, every translating graph is a $\Delta$-wing, a grim reaper surface, or bowl soliton.
Since $M$ lies in a vertical slab, it cannot contain a bowl soliton.
\end{proof}

\begin{theorem}\label{product-theorem}
Suppose that $M$ is a complete translator of finite type contained in a slab $\{|y|\le B\}$.
Let $\vv$ be a horizontal unit vector that is not $\pm \ee_2$.
For $t\in \RR$ and $I\subset \RR$, let $M(t)=M\cap F_\vv^{-1}(t)$ and $M(I)=M\cap F_\vv^{-1}(I)$.
Let $I$ be a connected subset of $\RR$ that does not include any critical values of $F_\vv|M$, 
and let $t_0\in I$.  Then
\[
    M(I)
\]
is diffeomorphic to
\[
   I \times M(t_0).
\]
Indeed, there is a diffeomorphism of the form
\begin{equation}\label{the-diffeomorphism}
\Phi: p \in M(I)  \mapsto (F_\vv(p), \phi(p)) \in I \times M(t_0).
\end{equation}
\end{theorem}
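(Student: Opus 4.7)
The plan is to construct the diffeomorphism as the trajectory map of a gradient-like flow on $M(I)$. I would introduce the tangential vector field
\[
X = \frac{\nabla_M F_\vv}{|\nabla_M F_\vv|^2}
\]
on $M(I)$; since $I$ contains no critical values of $F_\vv|M$, the denominator never vanishes and $X$ is a smooth tangent vector field satisfying $X(F_\vv)\equiv 1$. Letting $\psi_s$ denote its flow, the candidate diffeomorphism is
\[
\Phi(p) = \bigl(F_\vv(p),\, \psi_{t_0 - F_\vv(p)}(p)\bigr),
\]
with smooth inverse $(t,q) \mapsto \psi_{t - t_0}(q)$, provided the flow is defined for all $s$ with $F_\vv(p) + s \in I$.

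The main obstacle is proving completeness of this flow, and for that I expect to need a uniform lower bound: for every compact subinterval $J \subset I$, there exists $\epsilon > 0$ with $|\nabla_M F_\vv| \ge \epsilon$ on $M(J)$. Granted such a bound, $|X| \le 1/\epsilon$ on $M(J)$, and the standard ODE escape argument, combined with the fact that $M$ is properly embedded, forces the flow to persist as long as its $F_\vv$-value remains in $J$: a would-be blow-up point of a flow line is Cauchy in $\RR^3$, hence converges in $M$, contradicting maximality of the flow interval.

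I would prove the lower bound by contradiction. Suppose $p_n \in M(J)$ with $|\nabla_M F_\vv|(p_n) \to 0$. If a subsequence converges to some $p_\infty \in M$, then $F_\vv(p_\infty) \in J \subset I$ is a critical value of $F_\vv|M$, contradicting the hypothesis on $I$. Otherwise $|p_n| \to \infty$, and Theorem~\ref{translates-theorem} (applicable because $M$ has finite type and no boundary) yields, after a subsequence, a smooth limit $M - p_n \to M'$ with $0 \in M'$. Since $M \subset \{|y| \le B\}$ and the $y$-coordinates of the $p_n$ are bounded, $M'$ lies in a translated slab; therefore each component of $M'$ is a vertical plane parallel to $\{y = \textnormal{const}\}$, a grim reaper surface, or a $\Delta$-wing (bowl solitons are not contained in any slab).

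The final input is the normal at $0$: the identity
\[
|\nabla_M F_\vv|(p_n) = \bigl|\vv - (\vv\cdot \nu(M,p_n))\,\nu(M,p_n)\bigr| \to 0
\]
forces $\vv\cdot\nu(M,p_n) \to \pm 1$ along a further subsequence, so $\nu(M,p_n) \to \pm\vv$, and smooth convergence gives $\nu(M',0) = \pm \vv$. But no admissible component of $M'$ can have $\pm \vv$ as a unit normal at an interior point: vertical plane components in a slab $\{|y - y_\infty|\le B\}$ must have normal $\pm \ee_2$, distinct from $\pm\vv$ by the hypothesis $\vv \ne \pm \ee_2$; and grim reapers (tilted or untilted) together with $\Delta$-wings are graphs $z = h(x,y)$ over horizontal strips, so at every interior point their unit normals have strictly positive $\ee_3$-component and in particular are never horizontal, whereas $\pm\vv$ is horizontal. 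This contradiction establishes the lower bound, and the flow construction of $\Phi$ then delivers the theorem.
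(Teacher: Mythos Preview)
Your proof is correct and follows essentially the same approach as the paper: construct the normalized gradient vector field, prove a uniform bound on it over $M(J)$ for compact $J\subset I$ by a contradiction argument using the finite-type structure, and then integrate to obtain the diffeomorphism. The only cosmetic difference is that the paper first observes that $(x_i,y_i)$ is bounded (since $\vv\ne\pm\ee_2$ and $|y|\le B$) so that $|z_i|\to\infty$, and then invokes Corollary~\ref{translates-corollary} to get $\nu(M,p_i)\to\pm\ee_2$ directly; you instead apply Theorem~\ref{translates-theorem} in full generality and rule out each possible component of the limit by hand, which amounts to the same thing.
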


\begin{proof}
To simplify notation, we write $F$ in place of $F_\vv$.
Consider the tangent vector field 
\[
    V: = \frac{\nabla (F|M)}{|\nabla (F|M)|^2}
\]
on $M(I)$.

\begin{claim}\label{V-bound-claim}
If $J$ is a compact interval contained in $I$,
then $|V(\cdot)|$ is bounded above on $M(J)$: 
\[
   c_J := \sup_{M(J)} |V(\cdot)| < \infty.
\]
\end{claim}

To prove the claim, suppose it fails for some $J$.
Then there is a sequence $p_i=(x_i,y_i,z_i)$ in $M(J)$
such that $|V(p_i)|\to\infty$.  Therefore
\begin{equation}\label{to-zero}
   \nabla(F|M)(p_i) \to 0.
\end{equation}
Since $I$ contains no critical values of $F$, the sequence $p_i$
diverges. Note that $x_i$ and $y_i$ are bounded, and thus that $|z_i|\to \infty$.
By Corollary~\ref{translates-corollary}, $\nu(M,p_i)$ converges (after passing to a subsequence)
to $\ee_2$ or to $-\ee_2$.  We may choose the orientation on $M$ so 
 that $\nu(M,p_i)$ converges to $\ee_2$. 
Thus
\begin{align*}
\nabla (F|M)(p_i) 
&= \vv - (\vv\cdot \nu(M,p_i)) \nu(M,p_i)  \\
&\to \vv - (\vv\cdot \ee_2) \ee_2,
\end{align*}
which is nonzero since $\vv\ne \pm\ee_2$.
But that contradicts~\eqref{to-zero}.
Thus we have proved Claim~\ref{V-bound-claim}.

For $p\in M(I)$, let $t\in I \mapsto q_p(t)$ be the solution 
to the following initial value problem.  The ODE is 
$q_p'(t) = V(q_p(t))$, and the initial condition is that $q_p(t)=p$
at time $t=F(p)$.

Note that
\[
   (d/dt) F(q_p(t))  = \nabla F \cdot V  = 1,
\]
so 
\[
   F(q_p(t)) \equiv t + c
\]
for some constant $c$.  Putting $t=F(p)$, we see that $F(p)=F(p)+c$, so $c=0$.  Thus
\[
   F(q_p(t)) \equiv t.
\]
That is, at each time $t$, the point $q_p(t)$ is in the level set $M(t)$.

The bound on  $|V(\cdot)|$ in Claim~\ref{V-bound-claim}  imply that during a compact time interval $J\subset I$,
$q_p(t)$ traces out a curve of length at most $c_J|J|$.
  Thus the solution $t\mapsto q_p(t)$ exists for the entire interval $t\in I$.

Now we define the diffeomorphism $\Phi$ in~\eqref{the-diffeomorphism} by letting
\[
   \phi(p) = q_{p}(t_0).
\]
The inverse $\Psi$ of $\Phi$ is given by
\begin{align*}
   &\Psi: M(t_0) \times I \to M(I), \\
   &\Psi(p,t) = q_p(t).
\end{align*}

\end{proof}

\begin{theorem}\label{first-symmetry-theorem}
Suppose that $M$ is a translator of finite type and  that $M$ has no boundary.
Suppose also that $M$ lies in a vertical slab $\{|y|\le B\}$
and that $M$ is invariant under $(x,y,z)\mapsto (-x,y,z)$.
Then as $z\to \infty$ (or as $z\to -\infty$), the surfaces $M+(0,0,z)$
converge smoothly to a finite union of vertical planes.
\end{theorem}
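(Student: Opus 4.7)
The plan is to apply Theorem~\ref{translates-theorem} directly with the divergent sequence $p_n = (0, 0, -z_n) \in Z$. For $z_n \to +\infty$ (the case $z_n \to -\infty$ is handled identically), since $M$ is of finite type and has no boundary, the curvature and area bounds transfer uniformly to the translates $M + (0, 0, z_n) = M - p_n$; standard compactness then yields smooth subsequential convergence to a limit surface $M'$. Because $p_n$ lies on $Z$ and $\dist(p_n, \partial M) = \infty$ trivially, the final assertion of Theorem~\ref{translates-theorem} applies and gives that $M'$ is a union of vertical planes.

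I would then sharpen this conclusion using the slab hypothesis and the area bound. Since $M + (0, 0, z_n) \subset \{|y| \le B\}$ for all $n$, the inclusion passes to $M' \subset \{|y| \le B\}$, and the only vertical planes contained in such a slab are those of the form $\{y = c\}$ with $|c| \le B$. For finiteness, pass the quadratic area bound $\area(M \cap \BB(0, r)) \le c_1 r^2$ to the limit: any union of $N$ distinct planes $\{y = c_i\}$ with $|c_i| \le B$ contributes at least $N \pi (r^2 - B^2)$ of area inside $\BB(0, r)$, so combining with $\area(M' \cap \BB(0, r)) \le c_1 r^2$ and letting $r \to \infty$ bounds $N$ uniformly. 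To promote subsequential convergence into genuine convergence as $z \to \infty$, one can characterize the limit set intrinsically: $\{y = c\}$ appears as a component of a subsequential limit exactly when there is a sequence $q_k \in M$ with $y(q_k) \to c$ and $z(q_k) \to -\infty$, a property depending only on $M$.

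The hard work of excluding grim-reaper, $\Delta$-wing, and bowl-soliton components from $M'$ is entirely absorbed into Theorem~\ref{translates-theorem}, which accomplishes it via lower-semicontinuity of critical-point counts for the grim-reaper and vertical-plane foliation functions (the critical points of $M$ with respect to any such foliation are finite in number and therefore drift off to $z = +\infty$ under the translates). Consequently, there is no serious new obstacle here beyond verifying the slab confinement and applying the quadratic area bound to force finiteness. The $(x,y,z)\mapsto(-x,y,z)$ symmetry hypothesis is not actually used in this reduction but fits naturally into the paper's broader setting.
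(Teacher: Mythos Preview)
Your reduction to Theorem~\ref{translates-theorem} is correct for obtaining \emph{subsequential} limits consisting of finitely many planes of the form $\{y=c\}$, and the area bound does give a uniform cap on the number of planes in any one such limit. The gap is in your last step, promoting subsequential convergence to genuine convergence. Your ``intrinsic characterization'' --- that $\{y=c\}$ occurs in some subsequential limit iff there exist $q_k\in M$ with $y(q_k)\to c$ and $z(q_k)\to-\infty$ --- describes the Kuratowski $\limsup$ of the family $M+(0,0,z)$, not the $\liminf$. Nothing you have said rules out two sequences $z_n,z_n'\to\infty$ along which $M+(0,0,z_n)$ and $M+(0,0,z_n')$ converge to different finite unions of planes (each selecting a different subset of your set of admissible $c$'s). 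The uniform bound $N$ on the number of planes in a single limit does not bound the total set of $c$'s that can occur across all subsequential limits, nor does it force that set to be realized by every subsequence.

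This is exactly where the paper spends the symmetry hypothesis, so your remark that it is ``not actually used'' is mistaken. Because $M$ is invariant under $(x,y,z)\mapsto(-x,y,z)$, the intersection $\Gamma:=M\cap\{x=0\}$ is a smooth curve meeting $\{x=0\}$ orthogonally, and every critical point of $F_{\ee_2}|\Gamma$ is a critical point of $F_{\ee_2}|M$, hence there are only finitely many. Therefore along each end of $\Gamma$ the coordinate $y(\cdot)$ is eventually monotone and has a limit, so $\Gamma+(0,0,z)$ converges to a specific finite set of vertical lines $\cup_i L_i$ in $\{x=0\}$. Any subsequential limit $M'$ of $M+(0,0,z)$ must then satisfy $M'\cap\{x=0\}=\cup_i L_i$, which determines $M'$ uniquely as $\cup_i P_i$. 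That is how subsequential convergence is upgraded to convergence in the paper's proof; without the symmetry (or some substitute device producing a curve in $M$ on which $y$ is eventually monotone), your argument does not close.
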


Note that we get convergence, not just subsequential convergence.
The limit as $z\to\infty$ will, in general, be different from the limit as $z\to -\infty$.
For example, if $M$ is a complete translating graph in a slab, then $M+(0,0,z)$
converges as $z\to\infty$ to a pair of parallel planes, and $M+(0,0,z)$
converges as $z\to -\infty$ to the empty set.

\begin{proof}
Note that $M$ intersects the plane $\{x=0\}$ orthogonally, and thus
\[
  \Gamma:= M\cap \{x=0\}
\]
is a smooth curve.  Let $\vv=\ee_2$. Any critical point of $F_\vv|\Gamma$ is also a critical 
point of $F_\vv|M$.  Thus $F_\vv|\Gamma$ has only finitely many critical points.
Consequently, if $t\in [0,\infty)\mapsto \gamma(t)=(0,y(t),z(t))$ is a parametrization of an end of $\Gamma$,
then $y(t) =  F_\vv(\gamma(t))$ is eventually mononotic and thus has a well-defined limit as $t\to\infty$.

It follows that $\Gamma+(0,0,z)$ converges as $z\to\infty$ to a union $\cup_iL_i$ of vertical lines that are contained in the plane $\{x=0\}$ and in the slab $\{|y|\le B\}$.  Let $P_i$ be the plane in the slab $\{|y|\le B\}$
such that $P_i\cap \{x=0\}=L_i$.

Now let $M'$ be any subsequential limit of $M+(0,0,z)$ as $z\to\infty$.  
   By Theorem~\ref{translates-theorem}, $M'$ is a union of vertical planes.  We know that
\begin{equation}\label{the-lines}
  M'\cap \{x=0\} = \cup_i L_i.
\end{equation}
It follows that 
\begin{equation}\label{the-planes}
  M'=\cup_i P_i.
\end{equation}
Since the limit~\eqref{the-planes} does not depend on choice of subsequence, we get convergence (and not just subsequential 
convergence) of $M+(0,0,z)$ to $M'$.

The same proof works for convergence of $M+(0,0,z)$ as $z\to -\infty$.
\end{proof}
 
\begin{theorem}\label{second-symmetry-theorem}
Let $M$ be a connected translator of finite type. Assume that $M$ is invariant under reflection in the
plane $\{y=0\}$, and that $M$ is not contained in $\{y=0\}$.  Then
\begin{enumerate}
\item $\Gamma:=(M\setminus \partial M)\cap \{y=0\}$ is a smooth $1$-manifold.
\item\label{2nd-symmetry-slope-item} If 
\[
   t\in [0,\infty)\mapsto \gamma(t)=(x(t),0,z(t))
\]
is an arclength parametrization of an end of 
   $\Gamma$, then $\gamma'(t)$ converges as $t\to\infty$ to a limit $\uu=(x'(\infty),0,z'(\infty))$.
\end{enumerate}
Furthermore, if $M$ lies in the slab $\{|y|\le B\}$ and if
$
  \dist(\gamma(t), \partial M)\to \infty
$,
then 
\begin{enumerate}\setcounter{enumi}{2}
\item\label{second-symmetry-3}
   $M-\gamma(t)$ converges subsequentially to a limit $M'$, and the component $\Sigma$
of $M'$ containing the origin is the grim reaper surface that contains the line $L:=\{s\uu: s\in \RR\}$
and that is symmetric about the plane $\{y=0\}$.
\item\label{second-symmetry-4}
    $|z'(\infty)| \le s(B)\, |x'(\infty)|$.  
\item\label{second-symmetry-5}
  $x(t)$ tends to $\infty$ (if $x'(\infty)>0$) or to $-\infty$ (if $x'(\infty)<0$).
\end{enumerate}
\end{theorem}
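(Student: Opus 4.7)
My plan is to prove the five assertions in order, using the $y\mapsto -y$ reflection invariance as the central tool. For (1), if $p\in M\setminus\partial M$ lies in $\{y=0\}$, then $T_pM$ is invariant under the reflection, and any $2$-plane so invariant either equals $\{y=0\}$ or contains $\ee_2$. The first case is excluded by unique continuation for the translator metric: two $g$-minimal surfaces sharing a tangent plane at a common point agree on a neighborhood, so if $T_pM=\{y=0\}$ then connected $M$ would coincide with $\{y=0\}$, contradicting the hypothesis. Hence $\ee_2\in T_pM$ at every such $p$, so $M$ is transverse to $\{y=0\}$ and $\Gamma$ is a smoothly embedded $1$-manifold without boundary.

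For (2), write $\gamma'(t)=(\cos\tilde\theta(t),0,\sin\tilde\theta(t))$ with $\tilde\theta\colon[0,\infty)\to\RR$ a continuous lift of the angle, and observe that by (1) we have $T_{\gamma(t)}M=\mathrm{span}(\ee_2,\gamma'(t))$. The key step is to recognize this tangent plane as the tangent plane, at $\gamma(t)$, of a specific translator leaf: when $\gamma'(t)=\pm\ee_3$ the leaf is the vertical plane $\{x=x(t)\}$ and $\gamma(t)$ is a critical point of $F_{\ee_1}|M$; when $\gamma'(t)\ne\pm\ee_3$, with $s=\tan\tilde\theta(t)$, the leaf is the grim reaper through $\gamma(t)$ whose tangent at $y=0$ has slope $s$, and $\gamma(t)$ is a critical point of the grim-reaper foliation function $H=z-u_{b}(\pm x,y)$ (with $b$ defined by $s(b)=|s|$ and the sign chosen to match $s$). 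The finite-type hypothesis bounds the number of critical points of $F_{\ee_1}|M$ and of each such $H|M$ by $k$, so for every $\theta\in S^1$ the set of times with $\gamma'(t)=(\cos\theta,0,\sin\theta)$ is finite. If $\tilde\theta$ did not converge in $\RR$, continuity and the intermediate value theorem would force some direction to be attained infinitely often---in the bounded oscillating case directly, and in the unbounded case via the $2\pi$-periodicity of $\theta\mapsto(\cos\theta,0,\sin\theta)$---contradicting finite type; hence $\gamma'(t)\to\uu=(\cos\theta_\infty,0,\sin\theta_\infty)$.

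For (3)--(5), assume additionally that $M\subset\{|y|\le B\}$ and $\dist(\gamma(t),\partial M)\to\infty$. The area and curvature bounds built into Definition~\ref{finite-type-definition} give smooth subsequential convergence $M-\gamma(t_n)\to M'$ to a complete boundaryless translator $M'$ contained in $\{|y|\le B\}$ and invariant under $y\mapsto -y$. Since $\gamma(t_n)\in\{y=0\}\supset Z$, Theorem~\ref{translates-theorem} identifies $M'$ as a union of vertical planes and grim reaper surfaces. Let $\Sigma$ be the component of $M'$ through the origin, so $\Sigma$ is $y$-symmetric and $T_0\Sigma=\mathrm{span}(\ee_2,\uu)$. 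The vertical-plane case is ruled out: the only $y$-symmetric vertical planes through $0$ are $\{y=0\}$ (whose tangent plane does not contain $\ee_2$) and $\{x=0\}$ (which is unbounded in $y$ and hence cannot lie in the slab). Thus $\Sigma$ is a grim reaper surface, and the slab plus $y$-symmetry force its domain strip to be $\RR\times(-b,b)$ for some $b\le B$; matching tangent planes at $0$ then gives $x'(\infty)\ne 0$ (else $\uu=\pm\ee_3$ and no finite-slope grim reaper matches) and $z'(\infty)/x'(\infty)=\pm s(b)$, which determines $\Sigma$ uniquely and makes $\Sigma\cap\{y=0\}$ the line $L$, proving (3). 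Assertion (4) then follows from $|z'(\infty)|=s(b)|x'(\infty)|$ and $b\le B$, and (5) follows since $x'(t)\to x'(\infty)\ne 0$ forces $x(t)\to\pm\infty$ with the sign of $x'(\infty)$.

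The main obstacle lies in (2): the argument requires the right one-parameter family of translator foliations so that every possible direction of $\gamma'(t)$ is realized as a tangency of $M$ with some leaf at $\gamma(t)$, followed by a careful case analysis of the lifted angle $\tilde\theta$ (including the subcase $\tilde\theta\to\pm\infty$) to produce a single direction in $S^1$ attained infinitely often. Once that identification is in place, the finite-type Morse-Rad\'{o} bounds close out (2), and (3)--(5) follow from Theorem~\ref{translates-theorem} and the slab condition in a largely mechanical way.
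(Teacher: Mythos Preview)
Your approach matches the paper's, and (2)--(5) are argued correctly; your treatment of (2) is in fact more detailed than the paper's, which simply observes that for each $m\in\RR$ there are at most $k$ points on $\Gamma$ with tangent slope $m$ (each being a critical point of the corresponding grim-reaper foliation function $H$), whence $\gamma'$ must converge.

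One correction in (1): the claim that two $g$-minimal surfaces sharing a tangent plane at a common point agree on a neighborhood is false as stated---a catenoid and its tangent plane at a waist point are a counterexample. The correct argument uses the symmetry first: if $T_pM=\{y=0\}$, then near $p$ the surface $M$ is a graph $y=\phi(x,z)$ over a neighborhood in $\{y=0\}$, and the reflection invariance forces $\phi\equiv-\phi$, hence $\phi\equiv 0$ locally. Only after obtaining agreement on an open set does unique continuation yield $M\subset\{y=0\}$. With this fix, your proof of (1) is complete and your overall argument is correct.
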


\begin{proof}[Proof of Theorem~\ref{second-symmetry-theorem}]
Because $M$ intersects the plane $\{y=0\}$ orthogonally, $\Gamma$ is a smooth $1$-manifold.

Let $m\in \RR$.  Let $\beta\ge\pi/2$ be such that there is a grim reaper function
\[
  h: \RR\times (-\beta,\beta)\to \RR
\]
with $\pdf{}xh\equiv m$.  Then 
\[
  \mathsf{N}(H|M)\le k < \infty
\]
where $H(x,y,z):=z-h(x,y)$ (as in Definition~\ref{general-H})
and where $k$ is as in Definition~\ref{finite-type-definition}.
Each point on $\Gamma$ where the tangent line is parallel to the line $\{z=mx, \, y=0\}$ is a critical
point of $H|M$.  Hence there are at most $k$ such points.

It follows immediately that $\gamma'(t)$ converges to a limit $\uu$.

Now suppose that $M$ lies in the slab $\{|y|\le B\}$.
By Theorem~\ref{translates-theorem},
 $M-\gamma(t)$ converges smoothly, perhaps after passing to subsequence, to a limit $M'$
consisting of vertical planes and grim reaper surfaces. 
The component $\Sigma$ containing the origin is orthogonal to the plane $\{y=0\}$.  
Thus if it were a vertical plane, it would be the plane $\{x=0\}$, which is impossible
since $\Sigma$ is contained in the slab $\{|y|\le B\}$.
Thus $\Sigma$ is a grim reaper surface.
Now $\Sigma$ contains the line $L:=\{t\uu: t\in \RR\}$ and intersects the plane $\{y=0\}$ orthogonally
along $L$.
Thus it is
the unique grim reaper surface with those properties.

Since the grim reaper surface $\Sigma$ is contained in the slab $\{|y|\le B\}$, the slope
of the line $L$ is at most $s(B)$ in absolute value.  Thus Assertion~\eqref{second-symmetry-4} holds.

By Assertion~\eqref{second-symmetry-4}, $x'(\infty)\ne 0$.  Thus Assertion~\eqref{second-symmetry-5} holds.
\end{proof}

\begin{remark}\label{redundancy}
As mentioned earlier, 
the condition~\eqref{Fv-item} on $F_\vv$ in the definition of finite type is redundant: it is implied
by condition~\eqref{H-item}, as we now explain.
By rotating, it suffices to consider the case $\vv=\ee_1$.  For $b\ge \pi/2$, let
\[
   h_b: \RR\times (-b,b) \to \RR
\]
be the grim reaper surface with $h_b(0,0)=0$ and $\pdf{}xh_b\ge 0$.  
Let $\Ff_b$ be the foliation of the slab $\{|y|<b\}$ by surfaces $h_b=\textnormal{constant}$, 
i.e., by the level sets of $H_b(x,y,z):= z- h_b(x,y)$.
Since we are assuming condition~\eqref{H-item} Definition~\ref{finite-type-definition}, 
\[
    \mathsf{N}(\Ff_b|M) = \mathsf{N}(H_b|M) \le k.
\]
As $b\to \infty$, the foliation $\Ff_b$ converges to the foliation $\Ff$ of $\RR^3$ by the level sets of $F_{\ee_1}$.  
Thus, by lower semicontinuity,
\[
   \mathsf{N}(F_{\ee_1}|M) = \mathsf{N}(\Ff|M) \le \liminf_{b\to\infty} \mathsf{N}(\Ff_b|M) \le k.
\]
\end{remark}

\section{The Space $\Rr$ of Annuli with Rectangular Boundaries}

\begin{definition}\label{Cc-definition}
We define $\Cc$ to be the space of compact,  embedded, translating annuli $M$ such that
\begin{enumerate}[\upshape(1)]
\item The boundary of $M$ is a pair of disjoint, nested, convex closed curves in the plane $\{z=0\}$:
\item $M$ is invariant under reflection in the planes $\{x=0\}$ and $\{y=0\}$.
\item\label{cc-disjoint-item} $M$ is disjoint from the $z$-axis.
\end{enumerate}
We let $\partialin M$ and $\partialout M$ denote the inner and outer components of $\partial M$.
We define $a(M)$, $b(M)$, $A(M)$, and $B(M)$ to be the positive numbers such that
\begin{align*}
(a(M),0,0) &\in \partialin M, \\
(A(M),0,0) &\in \partialout M, \\
(0,b(M),0) &\in \partialin M, \\
(0,B(M),0) &\in \partialout M.
\end{align*}
\end{definition}
See Figures~\ref{fig:x(M)} and~\ref{curve-1}.

\begin{figure}[htbp]
\begin{center}
\includegraphics[width=.66\textwidth]{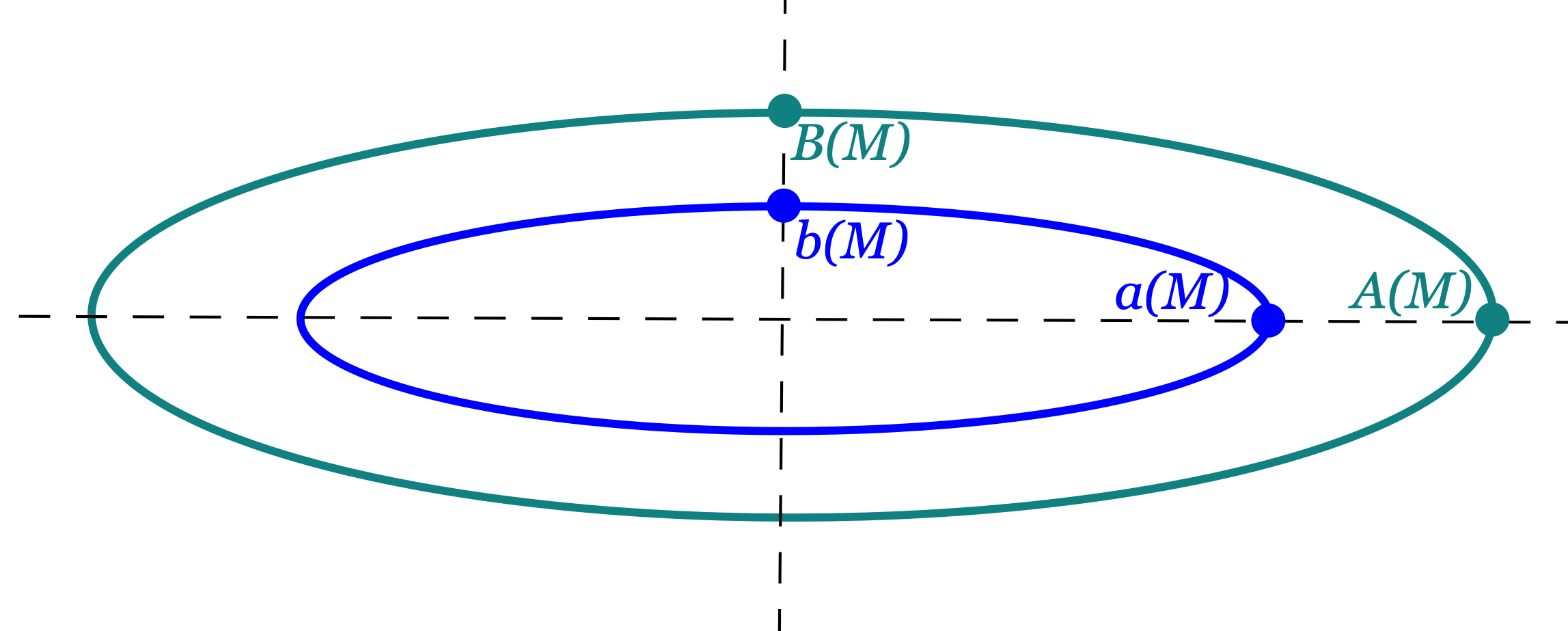}
\caption{\small $\partialin M$ and $\partialout M.$}
\label{curve-1}
\end{center}
\end{figure}

Condition~\eqref{cc-disjoint-item} in Definition~\ref{Cc-definition} is redundant, but we include it here for convenience.

\begin{definition}\label{Rr-definition}
We define $\Rr$ to be the space of $M\in \Cc$ such that
\begin{enumerate}[\upshape(1)]
\item\label{rectangle-condition} $\partialin M$ are $\partialout M$ are rectangles whose sides are
  parallel to the coordinate axes, and
\item\label{approximation-condition} $M$ is the limit of a sequence of tranlators $M_n\in \Cc$ 
  such that for each $n$, $\partialin M_n$ and $\partialout M_n$ are smooth with nowhere vanishing curvature.
\end{enumerate}
\end{definition}

(Condition~\eqref{approximation-condition} might be redundant: perhaps every $M\in \Cc$ with property~\eqref{rectangle-condition}
 also has property~\eqref{approximation-condition}.)

When discussing a surface $M$ in $\Rr$, 
we will often write $a$, $b$, $A$, and $B$
in place of the more cumbersome $a(M)$, $b(M)$, $A(M)$, and $B(M)$.
Likewise, if $M_n$ is a sequence of surfaces in $\Rr$, we will often
write $a_n$, $b_n$, $A_n$, and $B_n$ in place of $a(M_n)$, $b(M_n)$, $A(M_n)$,
and $B(M_n)$.
For $M\in \Rr$, the boundary is completely determined by
the four numbers $a:=a(M)$, $b:=b(M)$, $A:=A(M)$, and $B:=B(M)$:
\begin{align*}
\partialin M &= \partial( [-a,a]\times [-b,b] ), \\
\partialout M &= \partial( [-A,A] \times [-B,B] ).
\end{align*}

\begin{theorem}\label{easy-bound-C}
Suppose $M\in \Cc$.
Let $U\subset \RR^2$ be an open strip of width $\pi$ and let $h:U\to \RR$ be an untilted grim reaper.
Let
\begin{align*}
&H: U\times\RR \to \RR, \\
&H(x,y,z)=z-h(x,y).
\end{align*}
Then
\[
  \mathsf{N}(H|M) \le 4.
\]
\end{theorem}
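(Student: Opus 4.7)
The plan is to apply the Morse-Rad\'o theorem (Theorem~\ref{morse-rado-theorem}) to $F = H$ and $M$, with the foliation of $W = U \times \RR$ by level sets of $H$. By rotating and translating, I may assume $U = \RR \times (-\pi/2, \pi/2)$ and $h(x,y) = \log\cos y$, so $H(x,y,z) = z - \log\cos y$. Since $M$ is an annulus, $\chi(M) = 0$, and the Morse-Rad\'o bound will take the form
\[
\mathsf{N}(H|M) \le |Q| - |A| - \chi(M \cap W),
\]
where $Q$ is the set of local minima of $H|\partial M$. The main task is to show $|Q| \le 4$, and to verify that the remaining terms do not spoil the bound.

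For the count $|Q|$: on $\partial M \subset \{z = 0\}$, $H|\partial M = -\log\cos y$ is a function of $y$ alone, strictly convex with a unique minimum at $y = 0$ and diverging to $+\infty$ as $y \to \pm\pi/2$. The boundary $\partial M$ has two nested convex components $\partialin M$ and $\partialout M$. On any convex closed curve $C_i \subset \RR^2$, the coordinate $y$ attains its extrema at unique points, so by convexity the level set $\{y = 0\} \cap C_i$ contains at most two points; combined with the strict monotonicity of $-\log\cos y$ on each side of $y = 0$, this gives at most two local minima of $H|C_i$ per boundary component. Hence $|Q| \le 4$. The remaining Morse-Rad\'o hypotheses are easy to check: $H|M \cap W$ is proper because $H \to +\infty$ as one approaches $\partial W$ (where $h \to -\infty$) while $z$ is bounded on the compact surface $M$; $H \ge 0$ on $\partial M \cap W$ since $h \le 0$ on the strip; and $M \cap W$ has finite genus.

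Two technical subtleties remain. First, for rectangular boundary (when $M \in \Rr$), $H$ is constant on the horizontal sides of $\partialin M$ and $\partialout M$, so the literal set of local minima of $H|\partial M$ is infinite; I would handle this via Remark~\ref{re-one}, collapsing each constant arc to a point, after which $\tilde{\partial M}$ is still a pair of topological circles and the count of local minima is preserved. Second, and more seriously, one must handle the case $M \not\subset W$, since then the Euler characteristic $\chi(M \cap W)$ may differ from $\chi(M) = 0$; the main obstacle is to show $\chi(M \cap W) \ge 0$. I would decompose
\[
M = (M \cap \overline W) \cup C^+ \cup C^-, \qquad C^\pm = M \cap \{\pm y \ge \pi/2\},
\]
with common boundary between $M \cap \overline W$ and $C^\pm$ equal to the 1-submanifold $M \cap \{y = \pm \pi/2\}$, and apply inclusion-exclusion to $\chi$, using the convexity of $\partialin M$ and $\partialout M$ (each cut by the two horizontal lines $\{y = \pm \pi/2\}$ in at most four points apiece) to bound the topological complexity of the caps and their common boundary. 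This topological bookkeeping, combined with $|Q| \le 4$, yields $\mathsf{N}(H|M) \le 4$.
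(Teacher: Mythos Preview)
Your count $|Q|\le 4$ is correct and matches the paper's. For the constant-arc issue the paper takes a slightly different route---it restricts to strips $U$ not parallel to any straight segment of $\partial M$ and then invokes lower semicontinuity (Theorem~\ref{semicontinuity-theorem}) for the remaining strips---but your approach via Remark~\ref{re-one} is also valid.

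The genuine gap is the step $\chi(M\cap W)\ge 0$. Your inclusion--exclusion bookkeeping uses only the convexity of $\partial M$, which controls the finite set $\partial M\cap\partial W$ but says nothing about closed components of $M\cap\partial W$ or about the topology of the caps $C^\pm$. The inequality is in fact false for topological annuli that are not translators: take both convex boundary circles small enough to lie inside $W$, then push a finger of the interior of $M$ out through $\partial W$; now $M\cap W$ is a pair of pants with $\chi=-1$, while both boundary curves still cross the center line so $|Q|=4$, and your bound degrades to $|Q|-\chi=5$. The missing input is analytic. Because $M$ is a translator and $U$ is convex, the maximum principle for the linear functions $F_\vv$ forces any closed curve in $M\cap(U\times\RR)$ that bounds in $M$ to bound already inside $M\cap(U\times\RR)$; this is Lemma~\ref{topology-lemma} in the paper, and it gives immediately that every component of $M\cap W$ is a disk or an annulus, hence $\chi(M\cap W)\ge 0$.
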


\begin{proof}
There are at most countably many lines that contain a segment in $\partial M$.
It suffices to prove the Theorem for strips $U$ that are not parallel to any of those lines;
the general case follows by lower semicontinuity.

Let $L$ be the line in $U$ that bisects $U$.  Let $C$ be a closed convex curve in the plane $\{z=0\}$.
If $L$ passes through the interior of $C$, then $H|C$ has exactly two local minima, 
namely the two points in $L\cap C$.  Otherwise, $L$ has at most one local minimum,
namely the point in $C$ closest to $L$ (if that point is in $U$.)

Thus $|S|\le 4$ in the formula
\[
 \mathsf{N}(H|M) \le |S|-|T|-\chi(M\cap (U\times\RR)).
\]
Also, since $M$ is an annulus, $\chi(M\cap (U\times\RR))\ge 0$.
 (If this is not clear, see Lemma~\ref{topology-lemma} below.)
Thus $\mathsf{N}(H|M)\le 4-0=4$.
\end{proof}

\begin{theorem}\label{easy-bound-R}
Suppose $M\in \Rr$.
Let $h:U\to \RR$ be a complete translator.
(Thus $h$ is a grim reaper surface, a $\Delta$-wing, or a bowl soliton.)
Let
\begin{align*}
&H: U\times\RR \to \RR, \\
&H(x,y,z)=z-h(x,y).
\end{align*}
Then
\[
  \mathsf{N}(H|M) \le 8.
\]
\end{theorem}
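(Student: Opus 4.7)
The plan is to follow the same Morse--Rad\'o strategy used to prove Theorem~\ref{easy-bound-C}.  By Lemma~\ref{topology-lemma}, the annulus $M$ satisfies $\chi(M\cap(U\times\RR))\ge 0$, so Theorem~\ref{morse-rado-theorem} (combined with Remark~\ref{re-one} to handle any boundary arcs on which $H$ is constant) gives
\[
\mathsf{N}(H|M) \;\le\; |\tilde S| - |\tilde T| - \chi(M\cap(U\times\RR)) \;\le\; |\tilde S|,
\]
where $\tilde S$ is the set of local minima of $\tilde H|\partial\tilde M$ that are also local minima of $\tilde H|\tilde M$.  It therefore suffices to bound the total number of local minima of $\tilde H|\partial\tilde M$ by $8$.

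The collapsing step of Remark~\ref{re-one} is needed precisely when $h$ is an untilted grim reaper whose ruling direction is parallel to a pair of edges of $\partial M$; those (at most two per rectangle) constant-$H$ edges are collapsed, and each affected rectangle becomes a topological circle with two smooth arcs and two vertices.  In every other case ($h$ a tilted grim reaper, a $\Delta$-wing, a bowl soliton, or an untilted grim reaper with transverse ruling), $h$ is non-constant on every line segment of $\partial M$, no collapsing is needed, and each rectangle remains a topological circle with four smooth arcs and four corners.

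The central observation is that in each of these cases, $h$ restricted to any line segment of $U$ is concave: for $\Delta$-wings and bowl solitons $D^2h$ is negative definite; for tilted grim reapers $h$ is the sum of a strictly concave function of one variable and a linear function of another; for untilted grim reapers $h$ is strictly concave in the transverse direction and constant in the ruling direction.  Consequently, on each smooth arc of $\partial\tilde M$, $\tilde H=-h$ has at most one interior local minimum and is strictly monotonic away from it.  A rectangle thus contributes at most $4+4=8$ critical points of $\tilde H|\partial\tilde M$ (at most one per edge interior, at most one per corner), while a collapsed bigon contributes at most $2+2=4$.  Since the strict local minima and maxima of $\tilde H|\partial\tilde M$ alternate around any topological circle component of $\partial\tilde M$, their counts are equal, so each rectangular component contributes at most $4$ local minima and each bigonal component at most $2$.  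Summing over the two boundary components of $\tilde M$ gives $|\tilde S|\le 8$, hence $\mathsf{N}(H|M)\le 8$.

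The main technical obstacle I anticipate is the careful corner book-keeping---especially in the tilted grim reaper case, where two of the four corners per rectangle turn out to be neither local maxima nor local minima of $\tilde H|\partial\tilde R$---together with verifying that the ``strictly monotonic between consecutive critical points'' hypothesis required by the topological alternation argument genuinely holds across pass-through corners in each configuration.
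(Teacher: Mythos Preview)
Your argument is correct, but it is considerably more elaborate than what the paper does.  The paper avoids the corner bookkeeping and the alternation argument entirely by first invoking lower semicontinuity (Theorem~\ref{semicontinuity-theorem}): when $U$ is a strip, one may assume it is not parallel to either coordinate axis, so $h$ is \emph{strictly} concave on every one of the eight boundary edges.  Then each closed edge carries exactly one maximum of $h$, and any local maximum of $h|\partial M$ must be the maximum of $h$ on some closed edge; this immediately gives $|S|\le 8$.  No collapsing via Remark~\ref{re-one}, no counting of pass-through corners, and no min/max alternation is needed.

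Your route---handling the degenerate (axis-parallel) grim reaper case directly via Remark~\ref{re-one} and then using alternation on each boundary circle to convert the bound of $8$ potential extrema per rectangle into $4$ local minima---does work, and it has the minor advantage of not appealing to semicontinuity.  But the price is the ``careful corner book-keeping'' you yourself flag as the main technical obstacle.  The paper's general-position reduction makes that obstacle disappear: once every edge is a direction of strict concavity for $h$, the count $|S|\le 8$ is immediate.
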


\begin{proof}
In case $U$ is a strip, we may assume that $U$ is not parallel to either coordinate
axis; the general case then follows by lower semicontinuity (Theorem~\ref{semicontinuity-theorem}).  

Note that the restriction of $h$ to any line segment not parallel to $U$ is strictly concave.
  Thus $h$ has at most one local maximum on each of the eight edges of $\partial M$,
so $h|\partial M$ has at most $8$ local maxima.  The local maxima of $h|\partial M$ are
the local minima of $H|\partial M$, so $|S|\le 8$ in the formula
\[
  \mathsf{N}(H|M) \le |S|-|T|-\chi(M\cap (U\times\RR))
\]
from Theorem~\ref{morse-rado-theorem}.
Also, since $M$ is an annulus, $\chi(M\cap (U\times\RR))\ge 0$.
 (If this is not clear, see Lemma~\ref{topology-lemma} below.)
Thus $\mathsf{N}(H|M)\le 8-0=8$.
\end{proof}

\begin{lemma}\label{topology-lemma}
Suppose that $M$ is a translator and that
 $U\subset \RR^2$ is a convex set.
Then the inclusion of $M\cap (U\times\RR)$ into $M$ induces a monomorphism of first homology.
In particular, if $M$ is an annulus, then at most one component of $M\cap (U\times\RR)$ is an annulus,
and the non-annular components are disks.
   If $M\in \Cc$ and if $0\notin U$, then each component of $M\cap (U\times \RR)$ is a disk.
\end{lemma}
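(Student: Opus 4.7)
\emph{Proof proposal.}

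The plan is to derive the monomorphism on $H_1$ from the convex hull property for compact translators (the lemma preceding Corollary~\ref{bounded-corollary}), and then to combine it with elementary surface topology and with Lemma~\ref{topology-lemma-1} for the remaining assertions.

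For the first part, write $X = M \cap (U \times \RR)$, let $p : \RR^3 \to \RR^2$ denote horizontal projection, and suppose $\alpha \in \ker(H_1(X;\ZZ) \to H_1(M;\ZZ))$. I would represent $\alpha$ by a disjoint union $C$ of simple closed curves in $X$ that bounds, as its manifold boundary, a compact subsurface $D \subset M$ disjoint from $\partial M$. Applying the convex hull property to $D$ gives
\[
\max_D F_\vv = \max_C F_\vv \quad\text{and}\quad \min_D F_\vv = \min_C F_\vv
\]
for every horizontal unit vector $\vv$, so $p(D)$ is contained in the closed convex hull of $p(C) \subset U$; by convexity of $U$ this hull is a subset of $U$, hence $D \subset U \times \RR = X$ and $\alpha = 0$ in $H_1(X;\ZZ)$.

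For the second part, $H_1(M;\ZZ) \cong \ZZ$, and each component $X_i$ of $X$ is a genus-zero subsurface of $M$ with, say, $n_i$ boundary components, so $H_1(X_i;\ZZ) \cong \ZZ^{n_i - 1}$. The injection $\bigoplus_i H_1(X_i;\ZZ) \hookrightarrow \ZZ$ supplied by the first part forces $n_i \in \{1,2\}$, with $n_i = 2$ for at most one $i$. For the third part, if $M \in \Cc$ then the reflection symmetries in $\{x = 0\}$ and $\{y = 0\}$ compose to $\rho_Z$-invariance and $M$ is disjoint from $Z$, so Lemma~\ref{topology-lemma-1} gives an injection $H_1(M;\ZZ) \hookrightarrow H_1(\RR^3 \setminus Z;\ZZ)$. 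Since $U \times \RR$ is convex and disjoint from $Z$ (using $0 \notin U$), it is contractible in $\RR^3 \setminus Z$, so every loop in $X$ is null-homologous in $\RR^3 \setminus Z$, hence in $M$, hence (by the first part) in $X$. Thus each $n_i = 1$, i.e., every component of $X$ is a disk.

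The main obstacle is the technical point in the first part: one must show that the cycle representing $\alpha$ can be taken to be the manifold boundary of a compact subsurface of $M$ disjoint from $\partial M$, so that the convex hull property applies cleanly. This is routine surface topology when $M$ has finite topology, which is the case in every application in this paper, and after a small perturbation making $\partial U$ transverse to $M$ each $X_i$ has only finitely many boundary components, legitimizing the computation $H_1(X_i;\ZZ) \cong \ZZ^{n_i - 1}$ used in the second part.
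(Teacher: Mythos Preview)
Your proposal is correct and follows essentially the same approach as the paper: use the maximum principle for $F_\vv$ on the compact piece bounded by the cycle to force it into $U\times\RR$, then invoke Lemma~\ref{topology-lemma-1} for the $\Cc$ case. The paper phrases the first step for a single embedded curve bounding a $2$-chain rather than a disjoint union bounding a subsurface, but the idea is identical; one small slip in your write-up is the equation ``$D\subset U\times\RR = X$'' (you mean $D\subset M\cap(U\times\RR)=X$, since $D\subset M$ as well).
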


\begin{proof}
Let $C$ be a closed embedded curve in $M\cap (U\times\RR)$ that is homologically trivial
in $M$. Then $C$ bounds a $2$-chain in $M$.  Let $\Sigma$ be the support of that $2$-chain. By the strong maximum principle, for each horizontal unit vector
$\vv$, the maximum of $F_\vv|\Sigma$ occurs on $C$. It follows that $\Sigma$ lies in $U\times\RR$, 
and thus that $C$ is homologically trivial in $M\cap(U\times\RR)$.

Now suppose that $M\in \Cc$ and that $0\notin U$.  Each closed curve in $M\times (U\times\RR)$ is homotopically trivial 
in $\RR^3\setminus \ZZ$, and therefore homotopically trivial in $M$ by Lemma~\ref{topology-lemma-1}.
Hence it is homotopically trivial in $M\cap (U\times \RR)$.  Thus each component of $M\times (U\times\RR)$ is a disk.
\end{proof}

\begin{theorem}\label{x-critical-theorem}
Suppose that $M\in \Cc$ and that $\vv$ is a horizontal unit vector.
Then 
\begin{enumerate}
\item\label{either-item} $\mathsf{N}(F_\vv|M)$ is either $0$ or $2$.  If $\mathsf{N}(F_\vv|M)=2$, there are two critical points, each with multiplicity $1$ 
(i.e., with Gauss curvature $<0$.)
\item\label{at-most-item} There is at most one interior point of $M$ at which $\nu(p)=\vv$.
   If $\vv=\pm \ee_1$ then $y(p)=0$, and if $\vv=\pm \ee_2$, then $x(p)=0$.
\end{enumerate}
\end{theorem}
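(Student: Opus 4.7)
The plan is to combine the Morse--Rad\'o bound of Section~\ref{morse-rado-section} with the reflection symmetries of $M$.

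\emph{Upper bound and saddle structure.} First, I would apply Theorem~\ref{morse-rado-theorem} to $F_\vv$ on $M$. The boundary $\partial M=\partialin M\cup \partialout M$ is a pair of nested closed convex curves in $\{z=0\}$; for $\vv$ not parallel to any boundary segment, $F_\vv|\partial M$ has exactly one local minimum on each curve, so $|Q|=2$. For $M\in\Rr$ with $\vv$ parallel to a side, Remark~\ref{re-one} collapses each constant-$F_\vv$ side to a point, again giving $|\tilde Q|\le 2$; the remaining cases follow by lower semicontinuity (Theorem~\ref{semicontinuity-theorem}). Since $\chi(M)=0$, this yields $\mathsf{N}(F_\vv|M)\le 2$. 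The maximum principle lemma at the start of Section~3 shows $F_\vv|M$ has no interior local extrema, so every interior critical point is a saddle with Gauss curvature $K\le 0$, having multiplicity $1$ precisely when $K<0$.

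\emph{Symmetry pairing for axis-aligned $\vv$.} I would partition the interior critical set as $N_\vv \sqcup N_{-\vv}$, where $N_{\pm\vv}=\{p\in\mathrm{int}(M):\nu(p)=\pm\vv\}$ is counted with multiplicity, so that $\mathsf{N}(F_\vv|M)=|N_\vv|+|N_{-\vv}|$. For $\vv=\ee_1$, the reflection $R_y\colon(x,y,z)\mapsto(x,-y,z)$ preserves $M$ and fixes $\vv$, hence preserves $N_{\pm\vv}$ with multiplicity; and $R_x\colon(x,y,z)\mapsto(-x,y,z)$ preserves $M$ and negates $\vv$, giving a multiplicity-preserving bijection $N_\vv\leftrightarrow N_{-\vv}$. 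Therefore $|N_\vv|=|N_{-\vv}|$ and $\mathsf{N}(F_\vv|M)=2|N_\vv|\in\{0,2\}$, forcing $|N_\vv|\le 1$. If $N_\vv=\{p\}$, then $R_yp\in N_\vv$ forces $p=R_yp$, giving $y(p)=0$, and the multiplicity at $p$ must equal $1$ (otherwise $\mathsf{N}(F_\vv|M)>2$). The case $\vv=\pm\ee_2$ is identical with the roles of $R_x$ and $R_y$ swapped, yielding $x(p)=0$.

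\emph{General horizontal $\vv$.} When $\vv$ is not axis-aligned, neither $R_x$ nor $R_y$ preserves the critical set of $F_\vv|M$, so the pairing above fails; this is the main obstacle I expect in completing the proof. My plan is to rotate $\vv$ continuously starting from a coordinate direction, exploiting that $\vv\mapsto\mathsf{N}(F_\vv|M)$ is lower semicontinuous and bounded above by $2$. As $\vv$ varies, an interior critical point can only be created or destroyed either by (i) two interior critical points merging (a parity-preserving bifurcation) or (ii) a single interior critical point migrating through a boundary point $p\in\partial M$ at which $\nu(p)=\pm\vv$, which requires $T_p\partial M\perp\vv$. The set of $\vv$ admitting such boundary migrations is $R_x\times R_y$-invariant (since $\partial M$ is), and a careful case analysis should show that such migrations occur at each such $\vv$ in symmetric pairs, preserving parity throughout the rotation. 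Together with the axis-aligned base case this gives $\mathsf{N}(F_\vv|M)\in\{0,2\}$, $|N_\vv|\le 1$, and nondegenerate ($K<0$) saddles for every horizontal $\vv$.
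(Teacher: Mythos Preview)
Your upper bound $\mathsf{N}(F_\vv|M)\le 2$ and your treatment of the axis-aligned cases are correct and match the paper. The gap is in your handling of general horizontal $\vv$: the continuity/rotation argument you sketch is unnecessary, and as written it is not complete (you assert that boundary migrations ``occur in symmetric pairs, preserving parity'' without proof, and parity of $\mathsf{N}(F_\vv|M)$ under a one-parameter deformation is not automatic when critical points can enter or leave through the boundary).

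The observation you are missing is that the \emph{composition} $\rho_Z:=R_x\circ R_y$, i.e.\ rotation by $\pi$ about the $z$-axis, is an isometry of $M$ (since both reflections are) and sends \emph{every} horizontal unit vector $\vv$ to $-\vv$. Hence if $p$ is an interior critical point of $F_\vv|M$ with $\nu(p)=\vv$, then $\rho_Z p$ is an interior critical point with $\nu(\rho_Z p)=-\vv$, and $p\ne\rho_Z p$ because $M\cap Z=\emptyset$ (Definition~\ref{Cc-definition}\eqref{cc-disjoint-item}). So as soon as there is one interior critical point there are two distinct ones, and the bound $\mathsf{N}(F_\vv|M)\le 2$ forces each to have multiplicity~$1$ (whence $K<0$). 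This is exactly the paper's argument, and it works uniformly for all horizontal $\vv$; your separate use of $R_x$ and $R_y$ in the axis-aligned case is just the special instance where one of the two reflections already negates $\vv$. The location statements for $\vv=\pm\ee_1,\pm\ee_2$ then follow from the individual reflections exactly as you wrote.
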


\begin{proof}
First we claim that
\[
  \mathsf{N}(F_\vv|M) \le 2.
\]
Except for a countable set $\mathcal{V}$ of unit vectors $\vv$, the function $F_\vv|\partial M$ has exactly
two local minima, so by the counting formula (Theorem~\ref{morse-rado-theorem}),
\begin{equation}\label{twosome}
\mathsf{N}(F_\vv|M) \le 2 - 0 -\chi(M) = 2
\end{equation}
if $\vv\notin \mathcal{V}$.  
 By lower-semicontinuity (Theorem~\ref{semicontinuity-theorem}),
  \eqref{twosome} holds for all horizontal unit vectors $\vv$.

If $F_\vv|M$ has an interior critical point $p$, then $\rho_Zp$ is also an interior critical point, so each has multiplicity $1$.  Since $p$ has multiplicity one, the order of contact at $p$ between $M$ and the plane $\{F_\vv=F_\vv(p)\}$ 
is $2$.  Thus the Gauss curvature of $M$ at $p$ is nonzero.  Since the mean curvature at $p$
is $(-\ee_3)^\perp=0$, the Gauss curvature is negative.
Thus we have proved Assertion~\eqref{either-item}.

To prove Assertion~\eqref{at-most-item}, note that the interior critical points of $F_\vv|M$ are the points where $\nu=\pm\vv$.
If $\nu(p)=\vv$, then $\nu(\rho_Zp)= -\vv$, and, by Assertion~\eqref{either-item}, there are no other points where $\nu=\pm \vv$.
If $\nu(p)=\ee_1$ (or if $\nu(p)=-\ee_1$), then $y(p)=0$, since otherwise the image of $p$ under $(x,y,z)\mapsto (x,-y,z)$ would be another point
at which $\nu=\ee_1$ (or $\nu= -\ee_1$).  The analogous argument applies when $\nu(p)=\pm \ee_2$.
\end{proof}

\begin{corollary}\label{x-critical-corollary}
Suppose that $M_n\in \Cc$ and that $M_n':=M_n-p_n$ converges to a limit $M$.
Let $\tilde M$ be the set of points $p\in M\setminus \partial M$ such that the convergence is smooth at $p$
and such that the component of $\tilde M$ containing $p$ is not contained in a vertical plane.
Suppose $\vv$ is a horizontal unit vector.  
Then there is at most one point $q\in \tilde M$ such that $\nu(\tilde M,q)=\vv$.
Furthermore, the Gauss curvature of $M$ at such a point $q$ is $<0$,
and
if the vertical plane $V_\vv$ containing $\vv$ is a plane of symmetry of $M$, then $q\in V$.
\end{corollary}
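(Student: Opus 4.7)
\medskip

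\textbf{Proof plan for Corollary~\ref{x-critical-corollary}.}

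The plan is to transfer the count-of-critical-points information from the prelimit surfaces $M_n$ (where Theorem~\ref{x-critical-theorem} gives very rigid information) to the limit $M$ via the lower semicontinuity of $\mathsf{N}(F_\vv|\cdot)$ (Theorem~\ref{semicontinuity-theorem}). The key observation is that any point $q\in \tilde M$ with $\nu(\tilde M,q)=\vv$ is, by definition of $\tilde M$, a critical point of $F_\vv$ restricted to a component not contained in a vertical plane, hence a genuine critical point in the sense of Definition~\ref{critical-point-definition}. Since translation by $-p_n$ preserves $F_\vv$ up to an additive constant, it preserves the set of critical points, so critical points of $F_\vv|M_n'$ are in bijection with those of $F_\vv|M_n$.

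For the uniqueness statement, I would argue by contradiction. Suppose $q_1\neq q_2$ in $\tilde M$ both have normal $\vv$ (with a consistent orientation chosen so that $\nu(\tilde M,q_1)=\vv$). By the ``in particular'' clause of Theorem~\ref{semicontinuity-theorem}, each $q_i$ is a limit of critical points $q_{i,n}$ of $F_\vv|M_n'$; since the convergence is smooth at $q_i$ and $q_{i,n}$ is a critical point, $\nu(M_n',q_{i,n})\in\{+\vv,-\vv\}$, and by continuity $\nu(M_n',q_{i,n})=\vv$ for large $n$. Translating back, $q_{1,n}+p_n$ and $q_{2,n}+p_n$ are two distinct interior points of $M_n$ with the same normal $\vv$, contradicting Theorem~\ref{x-critical-theorem}\eqref{at-most-item}.

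For the Gauss curvature assertion, note first that at $q$ the tangent plane is vertical, so the mean curvature vector $(-\ee_3)^\perp$ vanishes and hence the Gauss curvature is $\leq 0$. Suppose for contradiction that it equals $0$. Then both principal curvatures of $M$ at $q$ vanish, so writing $M$ locally as a graph of a function $f$ over $\vv^\perp$, we have $Df(q)=0$ and $D^2 f(q)=0$. This means $q$ is a critical point of $F_\vv|M$ of multiplicity at least $2$. Picking a small neighborhood $U$ of $q$ containing no other critical points, lower semicontinuity applied locally gives $\mathsf{N}(F_\vv|(M_n'\cap U))\geq 2$ for large $n$. By Theorem~\ref{x-critical-theorem}\eqref{either-item}, every critical point of $F_\vv|M_n$ has multiplicity $1$, so these contributions come from at least two distinct points; both have normals close to $\nu(M,q)=\vv$ (by smooth convergence of the normal field on $U$), hence both equal $\vv$. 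Translating back, this produces two interior points of $M_n$ with $\nu=\vv$, again contradicting Theorem~\ref{x-critical-theorem}\eqref{at-most-item}.

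The symmetry claim is then essentially automatic: if $V_\vv$ is a plane of symmetry of $M$, the reflection $\sigma$ through $V_\vv$ preserves $\tilde M$ (which is defined symmetrically) and fixes $\vv$, so $\sigma(q)$ is also a point of $\tilde M$ with normal $\vv$; by the uniqueness just proved, $\sigma(q)=q$, i.e., $q\in V_\vv$. The main obstacle in carrying this out cleanly is the Gauss curvature step, where one must interpret ``multiplicity'' in Morse--Rad\'o correctly and apply lower semicontinuity in its local form. Once this local-multiplicity bookkeeping is set up and combined with the fact that normals in a small neighborhood of $q$ are all close to $\vv$ (ruling out that a critical point near $q$ could have normal $-\vv$), the argument reduces to the same uniqueness contradiction with Theorem~\ref{x-critical-theorem}.
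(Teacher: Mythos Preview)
Your proof is correct and rests on the same two ingredients as the paper's proof: lower semicontinuity of $\mathsf{N}(F_\vv|\cdot)$ (Theorem~\ref{semicontinuity-theorem}) and the count in Theorem~\ref{x-critical-theorem}. The symmetry argument is identical to the paper's.

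The route differs, though. You handle uniqueness and the Gauss-curvature sign as two separate contradiction arguments, each time pushing a critical-point configuration back to $M_n$ via lower semicontinuity (and in the Gauss-curvature step, via a local multiplicity count) and then invoking Theorem~\ref{x-critical-theorem}\eqref{at-most-item}. The paper instead makes one clean cut at the outset: it restricts to $M_n^+:=\{p\in M_n:\nu(p)\cdot\vv>0\}$, on which Theorem~\ref{x-critical-theorem} immediately gives $\mathsf{N}(F_\vv|M_n^+)\le 1$, and then a single global application of lower semicontinuity yields $\mathsf{N}(F_\vv|\tilde M^+)\le 1$. That one inequality simultaneously gives ``at most one point with $\nu=\vv$'' and ``any such point has multiplicity~$1$,'' from which the negative Gauss curvature follows just as in the proof of Theorem~\ref{x-critical-theorem}. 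The paper's device of passing to $M_n^+$ is what lets it avoid your separate local-multiplicity bookkeeping and the auxiliary argument that nearby critical points have normal $+\vv$ rather than $-\vv$; your argument for the latter is fine, but the paper sidesteps it entirely.
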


\begin{proof}
   Let $M_n^+= \{p\in M_n: \nu(p) \cdot \vv>0\}$ 
 and
  $\tilde M^+ = \{p\in \tilde M: \nu(p) \cdot\vv>0\}$.
By Theorem~\ref{x-critical-theorem}, $\mathsf{N}(F_\vv|M_n^+)\le 1$,
 so (by lower semicontinuity) $\mathsf{N}(F_\vv|\tilde M^+)\le 1$.
Thus there is at most one point in $\tilde M$ at which $\nu=\vv$, and if $q$ is such a point, it is a critical point of multiplicity $1$
and thus the Gauss curvature at $q$ is negative. 

Note that if $V_\vv$ is a plane of symmetry of $M$, then $q\in V_\vv$ since otherwise the image of $q$ under reflection in $V_\vv$
would be a second point at which $\nu=\vv$. 
\end{proof}

\section{The Space $\Aa$ of Annuloids}\label{space-A-section}

We will show the following by a path-lifting argument:

\begin{theorem}\label{existence-theorem}
Suppose that $b \ge \pi/2$ and that $0<\hat x < a$.
Then there exists a translator $M\in \Rr$ (not necessarily unique) such that
\begin{align*}
a(M) &= a, \\
b(M) &= b, \\
x(M) &= \hat x.
\end{align*}
\end{theorem}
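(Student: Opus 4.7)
The plan is a continuity/path-lifting argument. Fix $a > 0$ and $b \ge \pi/2$, and let $\Rr(a, b) \subset \Rr$ denote the subspace of translators with inner rectangle $\partial([-a, a] \times [-b, b])$. The continuous map $M \mapsto (A(M), B(M))$ sends $\Rr(a, b)$ into $(a, \infty) \times (b, \infty)$, and the goal is to show that the continuous function $x : \Rr(a, b) \to (0, a)$, $M \mapsto x(M)$, is surjective. To achieve this, first produce a seed solution $M_0 \in \Rr(a, b)$ with some necksize $\hat x_0 \in (0, a)$, then lift paths in the outer parameter space $(A, B)$ to sweep out all target values of $x$.

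A natural seed is obtained by taking $B$ slightly larger than $b$ and $A$ very large. In this elongated-rectangle regime, the surface should resemble a thin translating tube with $x(M_0)$ close to $a$, and existence in this thin regime follows from a Dirichlet / sub- and super-solution construction, using the area bound of Theorem~\ref{curvature-bound-theorem}\eqref{area-item}. For openness of path-lifting, I would apply the implicit function theorem to the translator equation with Dirichlet boundary on $\partial M$, restricted to $\ZZ_2 \times \ZZ_2$-equivariant variations: at any parameter where the symmetric Jacobi operator of $M$ is non-degenerate, perturbations of $(A, B)$ lift uniquely to nearby solutions. For closedness, along any lifted path with $(A_t, B_t)$ bounded and $x(M_t)$ bounded away from $0$, the area and curvature bounds of Theorem~\ref{curvature-bound-theorem}, parts \eqref{area-item} and \eqref{in-addition-item}, together with preservation of the annular topology (Proposition~\ref{simply-connected-proposition}), yield smooth subsequential convergence to a limit in $\Rr(a, b)$.

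With openness and closedness in hand, lift paths in $(A, B)$-space starting from the seed $M_0$. Driving $A$ to be large and keeping $B$ bounded should push $x(M_t)$ continuously toward $0$ (the compact analog of the catenoidal-neck limit in Theorem~\ref{theorem-3}\eqref{intro-3-1}), while moving the outer rectangle toward the inner one forces $x(M_t) \to a$. Continuity of $t \mapsto x(M_t)$ combined with the intermediate value theorem then yields, for every prescribed $\hat x \in (0, a)$, a lifted solution $M$ with $x(M) = \hat x$, $a(M) = a$, $b(M) = b$.

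The main obstacle is global continuation of the lifted path, since the Jacobi operator on $M_t$ may acquire kernel at isolated parameters and cause bifurcation or loss of the local homeomorphism property. I would handle this either by restricting attention to a connected component of the regular set of the projection to $(A, B)$-space and invoking a topological-degree continuation argument across critical points, or by exploiting the strong $\ZZ_2 \times \ZZ_2$-symmetry together with the Morse–Radó bounds (Theorems~\ref{easy-bound-C}, \ref{easy-bound-R}, and~\ref{x-critical-theorem}) to show that critical points along the path are of a transverse, benign type that can be crossed. Finally, one must rule out degeneration of the annular topology at interior parameter values (the neck cannot spontaneously collapse or fill in before the endpoint of the parameter interval); this is controlled by the strong maximum principle and Lemma~\ref{topology-lemma-1}.
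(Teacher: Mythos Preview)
Your overall strategy---continuity/path-lifting in the outer-rectangle parameters, followed by the intermediate value theorem on $x(M)$---is the same as the paper's. But the mechanism you propose for reaching the two endpoints $x(M)\to 0$ and $x(M)\to a$ is not the one that actually works, and this is a genuine gap.

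You assert that ``driving $A$ large and keeping $B$ bounded should push $x(M_t)\to 0$''. There is no reason to expect this, and in fact the opposite obstruction occurs: once the outer rectangle is wide enough (the gap between inner and outer in the $y$-direction reaches $\pi$), the Gap Theorem (Theorem~\ref{th:gap}) shows there is \emph{no} connected translator at all with that boundary. So the lifted path cannot be continued into the large-outer-rectangle regime; it must turn around. The paper exploits exactly this: it uses a single parameter $t\in[0,1]$ with $\Gamma_{\rm in}(t)\equiv\partial([-a,a]\times[-b,b])$ fixed and $\Gamma_{\rm out}(t)$ expanding from coinciding with $\Gamma_{\rm in}$ at $t=0$ to an infinite strip of width $2(b+\pi)$ at $t=1$. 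After a generic smoothing (Sard--Smale, so that $\Cc_{\Gamma(\cdot)}$ is a $1$-manifold), the gap theorem blocks $t$ from reaching $1$, so both ends of the relevant component of the $1$-manifold are forced back to $t\to 0$.

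The second ingredient you are missing is what happens at $t\to 0$. When the two boundary curves coalesce, the Dichotomy Lemma (Lemma~\ref{dichotomy-lemma}) says exactly two things can happen: either $M_i$ collapses to the curve $\Gamma$ (the graphical end, where $x(M_i)\to x(\Gamma)=a$), or $M_i$ collapses to the disk $D$ bounded by $\Gamma$ (where $x(M_i)\to 0$). The graphical solutions for small $t$ (Theorem~\ref{graphs-theorem}) give you the seed for free, with $x(M)$ near $a$; the other end of the $1$-manifold, being non-graphical, must have $x(M_i)\to 0$ by the dichotomy. This is what produces the full range $(0,a)$ of values of $x(M)$.

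Your proposed seed (thin tube with $B$ near $b$ and $A$ large) and your handling of Jacobi degeneracy via degree theory or symmetry would both require substantial further argument; the paper sidesteps both by using the graphical seed at $t=0^+$ and the Sard--Smale transversality to guarantee the $1$-manifold structure generically, then passing to the limit to handle non-smooth rectangular boundaries.
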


We postpone the proof to \S\ref{gap-section}--\ref{connected-section};
 see Lemma~\ref{connected-lemma}.
  However, interested readers may skip to \S\ref{gap-section}
   after finishing this section (\S\ref{space-A-section}); 
  the proof of Theorem~\ref{existence-theorem} (i.e., of Lemma~\ref{connected-lemma}) 
  does not depend on anything in the intervening sections.
  In any case, there is no risk of circularity in postponing that proof.  In the intervening sections, we prove various properties
of complete $M$ that arise as limits of $M_n\in \Rr$, but nowhere do we assume existence in those proofs.
(If existence did not hold, then the assertions would be vacuously true.)

We now wish to see what happens to the surface $M$ in Theorem~\ref{existence-theorem}
 if we fix $b$ and $\hat x$ and let $a\to\infty$.

\begin{definition}\label{z-of-M-definition}
If $M\in \Cc$, we let $z(M)$ be the largest value of $z$ such that $(x(M),0,z)\in M$.
\end{definition}

(In fact, there is only one $z$ for which $(x(M),0,z)$ is in $M$;
see Corollary~\ref{unique-z-corollary}.)

\begin{theorem}\label{complete-existence-theorem}
Let $M_i$ be a sequence in $\Rr$ such that 
\begin{gather*}
b_i:=b(M_i)\to b\in [\pi/2,\infty), \\
x(M_i) \to\hat x \in (0,\infty), \\
a_i:=a(M_i)\to \infty.
\end{gather*}
Then, after passing to a subsequence, the surfaces
\[
  M_i':= M_i - (0, 0, z(M_i))
\]
converge smoothly to a complete translator $M$.  
Furthermore,
\begin{enumerate}
\item $M$ is symmetric with respect to the planes $\{x=0\}$ and $\{y=0\}$.
\item $x(M)=\hat x$ and $(\hat x, 0,0)\in M$.
\item $\area(M\cap \BB(p,r)) \le c_1r^2$ for every $p\in \RR^3$ and $r\ge 0$.
\item $|A(M,\cdot)| \min\{1,x(M)\} \le c_2$.
\item $|A(M,p)|\, \min\{1, \dist(p,Z)\} \le c_2$.
\item\label{complete-existence-twosome} $\mathsf{N}(F_\vv|M)\le 2$ for every horizontal unit vector $\vv$.
\item $\mathsf{N}(H|M)\le 8$ for every function $H(x,y,z)=z-h(x,y)$ such that the graph of $h$ is a complete
translator.
\item\label{complete-8} The connected component of $M$ containing $(\hat x, 0, 0)$ is an annulus.
\end{enumerate}
Here $c_1$ and $c_2$ are the constants in Theorem~\ref{curvature-bound-theorem}.
\end{theorem}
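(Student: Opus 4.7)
The plan is to extract a smooth subsequential limit of the $M_i'$ using the area and curvature bounds at our disposal, and then read off each of the eight conclusions by lower semicontinuity or by continuity of the relevant geometric quantities. From Theorem~\ref{curvature-bound-theorem}, together with $x(M_i) \to \hat{x} > 0$, the second fundamental forms of the $M_i$ are uniformly bounded on any region of uniformly positive geodesic distance from $\partial M_i$. For the limit to be a \emph{complete} (boundary-free) translator, the translated boundaries $\partial M_i' \subset \{z = -z(M_i)\}$ must escape to $-\infty$, so the essential preliminary step is to prove $z(M_i) \to \infty$.

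Showing $z(M_i) \to \infty$ is, I expect, the main obstacle. I would argue by contradiction. Suppose $z(M_i)$ is bounded along some subsequence. Then the untranslated $M_i$ converge smoothly on compact subsets of $\{z > 0\}$ to a finite-type translator $M_\infty$ whose boundary consists of four parallel horizontal lines $\{y = \pm b,\, z = 0\}$ and $\{y = \pm B,\, z = 0\}$ (with $B_i \to B$ after passing to a further subsequence). This $M_\infty$ is symmetric in the two vertical coordinate planes, contains $(\hat{x}, 0, z_0)$ with tangent plane $\{x = \hat{x}\}$ (where $z_0 = \lim z(M_i)$), and by Theorem~\ref{first-symmetry-theorem} is asymptotic as $z \to \infty$ to a finite union of vertical planes in $\{|y| \le B\}$. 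The Morse--Rad\'o bound $\mathsf{N}(F_{\ee_1}|M_\infty) \le 2$ (inherited by lower semicontinuity from Theorem~\ref{x-critical-theorem}) together with the prescribed four-line boundary and the existence of a critical point of $F_{\ee_1}$ at bounded height should then force a contradiction with this asymptotic structure, since the isolated critical point at $(\hat{x}, 0, z_0)$ cannot be reconciled with the receding outer boundary.

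Granting $z(M_i) \to \infty$, smooth convergence to a limit $M$ is standard. For (1), the symmetries of each $M_i'$ pass to $M$. For (2), $(\hat{x}, 0, 0) \in M_i'$ by construction, so $(\hat{x}, 0, 0) \in M$; moreover $M_i \cap \{|x| < x(M_i),\, y = 0\} = \emptyset$ passes to the limit, giving $x(M) = \hat{x}$. The area bound (3) is preserved under smooth convergence, and the curvature bounds (4) and (5) follow from Theorem~\ref{curvature-bound-theorem} with $\dist(p, \partial M) = \infty$ and $x(M) = \hat{x} > 0$. The Morse--Rad\'o bounds (6) and (7) are immediate from Theorem~\ref{semicontinuity-theorem} applied to $\mathsf{N}(F_\vv|M_i) \le 2$ (Theorem~\ref{x-critical-theorem}) and $\mathsf{N}(H|M_i) \le 8$ (Theorem~\ref{easy-bound-R}).

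For the annulus property (8), the component of $M$ containing $(\hat{x}, 0, 0)$ is the smooth limit of the connected annuli $M_i'$. It is a translator of finite type, lies in a slab, is disjoint from $Z$, and is invariant under reflection in the two vertical coordinate planes. Lemma~\ref{topology-lemma-1} together with the asymptotic structure at infinity provided by Theorems~\ref{translates-theorem} and~\ref{first-symmetry-theorem} give enough control to force this component to have genus zero with exactly two ends, hence to be an annulus.
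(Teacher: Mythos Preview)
Your outline for items (1)--(7) is fine and matches the paper. The two places where you diverge from the paper --- the proof that $z(M_i)\to\infty$ and the proof of (8) --- are also the two places where your argument has real gaps.

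\textbf{On $z(M_i)\to\infty$.} Your contradiction sketch does not actually produce a contradiction. A connected translator in $\{z\ge 0\}$ bounded by the four lines $\{y=\pm b,\,z=0\}$ and $\{y=\pm B,\,z=0\}$, symmetric in the coordinate planes, with a single interior critical point of $F_{\ee_1}$, is not obviously impossible; nothing in the Morse--Rad\'o bound $\mathsf{N}(F_{\ee_1}|M_\infty)\le 2$ or in the asymptotic-plane structure of Theorem~\ref{first-symmetry-theorem} rules it out. The paper instead gives a one-line barrier argument: by the maximum principle, $M_i\cap([-a_i,a_i]\times[-b_i,b_i]\times\RR)$ lies above the graph of the translator $f_{a_i,b_i}:[-a_i,a_i]\times[-b_i,b_i]\to\RR$ with zero boundary values. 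Since $x(M_i)\to\hat x$ stays in a fixed compact set and $f_{a_i,b_i}\to\infty$ uniformly on compact subsets of $\RR\times(-b,b)$ as $a_i\to\infty$ (because $f_{a_i,b_i}-f_{a_i,b_i}(0,0)$ converges to $f_b$ while $f_{a_i,b_i}(0,0)\to\infty$), one gets $z(M_i)\ge f_{a_i,b_i}(x(M_i),0)\to\infty$ directly.

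\textbf{On (8).} Saying that finite type plus the asymptotic structure ``give enough control to force \dots\ exactly two ends'' is not a proof; smooth limits of annuli need not be annuli, and you have not excluded the possibility that the component through $(\hat x,0,0)$ is a disk. The paper does two concrete things. First, it bounds the topology from above: for each $\lambda$, every component of $M_n'\cap\{-\lambda<z<f(x,y)+\lambda\}$ (with $f$ the bowl soliton) is a disk or an annulus, and this passes to the limit and then to $\lambda\to\infty$. Second, it exhibits a nontrivial loop: by Corollary~\ref{half-slab-corollary} the component $\Sigma$ must meet $\{x=0\}$, so one can take a path in $\Sigma\cap\{x\ge 0\}$ from $(\hat x,0,0)$ to $\{x=0\}$ and then reflect in $\{x=0\}$ and $\{y=0\}$ to obtain a closed curve in $\Sigma$ winding once around $Z$, hence homologically nontrivial. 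This rules out simple connectivity and forces $\Sigma$ to be an annulus.
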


Concerning Assertion~\eqref{complete-8}, we will show later (Theorem~\ref{some-properties-theorem})
  that $M$ is connected and therefore that it is an annulus.

\begin{proof}
First, we claim that
\begin{equation}\label{pushed-up}
  z(M_i)\to \infty.
\end{equation}
To see this, let $f_{a,b}: [-a,a]\times [-b,b]\to \RR$ be the translator with boundary values $0$.
By the maximum principle,
\[
   M_i\cap ([-a_i,a_i]\times [-b_i,b_i]\times \RR)
\]
lies in $\{z\ge u_{a_i,b_i}(x,y)\}$.  As $i\to \infty$, $f_{a_i,b_i}(0,0)\to \infty$ and 
\[
     f_{a_i,b_i}(x,y) - f_{a_i,b_i}(0,0)
\]
converges smoothly to a translator 
\[
   f_b: \RR\times (-b,b)\to \RR.
\]
(See~\cite{graphs}*{Theorem~4.1}).  Hence if $K$ is a compact subset of $\RR\times (-b,b)$, then 
\[
  \min_K f_{a_i,b_i} \to \infty.
\]
Thus~\eqref{pushed-up} holds.

By~\eqref{pushed-up},
\begin{equation}\label{boundary-away}
  \dist( 0, \partial M_i') \to \infty.
\end{equation}
Consequently, the curvature and area bounds in Theorem~\ref{curvature-bound-theorem} give smooth convergence (after passing to a subsequence)
of $M_i'$ to a limit translator $M$.  By~\eqref{boundary-away}, $M$ has no boundary.
  From the construction, $(\hat x, 0,0)\in M$.
Also, $M$ is disjoint from the strip $(-\hat x, \hat x)\times\{0\}\times \RR$.  Thus $x(M)=\hat x$.
 All the assertions other than Assertion~\eqref{complete-8} follow trivially from the corresponding properties of the $M_i'$.
See Theorems~ \ref{semicontinuity-theorem} and~\ref{curvature-bound-theorem}.

Let $\Sigma$ be the connected component of $M$ containing $(\hat x,0,0)$.
Let $f:\RR^2\to \RR$ be the bowl solition.  Then, for any $\lambda$, each component 
of 
\begin{equation}\label{goofy}
    M_n'\cap \{-\lambda < z < f(x,y) + \lambda\} \tag{*}
\end{equation}
is a disk or an annulus.  

(If that is not clear, note that if one or more closed curves in~\eqref{goofy} bound a region $K$ in $M_n'$,
then $f|K$ must attain its maximum on $\partial K$ and $z(\cdot)|K$ must attain its minimum on $\partial K$,
and thus $K$ is contained in~\eqref{goofy}. Hence the inclusion of~\eqref{goofy} into $M$ induces
a monomorphism of first homology.)

By smooth convergence, each component of
\[
   M\cap \{ - \lambda < z < f(x,y)+\lambda\}
\]
is a disk or an annulus.  Letting $\lambda\to\infty$, we see that $\Sigma$ is a disk or an annulus.

Thus it suffices to show that $\Sigma$ is not simply connected.
No complete translator is contained in a half-slab of the form $\{|y|\le B\}\cap \{x< a\}$.
   (See Corollary~\ref{half-slab-corollary}.)
Thus $\Sigma\cap\{x=0\}$ is nonempty.  Let $C$ be a shortest path in $\Sigma\cap\{x\ge 0\}$ from $(x(M),0,0)$ 
to a point in $\Sigma\cap\{x=0\}$.  Extending by reflection in $\{x=0\}$ and then in $\{y=0\}$ gives
a simple closed curve $C'$ in $M$ that winds once around the $z$-axis.  Since $C'$ is homologically nontrivial
in $\RR^3\setminus Z$, it is homologically nontrivial in $\Sigma$. 
Thus $\Sigma$ is an annulus.  
\end{proof}

\begin{definition}\label{Aa-definition}
Let $\pi/2\le b \le B \le b+\pi$ and $\hat x\in (0,\infty)$.
We define $\Aa(b,B,\hat x)$ to be the space of all limits of sequences 
\[
    M_i - (0,0,z(M_i))
\]
such that 
\begin{gather*}
   M_i\in \Rr, \\
   a(M_i)\to\infty,
   \\
   b(M_i)\to b, 
   \\
   B(M_i) \to B, 
   \\
   x(M_i)\to \hat x.
\end{gather*}
We let   
\begin{align*}
  \Aa(b,\hat x) &:= \bigcup_B \Aa(b,B, \hat x), 
  \\
  \Aa(b) &:= \bigcup_{\hat x} \Aa(b, \hat x) 
  \\
  \Aa  &:= \bigcup_b\Aa(b)  \\
     &=\bigcup_{b,B,\hat x} \Aa(b,B,\hat x).
\end{align*}
\end{definition}

\begin{remark}\label{existence-remark}
Let $b\ge \pi/2$ and $\hat x\in (0,\infty)$.
By Theorems~\ref{existence-theorem} and~\ref{complete-existence-theorem}, $\Aa(b,\hat x)$ is nonempty, and the surfaces in
 $\Aa(b,\hat x)$ have the properties
listed in Theorem~\ref{complete-existence-theorem}.
In particular, the surfaces in $\Aa$ are translators of finite type (as defined in Section~\ref{finite-type-section}). 
\end{remark}

For surfaces in $M$ in $\Rr$ or in $\Aa$, there is a curvature bound that gives more information than the bounds
in Theorem~\ref{curvature-bound-theorem} and~\ref{complete-existence-theorem}
when $x(M)$ is close to $0$:

\begin{theorem}\label{better-curvature-theorem}
Suppose that $M\in \Rr$ or $M\in \Aa$. 
If $\vv$ is a horizontal unit vector, let 
\[
  \delta(M,\vv,p) = \min\{ |p-q|: q\in M, \, \nu(M,q)=\vv\}.
\]
(Here $\delta(M,\vv,p)=\infty$ if there is no such point $q$.)
Then
\[
      |A(M,p)| \, \min \{1,  \delta(M,\vv,p), \dist(p,\partial M) \}  \le C.
\]
\end{theorem}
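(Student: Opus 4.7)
The plan is a standard blow-up by contradiction.  Suppose the bound fails for the class $\Rr$; then there exist $M_i\in\Rr$, points $p_i\in M_i$, and horizontal unit vectors $\vv_i$ with
\[
|A(M_i,p_i)|\,\min\{1,\delta_i,\dist_i\}\to\infty,
\]
where $\delta_i=\delta(M_i,\vv_i,p_i)$ and $\dist_i=\dist(p_i,\partial M_i)$.  Setting $r_i=\min\{1,\delta_i,\dist_i\}/2$, I would apply a standard point-picking argument: choose $q_i\in M_i\cap \overline{\Br(p_i,r_i)}$ maximizing $q\mapsto |A(M_i,q)|(r_i-|q-p_i|)$.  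Writing $\mu_i=|A(M_i,q_i)|$ and $\sigma_i=r_i-|q_i-p_i|$, one obtains $\mu_i\sigma_i\to\infty$ and $|A|\le 2\mu_i$ on $\Br(q_i,\sigma_i/2)$, just as in the proof of Theorem~\ref{curvature-bound-theorem}.

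Rescale $\tilde M_i:=\mu_i(M_i-q_i)$.  As $\mu_i\to\infty$, the translator equation becomes the Euclidean minimality equation in the limit, and the area bound of Theorem~\ref{curvature-bound-theorem}\eqref{area-item} together with standard smooth compactness yield, after passing to a subsequence, smooth convergence of $\tilde M_i$ to a complete, properly embedded, Euclidean-minimal surface $M'\subset\RR^3$ with $|A(M',0)|=1$ and $|A|\le 2$.  Because $\mu_i\sigma_i\to\infty$, the limit $M'$ has no boundary and its Gauss map omits $\vv:=\lim\vv_i$; that is, $\nu(M',q)\ne \vv$ for every $q\in M'$.  Applying Theorem~\ref{semicontinuity-theorem} to the Euclidean plane foliations, together with Theorem~\ref{x-critical-theorem}, one obtains $\mathsf{N}(F_\xw|M')\le 2$ for every horizontal unit vector $\xw$.

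The remaining and main step is to show that $M'$ must be a plane, contradicting $|A(M',0)|=1$.  I would split into subcases according to the behavior of $x(M_i)$.  If $\liminf x(M_i)>0$, then Theorem~\ref{curvature-bound-theorem}\eqref{in-addition-item} gives $R(M_i,q_i)\ge x(M_i)$, so after rescaling $R(M',0)=\infty$; hence $M'$ is simply connected, and a simply-connected complete properly-embedded minimal surface with quadratic area growth is a plane, exactly as in the proof of Theorem~\ref{curvature-bound-theorem}\eqref{curvature-item}.  If $x(M_i)\to 0$, the constraints on $M'$ (complete non-planar minimal, bounded $|A|$, $\nu\ne\vv$ everywhere, and $\mathsf{N}(F_\xw|M')\le 2$ for every horizontal $\xw$) are compatible only with a catenoid of horizontal axis $\vv$, since the Gauss map of any vertical catenoid attains $\vv$ on its neck.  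But such a horizontal-axis catenoidal blow-up is incompatible with the reflective symmetries of $M_i\in\Rr$ across $\{x=0\}$ and $\{y=0\}$: the neck would, by symmetry and by property~\eqref{cc-disjoint-item} of Definition~\ref{Cc-definition}, either force $q_i\in Z$ (excluded) or produce mirror necks violating the annulus topology.  Ruling out this catenoidal scenario is the principal obstacle.  For $M\in\Aa$, the bound finally follows from the $\Rr$-case by Definition~\ref{Aa-definition} and lower semicontinuity of $|A|$ and $\delta$ under smooth convergence.
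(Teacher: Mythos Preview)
Your blow-up setup is sound and you correctly identify the obstruction: the limit $M'$ is either a plane (contradicting $|A(M',0)|=1$) or a catenoid whose axis is parallel to $\vv$ (since the Gauss map of $M'$ omits $\vv$). But the case split on $x(M_i)$ is unnecessary, and your attempt to exclude the horizontal-axis catenoid via ``mirror necks violating the annulus topology'' does not work. The $\rho_Z$-image of a short nontrivial loop near $q_i$ is just another nontrivial loop near $\rho_Z q_i$; an annulus carries many disjoint, freely homotopic nontrivial loops, so no topological contradiction arises from their coexistence. You yourself flag this as ``the principal obstacle,'' and it is the gap in your argument.

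The paper resolves it in one stroke, with no case split, via Lemma~\ref{topology-lemma-1}. Since the catenoid $M'$ has axis parallel to the horizontal vector $\vv$, the \emph{vertical} plane $\vv^\perp$ meets $M'$ transversely in a circle. By smooth convergence, $\vv^\perp\cap M_n'$ then contains a simple closed curve for all large $n$; undoing the translation and dilation, some vertical plane meets $M_n$ in a closed curve. Lemma~\ref{topology-lemma-1} forbids exactly this for a $\rho_Z$-invariant translating annulus disjoint from $Z$: a closed curve in a vertical plane is null-homotopic in $\RR^3\setminus Z$, hence bounds a disk in $M_n$, which the maximum principle would force into the plane, contradicting unique continuation. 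This single observation replaces your entire Case~2 and renders Case~1 superfluous.
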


\begin{proof}
It suffices to prove the theorem for $M\in \Cc$ with smooth boundaries, as any surface in $\Rr$ is a limit of such surfaces.

Thus, if the theorem is false, there is a sequence $M_n\in \Cc$ with smooth boundaries such that
\[
    \sup_{p\in M_n} |A(M_n,p)|\, \min\{1, \delta(M_n,\vv, p), \dist(p, \partial M_n) \} \to \infty.
\]
For each $n$, the supremum is attained at some point $p_n$.

Translate $M_n$ by $-p_n$ and then dilate by $|A(M_n,p_n)|$ to get $M_n'$.
Thus $|A(M_n',0|=1$ and 
\begin{gather}
   \delta(M_n', \vv, 0) \to \infty, 
   \label{far-away}
   \\
   \dist(0,\partial M') \to \infty.
\end{gather}

Then $M_n'$ converges smoothly (after passing to a subsequence) to a complete, embedded minimal surface $M'$ with 
$|A(M',0)|=1$.  Since $M'$ has genus $0$ and quadratic area growth, it is a catenoid.

If the axis of $M'$ is not parallet to $\vv$, then there is a point $q\in M'$ such that $\nu(M',q)=\vv$.  The curvature
of $M'$ at $q$ is non zero, so $q$ is the limit of points $q_n\in M_n'$ such that $\nu(M_n',q_n')=\vv$.
But that is excluded by~\eqref{far-away}.

Thus the axis of $M'$ is parallel to $\vv$, so the plane $\vv^\perp$ intersects $M'$ transversely in a circle.
Consequently, for large $n$, $\vv^\perp \cap M_n'$ contains a simple closed curve.  But that is impossible 
  by Lemma~\ref{topology-lemma-1}.
\end{proof}

\begin{remark}\label{blowup-remark}
Suppose that $M_n$ is a sequence  of surfaces in $\Rr$ or in $\Aa$, that $M_n'=M_n-q_n$ converges to a limit $M$,
and that the convergence is not smooth at a point $p\in M\setminus\partial M$. 
By Theorem~\ref{better-curvature-theorem}, the two points in $M_n'$ where $\nu=\pm \ee_1$ both have to converge to $p$.
Hence there can be at most one point in $M\setminus\partial M$ where the convergence $M_n'\to M$ is not smooth.
\end{remark}

\begin{theorem}\label{better-compactness-theorem}
Suppose that $M_n\in \Rr$ or $M_n \in \Aa$,
 and that the $M_n$ lie in a slab $\{|y|\le \lambda\}$.
Let $p_n^\pm$ be the points where $\nu(M_n,p_n^\pm)= \pm \ee_1$.
Suppose that $q_n$ are points in the plane $\{y=0\}$ such that
\[
  \dist(q_n, \{p_n^+, p_n^-\}\cup \partial M_n) \to \infty.
\]
Then, after passing to a subsequence, $M_n'=M_n - q_n$ converges smoothly to a limit $M'$, each component
of which is a plane parallel to $\{y=0\}$, or a $\Delta$-wing, or a grim reaper surface.

Now suppose also that each $q_n=(x_n,0,z_n)$ is in $M_n$, and that $x_n\ge 0$.
Then the component $\Sigma$ of $M'$ containing $0$ is either a $\Delta$-wing
or a grim reaper surface.  Furthermore, in this case,
\[
   x_n - x(M_n) \to \infty.
\]
\end{theorem}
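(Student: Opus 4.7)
The plan is to extract a smooth subsequential limit via Theorem~\ref{better-curvature-theorem}, classify its components by Morse--Rad\'o theory (Theorem~\ref{new-finite-type-theorem}), and then use the reflection symmetry of $M_n$ to exclude the degenerate planar limit. On any fixed ball $B_R(0)$, the hypotheses $\delta(M_n',\pm\ee_1,0)=|p_n^\pm-q_n|\to\infty$ and $\dist(0,\partial M_n')=\dist(q_n,\partial M_n)\to\infty$ combine with Theorem~\ref{better-curvature-theorem} and the uniform quadratic area bound to give uniform curvature and area bounds on $M_n'=M_n-q_n$. Standard compactness then yields, after passing to a subsequence, a smooth limit $M'$ that is a complete, properly embedded translator of finite type, without boundary, lying in $\{|y|\le\lambda\}$. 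The only interior critical points of $F_{\ee_1}|M_n'$ are $p_n^\pm-q_n$ (by Theorem~\ref{x-critical-theorem}\eqref{at-most-item}), and these diverge by hypothesis; so Theorem~\ref{semicontinuity-theorem} forces $\mathsf{N}(F_{\ee_1}|M')=0$, and Theorem~\ref{new-finite-type-theorem}\eqref{new-finite-2} classifies each component of $M'$ as a plane parallel to $\{y=0\}$, a $\Delta$-wing, or a grim reaper surface.

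Next I assume $q_n=(x_n,0,z_n)\in M_n$ with $x_n\ge 0$; since $M_n$ is disjoint from $Z$, in fact $x_n>0$. Let $\Sigma$ denote the component of $M'$ through $0$. If $\Sigma$ were planar it would equal $\{y=0\}$ (since $0\in\Sigma$). To rule this out, note that $q_n\in\{y=0\}$ means translation by $q_n$ preserves the involution $(x,y,z)\mapsto(x,-y,z)$ of $M_n$, so smooth convergence $M_n'\to\{y=0\}$ near $0$ with multiplicity $k$ presents $M_n'$ locally as disjoint graphs $y=g_n^{(1)}<\cdots<g_n^{(k)}$ over $\{y=0\}$, permuted by $g\mapsto-g$. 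The sheet through $0$ is either its own mirror image (forcing $g_n^{(j_0)}\equiv 0$) or it meets a distinct mirror sheet at $0$ (violating embeddedness). The first alternative places an open patch of $\{y=0\}$ inside $M_n$, so by unique continuation $M_n\subset\{y=0\}$, contradicting that $M_n$ is a non-planar annulus. Hence $\Sigma$ must be a $\Delta$-wing or a grim reaper surface.

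For the final assertion, since $\Sigma$ is a graph over a strip in $\{y=0\}$, the slice $\Sigma\cap\{y=0\}$ is a single connected smooth curve through $0$ whose $x$-coordinate ranges over all of $\RR$. Smooth convergence on compact sets forces the component $\alpha_n$ of $(M_n\cap\{y=0\}-q_n)\cap B_R(0)$ through $0$ to converge to $\Sigma\cap\{y=0\}\cap B_R(0)$ for each $R>0$. Because $M_n\cap\{y=0\}$ avoids $Z$ and $x_n>0$, the component of $M_n\cap\{y=0\}$ containing $q_n$ lies in $\{x\ge x(M_n)\}$, so $\alpha_n\subset\{x\ge x(M_n)-x_n\}$; sending $R\to\infty$ then yields $x_n-x(M_n)\to\infty$. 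The main obstacle is the planar exclusion in the second paragraph: this is the one step that uses the full structure of $\Rr\cup\Aa$ rather than merely finite type, and it rests on the delicate interplay between the $\{y=0\}$-reflection symmetry, embeddedness of $M_n$, and the local sheeting of the smooth limit.
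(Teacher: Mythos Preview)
Your proof is correct and follows the paper's route: the curvature bound of Theorem~\ref{better-curvature-theorem} gives smooth subsequential convergence, the vanishing of $\mathsf{N}(F_{\ee_1}|M')$ feeds into Theorem~\ref{new-finite-type-theorem}\eqref{new-finite-2}, and the final assertion comes from the fact that $\Sigma\cap\{y=0\}$ has unbounded $x$-coordinate while the component of $M_n'\cap\{y=0\}$ through $0$ sits in $\{x\ge x(M_n)-x_n\}$.

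The one place you work harder than the paper is the exclusion of $\Sigma=\{y=0\}$, which you flag as ``the main obstacle.'' It is actually a one-liner. Since $M_n$ is symmetric about $\{y=0\}$ and not contained in it, $M_n$ meets $\{y=0\}$ orthogonally at $q_n$; hence $\ee_2\in\Tan(M_n',0)$ for every $n$, and smooth convergence gives $\ee_2\in\Tan(\Sigma,0)$. A plane parallel to $\{y=0\}$ that contains $0$ would be $\{y=0\}$ itself and would have tangent plane $\{y=0\}$, which does not contain $\ee_2$. So $\Sigma$ cannot be a plane. Your sheeting-and-unique-continuation argument is valid, but the orthogonality observation shortcuts it entirely and shows the step needs nothing beyond the reflection symmetry.
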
  

\begin{proof}
The curvature bound in Theorem~\ref{better-curvature-theorem} 
implies that we get smooth subsequential convergence, and that the limit surface has bounded 
principal curvatures.  For each bounded open set $U$, $\mathsf{N}(x(\cdot)|M_n'\cap U)=0$ for all sufficiently large $n$,
so $\mathsf{N}(x(\cdot)|M'\cap U)=0$.  Since $U$ is arbitrary, $\mathsf{N}(x(\cdot)|M')=0$.
By Theorem~\ref{new-finite-type-theorem}, the components of $M'$ are planes parallel to $\{y=0\}$, $\Delta$-wings,
and grim reaper surfaces.

Now suppose also that each $q_n$ is in $M_n\cap\{y=0\}$ and that $x_n\ge 0$.  Then $x_n\ge x(M_n)$
be definition of $x(M_n)$.  The component $\Sigma$ of $M'$ containing $0$ is perpendicular to $\{y=0\}$ at $0$
and is contained in the slab $\{|y|\le \lambda\}$, so it cannot be a plane.
Hence $\Sigma$ must be a $\Delta$-wing or a grim reaper surface.   In particular $\Sigma$ is the graph of
a function
\[
  w: \RR\times(-\beta,\beta) \to \RR
\]
for some $\beta$.

Now the component $\Gamma_n$ of $M_n'\cap \{y=0\}$ containing $0$ converges to $\Sigma\cap\{y=0\}$
as $n\to\infty$. 

Note that the minimum value of $x(\cdot)$ on $\Gamma_n$ is $x(M_n) - x_n$.
If $x(M_n)-x_n$ were bounded below by $\alpha$, then $x(\cdot)$ would be bounded below by $\alpha$ on $\Sigma\cap\{y=0\}$
(that is, on $\{(x,0,w(x,0)): x\in \RR\}$), which is not the case.  Thus $x(M_n)-x_n\to -\infty$,
so $x_n - x(M_n)\to \infty$.
\end{proof}

\TOCstop
\section*{What are the next steps?}
\TOCstart

According to Remark~\ref{existence-remark}, 
 for each $b\ge \pi/2$ and $\hat x\in (0,\infty)$, there is a complete
smooth translator $M$ in $\Aa(b,\hat x)$.  We know that it has an annular component
 (Theorem~\ref{complete-existence-theorem}),
but it is not obvious that $M$ is connected.  Also, although we have examples for every $\hat x\in (0,\infty)$ and
$b\ge \pi/2$, it is not obvious that these examples are distinct.  We will eventually prove
that $\Aa(b,\hat x)$ and $\Aa(b',\hat x')$ are disjoint unless $b=b'$ and $\hat x=\hat x'$.
But a priori there is the possibility that $\Aa(b,\hat x)=\Aa(\pi/2,\hat x)$ for all $b\ge \pi/2$.
(We do already know that $x(M)=\hat x$ for $M\in \Aa(b,\hat x)$.  Thus if $\hat x\ne \hat x'$, then
$\Aa(b,\hat x)$ and $\Aa(b',\hat x')$ are disjoint.)

  Our goal now is to prove that every surface $M$ in $\Aa(b,B,\hat x)$ has the properties
described in Theorem~\ref{theorem-1}.  In particular, we wish to show that
\begin{enumerate}
\item\label{assertion-1} $M$ is connected (and therefore is an annulus).
\item\label{assertion-2} $M+(0,0,z)$ converges as $z\to -\infty$ to the
   empty set.
\item\label{assertion-3} There are nonnegative numbers $b(M)$ and $B(M)$ with $b(M)\le B(M)$
   such that $M+(0,0,z)$ converges smoothly at $z\to \infty$ to the
   planes  $\{y=\pm b(M)\}$ and $\{y=\pm B(M)\}$.
\item\label{assertion-4} $B(M)= B$.
\item\label{assertion-5}  $b(M)=b$.
\end{enumerate}

We will show~\eqref{assertion-1}, \eqref{assertion-2}, and~\eqref{assertion-3}
 in the next three sections; see Theorems~\ref{connected-theorem} and~\ref{some-properties-theorem}.

Assertions~\eqref{assertion-4} and~\eqref{assertion-5} seem to be much more subtle.
We deduce them from other properties of $M$, 
 which we describe and prove in Sections~\ref{vertical-section}, \ref{inner-outer-section},
 and~\ref{down-behavior-section}.
Assertions~\eqref{assertion-4} and~\eqref{assertion-5} 
 are proved in Theorem~\ref{b-B-theorem}.
\begin{remark}[Entropy of annuloids]
Assertion \ref{assertion-3} above implies that the entropy of $M$ is 4. This is a consequence
of Corollary 8.5 in \cite{GMM}.
\end{remark}

\section{The Slice $M\cap\{y=0\}$.} \label{sec:slice}

Let $M\in \Aa$.  To analyze $M$, it is helpful to analyze the slice $M\cap\{y=0\}$.
To analyze that slice, it suffices by symmetry to analyze
\[
 \Gamma := M\cap \{y=0\}\cap \{x>0\}.
\]

\begin{theorem}\label{y-slice-theorem}
Let $M\in \Aa$.  Then $\Gamma$ is the union of two graphs
\[
    \{ (x,0,\uup(x)): x\in [x(M),\infty)\}
\]
and
\[
   \{ (x,0,\ulow(x)): x\in [x(M),\infty) \}
\]
where $\uup$ and $\ulow$ are continuous on $[x(M),\infty)$,
smooth on $(x(M),\infty)$, and where
\begin{enumerate}[\upshape (1)]
\item\label{y-slice-1} $\ulow(x(M))=\uup(x(M))=0$.
\item\label{y-slice-2} $\ulow(x)< \uup(x)$ for all $x>x(M)$.
\item\label{y-slice-3} The limits
\[
    (\uup)'(\infty):=\lim_{x\to\infty}(\uup)'(x)
\]
and 
\[
   (\ulow)'(\infty):=\lim_{x\to\infty} (\ulow)'(x)
\]
exist and are finite.
\end{enumerate}
\end{theorem}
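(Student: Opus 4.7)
The plan is to single out a distinguished vertex $p^+ := (x(M),0,0)$ for the slice, carry out a local saddle-type analysis at $p^+$, and then upgrade local graphicality to a global graph over $[x(M),\infty)$ using the compactness and classification tools for finite-type translators. By Theorem~\ref{complete-existence-theorem} we have $p^+\in M$ and $x(M) = \min\{x>0: (x,0,z)\in M\}$. The bound $\mathsf{N}(F_{\ee_1}|M)\le 2$ together with Corollary~\ref{x-critical-corollary} applied to the defining sequence $M_n-(0,0,z(M_n))$ shows that the set of critical points of $x|_M$ in $\{x>0\}$ has at most one element, lies in the symmetry plane $\{y=0\}$, and has negative Gauss curvature. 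The symmetry $y\mapsto -y$ forces $\ee_2\in T_{p^+}M$, and the local minimum of $x$ along $M\cap\{y=0\}$ at $p^+$ forces the complementary tangent direction at $p^+$ to be $\ee_3$; hence $T_{p^+}M=\operatorname{span}(\ee_2,\ee_3)$, and $p^+$ is that unique critical point.

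Writing $M$ near $p^+$ as a graph $x=x(M)+f(y,z)$ with $\nabla f(0,0)=0$, the symmetry $y\mapsto -y$ diagonalizes $D^2f(0,0)=\operatorname{diag}(f_{yy},f_{zz})$; negative Gauss curvature makes this indefinite; and the inequality $f(0,z)\ge 0$ (which holds because $x\ge x(M)$ on $M\cap\{y=0\}$) pins down $f_{zz}(0,0)>0>f_{yy}(0,0)$. Thus $\Gamma$ near $p^+$ is the smooth curve $x=x(M)+\tfrac12 f_{zz}(0,0)\,z^2+O(z^3)$, with two branches into $\{z>0\}$ and $\{z<0\}$, both lying in $\{x>x(M)\}$. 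Away from $p^+$, $\Gamma$ is a smooth $1$-manifold (because $T_qM\neq\{y=0\}$ whenever $x(q)>0$, by Corollary~\ref{x-critical-corollary}), and $x|_\Gamma$ has no critical points on $\Gamma\setminus\{p^+\}$ (such a critical point would yield a second critical point of $x|_M$ in $\{x>0\}$). So each arc of $\Gamma\setminus\{p^+\}$ is locally a smooth graph $z=u(x)$.

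The component $\gamma$ of $\Gamma$ containing $p^+$ is a boundaryless smooth $1$-manifold, and by Lemma~\ref{topology-lemma-1} (no closed curves in $M\cap V$ for vertical $V$) it is a line. Combining this with the local saddle picture, its two halves extend to smooth graphs $\uup,\ulow$ over maximal intervals $[x(M),X^\pm)$ with value $0$ at $x(M)$. The main obstacle is to show $X^\pm=\infty$: if $X^+$ were finite, then along the upper branch $x$ is bounded while $|z|$ must escape to infinity by properness, and applying Theorem~\ref{better-compactness-theorem} to a sequence $q_n=(x_n,0,z_n)$ on the branch would give a $\Delta$-wing or grim-reaper limit and force $x_n-x(M)\to\infty$, contradicting $x_n\to X^+<\infty$. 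The same compactness argument rules out any additional component $\gamma'$ of $\Gamma$: such a $\gamma'$ would be a line on which $x$ is strictly monotonic, graphing over $(x_-,x_+)$, and Theorem~\ref{better-compactness-theorem} plus properness forces $x_+=\infty$ and $x_-=x(M)$, after which $p^+$ becomes a limit point of $\gamma'$ and $\gamma'$ merges with the component of $p^+$, a contradiction. Finally, $\uup>\ulow$ on $(x(M),\infty)$ because an equality $\uup(x_0)=\ulow(x_0)$ would complete a closed loop in $M\cap\{y=0\}$, again contradicting Lemma~\ref{topology-lemma-1}.

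For the limits of the derivatives, parametrize the upper branch by arclength $t\in[0,\infty)\mapsto\gamma(t)=(x(t),0,z(t))$. Theorem~\ref{second-symmetry-theorem} applies ($M$ is finite type, in a slab, symmetric in $\{y=0\}$, not contained in $\{y=0\}$, and has no boundary) and yields $\gamma'(t)\to\uu=(x'(\infty),0,z'(\infty))$. The slope bound $|z'(\infty)|\le s(B)|x'(\infty)|$ combined with $|\uu|=1$ forces $x'(\infty)>0$, so $(\uup)'(x)=z'(t)/x'(t)$ converges to the finite limit $z'(\infty)/x'(\infty)$. The identical argument applies to $\ulow$.
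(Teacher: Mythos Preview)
Your proof is correct, but it takes a noticeably different route from the paper's. Both arguments identify $p^+=(x(M),0,0)$ as the unique critical point of $x|_\Gamma$ and invoke Theorem~\ref{second-symmetry-theorem} for the derivative limits, but the middle is organized differently. The paper simply observes that $\mathsf{N}(F_{\ee_1}|M)\le 2$ together with the $\rho_Z$-symmetry forces $F_{\ee_1}|\Gamma$ to have exactly one critical point, then uses Theorem~\ref{second-symmetry-theorem}\eqref{second-symmetry-5} to see that $x\to\infty$ on every end of $\Gamma$; connectedness of $\Gamma$ then drops out from the elementary remark that each component must carry at least one local minimum of $x$. You instead perform an explicit Hessian analysis at $p^+$ (pinning down the saddle signature, which is more than is strictly needed) and replace the Morse-counting step by repeated appeals to Theorem~\ref{better-compactness-theorem}: first to force the maximal graphs to extend to $X^\pm=\infty$, and again to squeeze any hypothetical extra component into accumulating at $p^+$. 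Your approach is self-contained and gives a bit more local information at $p^+$, while the paper's is shorter and uses only the lighter tool Theorem~\ref{second-symmetry-theorem} rather than the compactness/classification machinery of Theorem~\ref{better-compactness-theorem}.
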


Later (Theorem~\ref{main-theorem-concluded}) we will prove that $(\ulow)'(\infty)=-s(b(M))$ and that $(\uup)'(\infty) =\pm S(B(M))$,
where $s(\beta)$ is the slope $\partial u_\beta/\partial x$ of the grim 
 reaper surface 
   $u_\beta: \RR\times (-\beta,\beta)\to \RR$ in \S\ref{graphs-section}.

\begin{proof}
Since $\{y=0\}$ intersects $M$ orthogonally, $\Gamma$ is a smooth $1$-manifold.

By Theorem~\ref{complete-existence-theorem}~\eqref{complete-existence-twosome}
   (see also Remark~\ref{existence-remark}),
\[
  2 \ge \mathsf{N}(F_{\ee_1}| M).
\]
Now each critical point $(x,0,z)$ of $F_{\ee_1}|\Gamma$ is a critical point of $F_{\ee_1}|M$, 
as is its mirror image $(-x,0,z)$.
Thus $F_{\ee_1}|\Gamma$ has at most one critical point. Since $(x(M),0,0)$ is a critical point,
$F_{\ee_1}|\Gamma$ has exactly one critical point.
  (Note that the point $(x(M),0,0)$ is in $M$ by Definition~\ref{Aa-definition}.) 
Thus no component of $\Gamma$ is a closed curve.
(We also know from Lemma~\ref{topology-lemma-1} that no component
 of $\Gamma$ is a closed curve.) 

By Theorem~\ref{second-symmetry-theorem},  $x(\cdot)$ tends to $\infty$ or $-\infty$ on each end of $\Gamma$.
Since $x>0$ on $\Gamma$, we see that $x\to\infty$ on each end of $\Gamma$.
Thus each component of $\Gamma$ has at least one local minimum of $F_{\ee_1}|\Gamma$.
Since $F_{\ee_1}|\Gamma$ has only one critical point, the curve 
 $\Gamma$ has only one component. (It is nonempty since it contains the
point $(x(M),0,0)$.)

Since $F_{\ee_1}|\Gamma$ has its minimum at $(x(M),0,0)$ and has no other critical points, and since
$x\to\infty$ on each end of $\Gamma$, we see that $\Gamma$ is the union of two graphs
satisfying~\eqref{y-slice-1} and~\eqref{y-slice-2}.
By Theorem~\ref{second-symmetry-theorem}, it also satisfies~\eqref{y-slice-3}.
\end{proof}

Theorem~\ref{y-slice-theorem} has an analog for surfaces in $\Rr$, or more generally, for
surfaces in $\Cc$:

\begin{theorem}\label{finite-y-slice-theorem}
Suppose that $M\in \Cc$.  Write $a=a(M)$ and $A=A(M)$.
If $x(M)<a$, 
then $M\cap \{y=0\}\cap \{x>0\}$
is the union of
\[
   \{ (x,0,\ulow(x)): x(M)\le x \le a\}
\]
and
\[
   \{ (x,0,\uup(x)): x(M)\le x \le A\},
\]
where 
\begin{enumerate}
\item $\uup$ and $\ulow$ are continuous and are smooth for $x>x(M)$.
\item $\ulow(x(M))=\uup(x(M))=z(M)$ and $\ulow(x)<\uup(x)$ for $x>x(M)$.
\item $\ulow(a)=\uup(A)=0$.
\end{enumerate}
If $x(M)=a$, then $M\cap\{y=0\}\cap\{x>0\}$ is
\[
  \{ (x,0,\phi(x)): a\le x \le A\}.
\]
for a function $\phi$ that is continuous on $[a,A]$ and smooth on $(a,A)$.
\end{theorem}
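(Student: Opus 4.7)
The plan is to mimic the proof of Theorem~\ref{y-slice-theorem}, adjusting for the fact that here $M$ is compact with boundary in the plane $\{z=0\}$.  Because $M$ is invariant under reflection in $\{y=0\}$, that plane meets $M\setminus \partial M$ orthogonally, so
\[
\Gamma := M\cap \{y=0\}\cap \{x>0\}
\]
is a smooth $1$-manifold-with-boundary inside $M$.  Its boundary lies in $\partial M\cap\{y=0\}\cap\{x>0\}$; since $\partialin M$ and $\partialout M$ are convex closed curves in $\{z=0\}$ symmetric under $y\mapsto -y$, each meets that half-plane in exactly one point, giving $\partial \Gamma = \{(a,0,0),\,(A,0,0)\}$.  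By Lemma~\ref{topology-lemma-1}, $M\cap \{y=0\}$ contains no closed curve, so no component of $\Gamma$ is a circle; hence the compact $1$-manifold $\Gamma$, having exactly two boundary points and no circle components, is a single embedded arc from $(a,0,0)$ to $(A,0,0)$.

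The main step is to count interior critical points of $F_{\ee_1}|\Gamma$ using Theorem~\ref{x-critical-theorem}.  At an interior point $p\in \Gamma$, the tangent line to $\Gamma$ lies in the plane $\{y=0\}$; that tangent is vertical iff the tangent plane to $M$ at $p$ is perpendicular to $\ee_1$, i.e., $p$ is an interior critical point of $F_{\ee_1}|M$.  Since $M$ is also invariant under reflection in $\{x=0\}$, every such $p\in \Gamma$ has a mirror partner in $\{x<0\}$ that is a second interior critical point of $F_{\ee_1}|M$.  Combined with Theorem~\ref{x-critical-theorem}\eqref{either-item} (which says $\mathsf{N}(F_{\ee_1}|M)\in\{0,2\}$), this shows $F_{\ee_1}|\Gamma$ has at most one interior critical point.

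The two cases now fall out directly.  If $x(M)<a$, then $x(M)=\min_\Gamma x$ is attained at an interior point $(x(M),0,z(M))$ of $M$ (with $z$-coordinate uniquely determined by Corollary~\ref{unique-z-corollary}); this is the unique interior critical point, the tangent to $\Gamma$ there is vertical, and traversing $\Gamma$ from $(a,0,0)$ to $(A,0,0)$ the coordinate $x$ first decreases monotonically from $a$ to $x(M)$ and then increases monotonically from $x(M)$ to $A$.  Writing these two monotone portions as graphs over $x$ produces $\ulow$ and $\uup$.  If $x(M)=a$, the minimum of $x$ on $\Gamma$ is attained at the boundary point $(a,0,0)$ rather than at an interior point, so there is no interior critical point; then $x$ is strictly monotone along $\Gamma$, giving the single graph $\phi$ over $[a,A]$.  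In both cases, continuity of the defining functions on the closed interval follows from compactness of $\Gamma$, smoothness on the open interval follows from the implicit function theorem (the tangent is not vertical there), and the endpoint values $\ulow(a)=\uup(A)=\phi(a)=\phi(A)=0$ come from $\partial M\subset \{z=0\}$.  I don't expect a single hard step; the main point requiring care is that $M$ need not be smooth up to $\partial M$, so the $1$-manifold analysis of $\Gamma$ must be carried out in the interior of $M$, with endpoint behaviour controlled by compactness of $M$ and the planar condition on $\partial M$.
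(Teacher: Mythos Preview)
Your proposal is correct and follows essentially the same approach the paper indicates (the paper omits the proof, noting it is ``very similar to the proof of Theorem~\ref{y-slice-theorem}'' but simpler since there is no behavior at infinity to analyze).  One small point: your parenthetical appeal to Corollary~\ref{unique-z-corollary} is circular, since that corollary is stated as a consequence of this very theorem; but you don't actually need it---the uniqueness of the $z$-coordinate at $x=x(M)$ already follows from your own argument that $F_{\ee_1}|\Gamma$ has at most one interior critical point.
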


We omit the proof, since it is very similar to the proof of
 Theorem~\ref{y-slice-theorem} (but simpler because behavior as $x\to\infty$ does not arise.)

Of course, the same theorem holds with the roles of $x$ and $y$ reversed, that is, for $M\cap\{x=0\}\cap\{y>0\}$.

\begin{corollary}\label{unique-z-corollary}
If $M\in \Cc$, then there is a unique $z$ such that $(x(M),0,z)\in M$.
\end{corollary}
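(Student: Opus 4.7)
The plan is to read off the corollary directly from Theorem~\ref{finite-y-slice-theorem}. Since $M \in \Cc$ is disjoint from the $z$-axis (condition~\eqref{cc-disjoint-item} in Definition~\ref{Cc-definition}), we have $x(M) > 0$, and by the $\{x=0\}$-symmetry of $M$ it suffices to show that there is a unique $z$ with $(x(M), 0, z) \in M \cap \{x > 0\}$.

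First I would split into the two cases of Theorem~\ref{finite-y-slice-theorem}. If $x(M) < a$, then $M \cap \{y=0\} \cap \{x > 0\}$ is the union of the graph of $\ulow$ on $[x(M), a]$ and the graph of $\uup$ on $[x(M), A]$, and both functions satisfy $\ulow(x(M)) = \uup(x(M)) = z(M)$. Hence the fiber over $x = x(M)$ consists of the single point $(x(M), 0, z(M))$. If instead $x(M) = a$, then $M \cap \{y=0\} \cap \{x>0\}$ is the graph of a single continuous function $\phi$ on $[a, A]$, whose fiber over $x = a = x(M)$ is the single point $(a, 0, \phi(a))$. In either case the value of $z$ is unique.

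There is no real obstacle here: the entire content is already packaged inside Theorem~\ref{finite-y-slice-theorem} (the Morse-theoretic count forcing the two branches $\uup$ and $\ulow$ to meet at $x = x(M)$, rather than giving a vertical segment or two separate tangency points). The only thing to check is that the statement of Theorem~\ref{finite-y-slice-theorem} does cover every surface $M \in \Cc$, not just those with $x(M) < a$; the second bullet of that theorem handles the boundary case $x(M) = a$, so no surface is missed.
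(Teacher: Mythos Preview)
Your proof is correct and matches the paper's approach exactly: the corollary is stated immediately after Theorem~\ref{finite-y-slice-theorem} with no separate proof, because it is read off from that theorem in precisely the way you describe. The only superfluous remark is the appeal to $\{x=0\}$-symmetry, since $x(M)>0$ already places $(x(M),0,z)$ in $\{x>0\}$; but this does no harm.
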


   Recall (Definition~\ref{z-of-M-definition}) that $z(M)$ is equal to the $z$ in 
Corollary~\ref{unique-z-corollary}.

\begin{corollary}\label{just-1-or-2-corollary}
Suppose $M\in \Cc$.
For each $x$, the vertical line $\{(x,0)\}\times\RR$ intersects $M$ in at most $2$ points.
For each $y$, the vertical line $\{(0,y)\}\times\RR$ intersects $M$ in at most $2$ points.
For each $y\in (b,B]$ ,the vertical line $\{(0,y)\times\RR\}$ interects $M$ in exactly one point.
\end{corollary}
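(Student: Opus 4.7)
My plan is to apply Theorem~\ref{finite-y-slice-theorem} (for the $\{y=0\}$ slice) together with its natural analog obtained by swapping the roles of $x$ and $y$ (for the $\{x=0\}$ slice), and to handle the coordinate axes themselves using the disjointness of $M$ from $Z$ and the symmetries of $M$.

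For the first claim, I will fix $x\in\RR$ and consider the vertical line $\ell=\{(x,0)\}\times\RR$, which lies in $\{y=0\}$. When $x=0$, the line $\ell$ is $Z$ and misses $M$ by Definition~\ref{Cc-definition}\eqref{cc-disjoint-item}. When $x\neq 0$, by the $x\mapsto -x$ symmetry of $M$ it suffices to treat $x>0$, and then Theorem~\ref{finite-y-slice-theorem} presents $M\cap\{y=0\}\cap\{x>0\}$ as the graph of at most two functions of $x$; the fiber over the given $x$-value therefore contains at most two points.

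For the second claim, I will invoke the statement obtained from Theorem~\ref{finite-y-slice-theorem} by literally interchanging $x$ and $y$ (with $b,B$ playing the roles of $a,A$, and $y(M):=\dist(Z,M\cap\{x=0\})$ playing the role of $x(M)$). This interchange is legitimate because the hypotheses on $M\in\Cc$ and the ingredients of the proof (symmetry under reflection in each coordinate plane, disjointness from $Z$, and the Morse--Rad\'o bound $\mathsf{N}(F_\vv|M)\le 2$) are symmetric in the first two coordinates. The same argument as in the first claim then shows that $\{(0,y)\}\times\RR$ meets $M$ in at most two points.

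For the third claim, I expect no serious obstacle: fix $y\in(b,B]$. If $y(M)<b$, then the analog of $\ulow$ is supported on $[y(M),b]$, which excludes $y$, while the analog of $\uup$ is supported on $[y(M),B]$ and, being a graph, contributes exactly one point. If $y(M)=b$, then the single graph $\phi$ is supported on $[b,B]$ and similarly contributes exactly one point. The only real ``step'' is identifying which branch's domain covers a given $y>b$. The whole argument is short and bookkeeping in nature; the substantive content is already contained in Theorem~\ref{finite-y-slice-theorem}.
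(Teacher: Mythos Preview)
Your argument is correct and is exactly the derivation the paper intends: the corollary is stated without proof immediately after Theorem~\ref{finite-y-slice-theorem} and the remark that the same theorem holds with the roles of $x$ and $y$ reversed, and your reading-off of the fiber counts from those graph descriptions (together with $M\cap Z=\emptyset$ and $y(M)\le b$) is precisely what is required.
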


\begin{theorem}\label{components-theorem}
Suppose that $M$ is a surface in $\Cc$ or in $\Aa$. If $M\in\Cc$, suppose also that $x(M)<a(M)$.
Let  $\Mlow$ be the component of $M\cap \{x>x(M)\}$ containing
\[
  \graph(\ulow) \cap \{x>x(M)\}
\]
and let $\Mup$ be the component of $M\cap\{x>x(M)\}$ containing
\[
  \graph(\uup)\cap \{x>x(M)\}.
\]
Then $\Mlow\ne \Mup$.

Furthermore, if $M\in \Cc$, then 
\begin{equation}\label{2-components}
  M\cap \{x>x(M)\} = \Mlow\cup\Mup.
\end{equation}
\end{theorem}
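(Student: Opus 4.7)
The plan is, in both cases, to reduce the statement to a component count for the slice $M(t_0) = M\cap\{x=t_0\}$ with $t_0 > x(M)$, combining the product structure from Theorem~\ref{product-theorem} with the reflection $\tau\colon(x,y,z)\mapsto(x,-y,z)$ and Lemma~\ref{topology-lemma-1} (which rules out closed curves in a vertical-plane slice of a translating annulus in $\RR^3\setminus Z$). By Theorem~\ref{x-critical-theorem} in the $\Cc$ case, and by Theorems~\ref{complete-existence-theorem} and~\ref{y-slice-theorem} in the $\Aa$ case, the only critical points of $F_{\ee_1}|M$ lie at $(\pm x(M), 0, \cdot)$, so the interval $(x(M), a)$ (for $\Cc$) or $(x(M), \infty)$ (for $\Aa$) contains no critical value of $F_{\ee_1}|M$. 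Theorem~\ref{product-theorem}, or its direct analogue for compact $M$ with boundary away from that interval, then exhibits $M\cap\{x>x(M)\}$ as diffeomorphic to a product $I \times M(t_0)$ on that interval.

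The component count for $M(t_0)$ proceeds via a symmetry-and-crossing argument. Theorems~\ref{y-slice-theorem} and~\ref{finite-y-slice-theorem} give exactly two intersections of $M(t_0)$ with $\{y=0\}$, at $q_\uparrow = (t_0, 0, \uup(t_0))$ and $q_\downarrow = (t_0, 0, \ulow(t_0))$, while Lemma~\ref{topology-lemma-1} rules out closed components. The reflection $\tau$ permutes the components of $M(t_0)$ and fixes the points on $\{y=0\}$. A component not fixed by $\tau$ cannot meet $\{y=0\}$, since such a crossing would also lie on the disjoint reflected component. A $\tau$-invariant component, being an arc carrying a non-identity involution of $\RR$ or $[0,1]$ (the identity case would force the arc into $\{y=0\}$, contradicting discreteness of $M(t_0)\cap\{y=0\}$), has exactly one fixed point and hence exactly one crossing. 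Thus $q_\uparrow$ and $q_\downarrow$ lie in two distinct $\tau$-invariant components, so $M(t_0)$ has at least two components, and the product structure then gives $\Mup \ne \Mlow$ in both cases.

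For the ``furthermore'' statement in the $\Cc$ case I need the equality $M\cap\{x>x(M)\}=\Mup\cup\Mlow$. For $t_0 \in (x(M), a)$ the slice $M(t_0)$ has exactly four boundary endpoints $(t_0, \pm b, 0)$ and $(t_0, \pm B, 0)$, and (since closed components are excluded) consists of two arcs pairing these four points. Of the three possible pairings, the two non-symmetric ones force $\tau$ to swap the two arcs, so neither arc is $\tau$-invariant and both contribute zero crossings with $\{y=0\}$, contradicting the existence of $q_\uparrow$ and $q_\downarrow$. Hence the pairing must be the symmetric one $\{(t_0,\pm b,0)\}$ and $\{(t_0,\pm B,0)\}$, yielding exactly two slice components; these extend via the product structure to the two components $\Mup$ and $\Mlow$ of $M\cap\{x(M)<x<a\}$. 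A parallel slice analysis on $(a,A)$ produces a single $\tau$-invariant arc through $(t_0, 0, \uup(t_0))$, hence a single component of $M\cap\{a<x<A\}$. Interior points of $M\cap\{x=a\}$ are connected in $M$ to nearby points at $x>a$, which lie in $\Mup$, so they too belong to $\Mup$; this gives $M\cap\{x>x(M)\}=\Mup\cup\Mlow$.

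The main technical point will be justifying the product-structure claim in the $\Cc$ case: Theorem~\ref{product-theorem} is stated for complete translators of finite type, but its proof relies only on boundedness of the gradient flow of $F_{\ee_1}$ on a regular region, which holds equally well for compact $M$ with boundary lying outside the chosen interval. Conceptually, the key insight is that $\tau$-non-invariant components cannot meet $\{y=0\}$, which reduces both the ``distinct'' claim and the enumeration of pairings in the $\Cc$ case to the elementary crossing count supplied by Theorems~\ref{y-slice-theorem} and~\ref{finite-y-slice-theorem}.
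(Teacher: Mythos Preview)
Your approach is correct in outline but takes a genuinely different route from the paper. The paper argues by contradiction using the maximum principle: if $\Mup=\Mlow$, choose an embedded path $C\subset M\cap\{x>x(M)\}$ from a point of $\graph(\ulow)$ to a point of $\graph(\uup)$ with $C\cap\{y=0\}=\partial C$, close it off with the arc $C_0\subset M\cap\{y=0\}$ joining the endpoints, note that $C\cup C_0$ is null-homotopic in $\RR^3\setminus Z$ and hence (via Lemma~\ref{topology-lemma-1}) bounds a disk $D\subset M$, and observe that the doubled region $\Dd=D\cup\tau(D)$ has $\partial\Dd\subset\{x>x(M)\}$ yet contains the point $(x(M),0,z(M))$, contradicting $\min_{\Dd}x=\min_{\partial\Dd}x$. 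For the ``furthermore'' clause the paper simply notes that $\Mup$ and $\Mlow$ already absorb $\partialout M\cap\{x>x(M)\}$ and $\partialin M\cap\{x>x(M)\}$ respectively, so any third component $\Sigma$ would carry no boundary in $\{x>x(M)\}$, forcing $x|\overline\Sigma$ to attain an interior maximum in violation of the strong maximum principle. This is considerably shorter than your slice-and-product analysis, and it avoids invoking Theorem~\ref{product-theorem} entirely.

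Your route, by contrast, gives a more explicit structural picture (the exact component count of each slice, and the specific pairing of boundary endpoints), at the cost of more machinery. One technical point needs tightening: the justification ``compact $M$ with boundary lying outside the chosen interval'' is not accurate, since $\partial M$ does meet $\{x\in(x(M),a)\}$. What actually makes the product structure go through is an Ehresmann-type statement for manifolds with boundary: you need $x$ to be a submersion both on the interior (no interior critical points of $F_{\ee_1}$, as you note) \emph{and} on $\partial M$ over the interval in question. Convexity of the boundary curves guarantees the latter on $(x(M),a)$ and on $(a,A)$ when $\partial M$ is smooth, and the general $\Cc$ case then follows by approximation. A second small point: for $M\in\Aa$, connectedness of $M$ has not yet been established at this stage of the paper (that is Theorem~\ref{connected-theorem}), so strictly speaking Lemma~\ref{topology-lemma-1} and your slice argument should be applied to the annular component containing $\graph(\ulow)\cup\graph(\uup)$, which suffices since both $\Mup$ and $\Mlow$ lie in that component.
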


We will show later (Theorem~\ref{connected-theorem}) that \eqref{2-components} also holds for $M\in \Aa$.
(Knowing~\eqref{2-components} for $M\in \Rr$ does not immediately imply it for $M\in \Aa$
because, in general, a limit of connected sets need not be connected.)

\begin{proof}
It suffices to prove that $\Mlow\ne \Mup$ for $M\in \Cc$; the result for $M\in \Aa$ follows easily.
Suppose, contrary to the theorem, that $\Mup=\Mlow$.
Then there is a path $C$ in $M\cap\{x>x(M)\}$ joining a point in $\graph(\ulow)$ to a point in $\graph(\uup)$.
We can choose the path so that $C$ is embedded and so that $C\cap\{y=0\} = \partial C$.
Let $C_0$ be the arc in $M\cap\{y=0\}$ joining the endpoints of $C$.
Since $C\cup C_0$ is homologically trivial in $\RR^3\setminus Z$, it is homologically trivial in $M$. 
(See Lemma~\ref{topology-lemma-1}.)  Thus, since $M$ is an annulus, $C\cup C_0$ bounds a simply-connected region $D$ in $M$.
Now the union $\Dd$ of $D$ and its image under $(x,y,z)\mapsto (x,-y,z)$ will be region whose boundary
lies in $\{x>x(M)\}$.  By the maximum principle,
\[
  \min_{\Dd}x(\cdot) = \min_{\partial \Dd} x(\cdot).
\]
The left-side is $\le x(M)$ since $(x(M),0,z(M))\in \Dd$, whereas the right-side is $>x(M)$,
a contradiction.  Thus $\Mup\ne \Mlow$.

Now we show (assuming $M\in \Cc$) that~\eqref{2-components} holds.
 Note that $\Mup$ contains $\partialout M\cap \{x>x(M)\}$
and $\Mlow$ contains $\partialin M\cap\{x>x(M)\}$.
If there were another component $\Sigma$ of $M\cap\{x>x(M)\}$, it would have no boundary in $\{x>x(M)\}$.
But then $x(\cdot)|\Sigma$ would attain its maximum at an interior point, violating the strong maximum
principle. Thus there is no such component, so~\eqref{2-components} holds.  
\end{proof}

\section{A Slope Bound}

For $b\ge \pi/2$, let
\[
f_b: \RR\times (-b,b) \to \RR
\]
be the translator such that $f(0,0)=0$ and $Df(0,0)=0$.  Thus $f_b$ is an untilted grim reaper if $b=\pi/2$
and a $\Delta$-wing if $b>\pi/2$.

The goal of this section is to prove

\begin{theorem}\label{slopes-theorem}
If $M\in \Rr$, then 
\begin{equation}\label{slopes-inequality}
   (\ulow)'(x) < \pdf{}x f_b(x,0)   \le 0
\end{equation}
for all $x\in [x(M),a]$, where $b=b(M)$.

If $M\in \Aa(b)$, then the inequality~\eqref{slopes-inequality} holds for all $x\ge x(M)$.

In particular, $\ulow$ is a strictly decreasing function.
\end{theorem}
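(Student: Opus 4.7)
The inequality $\partial_x f_b(x,0)\le 0$ is immediate from the structure of $f_b$ recalled in Section~\ref{graphs-section}.  For $b=\pi/2$, $f_b(x,y)=\log\cos y$ does not depend on $x$, so $\partial_x f_b\equiv 0$.  For $b>\pi/2$, the $\Delta$-wing $f_b$ has $D^2 f_b$ strictly negative definite and $Df_b(0,0)=0$, so $\partial_x f_b(\cdot,0)$ strictly decreases from $0$, giving $\partial_x f_b(x,0)<0$ for $x>0$.

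The substantive content is the strict lower bound, and the plan is to obtain it by Morse--Rad\'o counting against the foliation of $\{|y|<b\}\times\RR$ whose leaves are the vertical translates of $\graph(f_b)$.  Set
\[
  \Psi(x,y,z)\;:=\;z-f_b(x,y);
\]
each level set of $\Psi$ is a translator, so Theorem~\ref{easy-bound-R} (when $M\in\Rr$) or the finite-type property combined with Remark~\ref{existence-remark} (when $M\in\Aa(b)$) gives $\mathsf{N}(\Psi|M)\le 8$.  Both $M$ and every leaf are invariant under $y\mapsto -y$, so at any point of $M\cap\{y=0\}$ the two tangent planes already contain $\ee_2$, and tangency there reduces to matching $x$-slopes.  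Hence, with $\psi(x):=\ulow(x)-f_b(x,0)$, any interior zero $x_*$ of $\psi'$ would produce a critical point $p_*=(x_*,0,\ulow(x_*))$ of $\Psi|M$.  Writing $M$ and the leaf locally as graphs $z=u(x,y)$ and $z=v(x,y)$, subtracting the translator PDEs for $u$ and $v$ at $(x_*,0)$ (where $\nabla u=\nabla v=(m,0)$ with $m:=\partial_x f_b(x_*,0)$) and using that $(u-v)_{xy}(x_*,0)=0$ by $y$-symmetry gives
\[
  (u-v)_{xx}(x_*,0)+(1+m^2)(u-v)_{yy}(x_*,0)=0,
\]
so the Hessian of $u-v$ at $(x_*,0)$ is indefinite and $p_*$ is a Morse saddle of multiplicity $2$.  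The $x$-reflection $(-x_*,0,\ulow(x_*))$ supplies a second such saddle, so each zero of $\psi'$ consumes at least $4$ units of the Morse--Rad\'o budget.

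To exclude zeros outright, I would combine these symmetry-paired saddles with the further critical points of $\Psi|M$ forced by the global picture: the analogous saddles on $\uup$ produced by zeros of $\uup'-\partial_x f_b(\cdot,0)$, the interior extrema coming from the blow-up $\Psi\to+\infty$ as $|y|\to b$, and (for $M\in\Aa$) the critical points created by the interaction of $M$ with the asymptotic planes $\{y=\pm b\}$ as $z\to-\infty$.  Closing this bookkeeping so that at most $8$ units are used shows that no interior zero of $\psi'$ is possible.  Since $\ulow$ has a vertical tangent at the neck (Theorem~\ref{x-critical-theorem}), $\psi'(x)\to-\infty$ as $x\to x(M)^+$, so the absence of interior zeros then forces $\psi'<0$ throughout.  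For $M\in\Aa(b)$ the inequality is first obtained in non-strict form by passing to the limit from $\Rr$ via lower semicontinuity (Theorem~\ref{semicontinuity-theorem}), and then re-strengthened to strict by applying unique continuation to a putative tangency in the limit.

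The main obstacle is precisely this last counting step.  The symmetry argument by itself only rules out more than two zeros of $\psi'$, not all of them, so the crux is extracting enough additional critical points of $\Psi|M$ from the interaction of $M$ with the asymptotic behavior of the $f_b$-foliation and from the structure of $\uup$ to exhaust the full budget $\mathsf{N}(\Psi|M)\le 8$.
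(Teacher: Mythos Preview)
Your proposal is incomplete, and you say so yourself: the counting against the foliation $\{\Psi=\text{const}\}$ only gives $\mathsf{N}(\Psi|M)\le 8$, and you do not close the budget.  There is also a slip in the multiplicity claim: an indefinite (hence nondegenerate) Hessian of $u-v$ means the order of contact is exactly one, so in the convention of Definition~\ref{critical-point-definition} the critical point has multiplicity~$1$, not~$2$ (compare Theorem~\ref{x-critical-theorem}).  Thus each zero of $\psi'$ contributes only $2$ to the budget after $x$-reflection, and your ``bookkeeping'' problem is worse than you state.

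The paper avoids this difficulty by choosing a different foliation.  Rather than vertical translates of the complete graph $f_b$, it foliates the open region $U$ between the graphs of $f_{a,b}$ and $f_{A,B}$ (where $f_{\alpha,\beta}$ is the compact translating graph over $[-\alpha,\alpha]\times[-\beta,\beta]$ with zero boundary values) by the one-parameter family $\graph(f_{\alpha(t),\beta(t)})$ as $(\alpha,\beta)$ moves from $(a,b)$ to $(A,B)$.  The interior $M':=M\setminus\partial M$ lies in $U$ by the maximum principle, and since $M'$ has no boundary and $\chi(M')=0$, Theorem~\ref{morse-rado-theorem} gives $\mathsf{N}(T|M')\le 0$ for the corresponding foliation function $T$.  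So $M$ is \emph{everywhere} transverse to this foliation, and in particular along $\{y=0\}$ the slope $(\ulow)'(x)$ never equals $\partial_x f^{T}(x,0)$; since $(\ulow)'(x(M))=-\infty$, one gets $(\ulow)'(x)<\partial_x f^{T}(x,0)$ for all $x$.  The passage from $\partial_x f^{T}(x,0)$ to $\partial_x f_b(x,0)$ is a separate slope comparison between the compact graph $f_{\alpha,\beta}$ (with $\alpha>0$, $\beta\ge b$) and the complete graph $f_b$, proved by a second Morse--Rad\'o count on $\graph(f_{\alpha,\beta})$ together with the boundary maximum principle (this is Lemma~\ref{slopes-lemma}).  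The case $M\in\Aa(b)$ then follows by taking limits.  The idea you are missing is that the \emph{right} foliation is one adapted to $\partial M$, so that the Morse--Rad\'o bound is zero rather than eight.
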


(In Theorem~\ref{slopes-theorem}, we require that $M\in \Rr$ and not merely that $M\in \Cc$.
In fact, the theorem is false if we replace $\Rr$ by $\Cc$.)

\begin{corollary}\label{slopes-corollary}
If $M\in \Aa(b)$, then
\[
   \lim_{x\to\infty} (\ulow)'(x) \le -s(b).
\]
\end{corollary}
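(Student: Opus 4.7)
The plan is to combine the pointwise slope bound from Theorem~\ref{slopes-theorem} with the known asymptotic behavior of the $\Delta$-wing (or untilted grim reaper, when $b=\pi/2$) in the $x$-direction.

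First, by Theorem~\ref{y-slice-theorem}\eqref{y-slice-3}, the limit $(\ulow)'(\infty):=\lim_{x\to\infty}(\ulow)'(x)$ exists and is finite, so it makes sense to pass to a limit in any pointwise bound on $(\ulow)'(x)$. Next, by Theorem~\ref{slopes-theorem}, for $M\in\Aa(b)$ we have the strict inequality
\[
(\ulow)'(x) \;<\; \pdf{}{x} f_b(x,0)
\]
for every $x\ge x(M)$.

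The second step is to compute $\lim_{x\to\infty}\partial_x f_b(x,0)$. Here I would invoke the convergence property stated in Section~\ref{graphs-section}: $f_b(x+t,y)-f_b(t,0)\to w_b(x,y)$ as $t\to+\infty$, where $w_b$ is the tilted grim reaper surface over $\RR\times(-b,b)$ with $\partial_x w_b\equiv -s(b)$. Because the $f_b$ are solutions of the translator PDE, this is smooth convergence on compact subsets of $\RR\times(-b,b)$, so in particular $\partial_x f_b(t,0)\to\partial_x w_b(0,0)=-s(b)$ as $t\to\infty$.

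Combining the two ingredients, take $x\to\infty$ in $(\ulow)'(x)<\partial_x f_b(x,0)$: the left-hand side tends to $(\ulow)'(\infty)$, and the right-hand side tends to $-s(b)$, giving the asserted weak inequality $(\ulow)'(\infty)\le -s(b)$. There is no real obstacle here; the one point that needs a sentence of justification is the smoothness of the limit $f_b(x+t,y)-f_b(t,0)\to w_b(x,y)$, which follows from standard translator compactness (bounded curvature on compacts plus uniqueness of the tilted grim reaper of width $2b$ and slope $-s(b)$ through the prescribed point, exactly as in Section~\ref{graphs-section}).
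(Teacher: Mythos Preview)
Your proof is correct and follows essentially the same approach as the paper: pass to the limit in the pointwise inequality of Theorem~\ref{slopes-theorem}, using that $\partial_x f_b(x,0)\to -s(b)$ as $x\to\infty$ (from the convergence $f_b(x+t,y)-f_b(t,0)\to w_b(x,y)$ stated in Section~\ref{graphs-section}). The paper's proof is the one-line version of what you wrote.
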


\begin{proof}[Proof of Corollary~\ref{slopes-corollary}]
Let $x\to \infty$ in~\eqref{slopes-inequality}.  (The limit of $(\ulow)'(x)$ exists by
Theorem~\ref{y-slice-theorem}).
\end{proof}

Later (Theorem~\ref{main-theorem-concluded}) we will show that equality holds in Corollary~\ref{slopes-corollary}.

\begin{proof}[Proof of Theorem~\ref{slopes-theorem}]
It suffices to prove the assertion for $M\in \Rr$, as the result for $M\in \Aa(b)$ follows trivially.

For $\alpha,\beta>0$, let 
\[
f_{\alpha,\beta}: [-\alpha,\alpha]\times[-\beta,\beta]\to\RR
\]
be the translator with boundary values $0$.

Now suppose that $M\in \Rr$. 

Let
\begin{align*}
&\alpha:[-\infty,\infty]\to [a,A] \\
&\beta:[-\infty,\infty] \to [b,B]
\end{align*}
be surjections that are continuous and strictly increasing.
Let $U$ be the region in the upper halfspace $\{z>0\}$ between the graphs 
of $f_{a,b}$ and $f_{A,B}$.
By the maximum principle, the interior of $M$ lies in $U$.

For $t\in [-\infty,\infty]$, let $f^t=f_{\alpha(t),\beta(t)}$.

Note that the graphs of $f^t$ with $t\in \RR$ form a foliation $\Ff$ of $U$.
Let $T:U\to \RR$ be the function such that
\[
   T(p) = t   \quad \text{for $p\in \graph(f^t).$}
\]
Let $M'=M\setminus \partial M$.
Since $T|M'$ is proper and since $M'$ includes no boundary points,
\[
\mathsf{N}(T|M') \le \chi(M') = 0
\]
by Theorem~\ref{morse-rado-theorem}.
That is, $M'$ is everywhere transverse to the foliation $\Ff$.

For $p\in U$, note that $\graph(f^{T(p)})$ is the leaf that passes through $p$.
For $x\in [x(M),a)$, let
\[
  u(x) = \ddx f^T(x,0) - (\ulow)'(x), \quad\text{where $T=T(x,0,\ulow(x))$}.
\]
Now $M$ and $\graph(f^T)$ are not tangent at $p=(x,0,\ulow(x))$.  Since $\ee_2$ is tangent to $M$ and to
$\graph(f^T)$ at $p$, we see that $M\cap \{y=0\}$ and $\graph(f^T)\cap\{y=0\}$ are not tangent at $p$.
In other words, $u(x)\ne 0$.
We have shown that $u(\cdot)$ never vanishes on $[x(M),a)$.
Now $u(x)= \infty$ for $x=x(M)$.  Thus $u(x)>0$ for all $x\in [x(M),a)$.
Consequently, $u(x)\ge 0$ for all $x\in [x(M),a]$.
Thus if $x\in [x(M),a]$ and if $T=T(x,0,\ulow(x))$, then
\begin{align*}
(\ulow)'(x) 
&\le \ddx f^T(x,0)  \\
&= \ddx f_{\alpha(T),\beta(T)}(x,0) \\
&<  \ddx f_b(x,0),
\end{align*}
where the last inequality is by Lemma~\ref{slopes-lemma} below.
This completes the proof of Theorem~\ref{slopes-theorem}.
\end{proof}

\begin{lemma}\label{slopes-lemma}
If $\pi/2 \le b \le \beta$, then
\[
   \pdf{}x f_{\alpha,\beta}(x,0) < \pdf{}x f_b(x,0)
\]
for all $x\in (0,\alpha]$.
\end{lemma}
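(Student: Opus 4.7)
The plan is to apply the Morse-Radó bound (Theorem~\ref{morse-rado-theorem}) to the minimal foliation function $H(x,y,z):=z-f_b(x,y)$, whose level sets are the vertical translates of $\graph(f_b)$, restricted to $M:=\graph(f_{\alpha,\beta})$ intersected with the slab $W:=\{|y|<b\}\times\RR$. Setting $w(x,y):=f_{\alpha,\beta}(x,y)-f_b(x,y)$, the function $H|M$ is exactly $w$ on its natural domain $[-\alpha,\alpha]\times(-b,b)$.

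First, using the maximum principle, I will show that $w\ge h_0$ on $[-\alpha,\alpha]\times(-b,b)$, where $h_0:=-f_b(\alpha,0)\ge 0$, with equality precisely at the two boundary points $(\pm\alpha,0)$ and strict inequality in the interior. The key inputs are: on the closed edges $\{|x|=\alpha,\,|y|<b\}$, $f_{\alpha,\beta}=0$ while $f_b\le 0$ (because $0=f_b(0,0)$ is the maximum of $f_b$), and $y\mapsto f_b(\pm\alpha,y)$ is strictly concave with maximum at $y=0$; as $|y|\to b$, $w\to+\infty$; and the strong maximum principle for the linear elliptic equation satisfied by $w$ (as the difference of two translator solutions) rules out interior minima.

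Next, I assemble the Morse-Radó ingredients. The surface $M\cap W$ is contractible so $\chi(M\cap W)=1$, its boundary $\partial M\cap W$ consists of the two open arcs $\{(\pm\alpha,y,0):y\in(-b,b)\}$, and on each arc $H=-f_b(\pm\alpha,y)$ has exactly one local minimum (at $y=0$, value $h_0$) and no local maximum (since $H\to+\infty$ at both open ends). By the previous step, both boundary minima are also local minima of $H|M$, so $|S|=2$ and $|T|=0$. Properness of $H|M\cap W$ follows from $w\to\infty$ as $|y|\to b$, so Morse-Radó gives
\[\mathsf{N}(H|M)\le |S|-|T|-\chi(M\cap W)=2-0-1=1.\]
Since the coordinate-plane symmetries of both $f_b$ and $f_{\alpha,\beta}$ force $\nabla f_b(0,0)=\nabla f_{\alpha,\beta}(0,0)=0$, the origin is an interior critical point of $H|M$, so by the bound it is the \emph{only} one; equivalently $\nabla w\ne 0$ on $[-\alpha,\alpha]\times(-b,b)\setminus\{(0,0)\}$.

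To conclude, along $\{y=0\}$ the $y$-symmetry of $w$ gives $\partial_y w(x,0)\equiv 0$, hence $\partial_x w(x,0)\ne 0$ for $x\in(0,\alpha)$. Since $w(0,0)=f_{\alpha,\beta}(0,0)>h_0=w(\alpha,0)$ (strict because $(0,0)$ is an interior point), $\int_0^\alpha\partial_x w(x,0)\,dx<0$, which forces $\partial_x w(x,0)<0$ throughout $(0,\alpha)$. Hopf's boundary lemma, applied to $w-h_0\ge 0$ (the difference of the two translators $f_{\alpha,\beta}$ and $f_b+h_0$, satisfying a linear elliptic equation) at the boundary point $(\alpha,0)$, extends the strict inequality to $\partial_x w(\alpha,0)<0$. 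Rewriting $\partial_x w=\partial_x f_{\alpha,\beta}-\partial_x f_b$ yields the lemma. The main obstacle I expect is the Morse-Radó computation itself, specifically verifying that the two boundary minima of $H|\partial M$ are also minima of $H|M$ (which uses both the strong maximum principle on $\Omega$ and the divergence of $w$ at $|y|=b$), and that no extra extrema are introduced by the open ends of the two closed boundary arcs; once $\mathsf{N}(H|M)\le 1$ is established, the remaining sign analysis is routine.
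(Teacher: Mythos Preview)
Your proof is correct and follows essentially the same route as the paper: apply Morse--Rad\'o to $H(x,y,z)=z-f_b(x,y)$ on $\Sigma=\graph(f_{\alpha,\beta})$ to get $\mathsf{N}(H|\Sigma)\le 1$, identify the origin as the unique interior critical point of $w=f_{\alpha,\beta}-f_b$, use $y$-symmetry to deduce $\partial_x w(x,0)\ne 0$ on $(0,\alpha)$, and settle the sign via Hopf at $(\alpha,0)$. Your additional step~3 (proving $w\ge h_0$ by the maximum principle so that the two boundary minima lie in $S$) is correct but not needed: the paper simply uses $|Q|=2$ in the $|Q|-|A|-\chi$ version of Theorem~\ref{morse-rado-theorem}, which already gives $\mathsf{N}\le 2-1=1$ without checking that the boundary minima are minima of $H|M$.
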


To prove the lemma, it suffices to prove that
\begin{equation}\label{thing-1}
\text{$\ddx f_{\alpha,\beta}(x,0)\ne \ddx f_b(x,0)$ for each $x\in (0,\alpha)$}, \tag{i}
\end{equation}
and that
\begin{equation}\label{thing-2}
\text{$\ddx f_{\alpha,\beta}(\alpha,0) < \ddx f_b(\alpha,0)$.} \tag{ii}
\end{equation}

\begin{proof}[Proof of~\eqref{thing-1}]
Let $\Sigma=\graph(f_{\alpha,\beta})$.  

Define $F_b$ on the slab $W:=\{|y|<b\}$ by
\[
F_b(x,y,z)=z - f_b(x,y).
\]
Now $F_b|\partial \Sigma$ has exactly two critical points, namely $(\alpha,0,0)$ and $(-\alpha,0,0)$.
Thus
\begin{align*}
\mathsf{N}(F_b|\Sigma)
&\le
2 - \chi(\Sigma\cap\{|y|<\beta\} \\
&= 1
\end{align*}
Thus $F_b|\Sigma$ has at most one critical point.  
Equivalently, the function
\[
  w:=f_{\alpha,\beta} - f_b
\]
has at most one critical point in the rectangle $R:=(-\alpha,\alpha)\times(-\beta,\beta)$.
Since $(0,0)$ is a critical point, it is the only one.

Thus if $x\in (0,\alpha)$, then $Dw(x,0)\ne 0$.
Now $\partial_y w(x,0) = 0$ since $w(x,y)\equiv w(x,-y)$.
Thus
\[
  \partial_x w(x,0)\ne 0.
\]
This completes the proof of~\eqref{thing-1}.
\end{proof}

\begin{proof}[Proof of~\eqref{thing-2}]
Let $u= f_b - f_{\alpha,\beta}$ on $[-\alpha,\alpha] \times (-\beta,\beta)$.
Now $u$ is proper and bounded above, so it attains a maximum.
By the maximum principle, the maximum is attained at the boundary,
that is at a point $(\pm \alpha, y)$. By symmetry, it is attained at $(\alpha,y)$
for some $y$.  In fact, the maximum of $u(\alpha,y)$ occurs at $y=0$.

Thus we have shown
\[
 \max u = u(A,0).
\]
By the strong maximum principle, $u_x(A,0)>0$, i.e., that $\ddx f_b(A,0)> \ddx f_{\alpha,\beta}(A,0)$.
This completes the proof of~\eqref{thing-2}, and thus the proof of Lemma~\ref{slopes-lemma}.
\end{proof}

\section{Sideways Graphicality}\label{sideways-section}

We now show that large portions of $M\in \Cc$ and $M\in \Aa$ are graphical sideways in that they can be expressed
as $y=y(x,z)$.

In this section, we let
\begin{align*}
&h_c(x,y) = \log(\cos(x-c)) \qquad\text{for $x\in I_c:=(c-\pi/2, c+\pi/2)$}, \\
&H_c(x,y,z) = z - h_c(x,y) \qquad\text{for $(x,y,z)\in W_c:=I_c\times \RR^2$}. 
\end{align*}
Thus the graph of $h_c$ is an untilted grim reaper over the strip $I_c\times\RR$.

\begin{proposition}\label{first-y-graph-proposition}
Suppose that $M$ is a surface in $\Cc$ or in $\Aa$,
and that
\[
   c\ge \hat c:= x(M)+\pi/2.
\]
Then
\[
  \mathsf{N}(H_c| M\cap\{y\ne 0\})=0.
\]
\end{proposition}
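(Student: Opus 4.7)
The plan is to apply Morse--Rad\'o theory. Since $h_c$ is independent of $y$, each leaf of the level-set foliation of $H_c$ is invariant under translation in the $y$-direction, and therefore at any critical point of $H_c|M$ the tangent plane $T_pM$ contains $\partial_y$. By the reflection symmetry of $M$ across $\{y=0\}$, critical points of $H_c|M$ with $y\neq 0$ come in $\sigma$-pairs under $\sigma(x,y,z)=(x,-y,z)$, so it suffices to show there are no critical points with $y>0$.

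I would then apply Theorem~\ref{morse-rado-theorem} to $\tilde M := M \cap \{y \geq 0\}$ with $W = W_c$ and $F = H_c$. For $M \in \Aa$, I would first reduce to the case $M \in \Rr$ using the defining property that $M$ is a smooth limit of surfaces $M_n - (0,0,z(M_n))$ with $M_n \in \Rr$ and $x(M_n) \to x(M)$: the bound $c \geq x(M_n) + \pi/2$ holds for all large $n$ (perhaps after an infinitesimal perturbation of $c$), and lower semicontinuity of $\mathsf{N}$ (Theorem~\ref{semicontinuity-theorem}) transfers the conclusion. The hypothesis $c \geq \hat c$ together with $x(M)>0$ forces $c > \pi/2$, so $W_c \subset \{x > x(M)\}$ and the convex planar set $U := I_c \times [0,\infty)$ excludes the origin of $\RR^2$. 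By Lemma~\ref{topology-lemma}, every component of $\tilde M \cap W_c = M \cap (U \times \RR)$ is therefore a disk (for $M \in \Cc$, hence in particular for $M \in \Rr$). Properness of $H_c|\tilde M \cap W_c$ holds because $H_c \to +\infty$ as $x \to c \pm \pi/2$, while $M$ is confined to a slab in $y$ (and $z$ is bounded above for $M \in \Aa$, or $M$ is compact for $M \in \Rr$).

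The main obstacle will be the boundary bookkeeping in the Morse--Rad\'o inequality $\mathsf{N}(H_c|\tilde M) \le |S|-|T|-\chi(\tilde M \cap W_c)$. The boundary $\partial \tilde M$ intersected with $W_c$ decomposes into: (i) $\Gamma_0 := M \cap W_c \cap \{y=0\}$, on which $H_c$ equals $\ulow(x) - \log\cos(x-c)$ or $\uup(x) - \log\cos(x-c)$ on the two arcs described in Theorem~\ref{finite-y-slice-theorem} (or Theorem~\ref{y-slice-theorem} in the annuloid case), each blowing up as $x \to c \pm \pi/2$ and contributing at most one local minimum per arc; (ii) the lateral boundary $M \cap \partial W_c$, where $H_c = +\infty$ and no local minima occur; and (iii) portions of $\partial M$ with $y \ge 0$ (present only when $M \in \Rr$), on which $z=0$ and $H_c = -\log\cos(x-c)$ can be read off directly from the rectangular boundary. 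The task is to verify that each boundary local minimum of $H_c$ that is also a local minimum of $H_c|\tilde M$ is absorbed by a $+1$ contribution from a disk component to $\chi(\tilde M \cap W_c)$, giving $|S| - |T| - \chi(\tilde M \cap W_c) \le 0$ and hence $\mathsf{N}(H_c|\tilde M) = 0$, which by the symmetry pairing yields the proposition.
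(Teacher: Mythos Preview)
Your plan has a genuine gap in the boundary bookkeeping. When you cut at $\{y=0\}$, the new boundary arcs $\Gamma_0$ are parametrized by $x\in I_c$ via $x\mapsto \phi^{\textnormal{low/up}}(x)-\log\cos(x-c)$. You assert ``at most one local minimum per arc'', but no convexity of $\phi^{\textnormal{low}}$ or $\phi^{\textnormal{up}}$ is available, so this is unjustified. Worse, even granting one minimum per arc you get $|S|\le 4$ while $\tilde M\cap W_c$ has only two disk components (one from $\Mup\cap\{y\ge 0\}$, one from $\Mlow\cap\{y\ge 0\}$), so $\chi=2$ and the inequality yields only $\mathsf N\le 2$. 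To push this to $0$ you would need to show the $\Gamma_0$ minima are \emph{not} local minima of $H_c|\tilde M$; but any local minimum of $H_c|\Gamma_0$ is automatically a critical point of $H_c|M$ (since $H_c$ is $y$-independent and $M\perp\{y=0\}$), and deciding its type in the $\partial_y$-direction is exactly the two-dimensional information you are trying to establish.

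The paper sidesteps this by cutting along the \emph{other} decomposition. Since $c\ge x(M)+\pi/2$ forces $W_c\subset\{x>x(M)\}$, one has $M\cap W_c=(\Mup\cup\Mlow)\cap W_c$ (Theorem~\ref{components-theorem}), and it suffices to treat each piece. For $\Sigma=\Mlow$ (or $\Mup$), the only boundary of $\Sigma$ inside $W_c$ comes from the rectangular $\partial M$, where $H_c=-\log\cos(x-c)$ has exactly two local minima (the points $(c,\pm b,0)$, resp.\ $(c,\pm B,0)$). With $\chi(\Sigma\cap W_c)\ge 1$ from Lemma~\ref{topology-lemma}, Morse--Rad\'o gives $\mathsf N(H_c|\Sigma)\le 1$. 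Now the parity trick you already identified finishes it: $\mathsf N(H_c|\Sigma\cap\{y\ne0\})=2\,\mathsf N(H_c|\Sigma\cap\{y>0\})\le 1$, hence $=0$. The key is that this cut places the artificial boundary at $\{x=x(M)\}$, which lies \emph{outside} $W_c$ and therefore contributes nothing to $|S|$.
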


\begin{proof}
By lower semicontinuity (Theorem~\ref{semicontinuity-theorem}), it suffices to prove it for $M\in \Cc$.
By Theorem~\ref{components-theorem}, $M\cap \{x>x(M)\}=\Mup \cup \Mlow$,
so it suffices to prove that
\[
 \mathsf{N}(H_c|\Sigma\cap\{y\ne 0\})=0
\]
for $\Sigma=\Mup$ and for $\Sigma=\Mlow$.
The proofs for the two cases are essentially the same, so we give the proof  for $\Mlow$.

We may assume that $c< a(M)+(\pi/2)$, as otherwise $\Mlow\cap W_c$ is empty.

If $c< a(M)$, then $H_c|\partial \Mlow$ has exactly two local minima, namely the two
points on $\{x=c\}\cap\partialin M$.

If $c\in [a(M), a(M)+\pi)$, then $H_c|\partial \Mlow$ has exactly one
local minimum, namely $\{x=c\}\cap \partialin M$, which is either a point or an interval.
(If it is an interval, we identify that interval to a point 
as in Remark~\ref{re-one}.)
Thus 
\[
\mathsf{N}(H_c|\Mlow) \le |S|-|T|-\chi(\Mlow\cap W) \le 2 - 0 -1 =1.
\]
(Note that $\chi\ge 1$ by Lemma~\ref{topology-lemma}.)
Consequently, 
\begin{align*}
1 
&\ge \mathsf{N}(H_c|\Mlow) \\
&\ge \mathsf{N}(H_c|\Mlow\cap \{y\ne 0\}) \\
&=  2 \mathsf{N}(H_c| \Mlow\cap \{y>0\}).
\end{align*}
It follows immediately that $\mathsf{N}(H_c|\Mlow\cap\{y\ne 0\})=0$.
\end{proof}

\begin{corollary}
Suppose that $p=(x_0,y_0,z_0)$ is an interior point of $M$ and that $x_0 \ge \hat c:=x(M)+\pi$.
Then $\ee_2 \in \Tan(M,p)$ if and only if $y_0=0$.
\end{corollary}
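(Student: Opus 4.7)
The plan is to prove the two implications separately. The easy one ($y_0 = 0 \Rightarrow \ee_2 \in \Tan(M,p)$) follows from the reflection symmetry of $M$ across $\{y=0\}$: the involution $\sigma\colon(x,y,z)\mapsto(x,-y,z)$ fixes $p$ and preserves $M$, so it acts linearly on the two-plane $\Tan(M,p)$. Any $\sigma$-invariant two-plane in $\RR^3$ either contains the $-1$-eigenvector $\ee_2$ or equals the fixed plane $\{y=0\}$. The latter alternative would force $M$ to be tangent to $\{y=0\}$ at $p$; writing $M$ locally as $\{y = g(x,z)\}$, the $\sigma$-invariance gives $g \equiv -g$, hence $g \equiv 0$, and unique continuation would then give $M \subset \{y=0\}$, contradicting $M \in \Cc \cup \Aa$.

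For the reverse implication, suppose $\ee_2 \in \Tan(M,p)$ and $y_0 \ne 0$, so that $\nu(M,p) = (\alpha, 0, \gamma)$ with $\alpha^2 + \gamma^2 = 1$. I first rule out $\gamma = 0$: any interior point of $M$ with $\nu = \pm \ee_1$ must have $y = 0$ (by Theorem~\ref{x-critical-theorem} for $M\in\Cc$, and for $M\in\Aa$ because otherwise the $x$- and $y$-reflection symmetries would produce four distinct points with $\nu=\pm\ee_1$, exceeding the bound $\mathsf{N}(F_{\ee_1}|M)\le 2$ from Theorem~\ref{complete-existence-theorem}); and on $\{y=0,\,x>0\}$, the only such point is $(x(M),0,0)$ by Theorem~\ref{y-slice-theorem} (respectively Theorem~\ref{finite-y-slice-theorem}). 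Since $x_0 \ge x(M)+\pi > x(M)$, this excludes $\gamma = 0$.

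The central step is then to choose
\[
   c := x_0 - \arctan(\alpha/\gamma) \in \left(x_0-\tfrac{\pi}{2},\, x_0+\tfrac{\pi}{2}\right),
\]
so that $\nabla H_c(p) = (\tan(x_0-c),0,1) = (\alpha/\gamma, 0, 1)$ is parallel to $\nu(M,p)$; equivalently, the level set of $H_c$ through $p$ is tangent to $M$ at $p$, making $p$ a critical point of $H_c|M$. Because $c > x_0 - \pi/2 \ge x(M) + \pi/2$ and $x_0 \in I_c$, this $c$ lies in the range covered by Proposition~\ref{first-y-graph-proposition} and $p \in W_c$. To conclude that $p$ contributes genuinely to $\mathsf{N}(H_c|M\cap\{y\ne 0\})$, I need the component of $M\cap W_c$ containing $p$ not to lie inside any leaf of $\Ff$; if it did, unique continuation would force the connected component of $M$ containing $p$ to coincide with a full grim reaper ruled by $\ee_2$-lines, which extends unboundedly in $y$ and so cannot fit inside a compact $M\in\Cc$ or the vertical slab containing $M\in\Aa$. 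The resulting contradiction with the Proposition completes the argument.

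The main obstacle is keeping $c$ inside the Proposition's admissible range: the corollary's strengthened hypothesis $x_0 \ge x(M) + \pi$ (versus the Proposition's lower bound $c \ge x(M) + \pi/2$) gives precisely the $\pi/2$ of slack needed to absorb any value $\arctan(\alpha/\gamma) \in (-\pi/2, \pi/2)$. The degenerate case $\gamma = 0$ would leave no valid $c$, which is why the finite-type critical-point count ruling it out is essential.
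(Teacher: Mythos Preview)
Your proof is correct and follows essentially the same route as the paper's: construct an untilted grim reaper (a level set of some $H_c$ from Proposition~\ref{first-y-graph-proposition}) tangent to $M$ at $p$, and invoke that proposition to force $y_0=0$. Your explicit formula $c = x_0 - \arctan(\alpha/\gamma)$ is exactly the $c$ the paper obtains implicitly when it says ``there is a grim reaper surface $G$ tangent to $M$ at $p$ containing the line $\{p+s\ee_2\}$,'' and your observation that $c > x_0 - \pi/2 \ge x(M)+\pi/2$ is precisely the check that the proposition applies.

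You are in fact more careful than the paper on two small points. First, the paper's sentence ``there is a grim reaper surface $G$\dots'' tacitly assumes $\Tan(M,p)$ is not vertical (equivalently $\gamma\ne 0$), since the level sets of $H_c$ are graphs $z = h_c(x)+\text{const}$ and never have vertical tangent planes; you dispose of the case $\nu = \pm\ee_1$ explicitly via the critical-point bound. Second, you check that the component of $M\cap W_c$ through $p$ is not contained in a leaf, which is needed for $p$ to count toward $\mathsf{N}(H_c|M)$; the paper omits this routine verification. A minor redundancy: having assumed $y_0\ne 0$, the clause ``any interior point with $\nu=\pm\ee_1$ must have $y=0$'' already yields the contradiction, so the subsequent remark about $(x(M),0,0)$ is not needed.
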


\begin{proof}
The ``if'' assertion follows from the fact that $\{y=0\}$ is a plane of symmetry of $M$.  Suppose that $\ee_2\in \Tan(M,p)$.
Then there is a grim reaper surface $G$
that is tangent to $M$ at $p$ and that contains the line $\{p+s\ee_2: s\in \RR\}$.
Note that $G$ is the level set $\{H_c=H_c(p)\}$ for some $H_c$ as in Proposition~\ref{first-y-graph-proposition}.
Now $p$ is a critical point of $H_c|M$, so $y_0=0$ by Proposition~\ref{first-y-graph-proposition}.
\end{proof}

\begin{theorem}\label{y-graph-theorem}
Suppose $M\in \Cc$ or $M\in \Aa$. Let $\hat c= x(M)+\pi$.
If $M\in \Aa$, let me know 
\begin{align*}
\Omegain(M) &= \{(x,z):  \text{$x\ge \hat c$ and $z\le \ulow(x)$} \}, \\
\Omegaout(M) &= \{(x,z): \text{$x \ge \hat c$ and $z \le \uup(x)$}  \}.
\end{align*}
If $M\in \Cc$, let
\begin{align*}
\Omegain(M) &= \{(x,z):  \text{$x\in [\hat c, a)$ and $0\le z\le \ulow(x)$} \}, \\
\Omegaout(M) &= \{(x,z): \text{$x \in  [\hat c, A)$ and $0 \le z \le \uup(x)$}  \}.
\end{align*}
There exist nonnegative, continuous functions $\yin$ on $\Omegain$ and $\yout$ on $\Omegaout$ whose 
graphs $y=\yin(x,z)$ and $y=\yout(x,z)$ are contained in $M$.
Furthermore, $M\cap\{x\ge \hat c\}\cap\{y\ge 0\}$ has exactly two components,
namely $\graph(\yin)$ and $\graph(\yout)$ if $M\in \Aa$, or
\[
\graph(\yin) \cup (\partialin M\cap\{x=a\})
\]
and
\[
\graph(\yout) \cup (\partialout M\cap\{x=A\})
\]
if $M\in \Cc$.

The functions $\yin$ and $\yout$ are strictly positive except 
where $z=\ulow(x)$ (for $\yin$) and $z=\uup(x)$ (for $\yout$).
The functions are smooth on the interior of their domains.
\end{theorem}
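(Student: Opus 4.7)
The plan is to combine local sideways graphicality (via Proposition~\ref{first-y-graph-proposition} and its corollary) with a global slicing analysis based on Theorem~\ref{product-theorem}. The corollary immediately preceding this theorem gives $\ee_2 \notin \Tan(M,p)$ for every interior point $p = (x_0,y_0,z_0)$ of $M$ with $x_0 \ge \hat c$ and $y_0 > 0$; the implicit function theorem then makes $M$ a smooth local graph $y = y(x,z)$ near every such point, producing the interior smoothness of $\yin$ and $\yout$ once those functions are constructed.

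For the slice structure I would apply Theorem~\ref{product-theorem} with $\vv = \ee_1$ on the critical-value-free interval $(x(M),\infty)$ (all critical points of $x|M$ lie at $(\pm x(M),0,0)$ by Theorem~\ref{x-critical-theorem} or Theorem~\ref{complete-existence-theorem}\eqref{complete-existence-twosome}): $M \cap \{x > x(M)\}$ is a smooth product over the slice $M(t_0) := M \cap \{x = t_0\}$. Combined with Theorem~\ref{components-theorem}, which supplies the two distinct components $\Mlow$ and $\Mup$ of $M \cap \{x > x(M)\}$, the product structure yields, for each $t_0 > x(M)$, distinguished components $\Lambda^{\mathrm{low}}(t_0) \subset \Mlow$ and $\Lambda^{\mathrm{up}}(t_0) \subset \Mup$ of $M(t_0)$, each a reflection-invariant properly embedded arc (no closed curves by Lemma~\ref{topology-lemma-1}) containing the unique $\{y=0\}$ point supplied by Theorem~\ref{y-slice-theorem}. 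For $M \in \Cc$, these exhaust $M(t_0)$ by a direct count using the four boundary points of $M(t_0)$ on $\partial M$; for $M \in \Aa$, any extra component would be a pair of arcs in $\{y \ne 0\}$ related by $y \mapsto -y$, and I would exclude such extras via the asymptotic four-plane structure of $M$ (inherited from the defining sequence of Definition~\ref{Aa-definition}) together with an end-count on $M(t_0)$ at $z = -\infty$.

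Fix $x_0 \ge \hat c$ and focus on $\Lambda^{\mathrm{low}}(x_0) \cap \{y \ge 0\}$. On its interior the tangent lies in $\Tan(M,\cdot) \cap \mathrm{span}(\ee_2,\ee_3)$ and is not parallel to $\ee_2$, so its $\ee_3$-component is nowhere zero and keeps constant sign along the connected arc; hence $z$ is strictly monotone along the arc, which is therefore a smooth graph of $y$ over $z$. The free end escapes the slab $\{|y| \le B(M)\}$ to $z = \pm\infty$; for $M \in \Cc$ the end lands on $\partial M$ at $z = 0$, while for $M \in \Aa$ the upper-boundedness of $z$ on $M$ over the compact region $\{x_0\} \times [0,B(M)]$ (which follows for each such slice from the asymptotic structure of $M$) rules out $z \to +\infty$. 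Thus the arc is the graph of a smooth positive function $\yin(x_0,\cdot)$ on the appropriate $z$-interval bounded above by $\ulow(x_0)$, extended by $0$ at the endpoint; the analogous argument on $\Lambda^{\mathrm{up}}(x_0)$ gives $\yout(x_0,\cdot)$. Letting $x_0$ vary assembles the global graphs. Joint continuity in $(x,z)$ on the interior of $\Omegain$ is immediate from local graphicality; continuity up to the boundary curve $\{z = \ulow(x)\}$ follows by writing $M$ near $(x,0,\ulow(x))$ as the smooth graph $x = x(y,z)$, valid because $(\ulow)'(x) \ne 0$ by Theorem~\ref{slopes-theorem} makes the $\ee_1$-component of $\nu(M,\cdot)$ nonzero there. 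The main obstacle is the component-count in the $\Aa$ case: ruling out extra $y \mapsto -y$-paired components of $M(t_0)$ disjoint from $\{y=0\}$, for which I would invoke the asymptotic four-plane structure of $M$ carefully enough to avoid circularly using the later Theorem~\ref{connected-theorem}.
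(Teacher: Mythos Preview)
Your slice-by-slice argument for $M\in\Cc$ is essentially what the paper does: on $\Mlow\cap\{x=x_0\}\cap\{y\ge0\}$ the function $z(\cdot)$ has its unique critical point at the endpoint $(x_0,0,\ulow(x_0))$ (this is the content of Proposition~\ref{first-y-graph-proposition} and its corollary), so $z$ maps the arc homeomorphically onto $[0,\ulow(x_0)]$ and one reads off $\yin$. Theorem~\ref{product-theorem} is stated only for complete translators, so invoking it for $M\in\Cc$ is not quite licit, but you do not need it there: Theorem~\ref{components-theorem} already supplies the exhaustion $M\cap\{x>x(M)\}=\Mlow\cup\Mup$ when $M\in\Cc$, and that is all the slice analysis requires.

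The real problem is your treatment of $M\in\Aa$. You correctly identify the obstacle---ruling out extra components of $M(t_0)$ disjoint from $\{y=0\}$---but your proposed fixes are circular. Both the ``asymptotic four-plane structure'' and the ``upper-boundedness of $z$ on $M$ over $\{x_0\}\times[0,B(M)]$'' are established only later, in Theorem~\ref{some-properties-theorem}, whose proof explicitly uses Theorem~\ref{y-graph-theorem}. Avoiding Theorem~\ref{connected-theorem} is not enough; you would need an independent end-count for $M\in\Aa$, and none is available at this stage. The paper sidesteps the entire difficulty in one line: it proves the statement only for $M\in\Cc$, and then observes that the $\Aa$ case follows by taking limits of the approximating sequence $M_n\in\Rr$ from Definition~\ref{Aa-definition}. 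Since for each $M_n$ one has $M_n\cap\{x\ge\hat c_n\}\cap\{y\ge0\}=\graph(\ynin)\cup\graph(\ynout)$ (up to the boundary segments, which recede to infinity), every point of $M\cap\{x\ge\hat c\}\cap\{y\ge0\}$ is a limit of such graph points and hence lies on $\graph(\yin)\cup\graph(\yout)$. This is the step you are missing.
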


\begin{proof}
It suffices to prove it for $M\in \Cc$, since the $M\in \Aa$ case follows by taking limits.

For each $x_0\in [\hat c, a)$, the height function $z(\cdot)$ on the
 smooth curve $\Mlow\cap \{x=x_0\}\cap \{y\ge 0\}$
has only one critical point, namely the endpoint  $(x_0,0,\ulow(x_0))$.
Thus $z(\cdot)$ maps the curve homeomorphically onto $[0,\ulow(x)]$. 
Hence there is a continuous, nonnegative function $\yin$ on $\Omegain$
such that $\yin(x,z)\in \Mlow$ for each $(x,z)\in \Omegain$.
The graph $\{(x,\yin(0,z),z): (x,z)\in \Omegain\}$ of $\yin$ is a smooth manifold (namely
$\Mlow\cap\{ \hat c\le x < a\}$)
and $\ee_2$ is not tangent to that manifold except where $y=\ulow(x)$.
Thus $\yin$ is a smooth function except where $y=\ulow(x)$. 
This completes the proof in the case of $\yin$.

The same proof works for $\yout$.
\end{proof}

\begin{corollary}\label{y-graph-corollary}
Suppose $p\in M \cap \{x\ge \hat c\} \cap \{y>0\}$.  Then
\begin{align*}
\nu(M,p)\cdot \ee_2>0 &\qquad\text{if $p\in \Mup$}, \\
\nu(M,p)\cdot \ee_2<0 &\qquad\text{if $p \in \Mlow$}.
\end{align*}
\end{corollary}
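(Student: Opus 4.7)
The plan is to deduce the corollary directly from the sideways-graphicality of Theorem~\ref{y-graph-theorem}. By that theorem, $M\cap\{x\ge\hat c\}\cap\{y>0\}$ is the disjoint union of two smooth sideways graphs, $y=\yout(x,z)$ inside $\Mup$ and $y=\yin(x,z)$ inside $\Mlow$. On any such graph, the tangent plane at an interior point is spanned by $(1,y_x,0)$ and $(0,y_z,1)$, so the unit normal is proportional to the cross product $(y_x,-1,y_z)$; in particular
\[
\nu\cdot\ee_2 \;=\; \frac{\mp 1}{\sqrt{1+y_x^2+y_z^2}} \;\neq\; 0.
\]
Thus $\nu\cdot\ee_2$ is nowhere zero on either graph, and by the continuity of $\nu$ and the connectedness of each graph, it takes a single sign on each of $\Mup\cap\{x\ge\hat c\}\cap\{y>0\}$ and $\Mlow\cap\{x\ge\hat c\}\cap\{y>0\}$.

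To identify the two signs, I would invoke the natural ``outward'' orientation on $M$. For $M\in\Cc$, the annulus $M$ together with the annular planar strip in $\{z=0\}$ between $\partialin M$ and $\partialout M$ forms a topological torus bounding a compact ``solid-torus'' region $\Omega\subset\RR^3$; orient $M$ by the outward unit normal to $\Omega$. For $M\in\Aa$, an analogous unbounded region $\Omega$ is obtained by passing to the limit from approximating $M_n\in\Rr$. With this orientation in hand, the signs can be read off the geometry: at an interior point $p\in\Mup$ with $y>0$, the region $\Omega$ lies on the axis-side of the outer sheet, so the direction $+\ee_2$ points outward from $\Omega$ and hence $\nu\cdot\ee_2>0$; at an interior point $p\in\Mlow$ with $y>0$, the region $\Omega$ lies on the side of the inner sheet away from the axis, so $+\ee_2$ points into $\Omega$ and hence $\nu\cdot\ee_2<0$.

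The main obstacle in this plan is the second step: one must verify that the outward normal to the solid-torus region $\Omega$ is in fact the orientation with respect to which the corollary's signs are stated, and, more substantively, that at a generic $p\in\Mup\cap\{y>0\}$ (respectively $p\in\Mlow\cap\{y>0\}$) the direction $+\ee_2$ points outward from (respectively into) $\Omega$. For $M\in\Cc$ this is an elementary consequence of the geometry of the compact annulus bounded by nested rectangles in $\{z=0\}$, together with the fact that $\Mup$ contains $\partialout M$ and $\Mlow$ contains $\partialin M$ (Theorem~\ref{components-theorem}); the case $M\in\Aa$ then follows by passing to the limit $a(M_n)\to\infty$ along approximating $M_n\in\Rr$.
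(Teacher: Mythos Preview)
Your approach is correct and is essentially the intended one: the paper states the corollary without proof, expecting it to follow directly from Theorem~\ref{y-graph-theorem}. Your step~1 (nonvanishing of $\nu\cdot\ee_2$ on a sideways graph $y=y(x,z)$) and step~2 (constancy of sign by connectedness) are exactly the right observations; step~3 (fixing the sign via the outward-normal convention) is also correct, and the paper does adopt precisely the orientation you describe---see the proof of Proposition~\ref{trivial-proposition}, where $\nu$ is declared to point out of the bounded component $K$ of $\{z\ge 0\}\setminus M$. With that convention in hand, one can shortcut your solid-torus construction: for $M\in\Cc$, simply look at a boundary point of $\Mup$ on the segment $\{(A,y,0): 0<y<B\}\subset\partialout M$; the outward normal there manifestly has $\nu\cdot\ee_2>0$, and similarly on $\partialin M\subset\Mlow$ one reads off $\nu\cdot\ee_2<0$. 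The case $M\in\Aa$ follows by passing to the limit, as you say.
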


\begin{theorem}\label{connected-theorem}
Suppose $M\in \Aa$.
Then 
\begin{enumerate}
\item\label{connected-item-1} $M$ is connected.
\item\label{connected-item-2} $M\cap \{x>x(M)\}$ has exactly two connected components, $\Mlow$ and $\Mup$.
\end{enumerate}
\end{theorem}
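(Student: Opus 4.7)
I'll prove part~\eqref{connected-item-2} first and deduce part~\eqref{connected-item-1} from it.

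\emph{Plan for part~\eqref{connected-item-2}.} The function $F_{\ee_1}=x$ on $M$ has exactly two critical points: the reflection symmetries of $M$ in $\{x=0\}$ and $\{y=0\}$ force the tangent plane at $(\pm\hat x,0,0)\in M$ to contain both $\ee_2$ and $\ee_3$, hence to equal the $yz$-plane, making these critical points; and Theorem~\ref{complete-existence-theorem}\eqref{complete-existence-twosome} gives $\mathsf{N}(F_{\ee_1}|M)\le 2$. Thus the interval $I:=(x(M),\infty)$ contains no critical values. Since $M$ is a complete translator of finite type in a slab (Remark~\ref{existence-remark}), Theorem~\ref{product-theorem} applies with $\vv=\ee_1$ and yields a diffeomorphism $M\cap\{x>x(M)\}\cong I\times(M\cap\{x=t_0\})$ for any $t_0\in I$. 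Taking $t_0>\hat c:=x(M)+\pi$, Theorem~\ref{y-graph-theorem} together with the $\{y=0\}$-symmetry describes $M\cap\{x=t_0\}$ as exactly two smooth simple arcs: the graph $\graph(\yin)\cap\{x=t_0\}$ joined to its $\{y=0\}$-reflection at the single point $(t_0,0,\ulow(t_0))$ (lying inside $\Mlow$), and the analogous arc built from $\yout$ (lying inside $\Mup$). This gives exactly two components.

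\emph{Plan for part~\eqref{connected-item-1}, inclusion step.} Let $\Sigma$ be the annular component of $M$ containing $(\hat x,0,0)$ (Theorem~\ref{complete-existence-theorem}\eqref{complete-8}). Since $(\hat x,0,0)$ is in the closures of both $\Mlow$ and $\Mup$, these lie in $\Sigma$. Repeating the last step of the proof of Theorem~\ref{complete-existence-theorem} (using Corollary~\ref{half-slab-corollary}, i.e., that no complete translator is contained in a half-slab), $\Sigma\cap\{x=0\}$ is nonempty, so $\Sigma$ is invariant under reflection $\rho$ in $\{x=0\}$. Hence $\rho(\Mlow\cup\Mup\cup\{(\hat x,0,0)\})\subset\Sigma$, and consequently
\[
   \Sigma\;\supseteq\; M\cap\{|x|\ge x(M)\}.
\]

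\emph{Plan for part~\eqref{connected-item-1}, exhaustion step.} Suppose for contradiction that some other component $\Sigma'\ne\Sigma$ exists. Then $\Sigma'\subset R:=\{|x|<x(M)\}\cap\{|y|\le B(M)\}$, a horizontally bounded region. Since closed translators do not exist (by the maximum principle applied, say, to a bowl-soliton foliation), $\Sigma'$ is non-compact and boundary-less; being horizontally confined to $R$, it admits a sequence $p_n=(x_n,y_n,z_n)\in\Sigma'$ with $(x_n,y_n)$ bounded and $|z_n|\to\infty$. After passing to a subsequence with $(x_n,y_n)\to(x_\infty,y_\infty)$, Theorem~\ref{translates-theorem} produces a smooth subsequential limit $M'$ of $\Sigma'-p_n$ whose components are vertical planes, grim reaper surfaces, $\Delta$-wings, or bowl solitons. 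But $\Sigma'-p_n\subset R-(x_n,y_n,0)$, and the horizontal projection of the right-hand side converges to the bounded rectangle $[-x(M)-x_\infty,\,x(M)-x_\infty]\times[-B(M)-y_\infty,\,B(M)-y_\infty]$, forcing $M'$ to have horizontally bounded image. Since vertical planes, grim reapers, $\Delta$-wings, and bowl solitons each project onto horizontally unbounded subsets of $\RR^2$, this is impossible. Hence $M=\Sigma$.

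The main obstacle is the exhaustion step: one must check systematically that no candidate limit from Theorem~\ref{translates-theorem} fits in a bounded horizontal window, and one must verify that the shifting confinement $R-(x_n,y_n,0)$ passes to a bounded horizontal projection of the limit (which is where the convergence of the base points $(x_n,y_n)$ matters). Everything else is a clean application of the product structure and sideways graphicality already established.
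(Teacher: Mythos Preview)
Your proof is correct, but it takes a different route from the paper and, for part~\eqref{connected-item-1}, works harder than necessary.

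For part~\eqref{connected-item-2}, the paper argues by the maximum principle: assuming an extra component $\Sigma$ of $M\cap\{x>x(M)\}$, one uses connectedness of $M$ (proved first) to see $\partial\Sigma\subset\{x=x(M)\}$, and Theorem~\ref{y-graph-theorem} to bound $\sup_\Sigma x\le x(M)+\pi$; then Corollary~\ref{half-slab-corollary} forces $\sup_\Sigma x=\sup_{\partial\Sigma}x=x(M)$, a contradiction. Your argument via Theorem~\ref{product-theorem} is a clean alternative: the product structure over $(x(M),\infty)$ reduces the question to counting the components of a single slice $M\cap\{x=t_0\}$, which Theorem~\ref{y-graph-theorem} already settles. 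This is arguably slicker, and avoids needing part~\eqref{connected-item-1} as input.

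For part~\eqref{connected-item-1}, the paper simply observes (via Theorem~\ref{y-graph-theorem}) that the complement of the component $M'$ containing $M\cap\{y=0,\,x>0\}$ lies in the half-slab $\{|y|\le B,\,x<x(M)+\pi\}$, and then invokes Corollary~\ref{half-slab-corollary} to conclude that complement is empty. Your exhaustion step reaches the same situation (an extra component $\Sigma'$ trapped in a bounded horizontal rectangle), but then expends effort on a translate-and-limit argument via Theorem~\ref{translates-theorem}. That works, but it is unnecessary: once you know $\Sigma'$ is a nonempty complete translator in a slab with $\sup_{\Sigma'}|x|\le x(M)<\infty$, Corollary~\ref{half-slab-corollary} already gives the contradiction in one line. (A small side remark: $B(M)$ is only defined in Theorem~\ref{some-properties-theorem}, which logically comes after this result; you should instead use any slab bound inherited from the approximating $M_i\in\Rr$, e.g.\ $b+\pi$.)
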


\begin{proof}
Let $M'$ be the connected component of $M$ containing 
\[
M\cap \{y=0\}\cap \{x>0\}.
\]
By Theorem~\ref{y-graph-theorem}, $M\setminus M'$ is contained in the half-slab $\{|y|\le B\}\cap \{x< x(M)+\pi\}$.
But there are no nonempty translators (without boundary) in such a half-slab (Corollary~\ref{half-slab-corollary}), so
$M\setminus M'$ is empty and therefore
 $M$ is connected.  Thus we have proved
Assertion~\eqref{connected-item-1}.

Now let 
\[
 \Sigma = (M\cap \{x>x(M)\})\setminus (\Mup\cup \Mlow).
\]
Suppose, contrary to Assertion~\eqref{connected-item-2}, that $\Sigma$ is nonempty.
Then 
\begin{equation}\label{henry-oliver}
 x(M) <   \sup_{\Sigma}x(\cdot).
\end{equation}
Now $\partial \Sigma$ is nonempty by Assertion~\eqref{connected-item-1}, and $x(\cdot)\equiv x(M)$ on $\partial \Sigma$, so
\begin{equation}\label{wish}
   \sup_{\partial \Sigma} x(\cdot) = x(M).
\end{equation}

By Theorem~\ref{y-graph-theorem},
\[
   \sup_\Sigma x(\cdot) \le x(M)+\pi.
\]
Thus, by a version of the maximum principle in slabs (Corollary~\ref{half-slab-corollary}),
\begin{align*}
\sup_{\Sigma}x(\cdot) 
&= \sup_{\partial \Sigma}x(\cdot)  \\
&= x(M)
\end{align*}
by~\eqref{wish}.
 But this contradicts~\eqref{henry-oliver}.
  Hence Assertion~\eqref{connected-item-2} holds.
\end{proof}

\begin{theorem}\label{some-properties-theorem}
Suppose $M\in \Aa$.  Then
\begin{enumerate}[\upshape (1)]
\item $M$ is an annulus.
\item\label{up-planes-item} $M-(0,0,z)$ converges as $z\to \infty$ to the empty set.
\item\label{down-planes-item} There are numbers $b(M)$ and $B(M)$ with $0\le b(M)\le B(M)$ such that 
  $M+(0,0,z)$ converges smoothly as $z\to\infty$ to the planes $\{y=\pm b(M)\}$ a
 and $\{y=\pm B(M)\}$.
\item For each $x\ge \hat c:=x(M)+\pi$,
\begin{align*}
\lim_{z\to -\infty} \yin(x,z) &= b(M), \\
\lim_{z\to -\infty} \yout(x,z) &= B(M).
\end{align*}
\end{enumerate}
\end{theorem}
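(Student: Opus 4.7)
The plan is to prove (1) directly from earlier results, then establish (2) (the main obstacle) via a contradiction argument that uses the sideways graph structure to rule out any nonempty asymptotic limit at $z=+\infty$; parts (3) and (4) will then follow from (2) combined with Theorems~\ref{first-symmetry-theorem} and~\ref{y-graph-theorem}.

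For (1), I would combine Theorem~\ref{complete-existence-theorem}\eqref{complete-8}, which says the component of $M$ containing $(x(M),0,0)$ is an annulus, with Theorem~\ref{connected-theorem}, which says $M$ is connected; hence $M$ is that annulus.

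The hard step is (2). I would argue by contradiction: suppose $z_n\to\infty$ and, after passing to a subsequence, $M-(0,0,z_n)$ converges smoothly on compact sets to a nonempty limit $M_\infty$. Theorem~\ref{translates-theorem} applied to $p_n=(0,0,z_n)\in Z$ gives that $M_\infty$ is a union of vertical planes, each of the form $\{y=c_j\}$ because $M$ lies in the slab $\{|y|\le B\}$. Pick one such plane $\{y=c_0\}\subset M_\infty$ and any $x_0>\hat c:=x(M)+\pi$. Smooth convergence then produces $r_n\in M$ with $r_n-(0,0,z_n)\to(x_0,c_0,0)$, so $x(r_n)\to x_0$, $y(r_n)\to c_0$, and $z(r_n)\to\infty$. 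For $n$ large we have $x(r_n)>\hat c$, so Theorem~\ref{y-graph-theorem} (together with the $y\to -y$ symmetry of $M$ to handle the case $c_0<0$) forces $r_n$ onto $\graph(\yin)$ or $\graph(\yout)$ (or their reflections). The domain $\Omegain$ requires $z\le \ulow(x)<0$ for $x>x(M)$, excluded by $z(r_n)\to\infty$; the domain $\Omegaout$ requires $z\le\uup(x)$, which is incompatible with $z(r_n)\to\infty$ since $\uup$ is continuous and finite at $x_0$ by Theorem~\ref{y-slice-theorem}. This contradiction forces $M_\infty=\emptyset$, yielding (2).

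For (3), Theorem~\ref{first-symmetry-theorem} applies because $M$ is of finite type, boundary-free, in a slab, and invariant under $x\to -x$: it gives smooth convergence of $M+(0,0,z)$, as $z\to\infty$, to a finite union of vertical planes in $\{|y|\le B\}$, which by $y\to -y$ symmetry group into pairs $\{y=\pm c_j\}$. Inspection of that theorem's proof identifies the planes with the ends of $\Gamma^0:=M\cap\{x=0\}$ going to $z=-\infty$. Approximating $M$ by $M_i':=M_i-(0,0,z(M_i))$ with $M_i\in\Rr$, whose slice $\Gamma_i^0$ has the four boundary points $(0,\pm b(M_i),-z(M_i))$ and $(0,\pm B(M_i),-z(M_i))$ whose $z$-coordinates tend to $-\infty$, produces exactly four ends of $\Gamma^0$ at $z=-\infty$, at $y$-values $\pm b:=\pm\lim b(M_i)$ and $\pm B:=\pm\lim B(M_i)$; Part (2) rules out additional ends at $z=+\infty$ by showing $\Gamma^0$ is bounded above. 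Setting $b(M):=b$ and $B(M):=B$ establishes (3). Finally, (4) follows by fixing $x\ge\hat c$ and observing that $(x,\yin(x,z),0)\in M+(0,0,|z|)$ converges, as $z\to-\infty$, to a point on $\{y=\pm b\}\cup\{y=\pm B\}$ by (3); combined with $\yin\ge 0$ and the inner/outer distinction built into Theorem~\ref{y-graph-theorem}, this forces $\yin(x,z)\to b$ and (by the analogous argument for the outer component) $\yout(x,z)\to B$.
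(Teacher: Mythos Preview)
Parts (1) and (2) are correct and coincide with the paper's argument. For (2) the paper does exactly what you do: any limit of $M-(0,0,z)$ as $z\to\infty$ is a union of planes $\{y=c\}$, and Theorem~\ref{y-graph-theorem} confines it to $\{|x|\le\hat c\}$, forcing the limit to be empty.

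The gap is in (3). Your approximation step---that the four boundary points of $\Gamma_i^0=M_i'\cap\{x=0\}$ ``produce exactly four ends of $\Gamma^0$ at $z=-\infty$, at $y$-values $\pm\lim b(M_i)$ and $\pm\lim B(M_i)$''---is not justified. Smooth convergence on compact sets does not control the number of ends of the limit curve: a single arc whose endpoints escape to $z=-\infty$ can converge to a disconnected curve if an interior portion also escapes. To make your route work you would need an $x\leftrightarrow y$ analog of Theorem~\ref{y-slice-theorem} for $M\in\Aa$, showing $\Gamma^0\cap\{y>0\}$ is a single arc; this is not stated, and the proof does not transfer symmetrically because $M$ lies in a $y$-slab (so Theorem~\ref{second-symmetry-theorem} behaves differently in the swapped roles). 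Moreover, identifying the limiting $y$-values with $\lim b(M_i)$ and $\lim B(M_i)$ is exactly Theorem~\ref{b-B-theorem}, proved much later with additional machinery; it is neither needed for (3) nor available here.

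The paper bypasses $\Gamma^0$ entirely and counts planes directly from Theorem~\ref{y-graph-theorem}: for any fixed $x>\hat c$ and all sufficiently negative $z$ one has $(x,z)\in\Omegain\cap\Omegaout$, so the line $\{(x,t,z):t\in\RR\}$ meets $M$ in exactly the four points $(x,\pm\yin(x,z),z)$ and $(x,\pm\yout(x,z),z)$. Hence $M_{-\infty}$ consists of exactly four planes (counting multiplicity), which by $y$-symmetry are $\{y=\pm b(M)\}$ and $\{y=\pm B(M)\}$ for some $0\le b(M)\le B(M)$. For (4) you should also say why $\yin\to b(M)$ rather than $B(M)$: by Proposition~\ref{trivial-proposition} and Corollary~\ref{y-graph-corollary} the outermost intersection of each such line with $M\cap\{y>0\}$ lies on $\Mup$, so $\yout(x,z)\ge\yin(x,z)$, which forces the correct assignment of limits.
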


\begin{proof}
Since $M$ is connected and has an annular component (Theorem~\ref{complete-existence-theorem}), it is an annulus.

By Theorem~\ref{first-symmetry-theorem}, $M-(0,0,\zeta)$ converges smoothly as $\zeta\to\infty$ to a limit $M_\infty$ that is a finite
union of planes in the slab $\{|y|\le B\}$.
By Theorem~\ref{y-graph-theorem}, $M_\infty$ lies in the region $\{|y|\le x(M)+\pi\}$.  Thus $M_\infty$ is empty.

By Theorem~\ref{first-symmetry-theorem}, $M+(0,0,\zeta)$ converges smoothly as $\zeta\to\infty$ to a finite
union $M_{-\infty}$ of planes.    From Theorem~\ref{y-graph-theorem}, we see that the number of planes
is four (counting multiplicity).  Thus the planes have the form $\{y=\pm b(M)\}$ and $\{y=\pm B(M)\}$ for some $0\le b(M)\le B(M)$.
It follows trivially that $\yin(x,z)\to b(M)$ and $\yout(x,z)\to B(M)$ for every $x\ge \hat c$.
\end{proof}

\begin{theorem}\label{alexandrov-y-theorem}
Suppose that $M$ is a surface in $\Cc$ (see Definition~\ref{Cc-definition}) with smooth, uniformly convex
boundary curves, or that $M$ is a surface in $\Rr$.
Then 
the surface $S:=M\cap\{z>0\}\cap \{y\ge (b+B)/2\}$
projects diffeomorphically  onto its image in the $xz$-plane, and $M$ is disjoint from the open
region in $\{z>0\}$ between $S$ and its image under reflection in  the plane $\{y=(b+B)/2\}$.
\end{theorem}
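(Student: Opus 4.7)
The plan is to apply Alexandrov's moving plane method in the $y$-direction. For each $c \in \RR$, let $\Phi_c(x,y,z) := (x,\,2c-y,\,z)$ be the reflection in the plane $\{y = c\}$. Since $\Phi_c$ preserves the $z$-coordinate, it is an isometry of the translator metric~\eqref{translator-metric}, and therefore carries translators to translators. Set $T_c^+ := M \cap \{y > c\}$ and $M_c := \Phi_c(T_c^+) \subset \{y \le c\}$. Call $c$ \emph{admissible} if, inside the half-space $\{z > 0\}$: (a) no tangent plane to $T_c^+$ contains $\ee_2$ (so $T_c^+$ is graphical over the $xz$-plane); and (b) $M_c$ is disjoint from $M \cap \{y < c\}$. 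The two conclusions of the theorem together amount precisely to the admissibility of $c = (b+B)/2$.

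I first treat the case $M \in \Cc$ with smooth, uniformly convex boundary; the general case $M \in \Rr$ then follows by approximation via Definition~\ref{Rr-definition}(\ref{approximation-condition}), since the uniform area and curvature bounds of Theorem~\ref{curvature-bound-theorem} force smooth subsequential convergence and admissibility passes to such limits. For the smooth case, the initial step of the moving plane is straightforward: $(0,B,0)$ is the unique topmost point of $\partialout M$, the tangent to $\partialout M$ there is $\ee_1$, and uniform convexity together with boundary regularity for the translator equation yield a $\delta > 0$ such that every $c \in (B-\delta, B)$ is admissible. That the set of admissible $c$ is open is standard.

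Let $c_\ast := \inf\{c \le B : \text{every } c' \in [c, B] \text{ is admissible}\}$, and suppose, for contradiction, that $c_\ast > (b+B)/2$. The failure of admissibility at $c = c_\ast$ must fall into one of three possibilities:
\begin{enumerate}
\item[(i)] there is an interior point $p$ of $M$ with $y(p) < c_\ast$ and $z(p) > 0$ at which $M$ and $\Phi_{c_\ast}(M)$ are tangent;
\item[(ii)] there is a point $p \in M \cap \{y = c_\ast,\, z > 0\}$ with $\ee_2 \in \Tan(M, p)$;
\item[(iii)] a point of $\Phi_{c_\ast}(\partial M \cap \{y > c_\ast\})$ lies on $M$ in $\{z > 0\}$.
\end{enumerate}
In case (i), the strong maximum principle for two translators tangent at a common interior point forces $M = \Phi_{c_\ast}(M)$ in a neighborhood of $p$; by unique continuation for the translator equation and connectedness of the annulus $M$, this globalizes to $\Phi_{c_\ast}(M) = M$. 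Combined with the symmetry $M = \Phi_0(M)$ from Definition~\ref{Cc-definition}, $M$ is invariant under translation by $(0,\,2c_\ast,\,0)$, contradicting $M \subset \{|y| \le B\}$ since $c_\ast > 0$. Case (ii) reduces to case (i) via the Hopf boundary point lemma applied along $\{y = c_\ast\}$ to the signed difference of $M$ and its reflection, both minimal in the translator metric and sharing the same tangent plane at $p$. Case (iii) is vacuous, since $\partial M \subset \{z = 0\}$ implies $\Phi_{c_\ast}(\partial M) \subset \{z = 0\}$, which cannot intersect $M \cap \{z > 0\}$.

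The main obstacle will be the careful verification of the initial step when $\partial M$ is only piecewise smooth, which is precisely why I reduce first to the smooth uniformly convex boundary case via Definition~\ref{Rr-definition}(\ref{approximation-condition}). A secondary subtlety is the application of the Hopf lemma in case (ii) near $\{z = 0\}$: the bound $2c_\ast - B > b$ coming from $c_\ast > (b+B)/2$ ensures that the reflected boundary $\Phi_{c_\ast}(\partial M \cap \{y > c_\ast\})$ lies strictly above $\partialin M$ and meets $\partialout M$ only at the isolated corner points $(\pm A,\, 2c_\ast - B,\, 0)$, so the comparison surface $M_{c_\ast}$ stays clear of $\partial M$ in the interior $\{z > 0\}$ where we need the maximum principle to bite.
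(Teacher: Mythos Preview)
Your proposal is correct and takes precisely the approach the paper indicates: the paper's proof reads, in its entirety, ``For $M$ with smooth boundary, the proof is by the standard Alexandrov moving planes argument, so we omit it. The theorem for $M\in\Rr$ follows immediately by taking limits.'' You have supplied the omitted details, and the structure (moving planes in the $y$-direction, interior/boundary Hopf cases, then pass to limits via Definition~\ref{Rr-definition}\eqref{approximation-condition}) is exactly right.

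Two minor points worth tightening. First, in your limit step you say ``admissibility passes to such limits,'' but condition (a) (no tangent plane containing $\ee_2$) is an open condition, not a closed one; what passes to the limit is the weak version (the reflection lies weakly on one side), and you then recover strictness on the limit surface $M\in\Rr$ by reapplying the strong maximum principle/Hopf lemma in $\{z>0\}$, where the corners of $\partial M$ do not interfere. Second, your ``secondary subtlety'' paragraph conflates the smooth and rectangular boundary pictures: in the smooth uniformly convex case $\partialout M$ has no corners, and the reflected arc $\Phi_{c_\ast}(\partialout M\cap\{y>c_\ast\})$ meets $\partialout M$ at the two points of $\partialout M\cap\{y=c_\ast\}$, not at $(\pm A, 2c_\ast-B,0)$. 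This does not affect your argument, since the essential observation---that all boundary curves live in $\{z=0\}$ and hence cannot produce contact in $\{z>0\}$---is already what makes case~(iii) vacuous.
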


\begin{proof}
For $M$ with smooth boundary, the proof is by the standard 
standard Alexandrov moving planes argument, so we omit it.  The theorem for $M\in \Rr$  follows
immediately by taking limits.
\end{proof}

\begin{corollary}\label{in-out-corollary}
Suppose $M\in \Rr$.  Then
\[
   \yin + \yout \le b + B
\]
at all points of $\domain(\yin)$.
\end{corollary}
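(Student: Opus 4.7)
The plan is to read the corollary off directly from the Alexandrov reflection statement (Theorem~\ref{alexandrov-y-theorem}) together with the component structure of Theorem~\ref{y-graph-theorem}. Fix $(x,z)\in\Omegain$. If $z=0$, then $x\in[\hat c,a)$ and the point $(x,\yin(x,0),0)$ lies on $\partialin M$ with $y\ge 0$, which forces $\yin(x,0)=b$; similarly $\yout(x,0)=B$, and equality holds in the claimed inequality.

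Now assume $z>0$. The key preliminary step is to show $\yin(x,z)<\yout(x,z)$ on $\Omegain$. By Theorem~\ref{y-graph-theorem}, $\graph(\yin)\cup(\partialin M\cap\{x=a\})$ and $\graph(\yout)\cup(\partialout M\cap\{x=A\})$ are the two \emph{distinct} connected components of $M\cap\{x\ge\hat c\}\cap\{y\ge 0\}$, so the two graphs are disjoint and $\yin\neq\yout$ everywhere on $\Omegain$. Since $\partialin M$ and $\partialout M$ are disjoint nested rectangles we have $b<B$, and therefore $\yin(x,0)=b<B=\yout(x,0)$. As $\Omegain$ is connected and $\yin-\yout$ is continuous and nowhere zero, we conclude $\yin<\yout$ throughout $\Omegain$.

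Next apply Theorem~\ref{alexandrov-y-theorem}. I would split into two cases. If $\yout(x,z)\ge(b+B)/2$, then $(x,\yout(x,z),z)$ lies in $S=M\cap\{z>0\}\cap\{y\ge(b+B)/2\}$; by the graphicality assertion of Alexandrov it is the unique point of $M$ above $(x,z)$ with $y$-coordinate $\ge(b+B)/2$. The disjointness assertion then forbids the point $(x,\yin(x,z),z)\in M$ from lying in the open interval of $y$-values $\bigl((b+B)-\yout(x,z),\,\yout(x,z)\bigr)$. Combined with $\yin<\yout$, this forces $\yin(x,z)\le (b+B)-\yout(x,z)$, i.e.\ $\yin+\yout\le b+B$. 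If instead $\yout(x,z)<(b+B)/2$, then $\yin<\yout<(b+B)/2$, so $\yin+\yout<b+B$ without further work.

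The whole argument is essentially a two-line deduction from Alexandrov; the only real content is the strict inequality $\yin<\yout$, which is handled by the component-counting of Theorem~\ref{y-graph-theorem} combined with the strict $b<B$ coming from the disjointness of the two rectangles in $\partial M$. Beyond what is packaged into Theorem~\ref{alexandrov-y-theorem} itself, no additional moving-planes or maximum-principle work is required, so I do not anticipate a serious obstacle.
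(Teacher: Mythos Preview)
Your proof is correct and follows exactly the route the paper intends: the corollary is stated immediately after Theorem~\ref{alexandrov-y-theorem} with no separate proof, and you have simply spelled out the two-case deduction from that Alexandrov statement together with the strict inequality $\yin<\yout$ (which follows from the component decomposition in Theorem~\ref{y-graph-theorem} and $b<B$). Nothing is missing.
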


\begin{theorem}\label{x-alexandrov-theorem}
Suppose that $M\in \Cc$ with smooth, strictly convex boundaries, or that
 $M\in \Rr$.
Then $\Mlow\cap\{z>0\}$ projects diffeomorphically onto its image in the $yz$
plane.
In particular, in the domain of $\yin$, 
for each $z$, $\yin(x,z)$ is a decreasing function of $x$.
\end{theorem}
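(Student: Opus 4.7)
The plan is to run the classical Alexandrov moving-planes method on $M$ itself with the family of vertical planes $\{x = t\}$, drawing the conclusion for $\Mlow$ as a corollary. I would first establish the statement when $M \in \Cc$ has smooth strictly convex boundaries, and then pass to the limit along a smooth approximating sequence (guaranteed by condition~\eqref{approximation-condition} of Definition~\ref{Rr-definition}) to obtain the case $M \in \Rr$.

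In the smooth setting, set $A := \max_M x(\cdot)$ and, for $t \in [0, A]$, write $\tilde M^t$ for the reflection of $M \cap \{x \ge t\}$ through $\{x = t\}$. Since $M$ is symmetric about $\{x = 0\}$, the standard moving-planes scheme should give $\tilde M^t \cap M \subset \{x = t\}$ for every $t \in (0, A]$. The set of such $t$ is non-empty (small caps near $t = A$ reflect to small caps on the other side, well separated from the rest of $M$) and closed; openness is the main point. A failure at some $t_0 > 0$ would produce a first tangency of $\tilde M^{t_0}$ with $M \cap \{x < t_0\}$ either at an interior point or on $\{z = 0\}$. An interior tangency is forbidden by the strong maximum principle for translators in the translator metric $e^{-z}(dx^2 + dy^2 + dz^2)$ combined with real-analyticity, since local coincidence would propagate globally and force the boundary curves $\partialin M$ and $\partialout M$ to be symmetric about $\{x = t_0\}$, contradicting their strict convexity once $t_0 \ne 0$. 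A tangency on $\{z = 0\}$ is ruled out by the Hopf boundary-point lemma together with the direct verification that, for a smooth strictly convex closed curve symmetric about $\{x = 0\}$, the reflection of the arc $\{x \ge t\}$ through $\{x = t\}$ lies strictly inside the convex region for every $t > 0$.

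Granted the moving-planes conclusion, both halves of the diffeomorphism statement follow. Injectivity: if $p_1, p_2 \in \Mlow \cap \{z > 0\}$ had the same $(y,z)$-coordinates with $x(p_1) < x(p_2)$, the midpoint $t = (x(p_1)+x(p_2))/2$ would satisfy $t > x(M) > 0$ (using $x(M) > 0$ from condition~\eqref{cc-disjoint-item} of Definition~\ref{Cc-definition}) and $\tilde M^t$ would contain $p_1 \in M \setminus \{x = t\}$, a contradiction. Non-vanishing of $\nu \cdot \ee_1$ on $M \cap \{x > 0\}$: a point $p$ with $\nu(p) \cdot \ee_1 = 0$ and $x(p) > 0$ would be a tangency of $M$ to $\{x = x(p)\}$, and reflecting would give an interior coincidence of tangent planes at $p$ for $\tilde M^{x(p)}$ and $M$, again impossible. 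These two facts show that the projection $(x,y,z) \mapsto (y,z)$ is a local diffeomorphism and a bijection on $\Mlow \cap \{z > 0\}$, hence a diffeomorphism onto its image. The monotonicity of $\yin(\cdot, z)$ is then immediate: an interior critical point would force $\partial_x \yin = 0$, i.e., $\ee_1 \in T\Mlow$, contradicting $\nu \cdot \ee_1 \ne 0$; by Theorem~\ref{y-slice-theorem}, $\yin$ vanishes at the right endpoint of its domain (where $\ulow(x) = z$) and is positive at $\hat c$, so $\yin(\cdot, z)$ is strictly decreasing. For $M \in \Rr$ we pass smoothly to the limit along $M_n$ with smooth strictly convex boundaries, invoking Theorem~\ref{x-critical-theorem} (at most one interior point of $M$ has $\nu = \pm\ee_1$) to exclude the formation of new tangencies in the limit.

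The main obstacle is the boundary portion of the moving-planes argument: one must simultaneously track how $\tilde M^t$ interacts with both boundary curves $\partialin M$ and $\partialout M$ along $\{z = 0\}$ and verify that no first tangency occurs there for $t > 0$. This requires the Hopf boundary-point lemma applied to the translator equation at the boundary, together with strict convexity and the key hypothesis $x(M) > 0$, which ensures that the range of $t$ we need to cover is strictly inside $(0, A]$ and well separated from the axis of symmetry at $t = 0$.
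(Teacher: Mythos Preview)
Your moving-planes argument on the full surface $M$ cannot work as stated, because the conclusion it would yield is false.  You claim that the scheme gives $\nu\cdot\ee_1\ne 0$ on all of $M\cap\{x>0\}$.  But look at the curve $\graph(\uup)$ in Theorem~\ref{finite-y-slice-theorem}: at $x=x(M)$ the tangent plane is $\{x=x(M)\}$, so $(\uup)'(x(M)^+)=+\infty$, while $\uup(A)=0<z(M)=\uup(x(M))$.  Hence $\uup$ rises and then falls, so it has an interior critical point $x_1\in(x(M),A)$, and at $(x_1,0,\uup(x_1))\in \Mup$ one has $\nu\cdot\ee_1=0$.  So either the disjointness $\tilde M^t\cap M\subset\{x=t\}$ already fails for some $t>x_1$, or your Hopf step at $t=x_1$ forces $M$ to be symmetric about $\{x=x_1\}$, which it is not.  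Either way the argument collapses.

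The underlying defect is in your boundary analysis at $\{z=0\}$.  You verify only the single-curve fact that the reflected arc of a strictly convex symmetric curve lies strictly inside that curve.  But $\partial M$ has two components, and you must also control the reflected arc of $\partialout M$ against $\partialin M$; at the first $t$ for which these meet, the reflected $\Mup$ touches $\Mlow$ along $\{z=0\}$, and the openness step breaks down.  The paper sidesteps all of this by reflecting only $\Mlow\cap\{x\ge t\}$ for $t\in(x(M),a(M)]$: its planar boundary lies in $\partialin M$ alone, so the single-curve convexity statement suffices, and the moving-planes set $\Tt$ (disjointness from $M\cap\{x<t\}$ together with $\nu\cdot\ee_1<0$ on the reflected piece) is nonempty, open, and closed in $(x(M),a(M)]$.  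That already delivers both the local diffeomorphism ($\nu\cdot\ee_1<0$ on $\Mlow$) and the injectivity, with no reference to $\Mup$.
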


\begin{proof}
First consider the case that $M\in \Cc$ with smooth, strictly convex boundaries.
The proof in this case is the following standard Alexandrov argument. 
For $t\in (x(M),a(M)]$, let $\Sigma(t)$ be the image of $\Mlow\cap\{x\ge t\}$
under reflection in the plane $\{x=t\}$.
Let $\Tt$ be the set of $t\in [x(M),a(M)]$ such that $\Sigma(t)$ is disjoint
from $M\cap\{x<t\}$ and such that $\nu\cdot \ee_1<0$ at all points of $\Sigma(t)$.
Now $\Tt$ is nonempty since $t\in \Tt$ for all $t$ sufficiently close to $a(M)$.
Clearly $\Tt$ is an open subset of $(x(M),a(M))$.  By the strong maximum principle,
it is also a relatively closed subset.
The statement for $\Rr$ follows by taking the limit.
\end{proof}



\section{Vertical Graphicality}\label{vertical-section}

In this section, we show that if $M\in \Rr$, then a large portion of $M$ can be written
as a graph $z=u_M(x,y)$.

\begin{definition}[Waist]\label{waist-definition}
If $M$ is a translator and if $\tilde M$ is the set of non-flat components of $M$ (i.e., the set of components that are
not contained in vertical planes), then $\waist(M)$ is the set of points in $\tilde M$ at which the tangent
plane is vertical.
\end{definition}

For $M\in \Cc$, then no component of $M$ is flat, and therefore $\waist(M)$ is the preimage
of the equator
 \[
    E:= \{(x,y,z)\in \SS^2: z=0\}
 \]
under the Gauss map.  If $p$ is in the waist, then the Gauss Curvature of $M$ at $p$
is negative (by Theorem~\ref{x-critical-theorem}),
 so $\nu$ maps a neighborhood of $p$ in $M$ diffeomorphically onto an open
set in $\SS^2$.  Thus $\waist(M)$ is a smooth curve, and $\nu$ is a smooth
immersion from $\waist(M)$ onto its
 image.
  By Theorem~\ref{x-critical-theorem}, the map 
$$\nu|_{\waist(M)}: \waist(M) \longrightarrow E$$ is one-to-one,
so $\nu$ is a diffeomorphism from $\waist(M)$ onto an open subset
 of $E$.

\begin{theorem}\label{waist-theorem}
Suppose that $M$ is a surface in $\Cc$ with smooth boundary curves, or a surface in $\Rr$.
Suppose that $p$ is a point in $M\setminus\partial M$ where $\Tan(M,p)$ is vertical.
If $x(p)\ne 0$, then $\nu\cdot\ee_1$ and $x(p)$ have opposite signs, 
and if $y(p)\ne 0$, then $\nu\cdot\ee_2$ and $y(p)$ have opposite signs.
\end{theorem}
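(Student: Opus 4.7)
By the approximation hypothesis in Definition~\ref{Rr-definition} together with smooth convergence of unit normals (waist points limit to waist points with limiting $\nu$), it suffices to prove the theorem for $M\in\Cc$ with smooth, strictly convex boundary curves. By the $(x,y,z)\mapsto(-x,y,z)$ symmetry I may assume $x_0:=x(p)>0$ and aim to show $\nu_1:=\nu(p)\cdot\ee_1<0$. Theorem~\ref{x-critical-theorem}\eqref{at-most-item} rules out $\nu(p)=\pm\ee_2$ (which would force $x_0=0$), so $\nu_1\neq 0$. Symmetrically, if $y(p)\neq 0$, I will show $\nu(p)\cdot\ee_2$ has the opposite sign from $y(p)$.

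Suppose toward contradiction that $\nu_1>0$, and apply the Alexandrov moving-plane method with reflections $\sigma_t$ in $\{x=t\}$: the same non-empty/open/closed scheme used in the proof of Theorem~\ref{x-alexandrov-theorem}, applied now to $M$ in place of $\Mlow$, yields $\sigma_t(M\cap\{x>t\})\cap M=\emptyset$ for every $t\in(0,A(M)]$. Applied at $t=x_0$, this confronts the two translators $M$ and $\sigma_{x_0}(M)$, which share the point $p$ with tangent-plane normals $(\nu_1,\nu_2,0)$ and $(-\nu_1,\nu_2,0)$ respectively. If $\nu_2\neq 0$, these two vertical tangent planes are distinct and meet transversely along the vertical line $\{x=x_0,\,y=y_0\}$, so the two translators cross transversely at $p$ and their intersection is a smooth arc through $p$ entering both $\{x<x_0\}$ and $\{x>x_0\}$, contradicting the Alexandrov disjointness on the side $\{x<x_0\}$. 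If $\nu_2=0$, then $\nu(p)=\ee_1$; by Theorem~\ref{x-critical-theorem}\eqref{at-most-item} combined with $\rho_Z$-symmetry, the unique interior point with $\nu=-\ee_1$ is then $\rho_Z(p)\in\{x<0\}$. However, Theorem~\ref{x-alexandrov-theorem} expresses $\Mlow\cap\{z>0\}$ as a graph $x=X(y,z)$ whose restriction $X(0,\cdot)=\ulow^{-1}$ attains its minimum $x(M)$ at $z=z(M)$ (using that $\ulow'(x)\to -\infty$ as $x\to x(M)^+$, from Theorem~\ref{finite-y-slice-theorem} and Theorem~\ref{slopes-theorem}). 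Consequently the apex $p_0:=(x(M),0,z(M))\in\{x>0\}$ has vertical tangent plane with normal $-\ee_1$ (under the orientation convention fixed by Corollary~\ref{y-graph-corollary}), exhibiting a second interior point in $\{x>0\}$ whose normal is in $\{\pm\ee_1\}$ and contradicting the uniqueness conclusion just deduced.

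The main obstacle will be verifying that the Alexandrov moving-plane argument extends from $\Mlow$ to all of $M$ without being blocked by interactions between $\sigma_t(\partialout M\cap\{x>t\})$ and $\partialin M$ as $t$ decreases below $(a+A)/2$; this requires checking that any such boundary meeting cannot produce an interior tangency, using strict convexity of the boundary curves and the strong maximum principle. A secondary obstacle in the $\nu_2\neq 0$ case is ruling out a higher-order tangential intersection of $M$ and $\sigma_{x_0}(M)$ at $p$ that remains entirely in $\{x\geq x_0\}$; this is handled by the strong maximum principle for translators tangent along a curve. The second assertion, concerning $\nu\cdot\ee_2$ and $y(p)$, follows by the same three-step scheme with reflections in $\{y=t\}$ in place of $\{x=t\}$, using Theorem~\ref{alexandrov-y-theorem} to supply the Alexandrov disjointness for $t\in[(b+B)/2,B]$ and Theorem~\ref{y-graph-theorem} together with Corollary~\ref{y-graph-corollary} to handle waist points with smaller $|y(p)|$.
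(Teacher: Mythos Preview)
Your Alexandrov moving-plane scheme for all of $M$ cannot succeed: the conclusion you assert, that $\sigma_t(M\cap\{x>t\})$ is disjoint from $M\cap\{x<t\}$ for every $t\in(0,A(M)]$, is false. Pick any small $z_0\in(0,z(M))$; by Theorem~\ref{finite-y-slice-theorem} there exist $x_1\in(x(M),a)$ with $\ulow(x_1)=z_0$ and $x_2\in(x(M),A)$ with $\uup(x_2)=z_0$, and for $z_0$ small one has $x_1$ near $a$ and $x_2$ near $A$, hence $x_1<x_2$. With $t=(x_1+x_2)/2>0$, the reflection $\sigma_t$ carries $(x_2,0,z_0)\in\Mup\cap\{x>t\}$ to $(x_1,0,z_0)\in\Mlow\subset M\cap\{x<t\}$. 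So the reflected $\Mup$ genuinely pierces $\Mlow$, and the moving-plane process halts well before you can reach an arbitrary $t=x(p)$. The ``main obstacle'' you flag---the reflected $\partialout M$ meeting $\partialin M$---is not a technicality to be verified away; it marks exactly where the method breaks. This is why Theorem~\ref{x-alexandrov-theorem} is formulated only for $\Mlow$, not for $M$. A secondary issue: in your $\nu_2=0$ subcase you invoke Theorem~\ref{slopes-theorem}, which applies only to $\Rr$ and (as the paper explicitly notes) fails for general $\Cc$, so it cannot be used after your reduction to smooth strictly convex boundaries.

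The paper's proof is quite different and avoids moving planes entirely. It works directly with the connected component $J$ of $\waist(M)\cap\{x>0,\,y>0\}$ through $p$, noting that $\nu$ maps $\overline{J}$ homeomorphically onto an arc of the equator (waist points have negative Gauss curvature by Theorem~\ref{x-critical-theorem}). Each endpoint $q$ of $J$ lies either on $\partialin M$ (where the maximum principle gives $\nu(q)\cdot\ee_i\le 0$), on $\{y=0\}$ (forcing $q=(x(M),0,z(M))$ with $\nu(q)=-\ee_1$), or on $\{x=0\}$ (forcing $\nu(q)=-\ee_2$). Since $\nu(J)$ also avoids $\pm\ee_1,\pm\ee_2$ (again by the uniqueness in Theorem~\ref{x-critical-theorem}), the arc $\nu(J)$ is trapped in the open third-quadrant arc of the equator, yielding both sign conditions simultaneously.
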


(Theorem~\ref{waist-theorem} is true, with essentially the same proof for any $M\in \Cc$, but the lack
of regularity of $\partial M$ makes the proof a bit more involved.)

\begin{proof}
If $y(p)=0$, then $p$ must be one of the points $(\pm x(M),0,z(M))$,
and the assertion holds at those points.
Likewise, it holds if $x(p)=0$.

Thus we may assume that neither $x(p)$ nor $y(p)$ is $0$.  By symmetry, 
it suffices to consider the case when $x(p)>0$ and $y(p)>0$.
Let $J$ be the connected component of $\waist(M)\cap\{x> 0, \, y> 0\}$ containing $p$.

Let $q$ be an endpoint of $J$.  We claim
that
\begin{equation}\label{waist-claim}
 \text{$\nu(q)\cdot \ee_i\le 0$ for $i=1,2$}. \tag{*}
\end{equation}
To prove~\eqref{waist-claim}, note that if $q$ is an endpoint of $J$, then one
of the following must hold:
\begin{enumerate}
\item\label{case-1}  $q$ is a point in $\partial M$ where $\Tan(M,q)$ is vertical, or
\item\label{case-2} $q$ is a  point in $M\setminus \partial M$ with $y(q)=0$, or
\item\label{case-3} $q$ is a point in $M\setminus \partial M$ with $x(q)=0$.
\end{enumerate}
In case~\eqref{case-1}, $q$ is in $\partialin M$ since, by the maximum principle, $\Tan(M,\cdot)$ is never
vertical on $\partialout M$.  Thus~\eqref{waist-claim} holds in this case.

In case~\eqref{case-2}, by Theorem~\ref{y-slice-theorem}, $x=x(M)$ and $q=(x(M),0,z(M))$, so $\nu(q)= -\ee_1$
and therefore~\eqref{waist-claim} holds.

Likewise, in case~\eqref{case-3}, $\nu(q)=-\ee_2$, so~\eqref{waist-claim} holds.

Now $\nu$ is a homeomorphism of $\overline{J}$ onto its image.  
Note that if $(x,y,z)\in J$, then $\nu(x,y,z)$ cannot be $\pm \ee_1$, since if it were,
 $\nu(x,y,z)$ and $\nu(x,-y,z)$ would be equal, contrary to Theorem~\ref{x-critical-theorem}.
 For the same reason, $\nu(x,y,z)$ cannot be $\pm \ee_2$.
Thus $\nu(J)$ lies in one of the four components of 
\[
  E \setminus \{\ee_1,\ee_2, -\ee_1,-\ee_2\}.
\]
By~\eqref{waist-claim}, 
  the endpoints of $\nu(\overline{J})$ lie in $\{v\in E: v\cdot\ee_1\le 0, \, v\cdot \ee_2\le 0\}$.
Thus $J$ lies in $\{v\in E: v\cdot \ee_1<0, \, v\cdot \ee_2<0\}$.
\end{proof}

\begin{definition}\label{u-definition}
Suppose $M$ is in $\Cc$ or $\Aa$.
We define a function $u=u_M$ as follows.
The domain of $u_M$ is the set of $(x,y)$ such that $\{z:(x,y,z)\in M\}$ is nonempty and
has a greatest element $\zeta$ and such that the tangent plane to $M$ at $(x,y,\zeta)$ is not vertical.
For $(x,y)\in \domain(u_M)$, we let
\[
u_M(x,y) = \max\{ z: (x,y,z)\in M\}.
\]
\end{definition}

If $M$ is smooth and $\partialout M$ is smooth, then $u_M$ is smooth.
For $M\in \Rr$, $u_M$ is $C^1$, and it is smooth except at the corners $(\pm A, \pm B)$.

Recall that for $M\in \Cc$ or $M\in \Aa$, 
\[
   x(M):= \min \{x>0: \text{$(x,0,z)\in M$ for some $z$}\}.
\]
Likewise, we let
\begin{equation}\label{yM-definition}
   y(M):= \min \{y>0: \text{$(0,y,z)\in M$ for some $z$}\}.
\end{equation}

\begin{theorem}\label{wide-graph}
For $M\in \Rr$, 
\[
  [\hat c, A]\times [-B,B] \subset \domain(u_M)
\]
and 
\[
  \pdf{}yu_M(x,y)< 0 \quad\text{on $[\hat c, A)\times (0,B]$},
\]
where $\hat c:=x(M)+\pi$.

Likewise, if $B>y(M)+\pi$, then
\[
   [-A,A]\times [y(M)+\pi, B] \subset \domain(u_M),
\]
and
\[
   \pdf{}x u_M(x,y) <0  \quad \text{on $(0,A]\times [y(M)+\pi,B)$.}
\]
\end{theorem}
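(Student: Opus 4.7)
The plan is to invert the sideways graph $y=\yout(x,z)$ from Theorem \ref{y-graph-theorem} to obtain $u_M$ as a vertical graph on $[\hat c,A]\times[-B,B]$, then extend by symmetry. The key analytic step is to prove strict monotonicity of $\yout(x,\cdot)$ on $[0,\uup(x)]$ for each $x\in[\hat c,A)$.

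To prove strict monotonicity, I would parametrize $\graph(\yout)$ by $\Phi(x,z)=(x,\yout(x,z),z)$. With the unit normal oriented so that $\nu\cdot\ee_2>0$ on $\Mup\cap\{y>0\}$ (as in Corollary \ref{y-graph-corollary}), one has
\[
\nu=(-\partial_x\yout,\,1,\,-\partial_z\yout)/N,
\]
so $\nu_2=1/N>0$ and $\nu_3=-\partial_z\yout/N$; thus $\partial_z\yout<0$ is equivalent to $\nu_3>0$. Combining Theorem \ref{waist-theorem} (which forces $\nu\cdot\ee_2<0$ at any waist point with $y>0$) with the chosen orientation, there are no waist points on $\graph(\yout)\cap\{y>0,\,x>x(M)\}$. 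Since this set is connected (the $\Phi$-image of the connected region $\{(x,z):x\in[\hat c,A),\,0<z<\uup(x)\}$), $\nu_3$ never vanishes and so has constant sign. To pin that sign, I would apply the mean value theorem to $\yout(x,\cdot)$: it runs continuously from $\yout(x,0)=B$ down to $\yout(x,\uup(x))=0$, so at some $z^*\in(0,\uup(x))$, $\partial_z\yout(x,z^*)=-B/\uup(x)<0$. Hence $\nu_3>0$ there and everywhere, and $\partial_z\yout<0$ strictly.

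Monotonicity makes $\yout(x,\cdot):[0,\uup(x)]\to[0,B]$ a homeomorphism, so I define $u_M(x,y)$ on $[\hat c,A)\times[0,B]$ as the unique $z$ with $\yout(x,z)=y$. The point $(x,y,u_M(x,y))$ lies on $\Mup$ with non-vertical tangent plane (as $\nu_3>0$), and by the inverse function theorem $\partial_y u_M=1/\partial_z\yout<0$ on the interior. The value $u_M(x,y)$ is indeed the maximum $z$ with $(x,y,z)\in M$: the only other possible candidate, a point on $\Mlow$ (present when $y\le b$), has strictly smaller $z$ because $\yin(x,z)<\yout(x,z)$ on the common domain while each is strictly decreasing in $z$. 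The symmetry $y\mapsto-y$ extends $u_M$ to $[\hat c,A)\times[-B,0]$, and the outer edge $\{A\}\times[-B,B]\times\{0\}\subset\partial M$ gives $u_M(A,y)\equiv 0$, yielding the inclusion $[\hat c,A]\times[-B,B]\subset\domain(u_M)$. At $y=B$ the strict inequality $\partial_y u_M<0$ persists by continuity, or by a Hopf argument applied to the smooth approximants of Definition \ref{Rr-definition}.

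Part (2) then follows by the identical argument with the roles of $x$ and $y$ swapped: the hypothesis $B>y(M)+\pi$ ensures the analogous sideways graph $x=x^{\textnormal{outer}}(y,z)$, coming from the $(x,y)$-swapped version of Theorem \ref{y-graph-theorem}, is defined on a nonempty set, and the waist/mean-value reasoning proves the corresponding strict monotonicity and gives $\partial_x u_M<0$ on $(0,A]\times[y(M)+\pi,B)$. The main obstacle is pinning the sign of $\nu_3$ in the monotonicity argument; everything else is bookkeeping.
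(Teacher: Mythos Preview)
Your approach is essentially the paper's: use Theorem~\ref{waist-theorem} together with Corollary~\ref{y-graph-corollary} to rule out vertical tangent planes on $\Mup\cap\{x\ge\hat c\}\cap\{y>0\}$, pin the sign of $\nu\cdot\ee_3$, and invert $\yout$. The paper pins the sign on the curve $\Sigma\cap\{y=0\}$ (where $\uup$ being a smooth graph forces $\nu\cdot\ee_3\ne0$); your mean-value argument is an equally valid alternative.

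One point to flag: your claim that $\yin$ is strictly decreasing in $z$ is unproven, and the waist argument does \emph{not} go through for $\Mlow$, since there Theorem~\ref{waist-theorem} and Corollary~\ref{y-graph-corollary} both yield $\nu\cdot\ee_2<0$, so there is no contradiction and waist points on $\Mlow\cap\{y>0\}$ are not excluded. Fortunately you do not need this. From $\Mup\cap\Mlow=\emptyset$ and the boundary values $\yin(x,0)=b<B=\yout(x,0)$ one gets $\yin<\yout$ on all of $\Omegain$ by continuity; then if $\yin(x,z)=y$ one has $\yout(x,z)>y=\yout(x,z_1)$, and the strict monotonicity of $\yout$ alone forces $z<z_1$. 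The paper handles this step by the more topological observation that $\Mlow\cap\{x\ge\hat c\}$ is connected, disjoint from the graph of $u$, and contains boundary points strictly below it.
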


\begin{proof}
Let $\Sigma = \Mup\cap \{x\ge \hat c\}$.
By Corollary~\ref{y-graph-corollary}, 
\begin{equation*}\label{dottie}
\text{$\nu\cdot \ee_2>0$ on $\Sigma\cap\{y>0\}$.}
\end{equation*}
If $p\in M\cap\{y>0\}$ and $\nu\cdot\ee_3=0$, then $\nu\cdot\ee_2<0$ by Theorem~\ref{waist-theorem}, and thus $p\notin \Sigma$.
Hence $\nu\cdot\ee_3$ is never $0$ on $\Sigma\cap\{y>0\}$.
By Theorem~\ref{finite-y-slice-theorem}, $\nu\cdot\ee_3>0$ on $\Sigma\cap\{y=0\}$.  Thus $\nu\cdot\ee_3>0$ on $\Sigma\cap\{y>0\}$ since
$\Sigma\cap\{y>0\}$ is connected.

Note that on $\Omegaout$, 
\[
  \pdf{\yout}{z} = - \frac{\nu\cdot\ee_3}{\nu\cdot\ee_2} < 0,
\]
where $\yout(x,z)$ is as in Theorem~\ref{y-graph-theorem}.
Thus for each $\hat x\in [\hat c, A)$, $\yout(\hat x,z)$ is a strictly decreasing function of $z$. Then, we have that $z$ is a 
strictly decreasing function of $y$ on $\Sigma\cap\{x=\hat x\}$.   Notice that $\yout(\hat x,0)=B$ and $\yout(\hat x, \uup(\hat x))=0$. 
Since this holds for each $\hat x\in [\hat c, A)$, we see that
\[
   \Sigma \cap \{y\ge 0\}
\]
is the graph of a function on $[\hat c, A]\times [0,B]$. By symmetry, $\Sigma$ is the graph of a function $u$ on
$[\hat c,A]\times [-B, B]$.  
By Theorem~\ref{y-graph-theorem} and Theorem~\ref{connected-theorem},
    $(M\cap\{x\ge \hat c\})\setminus \Sigma$ lies below the graph of $u$. Thus $u=u_M$.

Furthermore, 
\[
  \pdf{u_M}{y} = - \frac{\nu\cdot\ee_2}{\nu\cdot\ee_3} < 0  \quad\text{on $[\hat c, A)\times (0,B)$}.
\]
The last assertion of the theorem follows by reversing the roles of $x$ and $y$.
\end{proof}

\section{The Inner and Outer Portions of an Annuloid}\label{inner-outer-section}

In this section, we describe a way of dividing $M\in \Rr$ (or $M\in\Aa$) 
into an inner and an outer portion, and we show that the outer portion is well-behaved.
In particular, we show that $\nu\cdot\ee_3$ is everywhere positive on the outer portion, and we
get good control on the slopes of the tangent planes in the outer portion.  
This control is the key to showing that 
 if $M\in \Aa(b,B,\hat x)$, then $B(M)=B$.
See Theorem~\ref{planes-theorem}.

The idea is the following.  For points in the outer portion that are not in the plane $\{y=0\}$, 
the unit normal should point away from that plane, whereas on the inner portion, the unit normal should point
toward the plane.  Thus, except on the plane $\{y=0\}$, the outer portion should be the points
where $(\nu\cdot \ee_2)/y>0$, and the inner portion should be the points where $(\nu\cdot\ee_2)/y<0$.
What about on $M\cap\{y=0\}$?  The function $(\nu\cdot\ee_2)/y$ extends to a smooth function $f_M$
on all of $M$, so we extend the notion of outer and inner to all of $M$ by defining a point to be outer
or inner according to whether $f_M$ is $>0$ or $<0$.

\begin{definition} \label{definition-myin-myout}
Suppose that $M$ is an oriented surface in $\RR^3$ such that reflection in the plane $\{y=0\}$
is an orientation-reversing
isometry of $M$. We let
$f_M: M\to \RR$ be the smooth function such that 
\[
    f_M = \frac{\nu\cdot \ee_2}y     \quad\text{on $M\cap\{y\ne 0\}$.}
\]
We let
\begin{align*}
  &\Myout: = \{p\in M: f_M(p)>0\},  \\
  &\Myin:= \{p\in M: f_M(p)<0\}.
\end{align*}
\end{definition}

(The function $f_M$ exists because for any smooth function $u$ on $M$ with $u(x,y,z)\equiv -u(x,-y,z)$,
the function $u(x,y,z)/y$ on $M\cap\{y\ne 0\}$ extends smoothly to $M$.)

Note that  
\begin{equation}\label{crest}
\text{$f_M(p) = \nabla_{\ee_2}(\nu\cdot \ee_2) = (\nabla_{\ee_2}\nu)\cdot \ee_2$ for $p\in M\cap\{y=0\}$}.
\end{equation}
If $p\in M\cap\{y=0\}$, then $\ee_2$ is one of the principal directions of $M$ at $p$.
Thus, by~\eqref{crest},
\begin{equation}\label{crest2}
\text{If $p\in M\cap\{y=0\}$ and if the Gauss curvature is $\ne 0$, then $f_M(p)\ne 0$.}
\end{equation}

For $M\in \Rr$, note that $f_M$ has the following behavior on $\partial M$:
\begin{equation}\label{boundary-segments}
\begin{aligned}
f_M > 0  &\quad\text{on $(-A,A)\times\{\pm B\}\times\{0\}$}, \\
f_M< 0   &\quad\text{on $(-a,a)\times\{\pm b\}\times\{0\}$},\\
f_M=0   &\quad\text{ on $\{\pm a\}\times [-b,b]\times\{0\}$}, \\
f_M=0  &\quad\text{ on $\{\pm A\}\times [-B,B]\times\{0\}$}.
\end{aligned}
\end{equation}

The following is trivial, but useful:

\begin{proposition}\label{trivial-proposition}
Suppose that $M$ is in $\Rr$ or in $\Aa$.
Suppose $L$ is a line parallel to the $y$-axis.  Let $p$ be the point in $L\cap M$
that maximizes $y(p)$.  Then $\nu(M,p)\cdot \ee_2\ge 0$.

In particular, if $M$ and $L$ cross transversely at $p$, then $p$ is in $\Myout$.
\end{proposition}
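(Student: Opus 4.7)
The plan is to argue topologically, using the fact that surfaces in $\Rr$ close up to compact surfaces in $\RR^3$ bounding compact regions, and then to pass to $\Aa$ by taking smooth limits.

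First I would handle $M \in \Rr$ in the main case $L \subset \{z = z_0\}$ with $z_0 > 0$. Attach the flat annular region $A \subset \{z=0\}$ bounded by $\partialin M$ and $\partialout M$ to form a closed embedded torus $\Sigma := M \cup A$; by Jordan--Brouwer this torus bounds a compact region $\Omega \subset \RR^3$. Checking the sign conditions in~\eqref{boundary-segments} against the outward direction of $\Omega$ at, say, $(0,B,0) \in \partialout M$ confirms that the orientation $\nu$ on $M$ is the outward unit normal of $\Omega$ along $M$.

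Given that $p$ maximizes $y$ on $L \cap M$, the open ray $R_+ := \{p + t\ee_2 : t > 0\}$ is disjoint from $M$. Since $z_0 > 0$ and $A \subset \{z=0\}$, the ray is also disjoint from $A$, and hence from $\Sigma = \partial\Omega$. Because $R_+$ is connected and unbounded while $\Omega$ is compact, $R_+$ lies entirely in $\RR^3 \setminus \Omega$. Thus $+\ee_2$ is an outward direction at $p$, and since $\nu$ is the outward normal to $\Omega$, we conclude $\nu(p)\cdot\ee_2 \ge 0$. The case $z_0 < 0$ is vacuous since $M \subset \{z \ge 0\}$ by the maximum principle, and the case $z_0 = 0$ (where $p \in \partial M$) can be handled either directly from~\eqref{boundary-segments} or by a small upward perturbation of $L$ together with continuity.

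For $M \in \Aa$, Definition~\ref{Aa-definition} realises $M$ as a smooth limit of translates $M_n - (0,0,z(M_n))$ with $M_n \in \Rr$. Given $p \in L \cap M$ maximising $y$, one selects nearby lines $L_n$ and points $p_n \in L_n \cap M_n$ that (approximately) maximise $y$ on $L_n \cap M_n$; the inequality $\nu(M_n, p_n)\cdot\ee_2 \ge 0$ then passes to the limit under the smooth convergence provided by Theorem~\ref{complete-existence-theorem}. For the ``in particular'' clause: the symmetries of $L$ and $M$ in $\{y=0\}$ force $y(p) \ge 0$; a transverse intersection requires $\ee_2 \notin T_pM$, so $y(p) > 0$ and $\nu(p)\cdot\ee_2 > 0$, hence $f_M(p) > 0$, i.e., $p \in \Myout$.

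The main obstacle I expect is the orientation verification: one must check carefully that the orientation of $M$ encoded by the $f_M$ convention (equivalently, by~\eqref{boundary-segments}) coincides with the outward normal of $\Omega$. Once this is settled, the ray argument is essentially a one-liner, and the passage to $\Aa$ by limits is routine.
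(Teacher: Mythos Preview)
Your proof is correct and follows essentially the same idea as the paper's. The paper works directly with the closure $K$ of the bounded component of $\{z\ge 0\}\setminus M$ and asserts that $\nu$ points out of $K$ ``by definition,'' from which the result is immediate; your capping with the flat annulus $A$ and appeal to Jordan--Brouwer is a slightly more explicit packaging of the same region, and your attention to the orientation check is precisely the point the paper elides.
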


\begin{proof}
It suffices to prove it for $M\in \Rr$; the other case follows by taking limits.
Let $K$ be the closure of the bounded component of $\{z\ge 0\}\setminus M$.
By definition, $\nu(M,\cdot)$ is the unit normal vector field to  $M$ that points out of $K$.
Proposition~\ref{trivial-proposition} follows immediately.
\end{proof}

\begin{lemma}\label{wingnuts}
If $M\in \Rr$, then
\begin{enumerate}
\item\label{wingnuts-item-1} the surface $\Mup\cap\{x \in (x(M)+\pi, A)\}$  is contained in $\Myout$, and
\item\label{wingnuts-item-2} the surface $\Mlow\cap\{x \in (x(M)+\pi, a)\}$ is contained in $\Myin$.
\end{enumerate}
\end{lemma}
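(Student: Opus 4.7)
My plan is to split each statement into the case $y\neq 0$ (where everything follows almost at once from Corollary~\ref{y-graph-corollary} together with the reflection symmetry of $M$ in $\{y=0\}$) and the case $y=0$ (which requires a Hopf boundary-point argument on the linearised translator equation).

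For (1) at $p\in\Mup$ with $x(p)\in(\hat c,A)$ (where $\hat c=x(M)+\pi$) and $y(p)\neq 0$: if $y(p)>0$, Corollary~\ref{y-graph-corollary} gives $\nu\cdot\ee_2>0$, whence $f_M(p)=(\nu\cdot\ee_2)/y>0$. The reflection $(x,y,z)\mapsto(x,-y,z)$ flips the signs of both $\nu\cdot\ee_2$ and $y$, so the $y<0$ case follows. Completely analogous remarks (with signs reversed) handle (2) on $\Mlow\cap\{y\neq 0\}$.

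For the harder case $y=0$ in (1), take $p=(x_0,0,\uup(x_0))$ with $x_0\in(\hat c,A)$. By Theorem~\ref{wide-graph}, $\Mup$ is the graph $z=u_M(x,y)$ on $[\hat c,A]\times[-B,B]$, with $u_M$ even in $y$ (so $\partial_y u_M\equiv 0$ on $\{y=0\}$) and $\partial_y u_M<0$ on $[\hat c,A)\times(0,B]$. With the upward (outward) orientation, $\nu\cdot\ee_2=-\partial_y u_M/\sqrt{1+|\nabla u_M|^2}$; since $\partial_y u_M(x_0,0)=0$, L'H\^opital gives
\[
   f_M(p) \;=\; \lim_{y\to 0^+}\frac{\nu\cdot\ee_2}{y}\;=\;-\frac{\partial_{yy}u_M(x_0,0)}{\sqrt{1+(\partial_x u_M(x_0,0))^{2}}},
\]
so it suffices to show $\partial_{yy}u_M(x_0,0)<0$. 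The function $v:=\partial_y u_M$ satisfies the linearisation of the translator equation at $u_M$, which is a linear uniformly elliptic PDE with smooth bounded coefficients and no zeroth-order term (translation in the $y$-direction preserves the equation). By Theorem~\ref{wide-graph}, $-v\ge 0$ on $(\hat c,A)\times[0,B]$, is strictly positive in the open interior, and vanishes on $\{y=0\}$. Applying the Hopf boundary-point lemma to $-v$ at $(x_0,0)$ (outward direction $-\ee_2$) gives $\partial_{yy}u_M(x_0,0)<0$, hence $f_M(p)>0$, as desired. For (2) at $p=(x_0,0,\ulow(x_0))\in\Mlow$ with $x_0\in(\hat c,a)$, the argument is identical, but because $\Mlow$ is not globally an $xy$-graph, one invokes the implicit function theorem to obtain a local graph $z=u(x,y)$ near $p$, available since the tangent plane to $\Mlow$ at $p$ contains $\ee_2$ and the non-vertical vector $(1,0,\ulow'(x_0))$ (with $\ulow'(x_0)$ finite, and in fact negative by Theorem~\ref{slopes-theorem}); Corollary~\ref{y-graph-corollary} again yields $\partial_y u<0$ for $y>0$ in a neighbourhood, and the same Hopf argument yields $\partial_{yy}u(x_0,0)<0$ and so $f_M(p)<0$.

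The main obstacle is the slice $\{y=0\}$: continuity from $y\neq 0$ only gives $f_M\ge 0$ (respectively $\le 0$), whereas Definition~\ref{definition-myin-myout} requires strict inequality, so the Hopf boundary-point argument applied to the linearised translator operator is essential. A minor additional ingredient, needed only for~(2), is the local-graph observation that $\Mlow$ is an $xy$-graph in a neighbourhood of each point of $\graph(\ulow)\cap\{x\in(\hat c,a)\}$.
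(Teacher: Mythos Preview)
Your proof is correct and follows the same underlying strategy as the paper: the case $y\neq 0$ is immediate from Corollary~\ref{y-graph-corollary} and the reflection symmetry, and the case $y=0$ is handled by a Hopf boundary-point argument applied to the linearisation of the translator equation.

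The only difference is in packaging. You work in graph coordinates, applying Hopf to $v=\partial_y u_M$ (the linearised translator equation for $v$ having no zeroth-order term because the translator equation is invariant under $y$-translation); this forces you to invoke Theorem~\ref{wide-graph} for~(1) and a separate local-graph argument for~(2). The paper instead applies Hopf \emph{intrinsically on the surface}, directly to the Jacobi field $\nu\cdot\ee_2$ (positive on $M'\cap\{y>0\}$, vanishing on $M'\cap\{y=0\}$), obtaining $\nabla_{\ee_2}(\nu\cdot\ee_2)>0$ along $M'\cap\{y=0\}$, which by~\eqref{crest} is exactly $f_M>0$. This intrinsic formulation treats~(1) and~(2) identically, without any graph parametrisation or orientation bookkeeping, so it is a bit cleaner; but the mathematical content is the same.
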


\begin{proof}
Let $M'$ be the surface in Assertion~\eqref{wingnuts-item-1}.
By Theorem~\ref{y-graph-theorem}, $\ee_2\cdot \nu$ is $>0$ on all the points on $M'\cap\{y>0\}$.
Thus those points are in $\Myout$. By symmetry, $M'\cap\{y< 0\}$ is also contained in $\Myout$.
Now $\nu\cdot\ee_2$ is a Jacobi field that is $>0$ in $M'\cap\{y>0\}$ and that vanishes along
$M'\cap\{y=0\}$.  Thus, by the boundary maximum principle~\cite{hopf}, 
\[
\text{$\nabla_{\ee_2}(\nu\cdot\ee_2)>0$ on $M'\cap\{y=0\}$.} 
\]
Thus $M'\cap\{y=0\}$ is contained in $\Myout$ (by~\eqref{crest}).

The proof of Assertion~\eqref{wingnuts-item-2} is essentially the same.
\end{proof}

\begin{definition}\label{R-lambda-definition}
Given $\lambda\in (0,\infty)$, let $\Rr(\lambda)$ be the collection of $M\in \Rr$ such that
\begin{enumerate}
\item $M$ lies in the slab $\{|y|\le \lambda\}$.
\item $a(M)\ge x(M)+2\pi$.
\end{enumerate}
\end{definition}

The following theorem shows that the tangent planes to $M\in \Rr(\lambda)$ at points where $f_M=0$ have uniformly
bounded slopes.

\begin{theorem}\label{slopey-theorem}
There is an $\eta=\eta(\lambda)<\infty$ with the following property.
If $p\in M\in \Rr(\lambda)$ and if $f_M(p)=0$, then
\[
   |\nu(M,p)\cdot \ee_3|^{-1} \le \eta.
\]
\end{theorem}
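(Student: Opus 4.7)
I argue by contradiction via a blow-up and compactness analysis. Suppose no such $\eta(\lambda)$ works: then there are $M_n\in \Rr(\lambda)$ and interior points $p_n\in M_n$ with $f_{M_n}(p_n)=0$ and $\nu(M_n,p_n)\cdot \ee_3\to 0$. Passing to a subsequence, $\nu(M_n,p_n)\to \vv$, a horizontal unit vector. The condition $f_{M_n}(p_n)=0$ forces $\nu(M_n,p_n)\cdot \ee_2=0$: when $y(p_n)\ne 0$ this follows directly from $f_M=(\nu\cdot \ee_2)/y$; when $y(p_n)=0$, the reflection symmetry of $M_n$ across $\{y=0\}$ places $\ee_2\in \Tan_{p_n}M_n$.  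Hence $\vv=\pm\ee_1$; by the symmetry $(x,y,z)\mapsto(-x,y,z)$ we may assume $\vv=-\ee_1$.

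By Theorem~\ref{x-critical-theorem} together with the lower-semicontinuity of critical points from Theorem~\ref{semicontinuity-theorem}, for large $n$ there is a unique interior point $p_n^-\in M_n\cap \{y=0\}$ with $\nu(p_n^-)=-\ee_1$; and since $H=-\ee_3\cdot\nu$ vanishes there, the Gauss curvature at $p_n^-$ equals $-|A(M_n,p_n^-)|^2/2<0$.  Set $K_n=|A(M_n,p_n^-)|>0$ and rescale: $\tilde M_n:=K_n(M_n-p_n^-)$, with $|A(\tilde M_n,0)|=1$ and $\nu(\tilde M_n,0)=-\ee_1$.  The plan is to pass to a smooth subsequential limit $\tilde M$ of $\tilde M_n$ and to identify the component through $0$.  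The only subcase that gives a nondegenerate limit cleanly is $K_n\to\infty$: there the rescaled translator metric tends to the Euclidean one, and via Proposition~\ref{simply-connected-proposition}, Lemma~\ref{topology-lemma-1}, and the standard classification of complete embedded minimal surfaces of finite topology with quadratic area growth, the component of $\tilde M$ through $0$ is a Euclidean catenoid with axis along $\ee_2$ and $0$ as a waist point.  The subcase $K_n$ bounded in $(0,\infty)$ leads to an immediate contradiction: the translator finite-type classification forces the $\nu=-\ee_1$ component of $\tilde M$ to be a vertical plane, incompatible with $|A(\tilde M,0)|=1$.  The subcase $K_n\to 0$ is excluded by the slab and annular constraints built into $\Rr(\lambda)$.

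To conclude, I track $f$ through the rescaling.  Under $\tilde M_n=K_n(M_n-p_n^-)$ one has $f_{\tilde M_n}=f_{M_n}/K_n$, so $f_{\tilde M_n}(K_n(p_n-p_n^-))=0$ for every $n$.  By bi-Lipschitz control of the Gauss map at the unit-curvature scale, $K_n(p_n-p_n^-)\to 0$, and smooth convergence then gives $f_{\tilde M}(0)=0$.  On the other hand, at $0\in \tilde M$ we have $\nu=-\ee_1$, $\ee_2\in \Tan_0\tilde M$, and the Gauss curvature is strictly negative (from the catenoid at its waist point), so by~\eqref{crest2} we have $f_{\tilde M}(0)\ne 0$, which is the desired contradiction.

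The principal obstacle is the compactness argument of the second paragraph: ensuring that the rescaled limit is truly nondegenerate, and in particular ruling out the degenerate planar limit (on which $f$ vanishes identically, yielding no contradiction), which requires the slab bound and the annular topology encoded in the definition of $\Rr(\lambda)$.
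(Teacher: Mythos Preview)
Your argument has a genuine gap at the step ``by bi-Lipschitz control of the Gauss map at the unit-curvature scale, $K_n(p_n-p_n^-)\to 0$.'' This is circular. Bi-Lipschitz control of the Gauss map near $0\in\tilde M_n$ only tells you that points of $\tilde M_n$ \emph{in a fixed compact neighborhood of $0$} with normal close to $-\ee_1$ must be close to $0$. It says nothing about points that may lie far away in $\tilde M_n$. You have no a priori bound on $|p_n-p_n^-|$: the condition $\nu(M_n,p_n)\to-\ee_1$ does not force $p_n$ near the unique point where $\nu$ equals $-\ee_1$ exactly, since the Gauss map can take values arbitrarily close to $-\ee_1$ at points far from $p_n^-$. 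Without knowing $\tilde p_n:=K_n(p_n-p_n^-)$ stays bounded, you cannot pass $f_{\tilde M_n}(\tilde p_n)=0$ to the limit.

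The paper avoids this by centering everything at $p_n$ rather than at $p_n^-$, so the point with $f=0$ is always at the origin of the rescaled picture. It first establishes a slope bound on $(\partial M)\cap\{f_M=0\}$ via an explicit barrier (which also disposes of your unaddressed assumption that $p_n$ is interior), and then splits into three cases according to whether $\dist(p_n,\partial M_n)$ and $\dist(p_n,Z)$ stay bounded away from $0$. When both are bounded away from $0$, a mere translation suffices: the limit is a translator, and Corollary~\ref{x-critical-corollary} gives negative Gauss curvature at the limit point with $\nu=\pm\ee_1$, contradicting $f=0$ via~\eqref{crest2}. When $\dist(p_n,Z)\to 0$, one rescales by that distance and invokes Lemma~\ref{cat-lemma} to get a catenoid (with \emph{vertical} axis, not axis $\ee_2$), again with nonzero Gauss curvature at the limit point. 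The boundary case is handled by a separate blow-up. Your trichotomy on $K_n=|A(M_n,p_n^-)|$ does not align with these regimes, and the subcases $K_n$ bounded and $K_n\to 0$ are dismissed without real argument.
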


\begin{proof}
First we give a slope bound on the points of the boundary where $f_M=0$, i.e., 
on the edges $\{\pm A\}\times [-B,B]\times\{0\}$ and $\{\pm a\}\times [-b,b]\times\{0\}$.
Let $u: [0,2\pi)\times [-\lambda,\lambda]\to [0,\infty)$ be the translator that is $\infty$ on the edge
\[
    \{2\pi\}\times [-\lambda,\lambda]
\]
and that is $0$ on the other three sides of the boundary.
(For existence of the function $u$, see Theorem~\ref{short-barrier-theorem}.)

Let $s=\max_{y\in [-\lambda, \lambda]} (\partial/\partial x)u(0,y)$.
(By the boundary maximum principle, $s<\infty$.)
By the maximum principle,
\[
    \Mup\cap\{ x\in [-A, -A +\pi)\}
\]
lies under the graph of $(x,y)\mapsto u(x+A,y)$.
Thus the slope of the tangent plane to $M$ at each point of of $\{-A\}\times [-B,B]\times \{0\}$
is bounded above by $s$.  By symmetry, the same bound holds on $\{A\}\times [-B,B]\times \{0\}$.
The same argument gives the same upper bound on the edges $\{\pm a\}\times [-b,b]\times \{0\}$.

Thus we have shown
\begin{equation}\label{boundary-control}
   |\nu\cdot \ee_3|^{-1} \le (1+s^2)^{1/2}  \quad\text{on $(\partial M)\cap \{f_M=0\}$.}
\end{equation}

Now suppose that Theorem~\ref{slopey-theorem} is false.  Then there exist
 $M_n\in \Rr(\lambda)$ and $p_n\in M_n$  such that
\[
   f_{M_n}(p_n)=0
\]
and such that 
\[
   |\nu(M_n,p_n)\cdot \ee_3| \to 0.
\]
Since $\nu(M_n,p_n)\cdot\ee_2=0$, it follows that (after passing to a subsequence) 
\begin{equation}\label{limit-normal}
\text{$\nu(M_n,p_n)\to \sigma\ee_1$, where $\sigma$ is $1$ or $-1$.}
\end{equation}
By~\eqref{boundary-control}, we may suppose that $p_n$ is in the interior of $M_n$ for all $n$.  
By symmetry, we can assume that $x(p_n)\ge 0$ for all $n$.  By Lemma~\ref{wingnuts}, we see that 
$x(p_n)\le x(M_n) +\pi$, and thus
\[
    0 \le  x(p_n) \le a_n -\pi
\]
for all $n$.

By passing to a subsequence, we can assume that
\begin{enumerate}[{Case} 1:]
\item $\dist(p_n,\partial M_n)$ is bounded away from $0$ and $\dist(p_n,Z)$ is bounded away from $0$,  or
\item $\dist(p_n, \partial M_n)$ is bounded away from $0$ and $\delta_n:=\dist(p_n,Z)\to 0$, or
\item $\dist(p_n, \partial M_n)\to 0$ (and therefore $z(p_n)\to 0$.)
\end{enumerate}

In case 1, we let
\[
   M_n' = M_n - (x(p_n),0, z(p_n))
\]
and
\[
   p_n' = p_n -  (x(p_n),0,z(p_n))= (0,y(p_n),0).
\]
By passing to a subsequence, we can assume (by the fundamental compactness theorem for minimal surfaces of locally bounded area and genus~\cite{white18}) that the
  $M_n'$ converge to a smooth embedded limit $M'$ (possibly with multiplicity),
and  that $p_n'$ converges to a limit $p'=(0,y(p'),0)$.  By the curvature estimates in Theorem~\ref{curvature-bound-theorem},
 the convergence is smooth in a neighborhood of $p'$.
 Thus, 
 \begin{equation}\label{lucas}
   f_{M'}(p') = \lim f_{M_n'}(p_n') = \lim f_{M_n}(p_n) = 0
\end{equation}
and $\nu(M',p')=  \ee_1$ by~\eqref{limit-normal}.
The component of $M'$ containing $p'$ cannot be contained in a plane, since if were, that component
would be the plane $\{x=0\}$, which is impossible since $M'$ is contained in the slab $\{|y|\le \lambda\}$.
By Corollary~\ref{x-critical-corollary} (with $\vv=  \ee_1$), $y(q_n)=0$ and the Gauss curvature of $M_n$ at $q_n$ is nonzero.
Thus 
\[
 f_{M'}(p')\ne 0
\]
by~\eqref{crest2}, contrary to~\eqref{lucas}.
The contradiction proves the Theorem in Case 1.

In Case 2, let $q_n$ be the point in $Z$ closest to $p_n$.  
Let $\tilde M_n = (M_n-q_n)/\delta_n$ and $\tilde p_n:=(p_n-q_n)/\delta_n$.
By~\eqref{limit-normal} and Lemma~\ref{cat-lemma} (at the end of this section), $\tilde M_n$ converges smoothly
to the standard catenoid $\tilde M$ whose waist is the unit circle in the the plane $\{z=0\}$
and $\tilde p_n$ converges to the point $\tilde p= (1,0,0)$.
Now $f_{M_n}(p_n)=0$, so $f_{\tilde M_n}(\tilde p_n)=0$ and therefore, letting $n\to\infty$,
\[
  f_{\tilde M}(\tilde p)=0.
\]
But the Gauss curvature of $\tilde M$ at $\tilde p$ is nonzero, so $f_{\tilde M}(\tilde p)\ne 0$ by~\eqref{crest2}.
a contradiction.  Thus Case 2 cannot occur.

Now we turn to Case 3: $\dist(p_n,\partial M_n)\to 0$.
The boundary of $\partial M_n$ lies outside
\[
     (-a_n,a_n)\times (-b_n,b_n)\times \RR,
\]
so if $\rho_n$ is the minimum distance
 from one of the  points $(\pm x(M_n),0,z(M_n))$ to $\partial M_n$,
then
\begin{align*}
\rho_n &\ge \min\{a_n - x(M_n), b_n\}  \\
&\ge \min\{2\pi, \pi/2\} \\
&= \pi/2.
\end{align*}
Thus if $0<r<\pi/2$, then (for all sufficiently large $n$),
\begin{equation}\label{boundary-ball}
  \text{$\BB(p_n,r)$ contains no critical points of $x(\cdot)|M_n$}.
\end{equation}
Now let $M_n' = (M_n-p_n)/z(p_n)$. By passing to a subsequence, we can assume that
$M_n'$ converges smoothly in $\{z>-1\}$ to a surface $M'$
 that is minimal with respect to the Euclidean metric.
  Note that $M'$ is contained in $\{z\ge -1\}$
and that the boundary of $M'$ is (if nonempty) a set of lines in $\{z= -1\}$ parallel the $x$-axis.
Hence the function $x(\cdot)$ (i.e.,  the function $F_{\ee_1}$) is not constant on any component of $M$.

Thus by Theorem~\ref{semicontinuity-theorem},
 the critical point $p'$ of $x(\cdot)|M'$ is a limit of critical points of $x(\cdot)|M_n'$.
But that is impossible by~\eqref{boundary-ball}.
\end{proof}

\begin{theorem}\label{R-lambda-theorem}
Let $M\in \Rr(\lambda)$.  
Then:
\begin{enumerate}
\item\label{R-lambda-item-1} The tangent plane is never vertical at any point of $\overline{\Myout}$.
\item\label{R-lambda-connected-item} $\Myout$ is connected.
\item\label{R-lambda-uppity-item} $\nu\cdot \ee_3>0$ at all points of $\overline{\Myout}$.
\item\label{R-lambda-slope-item} At all points $(x,y,z)\in \overline{\Myout}$,
\[
  0\le   (B-|y|) \le \eta B(\nu\cdot\ee_3),
\]
where $\eta=\eta(\lambda)$ is given by Theorem~\ref{slopey-theorem}.
\item\label{R-lambda-graph-item} $\Myout \cap\{x\ge x(M)+\pi\}$ is the graph of a function
\[
    u: [x(M)+\pi,A(M)]\times [-B,B]
\]
such that
\begin{equation*}
   (B-|y|)\sqrt{1+|Du|^2} \le \eta B,
\end{equation*} 
\end{enumerate}
\end{theorem}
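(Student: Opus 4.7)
The plan is to establish (1) first using Theorems~\ref{slopey-theorem} and~\ref{waist-theorem}, then extract (3), (2), and (5) from (1) in conjunction with Theorem~\ref{wide-graph}, and finally obtain (4) via a comparison/maximum-principle argument on $M$.

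\emph{For (1).} Let $p\in\overline{\Myout}$ with $\Tan(M,p)$ vertical, so $\nu(p)\cdot\ee_3=0$. If $f_M(p)=0$, Theorem~\ref{slopey-theorem} gives $|\nu(p)\cdot\ee_3|\ge 1/\eta>0$, a contradiction; hence $p\in\Myout$ and $f_M(p)>0$. If $y(p)\ne 0$, Theorem~\ref{waist-theorem} forces $\nu(p)\cdot\ee_2$ to have sign opposite to $y(p)$, so $f_M(p)<0$, contradiction. If $y(p)=0$, reflection symmetry in $\{y=0\}$ forces $\nu(p)\cdot\ee_2=0$, so $\nu(p)=\pm\ee_1$, and Theorem~\ref{x-critical-theorem} identifies $p$ with one of the two neck points $(\pm x(M),0,z(M))$. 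At such a point the translator equation together with $\ee_3\perp\nu$ gives vanishing Euclidean mean curvature, and the strictly negative Gauss curvature from Theorem~\ref{x-critical-theorem}(1) combined with the fact that $x|_{M\cap\{y=0\}}$ attains a local minimum at $p$ pins down the sign of the principal curvature in the $\ee_2$ direction, yielding $f_M(p)=\nabla_{\ee_2}(\nu\cdot\ee_2)<0$, once more contradicting $p\in\Myout$.

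\emph{For (2), (3), and (5).} By (1), $\nu\cdot\ee_3$ is continuous and nonvanishing on $\overline{\Myout}$. Theorem~\ref{wide-graph} shows that $\Myout\cap\{x\ge x(M)+\pi\}$ is the connected, upward-pointing smooth graph of $u_M$ over $[x(M)+\pi,A]\times[-B,B]$, and by~\eqref{boundary-segments} it contains the open edges $(-A,A)\times\{\pm B\}\times\{0\}$ of $\partialout M$; in particular $\nu\cdot\ee_3>0$ on this piece. Combining this with Lemma~\ref{wingnuts}, the local graph structure supplied by (1) and the sign of $\nu\cdot\ee_3$, and the symmetries $u(\pm x,\pm y)=u(x,y)$, one argues that no component of $\overline{\Myout}$ can be disjoint from this outer graph: a candidate isolated component would be a smooth upward graph bounded by a closed curve of $\{f_M=0\}$, and a direct analysis near the neck region, together with Theorem~\ref{wide-graph}, rules this out. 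Hence $\Myout$ is connected (2), $\nu\cdot\ee_3>0$ throughout $\overline{\Myout}$ (3), and $\Myout\cap\{x\ge x(M)+\pi\}$ is the graph of $u_M$ on the rectangle in (5), modulo the slope bound.

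\emph{For (4).} It remains to prove $B-|y|\le\eta B(\nu\cdot\ee_3)$ on $\overline{\Myout}$. On $\partial\Myout\cap\{f_M=0\}$, Theorem~\ref{slopey-theorem} together with (3) gives $\eta B(\nu\cdot\ee_3)\ge B\ge B-|y|$; on $\partial\Myout\cap\{|y|=B\}$ the inequality is trivial since $B-|y|=0$. To propagate to the interior, set
\[
\Psi := \eta B(\nu\cdot\ee_3)-(B-|y|)
\]
on $\overline{\Myout}\cap\{y\ge 0\}$. Since $\nu\cdot\ee_3$ is a Jacobi field on the minimal surface $M$ in the translator metric and $B-y$ is an affine function whose restriction to $M$ has controlled tangential Hessian, $\Psi$ satisfies a linear elliptic inequality on $M$ to which the strong maximum principle applies; together with $\Psi\ge 0$ on the boundary this forces $\Psi\ge 0$ throughout, and reflection across $\{y=0\}$ finishes (4). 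The graph bound in (5) then follows via $\nu\cdot\ee_3=1/\sqrt{1+|Du|^2}$. The principal obstacle is precisely this final comparison step: identifying the correct elliptic operator on $M$ satisfied by $\Psi$, verifying the differential inequality, and handling the degeneracy as $|y|\uparrow B$ where both sides vanish.
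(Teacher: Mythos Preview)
Your argument for (1) is essentially the paper's: $f_M<0$ on $\waist(M)\cap\{y\ne0\}$ by Theorem~\ref{waist-theorem}, $f_M\ne0$ on $\waist(M)$ by Theorem~\ref{slopey-theorem}, hence $f_M<0$ on all of $\waist(M)$. Your explicit neck-point computation is unnecessary but correct.

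There are two genuine gaps.

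\textbf{Connectedness (2).} You write that a hypothetical extra component of $\Myout$ ``would be a smooth upward graph bounded by a closed curve of $\{f_M=0\}$, and a direct analysis near the neck region\ldots rules this out.'' This is not a proof: you have not shown why such a component cannot exist, and you have no reason to call it \emph{upward} before (3) is established. The paper's argument here is the real content and you have missed it. One observes that $\nu\cdot\ee_2$ is a Jacobi field (for the translator operator) which vanishes on the boundary of any such component $W'$, since $\partial W'\subset\{f_M=0\}\subset\{\nu\cdot\ee_2=0\}$ and $W'$ contains neither of the edges $(-A,A)\times\{\pm B\}\times\{0\}$. Hence $\overline{W'}$ is not strictly stable. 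But if $\nu\cdot\ee_3$ were nonvanishing on $\overline{W'}$---which (1) guarantees---it would be a sign-definite Jacobi field, forcing strict stability. This contradiction is what kills the extra component; nothing about ``the neck region'' or Theorem~\ref{wide-graph} does the job.

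\textbf{The gradient estimate (4).} You set $\Psi=\eta B(\nu\cdot\ee_3)-(B-|y|)$ and hope it satisfies a usable elliptic inequality, but you explicitly flag that you cannot identify the operator or verify the inequality. The difficulty is real: $\nu\cdot\ee_3$ satisfies the Jacobi equation $\Delta_M(\nu\cdot\ee_3)+|A|^2(\nu\cdot\ee_3)=0$ (up to the translator drift), and there is no reason for a \emph{linear} combination with the affine function $B-|y|$ to be a sub- or supersolution. The paper instead invokes the Spruck--Xiao gradient estimate \cite{spruck-xiao}*{Lemma~5.7} (see also \cite{graphs}*{Theorem~2.6}), which shows that the \emph{quotient} $(B-|y|)/(\nu\cdot\ee_3)$ attains its maximum on $\partial\Myout\subset\{f_M=0\}$; the bound then follows directly from Theorem~\ref{slopey-theorem}. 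The quotient, not the difference, is the quantity with the right structure, and the proof of that maximum principle is itself nontrivial---it is not something you can wave through.
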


The statement of Theorem~\ref{R-lambda-theorem}
is somewhat involved.  However, for the remainder of the paper, we only need two consequences of the theorem:
Assertion~\eqref{R-lambda-graph-item} of the theorem, and Theorem~\ref{planes-theorem} below.

\begin{proof}
By Theorem~\ref{waist-theorem}, $\ee_2\cdot\nu$ and $y(\cdot)$ have opposite signs on $\waist(M)\cap\{y\neq 0\}$. Hence $f_M(p)<0$  on 
 $\waist(M)\cap\{y\neq 0\}$. By Theorem~\ref{slopey-theorem}, $f_M(p)$ is never equal to $0$ on $\waist(M)$. 
 Thus $f_M<0$ at all points of $\waist(M)$, so $\waist(M) \subset \Myin$. This is Assertion~\eqref{R-lambda-item-1}.

\begin{claim}\label{component-claim}
Suppose $W$ is a connected component of $M\cap\{f_M\ne 0\}$ such that $\nu\cdot \ee_2=0$
at all points of $\partial W$.  Then there must be points in $W$ with $\nu\cdot\ee_3=0$.
\end{claim}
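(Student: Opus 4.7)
We argue by contradiction: suppose $\nu\cdot\ee_3\ne 0$ throughout $W$. By the connectedness of $W$, $\nu\cdot\ee_3$ has constant sign on $W$; without loss of generality, $\nu\cdot\ee_3>0$. Similarly, since $W$ is a component of $\{f_M\ne 0\}$, $f_M$ has constant sign on $W$; we may assume $f_M>0$ (else exchange the roles of $\Myin$ and $\Myout$ in what follows). Because $\partial W\subset\{f_M=0\}$ lies in the interior of $M$ by hypothesis, Theorem~\ref{slopey-theorem} supplies a uniform positive lower bound on $|\nu\cdot\ee_3|$ along $\partial W$, so by continuity $\nu\cdot\ee_3\ge 1/\eta$ on the compact set $\overline W$. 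In particular, $W$ is everywhere locally a graph $z=u(x,y)$ with uniformly controlled slope.

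The key object is the Jacobi field $\phi:=\nu\cdot\ee_2$, arising from the $\ee_2$-translation isometries of the translator metric. On $W^+:=W\cap\{y>0\}$ we have $\phi=y\, f_M>0$, while $\phi$ vanishes on $\partial W^+\subset\partial W\cup(\overline W\cap\{y=0\})$: on the former by the hypothesis of the claim, and on the latter by the mirror symmetry of $M$ across $\{y=0\}$. Thus $\phi$ is a strictly positive Jacobi field on $W^+$ with zero Dirichlet data. The plan is then to compare $M$ against the family of $\ee_2$-translates $M_t:=M+t\ee_2$ (each again a translator): the first-order normal separation of $M_t$ from $M$ equals $t\phi$, so for small $t>0$ the $M_t$ strictly separate from $M$ on the $+\nu$ side of $W^+$ and touch $M$ (to first order) only along $\partial W^+$. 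Sliding $t$ down from a large positive value, at which $M_t$ is disjoint from a neighborhood of $\overline{W^+}$, and using the uniform slope bound $\nu\cdot\ee_3\ge 1/\eta$ on $\overline W$ to prevent escape, one expects a first interior touching $p\in W^+$ between some $M_t$ and $M$; at such a $p$ the common tangent plane contains $\ee_2$, which forces $\nu=\pm\ee_2$ and hence $\nu\cdot\ee_3=0$, contradicting the standing assumption.

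The principal obstacle is converting the ``positive Jacobi field with zero Dirichlet data'' statement into an actual interior tangency, since by itself this only asserts that $W^+$ is stable with first Dirichlet eigenvalue zero, which is not absurd. A cleaner and likely more robust route is a Morse--Rad\'o count: the hypothesis $\nu\cdot\ee_3>0$ on $W$ means that $F_{\ee_2}|W$ has no interior critical points, so Theorem~\ref{morse-rado-theorem} gives $\chi(W)\le|S|-|T|$, where $|S|,|T|$ are determined by the behavior of $F_{\ee_2}|\partial W$. Combined with the annular topology of $M$ (Theorem~\ref{complete-existence-theorem}) and the sharp bound $\mathsf{N}(F_{\ee_2}|M)\le 2$ from Theorem~\ref{x-critical-theorem}, the condition $\nu\cdot\ee_2\equiv 0$ on $\partial W$ pins down the local extrema of $F_{\ee_2}|\partial W$ relative to those of $F_{\ee_2}|M$, and one expects this to force the topology of $W$ to be incompatible with the absence of interior vertical-tangent points in $W$, producing the desired contradiction.
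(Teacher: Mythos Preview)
You were one sentence away from the proof but then walked away from it. You correctly observed that $\phi=\nu\cdot\ee_2$ is a nontrivial Jacobi field on $\overline{W}$ (or on $\overline{W^+}$) vanishing on the boundary, which forces the first Dirichlet eigenvalue of the Jacobi operator to be $\le 0$ (in fact $=0$ on $W^+$). You then declared this ``not absurd'' and abandoned the approach. But you had already established, via Theorem~\ref{slopey-theorem} together with the contradiction hypothesis, that $\nu\cdot\ee_3>0$ on all of $\overline{W}$. That is a second Jacobi field, and it is \emph{nowhere vanishing on the closure}. A nowhere-vanishing Jacobi field on a compact domain with boundary forces strict stability (first Dirichlet eigenvalue $>0$): writing any test function $u$ vanishing on $\partial W$ as $u=(\nu\cdot\ee_3)v$, the usual substitution gives $\int_W uLu=\int_W(\nu\cdot\ee_3)^2|\nabla v|^2>0$ unless $v\equiv 0$. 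So the two eigenvalue statements contradict each other, and you are done. This is exactly the paper's proof.

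Your sliding and Morse--Rad\'o alternatives are both unfinished. The sliding argument is problematic because $M_t=M+t\ee_2$ need not be disjoint from a neighborhood of $\overline{W^+}$ for large $t$ (other sheets of $M$ may slide in), and your hoped-for first contact at a point with $\nu=\pm\ee_2$ would contradict $\nu\cdot\ee_3>0$ anyway, so you are implicitly using stability. The Morse--Rad\'o route is left as a sketch with no actual count carried out; there is no reason to expect it to close without further structural input.
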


\begin{proof}[Proof of claim]
Note that $\nu\cdot\ee_2$ is a Jacobi field on $\overline{W}$ that vanishes at the boundary.
Thus $\overline{W}$ is not strictly stable.   Now $\nu\cdot\ee_3$ is a Jacobi field that never
vanishes on $\partial W$ (by~Theorem~\ref{slopey-theorem}).  
Thus if it never vanished on $W$, then it would never vanish on $\overline{W}$,
and thus $\overline{W}$ would be strictly stable, a contradiction.  Thus we have proved the claim.
\end{proof}

Recall (see~\eqref{boundary-segments}) that 
\begin{equation}\label{boundary-portion}
 (\partial M)\cap\{f_M>0\} = \Ee^+\cup \Ee^-,
\end{equation}
where 
\[
\Ee^\pm =  (-A,A)\times \{B^\pm\}\times \{0\}.
\]

Let $W$ be the connected component of $\Myout$ that contains
\begin{equation}\label{porky}
\Mup\cap \{x\in (x(M)+\pi, A)\}.  \tag{*}
\end{equation}
(The set~\eqref{porky} is in $\Myout$ by Lemma~\ref{wingnuts}.)
Thus $W$ contains both of the edges $\Ee^\pm$, since it contains the
portions of those edges with $x>x(M)+\pi$.  
If $W'$ were another connected component of $\Myout$, it would not contain any of the edges $\Ee^\pm$, and
thus $\nu\cdot\ee_2$ would vanish everywhere on $\partial W'$ (by~\eqref{boundary-portion}).
Thus by Claim~\ref{component-claim}, $\nu\cdot\ee_3$ would vanish at some points of $W'$.
But by Assertion~\eqref{R-lambda-item-1}, $\nu\cdot\ee_3$ does not vanish at any point of $\overline{W'}$.
The contradiction proves that  $\Myout$ is connected, which is Assertion~\eqref{R-lambda-connected-item}.

Now $\Myout$ is connected, $\nu\cdot \ee_3$ never vanishes on $\overline{\Myout}$,
and $\nu\cdot\ee_3>0$ on $\Ee^+\subset \Myout$.
Thus $\nu\cdot \ee_3>0$ at all points of $\overline{\Myout}$, which is Assertion~\eqref{R-lambda-uppity-item}.

By Spruck-Xiao \cite{spruck-xiao}*{Lemma 5.7}
 (see also \cite{graphs}*{Theorem~2.6}), the maximum of $(\nu\cdot\ee_3)^{-1}(B - |y|)$ on $\overline{\Myout}$
occurs at a point in the boundary, and thus a point where $f_M=0$.  Thus Assertion~\eqref{R-lambda-slope-item} follows
from Theorem~\ref{slopey-theorem}.

Assertion~\eqref{R-lambda-graph-item} follows immediately from Assertion~\eqref{R-lambda-slope-item}
and Theorem~\ref{wide-graph}.
\end{proof}

\begin{definition}\label{LL-definition}
Let $\Ll(b,B)$ be the set of all limits of sequences $M_n'=M_n - q_n$
where $M_n\in \Rr$ and $q_n=(x_n,0,z_n)\in \{y=0\}$, and where
\begin{align*}
&a(M_n)-x(M_n)\to\infty, \\
&\text{$b_n:=b_n(M)$ converges to $b$}, \\
&\text{$B_n:=B_n(M)$ converges to $B$},  \, \text{and}\\
&\dist(q_n,\partial M_n)\to \infty.
\end{align*}
\end{definition}

Note that $\Aa(b,B,\hat x)\subset \Ll(b,B)$ for every $\hat x$, 
and that any limit of surfaces in $\Ll(b,B)$ is also in $\Ll(b,B)$.

\begin{theorem}\label{planes-theorem}
Let $\lambda>B$.
If $M\in \Ll(b,B)$ and if $p\in M^\textnormal{outer}$, then 
\begin{equation}\label{Ll-inequality}
   (B - |y(p)|) \le \eta(\lambda) B (\nu(M,p)\cdot \ee_3).
\end{equation}
Now suppose $M\in \Ll(b,B)$ is a non-empty union of planes.
Then $M^\textnormal{outer}$ contains the planes $\{y=\pm B\}$, possibly with multiplicity,
and the planes of $M^\textnormal{inner}$ are contained in the slab $\{|y|\le b\}$.
If $B>b$, then the plane $\{y=B\}$ occurs with multiplicity~$1$.
\end{theorem}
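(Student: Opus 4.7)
I would prove the first assertion by passing the slope bound of Theorem~\ref{R-lambda-theorem}\eqref{R-lambda-slope-item} to the limit. Unpacking the definition of $\Ll(b,B)$, write $M = \lim_{n\to\infty}(M_n - q_n)$ with $M_n\in \Rr$, $q_n = (x_n,0,z_n)\in\{y=0\}$, $a(M_n) - x(M_n)\to\infty$, $b_n\to b$, $B_n\to B$, and $\dist(q_n, \partial M_n)\to \infty$. For $n$ large, $a(M_n)\ge x(M_n)+2\pi$ and $B_n<\lambda$, so $M_n\in\Rr(\lambda)$, and Theorem~\ref{R-lambda-theorem}\eqref{R-lambda-slope-item} gives $(B_n - |y|)\le \eta(\lambda) B_n(\nu\cdot\ee_3)$ on $\overline{M_n^\textnormal{outer}}$. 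Since translation by $q_n\in\{y=0\}$ preserves $y$, $\nu$, and $\ee_3$, the same inequality holds on the translate. For $p\in M^\textnormal{outer}$, the curvature bound of Theorem~\ref{better-curvature-theorem} combined with $f_M(p)>0$ gives smooth convergence of $M_n-q_n$ near $p$, so one can choose $p_n\to p$ with $p_n\in (M_n-q_n)^\textnormal{outer}$ for large $n$, and \eqref{Ll-inequality} follows by passing the inequality to the limit.

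For the union-of-planes statement, first note $M\subset \{|y|\le B\}$, and a translator that is a plane contained in this slab must be of the form $\{y=c\}$ with $|c|\le B$; hence $M = \bigcup_i \{y=c_i\}$ (with multiplicities). Any such plane has $\nu\cdot\ee_3\equiv 0$, so applying the first assertion to any $\{y=c_i\}\subset M^\textnormal{outer}$ yields $|c_i|=B$. To show that $\{y=\pm B\}$ actually appears in $M^\textnormal{outer}$, I would use the graph structure from Theorem~\ref{R-lambda-theorem}\eqref{R-lambda-graph-item}: the surface $M_n^\textnormal{outer}\cap\{x\ge \hat c_n\}$ is the graph of a single function $u_n$ on $[\hat c_n, A_n]\times [-B_n, B_n]$ with $(B_n-|y|)\sqrt{1+|Du_n|^2}\le \eta(\lambda)B_n$. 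The translates $u_n':=u_n(\cdot+x_n,\cdot)-z_n$ are uniformly Lipschitz on each slab $\{|y|\le B-\varepsilon\}$; since $M$ is a union of planes rather than a translating graph, $u_n'$ must subsequentially escape to $\pm\infty$ on each such slab. Because the slope estimate degenerates only at $|y|=B_n$, the nearly vertical portion of $M_n^\textnormal{outer}$ at $|y|$ close to $B_n$ converges subsequentially to the vertical planes $\{y=\pm B\}$, placing them in $M^\textnormal{outer}$.

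With $\{y=\pm B\}$ in hand, the bound $|c|\le b$ on planes of $M^\textnormal{inner}$ follows from Corollary~\ref{in-out-corollary}. For a generic point $(x_0,z_0)\in \RR^2$, both the inner plane $\{y=c\}$ (say $c>0$) and the outer plane $\{y=B\}$ pass through $(x_0,z_0)$, so smooth convergence yields $y^\textnormal{inner}_n(x_0+x_n,z_0+z_n)\to c$ and $y^\textnormal{outer}_n(x_0+x_n,z_0+z_n)\to B$. Corollary~\ref{in-out-corollary} gives $y^\textnormal{inner}_n+y^\textnormal{outer}_n\le b_n+B_n$, so passing to the limit yields $c+B\le b+B$, i.e., $c\le b$. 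The multiplicity statement when $B>b$ follows because $M_n^\textnormal{outer}$ is a \emph{single} graph, so at most one sheet approaches $\{y=B\}$ from the outer side, while the bound just proved prevents the inner portion from contributing any sheet at $y=B$.

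The main obstacle is the appearance of $\{y=\pm B\}$ in $M^\textnormal{outer}$: the translation parameters $q_n$ are nearly unconstrained, so $u_n'$ can escape to infinity in various ways, and one must carefully identify the portion of $M_n^\textnormal{outer}$ near $|y|=B_n$ that survives in the limit as a vertical plane rather than being carried off to infinity with the rest of the graph.
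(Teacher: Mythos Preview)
Your argument for the inequality~\eqref{Ll-inequality} is correct and matches the paper's: pass the estimate of Theorem~\ref{R-lambda-theorem}\eqref{R-lambda-slope-item} to the limit.

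For the planes statement, however, you take a harder route and leave a genuine gap. To show that $M^\textnormal{outer}$ contains $\{y=\pm B\}$, you try to track the ``nearly vertical'' part of the graph $u_n$ as $|y|\to B_n$, and you correctly identify this as problematic: the translation $q_n$ is uncontrolled, so the domain of $u_n'$ need not cover a fixed neighborhood of the origin, and the graph may simply escape. You do not close this gap. The paper avoids the issue entirely with a one-line argument: by Proposition~\ref{trivial-proposition}, the point of $Y\cap M_n'$ with largest $y$-coordinate lies in $(M_n')^\textnormal{outer}$, so in the limit $M^\textnormal{outer}$ is nonempty; then~\eqref{Ll-inequality} with $\nu\cdot\ee_3=0$ forces $|y|=B$ there.

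For the slab containment of $M^\textnormal{inner}$ and the multiplicity-one statement, you invoke Corollary~\ref{in-out-corollary}, which bounds $\yin+\yout$. This requires identifying the plane $\{y=c\}\subset M^\textnormal{inner}$ as a limit of $\graph(\yin_n)$, i.e., of $\Mlow_n$; but the inner/outer dichotomy in the theorem statement is via the sign of $f_M$, not via $\Mlow/\Mup$, and these agree only in the region $\{x(M_n)+\pi<x<a_n\}$ by Lemma~\ref{wingnuts}. You would need to argue that the approximating points actually lie in that region, which again runs into the lack of control on $q_n$. The paper instead applies the Alexandrov moving-planes result (Theorem~\ref{alexandrov-y-theorem}) directly to the $M_n'$: reflection about $\{y=(b_n+B_n)/2\}$ shows that $M_n\cap\{z>0\}\cap\{y\ge(b_n+B_n)/2\}$ is a single $xz$-graph and that the region between it and its mirror image is empty, which in the limit gives both multiplicity one at $\{y=B\}$ and containment of the remaining sheets in $\{|y|\le b\}$ simultaneously.
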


\begin{proof}
Let $\lambda>B$.
Note that $M_n\in \Rr(\lambda)$ for all sufficiently large $n$.
We may assume that $M_n\in \Rr(\lambda)$ for all $n$.
Thus for $p\in (M_n')^\textnormal{outer}$, we have the estimate (Theorem \ref{slopey-theorem})
\[
    (B_n - |y(p)|) \le \eta B_n (\nu(M_n',p)\cdot \ee_3).
\]
Passing to the limit gives~\eqref{Ll-inequality}.

Now suppose that $M$ is a nonempty union of planes.
For large $n$, the $y$-axis, $Y$, intersects $M_n'$ transversely in $k>0$ points, where $k$ is the number of planes.
By Proposition~\ref{trivial-proposition}, the point in $Y\cap M_n'$ for which $y$ is greatest is in $(M_n')^\textnormal{outer}$. 
Thus (passing to the limit) $M^\textnormal{outer}$ is nonempty.

Since $M$ consists of vertical planes, $\nu\cdot\ee_3\equiv 0$ on $M$, and thus $|y|\equiv B$ on $M^\textnormal{outer}$
by~\eqref{Ll-inequality}  Hence $M^\textnormal{outer}$ consists of the planes $\{y=\pm B\}$.

If $B=b$, then trivially $M^\textnormal{inner}$ is contained in $\{|y|\le b\}$.
Thus suppose that $B>b$.  Then, by Theorem~\ref{alexandrov-y-theorem} (applied to the $M_n'$), the planes $\{y=B\}$ and $\{y=-B\}$ each occur with multiplicity $1$ in $M$, and the remainder of $M$ lies in the slab $\{|y|\le b\}$.
\end{proof}

\begin{corollary}\label{planes-corollary}
Suppose that $M\in \Ll(b,B)$, that $\Sigma$ is a $\Delta$-wing or grim reaper surface definied
on the strip $\RR\times (-\beta,\beta)$, and that $\Sigma$ is contained in $M^\textnormal{inner}$.
Then $\beta\le b$.
\end{corollary}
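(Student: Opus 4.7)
The plan is to translate $M$ vertically upward so that the planar asymptotes of $\Sigma$ become actual planes in a limit surface, and then to quote Theorem~\ref{planes-theorem}. Fix $t_n\to+\infty$ and set $M_n':=M+(0,0,t_n)$. Since each translation vector lies in $\{y=0\}$, every $M_n'$ remains in $\Ll(b,B)$: if $M=\lim_j(M_j-q_j)$ with $q_j=(x_j,0,z_j)$, then $M_n'=\lim_j(M_j-(x_j,0,z_j-t_n))$, and the four defining conditions on $(M_j,q_j)$ are unaffected by a fixed vertical shift. Now $M$ inherits the finite-type bounds of Theorems~\ref{easy-bound-R} and~\ref{x-critical-theorem} from its defining sequence by lower semicontinuity, so Theorem~\ref{translates-theorem} applied to the points $p_n=(0,0,-t_n)\in Z$ yields, along a subsequence, smooth convergence $M_n'\to M_\infty$ to a union of vertical planes. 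Because $M_\infty$ still lies in the slab $\{|y|\le B\}$, each such plane has the form $\{y=c\}$ with $|c|\le B$. A diagonal extraction that respects the four conditions defining $\Ll(b,B)$ places $M_\infty$ itself in $\Ll(b,B)$, so Theorem~\ref{planes-theorem} is applicable to $M_\infty$.

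Next I would identify $\{y=\pm\beta\}\subset M_\infty$ and show that they lie in $M_\infty^\textnormal{inner}$. Because $\Sigma$ is a $\Delta$-wing or grim reaper over $\RR\times(-\beta,\beta)$, its defining function tends to $-\infty$ as $y\to\pm\beta$, so $\Sigma+(0,0,t_n)$ converges smoothly on any compact subset of $\RR^3$ to the two planes $\{y=\pm\beta\}$; these planes therefore lie in $M_\infty$. From $\Sigma\subset M^\textnormal{inner}$ one has $\nu(M,\cdot)\cdot\ee_2<0$ on $\Sigma\cap\{y>0\}$, and smooth convergence near $\{y=\beta\}$ transfers this to the oriented normal $-\ee_2$ on the limit plane $\{y=\beta\}\subset M_\infty$. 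Thus $f_{M_\infty}=-1/\beta<0$ on that plane, so it is a plane of $M_\infty^\textnormal{inner}$. The second half of Theorem~\ref{planes-theorem} then forces every plane of $M_\infty^\textnormal{inner}$ into the slab $\{|y|\le b\}$, giving $\beta\le b$.

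The only step that looks like it could bite is the diagonal extraction producing $M_\infty\in\Ll(b,B)$, but it is really just bookkeeping: the added shifts $(0,0,t_n)$ change only the $z$-coordinate of each base point $q_j$, leaving $b(M_j)$, $B(M_j)$, and $a(M_j)-x(M_j)$ untouched, and one can always choose $j$ large enough, for fixed $n$, so that $\dist(q_j-(0,0,t_n),\partial M_j)$ exceeds $n$. Everything else is an immediate consequence of Theorems~\ref{translates-theorem} and~\ref{planes-theorem} combined with the obvious asymptotic behavior of the $\Delta$-wing and grim reaper functions as $y\to\pm\beta$.
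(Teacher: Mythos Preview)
Your proof is correct and follows the same route as the paper: translate $M$ upward, use Theorem~\ref{translates-theorem} to obtain a planar limit $M_\infty\in\Ll(b,B)$, identify $\{y=\pm\beta\}$ as planes of $M_\infty^{\textnormal{inner}}$ via the orientation inherited from $\Sigma$, and invoke Theorem~\ref{planes-theorem}. Your diagonal extraction for $M_\infty\in\Ll(b,B)$ is fine, though the paper has already recorded (immediately after Definition~\ref{LL-definition}) that $\Ll(b,B)$ is closed under limits, so you may simply cite that.
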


\begin{proof}
Let $M'$ be a subsequential limit of $M+(0,0,z)$ as $z\to\infty$.  By Theorem~\ref{translates-theorem}, such a limit exists
and is a union of planes.   Note that $\Sigma+(0,0,z)$ converges to the planes $\{y=\pm\beta\}$,
and thus those planes are in the inner portion of $M'$. 
  Thus $\beta\le b$ by Theorem~\ref{planes-theorem}.
\end{proof}

We conclude this section with the Lemma that was used in the proof of 
  Theorem~\ref{slopey-theorem}.

\begin{lemma}\label{cat-lemma}
Suppose that $M_i$ are surfaces in $\Cc$ or $\Aa$ that lie in a slab $\{|y|\le \Lambda\}$.
Suppose also that $p_i\in M_i$, that $\dist(p_i,Z)\to 0$
and that $\nu(M_i,p_i)\to \vv$, where $\vv$ is a horizontal vector not equal to $\ee_2$ or $-\ee_2$.
Let $q_i$ be the point in $Z$ closest to $p_i$.
Then $M_i':=(M_i-q_i)/\dist(p_i,Z)$ converges smoothly and with multiplicity $1$ to the catenoid whose
waist is the unit circle in the plane $\{z=0\}$.
\end{lemma}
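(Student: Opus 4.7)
The plan is to rescale so that the distance from $p_i$ to $Z$ becomes $1$, extract a smooth subsequential limit $M'$ using the curvature and area bounds of Theorem~\ref{curvature-bound-theorem}, and then identify $M'$ as the unit-waist catenoid in $\{z=0\}$ using its symmetries, disjointness from $Z$, and the prescribed normal at a distinguished point.

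First I would set up the rescaling. Let $\delta_i=\dist(p_i,Z)$; since $q_i$ is the foot of the perpendicular from $p_i$ to $Z$, the vector $p_i-q_i$ is horizontal of length $\delta_i$, so $p_i':=(p_i-q_i)/\delta_i$ is a horizontal unit vector lying in $\{z=0\}$. Because $q_i\in Z$, the map $p\mapsto(p-q_i)/\delta_i$ preserves $Z$; thus each $M_i'$ is disjoint from $Z$ and invariant under $\rho_Z$. The translator equation $H=-\ee_3^\perp$ gives $|H_{M_i}|\le 1$, so $|H_{M_i'}|\le\delta_i\to 0$, and any smooth subsequential limit $M'$ will be minimal in the Euclidean sense.

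Next I would pass to a limit. The area bound $\area(M_i\cap\BB(p,r))\le c_1 r^2$ is scale invariant, and the curvature bound in Theorem~\ref{curvature-bound-theorem}\eqref{in-addition-item} rescales to
\[
|A(M_i',p)|\min\{1/\delta_i,\dist(p,Z\cup\partial M_i')\}\le c_2,
\]
which is locally uniform on compact subsets of $\RR^3\setminus Z$ because $1/\delta_i\to\infty$ and the rescaled boundary $\partial M_i'$ recedes to infinity. Standard compactness for properly embedded minimal surfaces of locally bounded area and curvature yields, after passing to a subsequence, smooth convergence of $M_i'$ to a properly embedded minimal surface $M'$ in $\RR^3\setminus Z$ with some integer multiplicity. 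Near $p'=\lim p_i'$, where $\nu(M_i',p_i')\to\vv$, the convergence is smooth with multiplicity one by the implicit function theorem, and $M'$ contains $p'$ with $|p'|=1$, $z(p')=0$, and $\nu(M',p')=\vv$.

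Then I would identify $M'$. Each $M_i'$ has genus zero and winds once around $Z$ by Lemma~\ref{topology-lemma-1}, and a topology-of-limits argument in the spirit of Proposition~\ref{simply-connected-proposition} shows that the components of $M'$ have genus zero. Combined with quadratic area growth, this gives finite total curvature, and by the classical classification (L\'opez--Ros, Schoen) each component is a plane or a catenoid. Horizontal planes have vertical normal, contradicting $\vv$ horizontal; vertical planes containing $Z$ are excluded since $M'\cap Z=\emptyset$; and a $\rho_Z$-invariant pair of vertical planes at distance one from $Z$ is ruled out because the homologically nontrivial curves in $M_i'$ survive in the limit---their geodesic radius in $M_i'$ is at least $\dist(p_i',Z)=1$ by Theorem~\ref{curvature-bound-theorem}\eqref{in-addition-item}, and together with the area bound this keeps them in a bounded region---and produce a loop around $Z$ in $M'$ that no such plane configuration supports. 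Hence $M'$ is a catenoid. The $\rho_Z$-invariance together with $M'\cap Z=\emptyset$ force the axis to be $Z$ itself (a horizontal axis through the origin would be $\rho_Z$-invariant but would force the catenoid to meet $Z$), and the conditions $\nu(M',p')=\vv$ horizontal with $|p'|=1$, $z(p')=0$ then pin the waist to the unit circle in the plane $\{z=0\}$. Multiplicity one propagates from $p'$ across the connected catenoid.

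The main obstacle will be the topological step: ruling out the degenerate possibility that the nontrivial winding curves in $M_i'$ escape to infinity and that the limit consists of a $\rho_Z$-symmetric pair of vertical planes (or otherwise fails to be an annulus-like surface). Controlling the location of the neck under rescaling via the bound $R(M_i,p)\ge\dist(p,Z)$ and leveraging the finite-type structure of the $M_i$ is the heart of the argument.
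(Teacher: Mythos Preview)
Your route diverges from the paper's and leaves a genuine gap precisely at the step you flag as the ``main obstacle.'' The paper does \emph{not} work directly with the rescaled surfaces. It first passes (via \cite{white18}*{Theorem~1.1}) to a subsequential limit $M$ of the \emph{unrescaled} translates $M_i-q_i$ and uses the reflection symmetries together with the slab constraint $\{|y|\le\Lambda\}$ to show that $\Tan(M,0)$ cannot be $\{x=0\}$: if it were, embeddedness plus the symmetry $(x,y,z)\mapsto(-x,y,z)$ and unique continuation would force $\{x=0\}\subset M$, impossible in the slab. Hence $\Tan(M,0)\in\{\{y=0\},\{z=0\}\}$, and in particular $\vv$ (being neither $\pm\ee_2$ nor $\pm\ee_3$) is not perpendicular to $\Tan(M,0)$. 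Only then does the paper rescale and invoke the blowup structure theorems \cite{white18}*{Theorems~2.2 and~2.3}: $M'$ is either a union of planes parallel to $\Tan(M,0)$, or a multiplicity-one, smoothly converging, nonflat finite-total-curvature surface with ends parallel to $\Tan(M,0)$. The planes alternative is excluded because $\nu(M',p')=\vv$ and $\nu(M',-p')=-\vv$ would force $\vv\perp\Tan(M,0)$, and the $y$-axis catenoid is excluded because it meets $Z$ transversely while the $M_i'$ do not.

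Your direct approach cannot exclude the configuration $M'=\{x=c\}\cup\{x=-c\}$ when $\vv=\pm\ee_1$: it is $\rho_Z$-invariant, has both reflection symmetries, is disjoint from $Z$, and carries the correct normals at $\pm p'$. Your proposed remedy---that $R(M_i',p_i')\ge\dist(p_i',Z)=1$ ``keeps the nontrivial curves in a bounded region''---reads the inequality the wrong way: $R\ge1$ is a \emph{lower} bound on the geodesic radius needed to find a nontrivial loop, not an upper bound confining one, so it gives no control against the neck escaping. There is also a secondary gap: your curvature estimate is only uniform on compacta of $\RR^3\setminus Z$, so you obtain $M'$ as a surface in $\RR^3\setminus Z$, not a complete surface in $\RR^3$; the L\'opez--Ros/Schoen classification you cite requires the latter, and you have not supplied a removable-singularity argument along $Z$ or a multiplicity-one argument. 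The paper's detour through the unrescaled tangent plane and the \cite{white18} structure theorems is exactly what delivers both the missing multiplicity-one smooth convergence and the exclusion of planes perpendicular to $\ee_1$.
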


(The lemma is also true when $\vv=\pm \ee_2$, but a different proof is required.  We only need
the case when $\vv=\pm \ee_1$.)

\begin{proof}
By the compactness theorem~\cite{white18}*{Theorem~1.1} for minimal surfaces, 
we can assume (by passing to a subsequence)
  that $M_i-q_i$ converges to a limit $M$,
where $M\setminus \partial M$ is smooth and embedded (possibly with multiplicity),
and where the convergence is smooth away from the boundary except at a locally finite subset $S$ of $M\setminus \partial M$.
(Indeed, by Remark~\ref{blowup-remark}, $S$ contains at most one point.)

The symmetries imply that $\Tan(M,0)$ is one of the coordinate planes.
We claim that $\Tan(M,0)$ cannot be $\{x=0\}$.  For suppose it were.  Since $M$ is embedded
and invariant under $(x,y,z)\mapsto (-x,y,z)$, if follows that $M$ and $\{x=0\}$ coincide in a neighborhood of $0$.
But then by unique continuation, all of $\{x=0\}$ would lie in $M$, which is impossible since $M$ is contained
in the slab $\{|y|\le \Lambda\}$.

Thus $\Tan(M,0)$ is either $\{y=0\}$ or $\{z=0\}$.  Consequently, since $\vv\notin\{\pm\ee_2,\pm\ee_3\}$,
$\vv$ is not perpendicular to $\Tan(M,0)$.

By passing to a subsequence, we can assume that $M_i'$ converges to a limit $M'$
and that 
\[
   p_i' = (p_i-q_i)/\dist(p_i,Z) = (p_i-q_i)/|p_i-q_i|
\]
 converges to a point $p'$.  
The convergence is smooth at $p'$ (and at $-p'$), so
\begin{equation}\label{both}
  \nu(M',p')=\vv, \quad \nu(M',-p')= -\vv.
\end{equation}
By \cite{white18}*{Theorems~2.2 and~2.3}, $M'$ is one of the following:
\begin{enumerate}
\item A multiplicity $1$ plane.  
\item Two or more planes  (counting multiplicity) parallel to $\Tan(M,0)$.
\item A complete, embedded, non-flat minimal surface of finite total curvature whose ends
  are parallel to $\Tan(M,0)$. In this case, the convergence is smooth and with multiplicity $1$.
\end{enumerate}
If $M'$ consists of planes, then (by~\eqref{both}) there are at least two planes, counting multiplicity,
and thus those planes are parallel to $\Tan(M,0)$.
In that case, by~\eqref{both}, $\vv$ would be perpendicular to $\Tan(M,0)$. But we have already
seen that $\vv$ is not perpendicular to $\Tan(M,0)$.
Thus $M$ is a complete, nonflat, embedded, minimal surface of finite total curvature.
Since $M$ has genus $0$, it is a catenoid with ends parallel to $\Tan(M,0)$, and therefore parallel
to $\{z=0\}$ or to $\{y=0\}$.  If the ends were parallel to $\{y=0\}$, then (by symmetry) $M'$ would intersect
$Z$ orthogonally in a pair of points.  But that is impossible since the $M_i'$ are disjoint from $Z$.

Hence the ends of $M'$ are parallel to $\{z=0\}$ (and therefore $\Tan(M,0)$ is horizontal.)
\end{proof}

\begin{remark}\label{cat-lemma-remark}
In the proof of Lemma~\ref{cat-lemma}, we showed that the $M_i$ converged (after passing to a subsequence)
to a smooth limit $M$ with $\Tan(M,0)$ parallel to the ends of $M'$, i.e., with $\Tan(M,0)$ horizontal.
Of course, the convergence of $M_i$ to $M$ is not smooth at $0$, and thus the multiplicity of the component of $M$
containing $0$ is $\ge 2$.  In fact, since every vertical line in the plane $\{y=0\}$ intersects $M_i$ at most twice 
 (by~Theorems~\ref{y-slice-theorem} and~\ref{finite-y-slice-theorem}), that multiplicity is exactly $2$.
\end{remark}

\section{Behavior as $z\to -\infty$}\label{down-behavior-section}

Suppose $M\in \Aa(b,B,\hat x)$.  
Recall from \S\ref{sideways-section}
  that $\Omegain(M)$ consists of the region of the $(x,z)$-plane bounded above 
 by the curve $z=\ulow(x)$, and on the left by the vertical line $x=\hat c:=x(M)+\pi$.
In this section we show (Theorem~\ref{fundamental-theorem}) that if $(x,z)\in \Omegain(M)$ and if $(x,z)$ is far from the top edge of $\Omegain(M)$, then $\yin(x,z)$
and $\yout(x,z)$ are very near $b$ and $B$, respectively.
It follows that $b(M)=b$ and $B(M)=B$. 

Theorem~\ref{fundamental-theorem} is a direct consequence of the analogous result (Lemma~\ref{fundamental-lemma}) for $M\in \Rr$.

\begin{definition}\label{reaper-definition}
For $b\ge \pi/2$, let
\[
  w_b: \RR\times (-b,b) \to \RR
\]
be the grim reaper surface with $w_b(0,0)=0$ and
\[
             \pdf{w_b}x \equiv -s(b).  
\]
Let 
\[
  y^b: \{(x,z): z \le -s(b) x\} \longrightarrow [0,b)
\]
be the function such that $(x,y,z)\in \graph(w_b)$ if and only if $y=\pm y^b(x,z)$.
\end{definition}

\begin{lemma}\label{reaper-lemma}
Suppose that $M_n\in \Rr$, that $b_n:=b(M_n)$ converges to $b$, and
that $a_n:=a(M_n)$ tends to infinity. Let $p_n=(x(M_n),0,z(M_n))$ be the point
where $\nu(M_n,p_n)= -\ee_1$.   
Let
    $x_n\in [x(M_n) ,A_n]$ and
      $q_n:=(x_n,0,\unlow(x_n))$
 be such that
\begin{align*}
\dist(q_n, \{p_n\} \cup \partial M_n) \to \infty.
\end{align*}
Then 
\[
    \Sigma_n:= \Mlow_n - q_n
\]
 converges smoothly to the grim reaper surface $\graph(w_b)$.
\end{lemma}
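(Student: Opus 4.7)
My approach is to extract a smooth subsequential limit $\Sigma$ of the $\Sigma_n$, identify it as $\graph(w_b)$ via the finite-type classification machinery, and then upgrade to full convergence by uniqueness of the limit.

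First I would establish that $\Mlow_n\subset \{|y|\le b_n\}$ by an Alexandrov moving-planes argument in the $y$-direction, using that the only critical points of $F_{\ee_2}|M_n$ have $x=0$ (Theorem~\ref{x-critical-theorem}\eqref{at-most-item}) while $\Mlow_n\subset \{x\ge x(M_n)>0\}$; this places $\Sigma_n$ in the slab $\{|y|\le b_n\}$ with $b_n\to b$. Since $q_n=(x_n,0,\unlow(x_n))$ lies in $\{y=0\}$ with $x_n\in[x(M_n),a_n]$, the hypothesis $\dist(q_n,\partial M_n)\to\infty$ forces $\unlow(x_n)\to\infty$ (otherwise $q_n$ stays at bounded distance from the inner edge $\{z=0,|y|=b_n\}$) and $a_n-x_n\to\infty$ (otherwise $q_n$ stays near the side $\{x=a_n\}$). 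By Theorem~\ref{better-curvature-theorem} applied with $\vv=-\ee_1$, noting that $\delta(M_n,-\ee_1,q_n)=\dist(q_n,p_n)\to\infty$ by hypothesis, the translated surfaces $M_n-q_n$ have uniformly bounded second fundamental form on every fixed ball about $0$; combined with the quadratic area bound, a subsequence of $\Sigma_n$ converges smoothly on compacta to a complete translator $\Sigma$ of finite type contained in the slab $\{|y|\le b\}$.

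Next I would classify $\Sigma$. By Theorem~\ref{x-critical-theorem}, the only critical points of $F_{\ee_1}|M_n$ are $(\pm x(M_n),0,z(M_n))$, neither of which is an interior point of $\Mlow_n$, so $\mathsf{N}(F_{\ee_1}|\Mlow_n)=0$; lower semicontinuity (Theorem~\ref{semicontinuity-theorem}) then gives $\mathsf{N}(F_{\ee_1}|\Sigma)=0$. By Theorem~\ref{new-finite-type-theorem}\eqref{new-finite-2}, each component of $\Sigma$ is a plane parallel to $\{y=0\}$, a $\Delta$-wing, or a grim reaper surface. The component through $0$ is not such a plane: Theorem~\ref{finite-y-slice-theorem} shows $\Mlow_n$ meets $\{y=0\}$ transversely at $q_n$, so $\ee_2\in \Tan(\Sigma,0)$, whereas planes parallel to $\{y=0\}$ have $\ee_2$ as unit normal. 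It is not a $\Delta$-wing, because the $\Delta$-wing carries a critical point of $F_{\ee_1}$ at its apex, contradicting $\mathsf{N}(F_{\ee_1}|\Sigma)=0$. Hence $\Sigma$ is a grim reaper over $\RR\times(-\beta,\beta)$ for some $\beta\le b$; since $0\in\Sigma$ and $\Sigma$ is symmetric under $y\mapsto -y$, either $\Sigma=\graph(u_\beta)$ or $\Sigma=\graph(w_\beta)$. The slope bound of Theorem~\ref{slopes-theorem} gives $(\unlow)'(x_n)<\partial_x f_{b_n}(x_n,0)\le 0$, so the $x$-slope of $\Sigma$ at $0$ is nonpositive, ruling out $u_\beta$ and leaving $\Sigma=\graph(w_\beta)$.

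The crucial remaining step is to show $\beta=b$. Here I would use that the translated inner edges $\partialin M_n\cap\{|y|=b_n\}-q_n$ lie in the horizontal plane $\{z=-\unlow(x_n)\}$, which recedes to $z=-\infty$ because $\unlow(x_n)\to\infty$. Writing $\Sigma_n$ as a sideways graph $y=\yin_n(x,z)$ via Theorem~\ref{y-graph-theorem} and its analog for $\Cc$, the boundary data $\yin_n=b_n$ along that receding edge pass to the limit, forcing the asymptotic values of $\Sigma$ as $z\to-\infty$ to equal $\pm b$; since these asymptotic values are $\pm\beta$ for $\graph(w_\beta)$, we conclude $\beta=b$. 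Finally, because $\graph(w_b)$ is the unique possible subsequential limit, the entire sequence $\Sigma_n$ converges smoothly to $\graph(w_b)$. I expect this last width identification to be the main obstacle: the classification of $\Sigma$ up to a grim reaper is routine from the finite-type machinery, but pinning down the exact strip width requires quantitative control on the inner $y$-graph near the receding bottom boundary.
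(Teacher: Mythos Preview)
Your plan matches the paper's in outline, but the width identification $\beta=b$ contains one genuine error and one gap. The error is your exclusion of the $\Delta$-wing: the apex of a $\Delta$-wing has \emph{horizontal} tangent plane ($\nu=\ee_3$ there), so it is not a critical point of $F_{\ee_1}$; any graph satisfies $\mathsf{N}(F_{\ee_1}|\cdot)=0$, and Theorem~\ref{new-finite-type-theorem}\eqref{new-finite-2} alone cannot separate $\Delta$-wings from grim reapers. The gap is your receding-boundary argument for $\beta\ge b$: boundary values $b_n$ along an edge drifting to $z=-\infty$ do not determine the asymptotic value of the limit surface without a barrier or monotonicity of $\yin_n$ in $z$, neither of which you supply; nothing prevents $\yin_n$ from settling near some $\beta<b$ in the bulk and returning to $b_n$ only in a thin boundary layer that disappears in the limit.

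Both problems vanish if you use Theorem~\ref{slopes-theorem} at \emph{every} abscissa rather than only at $x_n$. This is precisely the paper's move: for each fixed $x$, $(\unlow)'(x_n+x)<\partial_x f_{b_n}(x_n+x,0)$; since $x_n-x(M_n)\to\infty$ (Theorem~\ref{better-compactness-theorem}) and hence $x_n\to\infty$, the right side tends to $-s(b)$, so in the limit $\partial_x w(x,0)\le -s(b)$ for all $x$. This single inequality rules out $\Delta$-wings (their $x$-slope along $\{y=0\}$ takes positive values) and forces $\beta\ge b$. For $\beta\le b$ the paper does not use your slab claim $\Mlow_n\subset\{|y|\le b_n\}$---which does not follow from the critical-point observation you cite, since Alexandrov in $y$ on the full $M_n$ only reaches $y=(b_n+B_n)/2$ (Theorem~\ref{alexandrov-y-theorem})---but instead observes that $\Sigma$, as a limit of pieces of $\Mlow_n$, lies in the inner portion of $M'$ (Lemma~\ref{wingnuts}) and applies Corollary~\ref{planes-corollary}.
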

    
\begin{proof}
Let $M_n'=M_n - q_n$.  
Theorem~\ref{better-compactness-theorem}  gives smooth subsequential convergence of $M_n'$ to a limit $M'$.
(Since $x_n\ge x(M_n)$, the distance from $q_n$ to $(-x(M_n),0,z(M_n))$ is greater than the distance from $q_n$
to $p_n$.)
Let $\Sigma$ be the component of $M'$ containing $0$.  
By Theorem~\ref{better-compactness-theorem}, 
\[
x_n - x(M_n) \to \infty
\]
and $\Sigma$ is a $\Delta$-wing or grim reaper surface, and thus the graph
of a function
\[
w: \RR\times (-\beta,\beta)\to \RR
\]
with $w(0,0)=0$, for some $\beta\ge \pi/2$.
Note that $\Sigma=\graph(w)$ is the limit of $\Mlow_n - q_n$.

Let $x\in\RR$.
By Theorem~\ref{slopes-theorem}, 
\begin{equation}\label{reaper-inequality}
    (\unlow)'(x_n + x) \le \pdf{}x f_{b_n}(x_n+x,0).
\end{equation}
As $n\to\infty$, the left side of~\eqref{reaper-inequality} converges to $\pdf{}x w(t,0)$. 
Since $x_n-x(M_n)\to\infty$, $x_n\to \infty$.  Thus the right side of~\eqref{reaper-inequality}
converges to $-s(b)$.  Hence
\[
   \pdf{}x w(x,0) \le -s(b)
\]
for all $x$.  Consequently $\Sigma=\graph(w_\beta)$ for some $\beta\ge b$.
On the other hand, $\beta\le b$ by Corollary~\ref{planes-corollary}.
\end{proof}

Recall that for $M\in \Rr$, 
\[
   \Omegain(M):=\{(x,z) \in [\hat c,a) \times \RR: 0\le z \le \ulow(x)  \},
\]
where $\hat c = x(M)+\pi$ and $a=a(M)$.
The
 following lemma says that for $M\in \Rr$, if $(x,z)\in \Omegain$ is far from the upper
and lower edges of $\Omegain$, then $\yin(y,z)$ is close to $b(M)$ and $\yout(y,z)$ is close
to $B(M)$.    The analogous assertion for $M\in \Aa(b,B,\hat x)$ (which is the main result of
this section) follows readily.

\begin{lemma}\label{fundamental-lemma}
For every pair of positive numbers $\eps$ and $\lambda$, there is an $R$ with the following property.
Suppose that $M\in \Rr(\lambda)$ with $b=b(M)$.
Then 
\[
   \text{$\yin(x,z)\in [b-\eps,b+\eps]$ and $\yout(x,z)\in [B-\eps, B]$}
\]
for all $(x,z)\in \Omegain(M)$ such that 
\[
   z\in [ R, \ulow(x) - R],
\]
\end{lemma}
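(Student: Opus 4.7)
The plan is a contradiction argument via compactness. Assume the conclusion fails: there exist $\eps_0, \lambda_0 > 0$, surfaces $M_n \in \Rr(\lambda_0)$, and points $(x_n, z_n) \in \Omegain(M_n)$ with $z_n \to \infty$ and $\xi_n := \unlow(x_n) - z_n \to \infty$ such that either $|\yin(x_n, z_n) - b(M_n)| > \eps_0$ or $|\yout(x_n, z_n) - B(M_n)| > \eps_0$ for every $n$. After passing to subsequences, $b(M_n) \to b$ and $B(M_n) \to B$ in $[\pi/2, \lambda_0]$, and $\delta_n := x_n - x(M_n)$ converges in $[\pi, \infty]$. Since $z(M_n) \ge \unlow(x_n) \to \infty$, the bounded-rectangle barrier $f_{a_n, b_n}$ from the proof of Theorem~\ref{complete-existence-theorem} forces $a(M_n) - x(M_n) \to \infty$. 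With $q_n := (x_n, 0, z_n)$, one has $\dist(q_n, \partial M_n) \ge z_n \to \infty$, so Theorem~\ref{curvature-bound-theorem} gives smooth subsequential convergence of $M_n - q_n$ to a complete limit $M' \in \Ll(b, B)$.

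I would split on $\delta_n$. In the regime $\delta_n \to \infty$, the top center $q_n^{\mathrm{top}} := (x_n, 0, \unlow(x_n))$ satisfies $\dist(q_n^{\mathrm{top}}, \{\pm(x(M_n), 0, z(M_n))\} \cup \partial M_n) \ge \min(\delta_n, \unlow(x_n)) \to \infty$, so Lemma~\ref{reaper-lemma} yields smooth convergence $\Mlow - q_n^{\mathrm{top}} \to \graph(w_b)$; an analogous statement for $\Mup$ centered at the outer peak, with limit $\graph(w_B)$ and proved by the same Morse--Rad\'{o} and slope machinery, handles the outer sheet. Reading off these convergences as convergences of sideways graphs, for each fixed $Z < 0$ one has $\yin(x_n, \unlow(x_n) + Z) \to y^b(0, Z)$, and $y^b(0, Z) \to b$ as $Z \to -\infty$. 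A diagonal extraction along a window $|Z| \le C_n$ with $C_n \to \infty$ growing slowly, combined with the upper bound $\yin \le b(M_n)$ on $\Omegain(M_n)$ (a moving plane argument in the $y$-direction against $\{y = b(M_n)\}$, using that $\Mlow$'s inner boundary rectangle sits in $\{|y| \le b(M_n)\}$) and the monotonicity $\partial_z \yin < 0$ on $\Omegain$, sandwiches $\yin(x_n, z_n)$ into $[b - \eps_0, b + \eps_0]$ for large $n$; analogously $\yout(x_n, z_n) \to B$, contradicting the standing assumption.

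In the opposite regime $\delta_n \to c \in [\pi, \infty)$, I instead translate by the neck point $p_n := (x(M_n), 0, z(M_n))$ (composed with a further horizontal shift if $x(M_n) \to \infty$, so that the translated neck sits near the origin) and apply Theorem~\ref{complete-existence-theorem} to extract a limit $\tilde M \in \Aa(b)$. Theorem~\ref{some-properties-theorem}(4) then gives $\yin_{\tilde M}(X, Z) \to b$ and $\yout_{\tilde M}(X, Z) \to B$ as $Z \to -\infty$, and since $z_n - z(M_n) \to -\infty$ while $x_n - x(M_n) \to c$, a diagonal argument based on the smooth convergence $M_n - p_n \to \tilde M$ again contradicts the standing assumption.

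The main obstacle is the diagonal step: Lemma~\ref{reaper-lemma} (respectively Theorem~\ref{complete-existence-theorem}) only delivers smooth convergence on compact subsets, whereas the evaluation point corresponds to $Z = -\xi_n \to -\infty$, outside any fixed compact window. This is bypassed by extracting on a growing window $|Z| \le C_n$ on which smooth convergence still holds, combined with the uniform upper bound $\yin \le b(M_n)$ and the strict $z$-monotonicity of $\yin$ on $\Omegain$; the latter in turn requires verifying that the waist of $\Mlow$ lies on the ridge $\{y = 0\}$, via the sign analysis of $\nu \cdot \ee_3$ afforded by Theorems~\ref{waist-theorem} and~\ref{x-alexandrov-theorem}.
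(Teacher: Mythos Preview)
Your contradiction setup and the instinct to use Lemma~\ref{reaper-lemma} are right, but the argument has two genuine gaps and misses the paper's key maneuver.

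\textbf{Circularity.} In your bounded-$\delta_n$ case you extract $\tilde M\in\Aa$ and invoke Theorem~\ref{some-properties-theorem}(4) to get $\yin_{\tilde M}\to b$. That theorem only gives convergence to $b(\tilde M)$, the intrinsic inner width of the limit; the identification $b(\tilde M)=\lim b(M_n)$ is Theorem~\ref{b-B-theorem}, which rests on Theorem~\ref{fundamental-theorem}, which rests on this very lemma.

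\textbf{The $z$-monotonicity of $\yin$ is unavailable.} Your diagonal step hinges on $\partial_z\yin<0$ on $\Omegain$, which you propose to get by showing the waist of $\Mlow$ lies on $\{y=0\}$. But the waist is a closed curve winding around $Z$, and by Theorem~\ref{waist-theorem} its portion in $\{x>0,\,y>0\}$ has $\nu\cdot\ee_2<0$, hence (Corollary~\ref{y-graph-corollary}) lies on $\Mlow$ off the ridge; nothing prevents it from entering $\{x\ge x(M)+\pi\}$, so $\nu\cdot\ee_3$ can vanish there and $\yin$ need not be monotone in $z$. Theorem~\ref{x-alexandrov-theorem} gives monotonicity of $\yin$ in $x$, not in $z$. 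The companion bound $\yin\le b(M_n)$ and your claimed $\Mup$-analogue of Lemma~\ref{reaper-lemma} are likewise unproved (the latter lacks any analogue of Theorem~\ref{slopes-theorem}, which is specific to $\ulow$).

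\textbf{What the paper does instead.} No case split on $\delta_n$ and no diagonal argument. Translate directly by $q_n=(x_n,0,z_n)$ and apply Theorem~\ref{better-compactness-theorem}: since for each fixed $(x,z)$ with $x>0$ the line $Y+(x,0,z)$ meets $M_n-q_n$ transversely in exactly four points once $n$ is large, the limit $M'$ is four parallel planes. Theorem~\ref{planes-theorem} then pins the outer pair at $\{y=\pm B\}$ (so $\yout(x_n,z_n)\to B$ immediately) and places the inner pair at $\{y=\pm b^*\}$ with $b^*\le b$. For $b^*\ge b$: fix $k>0$, pick $x_n'>x_n$ with $\unlow(x_n')=z_n+k$, apply Lemma~\ref{reaper-lemma} at $(x_n',0,z_n+k)$ to get $\ynin(x_n',z_n)\to y^b(0,-k)$, and then use the $x$-monotonicity $\ynin(x_n,z_n)\ge\ynin(x_n',z_n)$ from Theorem~\ref{x-alexandrov-theorem}; let $k\to\infty$. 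The trick is to slide horizontally to where the ridge is only height $k$ above $z_n$, rather than trying to push convergence down a distance $\xi_n\to\infty$ at fixed $x_n$.
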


\begin{proof}
Suppose it is false.  Then there exist $M_n\in \Rr(\lambda)$
and $(x_n,z_n)\in \Omegain(M_n)$ such that
\begin{equation}\label{zns-trapped}
n \le z_n \le \unlow(x_n) - n
\end{equation}
and such that
\begin{equation}\label{mishit}
\text{$\yin(x_n,z_n)\notin [b_n-\eps,b_n+\eps]$ or $\yout(x_n,z_n)\notin [B_n-\eps,B_n]$}.
\end{equation}
By passing to subsequence, we can assume that $b_n$ and $B_n$ converge:
\begin{equation}\label{bees-converge}
\begin{aligned}
&b_n \to b, \\
&B_n \to B.
\end{aligned}
\end{equation}

We will prove Lemma~\ref{fundamental-lemma} by showing that~\eqref{zns-trapped}
and~\eqref{bees-converge} imply (contrary to~\eqref{mishit}) that 
\begin{align*}
\lim_n \ynin(x_n,z_n) &= b, \,\text{and} \\
\lim_n\ynout(x_n,z_n) &= B.
\end{align*}

By Theorem~\ref{better-compactness-theorem}, we can assume (passing to a subsequence)
 that $M_n':= M_n - (x_n,0,z_n)$ converges smoothly to a limit $M'$, each component of which is one of the following:
 a plane parallel to $\{y=0\}$,
  or a $\Delta$-wing, or a grim reaper surface.

Note that if $x>0$ and if $Y + (x,0,z)$ intersects $M'$ transversely, then it does so in exactly four points (counting multiplicity).

Thus $M'$ is the union of $4$ planes (counting multiplicity). 
 By~Theorem~\ref{planes-theorem}, the planes are $\{y=\pm B\}$ and $\{y=b^*\}$
for some $b^*\in [0,b]$.

Thus $\lim_n \ynout(x_n,z_n)=B$ and $\lim \ynin(x_n,z_n)= b^*$, 
so it remains only to show that $b^*\ge b$.

Fix (for the moment) a $k\in (0,\infty)$.
For $n>k$, using \eqref{zns-trapped} and the fact that $\unlow$ is decreasing (Theorem \ref{slopes-theorem}), 
there is a $x_n'> x_n$ such that 
\[
   \unlow(x_n') = z_n + k.
\]
Let
\[
  q_n = (x_n', 0, \unlow(x_n')) = (x_n',0,z_n+k).
\]
Now
\[
\dist(q_n,\partial M_n) \ge z(q_n) = z_n + k \ge z_n\to\infty.
\]
Also,
\begin{align*}
|p_n - q_n|
&\ge 
z(p_n) - z(q_n) \\
&= \unlow(x(M_n)) - (z_n + k) \\
&\ge  \unlow(x_n) - z_n - k \\
&\ge n-k \to\infty.
\end{align*}
By Lemma~\ref{reaper-lemma},  $\Mlow_n-q_n$ converges smoothly
to the grim reaper surface $\graph(w_b)$.
Thus for every $t>0$,
\[
 \lim_n  \ynin(x_n',0, \unlow(x_n') - t) = y_b(0,-t).
\]
In particular, for $t=k$, $\unlow(x_n')-k = z_n$, so
\[
  \lim_n \ynin(x_n',z_n)  = y^b(0,-t).
\]
Since $\ynin(x_n,z_n) \ge \ynin(x_n',z_n)$ (by Theorem~\ref{x-alexandrov-theorem}),
\[
 b^* = \lim_n \ynin(x_n,z_n) \ge y^b(0,-t).
\]
This holds for every $t\in (0,\infty)$. Thus letting $t\to\infty$ gives
\[
 b^*\ge b.
\]
\end{proof}

\begin{theorem}\label{fundamental-theorem}
For every pair of positive numbers $\eps$ and $\lambda$, there is an $R$ with the following property.
Suppose that $M\in \Aa(b,B,\hat x)$.
If $B < \lambda$,
then
\[
   \text{$\yin(x,z)\in [b-\eps,b+\eps]$ and $\yout(x,z)\in [B-\eps, B]$}
\]
for all $(x,z)\in \Omegain(M)$ such that 
\[
    z < \ulow(x) - R
\]
\end{theorem}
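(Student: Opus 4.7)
The plan is to deduce Theorem~\ref{fundamental-theorem} from its compact counterpart, Lemma~\ref{fundamental-lemma}, by a limiting argument. By Definition~\ref{Aa-definition}, any $M \in \Aa(b,B,\hat x)$ is the smooth limit of a sequence $M_n' := M_n - (0,0,z(M_n))$ with $M_n \in \Rr$, $a(M_n) \to \infty$, $b_n := b(M_n) \to b$, $B_n := B(M_n) \to B$, and $x(M_n) \to \hat x$. Since $B < \lambda$, one has $B_n < \lambda$ for all sufficiently large $n$; combined with $a(M_n) \to \infty$, this yields $M_n \in \Rr(\lambda)$ eventually (cf.\ Definition~\ref{R-lambda-definition}).

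Set $R := R(\eps/2, \lambda)$, the constant provided by Lemma~\ref{fundamental-lemma}, and fix $(x,z) \in \Omegain(M)$ with $z < \ulow(x) - R$. I claim that for $n$ sufficiently large, $(x, z + z(M_n))$ lies in $\Omegain(M_n)$ and satisfies $z + z(M_n) \in [R, \unlow(x) - R]$. Indeed, since $x(M_n) \to x(M)$ and $a(M_n) \to \infty$, the hypothesis $x \ge x(M)+\pi$ gives $x \in [x(M_n)+\pi, a(M_n)]$ for large $n$; the lower bound $z + z(M_n) \ge R$ holds automatically because $z(M_n) \to \infty$, as established in the proof of Theorem~\ref{complete-existence-theorem}; and the upper bound $z + z(M_n) \le \unlow(x) - R$ reduces to $\unlow(x) - z(M_n) \ge z + R$, which follows from the pointwise convergence $\unlow(x) - z(M_n) \to \ulow(x)$ (itself a consequence of the smooth convergence $M_n' \cap \{y=0\} \to M \cap \{y=0\}$ away from the point $(x(M),0,0)$) together with the strict inequality $z < \ulow(x) - R$. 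Applying Lemma~\ref{fundamental-lemma} to $M_n$ at $(x, z + z(M_n))$ yields
\[
   \ynin(x,\, z + z(M_n)) \in [b_n - \tfrac{\eps}{2},\, b_n + \tfrac{\eps}{2}], \qquad \ynout(x,\, z + z(M_n)) \in [B_n - \tfrac{\eps}{2},\, B_n].
\]

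By the smooth convergence $M_n' \to M$, the inner and outer graph functions of $M_n'$ converge to those of $M$; equivalently, $\ynin(x, z + z(M_n)) \to \yin(x,z)$ and $\ynout(x, z + z(M_n)) \to \yout(x,z)$. Passing to the limit and using $b_n \to b$ and $B_n \to B$, one obtains $\yin(x,z) \in [b-\tfrac{\eps}{2}, b+\tfrac{\eps}{2}] \subset [b-\eps, b+\eps]$ and $\yout(x,z) \in [B-\tfrac{\eps}{2}, B] \subset [B-\eps, B]$, as required.

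There is no substantive obstacle here: all the real work was carried out in Lemma~\ref{fundamental-lemma}. The only bookkeeping is verifying that the shifted points $(x, z + z(M_n))$ lie in the range where the compact lemma applies, for which the key ingredients are the divergence $z(M_n) \to \infty$ and the smooth convergence of the lower slice profiles $\unlow(\cdot) - z(M_n) \to \ulow(\cdot)$.
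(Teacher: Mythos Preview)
Your approach is the same as the paper's: pass the compact estimate of Lemma~\ref{fundamental-lemma} through the defining approximation $M_n - (0,0,z(M_n)) \to M$, using $z(M_n)\to\infty$ to place the shifted point in the admissible range. One small slip: the claim ``$x \ge x(M)+\pi$ gives $x \in [x(M_n)+\pi, a(M_n)]$ for large $n$'' is not literally true at the boundary value $x = x(M)+\pi$, since $x(M_n)$ may exceed $x(M)$ along the sequence. The paper handles this by first establishing the conclusion for $x > x(M)+\pi$ (where your argument is valid verbatim) and then extending to $x = x(M)+\pi$ by continuity of $\yin$ and $\yout$. With that one-line fix your proof is complete; the use of $R(\eps/2,\lambda)$ rather than $R(\eps,\lambda)$ is harmless (the paper uses $\eps$ directly and passes to the limit in the closed interval).
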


\begin{proof}
Let $M_n\in \Rr$ be such that: $b_n:=b(M_n)$,  $B_n:=B(M_n)$, $x(M_n)$, and $a(M_n)$  converge to $b$, $B$, $\hat x$,
and $\infty$, 
 and such that $\tilde M_n=M_n - (0,0,z(M_n))$ converges
smoothly to $M$.   We may assume that $B_n<\lambda$ and that $a_n > x(M_n)+ 2\pi$ for
all $n$.

By the smooth convergence,
\begin{enumerate}[\upshape(i)]
\item  
        $\unlow(\cdot) - z(M_n)$ converges uniformly
         on compact subsets of $(x(M),\infty)$ to $\ulow(\cdot)$, and
\item\label{y-convergence-item}
    $\ynin(x, z + z(M_n))$ and $\ynout(x, z+ z(M_n))$ converge uniformly 
     on compact subsets of $\{x>x(M), \, z< \ulow(x)\}$ to $\yin(x,z)$ and $\yout(x,z)$.
\end{enumerate}

Let $R=R(\eps,\lambda)$ be as in Lemma~\ref{fundamental-lemma}.
Suppose first that $x> x(M)+\pi$ and that $z< \ulow(x)-R$.

Then for all sufficiently large $n$,
\[
     z  < \unlow(x) - z(M_n) - R
\]
or, equivalently,
\[   
    z + z(M_n) < \unlow -  R.
\]
Also, since $z(M_n)\to\infty$, 
\[
   R < z+ z(M_n)
\]
for all sufficiently large $n$.  Thus, for such $n$,
\[
   \text{$\ynin(x,z+z(M_n))\in [b-\eps,b+\eps]$ and $\ynout(x,z+z(M_n))\in [B-\eps, B]$}
\]
Then, letting $n\to\infty$, we have (by~\eqref{y-convergence-item})
\begin{equation}\label{eureka}
   \text{$\yin(x,z)\in [b-\eps,b+\eps]$ and $\yout(x,z)\in [B-\eps, B]$}.
\end{equation}
We have shown that~\eqref{eureka} holds for all $(x,z)$ with $x>x(M)$ and $z< \ulow(x)-R$.
By continuity, \eqref{eureka}~also holds for $(x,z)$ with $x\ge x(M)$ and $z\le \ulow(x)-R$.
\end{proof}
   
\begin{theorem}\label{b-B-theorem}
Suppose $M\in \Aa(b,B, \hat x)$. Then $b(M)=b$ and $B(M)=B$.
\end{theorem}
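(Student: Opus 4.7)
The plan is to read off both equalities as immediate corollaries of Theorem~\ref{fundamental-theorem} combined with the asymptotic characterization of $b(M)$ and $B(M)$ supplied by Theorem~\ref{some-properties-theorem}. Essentially, all the heavy lifting has already been done: Theorem~\ref{fundamental-theorem} pins down the values of $\yin$ and $\yout$ in the bulk of $\Omegain(M)$, and Theorem~\ref{some-properties-theorem} tells us that those bulk values are precisely $b(M)$ and $B(M)$.

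Concretely, I would fix any $\eps>0$ and choose $\lambda>B$ (this is possible since $B<\infty$ by the definition of $\Aa(b,B,\hat x)$). Theorem~\ref{fundamental-theorem} then produces an $R=R(\eps,\lambda)$ such that
\[
   \yin(x,z)\in[b-\eps,b+\eps], \qquad \yout(x,z)\in[B-\eps,B]
\]
whenever $(x,z)\in\Omegain(M)$ and $z<\ulow(x)-R$. Now fix any $x\ge \hat c:=x(M)+\pi$. Then $(x,z)\in\Omegain(M)$ for every $z\le \ulow(x)$, and for $z$ sufficiently negative the condition $z<\ulow(x)-R$ is automatic; hence the two inclusions above hold for all sufficiently negative $z$.

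Passing to the limit $z\to-\infty$ and invoking Theorem~\ref{some-properties-theorem}, which gives $\lim_{z\to-\infty}\yin(x,z)=b(M)$ and $\lim_{z\to-\infty}\yout(x,z)=B(M)$, yields
\[
   b(M)\in[b-\eps,b+\eps], \qquad B(M)\in[B-\eps,B].
\]
Since $\eps>0$ was arbitrary, $b(M)=b$ and $B(M)=B$, as required.

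There is essentially no obstacle at this stage: the work was in establishing Theorem~\ref{fundamental-theorem} (and before it Lemma~\ref{fundamental-lemma} via the grim-reaper blow-up in Lemma~\ref{reaper-lemma}, together with the Alexandrov monotonicity of $\yin$ in Theorem~\ref{x-alexandrov-theorem} and the slope estimates in Theorem~\ref{planes-theorem}). The only mild care needed is to ensure that $B<\infty$ so that a valid $\lambda$ exists, which is built into Definition~\ref{Aa-definition}, and to confirm that the $\hat c$ used in Theorem~\ref{fundamental-theorem} (i.e.\ $x(M)+\pi$) is the same as the one implicit in Theorem~\ref{some-properties-theorem}, which it is.
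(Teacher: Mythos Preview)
Your proof is correct and is essentially the same as the paper's: both invoke Theorem~\ref{fundamental-theorem} to pin down $\yin(x,z)$ and $\yout(x,z)$ for large negative $z$ and then identify those limits with $b(M)$ and $B(M)$ via Theorem~\ref{some-properties-theorem}. Your version is just a slightly more explicit $\eps$-argument spelling out what the paper compresses into two displayed limits.
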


\begin{proof}
Let $x\ge x(M)+\pi$.  Then
\begin{align*}
  &\lim_{z\to -\infty} \yin(x,z) = b, \, \text{and} \\
  &\lim_{z\to -\infty} \yout(x,z)=B.
\end{align*}
by Theorem~\ref{fundamental-theorem}.   Hence $b(M)=b$ and $B(M)=B$.  (See Theorem~\ref{some-properties-theorem}
if this is not clear.)
\end{proof}

\section{Behavior as $x\to \pm \infty$} \label{sec:behavior-at-infinity}

In this section we study the asymptotic behavior of the wings, $\Mlow$ and $\Mup$, of $M \in \Aa(b,B,\hat x)$, as $x \to \infty.$ In particular, we prove that if we take limit of 
$\Mlow-(x,0,\ulow(x))$, as $x \to \infty$, then this limit is the grim reaper surface of slope $-s(b)$. On the other hand, $\Mup-(x,0,\uup(x))$ converges, as $x \to \infty$, to a grim reaper surface of slope $s=\pm s(B)$. If $B>b$, then $s=s(B).$

\begin{theorem}\label{main-theorem-concluded}
Suppose that $M\in \Aa(b,B,\hat x)$.  Let $\lambda>B$.  Then
\begin{enumerate}[\upshape(1)]
\item\label{lower-slope-item} $\displaystyle \lim_{x\to\infty} (\ulow)'(x)= - s(b)$,
 and $\Mlow - (x,0,\ulow(x))$ converges to the grim reaper surface $\graph(w_b)$. (See Definition~\ref{reaper-definition}.)
\item\label{upper-graph-item}
  The surface $\Mup \cap\{x\ge x(M)+\pi\}$ is the graph of a function
  \[
      u: [x(M)+\pi,\infty)\times (-B,B)\to \RR
  \]
  that satisfies the gradient bound
  \begin{equation*}\label{the-gradient-bound}
   (B-|y|)\sqrt{1+|Du|^2} \le \eta B, 
\end{equation*}  
 for a constant  $\eta=\eta(\lambda)$.
\item\label{upper-slope-item} 
The limit 
   $\displaystyle L:=\lim_{x\to\infty} (\uup)'(x)$
exists, and $L=\pm s(B)$.  
Moreover, $\Mup - (x,0,\uup(x))$
converges smoothly as $x\to \infty$ to the grim reaper surface $\graph(w)$ such that
\begin{gather*}
  w:\RR\times (-B,B)\to \RR, \\
  w(0,0)=0, \\
  \pdf{w}x \equiv L.
\end{gather*}
In particular, 
\[
   \lim_{x\to\infty}\pdf{}x u(x,y) = L
\]
for each $y\in (-B,B)$.
\item\label{uncapped-item}  If $B>b$, then $L=s(B)$.
\end{enumerate}
\end{theorem}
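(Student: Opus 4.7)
The plan is to take the four assertions in turn, leveraging the classification of graphical translators (Theorem~\ref{classification-theorem}) and the compactness and graph theorems from the preceding sections.

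For part~\eqref{lower-slope-item}, I would adapt the argument of Lemma~\ref{reaper-lemma} to the complete setting. Taking $x_n\to\infty$ and $q_n=(x_n,0,\ulow(x_n))$, Theorem~\ref{better-compactness-theorem} gives a subsequential smooth limit of $M-q_n$ whose connected component $\Sigma$ through the origin is a $\Delta$-wing or grim reaper surface over some strip $\RR\times(-\beta,\beta)$. Because $(\ulow)'(x)$ has a limit as $x\to\infty$ by Theorem~\ref{y-slice-theorem}\eqref{y-slice-3}, the slope of $\Sigma$ along $\{y=0\}$ is constant in $x$; this excludes translates of $f_\beta$ with $\beta>\pi/2$ (whose slope along $\{y=0\}$ is strictly monotone) and forces $\Sigma=\graph(w_\beta)$ up to vertical translation. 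Theorem~\ref{slopes-theorem} gives $(\ulow)'(\infty)\le\lim_{x\to\infty}\partial_xf_b(x,0)=-s(b)$, so $s(\beta)\ge s(b)$ and hence $\beta\ge b$; meanwhile, a diagonal approximation by $M_n\in\Rr$ exhibits $M'=\lim(M-q_n)$ as an element of $\Ll(b,B)$, with $\Sigma$ sitting in the inner portion of $M'$ (since each $\Mlow_n\cap\{x>x(M_n)+\pi\}$ does by Lemma~\ref{wingnuts}), so Corollary~\ref{planes-corollary} gives $\beta\le b$. Combining, $\beta=b$ and $\Sigma=\graph(w_b)$, which establishes both statements of part~\eqref{lower-slope-item}.

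For part~\eqref{upper-graph-item}, I would approximate $M$ by $M_n\in\Rr(\lambda)$ for some $\lambda>B$ and pass to the limit in Theorem~\ref{R-lambda-theorem}\eqref{R-lambda-graph-item}: the graph functions $u_n$ representing $\Mup_n\cap\{x\ge x(M_n)+\pi\}$ satisfy the uniform bound $(B_n-|y|)\sqrt{1+|Du_n|^2}\le\eta(\lambda)B_n$, yielding uniform Lipschitz estimates on compact subsets of $(-B,B)$ in the $y$-direction. Interior Schauder theory for the translator equation, together with Arzel\`a--Ascoli, then produces $C^\infty_{\mathrm{loc}}$ convergence to a function $u$ on $[x(M)+\pi,\infty)\times(-B,B)$ inheriting the gradient bound, whose graph must coincide with $\Mup\cap\{x\ge x(M)+\pi\}$. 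For part~\eqref{upper-slope-item}, this graph is the crucial input. Setting $v_n(x,y):=u(x+x_n,y)-\uup(x_n)$ for any $x_n\to\infty$, the translation-invariance of the gradient bound gives subsequential $C^\infty_{\mathrm{loc}}$ convergence to a translator graph $v$ on the full strip $\RR\times(-B,B)$ with $v(0,0)=0$; since the gradient bound forces $|v|\to\infty$ as $|y|\to B$, the graph is a complete properly embedded translator. By Theorem~\ref{classification-theorem}, $v$ must be $u_B$, $w_B$, or a horizontal translate of $f_B$. The same constant-slope argument as in part~\eqref{lower-slope-item}, now applied with $\partial_xv(t,0)\equiv(\uup)'(\infty)=:L$, excludes the $\Delta$-wing case, so $v=u_B$ (with $L=s(B)$) or $v=w_B$ (with $L=-s(B)$); since $L$ is fixed independently of subsequence, the full sequence converges.

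Part~\eqref{uncapped-item} then follows by comparing $\{y=0\}$-traces: if $L=-s(B)$ and $B>b$, parts~\eqref{lower-slope-item} and~\eqref{upper-slope-item} give $\uup(x)/x\to-s(B)$ and $\ulow(x)/x\to-s(b)$, whence $\uup(x)-\ulow(x)\to-\infty$ (since $s(B)>s(b)$), contradicting $\uup(x)>\ulow(x)$ for all $x>x(M)$ from Theorem~\ref{y-slice-theorem}\eqref{y-slice-2}. The main obstacle I foresee is in part~\eqref{upper-slope-item}: confirming that the subsequential limit $v$ is a \emph{complete} graphical translator defined on a strip of width \emph{exactly} $2B$, so that the classification produces the three-parameter family $\{u_B,w_B,f_B(\cdot+c,\cdot)\}$. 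This is ensured by the gradient bound from part~\eqref{upper-graph-item}, which both preserves the $y$-domain $(-B,B)$ under $(x,z)$-translation and forces blow-up of $v$ at the strip edges; after that, the constant-slope-along-$\{y=0\}$ observation cleanly singles out a grim reaper and pins down the sign of $L$.
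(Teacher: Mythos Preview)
Your proposal is correct and tracks the paper's proof closely in all four parts; the paper argues~\eqref{uncapped-item} by comparing the limiting slopes directly ($L\ge -s(b)$ forces $L=s(B)$), but your version via $\uup-\ulow\to-\infty$ is equivalent.

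One small correction in part~\eqref{upper-slope-item}: the gradient bound $(B-|y|)\sqrt{1+|Dv|^2}\le\eta B$ is an \emph{upper} bound on $|Dv|$ and does not force $|v|\to\infty$ at the strip edges, so it does not by itself give completeness. Completeness of $\graph(v)$ follows instead because it is the smooth limit of the translates $M-(x_n,0,\uup(x_n))$ of the complete surface $M$; alternatively, Theorem~\ref{translates-theorem} (applied with $p_i=(x_n,0,\uup(x_n))$ in the plane $\{y=0\}$) gives directly that the limit component through the origin is a grim reaper surface, bypassing the classification theorem altogether. The paper's own proof is terse at this step, simply asserting from $w(x,0)=Lx$ that $w$ is the grim reaper of slope $L$ on $\RR\times(-B,B)$.
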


\begin{proof}
Let $\beta\ge \pi/2$ be the number such that 
\begin{equation}\label{reaper-limit}
   \lim_{x\to\infty} (\ulow)'(x)= -s(\beta).
\end{equation}
(The limit exists by Theorem~\ref{y-slice-theorem}.)
By Corollary~\ref{slopes-corollary}, the limit in~\eqref{reaper-limit} is at most $-s(b)$,
and therefore
\begin{equation}\label{beta-bigger-again}
\beta\ge b.
\end{equation}
Let $x_n\to\infty$ and 
\[
  M_n':=M - (x_n,0,\ulow(x_n))
\]
By passing to a subsequence, we can assume that $M_n'$ converges
smoothly to a limit $M'$. 
  Let $\Sigma$ be the component of $M'$ containing $0$.
By~\eqref{reaper-limit},
\[
  \Sigma\cap\{y=0\} = \{(x,0,z): z = -s(\beta)x\},
\]
Thus $\Sigma$ is the grim reaper surface $w_\beta$.
(If this is not clear, see Theorem~\ref{translates-theorem}.)
Note that $\Sigma$ is in $(M')^\textnormal{inner}$.
Thus
\[
  \beta \le b.
\]
by Corollary~\ref{planes-corollary}.
This (together with~\eqref{beta-bigger-again}) completes the proof of Assertion~\eqref{lower-slope-item}.

Assertion~\eqref{upper-graph-item} follows immediately from the corresponding assertion
 in Theorem~\ref{R-lambda-theorem}
for $M\in \Rr$. 

By Theorem~\ref{y-slice-theorem},
   the limit $\displaystyle L=\lim_{x\to\infty} (\uup)'(x)$ exists and is finite.
Suppose $x_n\to\infty$.  By the gradient bound in Assertion~\eqref{upper-graph-item} and Arzela-Ascoli,
the function
\[
   u(x+x_n,y) - u(x_n,0)
\]
converges smoothly, perhaps after passing to a subsequence,
to a function
\[
    w: \RR\times (-B,B) \to \RR.
\]
Since $w(x,0)= Lx$, we see that $w$ is the grim reaper surface
with 
\[
   \pdf{}x w \equiv L,
\]
and thus that $L=\pm s(B)$.  Since the limit is independent of the choice
of subsequence, we get convergence and not just subsequential convergence.

To prove Assertion~\eqref{uncapped-item}, suppose that $b<B$.  Then $s(b)<s(B)$, so
\begin{equation}\label{sandwich}
   -s(B) < -s(b) < s(B).
\end{equation}
Now $\uup(x)> \ulow(x)$ for all $x>x(M)$, so
\[
   L:=\lim_{x\to\infty} (\uup)'(x) \ge \lim_{x\to\infty} (\ulow)'(x) = -s(b).
\]
Since $L=\pm s(B)$, we see from~\eqref{sandwich} that $L=s(B)$.
\end{proof}

We will show later (Theorem~\ref{cutoffs-theorem}) that there are examples for which $b=B$ and other examples for which $b<B$.
The following theorem gives a condition guaranteeing that $b=B$:

\begin{theorem}\label{b-B-criterion}
Suppose that $M\in \Aa$ and that $B:=B(M)>y(M)+\pi$.   Then 
$b(M)=B(M)$ and
\[
   \lim_{x\to\infty} (\uup)'(x) = - s(B).
\]
\end{theorem}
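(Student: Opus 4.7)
The plan is to approximate $M$ by a sequence $M_n\in\Rr$ with $\tilde M_n := M_n-(0,0,z(M_n))\to M$ smoothly, invoke the second half of Theorem~\ref{wide-graph} at the finite stage, and pass to the limit. Since $x(M)>0$, the two points of $\tilde M_n$ where $\nu=\pm\ee_1$ converge to the two distinct points $(\pm x(M),0,0)\in M$, so Remark~\ref{blowup-remark} forces the convergence $\tilde M_n\to M$ to be smooth on all of $M$.

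First I will verify that $B(M_n)>y(M_n)+\pi$ for all sufficiently large $n$. The nontrivial point is $\limsup_n y(M_n)\le y(M)$: given $\eps>0$, pick $(0,y_\eps,z_\eps)\in M$ with $y(M)\le y_\eps<y(M)+\eps$, and by smooth convergence obtain nearby points of $\tilde M_n$, giving $y(M_n)=y(\tilde M_n)<y(M)+2\eps$ eventually. Combined with $B(M_n)\to B$ and the strict hypothesis $B>y(M)+\pi$, this yields the desired inequality for large $n$.

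Next, Theorem~\ref{wide-graph} applied to each such $M_n$ gives $[-A_n,A_n]\times[y(M_n)+\pi,B_n]\subset\domain(u_{M_n})$ together with $\pdf{u_{M_n}}{x}<0$ on $(0,A_n]\times[y(M_n)+\pi,B_n)$. For any fixed $x_0>0$ and $y_0\in(y(M)+\pi,B)$, the point $(x_0,y_0)$ is eventually in $\domain(u_{M_n})$, and smooth convergence lets us pass the strict sign to the limit, yielding $\pdf{u_M}{x}(x_0,y_0)\le 0$.

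To finish, fix $y_0\in(y(M)+\pi,B)$ (nonempty by hypothesis) and invoke Theorem~\ref{main-theorem-concluded}\eqref{upper-slope-item}, which gives $\lim_{x\to\infty}\pdf{u_M}{x}(x,y_0)=L$ with $L=\pm s(B)$. The previous step forces $L\le 0$, and since $B>y(M)+\pi>\pi$ implies $s(B)>0$, we must have $L=-s(B)$; this is the second claim. Then the contrapositive of Theorem~\ref{main-theorem-concluded}\eqref{uncapped-item}---``$L\ne s(B)$ implies $B\le b$''---combined with the automatic $b(M)\le B(M)$, yields $b(M)=B(M)$. The only real obstacle is the bookkeeping point $\limsup y(M_n)\le y(M)$; once that is secured, the argument is the direct combination of the $\Rr$-level graphicality result at large $y$ with the $\Aa$-level asymptotic slope dichotomy for the upper wing.
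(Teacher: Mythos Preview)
Your proof is correct and follows essentially the same route as the paper. The paper simply invokes Theorem~\ref{wide-graph} directly for $M\in\Aa$ (even though it is only stated for $\Rr$), taking the limiting step for granted; you spell out that step explicitly, including the verification that $B(M_n)>y(M_n)+\pi$ for large $n$. One minor point: your claim that $\partial u_M/\partial x\le 0$ holds for \emph{all} $x_0>0$ implicitly assumes $(x_0,y_0)\in\domain(u_M)$, which you have not checked for small $x_0$; but since you only use the inequality as $x_0\to\infty$ (where $u_M$ coincides with the $u$ of Theorem~\ref{main-theorem-concluded}\eqref{upper-graph-item}), this does not affect the argument.
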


\begin{proof}
By Theorem~\ref{wide-graph}, 
\[
   \RR\times [y(M)+\pi, B) \subset \domain(u_M)
\]
and
\[
  \pdf{}x u_M(x,y) \le 0  \quad\text{on $[0,\infty)\times [y(M+\pi, B)$.}
\]

In particular, for each $y\in [y(M)+\pi, B)$,
\[
 \limsup_{x\to\infty}\pdf{}x u_M(x,y) \le 0.
\]
In particular, since $u_M(x,0)=\uup(x)$ for $x\ge x(M)$ and since $\lim_{x\to\infty}(\uup)'(x)=\pm s(B)$, 
we see that
\[
  \lim_{x\to\infty} (\uup)'(x)= - s(B).
\]
On the other hand, if $b<B$, then
\[
  \lim_{x\to\infty} \pdf{}xu(x,y)= s(B)>0
\]
by Theorem~\ref{main-theorem-concluded}.  Thus $b=B$.
\end{proof}

\section{Large necks and Prongs}\label{large-neck-section}

In this section, we analyze the behavior of $M_n\in \Aa$ as the necksize  $x(M_n)$ tends to $\infty$.
We begin by examining the behavior of $M_ n\in \Aa$ near the point $(x(M_n),0,0)$.
Specifically,  we analyze the limit (which we call a prong) of $M_n - (x(M_n),0,0)$ as $x(M_n)\to\infty$.
Then, in Theorem~\ref{infinite-neck-theorem}, we describe the behavior of $M_n$ at bounded distances from the plane $\{x=0\}$:
in particular, we show that suitable vertical translates of the $M_n$ converge to a pair of untilted grim reaper surfaces over 
strips $\RR\times (b,b+\pi)$ and $\RR\times (-(b+\pi),-b)$.

\begin{theorem}\label{prong-existence-theorem}
Suppose that $M_n\in \Aa$ are annuloids with $b_n:=b(M_n)\to b<\infty$ and
with $x(M_n)\to\infty$.
Then, perhaps after passing to a subsequence, $B_n:=B(M_n)$ converges to a limit $B$, and the surfaces
\[
   M_n - (x(M_n), 0, 0)
\]
converge smoothly to a limit $M$.
\end{theorem}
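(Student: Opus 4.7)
I would prove this by a standard compactness argument applied to $M_n' := M_n - (x(M_n),0,0)$, using the uniform bounds surfaces in $\Aa$ enjoy. First, pass to a subsequence so that $B_n\to B$: this is free because Definition~\ref{Aa-definition} forces $B_n\in[b_n,b_n+\pi]$, which is bounded.

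Next, I would extract uniform geometric bounds for $M_n'$. Theorem~\ref{complete-existence-theorem} supplies the quadratic area bound $\area(M_n\cap\BB(p,r))\le c_1 r^2$ and the curvature bound $|A(M_n,\cdot)|\,\min\{1,x(M_n)\}\le c_2$. Since $x(M_n)\to\infty$, for all large $n$ the latter reduces to the unconditional bound $|A(M_n,\cdot)|\le c_2$ on $M_n$. Both estimates are translation-invariant and are therefore inherited by $M_n'$. The standard compactness theorem for properly embedded surfaces that are minimal in the translator metric with uniformly bounded second fundamental form and locally bounded area then yields, after passing to a further subsequence, smooth convergence of $M_n'$ on compact subsets of $\RR^3$ to a smooth properly embedded translator $M$. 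Since $0\in M_n'$ for every $n$, the limit $M$ is nonempty and contains the origin.

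The hard part will be ruling out multiplicity $\ge 2$, so that $M$ is genuinely a smooth embedded surface and deserves the name ``prong''. Away from the origin, this follows from the stronger curvature estimate of Theorem~\ref{better-curvature-theorem} with $\vv=\ee_1$: by Theorem~\ref{x-critical-theorem}, the only interior points of $M_n$ with $\nu=\pm\ee_1$ are $(\pm x(M_n),0,z(M_n))$, which translate to $0$ and the divergent point $(-2x(M_n),0,z(M_n))$. Hence $\delta(M_n',\ee_1,\cdot)$ stays bounded below on any compact subset of $\RR^3\setminus\{0\}$, and Remark~\ref{blowup-remark} excludes any sheet collision there. At the origin, each $M_n'$ has a non-degenerate saddle (vertical tangent plane with strictly negative Gauss curvature, by Theorem~\ref{x-critical-theorem}); the uniform bound $|A|\le c_2$ therefore produces a uniform graphical scale, incompatible with two embedded sheets of $M_n'$ coalescing. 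Thus the convergence is multiplicity one and $M$ is the desired smooth embedded translator.
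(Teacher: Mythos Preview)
Your core argument---pass to a subsequence so that $B_n$ converges (using $B_n\in[b_n,b_n+\pi]$ from Definition~\ref{Aa-definition}), then invoke the uniform area bound and the curvature bound $|A(M_n,\cdot)|\,\min\{1,x(M_n)\}\le c_2$ (which becomes $|A|\le c_2$ once $x(M_n)\ge 1$) to extract a smoothly convergent subsequence---is exactly the paper's proof. The paper dispatches the theorem in a single sentence citing Theorem~\ref{curvature-bound-theorem}.

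The multiplicity discussion you append is extra: the statement does not assert multiplicity one, and the paper defers all structural properties of the limit (connectedness, the slice $M\cap\{y=0\}$, etc.) to Theorem~\ref{prong-theorem}. Your invocation of Remark~\ref{blowup-remark} is also slightly misplaced: that remark concerns points where the convergence fails to be \emph{smooth} (i.e., curvature blow-up), but here the uniform bound $|A|\le c_2$ already makes the convergence smooth everywhere, so the remark is vacuous and says nothing about the number of sheets. Your saddle argument at the origin is more to the point, but again unnecessary for what the theorem actually claims.
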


Theorem~\ref{prong-existence-theorem} follows immediately from the curvature and area bounds in
Theorem~\ref{curvature-bound-theorem}.

\begin{definition}We define a {\bf prong} to be any surface $M$ obtained as in Theorem~\ref{prong-existence-theorem}. \end{definition}

Theorem~\ref{prong-theorem} gives some basic properties of prongs, including the behavior of a prong as $z$ tends
to $\infty$ or $-\infty$, and the fact that $B=b+\pi$ in Theorem~\ref{prong-existence-theorem}.
  Theorem~\ref{prong-ends-theorem} describes the behavior as $x$ tends
 to $\infty$ or to $-\infty$.
Another interesting property of a prong, namely that it is a sideways graph $x=x(y,z)$ over
a region in the $yz$-plane, will be proved in Section~\ref{final-graphicality-section}; see Theorem~\ref{prong-graph-theorem}.

\begin{theorem}\label{prong-theorem}
Suppose that $M$ is a prong, and that $M_n$, $b$, and $B$ are as in Theorem~\ref{prong-existence-theorem}. Let $\lambda>B$, 
then
\begin{enumerate}[\upshape (1) ]
\item\label{y-slice-item} $M\cap\{y=0\}$ is the union of two graphs
\[
  \{(x,0,\uup(x)): x\ge 0\} \quad\text{and}\quad \{(x,0,\ulow(x)): x\ge 0\},
\]
where $\uup, \ulow: [0,\infty)\to \RR$ and 
\begin{align*}
   \ulow(0)&= \uup(0) = 0, \, \text{and} \\
   \ulow(x)&< \uup(x) \quad\text{for $x>0$}.
\end{align*}
We let $\Mup$ and $\Mlow$ be the components of $M\cap\{x>0\}$
containing $\{(x,0,\uup(x)):x>0\}$ and $\{(x,0,\ulow(x)): x>0\}$.
\item\label{in-out-item} There are functions 
\begin{align*}
&\yin: \Omegain\to [0,B), \\
&\yout: \Omegaout\to [0,B),
\end{align*}
where
\begin{align*}
\Omegain&:= \{(x,z): x\ge \pi, \, z\le \ulow(x)\}, \\
\Omegaout&:= \{(x,z): x\ge \pi, \, z\le \uup(x)\},
\end{align*}
such that
\begin{align*}
\Mup\cap\{x\ge \pi\}\cap\{y\ge 0\} &= \{(x,\yout(x,z), z): (x,z)\in \Omegaout\}, \\
\Mlow\cap\{x\ge \pi\}\cap \{y\ge 0\} &= \{ (x,\yin(x,z), z): (x,z) \in \Omegain\}.
\end{align*}
\item\label{up-low-item} $M$ is connected, and $M\cap \{x>0\} = \Mup\cup \Mlow$.
\item\label{decreasing-item} For each $z$, $\yin(x,z)$ is a strictly decreasing function of $x$.
\item\label{B-graph-item}  $\Mup\cap \{x\ge \pi\}$ is the graph of a function $u(x,y)$
\[
   u:  [\pi,\infty) \times (-B,B)\longrightarrow  \RR,
\]
and
\begin{equation*}\label{gradient-bound-prong}
   (B-|y|)\sqrt{1+|Du|^2} \le \eta B,
\end{equation*}
for a constant $\eta=\eta(\lambda)$.
\item\label{upper-empty-item} As $z\to\infty$, $M-(0,0,z)$ converges to the empty set.
\item\label{planes-item} As $z\to \infty$, $M+(0,0,z)$ converges smoothly to the planes $\{y=\pm b\}$ and $\{y=\pm B\}$.
\item\label{prong-lower-slope-item} $\displaystyle \lim_{x\to \infty} (\ulow)'(x) = - s(b)$, and as $x\to\infty$,
\[
    \Mlow - (x,0,\ulow(x))
\]
converges to the grim reaper surface $w_b: \RR\times (-b,b)\to\RR$ with $w(0,0)=0$ and $\partial w/\partial x \equiv -s(b)$.
\item\label{pi-prong} $B=b+\pi$.
\end{enumerate}
\end{theorem}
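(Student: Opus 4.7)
The smooth convergence $M_n-(x(M_n),0,0)\to M$, with $B_n\to B$ along a subsequence, comes from the curvature bounds of Theorems~\ref{curvature-bound-theorem} and~\ref{better-curvature-theorem}. The origin lies in $M$ since $(x(M_n),0,0)\in M_n$ by Definition~\ref{Aa-definition}; reflection symmetry across $\{y=0\}$ persists (while the $\{x=0\}$ symmetry is broken by the shift), and $M$ is a complete translator of finite type. Items~\eqref{y-slice-item}--\eqref{B-graph-item} would be proved by pushing the corresponding annuloid facts through this limit: Theorem~\ref{y-slice-theorem} yields the two graphs $\uup,\ulow$ in~\eqref{y-slice-item}, with strict separation for $x>0$ supplied by the strong maximum principle; Theorem~\ref{y-graph-theorem} yields~\eqref{in-out-item}; Theorem~\ref{x-alexandrov-theorem} yields~\eqref{decreasing-item}; Theorems~\ref{wide-graph} and~\ref{R-lambda-theorem}\eqref{R-lambda-graph-item} yield~\eqref{B-graph-item}. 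For~\eqref{up-low-item}, connectedness of $M$ is proved as in Theorem~\ref{connected-theorem}: any extra component would be trapped in $\{|y|\le B\}\cap\{x<\pi\}$, forbidden by Corollary~\ref{half-slab-corollary}.

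Item~\eqref{upper-empty-item} is then an immediate consequence of~\eqref{in-out-item} and~\eqref{B-graph-item}. For item~\eqref{planes-item}, I would apply Lemma~\ref{fundamental-lemma} to each $M_n$ at the translated points and pass to the limit, obtaining $\yin(x,z)\to b$ and $\yout(x,z)\to B$ as $z\to-\infty$ uniformly on compact $x$-sets; for sequences $(x_k,y_k,z_k)\in M$ with $(x_k,y_k)$ bounded and $z_k\to-\infty$, Theorem~\ref{translates-theorem} together with Corollary~\ref{translates-corollary} give subsequential convergence of $M-(x_k,y_k,z_k)$ to unions of planes of the form $\{y=c\}$, and the sideways-graph asymptotics pin the values of $c$ to $\{\pm b,\pm B\}$. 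Item~\eqref{prong-lower-slope-item} is the prong version of Theorem~\ref{main-theorem-concluded}\eqref{lower-slope-item}: Theorem~\ref{slopes-theorem}, passed to the limit, yields $(\ulow)'(\infty)\le -s(b)$, while Theorem~\ref{better-compactness-theorem} together with Corollary~\ref{planes-corollary} supplies the matching lower bound, so $\Mlow-(x,0,\ulow(x))\to\graph(w_b)$.

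The main obstacle is item~\eqref{pi-prong}, $B=b+\pi$. The genuinely new feature is the behavior as $x\to-\infty$, which is not mirror-symmetric to the $x\to+\infty$ behavior because the prong has inherited only one of the two necks of each $M_n$. The key observation is that $F_{\ee_1}|M$ has exactly one critical point, at the origin: each $M_n$ has exactly two critical points of $F_{\ee_1}$ by Theorem~\ref{x-critical-theorem}, at $(\pm x(M_n),0,z(M_n))$, and after the translation only the first survives in the smooth limit, the second escaping to $x=-\infty$. The strategy is then to extract, via a diagonal subsequence from $M+(L_k,0,0)$ with $L_k\to\infty$, an $x$-translation-invariant limit $\tilde M$ of finite type contained in the slab $\{|y|\le B\}$. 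Every $x$-translation-invariant translator is a cylinder $\RR\times\Gamma$ over a planar translating curve $\Gamma$ in $\{x=0\}$; the only such $\Gamma$ are horizontal lines and untilted grim reaper curves $z=\log(\cos(y-c))$, so $\tilde M$ is a union of planes parallel to $\{y=0\}$ and untilted grim reaper surfaces of strip width~$\pi$. By item~\eqref{planes-item}, the plane components of $\tilde M$ are contained in $\{y=\pm b,\pm B\}$, and the upper-right asymptotic sheet of $M$ (visible from the sideways graph in~\eqref{in-out-item} and the planar limits in~\eqref{planes-item}) contributes a grim reaper component of $\tilde M$ that bounds the planes $\{y=b\}$ and $\{y=B\}$. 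Since its strip width is $\pi$, we conclude $B-b=\pi$.
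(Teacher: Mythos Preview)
Your treatment of items~\eqref{y-slice-item}--\eqref{prong-lower-slope-item} is correct and follows the paper's approach: the proofs for annuloids (Theorems~\ref{y-slice-theorem}, \ref{y-graph-theorem}, \ref{connected-theorem}, \ref{x-alexandrov-theorem}, \ref{R-lambda-theorem}, \ref{fundamental-theorem}, \ref{main-theorem-concluded}) carry over to the prong by passing through the smooth limit, exactly as you describe.

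For item~\eqref{pi-prong}, however, your argument has a genuine gap. You claim that a diagonal subsequence of $M+(L_k,0,0)$ converges to an $x$-translation-invariant surface $\tilde M$, and then invoke the classification of $x$-invariant translators. But nothing in your setup forces $x$-invariance. What you can extract from the shifted $M_n$ is that $F_{\ee_1}|M$ has its unique critical point at the origin, so any subsequential limit $\tilde M$ of $M+(L_k,0,0)$ (or of $M+(L_k,0,\zeta_k)$, which is what you actually need to see anything nontrivial) satisfies $\mathsf{N}(F_{\ee_1}|\tilde M)=0$. By Theorem~\ref{new-finite-type-theorem}\eqref{new-finite-2} this forces each component to be a plane, a $\Delta$-wing, or a grim reaper surface, but a \emph{tilted} grim reaper also has $\mathsf{N}(F_{\ee_1}|\cdot)=0$ and is not $x$-invariant. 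So you cannot conclude that the component joining the sheets at $y=b$ and $y=B$ is an \emph{untilted} reaper of width exactly $\pi$; a priori it could be a tilted reaper over a narrower strip, with the plane $\{y=B\}$ appearing as a separate component. Your argument also tacitly uses uniqueness of the subsequential limit to obtain invariance, which is not available.

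The paper takes a different route. Since $M\cap\{y=0\}\subset\{x\ge 0\}$ by item~\eqref{y-slice-item}, the piece $\Sigma:=M\cap\{x<0\}\cap\{y>0\}$ is a translator in the half-slab $\{0<y\le B\}\cap\{x<0\}$ with $\partial\Sigma\subset\{x=0\}$, with $z$ bounded above on $\Sigma$ (from item~\eqref{upper-empty-item}), and with $\Sigma+(0,0,z)\to\{y=b\}\cup\{y=B\}$ (from item~\eqref{planes-item}). After centering the slab, Theorem~\ref{chini-theorem-2}\eqref{chini-thin} says that if the asymptotic half-width $(B-b)/2$ were less than $\pi/2$, then $\Sigma$ would have to lie in $\{x=0\}$, contradicting Corollary~\ref{half-slab-corollary}. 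Hence $B\ge b+\pi$. The reverse inequality $B\le b+\pi$ is immediate from $B_n\le b_n+\pi$ in Definition~\ref{Aa-definition}. The key input you are missing is this half-slab nonexistence result, which is proved with Scherk-type barriers rather than by classifying translation-invariant limits.
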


\begin{proof}
Let $M_n\in \Aa$ be such that $M_n-(x(M_n),0,0)$ converges smoothly to $M$.

The proof of Assertion~\eqref{y-slice-item} is almost identical to the proof of Theorem~\ref{y-slice-theorem}.

Assertion~\eqref{in-out-item} follows from Theorem~\ref{y-graph-theorem}.

The proof of Assertion~\eqref{up-low-item} is the same as the proof of Theorem~\ref{connected-theorem}.

Concerning Assertion~\eqref{decreasing-item}, the fact that $\yin(x,z)$ is a decreasing function of the $x$ follows from the corresponding
property of the $M_n$  (Theorem~\ref{x-alexandrov-theorem}). 
 Strict inequality follows, for example, by the strong maximum principle.  (Specifically, $\nu\cdot \ee_1$
is a Jacobi field that is nonnegative on $\Mlow$.  If it vanished at a point of $\Mlow$, it would vanish identically on
 $\Mlow$ by the strong maximum principle, and then on all of $M$ by unique continuation, which is impossible since 
   $\nu\cdot\ee_1=-1$ at the origin.)

Now let $\lambda> B$.  Then $\lambda> B_n$ for all sufficiently large $n$.  We may
assume that $\lambda>B_n$ for all $n$.
Assertion~\eqref{B-graph-item} follows from the corresponding property of the $M_n$
in Theorem~\ref{main-theorem-concluded}.

To prove Assertion~\eqref{upper-empty-item},
 note (by Theorem~\ref{translates-theorem}) that every sequence $z_i\to\infty$ has a subsequence $z_{i(n)}$ such that
$M- (0,0,z_{i(n)})$ converges a limit $M'$ consisting of a union of planes parallel to the plane $\{y=0\}$.
  By Assertions~\eqref{in-out-item} and~\eqref{up-low-item},
$M'$ is contained in $\{ |x|\le \pi\}$.  Thus $M'$ is the empty set.

By Theorem~\ref{fundamental-theorem}, for each $\eps$, there is an $R=R(\eps,\lambda)$
such that, for each $n$, 
\[
   \text{$\ynin(x,z) \in [b_n-\eps,b_n+\eps]$ and $\ynout(x,z) \in [B_n-\eps,B_n]$}
\]
provided $x\ge x(M_n)+\pi$ and $z\le \unlow(x) - R$.

Passing to the limit, we see that
\[
    \text{$\yin(x,z) \in [b-\eps, b+\eps]$ and $\yout(x,z)\in [B-\eps,B]$},
\]
provided $x\ge \pi$ and $y\le \ulow(x)-R$.

Thus for each $x\ge 0$,
\begin{equation}\label{b-B-prong} 
\begin{gathered}
\lim_{z\to -\infty} \yin(x,z)=b, \\
\lim_{z\to -\infty} \yout(x,z)=B.
\end{gathered}
\end{equation}

We know (by Theorem~\ref{translates-theorem}) 
 that if $\zeta_n\to\infty$, then $M + (0,0,\zeta_n)$ converges (after passing to a subsequence)
to a limit consisting of planes.  By~\eqref{b-B-prong}, those planes are $\{y=\pm b\}$
and $\{y=\pm B\}$.  Since this is independent of the sequence and choice of subsequence, in
fact $M+(0,0,\zeta)$ converges as $\zeta\to\infty$ to those planes.

The proof of Assertion~\eqref{prong-lower-slope-item} 
  is identical to the proof of the corresponding assertion for $M\in \Aa$
   in Theorem~\ref{main-theorem-concluded}.

To prove that $B=b+\pi$ (Assertion~\eqref{pi-prong}), note that $M$ is disjoint from $\{y=0\}\cap\{x<0\}$.
Let $\Sigma$ be the portion of $M\cap \{x<0\}$ in the halfspace $\{y>0\}$.

Then $\Sigma$ lies in the half-slab $\{0< y \le B\}\cap \{x<0\}$, the boundary
of $\Sigma$ lies in $\{x=0\}$, and
\[
  \sup_{\Sigma}z(\cdot) < \infty
\]
(This last inequality follows from Assertion~\eqref{upper-empty-item}.)
Also $\Sigma+(0,0,z)$ converges smoothly as $z\to\infty$ to the halfplanes 
$\{y=b\}\cap \{x\le 0\}$ and $\{y=B\}\cap\{x\le 0\}$.

Hence, by a general theorem (Theorem~\ref{chini-theorem-2}) about translators in half-slabs,
$B\ge b+\pi$.  On the other hand, $B_n < b_n +\pi$, so $B\le b+\pi$.
Thus $B=b+\pi$.
 \end{proof}
\begin{remark}[Entropy of prongs]
Taking into account \eqref{planes-item} in Theorem \ref{prong-theorem}, we can use Corollary 8.5 in \cite{GMM} to deduce that 
a prong has entropy $4$.
\end{remark}
The following theorem describes the behavior of the prong $M$ in Theorem~\ref{prong-theorem}
as $x\to -\infty$.

\begin{theorem}\label{prong-ends-theorem}
Let $M$ be a prong as in Theorem~\ref{prong-theorem}, and let
\[
 \psi(t):=\max_{M\cap\{x=t\}}z(\cdot).
\]
Then
\[
   M - (x,0,\psi(x))  
\]
converges smoothy at $x\to-\infty$ to a pair of (untilted) grim reaper surfaces,
one over $\RR\times (b,b+\pi)$ and the other over $\RR\times (-(b+\pi),-b)$.
\end{theorem}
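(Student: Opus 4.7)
The plan is to take an arbitrary sequence $x_n\to-\infty$, extract a smooth subsequential limit of the translates $M_n:=M-(x_n,0,\psi(x_n))$, identify this limit as the prescribed pair of untilted grim reapers, and then invoke the resulting uniqueness to deduce full convergence.  By the $(x,y,z)\mapsto(x,-y,z)$ symmetry of $M$, it suffices to analyze $\Sigma^+:=M\cap\{y>0,\,x<0\}$; the behavior in $\{y<0\}$ follows by reflection.

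Choose $y_n\in(0,B]$ with $p_n:=(x_n,y_n,\psi(x_n))\in\Sigma^+$.  Since $\dist(p_n,Z)\ge|x_n|\to\infty$, the curvature and area bounds of Theorem~\ref{curvature-bound-theorem} (which apply to prongs as smooth limits of surfaces in $\Aa$, via Theorem~\ref{complete-existence-theorem}) yield local uniform control, so $M_n$ converges smoothly on compact sets, after passing to a subsequence, to a complete translator $M'$ of finite type, with $y_n\to y_*\in(0,B]$.  By Theorem~\ref{translates-theorem}, each component of $M'$ is a vertical plane parallel to $\{y=0\}$, a grim reaper surface, or a $\Delta$-wing (bowl solitons are excluded by the slab $|y|\le B$).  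Because $z|_{M\cap\{x=x_n\}}$ attains its maximum at $p_n$, a direct Lagrange-multiplier computation gives $\nu(M,p_n)\cdot\ee_2=0$ (even in the degenerate case $\nu(M,p_n)=\pm\ee_1$); passing to the limit, $\nu(M',(0,y_*,0))\cdot\ee_2=0$, ruling out a flat plane $\{y=\textrm{const}\}$ through $(0,y_*,0)$.  Moreover, $M\cap\{y=0\}\subset\{x\ge 0\}$ by Theorem~\ref{prong-theorem}\eqref{y-slice-item}, so after translating by $-(x_n,0,\psi(x_n))$ with $x_n\to-\infty$ this slice retreats from every compact set; hence $M'\cap\{y=0\}=\emptyset$, and the component $\Gamma^+$ of $M'$ through $(0,y_*,0)$ lies entirely in $\{0<y\le b+\pi\}$, defined over a strip $\RR\times(\alpha,\beta)\subset[0,b+\pi]$.

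The principal obstacle will be the precise identification of $\Gamma^+$ as the untilted grim reaper over $\RR\times(b,b+\pi)$.  The key input is the smooth convergence, established in the proof of Theorem~\ref{prong-theorem}\eqref{pi-prong}, of $\Sigma^+ +(0,0,z)$ to the halfplanes $\{y=b,\,x\le 0\}\cup\{y=b+\pi,\,x\le 0\}$ as $z\to\infty$.  After translating by $-(x_n,0,\psi(x_n))$, the corresponding halfplanes for $\Sigma_n^+$ sit at $x\le -x_n$, and as $n\to\infty$ they sweep out the full planes $\{y=b\}\cup\{y=b+\pi\}$.  A diagonal extraction of $z_n\to\infty$ growing fast enough to outpace any possible growth of $|\psi(x_n)|$ then shows that $\Gamma^+ +(0,0,z_n)$ converges smoothly to $\{y=b\}\cup\{y=b+\pi\}$ (restricted to $\{y>0\}$), pinning the strip of $\Gamma^+$ to $(\alpha,\beta)=(b,b+\pi)$, of width exactly $\pi$.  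Since tilted grim reapers and $\Delta$-wings both require strip width strictly greater than $\pi$, $\Gamma^+$ must be an untilted grim reaper, and $y_*=b+\pi/2$.

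Finally, any additional component of $M'$ in $\{0<y\le b+\pi\}$ (a further grim reaper or $\Delta$-wing, or a vertical plane parallel to $\{y=0\}$) would contribute at least one additional boundary plane, distinct from $\{y=b\}$ and $\{y=b+\pi\}$, in the $z\to\infty$ limit, contradicting the identification obtained above.  Hence $M'\cap\{y>0\}=\Gamma^+$, and by the $y\mapsto -y$ symmetry $M'\cap\{y<0\}$ is the reflected untilted grim reaper over $\RR\times(-(b+\pi),-b)$.  Since every subsequential limit equals this prescribed pair, the full sequence $M-(x,0,\psi(x))$ converges smoothly to it as $x\to-\infty$.
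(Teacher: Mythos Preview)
Your argument has two genuine gaps.

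The main gap is the diagonal extraction meant to identify the strip of $\Gamma^+$. You know $\Sigma^+ + (0,0,z)\to\{y=b,\,x\le 0\}\cup\{y=b+\pi,\,x\le 0\}$ on compact sets as $z\to\infty$, and you want to conclude $\Gamma^+ + (0,0,z_n)\to\{y=b\}\cup\{y=b+\pi\}$ for some $z_n\to\infty$. But the diagonal only controls $\Sigma_n^+ + (0,0,z_n)$, not $\Gamma^+ + (0,0,z_n)$; transferring from one to the other would require $\Sigma_n^+$ to be close to $\Gamma^+$ on balls centered at height $-z_n\to-\infty$, whereas the convergence $M_n\to M'$ holds only on \emph{fixed} compacts. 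Equivalently, the convergence of $\Sigma^+ + (0,0,z)$ to the halfplanes gives no information in the region $\{x\approx x_n\}$ with $x_n\to-\infty$, which is precisely where $\Gamma^+$ originates. The paper avoids this entirely by applying Theorem~\ref{chini-theorem-2}\eqref{chini-compact} to (the reflection of) each component of $M\cap\{x<0\}$: since the $z\to-\infty$ asymptotic planes of $\Sigma^+$ lie in $\{b\le y\le b+\pi\}$, the portion of $\Sigma^+$ outside any slightly larger slab is compact and therefore disappears in the limit as $x\to-\infty$. This forces $M'\subset\{b\le|y|\le b+\pi\}$, and the width-$\pi$ constraint then pins $\Gamma^+$ to the untilted grim reaper over $(b,b+\pi)$.

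The second gap is the multiplicity count. You assert that any additional component of $M'$ in $\{y>0\}$ would contribute a boundary plane \emph{distinct} from $\{y=b\}$ and $\{y=b+\pi\}$. But a second untilted grim reaper over the \emph{same} strip $\RR\times(b,b+\pi)$, at a different height, contributes exactly the same two planes, merely with higher multiplicity; nothing in your argument rules this out. The paper observes that if there are $k$ reapers in $\{y>0\}$, then $M'+(0,0,z)$ limits to the planes $\{y=\pm b\}$, $\{y=\pm(b+\pi)\}$ each with multiplicity $k$, and then applies Theorem~\ref{planes-theorem} (which rests on the Alexandrov reflection of Theorem~\ref{alexandrov-y-theorem}): since $B=b+\pi>b$, the outer plane $\{y=b+\pi\}$ occurs with multiplicity exactly one, forcing $k=1$.
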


\begin{proof}
Let $M'$ be a subsequential limit of $M-(x,0,\psi(x))$ and let $\Sigma$ be a component of $M'$.  
By Theorem~\ref{chini-theorem-2}\eqref{chini-compact}
 (applied to each of the two components of $M\cap\{x<0\}$),
\begin{equation}\label{slab-pair}
\text{$M'$ is contained in the two slabs given by $\{b\le |y|\le b+\pi\}$.}
\end{equation}
Let $\Sigma$ be a component of $M'$.  By symmetry, it suffices to consider the case when
 $\Sigma$ is contained in the slab $\{b \le y \le b+\pi\}$.
By Theorem~\ref{translates-theorem}, $\Sigma$ is a graph or a vertical plane.
By construction,
\begin{equation}\label{max-height}
  \max_{M'\cap \{x=0\}} z(\cdot) = 0.
\end{equation}
Thus $\Sigma$ is  a graph.  By~\eqref{slab-pair}, it is an untilted grim reaper surface.

We have shown that each component of $M'$ in the halfspace $\{y>0\}$ is a grim reaper surface over
the strip $\{b < y < b+\pi\}$.

Let $k$ be the number of grim reaper surfaces in $\{y>0\}$. Then $M'+(0,0,z)$ converges as $z\to\infty$
to the planes $\{y=\pm b\}$ and $\{y=\pm (b+\pi)\}$, each with multiplicity $k$.
By Theorem~\ref{planes-theorem}, the plane $\{y=b+\pi\}$ occurs with multiplicity $1$.  Hence $k=1$.

Note that~\eqref{slab-pair} and~\eqref{max-height} determine the two grim reaper surfaces.  Thus the limit $M'$ is independent
of choice of subsequence, so we get convergence and not merely
subsequential convergence.
\end{proof}

\begin{theorem}\label{infinite-neck-theorem}
Suppose that $M_n\in \Aa$, that $b_n:=b(M_n)\to b<\infty$, $B_n:=B(M_n)\to B$, and $x(M_n) \to+\infty.$
Then $y(M_n)\to b$ and $B=b+\pi$.
Furthermore, if $\zeta_n=\max_{M\cap\{x=0\}}z(\cdot)$, then
\[
   M_n - (0,0,\zeta_n)
\]
converges smoothly to a pair of grim reaper surfaces over $\RR\times (b,b+\pi)$ and $\RR\times (-(b+\pi),-b)$.
\end{theorem}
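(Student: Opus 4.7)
The plan is to leverage the prong machinery of Section~\ref{large-neck-section} and reduce the theorem to Theorems~\ref{prong-theorem} and~\ref{prong-ends-theorem} via a diagonal extraction.

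First, by Theorem~\ref{prong-existence-theorem}, after passing to a subsequence the surfaces $\tilde M_n := M_n - (x(M_n),0,0)$ converge smoothly to a prong $\Sigma$. Theorem~\ref{prong-theorem}\eqref{pi-prong} then gives $B(\Sigma)=b+\pi$, and since $B(\tilde M_n)=B(M_n)\to B$ we conclude $B=b+\pi$. Next, the curvature bound $|A(M_n,\cdot)|\min\{1,x(M_n)\}\le c_2$ from Theorem~\ref{complete-existence-theorem} (applied with $x(M_n)\to\infty$) together with the quadratic area bound yields smooth subsequential convergence $N_n := M_n-(0,0,\zeta_n)\to N$ to a complete finite-type translator $N$ lying in the slab $\{|y|\le b+\pi\}$ and satisfying $\sup_{N\cap\{x=0\}}z=0$.

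To identify $N$ as the pair $G$ of untilted grim reaper surfaces over $\RR\times(b,b+\pi)$ and $\RR\times(-(b+\pi),-b)$ normalized with apexes at $(0,\pm(b+\pi/2),0)$, observe the key identity
\[
   N_n \;=\; \tilde M_n - \bigl(-x(M_n),\,0,\,\psi_{\tilde M_n}(-x(M_n))\bigr),
\]
where $\psi_{\tilde M_n}(\tilde x):=\max_{\tilde M_n\cap\{x=\tilde x\}}z$ (so $\zeta_n=\psi_{\tilde M_n}(-x(M_n))$). This is exactly the form of shift treated in Theorem~\ref{prong-ends-theorem}, which yields $\Sigma-(r,0,\psi_\Sigma(r))\to G$ smoothly as $r\to-\infty$. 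A diagonal argument transfers this to $N_n$: for each scale $R$ and tolerance $\eps$, choose $X$ so that $\Sigma-(r,0,\psi_\Sigma(r))$ is $\eps/2$-close to $G$ on $B(0,R)$ for all $r\le -X$, then for $r$ in a compact range such as $[-X-1,-X]$ use smooth convergence $\tilde M_n\to\Sigma$ on the compact window $B((r,0,\psi_\Sigma(r)),R+1)$ to bring $\tilde M_n$ within $\eps/2$ of $\Sigma$ there; extracting a subsequence $n_k$ with $x(M_{n_k})$ past the relevant cutoffs produces smooth convergence $N_{n_k}\to G$. Since $G$ is uniquely determined by $b$, the slab width, and the normalization at $x=0$, the full sequence converges.

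Finally, $y(M_n)\to b$: if $y(M_n)\le b-\eps$ were realized by a point $(0,y_n,z_n)\in M_n$ with $y_n\le b-\eps$, then in $N_n$-coordinates $(0,y_n,z_n-\zeta_n)\in N_n$. If $z_n-\zeta_n$ stays bounded, smooth convergence places a limit in $G\cap\{0<y<b\}=\emptyset$; if $z_n-\zeta_n\to+\infty$, the point lies in the region where $N_n$ converges to the empty set; if $z_n-\zeta_n\to-\infty$, the asymptotic plane decomposition of $M_n$ (Theorem~\ref{some-properties-theorem}\eqref{down-planes-item}) forces $y_n\to b_n\to b$, contradicting $y_n\le b-\eps$. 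Combined with the matching upper bound (from the same asymptotic planes giving points of $M_n$ with $y$ arbitrarily close to $b_n$), this yields $y(M_n)\to b$. The main obstacle is the diagonal argument: smooth convergence $\tilde M_n\to\Sigma$ holds only on compact subsets of $\RR^3$, whereas we must control $\tilde M_n$ along shifts $q_n=(-x(M_n),0,\zeta_n)$ escaping to infinity, so the extraction must be carefully ordered so that the prong cutoff $X$ and the compact prong-center window are fixed first, compactness of $\tilde M_n\to\Sigma$ is invoked on that window, and only then are the diagonal indices chosen to satisfy $x(M_{n_k})\ge X$ together with the $\tilde M_n$-to-$\Sigma$ proximity threshold.
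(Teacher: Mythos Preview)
The diagonal argument for identifying the limit $N$ has a genuine gap that your proposed ordering does not close. Smooth convergence $\tilde M_n\to\Sigma$ holds only on \emph{fixed} compact subsets of $\RR^3$; it gives no information about $\tilde M_n$ in a window centered at $(-x(M_n),0,\psi_{\tilde M_n}(-x(M_n)))$, because that window escapes to infinity. Concretely: after you fix the prong cutoff $X$ and a compact window near $(r,0,\psi_\Sigma(r))$ for some $r\in[-X-1,-X]$, the convergence $\tilde M_n\to\Sigma$ on that window tells you about $\tilde M_n-(r,0,\psi_{\tilde M_n}(r))$, not about $N_n=\tilde M_n-(-x(M_n),0,\psi_{\tilde M_n}(-x(M_n)))$; and for large $n$ you have $x(M_n)\gg X+1$, so these are different shifts. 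No choice of diagonal indices repairs this, because you cannot force $x(M_{n_k})$ to land in $[X,X+1]$ while simultaneously taking $k\to\infty$ (the sequence $x(M_n)$ may skip such windows entirely). Your argument for $y(M_n)\to b$ then inherits this problem: Case~1 (bounded $z_n-\zeta_n$) assumes $N=G$, and Case~3 ($z_n-\zeta_n\to-\infty$) invokes the per-surface asymptotic planes of Theorem~\ref{some-properties-theorem}, which is not a uniform-in-$n$ statement.

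The paper avoids both issues by reversing the order and working intrinsically with the limit. First, $y(M_n)\to b$ is obtained directly from Theorem~\ref{b-B-criterion}: since $B_n>b_n$ for large $n$, the contrapositive gives $y(M_n)\ge B_n-\pi\to b$, while $y(M_n)\le b_n\to b$ is immediate. Then any subsequential limit $N$ of $M_n-(0,0,\zeta_n)$ is analyzed directly: since the unique critical points of $x(\cdot)|M_n$ lie at $|x|=x(M_n)\to\infty$, one has $\mathsf{N}(x(\cdot)|N)=0$, so by Theorem~\ref{new-finite-type-theorem} each component of $N$ is a plane, $\Delta$-wing, or grim reaper surface. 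The normalization $\max_{N\cap\{x=0\}}z=0$ rules out planes; the disjointness of $M_n$ from $\{x=0,|y|<y(M_n)\}$ passes to the limit (using $y(M_n)\to b$) and forces $N\cap\{x=0\}\subset\{b\le|y|\le b+\pi\}$, so each component is an untilted grim reaper over a strip of width $\pi$ in $\{b\le|y|\le b+\pi\}$; multiplicity one for the outer plane (Theorem~\ref{planes-theorem}) pins down the count. This approach never needs to track the approximating surfaces at escaping shifts.
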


\begin{proof}
That $B=b+\pi$ was proved in Theorem~\ref{prong-theorem}\eqref{planes-item}.

Thus, for large $n$, $B_n>b_n$, and therefore 
\[
  y(M_n) \ge B_n - \pi
\]
by Theorem~\ref{b-B-criterion}.  Thus
\[
   \liminf y(M_n) \ge B-\pi = b.
\]
On the other hand, $\limsup y(M_n)\le b$ trivially.  Thus $y(M_n)\to b$.

Let $M$ be a subsequential limit of $M_n-(0,0,\zeta_n)$.
Note that $M$ is also a subsequential limit of 
\[
   M_n' :=   M_n \cap \{|x|< x(M_n)\} - (0,0,\zeta_n)
\]
and that $\mathsf{N}(x(\cdot)| M_n')=0$.  Thus $\mathsf{N}(x(\cdot)| M) =0$.
Let $\Sigma$ be a component of $M$.
By Theorem~\ref{new-finite-type-theorem}, 
 $\Sigma$ is a plane parallel to $\{y=0\}$, or a $\Delta$-wing, or a grim reaper surface.
 Since $\max_{M\cap\{x=0\}}z(\cdot)=0$, $\Sigma$ cannot be 
a plane.
Thus $\Sigma$ is a $\Delta$-wing or a grim reaper surface.

Since $M_n$ is disjoint from $\{x=0, \, |y|<y(M_n)\}$, we see (by  Letting $n\to \infty$) that
$M$ is disjoint from $\{x=0,\, |y|<b\}$.   Thus $\Sigma\cap\{x=0\}$ lies in $\{b\le |y|\le B\}$.
Since $B=b+\pi$, $\Sigma$ is an untilted grim reaper surface.

We have shown that $M'$ is a union of untilted grim reaper surface. 
 Exactly as in the proof of Theorem~\ref{prong-ends-theorem},
$M'$ consists of exactly $2$ grim reaper surfaces, and we get convergence, not merely subsequential convergence.
\end{proof}

\section{Small Necks}

In this section, we show that if its neck is very small, then an annuloid in $\Aa$ resembles two $\Delta$-wings joined
by a small catenoidal neck.

\begin{theorem}\label{small-neck-theorem}
Suppose that $M_n\in \Aa$, that $b_n:=b(M_n)\to b<\infty$, and that $x(M_n)\to 0$.
Then
\begin{enumerate}[\upshape(1)]
\item\label{small-neck-1}  $M_n/x(M_n)$ converges to the catenoid whose waist is the unit circle in $\{z=0\}$.
\item\label{small-neck-2}  There is an $n_0$ such that if $n\ge n_0$, then $\waist(M_n)$, the set of points $p$ where $\nu(M_n,p)$ is horizontal, is a smooth closed curve that is contained in $\BB(0,2x(M))$ and 
 that projects diffeomorphically to $\partial K_n$, where $K_n$ is a compact, strictly convex
region in $\RR\times (-b_n,b_n)$.
\item\label{small-neck-3}  If $n\ge n_0$, then one of the components of $M_n\setminus \waist(M_n)$ is a graph over
\[   
     (\RR\times (-B_n,B_n)) \setminus K_n,
\]
and the other is a graph over
\[
    (\RR\times (-b_n,b_n)\setminus K_n.
\]
\item\label{small-neck-4}   $M_n$ converges to $\graph(f_b)$, where $f_b:\RR\times(-b,b)\to \RR$ is the translator
such that $f_b(0,0)=0$ and $Df_b(0,0)=0$.   The convergence is smooth with multiplicity $2$ away from the origin.
\item\label{small-neck-5} $B_n:=B(M_n)$ converges to $b$.
\end{enumerate}
\end{theorem}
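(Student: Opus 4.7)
For assertion~(1), my plan is to apply Lemma~\ref{cat-lemma} at the points $p_n := (x(M_n), 0, 0) \in M_n$. By the definition of $x(M_n)$ and the $\rho_Z$-symmetry, $M_n$ has vertical tangent plane at $p_n$ with $\nu(M_n, p_n) = -\ee_1$; in particular $\vv := -\ee_1 \ne \pm\ee_2$. The point of $Z$ closest to $p_n$ is the origin, so $q_n = 0$ and $\dist(p_n, Z) = x(M_n) \to 0$, and the $M_n$ lie in the uniformly bounded slab $\{|y| \le b_n+\pi\}$. Lemma~\ref{cat-lemma} then immediately delivers smooth multiplicity-one convergence of $M_n/x(M_n)$ to the standard catenoid with unit-circle waist in $\{z = 0\}$.

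For assertions~(2) and~(3), I will exploit the catenoid structure from Step~1. The catenoid has vertical tangent plane exactly on its unit-circle waist, so smooth convergence gives that $\waist(M_n)$ contains a smooth simple closed curve inside $\BB(0, 2x(M_n))$ close to $x(M_n) S^1$. To rule out additional waist points, I use the bound $\mathsf{N}(F_\vv|M_n) \le 2$ from Theorem~\ref{complete-existence-theorem}\eqref{complete-existence-twosome}: the two antipodal points on the near-catenoid waist already saturate this bound for every horizontal $\vv$, forcing $\waist(M_n) \subset \BB(0, 2x(M_n))$. The Gauss map $\nu : \waist(M_n) \to E$ is then a diffeomorphism (locally by Theorem~\ref{x-critical-theorem}\eqref{at-most-item}, globally by counting), and its monotonicity implies that the $(x,y)$-projection is the boundary of a convex set $K_n$. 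Since $\waist(M_n)$ is homologically nontrivial in the annulus $M_n$ (being close to the catenoid waist generator), it separates $M_n$ into two disk components, on each of which $\nu \cdot \ee_3$ has constant nonzero sign, hence is graphical; identifying the outer and inner pieces by their $z \to -\infty$ asymptotic planes completes~(3).

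The heart of the proof is~(4), from which~(5) will follow. I will extract a subsequence with $B_n \to B \in [b, b+\pi]$, use the curvature bounds in Theorems~\ref{curvature-bound-theorem} and~\ref{complete-existence-theorem} together with the area bound to pass to a smooth subsequential limit on compact subsets of $\RR^3 \setminus \{0\}$. Each of the two graphical sheets from~(3) converges to a translating graph on a strip, which extends smoothly across the origin thanks to the horizontal tangent plane inherited from the catenoid limit. By Theorem~\ref{classification-theorem} and the normalization $f(0,0) = 0$, $Df(0,0) = 0$, the inner-sheet limit is $f_b$ and the outer-sheet limit is $f_B$. The principal obstacle is then to show $B = b$: once this is known, both sheets converge to the single surface $\graph(f_b)$, producing the multiplicity-$2$ convergence in~(4), and~(5) is immediate. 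For $b > \pi/2$ this follows from Theorem~\ref{necksize-limits-theorem}\eqref{necksize-capped-item} (capping at small necksize forces $B_n = b_n \to b$); for $b = \pi/2$ no such bound is available a priori, and the argument must instead exploit the multiplicity-one of the catenoid limit to show that the upper and lower sheets of $M_n$ cannot separate in the limit into two distinct $\Delta$-wings $f_b \ne f_B$.
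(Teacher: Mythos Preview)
Your approach to (1)--(3) is essentially the paper's, but your argument for (4)--(5) has two genuine gaps.

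First, the appeal to Theorem~\ref{necksize-limits-theorem}\eqref{necksize-capped-item} is circular: that result is established in the paper as Theorem~\ref{cutoffs-theorem}(2), whose proof explicitly invokes Theorem~\ref{small-neck-theorem}\eqref{small-neck-5}. So you cannot use it here to conclude $B_n=b_n$ for large $n$.

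Second, and more fundamentally, your identification of the two sheet limits as $\graph(f_b)$ and $\graph(f_B)$ is unjustified. The inner sheet lies in $\{|y|\le b_n\}$, so its limit lies in $\{|y|\le b\}$; this only gives width $\beta\le b$, not $\beta=b$. Likewise nothing you have said forces the outer sheet to have width exactly $B$. The classification Theorem~\ref{classification-theorem} only tells you each limit is \emph{some} $\graph(f_\beta)$.

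The paper's argument avoids both problems by a different mechanism. The key observation is that the two sheets $\Sigma^+$ and $\Sigma^-$ must \emph{coincide}: by Remark~\ref{cat-lemma-remark} the limit $M'$ is smooth and embedded with horizontal tangent at $0$, so $\Sigma^+$ and $\Sigma^-$ agree near $0$, hence everywhere by unique continuation. Thus both sheets converge to a single $\graph(f_\beta)$ with $\beta\le b$ (from the inner-sheet bound). Now the asymptotic planes of $M'$ as $z\to -\infty$ are $\{y=\pm\beta\}$ with multiplicity~$2$; since $M'\in\Ll(b,B)$, Theorem~\ref{planes-theorem} forces the plane $\{y=B\}$ to appear among them, giving $\beta=B$. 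Combined with $\beta\le b\le B$ this yields $\beta=b=B$, proving (4) and (5) simultaneously and uniformly in $b\ge\pi/2$.
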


Later (Theorem~\ref{cutoffs-theorem}) 
   we will show that  $B_n=b_n$ for all sufficiently large $n$, provided $b>\pi/2$.  (We do not know
whether ``provided $b=\pi/2$'' is necessary.)

\begin{proof}
Assertion~\eqref{small-neck-1} was proved in Lemma~\ref{cat-lemma}.

If $n$ is large, then, by Assertion~\eqref{small-neck-1}, $M_n$ contains
a curve $C_n$ that is a slight perturbation of the circle $\{z=0\}\cap\partial\BB(0,x(M_n))$
such that $\Tan(M_n,\cdot)$ is vertical at each point of $C_n$.  It follows that $\nu(M_n,\cdot)$
maps $C_n$ diffeomorphically onto the equator.  By Corollary~\ref{x-critical-corollary}, $C_n$ contains all the points
of $\waist(M_n)$.  Thus Assertion~\eqref{small-neck-2} holds.

Assertion~\eqref{small-neck-3} follows from Assertion~\eqref{small-neck-2} by Lemma~\ref{degree-lemma} below.

Note that $\nu\cdot\ee_3$ is $>0$ on one component $M_n^+$ of  components of $M_n\setminus \Gamma_n$ 
and that $\nu\cdot \ee_3$ is $<0$ on the other component, $M_n^-$.
Each component is stable since it is a graph.

By passing to a subsequence, we can assume that $M_n^+$ and $M_n^-$ converge as sets to limits $M^+$
and $M^-$, both containing the origin, and that $B_n$ converges to a limit $B$.
 By stability, the convergence is smooth away from the origin.
By Remark~\ref{cat-lemma-remark}, $M':=M^+\cup M^-$ is smooth and embedded (possibly with multiplicity) and $\Tan(M',0)$
is horizontal.   Let $\Sigma^+$ and $\Sigma^-$ be the components of $M^+$ and $M^-$ containing the origin.
Since $M$ is smooth and embedded, $\Sigma^+$ and $\Sigma^-$ coincide near $0$.  Thus, by unique continuation,
$\Sigma^+=\Sigma^-$.

Now $\nu\cdot \ee_3$ is a non-negative Jacobi field on $\Sigma^+$ that is $>0$ at $0$.  Hence $\nu\cdot\ee_3>0$
everywhere on $\Sigma^+$, and therefore $\Sigma^+$ is $\graph(f_\beta)$ for some $\beta$, where 
\[
   f_\beta: \RR\times (-\beta,\beta)\to\RR
\]
is the translator with $f_\beta(0,0)=0$ and $Df_\beta(0,0)=0$.    Since $\Sigma^-=\Sigma^+$, 
we see that $\Sigma^-=\Sigma^+=\graph(f_\beta)$.   
Since $M_n^-$ lies in $\{|y|\le b_n\}$, we see that $\Sigma^-$ lies in $\{|y|\le b\}$. 
Thus 
\begin{equation}\label{beta-is-less}
\beta\le b.
\end{equation}

Note that $M'+(0,0,z)$ converges as $z\to\infty$ to a limit $M''$ consisting of the planes $\{y=\pm \beta\}$, each with multiplicity $2$.
Now $M''\in \Ll(b,B)$, so by Theorem~\ref{planes-theorem}, the plane $\{y=B\}$ is in $M''$.  Thus (by~\eqref{beta-is-less}) $\beta=b=B$.
\end{proof}

\begin{lemma}\label{degree-lemma}
Suppose that $M\in \Aa$, and that the waist of $M$ is a smooth closed curve $C$ that
projects diffeomorphically onto the boundary of a compact, strictly convex set $K$ in
$\RR\times (-b,b)$, where $b=b(M)$.  
Let $M^+$  be the set of points of $M$ where $\nu\cdot\ee_3>0$ and $M^-$ be the set
of points of $M$ where $\nu\cdot \ee_3<0$.  Then $M^+$ projects diffeomorphically
onto 
\[
    (\RR^2\times (-B,B)) \setminus K,
\]
and $M^-$ projects diffeorphically onto 
\[
   (\RR^2\times (-b,b)) \setminus K,
\]
where $b=b(M)$ and $B=B(M)$.
\end{lemma}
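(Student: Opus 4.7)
The plan is to show that the horizontal projection $\pi \colon (x,y,z) \mapsto (x,y)$ restricts on each of $M^+$ and $M^-$ to a degree-$1$ covering map, hence a diffeomorphism, with the claimed image.

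\textbf{Step 1 (Topological decomposition).} The waist $C = \waist(M)$ is precisely the zero set of $\nu \cdot \ee_3$ on the non-flat part of $M$; since by hypothesis $C$ is a smooth simple closed curve, $\nu \cdot \ee_3$ is a transverse defining function, so $M \setminus C = M^+ \sqcup M^-$ as open sets. The points $(\pm x(M), 0, z(M)) \in M$ satisfy $\nu = \mp \ee_1$, so they lie in $C$ and project to $(\pm x(M), 0) \in \partial K$; convexity of $K$ then places $0 \in K^\circ$. Hence $\partial K$ (and thus $C$) winds once around the $z$-axis, and by Lemma~\ref{topology-lemma-1}, $[C]$ generates $H_1(M;\ZZ) \cong \ZZ$. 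Cutting the annulus $M$ along an essential simple closed curve yields exactly two annular components, which must be $M^+$ and $M^-$.

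\textbf{Step 2 (Covering structure).} On $M \setminus C$ the tangent planes are non-vertical, so $\pi$ is a local diffeomorphism on $M^\pm$. I claim $\pi|_{M^+} \colon M^+ \to \Omega^+ := \pi(M^+)$ is proper: given $p_n \in M^+$ with $\pi(p_n) \to (x_\infty, y_\infty) \in \Omega^+$, the local-graph structure around any $p^\ast \in M^+$ projecting to $(x_\infty, y_\infty)$, together with Theorem~\ref{some-properties-theorem}\eqref{up-planes-item} (vanishing at the top) and the asymptotic-plane control as $z \to -\infty$, forces $z(p_n)$ to remain bounded and $p_n$ to converge subsequentially in $M^+$. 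A proper local diffeomorphism onto its image is a covering map.

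\textbf{Step 3 (Degree, image identification).} To compute the degree, note that $\pi|_C \colon C \to \partial K$ is a diffeomorphism, so a small enough $U \subset \RR^2$ meeting $\partial K$ has $\pi^{-1}(U) \cap C$ equal to a single arc; crossing this arc in $M$ enters $M^+$ on exactly one transverse side, so over the exterior-of-$K$ component of $U \setminus \partial K$, $\pi|_{M^+}$ has one sheet. Hence the covering degree is $1$, and $\pi|_{M^+}$ is a diffeomorphism onto $\Omega^+$. For the image, I use $\overline{\Myout} \subset M^+$, which follows from Theorem~\ref{R-lambda-theorem}\eqref{R-lambda-uppity-item} passed to the $\Aa$-limit: by Theorem~\ref{planes-theorem}, the outer asymptotic planes $\{y = \pm B\}$ lie in $\overline{\Myout}$, hence in $\overline{M^+}$, so $M^+$ realizes every $(x,y) \in (\RR \times (-B,B)) \setminus K$ (using the asymptotic structure plus connectedness of the target). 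The argument for $M^-$ is parallel; in this case $M^- \cap \overline{\Myout} = \emptyset$, so $M^- \subset \Myin$ asymptotes only to the inner planes $\{y = \pm b\}$, and $\Omega^- = (\RR \times (-b,b)) \setminus K$.

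\textbf{Main obstacle.} The crux is the image identification in Step~3: tying the abstract $M^+/M^-$ decomposition (by sign of $\nu \cdot \ee_3$) to the outer/inner decomposition (by sign of $\nu \cdot \ee_2/y$), so as to pin down the correct strip width for each $\Omega^\pm$. The sign consistency from Theorem~\ref{R-lambda-theorem}\eqref{R-lambda-uppity-item}---extended from $\Rr$ to the $\Aa$-limits via the definition of $\Aa$---is what makes the two decompositions match up. Properness in Step~2 also requires care at the $z \to -\infty$ ends of $M^\pm$, where $M$ fans out near the four asymptotic planes.
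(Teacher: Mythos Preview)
Your overall strategy matches the paper's (show the projection is a local diffeomorphism with locally constant fiber count, then pin down that count region by region), but Step~3 has a genuine gap, and the paper's route is much shorter than your $\Myout/\Myin$ detour.

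The gap: arguing near $\partial K$ that ``crossing this arc enters $M^+$ on one side, so $\pi|_{M^+}$ has one sheet there'' only accounts for the sheet born from $C$; it says nothing about \emph{other} sheets of $M^+$ that might sit over the same neighborhood $U$ far from $C$. (You could repair this by first establishing degree $0$ over $K^\circ$---you have the ingredients, since you noted $0\in K^\circ$ and $M\cap Z=\emptyset$---and then arguing the degree jumps by exactly one across $\partial K$; but as written the step is incomplete.) Your image identification is likewise unjustified: knowing $M^+$ is asymptotic to $\{y=\pm B\}$ does not by itself give surjectivity onto $(\RR\times(-B,B))\setminus K$, and your appeal to Theorem~\ref{R-lambda-theorem}\eqref{R-lambda-uppity-item} is to a statement for $\Rr(\lambda)$, which after passing to the limit yields only $\nu\cdot\ee_3\ge 0$ on $\overline{\Myout}$, not the strict inequality you need.

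The paper sidesteps all of this by computing the fiber count directly at well-chosen base points, using only elementary facts already in hand. Over $(0,0)\in K^\circ$ the fiber is empty since $M\cap Z=\emptyset$; over any point with $|y|>B$ the fiber is empty since $M$ lies in the slab $\{|y|\le B\}$; and over $(x_0,0)$ with $x_0>x(M)$, Theorem~\ref{y-slice-theorem} says the full preimage in $M$ is exactly the two points $(x_0,0,\uup(x_0))$ and $(x_0,0,\ulow(x_0))$, the lower of which has downward-pointing outward normal and hence lies in $M^-$. This pins the degree to $0$, $0$, $1$ on the three region types and finishes the lemma with no reference to $\Myout/\Myin$ at all.
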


\begin{proof}
We give the proof for $M^-$.  (The proof for $M^+$ is the same.)
Let
\begin{align*}
&\pi: M^- \to \RR^2, \\
&\pi(x,y,z) = (x,y).
\end{align*}
Let $\Omega$ be a connected component of $\RR^2\setminus (\{y=\pm b\} \cup \partial K)$.
Note that if $Q$ is a compact subset of $\Omega$, then $\pi^{-1}(Q)$ is compact.
Then (since $\nu\cdot\ee_3<0$ everwhere in $M^-$), for $q\in \Omega$, the number $d_\Omega$
of points in $\pi^{-1}(q)$ is independent of $q$.

If $\Omega$ is the interior of $K$, then $d_\Omega=0$ since $\pi^{-1}(0,0)=Z\cap M^-$ is empty.
if $\Omega$ is the region $\{y>b\}$ or the region $\{y<-b\}$, then $d_\Omega=0$
since $M$ lies in the slab $\{|y|\le B\}$.  Finally, if $\Omega$ is the region 
\[
   \RR^2\setminus K,
\]
then $d_\Omega=1$, since for $x> x(M)$, $\pi^{-1}(x,0)$ has exactly one point,
the point $(x,0, \ulow(x))$.
\end{proof}

\begin{remark}
Whether every $M\in \Aa$ satisfies the hypothesis (and therefore also the conclusion)
of Lemma~\ref{degree-lemma} is an interesting open question.
By Theorem~\ref{small-neck-theorem}, the hypothesis is satisfied when the necksize $x(M)$
is sufficiently small.
\end{remark}

The following corollary shows that several notions of necksize are very nearly the same when $x(M)$ is small.

\begin{corollary}\label{neck-notions-corollary}
Let $M_n$ be as in Theorem~\ref{small-neck-theorem}.  Then
\[
  \lim_{n\to\infty} \frac{y(M_n)}{x(M_n)} = 1
\]
and
\[
   \lim_{n\to\infty} \frac{L_n}{2\pi x(M_n)} = 1
\]
where $L_n$ is the length of the shortest homotopically nontrivial closed curve in $M_n$.
\end{corollary}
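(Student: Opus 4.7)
The plan is to build everything on Theorem~\ref{small-neck-theorem}, items~(1)--(3). Write $\tilde M_n := M_n/x(M_n)$ and $\tilde K_n := K_n/x(M_n)$. By~(1), $\tilde M_n$ converges smoothly on compact subsets of $\RR^3$ to the standard Euclidean catenoid $\Cc$ whose waist is the unit circle in $\{z=0\}$. By~(2)--(3), once $n\ge n_0$, $\waist(\tilde M_n)\subset \BB(0,2)$ is a smooth closed curve projecting diffeomorphically onto the convex closed curve $\partial\tilde K_n$, and every point of $\tilde M_n\setminus\waist(\tilde M_n)$ has $xy$-projection outside $\tilde K_n$.

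The core step is the identity
\[
  \lim_{n\to\infty}\frac{\dist(M_n,Z)}{x(M_n)} \;=\; 1.
\]
The bound $\le 1$ is immediate from $(x(M_n),0,0)\in M_n$. For the reverse bound, smooth convergence $\waist(\tilde M_n)\to\waist(\Cc)$ yields smooth, hence Hausdorff, convergence of $\partial\tilde K_n$ to the unit circle. Since each $\tilde K_n$ is convex, a support-function argument shows that for every $\epsilon>0$ and all sufficiently large $n$,
$\{(x,y): x^2+y^2\le(1-\epsilon)^2\}\subset\tilde K_n\subset\{(x,y): x^2+y^2\le(1+\epsilon)^2\}$.
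Consequently every point of $M_n\setminus\waist(M_n)$ projects outside $K_n$ and therefore outside the disk of radius $(1-\epsilon)x(M_n)$ in the $xy$-plane, while every point of $\waist(M_n)$ projects onto $\partial K_n$, which also lies outside that disk. Either way, $\dist(\cdot,Z)\ge(1-\epsilon)x(M_n)$, and $\epsilon$ was arbitrary.

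Granted the displayed identity, both limits fall out. For $y(M_n)/x(M_n)=y(\tilde M_n)$, the upper bound $\le 1+o(1)$ comes from smooth convergence of $\tilde M_n\cap\{x=0\}$ to $\Cc\cap\{x=0\}$ near the closest-to-$Z$ point $(0,1,0)\in \Cc$; the lower bound $\ge 1-o(1)$ follows because every $(0,y,z)\in\tilde M_n$ satisfies $|y|=\dist((0,y,z),Z)\ge\dist(\tilde M_n,Z)$. For $L_n$, the waist $\waist(M_n)$ projects to a convex curve enclosing $0$ (which lies in $K_n$ for $n$ large by the Hausdorff inclusion above), and so winds once around $Z$; by Lemma~\ref{topology-lemma-1} it is homotopically nontrivial in $M_n$, while smooth convergence gives $L(\waist(M_n))=(2\pi+o(1))x(M_n)$, yielding $L_n\le(2\pi+o(1))x(M_n)$. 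The matching lower bound is the standard angular estimate: any homotopically nontrivial closed curve $\gamma$ in $M_n\subset\RR^3\setminus Z$ has nonzero winding number about $Z$ by Lemma~\ref{topology-lemma-1}, whence
\[
  L(\gamma)\;\ge\;2\pi\min_{p\in\gamma}\dist(p,Z)\;\ge\;2\pi\,\dist(M_n,Z)\;\ge\;2\pi(1-\epsilon)\,x(M_n).
\]
The principal obstacle is the lower bound on $\dist(M_n,Z)$ itself: the catenoid approximation in~(1) holds only on compact sets of $\RR^3$, and a priori $\tilde M_n$ could approach $Z$ at points escaping every fixed ball (for instance with $z\to\pm\infty$). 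The confinement assertions in Theorem~\ref{small-neck-theorem}(2)--(3) eliminate this possibility by forcing every point of $\tilde M_n$ either to lie in $\waist(\tilde M_n)\subset\BB(0,2)$ or to project outside $\tilde K_n$, which is nearly the unit disk.
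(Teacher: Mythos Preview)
Your proof is correct. The paper states this corollary without proof, so you have supplied the natural argument the authors left to the reader: everything follows from the rescaled convergence $\tilde M_n\to\Cc$ in Theorem~\ref{small-neck-theorem}(1), with the crucial global lower bound $\dist(\tilde M_n,Z)\ge 1-\epsilon$ coming from the structural statements~(2)--(3), exactly as you isolate.
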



\section{Continuity and Properness}\label{properness-section}

In this section, we show that $b(M)$, $B(M)$, $x(M)$, and $y(M)$ depend continuously on $M\in\Aa$.
We also show that the map $M\in \Aa\mapsto (b(M),x(M))$ is proper.

\begin{theorem}\label{properness-theorem}
Suppose that $M_n\in \Aa$, and that $b_n:=b(M_n)$ and $x_n:=x(M_n)$ converge to finite
limits $b$ and $\hat x$.  
Then, after passing to a subsequence, $M_n$ converges to an $M$ such that $b(M)=b$, $x(M)=\hat x$,
and $B(M)=\lim_nB(M_n)$.

In particular, the map
\begin{align*}
&\Phi: \Aa \to [\pi/2,\infty)\times (0,\infty)  \\
&\Phi(M) = (b(M), x(M))
\end{align*}
is proper and surjective.
\end{theorem}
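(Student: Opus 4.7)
The plan is to establish subsequential smooth convergence of $M_n$ to a limit $M$ via the uniform curvature bound from Theorem~\ref{complete-existence-theorem}, then to identify $M$ as an element of $\Aa(b, B, \hat x)$ via a diagonal extraction against the defining approximating sequences, and finally to derive properness and surjectivity of $\Phi$ as essentially formal consequences of this compactness.

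First, since every member of $\Aa$ arises as a limit of a sequence in $\Rr$, the bound $|A(M_n, \cdot)| \min\{1, x(M_n)\} \le c_2$ from Theorem~\ref{complete-existence-theorem} applies to each $M_n$. Because $x(M_n) \to \hat x > 0$, the second fundamental forms of $M_n$ are uniformly bounded on all of $M_n$; together with the uniform quadratic area bound, standard minimal surface compactness yields subsequential smooth convergence $M_n \to M$ to a complete, smoothly embedded translator without boundary that inherits the same area and curvature bounds. The limit contains $(\hat x, 0, 0)$ as the limit of $(x(M_n), 0, 0) \in M_n$; since each $M_n \cap \{y=0\}$ is disjoint from $\{|x| < x(M_n)\}$, the set $M \cap \{y=0\}$ is disjoint from $\{|x| < \hat x\}$, so $x(M) = \hat x$. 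Each $M_n$ satisfies $B(M_n) \in [b(M_n), b(M_n) + \pi]$ by Definition~\ref{Aa-definition}, so $B_n := B(M_n)$ is bounded and, after a further subsequence, $B_n \to B$ for some $B \in [b, b+\pi]$.

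The main step will be a diagonal argument showing $M \in \Aa(b, B, \hat x)$. By Definition~\ref{Aa-definition}, each $M_n$ is itself a smooth limit $\lim_{k \to \infty} (M_{n,k} - (0, 0, z(M_{n,k})))$ with $M_{n,k} \in \Rr$, $a(M_{n,k}) \to \infty$, $b(M_{n,k}) \to b_n$, $B(M_{n,k}) \to B_n$, and $x(M_{n,k}) \to x_n$ as $k \to \infty$. Fixing a compact exhaustion of $\RR^3$ and a metric of smooth convergence on compacta, one would choose $k(n)$ large enough that $M_{n,k(n)} - (0, 0, z(M_{n,k(n)}))$ approximates $M_n$ within $1/n$ on the ball of radius $n$, that $a(M_{n,k(n)}) \ge n$, and that $b(M_{n,k(n)})$, $B(M_{n,k(n)})$, $x(M_{n,k(n)})$ each differ from $b_n$, $B_n$, $x_n$ respectively by at most $1/n$. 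The diagonal sequence $M_{n,k(n)} - (0, 0, z(M_{n,k(n)}))$ then converges smoothly to $M$ with the correct asymptotic values of $a$, $b$, $B$, $x$, exhibiting $M \in \Aa(b, B, \hat x)$. Theorem~\ref{b-B-theorem} now gives $b(M) = b$ and $B(M) = B$, completing the identification.

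Properness of $\Phi$ is then immediate: if $(b(M_n), x(M_n))$ lies in a compact subset of $[\pi/2, \infty) \times (0, \infty)$, passing to a subsequence gives $b_n \to b \ge \pi/2$ and $x_n \to \hat x > 0$, and the previous analysis yields a further subsequential limit $M \in \Aa$ with $\Phi(M) = (b, \hat x)$. Surjectivity follows from Remark~\ref{existence-remark}: for any $(b, \hat x) \in [\pi/2, \infty) \times (0, \infty)$, the set $\Aa(b, \hat x)$ is nonempty, and any $M$ in it satisfies $b(M) = b$ and $x(M) = \hat x$ by Theorem~\ref{b-B-theorem} and Theorem~\ref{complete-existence-theorem}. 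The only genuinely delicate point is setting up the diagonal extraction so that all four quantities $a$, $b$, $B$, $x$ pass correctly to the limit simultaneously; once that is in place, everything else reduces to already-established results.
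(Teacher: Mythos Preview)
Your proposal is correct and follows essentially the same approach as the paper: both establish subsequential convergence via the uniform curvature and area bounds, then run a diagonal extraction against the defining approximating sequences $M_{n,k}\in\Rr$ to exhibit the limit $M$ as an element of $\Aa(b,B,\hat x)$, after which Theorem~\ref{b-B-theorem} identifies $b(M)$ and $B(M)$. The paper phrases the compactness step through the metric $\delta$ on closed sets before upgrading to smooth convergence, while you invoke the curvature bound directly, but this is a cosmetic difference.
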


\begin{proof}
After passing to a subsequence, we can assume that the $B_n:=B(M_n)$ converge to a limit $B \in [b,b+\pi]$ and (by Theorem~\ref{R3-theorem}) that the $M_n$ converge as sets to a limit $M$:
\begin{equation}\label{itamar}
   \delta_n(M_n,M) \to 0,
\end{equation}
where $\delta(\cdot,\cdot)$ is the metric on closed subsets of $\RR^3$ defined in Appendix~\ref{sets-appendix}.
Now $M_n\in \Aa(b_n,B_n,x_n)$ (by Theorem~\ref{b-B-theorem}), so, by definition of $\Aa(b_n,B_n,x_n)$, there exists a surface
$\tilde M_n\in \Rr$ such that
\begin{equation}\label{bery}
\begin{gathered}
|b(\tilde M_n) - b_n| < 2^{-n}, \\
|B(\tilde M_n) - B_n| < 2^{-n}, \\
|x(\tilde M_n) - x_n| < 2^{-n}, \\
a(\tilde M_n)> n, \\
\delta(M_n', M_n) < 2^{-n}, 
\end{gathered}
\end{equation}
where $M_n':=\tilde M_n - (0,0, z(\tilde M_n))$.

By~\eqref{itamar} and~\eqref{bery}, $\delta(M_n',M)\to 0$.
The curvature and area bounds imply that the convergence is smooth.
Again, by~\eqref{bery}, $b(\tilde M_n)\to b$ and $x(\tilde M_n)\to \hat x$.
Thus $M\in \Aa(b, B, \hat x)$ (by definition of $\Aa(b,B,\hat x)$), and therefore $b(M)=b$, $B(M)=B$,
and $x(M)=\hat x$.
\end{proof}

\begin{theorem}\label{b-B-x-continuity-theorem}
The maps $M\mapsto b(M)$, $M\mapsto B(M)$, and $M\mapsto x(M)$ are continuous maps
from $\Aa$ to $\RR$.
\end{theorem}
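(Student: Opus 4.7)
Given a convergent sequence $M_n\to M$ in $\Aa$, the three continuity statements will follow from a subsequence argument combined with the properness statement of Theorem~\ref{properness-theorem}. The convergence is smooth on compact subsets of $\RR^3$ thanks to the uniform curvature and area bounds of Theorems~\ref{curvature-bound-theorem} and~\ref{complete-existence-theorem}.

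I first dispose of $x(M_n)\to x(M)$. The point $(x(M),0,0)$ lies in $M$ by construction of $\Aa$ (Theorem~\ref{complete-existence-theorem}), and Theorem~\ref{y-slice-theorem} characterizes it as the unique point of $M\cap\{y=0,\,x>0\}$ that minimizes the coordinate $x$; the analogous characterization holds for each $M_n$. Smooth convergence on compacta gives $\limsup x(M_n) \le x(M)$ since $(x(M),0,0)$ is approximated by points of $M_n$, while any subsequential limit $(x_\infty,0,0)$ of $(x(M_n),0,0)$ lies in $M$ and satisfies $x_\infty\ge x(M)$ by definition. Hence $x(M_n)\to x(M)$.

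To handle $b$ and $B$ I invoke the subsequence principle: it suffices to prove that every subsequence $M_{n_k}$ admits a further subsequence along which $b(M_{n_{k_j}})\to b(M)$ and $B(M_{n_{k_j}})\to B(M)$. Given such a subsequence, I extract a further one along which $b(M_{n_k})\to b_\infty$ and $B(M_{n_k})\to B_\infty$, both in $[\pi/2,\infty]$ (using $B\le b+\pi$). The crucial step is to verify that $b_\infty$ and $B_\infty$ are finite: the asymptotic widths of $M_n$ cannot escape to infinity when the neck point $(x(M_n),0,0)$ stays bounded and $M_n\to M$ on compacta. More precisely, by applying Theorem~\ref{fundamental-theorem} to $M$ with any $\lambda>B(M)$, one can fix $x_0>x(M)+\pi$ and $z_0$ sufficiently negative so that $\yout(x_0,z_0)$ is within $\eps$ of $B(M)$; smooth convergence forces $\ynout(x_0,z_0)\to \yout(x_0,z_0)$, and together with the monotonicity and gradient controls of Theorems~\ref{x-alexandrov-theorem} and~\ref{R-lambda-theorem} this pins down a uniform upper bound on $B(M_n)$ (and hence on $b(M_n)$). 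This boundedness argument is the main obstacle of the proof and the only substantive ingredient not already packaged in Theorem~\ref{properness-theorem}.

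Once $b_\infty,B_\infty<\infty$ and $x(M_{n_k})\to x(M)$, Theorem~\ref{properness-theorem} provides a further subsequence converging in $\Aa$ to some $M'$ with $b(M')=b_\infty$, $B(M')=B_\infty$, $x(M')=x(M)$. But $M_n\to M$ already, so uniqueness of limits forces $M'=M$, giving $b(M)=b_\infty$ and $B(M)=B_\infty$. Since this identification is made along every subsequence, the full sequences $b(M_n)$ and $B(M_n)$ converge to $b(M)$ and $B(M)$, completing the proof.
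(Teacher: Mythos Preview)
Your argument for $x(M_n)\to x(M)$ is fine (and is essentially what the paper dismisses as ``trivial''). The subsequence reduction via Theorem~\ref{properness-theorem} is also the right strategy and matches the paper.

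The gap is precisely where you flagged it: showing $b_\infty,B_\infty<\infty$. Your sketched argument does not close. Knowing that $\ynout(x_0,z_0)\to \yout(x_0,z_0)$ only tells you that $\ynout(x_0,z_0)$ is bounded, which yields $\liminf B_n\ge \yout(x_0,z_0)$---a \emph{lower} bound on $B_n$, not an upper bound. To convert this into an upper bound you would need an estimate of the form $B_n-\ynout(x_0,z_0)\le\eps$, i.e., exactly the conclusion of Theorem~\ref{fundamental-theorem} or Lemma~\ref{fundamental-lemma} applied to $M_n$. But the constant $R$ there, like the constant $\eta(\lambda)$ in Theorem~\ref{R-lambda-theorem}, depends on a choice of $\lambda$ with $M_n\subset\{|y|\le\lambda\}$; if $B_n\to\infty$ no uniform $\lambda$ is available, so the appeal to these results is circular. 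The monotonicity of $\yin$ in $x$ from Theorem~\ref{x-alexandrov-theorem} does not help either, since it compares values of $\yin$ at different $x$ but gives no relation between $\ynout$ and $B_n$.

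The paper avoids this circularity by a different route. It uses the slope inequality of Theorem~\ref{slopes-theorem}, namely $(\unlow)'(x)\le (\partial/\partial x)f_{b_n}(x,0)$, together with monotonicity of the $\Delta$-wing slope in the width parameter: if $b_n>\beta$ then $(\partial/\partial x)f_{b_n}(x,0)<(\partial/\partial x)f_\beta(x,0)$. Passing to the limit in $n$ (using smooth convergence of $\unlow$ to $\ulow$) and then letting $x\to\infty$ gives $-s(b(M))\le -s(\beta)$ for every $\beta<\limsup b_n$, forcing $\limsup b_n\le b(M)<\infty$. Since $B_n<b_n+\pi$ this bounds $B_n$ as well, after which Theorem~\ref{properness-theorem} applies exactly as you described.
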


\begin{proof}
Suppose that $M_n\in \Aa$ converges to $M\in \Aa$.  Trivially $x(M_n)\to x(M)$.
Let $b_n=b(M_n)$, $b=b(M)$, $B_n=B(M_n)$, and $B=B(M)$.
By passing to a subsequence, we can assume that the $b_n$  and $B_n$ converge to a limits $b'$ and $B'$ in $[\pi/2,\infty]$.

First, we claim that $b'\le b$.  For if not, let $b<\beta< b'$.  For all sufficiently large $n$, $b_n>\beta$.
For such  $n$,
\[
  (\unlow)'(x) \le \pdf{}x f_{b_n}(x,0)  < \pdf{x} f_\beta(x,0).
\]
Letting $n\to\infty$ gives
\[
   (\ulow)'(x) \le \pdf{}x f_\beta(x,0)
\]
and therefore (letting $x\to \infty$), $-s(b) \le -s(\beta)$, so $b\ge \beta$, a contradiction.

Since $B_n\in [b_n, b_n+\pi)$, we see that $B'\in [b',b'+\pi]$.

By the Properness Theorem~\ref{properness-theorem}, we can assume, after passing to a subsequence, that
 $M_n$ converges to a limit $M'$ with $b(M')=b$ and $B(M')=B$.  Since $M_n$ also converges to $M$, we
 see that $M'=M$ and thus $b(M)=b(M')=b$ and $B(M')=B(M)=B$.
\end{proof}

Continuity of the map $M\in\Aa\mapsto y(M)$ requires a different argument.

\begin{lemma} \label{b-B-lemma}
Suppose $M_n\in \Aa$ converges to $M\in \Aa$, and suppose that $\zeta_n\to \infty$.
Then $M_n':=M_n+(0,0,\zeta_n)$ converges smoothly to the planes $\{y=\pm b\}$ and $\{y=\pm B\}$,
where $b=b(M)$ and $B=B(M)$.
\end{lemma}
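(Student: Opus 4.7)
The plan is to combine smooth subsequential compactness for the family $\{M_n\}$ with the uniform asymptotic estimates of Theorem~\ref{fundamental-theorem}. Because $x(M_n)\to x(M)>0$, the curvature bound $|A(M_n,\cdot)|\min\{1,x(M_n)\}\le c_2$ furnished by Theorem~\ref{complete-existence-theorem} is uniform in $n$, and vertical translation preserves it; together with the uniform area bound this yields smooth subsequential convergence $M_n'\to M'$, with $M'\subset\{|y|\le B+1\}$.

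First I would show that $M'$ is a union of vertical planes, by a sequential analogue of Theorem~\ref{translates-theorem}. For any horizontal unit vector $\vv$, the bound $\mathsf{N}(F_\vv|M_n)\le 2$ holds by Theorem~\ref{complete-existence-theorem}, and the critical points of $F_\vv|M_n$ stay in a bounded region of $\RR^3$ because $M_n$ converges smoothly to $M$. Under translation by $(0,0,\zeta_n)$ with $\zeta_n\to\infty$, these critical points escape to $+\infty$ in the $z$-direction, so every compact set of $\RR^3$ is eventually free of $F_\vv$-critical points of $M_n'$; lower semicontinuity (Theorem~\ref{semicontinuity-theorem}) then yields $\mathsf{N}(F_\vv|M')=0$. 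Identically, $\mathsf{N}(H|M')=0$ for every function $H(x,y,z)=z-h(x,y)$ whose graph is a complete translator, using the uniform $\le 8$ bound on $\mathsf{N}(H|M_n)$. Applying the Spruck--Xiao classification together with the $H$-function argument of Theorem~\ref{translates-theorem} excludes grim reaper, $\Delta$-wing and bowl soliton components, leaving only vertical planes.

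Next I would identify the specific planes. Fix $x_0>x(M)+\pi$ and $z_0\in\RR$, and let $L=\{(x_0,y,z_0):y\in\RR\}$. For $n$ large, $x_0>x(M_n)+\pi$ and $z_0-\zeta_n\to-\infty$, so by Theorem~\ref{y-graph-theorem} the intersection $L\cap M_n'$ consists of exactly the four points $(x_0,\pm\ynin(x_0,z_0-\zeta_n),z_0)$ and $(x_0,\pm\ynout(x_0,z_0-\zeta_n),z_0)$ (with $\ynin,\ynout$ referring to $M_n$). Applying Theorem~\ref{fundamental-theorem} with any $\lambda>B$ (which is valid eventually since $B(M_n)\to B$) gives $\ynin(x_0,z_0-\zeta_n)\to b$ and $\ynout(x_0,z_0-\zeta_n)\to B$. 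Hence each of the planes $\{y=\pm b\}$ and $\{y=\pm B\}$ appears in $M'$, and matching total multiplicities along $L$ forces each plane to occur with multiplicity one if $b<B$, or each of $\{y=\pm b\}$ to occur with multiplicity two if $b=B$. Since the limit is uniquely determined, the convergence $M_n'\to M'$ requires no subsequence, and smoothness follows from smooth compactness at a smooth limit.

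The main obstacle will be the step identifying the planes: without the uniform-in-$M\in\Aa$ asymptotic estimate of Theorem~\ref{fundamental-theorem}, there is no way to guarantee that $\ynin(x_0,z_0-\zeta_n)$ and $\ynout(x_0,z_0-\zeta_n)$ converge to $b$ and $B$, since both the index $n$ and the depth $z_0-\zeta_n$ vary simultaneously, and only the uniformity built into Theorem~\ref{fundamental-theorem} permits taking them together.
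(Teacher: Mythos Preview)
Your step 2---applying the uniform estimate of Theorem~\ref{fundamental-theorem} at $(x_0,z_0-\zeta_n)$---is correct and is exactly the core of the paper's argument. The issue is in step 1. Smooth convergence $M_n\to M$ controls behavior only on compact sets; it does not by itself prevent the critical points of $F_\vv|M_n$ (let alone of $H|M_n$) from drifting to infinity as $n$ grows. For $\vv=\ee_1$ the critical points are precisely $(\pm x(M_n),0,0)$, manifestly bounded, so the escape argument is airtight there. For general $\vv\ne\pm\ee_2$ one can still argue boundedness (via the unique nondegenerate point of $M$ with $\nu=\vv$), but the claim for $H$-critical points has no evident justification, and without it your route to ``$M'$ consists only of vertical planes'' is incomplete.

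The fix is to weaken step 1 to what $\vv=\ee_1$ alone gives. From $\mathsf{N}(F_{\ee_1}|M')=0$, Theorem~\ref{new-finite-type-theorem}\eqref{new-finite-2} (equivalently, Theorem~\ref{better-compactness-theorem} applied with $q_n=(0,0,-\zeta_n)$) yields that every component of $M'$ is a plane parallel to $\{y=0\}$, a $\Delta$-wing, or a grim reaper surface over a strip $\RR\times J$. Each such surface meets every plane $\{x=x_0\}$, so your step-2 identification of $M'\cap\{x>x(M)+\pi\}$ as the four half-planes forces every component to be one of $\{y=\pm b\}$, $\{y=\pm B\}$. This is precisely the paper's argument: it asserts (implicitly via Theorem~\ref{better-compactness-theorem}) that $M'$ consists of planes, $\Delta$-wings and grim reaper surfaces, and then applies Theorem~\ref{fundamental-theorem} exactly as you do.
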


\begin{proof}
After passing to a subsequence, we can assume that $M_n'$ converges to a limit $M'$ consisting of planes, $\Delta$-wings,
and grim reaper surfaces.  Let $x> x(M)+\pi$ and $z\in \RR$.   By the smooth convergence, $x>x(M_n)+\pi$ for large $n$ and 
\[
   \unlow(x(M_n)+x)) \to \unlow(x(M)+x).
\]
Thus 
\[
   (z - \zeta_n) - \unlow(x(M_n)+x) \to -\infty.
\]
Hence by Theorem \ref{fundamental-theorem}, 
\[
    |\ynin(x,z - \zeta_n) - b_n| \to 0
\]
where $b_n=b(M_n)$.  Since $b_n\to b$ (by Theorem~\ref{b-B-theorem}), 
\[
  \ynin(x,z  - \zeta_n)\to b.
\]
This holds for all $(x,z)$ with $x>x(M)$.  Hence
$M'$ coincides with the planes $\{y=\pm b\}$ and $\{y=\pm B\}$
in the halfspace $\{x>x(M)\}$.   Since $M'$ consists of planes, $\Delta$-wings, and grim reaper surfaces,
in fact $M'$ consists of the planes $\{y=\pm b\}$ and $\{y=\pm B\}$.
\end{proof}

\begin{theorem} The map $M\in \Aa \mapsto y(M)$ is continuous.
\end{theorem}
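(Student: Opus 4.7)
The plan is to prove both $\limsup y(M_n) \le y(M)$ and $\liminf y(M_n) \ge y(M)$ for any sequence $M_n \to M$ in $\Aa$. The upper bound will follow directly from smooth convergence, while the lower bound will require a three-case analysis of a purported violating sequence.

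For the upper bound, I would fix $\eps > 0$ and use the infimum definition~\eqref{yM-definition} to choose $q = (0, y^*, z^*) \in M$ with $0 < y^* < y(M) + \eps$. Since $M$ is invariant under $(x,y,z) \mapsto (-x,y,z)$, it meets $\{x=0\}$ orthogonally, so smooth convergence of $M_n$ to $M$ combined with this transversality gives points $q_n \in M_n \cap \{x=0\}$ with $q_n \to q$, yielding $y(M_n) \le y(q_n) < y(M) + 2\eps$ for large $n$.

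For the lower bound, I would suppose for contradiction that $y(M_{n_k}) \le y(M) - \eps$ along a subsequence, and pick $p_k = (0, y_k, z_k) \in M_{n_k} \cap \{x=0\}$ with $0 < y_k \le y(M) - \eps/2$. Since $y(M) \le b(M)$ (because $M$ has sheets asymptotic to $\{y = \pm b(M)\}$ as $z \to -\infty$) and $b(M_n), B(M_n)$ are bounded by Theorem~\ref{b-B-x-continuity-theorem}, the $y_k$ are bounded, so the question reduces to the behavior of $z_k$ along a further subsequence. If $z_k$ stays bounded, then $p_k \to p_\infty \in M$ by smooth convergence on compact sets; then $y(p_\infty) \in [0, y(M) - \eps/2]$ forces either $y(p_\infty) > 0$, contradicting the infimum defining $y(M)$, or $y(p_\infty) = 0$, placing $p_\infty$ in $Z \cap M = \emptyset$. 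If $z_k \to -\infty$, I would apply Lemma~\ref{b-B-lemma} with $\zeta_k = -z_k \to +\infty$: the translates $M_{n_k} + (0, 0, \zeta_k)$ converge smoothly to the planes $\{y = \pm b(M), \pm B(M)\}$, so the translated points $(0, y_k, 0)$ accumulate on these planes, forcing $y_k$ to approach one of $\pm b(M), \pm B(M)$; but $0 < y_k \le y(M) - \eps/2 < b(M) \le B(M)$, a contradiction.

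The main obstacle will be the case $z_k \to +\infty$, which I would treat via a bowl soliton barrier. Writing $\phi$ for the bowl soliton, the function $z - \phi$ tends to $-\infty$ on each end of any $M' \in \Aa$: on the four descending planar sheets, $z \to -\infty$ while $\phi$ is bounded below, and on the wings the at most linear-in-$|x|$ growth of $z$ from Theorem~\ref{main-theorem-concluded} together with the gradient control in Theorem~\ref{R-lambda-theorem}\eqref{R-lambda-graph-item} is dominated by the quadratic growth of $\phi$. Hence $\sup_{M'}(z - \phi)$ is finite and attained in a compact region whose size depends only on bounds for $b(M'), B(M')$. Since these bounds are stable along the sequence, the region $K$ can be chosen uniformly in $n$, and smooth convergence on $K$ yields $\limsup_n \sup_{M_n}(z - \phi) \le \sup_M(z - \phi) < \infty$. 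Consequently $z_k \le \phi(0, y_k) + C$ for large $k$, and the boundedness of $y_k$ gives a bound on $z_k$, contradicting $z_k \to +\infty$. The delicate point is controlling the wings and descending sheets of the $M_n$ uniformly in $n$, which should follow from the explicit asymptotic descriptions in Section~\ref{sec:behavior-at-infinity}.
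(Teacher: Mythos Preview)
Your overall structure matches the paper's: upper bound by transversality at $\{x=0\}$, lower bound by tracking near-minimizers $p_k=(0,y_k,z_k)$, with the bounded case and the $z_k\to-\infty$ case handled via Lemma~\ref{b-B-lemma} just as the paper does. The third case, however, rests on a sign error. In this paper the mean curvature vector equals $(-\ee_3)^\perp$, so every complete translating graph---the grim reapers, the $\Delta$-wings, and the bowl soliton $\phi$---opens \emph{downward}: $\phi$ has a global maximum at its vertex and $\phi(x,y)\to-\infty$ quadratically as $x^2+y^2\to\infty$ (cf.\ \S\ref{graphs-section}, where $f_b$ attains its maximum $0$ at the origin and the $\Delta$-wings converge to the bowl soliton as $b\to\infty$). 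On the wings of any $M\in\Aa$, where $|x|$ is large and $z$ is at most linear in $|x|$ by Theorem~\ref{main-theorem-concluded}, one therefore has $z-\phi\to+\infty$, not $-\infty$; hence $\sup_M(z-\phi)=+\infty$ and the barrier argument fails.

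The paper's own proof glosses over the possibility $z_k\to+\infty$ (writing only ``$z_n$ bounded below'' for its first case), but this is easily justified without bowl solitons. By Theorem~\ref{lipschitz-theorem}, $\psi_{M_n}(0)\le\psi_{M_n}(t)+s_\lambda|t|$ for every $t$, where $\lambda>\sup_nB(M_n)$. Taking $t=x(M_n)+\pi$, Theorem~\ref{y-graph-theorem} gives $\psi_{M_n}(t)=\unup(t)$, and the right-hand side is then uniformly bounded since $\unup\to\uup$ smoothly on compact sets and $x(M_n)\to x(M)$. Thus $z_k\le\psi_{M_{n_k}}(0)$ is bounded above, and your third case never occurs.
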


\begin{proof} 
Suppose that $M_n\in \Aa$ converges to $M\in \Aa$.  By Theorem~\ref{b-B-x-continuity-theorem},
$B_n:=B(M_n)$ and $b_n:=b(M_n)$ converge to $B=B(M)$ and $b=b(M)$.
By passing to a subsequence, we can assume that $y(M_n)$ converges.

Let $p\in M\cap \{y=0\}$.  Then there exist $p_n\in M\cap \{x=0\}$ such that $p_n\to p$.
Now
\[
    y(M_n) \le |y(p_n)| \to |y(p)|,
\]
so 
\[
  \lim y(M_n) \le |y(p)|.
\]
Taking the infimum over $p\in M\cap\{x=0\}$ gives
\[
 \lim y(M_n) \le y(M).
\]
Thus to prove the theorem, it suffices to prove the reverse inequality.

Choose $p_n=(0,y_n,z_n)\in M_n$ with 
\begin{equation}\label{squeeze}
0 \le |y_n| < y(M_n) + (1/n).
\end{equation}
By symmetry, we can assume $y_n\ge 0$.
We can also assume that $y_n$ converges to a limit $y_\infty$.  Thus
\[
  y_\infty \le \lim_n y(M_n)
\]
by squeeze.

{\bf Case 1}: $z_n$ is bounded below.  
Then (passing to a subsequence) we can assume that $p_n$ converges to a point $p=(0,y_\infty,z_\infty)\in M$.
Thus
\[
     y(M) \le y(p) = y_\infty \le \lim_n y(M_n).
\]

{\bf Case 2}: $z_n$ is not bounded below.  Then, passing to a subsequence, we can
assume that $z_n\to -\infty$.

By Lemma~\ref{b-B-lemma}, $M_n':= M_n + (0,0,z_n)$ converges smoothly to the planes $\{y=\pm b\}$ and $\{y=\pm B\}$.
Hence $y_\infty$ is $b$ or $B$.  In particular, $b\le y_\infty$.
Trivially, $y(M)\le b$.  Thus
\[
   y(M) \le b \le y_\infty \le \lim y(M_n).
\]
\end{proof}



\section{Gap Theorems}\label{gap-section}

In the next two sections, we prove  results that will be used to establish  the existence  of  compact translating annuli bounded by pairs of nested rectangles (The existence is proved in Section 19 using a path-lifting argument). 
In this section, we establish conditions guaranteeing that a pair of curves does not bound a connected translator.

\begin{theorem}\label{gap-theorem}
Suppose that $I$ is an infinite open strip in $\RR^2$ of width $\pi$.   Suppose that $M$ is a properly immersed translator in $\RR^3\cap\{z\ge 0\}$ with no boundary in the slab $S:=I\times\RR$.  Then $M$ lies in the complement of $S$.
\end{theorem}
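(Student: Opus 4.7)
The plan is to use the untilted grim reaper on $I$ as a barrier and slide it upward until it just touches $M$; the contact point will violate the strong maximum principle. After an isometry we may assume $I=\RR\times(-\pi/2,\pi/2)$, so $S=\RR\times(-\pi/2,\pi/2)\times\RR$. Let $U(x,y)=\log\cos y$ be the untilted grim reaper function on $I$, and define
\[
   H:S\to\RR,\qquad H(x,y,z)=z-U(x,y).
\]
The level sets $\{H=c\}$ are the vertical translates of $\graph(U)$, which are translators (equivalently, $g$-minimal surfaces in the translator metric), so $H$ is a minimal foliation function on $S$ in the sense of \S\ref{morse-rado-section}. Since $U\le 0$ on $I$ and $M\subset\{z\ge 0\}$, on $M\cap S$ we have $H\ge 0$.

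Suppose for contradiction that $M\cap S\ne\emptyset$, and set $c_0:=\inf_{M\cap S}H\ge 0$. The main step will be to show that $c_0$ is attained at an interior point $p_\infty$ lying in some translator that is a limit of translated copies of $M$. Pick $p_n\in M\cap S$ with $H(p_n)\to c_0$. Three possibilities must be ruled out as $n\to\infty$: (i) $(x(p_n),y(p_n))\to\partial I$, which forces $U(x(p_n),y(p_n))\to-\infty$ and hence $H(p_n)\to+\infty$; (ii) $z(p_n)\to+\infty$, which again forces $H(p_n)\to+\infty$ since $U\le 0$; or (iii) $x(p_n)\to\pm\infty$ while $y(p_n)$ stays away from $\pm\pi/2$ and $z(p_n)$ stays bounded. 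In cases (i) and (ii) a minimizing sequence is impossible, so the only alternative to a compact convergent subsequence of $(p_n)$ in $M\cap S$ is the drift case (iii). In case (iii), we translate $M_n:=M-x(p_n)\ee_1$. Since $M_n\subset\{z\ge 0\}$ and $\partial M_n\cap S=\emptyset$ for every $n$, and since properly immersed translators in a halfspace have uniform local area and curvature bounds away from their boundaries (standard compactness for minimal surfaces in the translator metric), some subsequence of $M_n$ converges smoothly to a properly immersed translator $M_\infty\subset\{z\ge 0\}$ with $\partial M_\infty\cap S=\emptyset$, and the translated points $p_n-x(p_n)\ee_1$ converge to an interior point $p_\infty\in M_\infty\cap S$ with $H(p_\infty)=c_0$. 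In all cases, therefore, $c_0$ is attained at an interior point of (possibly a translated limit of) $M$.

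With $p_\infty$ in hand, $M_\infty$ lies in the closed halfspace $\{H\ge c_0\}$ and touches $\{H=c_0\}=G_{c_0}$ (the $c_0$-translate of the grim reaper graph) at $p_\infty$. By the strong maximum principle for minimal surfaces in the translator metric, the sheet of $M_\infty$ through $p_\infty$ coincides with $G_{c_0}$ in a neighborhood of $p_\infty$; by unique continuation, that sheet coincides with all of $G_{c_0}$. But $G_{c_0}=\{z=\log\cos y+c_0\}$ has $z\to-\infty$ as $y\to\pm\pi/2$, so $G_{c_0}\not\subset\{z\ge 0\}$, contradicting $M_\infty\subset\{z\ge 0\}$. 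Hence $M\cap S=\emptyset$.

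The main obstacle is the attainment of the infimum under case (iii), i.e., the noncompact drift along the strip direction. This requires enough compactness of proper immersed translators in $\{z\ge 0\}$ to extract a smooth subsequential limit from horizontal translates; once the minimum is realized at an interior point, the grim reaper barrier together with the strong maximum principle closes out the argument cleanly.
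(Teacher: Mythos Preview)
Your barrier idea is natural, but the compactness step in case (iii) is a genuine gap. You assert that ``properly immersed translators in a halfspace have uniform local area and curvature bounds away from their boundaries,'' and this is simply not true for an arbitrary properly immersed translator $M$: there is no a priori area or curvature estimate available here. Without such bounds, you cannot extract a smooth subsequential limit from the horizontal translates $M - x(p_n)\ee_1$, and without a smooth limit you cannot apply the strong maximum principle at $p_\infty$. The curvature and area bounds proved earlier in the paper (Theorem~\ref{curvature-bound-theorem}) apply only to the specific class of annular translators studied there, not to a general $M$.

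The paper's proof avoids this difficulty entirely by reversing the roles: instead of sliding the noncompact grim reaper against $M$ and hoping for a touching point, it slides \emph{compact} graphical translators $f_{\alpha,\beta}:[-\alpha,\alpha]\times[-\beta,\beta]\to\RR$ (with boundary values $0$) underneath $M$. Since $\partial(\graph f_{\alpha,\beta})$ lies in $\{z=0\}$ and $M$ has no boundary in the slab, the ordinary maximum principle gives $z\ge f_{\alpha,\beta}(x,y)$ on $M$ for every $\beta<\pi/2$, hence for $\beta=\pi/2$. Then one lets $\alpha\to\infty$: the key fact is that $f_{\alpha,\pi/2}(x,y)-f_{\alpha,\pi/2}(0,0)$ converges to the grim reaper, so $f_{\alpha,\pi/2}\to\infty$ uniformly on compact subsets of the strip, forcing $M\cap S=\emptyset$. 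No limit of $M$ itself is ever taken, which is why (as the paper remarks) the argument works even when $M$ is merely a closed set satisfying the maximum principle.
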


\begin{proof}
We may assume that the strip $I$ is $\RR\times(-\pi/2,\pi/2)$.  For $\alpha,\beta>0$, let
\[
    f_{\alpha,\beta}: [-\alpha,\alpha] \times [-\beta,\beta] \to \RR
\]
be the unique graphical translator~\cite{graphs}*{\S3} with boundary values $0$.
By the maximum principle, if $\beta<\pi/2$, then the graph of $f_{\alpha, \beta}$ lies below $M$.  That is,
for $(x,y,z)\in M$ with $(x,y)\in [-\alpha,\alpha]\times [-\beta,\beta]$, 
\[
   z \ge f_{\alpha,\beta}(x,y).
\]
Letting $\beta\to \pi/2$, we see that
\[
   z\ge f_{\alpha,\pi/2}(x,y).
\]
As $\alpha\to \infty$, $f_{\alpha,\pi/2}(x,y)-f_{\alpha,\pi/2}(0,0)$ converges smoothly to the untilted 
grim reaper surface $f_{\pi/2}(x,y):=\log(\cos y)$.
(See the proof of \cite{graphs}*{Theorem~4.1}.)
  Thus $f_{\alpha,\pi/2}$ converges to $\infty$ uniformly on compact
subsets of $\RR\times(-\pi/2,\pi/2)$.  The result follows immediately.
\end{proof}

\begin{remark}\label{gap-remark}
Note this does not require any regularity of $M$: $M$ can be any closed set that satisfies the maximum principle.
(In the language of \cite{white16}, one could state the result as follows:
Suppose $\Sigma$ is a $(2,0)$ set (with respect to the Ilmanen metric) in $I\times\RR$ that lies in $\{z\ge 0\}$.  Then $\Sigma$
is empty.)
\end{remark}

\begin{corollary}\label{convex-corollary}
If $M$ is a translator in $\{z\ge 0\}$, then $M$ lies in $C\times [0,\infty)$, where
$C$ is the convex hull of the projection of $\partial M$ to the horizontal plane.
\end{corollary}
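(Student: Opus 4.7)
The plan is to deduce the corollary directly from Theorem~\ref{gap-theorem} by a convex-geometry argument. Let $\pi_h:\RR^3\to\RR^2$ denote the horizontal projection $(x,y,z)\mapsto (x,y)$, and let $C=\operatorname{conv}(\pi_h(\partial M))\subset\RR^2$. I want to show that if $p=(x_0,y_0,z_0)\in M$ with $\pi_h(p)\notin C$, then one can construct an open strip $I\subset\RR^2$ of width $\pi$ such that $\pi_h(p)\in I$ and $I\cap C=\emptyset$; this leads to a contradiction.

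First I would construct the strip. Since $C$ is closed and convex and $\pi_h(p)\notin C$, set $d:=\dist(\pi_h(p),C)>0$, and let $q_0\in C$ be the nearest point to $\pi_h(p)$. Let $\uu$ be the unit vector from $q_0$ to $\pi_h(p)$, so that $\pi_h(p)=q_0+d\,\uu$. By the characterization of nearest points in a convex set, the line through $q_0$ perpendicular to $\uu$ is a supporting line, and $C$ is contained in the closed halfplane $\{q_0+t\uu+s\uu^\perp : t\le 0,\ s\in\RR\}$. Now choose
\[
  a=\max\{0,\,d-\pi/2\},\qquad b=a+\pi,
\]
and let
\[
  I:=\{q_0+t\uu+s\uu^\perp : a<t<b,\ s\in\RR\}.
\]
Then $I$ is an open strip of width $\pi$; one checks that $a<d<b$ in both cases $d\le\pi/2$ and $d>\pi/2$, so $\pi_h(p)\in I$, while $a\ge 0$ forces $I\cap C=\emptyset$.

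Next I would apply Theorem~\ref{gap-theorem}. Because $\partial M\subset C\times\RR$ and $I\cap C=\emptyset$, the slab $I\times\RR$ contains no boundary points of $M$. Since $M\subset\{z\ge 0\}$ by hypothesis, Theorem~\ref{gap-theorem} yields $M\cap(I\times\RR)=\emptyset$. But $p\in M$ and $\pi_h(p)\in I$, so $p\in M\cap(I\times\RR)$, a contradiction. Hence $\pi_h(M)\subset C$, i.e.\ $M\subset C\times[0,\infty)$.

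The only nontrivial step is the strip construction, and even there the main subtlety is just the case split on whether $d\le\pi/2$ or $d>\pi/2$: we need the strip to be wide enough to contain $\pi_h(p)$ while still fitting on the far side of the supporting line of $C$. Everything else is an immediate appeal to Theorem~\ref{gap-theorem} (which already encodes the analytic content via the grim-reaper barriers $f_{\alpha,\beta}$). Note that, as in Remark~\ref{gap-remark}, no additional regularity of $M$ beyond what Theorem~\ref{gap-theorem} requires is needed.
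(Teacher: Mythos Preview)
Your proof is correct and is essentially the argument the paper intends: the corollary is stated without proof immediately after Theorem~\ref{gap-theorem} and Remark~\ref{gap-remark}, and your separating-strip construction is precisely how one extracts it from the gap theorem. The only minor caveat is that your nearest-point argument tacitly assumes $C$ is closed; this holds whenever $\partial M$ is compact (which covers all applications in the paper), and in general one can run the same argument with $\overline{C}$ in place of $C$.
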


\begin{lemma}\label{boundedness-lemma}
Let $M \subset \R^3 \cap \{z\ge 0\}$ be a translator such that $\partial M$  lies in a compact set $K$.  Then $M$ is bounded. In particular, if $K$ lies below a bowl soliton $Q$, then $M$ also lies below $Q$.
\end{lemma}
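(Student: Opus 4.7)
The plan is to combine the horizontal bound from Corollary~\ref{convex-corollary} with a vertical bound from a bowl-soliton barrier. By Corollary~\ref{convex-corollary}, $M \subset C \times [0,\infty)$, where $C$ is the convex hull of the horizontal projection of $\partial M \subset K$. Since $K$ is compact, so is $C$, so the horizontal coordinates of $M$ are uniformly bounded.

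For the vertical bound, let $Q = \graph(\phi)$ be a bowl soliton. By translating $Q$ vertically (adding a constant to $\phi$), I would arrange that $K$ lies strictly below $Q$, so $L := \max_{\partial M}(z - \phi) < 0$. The level sets $Q_t = \{z = \phi + t\}$ are translators foliating $\RR^3$, so $\Phi := z - \phi$ is a minimal foliation function in the translator metric. The plan is to show $\Phi \le L$ on all of $M$; combined with the fact that $\phi$ is bounded on the compact set $C$, this yields an absolute upper bound on $z$ over $M$, so $M$ is bounded. The same argument applied to the prescribed bowl soliton $Q$ in the statement (without further translation) gives the ``in particular'' clause.

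To prove $\Phi \le L$ on $M$, I would slide a bowl $Q_t$ down from $+\infty$ and set $t^* := \sup_M \Phi$. If $t^* > L$, then $t^*$ must be attained either at an interior point $p$ of $M$ or on $\partial M$; the second case is ruled out by $L = \max_{\partial M} \Phi < t^*$, and the first case gives a tangential contact of $M$ with $Q_{t^*}$ at $p$ with $M$ on one side, so the strong maximum principle in the translator metric together with unique continuation force $M \subset Q_{t^*}$, contradicting $\partial M \subset \{\Phi \le L\}$ with $L < t^*$.  The case $t^* = +\infty$ is ruled out by a compact-exhaustion argument: since $M$ is properly immersed and horizontally bounded, each $M^T := M \cap \{z \le T\}$ is compact, and applying the bowl-soliton maximum principle for compact translators (the displayed lemma preceding Corollary~\ref{bounded-corollary}) shows that if $\sup_{M^T} \Phi$ were attained on $M \cap \{z = T\}$ for arbitrarily large $T$, the Hopf boundary-point principle combined with unique continuation would yield the same tangency contradiction.

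The main obstacle is handling the potential non-compactness of $M$ in the sliding-barrier argument: the max principle cited in the paper is stated for compact translators, and extending it to $M$ reduces to ruling out $\sup_M \Phi = +\infty$. Finding the right formulation of a ``maximum principle at infinity'' for translators in a vertical cylinder $C \times [0,\infty)$, or equivalently a clean compact-exhaustion argument, is the delicate technical point; everything else is a standard application of the strong maximum principle and unique continuation in the translator metric.
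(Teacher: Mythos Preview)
Your horizontal bound via Corollary~\ref{convex-corollary} is correct, and your strategy for the ``in particular'' clause (slide the bowl down and invoke the strong maximum principle once $M$ is known to be compact) is exactly what the paper does.  The gap is in the vertical bound, and you have correctly located it yourself: ruling out $t^*=\sup_M\Phi=+\infty$.  Your proposed compact-exhaustion fix does not work.  On $M^T=M\cap\{z\le T\}$ the maximum of $\Phi$ is indeed attained, but when it is attained at a point $p_T\in M\cap\{z=T\}$, that point is an \emph{interior} point of $M$ and a boundary point only of the truncation $M^T$.  There is no tangency of $M$ with any $Q_t$ at $p_T$; the Hopf lemma only tells you that $\Phi$ increases strictly as you move from $p_T$ into $M\cap\{z>T\}$, which is exactly what you expect when $t^*=+\infty$ and yields no contradiction.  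In fact, the bowl soliton is the wrong barrier here: its level circles at height $z$ have radius growing without bound, so no horizontal or vertical positioning of a bowl makes it disjoint from the solid cylinder $C\times[0,\infty)$ at all heights.

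The paper closes the gap with a different barrier: a rotationally symmetric translating catenoid $\Sigma$ whose waist circle has radius exactly $R$.  Such a $\Sigma$ lies entirely in $\{x^2+y^2\ge R^2\}$, hence is disjoint from $M\subset\{x^2+y^2\le R^2\}$ from the start.  After a vertical translation placing the waist at height $h$ and keeping $\Sigma$ away from $\partial M$, one slides $\Sigma$ horizontally by $s\mathbf v$ for $0\le s\le R$; the strong maximum principle keeps $\Sigma+s\mathbf v$ disjoint from $M$ throughout.  Since every point of the disk $\{x^2+y^2\le R^2,\,z=h\}$ lies on the waist of some $\Sigma+s\mathbf v$, this forces $M\cap\{z=h\}=\emptyset$, giving the vertical bound.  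The key feature the bowl lacks and the catenoid has is that it stays outside a fixed cylinder for all heights, so the sliding family is disjoint from $M$ at the initial time without any a priori bound on $z$.
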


\begin{proof}
Let $R = \max\{|p| \; : \; p \in \partial M\}$. By Corollary~\ref{convex-corollary}, 
\[
    M \subset \{(x,y,z) \; : \; x^2 + y^2 \leq R^2, \, z\ge 0\}.
 \]
By \cite{CSS}, there exists a complete translating annulus $\Sigma$ such that $\Sigma$ is rotationally
invariant about the $z$-axis and such that
\[
\min_{p \in \Sigma} \dist(p,Z) = R.
\]
By translating $\Sigma$ vertically, we can assume that the neck of $\Sigma$ is at a height $h > 0$
and that $\Sigma$ is disjoint from $\{(x,y,z): x^2+y^2 + z^2\le (2R)^2\}$.
Let $\mathbf{v}$ be a horizontal unit vector. By the maximum principle, $\Sigma+s\mathbf{v}$ is disjoint
from $M$ for $0 \leq s \le R.$ It follows that M is disjoint from the plane $\{z = h\}$, and
thus that $M$ lies in $$\{x^2 + y^2 \leq R^2\} \cap \{0\leq z \leq h \}.$$
The last statement ( ``In particular...") follows immediately from the maximum
principle. (If $M$ did not lie below $Q,$ then there would be a largest $s > 0$ such that
$M$ has a nonempty intersection with $Q + s\mathbf{e}_3.$ For that s, the strong maximum
principle would be violated at the point of contact of $M$ and $Q + s\mathbf{e}_3.$)
\end{proof}

\begin{theorem} \label{th:gap}
Let $U_n\subset U_n'$ be open convex regions in $\RR^2$ such that $U_n$ converges to a bounded open 
convex set $U$ and such that $U_n'$ converges to an infinite strip $U'$.
Suppose that
\[
    \min\{ |p-q|: p\in \partial U, q\in \partial U'\} \ge \pi.
\]
Then for all sufficiently large $n$, there is no connected translator in $\{z\ge 0\}$ whose
boundary is $S_n:=((\partial U_n)\cup (\partial U_n'))\times\{0\}$.
\end{theorem}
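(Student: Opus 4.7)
I will argue by contradiction. Suppose that, along a subsequence still called $n$, there is a connected translator $M_n\subset\{z\ge 0\}$ with $\partial M_n=S_n$. After a rotation, normalize so that $U'=\RR\times(-c,c)$; the hypothesis then forces $\overline U\subset\{|y|\le c-\pi\}$. By Hausdorff convergence $U_n\to U$ and $U_n'\to U'$, for any fixed $\eta>0$ and all sufficiently large $n$ we have $\partial U_n\subset\{|y|\le c-\pi+\eta\}$, and on each bounded $x$-interval $\partial U_n'$ is $\eta$-close to $\{y=\pm c\}$. By Corollary~\ref{convex-corollary}, the $xy$-projection of $M_n$ lies in $\overline{U_n'}$; by Lemma~\ref{boundedness-lemma}, $M_n$ is bounded.

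Since $M_n$ is connected and $\partial M_n$ has two components, there is a path $\gamma_n\subset M_n$ from $\partial U_n\times\{0\}$ to $\partial U_n'\times\{0\}$. By concatenating with an arc of $\partial U_n'$ if necessary, arrange that $\gamma_n(1)$ lies on the top edge of $\partial U_n'$ (where $y\approx c$) at a point of bounded $x$-coordinate. The intermediate value theorem applied to $y\circ\gamma_n$ then produces $q_n\in M_n$ with $y(q_n)=c-\pi/2$; after a horizontal translation (which is a symmetry of the translator equation), assume $x(q_n)=0$. In the translated configuration, the caps of $\partial U_n'$ recede to $x=\pm\infty$ as $n\to\infty$.

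I then pass to a subsequential limit $M_\infty$ of the translated surfaces (as closed sets in the Ilmanen/translator-metric framework of Remark~\ref{gap-remark}). In the limit, $\partial M_\infty$ lies in the plane $\{z=0\}$ and consists only of (a subset of) $\partial U\times\{0\}$ --- which lies below the line $\{y=c-\pi\}$ --- together with the two lines $\{y=\pm c\}\times\{0\}$. In particular $\partial M_\infty$ does not meet the open strip $J\times\RR$, where $J:=\{c-\pi<y<c\}$ has width exactly $\pi$. By Theorem~\ref{gap-theorem} (in the generalized $(2,0)$-set form of Remark~\ref{gap-remark}), $M_\infty$ is disjoint from $J\times\RR$. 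However, the points $q_n$ have $y$-coordinate $c-\pi/2\in J$, and after extracting a further subsequence one sees that some limit of (translates of) $q_n$ survives in $M_\infty\cap(J\times\RR)$, giving the desired contradiction.

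The principal obstacle is executing the limit rigorously in the absence of a priori curvature bounds for $M_n$: one must ensure that the point $q_n$ (with $z(q_n)$ potentially tending to $+\infty$) actually contributes a point to $M_\infty$ inside $J\times\RR$. This is handled by pairing a bowl-soliton upper barrier (Lemma~\ref{boundedness-lemma}) with the $f_{\alpha,\beta}$-graph lower barriers from the proof of Theorem~\ref{gap-theorem}, placed in a rectangle of width close to $\pi$ sitting inside $J$ and of length $\alpha\to\infty$; these together trap the relevant portion of $M_n$ in a region where a closed-set subsequential limit captures a point with $y=c-\pi/2$.
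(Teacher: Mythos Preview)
Your overall strategy---pass to a limit, apply Theorem~\ref{gap-theorem} to clear out a strip of width $\pi$, then use connectedness of $M_n$---is the paper's strategy. The gap is exactly where you flag it: you need $z(q_n)$ bounded so that $q_n$ survives in the limit, and your proposed barrier fix does not deliver this. The bowl-soliton upper barrier from Lemma~\ref{boundedness-lemma} must sit above all of $\partial M_n$, and since $\operatorname{diam}(\partial U_n')\to\infty$, its value over $q_n$ is not uniformly bounded in $n$. On the other side, the lower barrier $f_{\alpha,\beta}$ can only be placed in a rectangle of $y$-width $2\beta\le\pi-2\eta<\pi$ (so as to avoid $\partial M_n$), and for any fixed $\beta<\pi/2$ comparison with the grim reaper $y\mapsto\log\cos y-\log\cos\beta$ gives $f_{\alpha,\beta}(0,0)\le -\log\cos\beta$ uniformly in $\alpha$. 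So the lower barrier stays bounded while the upper barrier blows up: there is no squeeze, and nothing prevents $z(q_n)\to\infty$.

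The paper avoids this entirely by not seeking a point with a prescribed $y$-value. Take the limit $M$ of the (untranslated) $M_n$ as closed sets; apply Theorem~\ref{gap-theorem} (via Remark~\ref{gap-remark}) to each of the two width-$\pi$ strips $I_1,I_2\subset U'\setminus\overline U$ to see that $M$ is disjoint from $(I_1\cup I_2)\times\RR$. Thus $M$ splits into a piece $M^*$ with boundary $(\partial U)\times\{0\}$ and two pieces bounded by the lines of $\partial U'$. Now apply Lemma~\ref{boundedness-lemma} to $M^*$, whose boundary is the \emph{compact} curve $\partial U$: this gives $M^*$ compact. Choose a compact $K$ with $M^*\subset\operatorname{int}K$ and $K\cap(M\setminus M^*)=\emptyset$; connectedness of $M_n$ forces $M_n\cap\partial K\ne\emptyset$ for large $n$, and since $\partial K$ is compact these points subconverge to a point of $M\cap\partial K$, a contradiction. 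The maneuver you are missing is to apply Lemma~\ref{boundedness-lemma} to the limit component $M^*$ rather than to $M_n$; this manufactures a \emph{compact} separator $\partial K$ in place of your non-compact hyperplane $\{y=c-\pi/2\}$, and the boundedness of the crossing point is then automatic.
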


\begin{proof}
Suppose the result is false.  Then (after passing to a subsequence) each $S_n$ bounds a connected translator $M_n$
in $\{z\ge 0\}$.  Passing to a further subsequence, 
we can assume that the $M_n$ converge as sets to a closed set $M$ in $\RR^3$. 
Note that $U'\setminus U$ contains two parallel infinite strips $I_1$ and $I_2$ each of width $\pi$.  Thus
by Theorem~\ref{gap-theorem} (and Remark~\ref{gap-remark}), $M$ is disjoint from $I_1\times\RR$ and $I_2\times \RR$.
Thus $M$ is the union of three connected components, where one component, $M^*$, has boundary $(\partial U)\times\{0\}$,
and where each of the other two components is bounded by one of the straight lines in 
  $(\partial U')\times \{0\}$.

By Lemma \ref{boundedness-lemma}, $M^*$ is compact, a contradiction.

(If the contradiction is not clear, let $K$ be a compact set such that $M^*$ is in the interior
of $K$ and such that $M\setminus M^*$ is disjoint from $K$.  For all sufficiently large $n$,
$M_n$ contains a point in $\partial K$, and therefore $M\cap \partial K$ is nonempty, 
a contradiction.)
\end{proof}

\section{Limits when $\partialin M$ approaches $\partialout M$}

\begin{lemma}\label{torus-lemma}
Let $C$ be a smooth convex curve in $\{z=0\}$ such that at each point, the radius of curvature is $\ge R>0$.
If $0<r<R$, then the set 
\[
   T(C,r)= \{p\in \RR^3: \dist(p,C)\le r\}
\]
is a solid torus whose boundary is foliated by circles of radius $r$.  At each point of the boundary, the mean curvature
with respect to the inward pointing normal $\nu$ is greater than or equal to 
\[
 \frac1r - \frac1{R-r}.
\]
\end{lemma}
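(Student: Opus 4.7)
The plan is a direct tubular-neighborhood calculation. Parametrize $C$ by arclength as $\gamma:\RR/L\ZZ\to \{z=0\}$, with unit tangent $T(s)=\gamma'(s)$ and inward unit normal $N(s)$ (pointing toward the center of curvature); the Frenet formulas read $T' = \kappa N$ and $N' = -\kappa T$, where $0\le \kappa(s) \le 1/R$. Consider the map
\[
  \Phi:(\RR/L\ZZ)\times(\RR/2\pi\ZZ)\to\RR^3,\qquad \Phi(s,\theta) = \gamma(s) + r\cos\theta\, N(s) + r\sin\theta\, \ee_3.
\]
A direct computation gives $\Phi_s = (1 - r\kappa(s)\cos\theta)\,T(s)$ and $\Phi_\theta = -r\sin\theta\, N(s) + r\cos\theta\,\ee_3$, which are orthogonal and nowhere vanishing because $r\kappa\cos\theta\le r/R<1$; so $\Phi$ is an immersion.

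Next I would verify that $\Phi$ parametrizes $\partial T(C,r)$ bijectively, and more generally that every point of $T(C,r)$ has a unique foot on $C$. The convexity and radius-of-curvature bound on $C$ imply that the focal set of $C$ (the closure of the set where the normal map is singular or not injective) lies outside the open tube $\{\dist(\cdot,C)<R\}$: any point with two distinct closest points on $C$ would lie on a perpendicular bisector of a chord of $C$, and convexity together with $\kappa\le 1/R$ pushes such bisectors to distance $\ge R$ from $C$. Hence for $r<R$ the nearest-point projection $\pi:T(C,r)\to C$ is well defined and smooth, and $\Phi$ is a diffeomorphism onto $T(C,r)$ (with $s$ the arclength parameter of $\pi(p)$ and $\theta$ the angle in the normal plane). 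This identifies $T(C,r)$ as a solid torus whose boundary is foliated by the circles $\theta\mapsto \Phi(s,\theta)$ of radius $r$. I expect this embedding step to be the main obstacle, since the curvature computation that follows is purely formal.

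For the mean curvature, let $\nu = -(\cos\theta\, N(s)+\sin\theta\,\ee_3)$ be the inward unit normal to $\partial T(C,r)$. Using $T'=\kappa N$ I compute
\[
  \Phi_{ss}\cdot\nu = -(1-r\kappa\cos\theta)\,\kappa\cos\theta,\qquad \Phi_{\theta\theta}\cdot\nu = r,
\]
and dividing by the orthogonal metric coefficients $(1-r\kappa\cos\theta)^2$ and $r^2$ yields the principal curvatures
\[
   k_s = -\frac{\kappa\cos\theta}{1-r\kappa\cos\theta},\qquad k_\theta=\frac{1}{r}.
\]
Hence $H = k_s+k_\theta = \dfrac{1}{r} - \dfrac{\kappa\cos\theta}{1-r\kappa\cos\theta}$.

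Finally, I would bound $H$ from below. If $\cos\theta\le 0$ then $k_s\ge 0$, so $H\ge 1/r$. If $\cos\theta>0$, the function $u\mapsto u/(1-ru)$ is increasing on $[0,1/r)$, so substituting $u=\kappa\cos\theta\in[0,1/R]$ gives
\[
  \frac{\kappa\cos\theta}{1-r\kappa\cos\theta}\le \frac{1/R}{1-r/R} = \frac{1}{R-r},
\]
and therefore $H\ge 1/r - 1/(R-r)$ in both cases. The only care needed throughout is consistent sign conventions for $\nu$, $N$, and $\theta$ to avoid sign errors.
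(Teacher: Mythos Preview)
Your proof is correct and is exactly the computation the paper has in mind: the paper omits the argument entirely, remarking only that with orthonormal tangent fields $\uu$ (tangent to the circles) and $\vv$ one has $\nabla_{\uu}\uu\cdot\nu=1/r$ and $\nabla_{\vv}\vv\cdot\nu\ge -1/(R-r)$, which are precisely your $k_\theta$ and your bound on $k_s$. Your tube parametrization and the monotonicity step $u\mapsto u/(1-ru)$ simply make that sketch explicit.
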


The proof is a standard and straightforward, so we omit it.  (If $\uu$ and $\vv_2$ are orthornomal vector fields tangent to 
$\partial T(C,r)$ with
$\uu$ tangent to the circles, then $\nabla_{\uu}\uu\cdot\nu = 1/r$
and $\nabla_\vv\vv\cdot\nu \ge - 1/(R-r)$.)

If $r\le R/3$, then $R-r\ge 2r$, so the mean curvature is $\ge 1/r - 1/(2r)=1/(2r)$.  Hence we have:

\begin{corollary}\label{torus-corollary}
Let $h\ge 0$.
In Lemma~\ref{torus-lemma}, if $r\le \min\{R/3, 1/(2h)\}$, then the mean curvature of $T(C,r)$ with respect to the 
inward unit normal is everywhere $\ge h$.
In particular, if 
\[
   r\le \min\left\{ \frac{R}3, \frac12 \right\},
\]
then $T(C,r)$ is mean-convex with respect to the translator metric.
\end{corollary}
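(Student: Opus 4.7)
The plan is to derive Corollary~\ref{torus-corollary} as an immediate consequence of Lemma~\ref{torus-lemma}, combined with an elementary inequality and the standard relation between Euclidean and translator-metric mean curvatures. The whole argument is essentially bookkeeping, with no substantive obstacle.

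First I would process the bound from Lemma~\ref{torus-lemma} under the hypothesis $r \le R/3$. Since $r \le R/3$ gives $R-r \ge 2r$, hence $1/(R-r) \le 1/(2r)$, Lemma~\ref{torus-lemma} yields
$$H \;\ge\; \frac{1}{r} - \frac{1}{R-r} \;\ge\; \frac{1}{r} - \frac{1}{2r} \;=\; \frac{1}{2r},$$
where $H$ denotes the Euclidean mean curvature of $\partial T(C,r)$ with respect to the inward unit normal $\nu$. The additional hypothesis $r \le 1/(2h)$ then gives $1/(2r) \ge h$, so $H \ge h$ at every boundary point, which is the first assertion of the corollary.

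For the ``in particular'' statement, one uses the standard conformal formula for mean curvature under $g = e^{2\phi}g_{\mathrm{Euc}}$ with $\phi = -z/2$: for a hypersurface in $\RR^3$, the translator-metric mean curvature with respect to the (Euclidean) inward unit normal $\nu$ equals
$$\tilde H \;=\; e^{z/2}\bigl(H + \nu \cdot \ee_3\bigr).$$
(Consistency check: the vanishing of $\tilde H$ recovers the translator equation $H = -\nu\cdot \ee_3$, which characterizes translators as the minimal surfaces of the translator metric.) Hence $\partial T(C,r)$ is mean-convex in the translator metric precisely when $H + \nu\cdot\ee_3 \ge 0$ everywhere. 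Applying the first part of the corollary with $h=1$ --- permissible because $r \le \min\{R/3,1/2\}$ --- gives $H \ge 1$, and since $|\nu\cdot \ee_3|\le 1$ we conclude $H + \nu\cdot\ee_3 \ge 0$, so $T(C,r)$ is mean-convex in the translator metric. The only step that requires any verification is the conformal identity, which is routine; the rest was already done in Lemma~\ref{torus-lemma}.
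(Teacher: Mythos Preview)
Your proof is correct and follows essentially the same approach as the paper. The paper's argument is the one-line observation preceding the corollary (deriving $H\ge 1/(2r)$ from $r\le R/3$), leaving the translator-metric mean-convexity implicit; you supply that step explicitly via the conformal formula, which is the natural way to justify it.
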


\begin{theorem}\label{graphs-theorem}
Suppose $R>0$, and let
\[
   r= \frac12 \min\{R/3, 1/2\}.
\]
Suppose $A$ is an annulus bounded by a pair of smooth, nested, convex curves  in $\{z=0\}$.  Suppose that the inner curve, $C_1$, has radius of curvature
everywhere $\ge R$, and that $A$ lies in $T(C_1,r)$.  Then there is exactly one translator $M$ in $T(C_1,r)$ with boundary $\partial A$,
and it is a graph over $A$.
\end{theorem}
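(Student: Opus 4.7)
The strategy has two parts: (i) construct a graphical translator $M=\graph(u)\subset\Omega:=T(C_1,r)$ with boundary $\partial A$ by solving a Dirichlet problem, and (ii) show any translator $M'\subset\Omega$ with $\partial M'=\partial A$ coincides with $M$. Throughout, I use that, by Corollary~\ref{torus-corollary} (applied with $r<\min\{R/3,1/2\}$), the boundary $\partial\Omega$ is strictly mean-convex with respect to the translator metric $g=e^{-z}(dx^2+dy^2+dz^2)$.

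\emph{Existence and graphicality.} I solve the Dirichlet problem
\[
\ddiv\!\left(\frac{\nabla u}{\sqrt{1+|\nabla u|^2}}\right)=\frac{1}{\sqrt{1+|\nabla u|^2}}\quad\text{on } A,\qquad u|_{\partial A}=0.
\]
At any interior local maximum $p$ of $u$ one has $\nabla u(p)=0$, so the PDE gives $\Delta u(p)=1>0$, which contradicts $D^2u(p)\le 0$; hence $u\le 0$ throughout $A$. The lower bound keeping $\graph(u)\subset\Omega$ and, crucially, the boundary gradient estimate at $\partial A$ are both supplied by the strict $g$-mean-convexity of $\partial\Omega$, which furnishes local pinching barriers at each point of $\partial A$. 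The standard continuity method for quasilinear elliptic Dirichlet problems (as in Gilbarg--Trudinger, Ch.~15) then yields a smooth solution $u\in C^\infty(A)\cap C(\bar A)$, and $M:=\graph(u)$ is a translator in $\Omega$ with $\partial M=\partial A$.

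\emph{Uniqueness.} Let $M'\subset\Omega$ be any translator with $\partial M'=\partial A$. The same interior-maximum argument (applied to $z|_{M'}$ and writing $M'$ locally as a graph near any interior maximum) gives $z|_{M'}\le 0$, so $M'\subset\Omega\cap\{z\le 0\}$. For $s\in\RR$ put $M_s:=M+s\ee_3$ (again a translator), and let $s^+:=\sup\{s:M_s\cap M'\ne\emptyset\}$. For $s$ large, $M_s\subset\{z>0\}$ is disjoint from $M'$, so $s^+<\infty$; and $s^+\ge 0$ since $\partial A\subset M_0\cap M'$. If $s^+>0$, the boundaries $\partial M_{s^+}=\partial A+s^+\ee_3$ and $\partial M'=\partial A$ are disjoint, so any touching at $s=s^+$ is interior to both surfaces; the strong maximum principle for $g$-minimal surfaces then forces $M_{s^+}=M'$, contradicting the boundary mismatch. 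Hence $s^+=0$, and symmetrically $\inf\{s:M_s\cap M'\ne\emptyset\}=0$. Thus $M_s\cap M'=\emptyset$ for every $s\ne 0$, which forces every $p\in M'$ with $\pi(p)\in A$ to satisfy $z(p)=u(\pi(p))$, i.e.\ $M'\cap\pi^{-1}(A)\subset M$. A separate barrier argument---sliding a rotationally symmetric translating catenoid from outside $\Omega$ (cf.\ Lemma~\ref{boundedness-lemma}) and using the strict $g$-mean-convexity of $\partial\Omega$---rules out components of $M'$ projecting into the inner planar region $T_{\text{plane}}(C_1,r)\setminus A$, giving $M'=M$.

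The main obstacle is the very last step: showing $\pi(M')\subset A$. The sliding argument handles $M'\cap\pi^{-1}(A)$ directly but says nothing about possible sheets of $M'$ sitting over the inside of $C_1$; a complementary mean-convex or catenoidal barrier argument tailored to the thin tubular geometry is needed to exclude them. By contrast, the boundary gradient estimate in the existence step is routine once the $g$-mean-convex local barriers are in place.
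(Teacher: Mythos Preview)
There are two problems. First, a sign error: with the paper's convention $t\mapsto M-t\ee_3$, the graphical translator equation is $\ddiv\bigl(\nabla u/\sqrt{1+|\nabla u|^2}\bigr)=-1/\sqrt{1+|\nabla u|^2}$, so at an interior critical point $\Delta u=-1$. Thus $u$ has an interior maximum and $u\ge 0$ on $A$, not $u\le 0$; your claim $z|_{M'}\le 0$ fails for the same reason. This is repairable, but as written neither $\graph(u)\subset\Omega$ nor the setup of your sliding argument is justified.

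Second, and more seriously, the gap you flag at the end---ruling out pieces of $M'$ projecting into the region enclosed by $C_1$---is real, and the catenoidal barrier you sketch is not adapted to this geometry. The paper closes this gap by a shrinking-tube argument based on a \emph{different} tube. Let $C$ be the curve at distance $r$ from $C_1$ on the unbounded side; then $C$ is convex with radius of curvature $\ge R$, and any translator $M\subset T(C_1,r)$ with $\partial M=\partial A$ lies in $T(C,2r)$. Since $2r=\min\{R/3,1/2\}$, Corollary~\ref{torus-corollary} makes $T(C,\rho)$ $g$-mean-convex for every $\rho\le 2r$; since $\partial A\subset T(C,\rho)$ for all $\rho\in[r,2r]$, the maximum principle traps $M$ in $T(C,r)$. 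But $T(C,r)$ projects to a planar annulus whose inner boundary is exactly $C_1$, so $M$ cannot project inside $C_1$. Combining with the vertical-plane maximum principle (convexity of $C_2$) gives $M\subset A\times\RR$, and then sliding $M$ against \emph{itself} yields graphicality directly. Uniqueness of graphs is then immediate, and existence follows by taking a $g$-area minimizer in $T(C_1,r)\cap\{z\ge 0\}$---no Dirichlet machinery needed.
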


\begin{proof}
Let $C$ be the set of points $p$ in the the unbounded component
of $\{z=0\}\setminus C_1$ such that $\dist(p,C_1)=r$.   Then $C$ is a convex curve whose radius of curvature is everywhere $\ge R$.

\begin{claim}
Suppose $M$ is a translator in $T(C_1,r)$ with boundary $\partial A$.  Then $M$ is a graph over $A$.
\end{claim}

To prove the claim, note that
Since $M$ lies in $T(C_1,r)$, it also lies in $T(C,2r)$.
  By Corollary~\ref{torus-corollary}, $T(C,\rho)$ is $g$-mean-convex for every $\rho\le 2r$.
For $\rho\in [r,2r]$, $A$ is contained in $T(C,\rho)$.  Thus, by the maximum principle, 
\begin{equation}\label{cookie}
M  \subset T(C,r).
\end{equation}
Note that the projection of $T(C,r)$ to $\{z=0\}$ (which is also $T(C,r)\cap \{z=0\}$) is an annulus $A'$ whose inner boundary is $C_1$.
Thus $M$ lies in $A'\times \RR$.  Let $D$ be the convex planar region bounded by $C_2$, the outer boundary curve of $A$.
By the maximum principal, $M$ lies in $D\times\RR$ 
and $M\setminus\partial M$ lies in the interior of $D\times\RR$.
Thus, by~\eqref{cookie}, $M$ lies in $A\times\RR$
and $M\setminus \partial M$ lies in the interior of $A\times \RR$.

It follows that $M$ is a graph over $A$.  For if not, there would be a $t>0$ such that
$M\cap (M+t\ee_3)$ is nonempty, and, at the largest such $t$, we would get a violation of the maximum principle.
This completes the proof of the claim. 

It follows from the claim that there is at most one translator in $T(C_1,r)$ with boundary $\partial A$.

Thus to complete the proof of the theorem, it suffices to prove that there is at least one such translator.
For that, we can let $M$ be a $g$-area-minimizing surface (flat chain mod $2$ or integral current) in $T(C_1,r)\cap \{z\ge 0\}$
with boundary $\partial A$.
\end{proof}

\begin{lemma}[Dichotomy Lemma]\label{dichotomy-lemma}
Let $M_i\in \Cc$ and suppose that the two components of $\partial M_i$ both converge to the same convex curve $\Gamma$.
Let $D$ be the translating graph bounded by $\Gamma$.
Then, after passing to a subsequence, one of the following occurs:
\begin{enumerate}
\item $\max_{p\in M_i} \dist(p,\Gamma)\to 0$, in which case $x(M_i)\to x(\Gamma)$, or
\item $M_i$ converges to $D$.  Away from $\Gamma\cup Z$, the convergence is smooth with some positive integer multiplicity.
   In this case, $x(M_i)\to 0$.
\end{enumerate}
\end{lemma}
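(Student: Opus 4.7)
To prove the dichotomy, I would split on $d_i := \max_{p \in M_i}\dist(p,\Gamma)$. After passing to a subsequence, either $d_i \to 0$ (giving alternative (1)) or $d_i \ge \epsilon_0 > 0$ for all $i$ (to be identified with alternative (2)). In the first case the Hausdorff-distance conclusion is immediate, and for $x(M_i) \to x(\Gamma)$ the point $(x(M_i),0,z(M_i)) \in M_i\cap\{y=0\}$ lies within $d_i$ of $\Gamma\cap\{y=0\}\cap\{x>0\} = \{(x(\Gamma),0,0)\}$, so $x(M_i)\to x(\Gamma)$.

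Assume now $d_i \ge \epsilon_0$. By Theorem~\ref{curvature-bound-theorem}, the $M_i$ satisfy uniform area and curvature bounds on compact subsets of $\RR^3 \setminus (\partial M_i \cup Z)$, and since $\partial M_i \to \Gamma$ in Hausdorff distance these bounds become uniform on compact subsets of $\RR^3 \setminus (\Gamma \cup Z)$. Passing to a subsequence, the $M_i$ converge smoothly there, with a locally constant positive integer multiplicity, to a translator $M$; let $\overline M$ be the Hausdorff limit in $\RR^3$. Using the upward translates $D+t\ee_3$ ($t>0$) as barriers, whose boundaries lie strictly above $\partial M_i$ for large $i$, and noting the $M_i$ are uniformly bounded by Lemma~\ref{boundedness-lemma}, sliding them downward and applying the strong maximum principle at any hypothetical first interior tangency forces $M_i$ below $D+t\ee_3$. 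Letting $t\to 0$ and combining with $M_i \subset \{z \ge 0\}$ confines $\overline M$ to the closed region bounded below by the disk $\Gamma$ bounds in $\{z=0\}$ and above by $D$.

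The main task is then to show $\overline M = D$. Since $M_i$ is an oriented annulus with $\partial [M_i] = [\partialout M_i] - [\partialin M_i]$ and both boundary curves tend to $\Gamma$ with matching orientations, the integer current limit has vanishing boundary, so the smooth translator $M = \overline M \setminus (\Gamma \cup Z)$ carries no boundary in $\RR^3 \setminus (\Gamma \cup Z)$. By the strong maximum principle, any component of $M$ touching $D$ at an interior point coincides with $D$ there, and hence on the entire component by unique continuation. A component of $M$ strictly below $D$ would be a compact translator with ``boundary'' only on $\Gamma \cup Z$; this is excluded by the superharmonicity of $z$ on any translator, which forces $z$ to attain its minimum on the actual manifold boundary. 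Hence every component of $M$ equals $D$, giving $\overline M = D$ with some positive integer multiplicity. Since $D$ intersects $\{y=0\}$ through $Z$, $M_i \to D$ then forces $x(M_i)\to 0$.

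The hard part will be the identification $\overline M = D$, particularly handling the behavior near $Z$ (where the curvature bound in Theorem~\ref{curvature-bound-theorem} degrades because $R(M_i,p)$ can shrink with the neck) and excluding exotic limit components supported on or near $Z$. The boundary-vanishing property of the current limit, combined with the strong maximum principle applied to each connected component of the smooth limit, together resolve this and pin the multiplicity down to a positive integer, morally $2$ corresponding to the two collapsing boundary curves.
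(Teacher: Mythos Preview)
Your overall strategy---pass to a subsequence, use the area and curvature bounds for smooth subsequential convergence away from $\Gamma\cup Z$, and then argue that the limit must be $D$---is reasonable, but the key step ``$\overline M=D$'' in case~(2) has a genuine gap, and the paper closes it with a better choice of barriers that also makes the argument much shorter.

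The failing sentence is ``A component of $M$ strictly below $D$ \ldots\ is excluded by the superharmonicity of $z$.'' Superharmonicity of $z$ on such a component $\Sigma$ only forces $\inf_\Sigma z$ to be approached on $\overline\Sigma\setminus\Sigma\subset\Gamma\cup Z$. Since $\Gamma$ sits at height $z=0$, a sheet whose closure meets $\Gamma$ would simply realize $\inf z=0$ there---no contradiction. Your current-boundary remark $\partial[M_i]\to 0$ is perfectly compatible with an even-multiplicity sheet strictly below $D$ attached along $\Gamma$, so it does not help. If you try instead to slide $D-s\ee_3$ upward to pin $\Sigma$, the first contact can occur on $\overline\Sigma\cap Z$, where you have no smooth control, so that route stalls too. (A minor side issue: your upper barrier $D+t\ee_3$ is only defined over the disk bounded by $\Gamma$, while $\partialout M_i$ lies just outside; this is patchable but already hints at the right fix.)

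The paper avoids all of this by trapping each $M_i$ between the translating graphs $D_i$ and $D_i'$ bounded by the \emph{actual} inner and outer curves $\partialin M_i$ and $\partialout M_i$ (slide each vertically and apply the maximum principle). Since both boundary curves converge to $\Gamma$, both $D_i$ and $D_i'$ converge to $D$, so the region between them collapses onto $D$ and the set-limit satisfies $\overline M\subset D$ for free---no components below $D$ to rule out. The dichotomy is then immediate: the smooth-limit multiplicity $k$ on the connected surface $D\setminus(\Gamma\cup Z)$ is a single nonnegative integer; $k=0$ together with connectedness of $\overline M$ forces $\overline M=\Gamma$ (case~(1)), while $k>0$ gives case~(2) with $x(M_i)\to 0$.
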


\begin{proof}
Let $D_i$ and $D_i'$ be the translating graphs bounded by the inner and outer components of $\partial M_i$.
By the maximum principle (consider vertical translates of $D_i$ and of $D_i'$), $M_i$ lies in the closed
region of $\{z\ge 0\}$ between $D_i$ and $D_i'$.

Thus, after passing to a subsequence, the $M_i$ converge as sets to a closed subset $M$ of $D$.

By the curvature and area bounds, away from $Z\cup \Gamma$, the convergence is smooth with some integer multiplicity $k\ge 0$.

If $k=0$, then $\Gamma\subset M \subset \Gamma\cup (Z\cap D)$.
Since each $M_i$ is connected, $M$ is connected and therefore $M=\Gamma$.  Thus 
\[
 \max_{p\in M_i}\dist(p,\Gamma)\to 0
\]
and $x(M_i)\to x(\Gamma)$.

Now suppose that $k>0$.  Let $\eps>0$ be small and let 
\[
     U= \{p \in \RR^3: \dist(p, Z\cap D)>\eps\}.
\]
Then for large $i$, $M\cap U$ consists of $k$ components, each of which converges smoothly as $i\to\infty$ to $D\cap U$.
Hence $x(M_i) \to 0$.
\end{proof}

\TOCstop
\section*{The space $\Pairs$}
\TOCstart

Let $\Pairs$ be the space of curves $\Gamma$ in $\{z=0\}$
such that $\Gamma$ consists of a pair of disjoint, convex, simple closed curves, each of which is symmetric about the $x$ and $y$ axes.
As in Definition~\ref{Cc-definition},
   $\Cc$ is the space of embedding translating annuli $M$ such that $\partial M\in \Pairs$, $M$ is invariant under 
reflection in the planes $\{x=0\}$ and $\{y=0\}$, and $M\cap Z=\emptyset$.

\begin{lemma}\label{proper-lemma}
Suppose that $M_i\in \Cc$ and that $\partial M_i \to \Gamma\in \Pp$. 
Then, after passing to a subsequence, the $M_i$ converge to a limit $M\in \Cc$,
and the convergence is smooth in $\{z>0\}$.
Furthermore, if $\partial M_i$ and $\Gamma$ are $C^{2,\alpha}$ and if
the $\partial M_i$ converge to $\Gamma$ in $C^{2,\alpha}$, then the $M_i$
converge to $M$ in $C^{2,\alpha}$.
\end{lemma}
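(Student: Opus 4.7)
The plan is to first establish subsequential set-convergence via the area and curvature bounds of Theorem~\ref{curvature-bound-theorem}, then rule out neck collapse ($x(M_i) \to 0$) by a strong maximum principle argument exploiting the two-component structure of $\Gamma$, and finally conclude smooth multiplicity-one convergence with $M \in \Cc$ and the stated boundary regularity.

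By Corollary~\ref{bounded-corollary}, the $M_i$ lie in a common bounded subset of $\RR^3$, and the area bound of Theorem~\ref{curvature-bound-theorem}(\ref{area-item}) then gives, after passing to a subsequence, convergence $M_i \to M$ as closed sets for some closed $M\subset\RR^3$. The interior curvature bound Theorem~\ref{curvature-bound-theorem}(\ref{in-addition-item}), $|A(M_i,p)|\min\{1,x(M_i),\dist_{M_i}(p,\partial M_i)\}\le c_2$, yields smooth convergence with some integer multiplicity on compact subsets of $\{z>0\}$ provided $\liminf_i x(M_i)>0$. Near $\partial M_i\subset\{z=0\}$, the boundary curves $\partial M_i$ converge to $\Gamma$ with multiplicity one, so the convergence has multiplicity one in a neighborhood of the boundary; by connectedness of $M_i$ and local constancy of multiplicity, the multiplicity is then one throughout. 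The heart of the argument is therefore to rule out $x(M_i)\to 0$.

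Assume for contradiction $x(M_i)\to 0$. By Theorem~\ref{x-critical-theorem}, $\nu(M_i,p_i)=-\ee_1$ at $p_i:=(x(M_i),0,z(M_i))$, and $\dist(p_i,Z)=x(M_i)\to 0$; Lemma~\ref{cat-lemma} then gives that the rescalings $(M_i-q_i)/x(M_i)$, with $q_i:=(0,0,z(M_i))\in Z$, converge smoothly with multiplicity one to the standard catenoid with vertical axis and waist the unit circle in $\{z=0\}$. Consequently each $M_i$ carries a catenoidal neck around $Z$ which, after passing to a further subsequence, collapses to a single point $p_\infty:=(0,0,z_\infty^*)\in Z$ with $z_\infty^*:=\lim_i z(M_i)$. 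Removing a small neighborhood of the neck splits $M_i$ into an inner subannulus (bounded by $\partial_\textnormal{inner} M_i$ plus a neck curve) and an outer subannulus (bounded by $\partial_\textnormal{outer} M_i$ plus a neck curve). Away from $p_\infty$ the curvatures remain uniformly bounded, so these subannuli converge smoothly to smooth translators $D_\textnormal{in}$ and $D_\textnormal{out}$ bounded, respectively, by the inner component $\Gamma_\textnormal{in}$ and the outer component $\Gamma_\textnormal{out}$ of $\Gamma$, both containing $p_\infty$ in their closures. A standard Alexandrov moving planes argument in horizontal planes (in the spirit of Theorem~\ref{x-alexandrov-theorem}) combined with the convex hull property (Corollary~\ref{convex-corollary}) shows that $D_\textnormal{in}$ and $D_\textnormal{out}$ are the unique translating graphs $u_\textnormal{in}$ and $u_\textnormal{out}$ over the interiors of $\Gamma_\textnormal{in}$ and $\Gamma_\textnormal{out}$, respectively, with zero boundary values. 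Since $\Gamma_\textnormal{in}$ lies strictly inside $\Gamma_\textnormal{out}$, $u_\textnormal{out}$ is strictly positive on $\Gamma_\textnormal{in}$, and the strong maximum principle applied to the difference $u_\textnormal{out}|_{\overline{\textnormal{int}(\Gamma_\textnormal{in})}}-u_\textnormal{in}$ (which satisfies a linear elliptic equation, has strictly positive boundary values on $\Gamma_\textnormal{in}$, and hence is strictly positive in the interior) gives $u_\textnormal{out}(0,0)>u_\textnormal{in}(0,0)$. But $p_\infty$ lies on both graphs, forcing $u_\textnormal{in}(0,0)=u_\textnormal{out}(0,0)=z_\infty^*$, a contradiction. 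Hence $\liminf_i x(M_i)>0$.

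By the curvature bounds and the multiplicity argument above, the convergence $M_i\to M$ is smooth with multiplicity one on compact subsets of $\{z>0\}$. The limit $M$ is then a compact embedded annulus (topology is preserved under smooth multiplicity-one convergence), inherits the $\ZZ_2\times\ZZ_2$ symmetries of the $M_i$, is disjoint from $Z$ (since $x(M)\ge\liminf_i x(M_i)>0$), and has $\partial M=\Gamma$; that is, $M\in\Cc$. For the $C^{2,\alpha}$ assertion, given $C^{2,\alpha}$ convergence of the boundaries, standard boundary Schauder estimates for the quasilinear elliptic translator equation with $C^{2,\alpha}$ boundary data yield uniform $C^{2,\alpha}$ bounds on $M_i$ up to $\partial M_i$, which combined with the interior smooth convergence give $C^{2,\alpha}$ convergence up to the boundary. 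The main obstacle is the careful neck-collapse analysis, specifically the identification of $D_\textnormal{in}$ and $D_\textnormal{out}$ as full translating graphs over the interiors of $\Gamma_\textnormal{in}$ and $\Gamma_\textnormal{out}$ in the presence of the pinch singularity at $p_\infty$; for this step the catenoid-end asymptotics from Lemma~\ref{cat-lemma} are essential.
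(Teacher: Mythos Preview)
Your overall strategy matches the paper's: extract subsequential convergence from the area and curvature bounds of Theorem~\ref{curvature-bound-theorem}, then rule out neck collapse via a maximum-principle contradiction between the two resulting disks. The paper phrases the collapse in terms of the length $\eps_i$ of the shortest closed geodesic rather than $x(M_i)$, and for the $C^{2,\alpha}$ assertion cites \cite{white-curv}*{Theorem~3} where you appeal to Schauder estimates directly; these are cosmetic differences.

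Where the arguments genuinely diverge is in how the contradiction is reached, and here the paper's route is both shorter and avoids the step you yourself flag as the main obstacle. The paper observes that the compactness theorem for embedded minimal surfaces already makes the limit set $M$ a \emph{smooth} embedded surface away from $\Gamma$ (possibly with multiplicity). Since both $D_\textnormal{in}$ and $D_\textnormal{out}$ lie in this smooth $M$ and pass through the collapse point $p$, they coincide near $p$, and the strong maximum principle forces them to coincide globally, contradicting $\Gamma_\textnormal{in}\neq\Gamma_\textnormal{out}$. No identification of $D_\textnormal{in}$, $D_\textnormal{out}$ as global graphs is needed. Your detour through graphicality is not justified by what you cite: reflection in horizontal planes is not an isometry of the translator metric, and Theorem~\ref{x-alexandrov-theorem} is a statement about $\Mlow$ for $M\in\Cc$, not about an arbitrary disk-type translator bounded by a convex planar curve (and it must be applied before you know smoothness at $p_\infty$). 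The easy fix is to skip this step: Lemma~\ref{cat-lemma} together with Remark~\ref{cat-lemma-remark} already tells you that near $p_\infty$ the two sheets are smooth with common horizontal tangent plane and lie one above the other, which is all that is needed to apply the strong maximum principle directly at $p_\infty$.
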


The first statement of the lemma is the assertion that 
the boundary map $\partial: M\mapsto \partial M$ is
 a continuous, proper map from $\Cc$ to $\Pairs$.
 
\begin{proof}
Recall that the $M_i$ are minimal surfaces with respect to the translator metric.
By the standard compactness theory (e.g., \cite{white18}*{Theorem~1.1})
and the curvature bounds in Theorem~\ref{curvature-bound-theorem}\eqref{in-addition-item}, we can assume that the $M_i$ converge to a limit $M$,
where $M\setminus \Gamma$ is smooth and embedded (possibly with multiplicity) and where the convergence
of $M_i$ to $M$ is smooth in compact subsets of $\RR^3\setminus (\Gamma\cup S)$, for some
locally finite subset  $S$ of $Z$.
By Corollary~\ref{bounded-corollary},
 the $M_i$ (and therefore also $M$) all lie some compact subset of $\RR^3$.
Let $\eps_i$ be the length of the shortest closed geodesic $\gamma_i$ in $M_i$.  It suffices to show
that $\eps_i$ is bounded away from $0$, as then the $M_i$ converge subsequentially to an embedded
minimal annulus $M\in \Cc$ (by the curvature bound in Theorem~\ref{curvature-bound-theorem}\eqref{curvature-item}, or by standard Douglas-Rado theory.)
Suppose to the contrary that $\eps_i\to 0$.   Note that $\gamma_i$ converges to a point $p$ in $M$.
One component of $M_i\setminus \gamma_i$ converges to a disk $D_\textnormal{out}$ in $M$ with boundary $\Gammaout$
and the other component converges to a disk $D_\textnormal{in}$ in $M$ with boundary $\Gammain$; the convergence is smooth
away from $\{p\}\cup \partial\Gamma$.   A priori, $D_\textnormal{in}$ and $D_\textnormal{out}$ might not be smooth at $\{p\}$.  But 
since $M$ is smooth, it follows that $D_\textnormal{in}$ and $D_\textnormal{out}$ are smooth at $p$.  But then their
contact at $p$ violates the strong maximum principle.
This completes the proof of the first Assertion.

The second assertion follows immediately from 
  Theorem~3 of~\cite{white-curv}.
Choose $r>0$ large enough that the ball $B(0,r)$ of radius $r$ about $0$ in $\RR^3$ contains
all the $M_i$.  Let 
\[
    N = \overline{B(0,2r)}\cap \{z\ge 0\}.
\]
Assertion~(4) of~\cite{white-curv}*{Theorem~3}
  includes the hypothesis that $\partial N$ is smooth and strictly convex
with respect to the Riemannian metric.  (Strict convexity of $\partial N$ at a point
means that the principal curvatures with respect to the inward pointing unit normal are positive.
Equivalently, it means that mean curvature vector points into $N$, and that the product
of the principal curvatures is positive.)
However, the proof of that theorem only uses smoothness and strict convexity of $\partial N$ in some
open subset of $\partial N$ containing $\Gamma$.
Indeed, smoothness and strict {\em mean} convexity of $\partial N$ near $\Gamma$ suffice.
  
Actually, \cite{white-curv}*{Theorem~3} only asserts convergence of $M_i$ to $M$
in $C^{2,\beta}$ for $\beta<\alpha$.  But convergence in $C^{2,\alpha}$ then follows
 by standard Schauder estimates.
\end{proof}

\section{Path Lifting}\label{path-lifting} 

Throughout this section, we shall consider one-parameter families of closed, convex, symmetric sets in 
the plane $\{z=0\}$
\begin{eqnarray*}
t \in [0,1] & \longmapsto & \cin(t) \\
t \in [0,1] & \longmapsto & \cout(t)
\end{eqnarray*}
satisfying:
\begin{enumerate}[(a)]
\item $\cin(0)=\cout(0),$
\item $\cin(t)$ is contained in the interior of $\cout(t)$, for all $t \in (0,1],$
\item\label{nested-outers} $\cout(t)$ is contained in the interior of $\cout(t')$ for all $0\le t < t' \le 1$.
\item $\cin(t)$ is compact with nonempty interior, for all $t \in [0,1],$
\item $\cout(1)$ is a strip $\RR \times [-d,d] \times \{0\}$ such that the distance from 
$\partial \cout(1)$ to $\cin(1)$ is $\geq \pi.$
\end{enumerate}
We define:
\begin{eqnarray*}
\Gamma_{\rm in}(t)& := & \partial \cin(t), \\
\Gamma_{\rm out}(t)& := & \partial \cout(t), \\
\Gamma(t)& := & \Gamma_{\rm in}(t) \sqcup \Gamma_{\rm out}(t).
\end{eqnarray*}

We also define
\begin{align*}
\Cc_{\Gamma(t)} &= \{M\in \Cc: \partial M=\Gamma(t)\},    \\
\Cc_{\Gamma(\cdot)} &= \cup_{t\in (0,1)} \Cc_{\Gamma(t)}.
\end{align*}
and we define a map
\[
   \pi: \Cc_{\Gamma(\cdot)} \to (0,1)
\]
by letting $\pi(M)$ be the unique $t\in (0,1)$ such that
\[
   \partial M = \Gamma(t).
\]
(Uniqueness of $t$ follows from~\eqref{nested-outers}.)

Theorem \ref{th:gap} implies the following:
\begin{lemma} \label{lem:gap}
For $t$ sufficiently close to $1$, $\Gamma(t)$ bounds no connected translator in the
halfspace $\{z \geq 0\}.$ 
\end{lemma}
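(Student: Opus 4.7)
\textbf{Proof plan for Lemma~\ref{lem:gap}.} The strategy is a direct reduction to Theorem~\ref{th:gap} via contradiction. Suppose the conclusion fails; then there exists a sequence $t_n \to 1$ such that $\Gamma(t_n) = \Gammain(t_n) \sqcup \Gammaout(t_n)$ bounds a connected translator $M_n$ in the halfspace $\{z \ge 0\}$. I will show that this sequence puts us exactly in the setting of Theorem~\ref{th:gap}, yielding a contradiction.

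Set $U_n := \mathrm{int}(\cin(t_n))$ and $U_n' := \mathrm{int}(\cout(t_n))$. These are open convex subsets of the plane $\{z=0\} \cong \RR^2$ with $U_n \subset U_n'$, and $(\partial U_n \cup \partial U_n') \times \{0\} = \Gamma(t_n)$. Using the continuity of the path $t \mapsto (\cin(t), \cout(t))$ (the monotonicity of $\cout$ already gives one-sided convergence, and $\cin$ is continuous by hypothesis on the path), we obtain $U_n \to U := \mathrm{int}(\cin(1))$, a bounded open convex set, and $U_n' \to U' := \mathrm{int}(\cout(1)) = \RR \times (-d,d)$, an infinite strip. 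Moreover, the distance hypothesis $\min\{|p-q| : p \in \partial\cout(1),\ q \in \cin(1)\} \ge \pi$ imposed on the path translates precisely into
\[
    \min\{ |p-q| : p \in \partial U,\ q \in \partial U'\} \ge \pi.
\]

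Thus the sequence $(U_n, U_n')$ satisfies every hypothesis of Theorem~\ref{th:gap}. That theorem then asserts that for all sufficiently large $n$, there is no connected translator in $\{z \ge 0\}$ with boundary $(\partial U_n \cup \partial U_n') \times \{0\} = \Gamma(t_n)$. This contradicts the existence of $M_n$ and finishes the proof. The only point that requires care (more bookkeeping than substance) is matching the implicit notion of set convergence used in Theorem~\ref{th:gap} with the continuity of the prescribed path; once that is checked, the remainder is automatic.
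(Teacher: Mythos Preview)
Your argument is correct and is exactly the approach the paper takes: the paper simply states that Theorem~\ref{th:gap} implies the lemma, and you have spelled out the routine verification that the path hypotheses (a)--(e) place any sequence $t_n\to 1$ squarely within the hypotheses of that theorem.
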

\begin{figure}[htbp]
\begin{center}
\includegraphics[width=.55\textwidth]{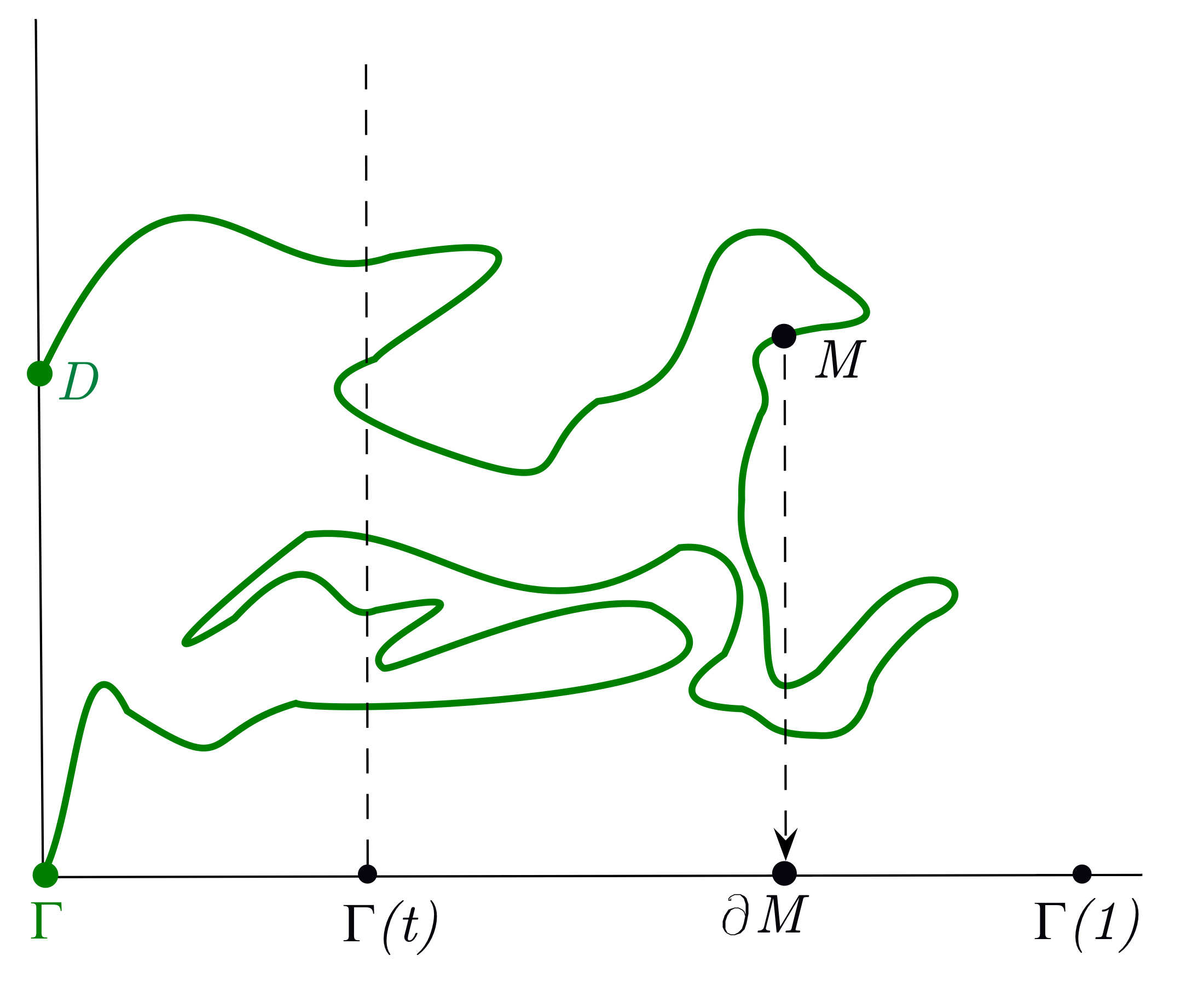}
\caption{\small $\Cc_{\Gamma(\cdot)}$ is homeomorphic to a $1$-manifold without boundary.}
\label{fig:h3}
\end{center}
\end{figure}

Let $\Gamma=\Gamma_{\rm in}(0)=\Gamma_{\rm out}(0)$ and let $D$ be the unique graphical translator with boundary $\Gamma$. The main result of this section asserts the existence of a  connected family, ${\mathcal{F}'}$, of compact translating annuli, each of which has boundary $\Gamma(t)$ for some $t\in(0,1)$. They have  the further property that, when  the elements  are considered as subsets of $\RR^3$, the  closure of $\mathcal{F}'$ is compact and equal to $\mathcal{F}:=\mathcal{F}'\cup\{\Gamma, D\}$.

\begin{theorem} \label{th:family}
Let $\Gamma= \Gamma_{\rm in}(0)=\Gamma_{\rm out}(0).$ 
Let $(x(\Gamma),0,0)$ be the point of $\Gamma$ in the positive $x$-axis.
\begin{enumerate}[\upshape(1)]
\item\label{family-1}
If $0< \hat x < x(\Gamma)$, then there exists a surface $M\in \Cc_{\Gamma(\cdot)}$ such that $x(M)=\hat x$.
\item\label{family-2}
 If $I$ is a closed interval in $(0,x(\Gamma))$, then there is a compact, connected subset $\Gg_I$ of $\Cc_{\Gamma(\cdot)}$ such that
\[
  \{ x(M): M\in \Gg_I\} = I.
\]
\end{enumerate}
\end{theorem}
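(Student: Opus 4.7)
The plan is to run a continuation (path-lifting) argument in a compactified space of solutions.

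First, I would topologize $\Cc_{\Gamma(\cdot)}^{\,*} := \Cc_{\Gamma(\cdot)} \cup \{\Gamma, D\}$ using the Hausdorff distance on closed subsets of $\RR^3$, and show it is compact. Compactness follows from Lemma~\ref{proper-lemma} (for sequences whose parameter $t$ stays bounded away from $0$) together with the Dichotomy Lemma~\ref{dichotomy-lemma} (forcing any sequence $M_n$ with $\pi(M_n) \to 0$ to subconverge to $\Gamma$ or to $D$). The projection $\pi : M \mapsto t$ and the necksize $x$ extend continuously to $\Cc_{\Gamma(\cdot)}^{\,*}$ with $\pi(\Gamma) = \pi(D) = 0$, $x(\Gamma) = x(\Gamma)$, and $x(D) = 0$; by Lemma~\ref{lem:gap}, $\pi(\Cc_{\Gamma(\cdot)}^{\,*}) \subset [0, t^*]$ for some $t^* < 1$.

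Second, I would produce a compact connected subset $\Ff \subset \Cc_{\Gamma(\cdot)}^{\,*}$ containing both $\Gamma$ and $D$. Take $\Ff$ to be the connected component of $\Cc_{\Gamma(\cdot)}^{\,*}$ containing $D$, and argue by contradiction that $\Gamma \in \Ff$. If not, compactness separates $\Ff$ from the component through $\Gamma$ by disjoint closed sets. The local structure of $\Cc_{\Gamma(\cdot)}$ near a smooth solution is that of a $1$-manifold (via a Fredholm / implicit function argument on the translator equation with Dirichlet boundary data $\Gamma(t)$), so $\Ff$ contains a nontrivial arc emanating from $D$. By Lemma~\ref{lem:gap} this arc cannot escape to $t = 1$, so by compactness it terminates at another point of $\{\Gamma, D\}$; since the local approach to $D$ is pinned down by the catenoidal blow-up of Theorem~\ref{small-neck-theorem} and is therefore a single germ, the arc cannot return to $D$, forcing it to terminate at $\Gamma$.

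Third, once $\Ff$ is in hand, both assertions follow. The image $x(\Ff)$ is a connected compact subset of $[0, x(\Gamma)]$ containing both endpoints, hence equal to $[0, x(\Gamma)]$. For assertion~\eqref{family-1}, given $\hat x \in (0, x(\Gamma))$, any $M \in \Ff$ with $x(M) = \hat x$ lies in $\Cc_{\Gamma(\cdot)}$ since $x$ separates it from $\Gamma$ and $D$. For assertion~\eqref{family-2}, writing $I = [\alpha, \beta]$, take $\Gg_I$ to be a minimal subcontinuum of $\Ff \cap x^{-1}(I)$ meeting both fibers $x^{-1}(\alpha)$ and $x^{-1}(\beta)$; existence of such a minimal subcontinuum is a standard Hausdorff--Zorn argument on nested closed subcontinua in the compact metric continuum $\Ff$, and minimality together with the intermediate-value behavior of $x$ forces $x(\Gg_I) = I$, while $I \subset (0, x(\Gamma))$ guarantees $\Gg_I \subset \Cc_{\Gamma(\cdot)}$.

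The main obstacle is the second step, specifically the rigidity of the branch rooted at $D$. This rests on (a) the local $1$-manifold structure of $\Cc_{\Gamma(\cdot)}$ at generic interior points, which is delicate because of potential degenerate bifurcations; (b) the uniqueness of the catenoidal germ at $D$, for which Theorem~\ref{small-neck-theorem} provides the required blow-up picture; and (c) excluding pathological accumulation or infinite branching as $t \to 0$. A transversality perturbation of the path $t \mapsto \Gamma(t)$ within an appropriate space of symmetric convex pairs should suffice to avoid degenerate bifurcations and make the continuation clean.
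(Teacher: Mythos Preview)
Your overall strategy---compactify the solution space by adjoining the two degenerate limits $\Gamma$ and $D$, and then find a continuum joining them---is close in spirit to the paper's, and your third step (extracting $\Gg_I$ from a continuum via intermediate values) is fine. But step~2 has a real gap, and one citation is misplaced.

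The gap is the ``single germ at $D$'' claim. You invoke Theorem~\ref{small-neck-theorem}, but that theorem concerns complete annuloids $M\in\Aa$ with $x(M)\to 0$; it says nothing about compact annuli $M\in\Cc_{\Gamma(\cdot)}$. What is actually available near $t=0$ is the Dichotomy Lemma~\ref{dichotomy-lemma}, which says that any sequence $M_i$ with both boundary curves collapsing to $\Gamma$ subconverges either to $\Gamma$ or to $D$; it does \emph{not} say there is a unique arc of solutions approaching $D$. Without that uniqueness, your argument that ``the arc cannot return to $D$'' does not go through: a priori several arcs could emanate from $D$, or the component through $D$ could loop back.

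The paper sidesteps this entirely by anchoring the path-lifting at the \emph{other} degenerate limit $\Gamma$. For small $t$, Theorem~\ref{graphs-theorem} (a direct barrier argument in a mean-convex tube) produces a graphical annulus $\Sigma(t)\in\Cc_{\Gamma(t)}$, and uniqueness of $\Sigma(t)$ is immediate from the maximum principle. So the arc $\Ee=\{\Sigma(t):0<t\le\delta\}$ is a canonical ``graphical'' end of a component $\Gg$ of $\Cc_{\Gamma(\cdot)}$. Following $\Gg$ to its other end, properness (Lemma~\ref{proper-lemma}) and Lemma~\ref{lem:gap} force $t\to 0$ there too; since that end is non-graphical, the Dichotomy Lemma gives $x(M)\to 0$. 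This yields $x(\Gg)=(0,x(\Gamma))$ with no need to analyze uniqueness near $D$. The transversality perturbation you flag is exactly what the paper does first (hypotheses~(h\ref{special-h1})--(h\ref{special-h3}) via Sard--Smale on the Fredholm boundary map), with the general case recovered by approximation. If you rewrite your step~2 to start from the graphical branch $\Ee$ rather than from $D$, your outline becomes essentially the paper's proof.
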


(We are not asserting uniqueness of the $M$ in Assertion~\eqref{family-1}
 or of the $\Gg_I$ in Assertion~\eqref{family-2}.)

\TOCstop
\section*{A special case}
\TOCstart

\newcommand{\tpp}{\tilde\Pp}
\newcommand{\tcc}{\tilde \Cc}

Fix an integer $k\ge 2$ and an $\alpha\in (0,1)$.
If $M$ and $N$ are smooth manifolds, recall that $C^{k,\alpha+}(M,N)$ is the closure
in $C^{k,\alpha}$ of $C^\infty(M,N)$.  
Let $\tpp$ be the set of $\Gamma\in \Pp$ such that $\Gammain$ and $\Gammaout$ are $C^{k,\alpha+}$ curves
with nowhere vanishing curvature.  Let $\tcc$ be the set of $M\in \Cc$ such that $\partial M\in \tpp$.
 
First, we will prove Theorem~\ref{th:family} under some additional hypotheses.
Then we will deduce the general theorem from the special case. 
The additional hypotheses are:
\begin{enumerate}[\upshape (h1).]
\item\label{special-h1} Each $\Gamma(t)$ is in $\tpp$.
\item\label{special-h2} The map $t\mapsto \Gamma(t)$ is a  a smooth embedding of $[0,1)$ into $\tpp$.
\item\label{special-h3} $\Cc_{\Gamma(\cdot)}$ is homeomorphic to a $1$-manifold without boundary. (The $1$-manifold need not be connected.)
Thus each connected component of $\Cc_{\Gamma(\cdot)}$ is homeomorphic to a circle or to $\RR$ (see Figure~\ref{fig:h3}.)
\end{enumerate}

By~\cite{white87}*{Theorem~3.3(8) and~\S1.5} (see also Remark~\ref{thesis-remark}), $\tcc$ and $\tpp$ are smooth, separable Banach manifolds and 
\begin{equation}\label{the-map}
   \partial: M\in \tcc \mapsto \partial M \in \tpp  \tag{*}
\end{equation}
is a smooth map of Fredholm index $0$.  
By the Sard-Smale Theorem, a generic map satisfying~(h\ref{special-h1}) and~(h\ref{special-h2})
is transverse to the map~\eqref{the-map}, and therefore also satisfies~(h\ref{special-h3}).

\begin{proof}[Proof of Theorem~\ref{th:family} assuming~(h\ref{special-h1})--(h\ref{special-h3})]
By Theorem~\ref{graphs-theorem}, there is a $\delta\in (0,1)$ such that if $t\in (0,\delta]$,
 then $\Gamma(t)$ bounds a translating graph $\Sigma(t)$. By the maximum principle, $\Sigma(t)$ is the unique
graphical translator with boundary $\Gamma(t)$:
\begin{equation}\label{not-graphical}
\text{If $M\in \Cc_{\Gamma(t)}$ and $M\ne \Sigma(t)$, then $M$ is not graphical}.
\end{equation}
The uniqueness implies that $\Sigma(t)$ depends smoothly on $t$ for $t\in (0,\delta]$.

Let $\Ee= \{\Sigma(t): 0<t\le \delta \}$ and let $\Gg$ be the connected component of $\Cc_{\Gamma(\cdot)}$ containing $\Ee$.

As $t\to 0$, the $\Sigma(t)$ converge (as sets) to the curve $\Gamma$.   Thus $\Ee$ is one end of $\Gg$.  
Let $M_i\in \Gg$ be a sequence that diverges to the other end.
Let $t_i=\pi(M_i)$ (i.e, let $\partial M_i=\Gamma(t_i)$).

By Lemma~\ref{lem:gap}, the sequence $t_i$ is bounded above by some $T<1$.
By properness of the map $\partial: \Cc\to \Pp$ (see Lemma~\ref{proper-lemma}), 
$t_i\to 0$.

Thus for large $i$, $t_i\in (0,\delta)$, and, since $M_i\notin \Ee$,
it follows that $M_i$ is not graphical (by~\eqref{not-graphical}).  Thus $x(M_i)\to 0$ by Lemma~\ref{dichotomy-lemma}.

Let $s\in \RR\mapsto M(s)$ be a parametrization of $\Gg$
such that $M(s)\in \Ee$ if and only if $s<0$.

We have shown that $x(M(s)) \to 0$ as $s\to -\infty$ and that $x(M(s))\to x(\Gamma)$
as $s\to\infty$.  Thus $x(M(\cdot))$ takes every value in $(0,x(\Gamma))$,
so Asssertion~\eqref{family-1} holds.

Now let $I=[d_1,d_2]$ be a compact subinterval of $(0,x(\Gamma))$.
Let $s_1$ be the largest $s$ for which $x(M(s))=d_1$.
Now let $s_2$ be the smallest $s\le s_1$ for which $x(M(s))=d_2$.
Then
\[
  \Gg_I:= \{M(s): s_1\le s \le s_2\}
\]
is a compact, connected subset of $\Cc_{\Gamma(\cdot)}$ such that
\[
   \{ x(M): M\in \Gg_I\} = [d_1, d_2].
\]
This completes the proof of Theorem~\ref{th:family}
 assuming the extra hypotheses~(h\ref{special-h1})--(h\ref{special-h3}).
\end{proof}

\TOCstop
\section*{The general case}
\TOCstart

Now we prove Theorem~\ref{th:family} without assuming extra hypotheses:

\begin{proof}[Proof of Theorem~\ref{th:family}]
The curves $\Gamma(t)$ need not be smooth.
However, we can smooth those curves to get, for $n\in \NN$, a family
\[  
    t\in [0,1]\mapsto \Gamma^n(t)
\]
that satisfies the hypotheses of Theorem~\ref{th:family} and also the addition hypotheses~(h\ref{special-h1})
  and~(h\ref{special-h2}).
We can do the smoothing in such a way that:
\begin{equation}\label{smoothing}
   \text{If $t_n\in (0,1]$ converges to $t\in (0,1]$, then $\Gamma^n(t_n)$ converges to $\Gamma(t)$}
\end{equation}

As mentioned at the beginning of the proof of Theorem~\ref{th:family}, 
the condition~(h\ref{special-h3}) is generic.
Thus by making a small, generic perturbation of $t\mapsto \Gamma^n(t)$, we can assume that it also
satisfies the hypothesis~(h\ref{special-h3}). 
Since the perturbations can be arbitrarily small, we can do them in such a way that~\eqref{smoothing} still holds 
 for the perturbed families.

\begin{claim}\label{itamar-claim}
 Suppose that $M_n\in \Cc_{\Gamma^n(t_n)}$ and that $x(M_n)$ converges to a limit 
   $\hat x$ with $0<\hat x < x(\Gamma)$.
Then there exist $i(n)\to\infty$ such that $t_{i(n)}$ converges to a limit $t\in (0,1)$ 
and such that $M_{i(n)}$ converges to a limit $M\in \Cc$ with $\partial M=\Gamma(t)$.

Likewise if there is a subsequence $M_{i(n)}\in \Cc_{\Gamma^{i(n)}}(t_{i(n)})$ with $x(M_{i(n)})$ bounded away from $0$, then, after passing to a further subsequence,
we get convergence to limits $t\in (0,\infty)$ and $M\in \Cc_{\Gamma(t)}$.
\end{claim}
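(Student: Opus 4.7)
The plan is to extract a convergent subsequence of the parameters $t_n\in(0,1)$ and then invoke the compactness/properness of the boundary map (Lemma~\ref{proper-lemma}); the heart of the argument is to rule out the two endpoint values $t=0$ and $t=1$. First I would pass to a subsequence along which $t_n\to t\in [0,1]$ by compactness of $[0,1]$.

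To rule out $t=1$: if $t_n\to 1$, then by hypothesis~(e) on the family and the smoothing construction, the inner regions $\cin^n(t_n)$ Hausdorff-converge to the bounded convex set $\cin(1)$ and the outer regions $\cout^n(t_n)$ converge to the strip $\cout(1)$, with distance between their boundaries at least $\pi$. Theorem~\ref{th:gap} is formulated directly for such converging sequences, so it applies to the diagonal sequence $(\cin^n(t_n),\cout^n(t_n))$ and yields that for large $n$ no connected translator in $\{z\ge 0\}$ can have boundary $\Gamma^n(t_n)$, contradicting the existence of $M_n$. To rule out $t=0$: if $t_n\to 0$, then both components of $\partial M_n=\Gamma^n(t_n)$ Hausdorff-converge to the single curve $\Gamma$, so the Dichotomy Lemma~\ref{dichotomy-lemma} applies. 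After passing to a further subsequence it forces either $x(M_n)\to 0$ or $x(M_n)\to x(\Gamma)$, and both alternatives contradict $x(M_n)\to \hat x\in (0,x(\Gamma))$.

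Thus $t\in(0,1)$. The smoothing property \eqref{smoothing} then gives $\Gamma^n(t_n)\to \Gamma(t)$ in $\Pp$, and Lemma~\ref{proper-lemma} extracts a further subsequence along which $M_n$ converges to some $M\in\Cc$ with $\partial M=\Gamma(t)$, completing the first half of the claim. For the ``likewise'' statement, I would first pass to a subsequence along which $x(M_{i(n)})\to \hat x>0$ (the sequence is bounded above since $x(M_{i(n)})\le x(\Gammain^{i(n)}(t_{i(n)}))$, which stays bounded once the argument above excludes $t=1$). Provided $\hat x<x(\Gamma)$ one is exactly in the setting of the first part; the borderline case $\hat x=x(\Gamma)$ would still force $t$ away from $0$ by the same reasoning, since Dichotomy case~(1) would collapse $M_{i(n)}$ onto the curve $\Gamma$, incompatible with producing a limit $M\in\Cc$.

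The main subtle point is that the gap and dichotomy lemmas are formulated for the unperturbed family $\Gamma(t)$, whereas this claim concerns the perturbed families $\Gamma^n(t_n)$. The resolution is that Theorem~\ref{th:gap} is already a limit statement about converging sequences of pairs, so it applies directly to $(\cin^n(t_n),\cout^n(t_n))$; and the construction of the smoothings ensures $\Gamma^n(t_n)\to \Gamma$ in the Hausdorff sense whenever $t_n\to 0$ (even though the named property \eqref{smoothing} was stated only for $t\in(0,1]$), so the Dichotomy Lemma likewise applies directly.
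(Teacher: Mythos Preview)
Your proposal is correct and follows essentially the same route as the paper: pass to a subsequence so $t_n\to t\in[0,1]$, use Theorem~\ref{th:gap} to rule out $t=1$, use the Dichotomy Lemma~\ref{dichotomy-lemma} to rule out $t=0$, then apply~\eqref{smoothing} and Lemma~\ref{proper-lemma}. The paper's proof is terser and does not explicitly address the subtleties you raise (applying the gap and dichotomy lemmas to the perturbed diagonal sequence, and convergence $\Gamma^n(t_n)\to\Gamma$ when $t_n\to 0$), but your handling of them is correct.
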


To prove the claim, 
 we can assume, by passing to a subsequence, that the $t_n$ converge to a limit $t\in [0,1]$.
 By Lemma~\ref{dichotomy-lemma}, $t>0$.  By Theorem~\ref{th:gap}, $t<1$.
  Hence $\partial M_n=\Gamma^n(t_n)$ converges
 to $\Gamma(t)$.  
 By properness (Lemma~\ref{proper-lemma}),  the $M_n$ converge (after passing to a further subsequence) 
 to a limit $M\in \Cc$ with $\partial M=\Gamma(t)$.
  This completes the proof of the claim.

Now let $\hat x \in (0,x(\Gamma))$.  Then $\hat x\in (0,x(\Gamma^n))$ for all sufficiently large $n$.
Thus (for such $n$) there exists an $t_n\in (0,1)$ and an $M_n\in \Cc$ such that $\partial M_n=\Gamma^n(t_n)$ and such 
that $x(M_n)=\hat x$.  By the claim, a subsequence of the $M_n$ will converge to an $M\in \Cc_{\Gamma}$
with $x(M)=\hat x$.  This proves Assertion~\eqref{family-1} of Theorem~\ref{th:family}.

Now let $I=[d,d']$ be a compact subinterval of $(0,x(\Gamma))$.
By passing to a subsequence, we can assume that $d' < x(\Gamma^n)$ for all $n$.
Since Theorem~\ref{th:family} holds for $\Gamma^n(\cdot)$, there is a compact, connected set $\Gg^I_n$ of $\Cc_{\Gamma^n(\cdot)}$
such that 
\[
   \{x(M): M\in \Gg^I_n  \} = [d,d'].   
\]
By Theorem~\ref{connected-limits-theorem}, a subsequence of the $\Gg^I_n$  converges
to a compacted, connected subset $\Gg^I$ of $\KK(\RR^3)$, the space of all closed subsets of $\RR^3$.
To simplify notation, we assume that the original sequence $\Gg^I_n$ converges to $\Gg^I$.

Let $M\in \Gg^I$.  Then there exist $M_n\in \Gg^I_n$ such that the $M_n$ converge
as sets to $M$. 
By Claim~\ref{itamar-claim}, $M\in \Cc_{\Gamma(\cdot)}$ and  the convergence is smooth away from the boundary, so
\[
  x(M)= \lim x(M_n).
\]
Thus  $\Gg^I\subset \Cc_{\Gamma(\cdot)}$ and 
\begin{equation}\label{reverse-inclusion}
   \{ x(M): M\in \Gg^I\} \subset [d,d'].
\end{equation}

Now let $\hat x\in [d,d']$.
Then there exist $M_n\in \Gg^I_n$ with $x(M_n)=\hat x$.
By Claim~\ref{itamar-claim}, a subsequence $M_{n(i)}$ will converge to a limit set $M\in \Cc_{\Gamma(\cdot)}$ 
with $x(M)=\hat x$.
The existence of a subsequence $M_{n(i)}\in \Gg^I_{n(i)}$ converging to $M$ means,
 by the definition of $\limsup$ of sets (Definition~\ref{limsup-definition})
that
\[
   M\in \limsup_n \Gg^I_n.
\]
But since the $\Gg^I_n$ converge, the limsup is the same as the limit, so $M\in \Gg$.

We have shown that for each $\hat x\in [d,d']$, there exist $M\in \Gg$ with $x(M)=\hat x$,
so
\[
  [d,d'] \subset \{x(M): M\in \Gg^I\}.
\]
But the reverse inclusion~\eqref{reverse-inclusion} also holds, so the two sets are equal.
\end{proof}

\begin{remark}\label{thesis-remark}
In the discussion at the beginning of the proof of Theorem~\ref{th:family}, we asserted that $\tpp$ was a Banach manifold.
This requires a little explanation, since, in general, the space of $C^k$ (or $C^{k,\alpha}$, or $C^{k,\alpha+})$ compact submanifolds 
of $\RR^N$ is {\em not} a Banach manifold.

If $\Gamma \in \tpp$, let $f_\Gamma: \SS^1\to (0,\infty)^2$ be the map such that
for each $p\in \SS^1$, $f_\Gamma(p)$ is the unique $(r_\textnormal{in}, r_\textnormal{out})\in (0,\infty)^2$ such that
$r_\textnormal{in}p\in \Gammain$ and $r_\textnormal{out}p\in \Gammaout$.
Then
\[
   \Gamma \mapsto f_\Gamma
\]
maps $\tpp$ homeomorphically onto an open subset of $C^{k,\alpha+}_\textnormal{sym}(\SS^1,\RR^2)$, 
the space of $f\in C^{k,\alpha+}(\SS^1,\RR^2)$ that have the symmetries $f(x,y)\equiv f(-x,y)\equiv f(x,-y)$.
  Thus we can regard $\tpp$ as
a smooth, separable Banach manifold.

For various technical reasons, \cite{white87} works with spaces of {\em parametrized} boundaries. But since each $\Gamma\in \tpp$
has a canonical parametrization $f_\Gamma$, here we need not distinguish between parametrized and unparametrized boundaries.
\end{remark}


%


\section{A connected family of Annuloids in $\Aa(b)$}\label{connected-section}

\begin{lemma}\label{connected-lemma}
Let $b\ge \pi/2$ and $a>0$.
\begin{enumerate}[\upshape(1)]
\item\label{connected-lemma-item-1}
 If $\hat x\in (0,a)$, there exists an $M\in \Rr$ with $a(M)\ge a$, $b(M)=b$, and $x(M)=\hat x$.
\item\label{connected-lemma-item-2}
 If $I=[d,d']$ is a compact interval in $(0,a)$, then there exists a compact, connected subset $\Gg^I$
of $\Rr\subset \KK(\RR^3)$ such for each $M\in \Gg^I$, 
\begin{align*}
a(M)&\ge a,\\
b(M)&=b, 
\end{align*}
and such that 
\[
  \{ x(M): M\in \Gg^I\} = I.
\]
\end{enumerate}
\end{lemma}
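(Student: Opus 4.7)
The plan is to deduce Lemma~\ref{connected-lemma} directly from Theorem~\ref{th:family} by choosing a one-parameter family of nested rectangular boundaries with fixed inner rectangle of the required dimensions.

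I would define the path $t\in[0,1]\mapsto(\cin(t),\cout(t))$ by setting $\cin(t)\equiv [-a,a]\times[-b,b]$ for every $t$, taking $\cout(0)=[-a,a]\times[-b,b]$ (so that $\cin(0)=\cout(0)$), and for $t\in(0,1]$ letting $\cout(t)=[-A(t),A(t)]\times[-B(t),B(t)]$ with $A,B$ continuous and strictly increasing, $A(0)=a$, $B(0)=b$, $A(t)\to+\infty$, and $B(t)\to b+\pi$ as $t\to 1$. Then $\cout(1)$ is the strip $\RR\times[-(b+\pi),(b+\pi)]$, and the distance from $\partial\cout(1)$ to $\cin(1)$ equals exactly $\pi$, so all hypotheses (a)--(e) preceding Theorem~\ref{th:family} are verified. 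The resulting curve $\Gamma:=\Gammain(0)=\Gammaout(0)$ is the rectangle $\partial([-a,a]\times[-b,b])$, which has $x(\Gamma)=a$.

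For Assertion~\eqref{connected-lemma-item-1}, I would apply Theorem~\ref{th:family}\eqref{family-1}: for the given $\hat x\in(0,a)$, there exists $M\in\Cc_{\Gamma(\cdot)}$ with $x(M)=\hat x$. By construction, $\partialin M=\partial([-a,a]\times[-b,b])$ and $\partialout M=\partial([-A(t),A(t)]\times[-B(t),B(t)])$ for some $t\in(0,1)$, so $M$ has rectangular boundary components with $a(M)=a$ and $b(M)=b$, giving condition~(1) of Definition~\ref{Rr-definition}. Condition~(2) is supplied by the very construction used in the proof of Theorem~\ref{th:family}: the boundaries $\Gamma(t)$ are smoothed to $\Gamma^n(t)\in\tpp$ (smooth, nowhere-vanishing-curvature) and $M$ arises as a set-theoretic (in fact, smooth on the interior) limit of $M_n\in\Cc$ with $\partial M_n=\Gamma^n(t_n)$. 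This is precisely what is needed for $M\in\Rr$. For Assertion~\eqref{connected-lemma-item-2}, I would apply Theorem~\ref{th:family}\eqref{family-2} with the same family and $I=[d,d']\subset(0,a)$ to obtain a compact connected $\Gg^I\subset\Cc_{\Gamma(\cdot)}\subset\KK(\RR^3)$ with $\{x(M):M\in\Gg^I\}=I$. Each element has $a(M)=a$, $b(M)=b$, rectangular boundary components, and inherits the smooth-boundary approximation property as above, so $\Gg^I\subset\Rr$.

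There is no substantial obstacle: the whole content is already packaged in Theorem~\ref{th:family}, and the only thing requiring explicit verification is that condition~(2) of Definition~\ref{Rr-definition} is inherited from the smoothing step inside the proof of Theorem~\ref{th:family}. A minor bookkeeping point is the continuity of the map $t\mapsto(\cin(t),\cout(t))$ near $t=1$, where $\cout(t)$ must Hausdorff-converge (on compact sets) to the infinite strip $\cout(1)$; this is achieved by any continuous surjection of $[0,1)$ onto $[a,\infty)$ for $A$ and of $[0,1]$ onto $[b,b+\pi]$ for $B$.
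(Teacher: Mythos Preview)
Your proposal is correct and follows essentially the same approach as the paper: define the same one-parameter family with fixed inner rectangle $[-a,a]\times[-b,b]$ and outer rectangles $[-A(t),A(t)]\times[-B(t),B(t)]$ with $A(0)=a$, $A(1)=\infty$, $B(0)=b$, $B(1)=b+\pi$, then invoke Theorem~\ref{th:family}. You are in fact slightly more careful than the paper's two-line proof, since you explicitly address why the resulting $M$ satisfies condition~(2) of Definition~\ref{Rr-definition} (via the smoothing step inside the proof of Theorem~\ref{th:family}), a point the paper leaves implicit.
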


\begin{proof}
Define a one-parameter family $t\in [0,1]\mapsto \Gamma(t) = \Gammain(t)\cup \Gammaout(t)$ as follows:
\begin{enumerate}[\upshape(i)]
\item $\Gammain(t)$ is the boundary of $[-a,a]\times [-b,b]$. 
\item  $\Gammaout(t)$ is the boundary of $[-A(t), A(t)]\times [-B(t),B(t)]$, where $A(\cdot)$ and $B(\cdot)$ are continuous,
strictly increasing functions such that $A(0)=a$, $A(1)=\infty$, $B(0)=b$, and $B(1)=b+\pi$.
\end{enumerate}
Assertions~\eqref{connected-lemma-item-1} and~\eqref{connected-lemma-item-2}
  now follow immediately from Theorem~\ref{th:family}.
\end{proof}

\begin{theorem}\label{connected-family-theorem}
Suppose $b\ge \pi/2$.
The space $\Aa(b)$ contains a closed, connected subset $\Ff=\Ff(b)$ such that
\[
  \{ x(M): M\in \Ff\} = (0,\infty).
\]
\end{theorem}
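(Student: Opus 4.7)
The strategy is to construct $\Ff$ as the closure in $\Aa(b)$ of a nested union $\bigcup_{k\ge 2}\Ff_k$, where each $\Ff_k$ is a compact, connected subset of $\Aa(b)$ with $x(\Ff_k)=[1/k,k]$ and $\Ff_k\subset \Ff_{k+1}$. Once such a nested family is in hand, $\bigcup_k \Ff_k$ is an increasing union of connected sets hence connected, its closure $\Ff$ is connected and closed in $\Aa(b)$, and continuity of $x$ on $\Aa(b)$ (Theorem~\ref{b-B-x-continuity-theorem}) together with $x>0$ on $\Aa$ gives $x(\Ff)=(0,\infty)$.

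To build the $\Ff_k$, I would first produce, for each pair $(n,k)$ with $n\ge k$, a compact connected sub-arc $\Gg_{n,k}\subset \Rr$ with $a(M)\ge n$, $b(M)=b$, and $x(\Gg_{n,k})=[1/k,k]$, arranged so that $\Gg_{n,k}\subset \Gg_{n,k+1}$ for each fixed $n$. Existence of such sub-arcs for each individual $k$ is Lemma~\ref{connected-lemma}(2); compatibility across $k$ is achieved by working in the special-case setup of Section~\ref{path-lifting}: for a generic smooth approximation $\Gamma^n(t)$ of the rectangular boundary family (satisfying (h\ref{special-h1})--(h\ref{special-h3})), the space $\Cc_{\Gamma^n(\cdot)}$ is a $1$-manifold, and the component $\Gg^n$ containing the graphical translators is parametrized by $s\in \RR$, with $x(M(s))\to 0$ as $s\to -\infty$ and $x(M(s))\to x(\Gamma^n(0))$ as $s\to \infty$. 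Nested intervals in the parameter $s$ then yield nested sub-arcs whose $x$-images cover $[1/k,k]$. The general (non-generic) rectangular case is recovered by approximation as in the proof of Theorem~\ref{th:family}. Translating by $(0,0,-z(M))$, which is well-defined and continuous on $\Rr$ by Corollary~\ref{unique-z-corollary}, produces sets $\Hh_{n,k}$, still compact, connected, and nested in $k$, with $x(\Hh_{n,k})=[1/k,k]$.

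Next I would take a diagonal limit in $n$. By iterated application of the compactness of $\KK(\RR^3)$ (Theorem~\ref{R3-theorem}) to the sequences $\{\Hh_{n,k}\}_n$, together with a standard diagonal extraction, I find $n_j\to \infty$ such that for every $k$, $\Hh_{n_j,k}$ converges in Hausdorff distance to a compact set $\Ff_k$. Preservation of connectedness under Hausdorff limits (Theorem~\ref{connected-limits-theorem}) gives that each $\Ff_k$ is connected. For any $M\in \Ff_k$, $M$ is a limit of $M_j\in \Hh_{n_j,k}$; since $x(M_j)\in [1/k,k]$ is bounded and $a(M_j)\to \infty$, passing to a further subsequence with $x(M_j)\to \hat x\in [1/k,k]$ and applying Theorem~\ref{complete-existence-theorem} yields $M\in \Aa(b,\hat x)\subset \Aa(b)$. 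Conversely, for each $\hat x\in [1/k,k]$, selecting $M_j\in \Hh_{n_j,k}$ with $x(M_j)=\hat x$ (possible by Lemma~\ref{connected-lemma}) and taking a subsequential limit gives an element of $\Ff_k$ with $x$-value $\hat x$. Hence $\Ff_k\subset \Aa(b)$ is compact and connected with $x(\Ff_k)=[1/k,k]$, and the nesting $\Hh_{n,k}\subset \Hh_{n,k+1}$ passes to $\Ff_k\subset \Ff_{k+1}$.

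The main obstacle is the coherent nesting in Step~2: ensuring that the Lemma~\ref{connected-lemma} sub-arcs $\Gg_{n,k}$ can be chosen simultaneously for all $k$ so that $\Gg_{n,k}\subset \Gg_{n,k+1}$. For generic, smoothly-bounded families this is transparent from the $1$-manifold parametrization by $s$, since nested intervals in $s$ produce nested sub-arcs; for the rectangular boundaries themselves, hypotheses (h\ref{special-h1})--(h\ref{special-h3}) fail and one must carefully preserve the nesting through the smooth-to-rectangular limit. The remaining ingredients---diagonal extraction, Hausdorff convergence, and the identification of set-limits with elements of $\Aa(b)$---are routine applications of the compactness, curvature, and continuity machinery already developed.
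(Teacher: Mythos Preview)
Your route differs from the paper's, and the difference is precisely where your acknowledged obstacle becomes a genuine gap. The paper never constructs nested families. It proves only that for each compact interval $I\subset(0,\infty)$ there exists \emph{some} compact connected $\Ff^I\subset\Aa(b)$ with $x(\Ff^I)=I$, with no compatibility among the $\Ff^I$ for different $I$, and then invokes an abstract metric-space lemma (Theorem~\ref{connected-extraction-theorem}): if $f:X\to(0,1)$ is continuous and proper and every compact subinterval is the $f$-image of some compact connected subset, then some closed connected subset of $X$ has $f$-image all of $(0,1)$. That lemma is proved by compactifying $X$ at two ideal endpoints, passing to a Hausdorff limit of the given compact connected sets (which must then contain both endpoints), and extracting the desired set via a Zorn-type two-point connectivity argument (Theorem~\ref{two-point-theorem}). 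This cleanly decouples the interval-by-interval existence from the global connected family.

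Your plan, by contrast, needs sub-arcs $\Gg_{n,k}$ of the generic $1$-manifold that are simultaneously nested in $k$ \emph{and} have $x$-image contained in an $n$-independent compact subset of $(0,\infty)$, so that every Hausdorff-limit element lies in $\Aa(b)$. But $s\mapsto x(M^n(s))$ can oscillate arbitrarily between its end-limits. The paper's own sub-arc for $[d_1,d_2]$ (largest $s$ with $x=d_1$, then the next $s$ with $x=d_2$) has $x$-image exactly $[d_1,d_2]$ but is \emph{not} monotone in the interval: if $x(M^n(s))$ first rises above $d_2'$, then dips to $d_1$, then increases monotonically, the sub-arcs for $[d_1,d_2]$ and for the larger $[d_1',d_2']\supset[d_1,d_2]$ are disjoint $s$-intervals. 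Conversely, taking nested $s$-intervals gives no uniform lower bound on $x$, so the Hausdorff limit $\Ff_k$ could contain limits of surfaces $M_j$ with $x(M_j)\to 0$, which land on a multiplicity-two copy of $\graph(f_b)$ (Theorem~\ref{small-neck-theorem}) rather than in $\Aa(b)$. You have not shown how to achieve both properties at once, and the fix is not routine; the missing idea is exactly the compactification and two-point device packaged in Theorem~\ref{connected-extraction-theorem}.
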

We do not know whether there is a unique such subset $\Ff$.

\begin{proof}
Let $I=[d,d']$ be a compact interval in $(0,\infty)$.
Let $a_n$ be a sequence of numbers such that $a_n>d'$ and such that $a_n\to\infty$.
By Lemma~\ref{connected-lemma}, there exists
 a compact, connected subset $\Gg^I_n$ of $\Rr\subset \KK(\RR^3)$ 
such that for each $M\in \Gg^I_n$,
\begin{gather*}
a(M) \ge a_n, \\
b(M)=b, 
\end{gather*}
and such that 
\[
\{x(M): M\in \Gg_n^I\} = I.
\]
Let
\[
 \Ff^I_n = \{ M - (0,0,z(M)):   M \in \Gg^I_n\}.
\]
Then $\Ff^I_n$ is a compact, connected subset of $\KK(\RR^3)$, and
\begin{equation}\label{gonzo}
\{x(M): M\in \Ff_n^I\} = I.
\end{equation}
By passing to a subsequence, we can assume 
(by~Theorem~\ref{connected-limits-theorem}) that the $\Ff^I_n$ converge to a compact, connected
subset $\Ff^I$ of $\KK(\RR^3)$.

By definition of $\Aa(b)$, $\Ff^I\subset \Aa(b)$.  Letting $n\to\infty$ in~\eqref{gonzo} gives
\begin{equation}\label{morg}
    \{ x(M): M\in \Ff^I\} = I.
\end{equation}

We have shown
\begin{equation}\label{intervals}
\begin{aligned}
&\text{For every compact interval $I\subset (0,\infty)$, there is a compact,} \\
&\text{connected family $\Ff^I$ of $\Aa(b)$ such that~\eqref{morg} holds.}
\end{aligned}
\end{equation}
Note that $\Aa(b)$ is a metric space (it is a subspace of $\KK(\RR^3)$), and that
\begin{equation}\label{x-proper}
 \text{$M \mapsto x(M)$ is a continuous, proper map from $\Aa(b)$ to $(0,\infty)$.}
\end{equation}

By a general theorem about metric spaces (Theorem~\ref{connected-extraction-theorem}), 
statements~\eqref{intervals} and~\eqref{x-proper} imply that
the space $\Aa(b)$ contains a closed, connected subset $\Ff(b)$ such that
\[
  \{ x(M): M\in \Ff(b)\} = (0,\infty).
\]
\end{proof}



\section{Capped and Uncapped Annuloids}\label{capping-section}

Suppose $M\in \Aa$ is an annuloid with $B(M)>\pi/2$.  Recall (see \eqref{upper-slope-item} in Theorem \ref{main-theorem-concluded}) that the limit 
$\displaystyle L:=\lim_{x\to\infty}(\uup)'(x)$ exists and is either $s(B)$ or $-s(B)$.
If $L= -s(B)$, we say that the annuloid is {\bf capped}. If $L= s(B)$, we say that it is {\bf uncapped}.

(We do not defined capped and uncapped for annuloids $M$ with $B(M)=\pi/2$.)

\begin{theorem}
Suppose that $M\in \Aa$ and that $B=B(M)>\pi/2$.
If $M$ is uncapped, then 
   $z(\cdot)|M$ is not bounded above.
If $M$ is capped, then $z(\cdot)|M$ attains its maximum.
\end{theorem}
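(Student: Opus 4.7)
For the uncapped case, I would invoke Theorem~\ref{main-theorem-concluded}\eqref{upper-slope-item}: since $L=\lim_{x\to\infty}(\uup)'(x)=s(B)>0$, integration gives $\uup(x)\to+\infty$. As $(x,0,\uup(x))\in M$ for all $x\ge x(M)$, $z(\cdot)|M$ is unbounded above.

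For the capped case, my plan is to prove that $z(\cdot)|M$ is bounded above and then that the bound is attained. For boundedness I would split $M$ into two pieces. On $M\cap\{|x|\ge x(M)+\pi\}$, by symmetry it suffices to bound $z$ on $\{x\ge x(M)+\pi\}$, which by Theorem~\ref{connected-theorem} is $\Mup\cup\Mlow$ restricted to this half-space. Theorem~\ref{main-theorem-concluded}\eqref{upper-graph-item} expresses $\Mup\cap\{x\ge x(M)+\pi\}$ as the graph of $u$ over $[x(M)+\pi,\infty)\times(-B,B)$. Approximating $M$ by surfaces $M_n\in\Rr$ as in Definition~\ref{Aa-definition} and using the strict monotonicity $\partial u_{M_n}/\partial y<0$ from Theorem~\ref{wide-graph}, passage to the limit yields $u(x,y)\le u(x,0)=\uup(x)$. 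In the capped case $(\uup)'(x)\to -s(B)<0$, so $\uup$ is continuous on $[x(M)+\pi,\infty)$ with $\uup(x)\to-\infty$, hence attains a finite supremum there. Similarly, Theorem~\ref{y-graph-theorem} writes $\Mlow\cap\{x\ge\hat c\}\cap\{y\ge 0\}$ as the sideways graph of $\yin$ over $\Omegain=\{(x,z):x\ge\hat c,\,z\le\ulow(x)\}$, giving $z(p)\le\ulow(x(p))$ on $\Mlow\cap\{x\ge\hat c\}$; combined with $\ulow(x)\to-\infty$ from Theorem~\ref{main-theorem-concluded}\eqref{lower-slope-item}, $\ulow$ also attains a finite supremum.

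For the central piece $M\cap\{|x|\le x(M)+\pi\}$, the projection to the $(x,y)$-plane lies in the compact rectangle $[-x(M)-\pi,x(M)+\pi]\times[-B,B]$. If $z$ were unbounded there, one could find $p_n=(x_n,y_n,z_n)\in M$ with $(x_n,y_n)$ in this rectangle and $z_n\to\infty$; but then $M-(0,0,z_n)$ would contain the point $(x_n,y_n,0)$, which lies in a fixed compact set of $\RR^3$, contradicting Theorem~\ref{some-properties-theorem}\eqref{up-planes-item}. Together the two steps give $\sup_M z<\infty$.

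To show the supremum is attained, I would pick $Z_0<\sup_M z$ and consider $S:=\{p\in M:z(p)\ge Z_0\}$. Since $M$ is properly embedded, $S$ is closed in $\RR^3$; it lies in the slab $\{|y|\le B\}$; and the decay $\uup(x),\ulow(x)\to-\infty$ together with the central bound forces $S\cap\{|x|\ge X\}=\emptyset$ for $X$ large. Hence $S$ is compact in $\RR^3$, and $z$ attains its supremum on $M$. The main technical point is the inequality $u(x,y)\le\uup(x)$, which needs either the $\Rr$-approximation argument via Theorem~\ref{wide-graph} (the route I would take) or an asymptotic argument using the grim-reaper limit $w$ from Theorem~\ref{main-theorem-concluded}\eqref{upper-slope-item} satisfying $w(0,y)\le w(0,0)=0$.
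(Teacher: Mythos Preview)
Your argument is correct, but it is considerably longer than the paper's, and the extra work in your capped case can be avoided.

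The paper's proof of the capped case is a two-line argument using the function
\[
   \psi_M(t):=\max_{M\cap\{x=t\}} z(\cdot),
\]
which is Lipschitz by Theorem~\ref{lipschitz-theorem}. Since $\psi_M(x)=\uup(x)$ for $x\ge x(M)+\pi$ (this follows directly from Theorem~\ref{y-graph-theorem}: the domain $\Omegaout$ of $\yout$ is $\{z\le\uup(x)\}$, so every point of $M$ with $x\ge\hat c$ has $z\le\uup(x)$, with equality at $(x,0,\uup(x))$), and since $\uup(x)\to-\infty$ in the capped case, $\psi_M(t)\to-\infty$ as $|t|\to\infty$. A Lipschitz function on $\RR$ that tends to $-\infty$ at both ends attains its maximum; that maximum is $\sup_M z$.

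Your decomposition into $\{|x|\ge\hat c\}$ and $\{|x|\le\hat c\}$ reaches the same conclusion but replaces the Lipschitz bound by the convergence-to-empty-set statement of Theorem~\ref{some-properties-theorem}\eqref{up-planes-item}. That is a legitimate alternative, and it has the minor advantage of not invoking the barrier construction of Appendix~A. However, your treatment of $\Mup\cap\{x\ge\hat c\}$ is unnecessarily indirect: you do not need the approximation argument via Theorem~\ref{wide-graph} to get $u(x,y)\le\uup(x)$. Exactly as you do for $\Mlow$, the sideways-graph description in Theorem~\ref{y-graph-theorem} already gives $z\le\uup(x)$ on $\Mup\cap\{x\ge\hat c\}$, since $\Omegaout=\{(x,z):x\ge\hat c,\ z\le\uup(x)\}$. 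With that simplification your proof becomes quite short, though still a step longer than the paper's $\psi_M$ argument.
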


\begin{proof}
The first assertion follows immediately from the definition.  For the second, recall that  the function
\[
\psi_M(t):=\max_{M\cap\{x=t\}} z(\cdot)
\]
is a Lipschitz function. Also, $\psi_M(x)=\uup(x)$ for $x\ge x(M)+\pi$ (by Theorem~\ref{y-graph-theorem}),
 so $\lim_{x\to\infty}\psi_M(x)= -\infty$.
The theorem follows immediately.
\end{proof}

\begin{lemma}
Let $h(x,y)=\log(\cos y)$ and $H(x,y,z)=z-h(x,y)$ for $|y|<\pi/2$.
If $M\in \Cc$ or $M\in \Aa$, then
\[
 \mathsf{N}(H|M\cap\{x>0\}) \le 1.
\]
\end{lemma}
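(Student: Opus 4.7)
The plan is to apply the Morse--Rad\'o counting formula (Theorem~\ref{morse-rado-theorem}) to the half-surface $N := M \cap \{x \ge 0\}$ with foliation function $H$ on $W := \RR \times (-\pi/2, \pi/2) \times \RR$. I would handle the case $M \in \Cc$ directly; the case $M \in \Aa$ then follows by approximation from $\Rr \subset \Cc$ together with lower semicontinuity of critical-point counts (Theorem~\ref{semicontinuity-theorem}).

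For $M \in \Cc$, Theorem~\ref{finite-y-slice-theorem} (with the roles of $x$ and $y$ interchanged) shows $M \cap \{x = 0\}$ consists of two arcs, each joining $\partialin M$ to $\partialout M$; cutting the annulus $M$ along these arcs produces two topological disks, one of which is $N$. Lemma~\ref{topology-lemma} applied to the convex set $\{x > 0\} \cap W$ (which avoids the $z$-axis) then gives that every component of $N \cap W$ is a disk, so $\chi(N \cap W) \ge 1$. Properness of $H|N \cap W$ is clear since $N$ is compact and $H \to +\infty$ as $|y| \to \pi/2$.

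The core estimate is $|S| \le 2$, where $S$ is the set of local minima of $H|\partial N$ in $W$ that are also local minima of $H|N$. On the half boundary arcs $\partialin M \cap \{x \ge 0\}$ and $\partialout M \cap \{x \ge 0\}$, both lying in $\{z = 0\}$, the function $H = -\log\cos y$ is strictly convex in $y$ and attains a unique local minimum at $y = 0$, namely $(a(M),0,0)$ and $(A(M),0,0)$; since $M$ enters $\{z > 0\}$ from these boundary points, both are local minima of $H|N$. On the arcs $M \cap \{x = 0\}$, no point contributes to $S$: parametrizing $M$ locally as $y = \yin(x,z)$ or $y = \yout(x,z)$ via Theorem~\ref{y-graph-theorem}, and using strict monotonicity in $x$ from Theorem~\ref{x-alexandrov-theorem} (and its Alexandrov analogue for $\yout$), one computes
$$\frac{\partial H}{\partial x}\bigg|_M = \tan(y) \cdot \frac{\partial y}{\partial x} < 0$$
for $y \ne 0$; and $y = 0$ does not occur on $M \cap \{x=0\}$ since $M$ is disjoint from $Z$. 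Hence $H$ strictly decreases moving from such an arc into $\{x > 0\}$, ruling out every candidate point. The Morse--Rad\'o inequality therefore yields
$$\mathsf{N}(H|M \cap \{x > 0\}) \;=\; \mathsf{N}(H|N \cap W) \;\le\; |S| - |T| - \chi(N \cap W) \;\le\; 2 - 0 - 1 \;=\; 1.$$

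The main obstacle will be the case $b(M) \ge \pi/2$ or $B(M) \ge \pi/2$, when parts of $\partial M$ lie outside $W$ and $\partial(N \cap W)$ acquires additional ``free'' ends along $\{|y| = \pi/2\}$. These ends are harmless because $H \to +\infty$ there, generating no new members of $S$, and, by Lemma~\ref{topology-lemma}, leaving $\chi \ge 1$ intact since each component of $N \cap W$ remains a disk. Secondary technicalities include extending Theorem~\ref{x-alexandrov-theorem}'s moving-planes proof to $\yout$, handling any boundary arcs on which $H$ is constant (Remark~\ref{re-one}), and checking the $M \in \Aa$ approximation argument, where the approximating $M_n \in \Rr$ yield the desired bound in the limit via the smooth convergence of $M_n - (0,0,z(M_n))$ and Theorem~\ref{semicontinuity-theorem}.
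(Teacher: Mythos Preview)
Your approach of cutting $M$ along $\{x=0\}$ and applying Morse--Rad\'o to $N = M\cap\{x\ge 0\}$ is plausible in outline, but the key step---ruling out contributions to $S$ from the arcs $M\cap\{x=0\}$---is argued incorrectly. You invoke the sideways graph parametrization $y=\yin(x,z)$ and the Alexandrov monotonicity of Theorem~\ref{x-alexandrov-theorem}, but both results apply only in the region $\{x\ge x(M)+\pi\}$ (that is where $\Omegain$, $\Omegaout$, and $\Mlow$ are defined), not at $x=0$. More fundamentally, your first-order computation cannot give a strict sign: since $M$ is invariant under $(x,y,z)\mapsto(-x,y,z)$, the vector $\ee_1$ is tangent to $M$ at every point of $M\cap\{x=0\}$, and $H$ is independent of $x$, so the tangential derivative of $H|_M$ in the $\ee_1$ direction vanishes identically along $M\cap\{x=0\}$. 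Thus ``$\tan(y)\,\partial y/\partial x<0$'' is false there; the expression is zero.

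What actually happens is that any local minimum $p$ of $H$ along the curve $M\cap\{x=0\}$ is a \emph{critical point} of $H|_M$ (the gradient of $H|_M$ is orthogonal both to $\ee_1$ and to the tangent of the curve, hence vanishes). Since interior tangencies of minimal surfaces are never local extrema, $p$ is a saddle of $H|_M$; the reflection symmetry then forces the Hessian to be diagonal in the $(\ee_1,\vv)$ basis with the $\ee_1$-eigenvalue negative, and one checks that $p$ is not a local minimum of $H|_N$. So your conclusion is salvageable, but by a second-order symmetry argument, not by Alexandrov monotonicity. The paper sidesteps this entirely: it applies Morse--Rad\'o to the \emph{whole} annulus $M$, counts $|S|=4$ (the points $(\pm a,0,0)$, $(\pm A,0,0)$), and uses the symmetry decomposition
\[
  \mathsf{N}(H|M)=2\,\mathsf{N}(H|M\cap\{x>0\})+\mathsf{N}(H|M\cap\{x=0\}).
\]
It then shows $\chi(M\cap W)+\mathsf{N}(H|M\cap\{x=0\})\ge 1$ by a short case split on whether $y(M)<\pi/2$, which immediately gives $2\,\mathsf{N}(H|M\cap\{x>0\})\le 3$. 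This avoids introducing the artificial boundary $M\cap\{x=0\}$ and the delicate analysis of local minima there.
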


\begin{proof}

By lower semicontinuity, it suffices to prove it for $M\in \Cc$.
We use
\[
 \mathsf{N}(H|M) \le |S| - \chi(M\cap W)
\]
where $S$ is the set of local minima of $H|\partial M$ and $W$ is the slab $\{|y|<\pi/2\}$. 
Note that $S$ consists of four points, namely $(\pm a(M),0,0)$ and $(\pm A(M),0,0)$.  Also, by symmetry,
\[
 \mathsf{N}(H|M) = 2 \mathsf{N}(H|M\cap\{x>0\}) + \mathsf{N}(H|M\cap\{x=0\}).
\]
Thus
\[
  2 \mathsf{N}(H|M\cap\{x>0\}) + \mathsf{N}(H|M\cap \{x=0\}) + \chi(M\cap W) \le 4,
\]
Hence to prove the lemma, it suffices to show that
\begin{equation}\label{to-show}
\mathsf{N}(H|M\cap\{x=0\}) + \chi(M\cap W)\ge 1.
\end{equation}
There are two cases: $y(M)<\pi/2$ and $y(M)\ge \pi/2$.

If $y(M)\ge \pi/2$, then $(M\cap W)\cap\{x=0\}=\emptyset$, so $M\cap W$ has two components, both simply
connected, and thus $\chi(M\cap W)=2$, so~\eqref{to-show} holds.

If $y(M)<\pi/2$, note that the function $H$ is $\infty$ on the endpoints of the curve $M\cap\{x=0\}\cap \{y>0\}$ and thus
it has at least one critical point on that curve. By symmetry, that point is also a critical point of $H|M$.  Thus $\mathsf{N}(H|M\cap\{x=0\})>0$,
so~\eqref{to-show} holds. (Recall that $M\cap W$ has nonnegative Euler characteristic by Lemma~\ref{topology-lemma}.)
\end{proof}

\begin{corollary}\label{capped-critical}
Suppose that $M\in \Aa$ and that $B(M)>\pi/2$.  Then either
\begin{enumerate}
\item $\uup$ is strictly increasing, with no critical points, or
\item $\uup$ has exactly one critical point, an absolute maximum.
\end{enumerate}
In the first case, $M$ is uncapped, and in the second case, $M$ is capped.
\end{corollary}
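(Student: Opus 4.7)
The plan is to combine the bound $\mathsf{N}(H|M\cap\{x>0\})\le 1$ from the preceding lemma (where $H(x,y,z)=z-\log\cos y$) with the known boundary behavior of $\uup$. The first step is to observe that each critical point $x_0\in(x(M),\infty)$ of $\uup$ produces a critical point of $H|M$ at $p=(x_0,0,\uup(x_0))$: the tangent plane of $M$ at $p$ is spanned by $\ee_1$ (from $\uup'(x_0)=0$) and $\ee_2$ (from the $\{y=0\}$ symmetry), hence is horizontal, and the tangent plane to the leaf $\{H=H(p)\}$ at $p$ is also horizontal since $\nabla H|_{y=0}=\ee_3$. Consequently $\uup$ has at most one critical point on $(x(M),\infty)$.

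The two pieces of boundary behavior I would invoke are: near $x(M)$, the curve $M\cap\{y=0,x>0\}$ has a vertical tangent at $(x(M),0,0)$ that opens to the right (the branches $\uup,\ulow$ split from the common value $0$), so $\uup(x)>0$ for $x$ slightly greater than $x(M)$; and as $x\to\infty$, $\uup'(x)\to L\in\{s(B),-s(B)\}$ by Theorem~\ref{main-theorem-concluded}\eqref{upper-slope-item}, with $s(B)>0$ since $B>\pi/2$. Thus $\uup'$ is nonzero at both ends.

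A short technical step is to show that any critical point $x_0$ of $\uup$ is non-degenerate, i.e., $\uup''(x_0)\ne 0$. Parametrizing $\Mup$ near $p$ as a graph $z=u(x,y)$ (available via Theorem~\ref{main-theorem-concluded}\eqref{upper-graph-item}), the function $H|M$ becomes $u(x,y)-\log\cos y$, whose Hessian at $(x_0,0)$ is $\operatorname{diag}(\uup''(x_0),\,u_{yy}(x_0,0)+1)$. If $\uup''(x_0)=0$, this Hessian has rank at most $1$ and the critical point of $H|M$ has multiplicity at least $2$, contradicting the lemma; so $\uup''(x_0)\ne 0$ and $x_0$ is a non-degenerate extremum.

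The conclusion follows by case analysis. If $\uup$ has no critical point, then $\uup'$ has constant sign on $(x(M),\infty)$; since $\uup$ is positive just to the right of $x(M)$, that sign is positive, so $\uup$ is strictly increasing, forcing $L=s(B)>0$, i.e., $M$ is uncapped. If $\uup$ has exactly one critical point $x_0$, non-degeneracy rules out an inflection, so $x_0$ is a local maximum or a local minimum; a local minimum would require $\uup$ to decrease initially, contradicting positivity just right of $x(M)$ and the absence of a second critical point to turn $\uup$ upward again, so $x_0$ is a local, hence absolute, maximum, with $\uup'>0$ on $(x(M),x_0)$ and $\uup'<0$ on $(x_0,\infty)$, forcing $L=-s(B)<0$ and $M$ capped. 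The main obstacle is the non-degeneracy step, which is exactly where the multiplicity refinement built into the preceding lemma is essential to exclude inflection points with horizontal tangent and secure the strict dichotomy between the two cases.
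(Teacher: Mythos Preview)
Your proof is correct and follows essentially the same route as the paper: both use the preceding lemma's bound $\mathsf{N}(H|M\cap\{x>0\})\le 1$ to see that $\uup$ has at most one critical point, necessarily of multiplicity one (hence nondegenerate), and then use the boundary behavior at $x(M)$ to conclude. The paper's version is terser---it just invokes $(\uup)'(x(M))=+\infty$ and stops---while you spell out the Hessian and the case analysis at both endpoints, but the logic is the same.

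One minor point: you cite Theorem~\ref{main-theorem-concluded}\eqref{upper-graph-item} to parametrize $\Mup$ near $p=(x_0,0,\uup(x_0))$ as a graph $z=u(x,y)$, but that theorem only covers $x\ge x(M)+\pi$. You do not need it: since you have already shown $\Tan(M,p)$ is horizontal, $M$ is a graph $z=u(x,y)$ in a neighborhood of $p$ for any $x_0>x(M)$, and that local graph is all your Hessian computation requires. Also, the step ``rank $\le 1$ Hessian implies multiplicity $\ge 2$'' is correct here but relies on the elliptic structure (the second-order part of $u-h$ must be trace-free in suitable coordinates, so a degenerate Hessian is actually zero); this is standard in Morse--Rad\'o theory and is exactly what the paper uses implicitly when it says the unique critical point is ``a nondegenerate one.''
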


\begin{proof}
Since any critical point of $\uup$ is also a critical point of $H|M\cap\{x>0\}$, we see (from the lemma) that $\uup$
either has no critical point or exactly one critical point, a nondegenerate one.  The corollary now follows
from the fact that $(\uup)'(x(M))=\infty$.
\end{proof}

\begin{theorem}
Let $\Ss$ be the set of $M\in \Aa$ such that $B(M)>\pi/2$ and such that $M$ is capped. 
Then $\Ss$ is an open subset of $\Aa$.
\end{theorem}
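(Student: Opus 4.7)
Let $M\in\Ss$, so $B:=B(M)>\pi/2$ and $M$ is capped. By Corollary~\ref{capped-critical}, the slice function $\uup$ has a unique critical point $x_0>x(M)$, which is a nondegenerate absolute maximum (so $(\uup)''(x_0)<0$). Set $p_0:=(x_0,0,\uup(x_0))\in M$; since $x_0$ is a local maximum of $\uup$ and $M$ is symmetric under $(x,y,z)\mapsto(x,-y,z)$, the tangent plane $\Tan(M,p_0)$ is horizontal, i.e.\ $\nu(M,p_0)=\ee_3$.

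Now suppose $M_n\in\Aa$ with $M_n\to M$. By Theorem~\ref{b-B-x-continuity-theorem}, $B(M_n)\to B>\pi/2$, so $B(M_n)>\pi/2$ for all sufficiently large $n$. The key point is that the convergence $M_n\to M$ is smooth near $p_0$: since $\nu(M,p_0)=\ee_3$ is not horizontal, Remark~\ref{blowup-remark} rules out a catenoidal bubble at $p_0$, and together with the curvature and area bounds inherent to $\Aa$ (Theorem~\ref{complete-existence-theorem}) we get smooth convergence in a neighborhood of $p_0$. Thus a neighborhood of $p_0$ in $M$ is the graph of a smooth function $u(x,y)$ over a neighborhood $U$ of $(x_0,0)$, with $(x_0,0)$ a nondegenerate critical point of $u$ (so $D^2 u(x_0,0)$ is negative definite), and for large $n$ the corresponding portions of $M_n$ are graphs of functions $u_n\to u$ smoothly on $U$.

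By the implicit function theorem applied to $\nabla u_n$, which converges to $\nabla u$ in $C^1$ with $\nabla u(x_0,0)=0$ and nondegenerate Hessian, there exist $(x_n,y_n)\in U$ converging to $(x_0,0)$ with $\nabla u_n(x_n,y_n)=0$. The $(x,y,z)\mapsto(x,-y,z)$ symmetry of $M_n$ forces $y_n=0$ (otherwise $(x_n,-y_n,u_n(x_n,-y_n))$ would be a second nearby critical point, contradicting nondegeneracy and uniqueness via the implicit function theorem). Hence $x_n$ is a critical point of the slice function $\unup$ on $M_n$. Since $B(M_n)>\pi/2$, Corollary~\ref{capped-critical} now forces $M_n$ to be capped, so $M_n\in\Ss$. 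Thus $\Ss$ is open in $\Aa$.

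The only delicate point is the smoothness of the convergence $M_n\to M$ at $p_0$; everything else is a routine implicit-function-theorem persistence argument for a nondegenerate critical point, combined with Corollary~\ref{capped-critical} which packages the dichotomy between capped and uncapped as the presence or absence of such a critical point.
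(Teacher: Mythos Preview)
Your approach is essentially the same as the paper's: use Corollary~\ref{capped-critical} to characterize capped annuloids by the existence of a nondegenerate critical point of $\uup$, then argue that this critical point persists under smooth convergence. Your discussion of why the convergence $M_n\to M$ is smooth near $p_0$ (via Remark~\ref{blowup-remark} and the fact that the $\nu=\pm\ee_1$ points converge to $(\pm x(M),0,0)\ne p_0$) is a welcome elaboration of something the paper leaves implicit.

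There is one small gap: you assert that $D^2u(x_0,0)$ is negative definite, but you only know $(\uup)''(x_0)<0$ from Corollary~\ref{capped-critical}; symmetry gives $u_{xy}(x_0,0)=0$, but nothing immediately rules out $u_{yy}(x_0,0)=0$, in which case the 2-dimensional implicit function theorem for $\nabla u_n$ does not apply as stated. The fix is to avoid the 2D detour entirely: since $M_n$ and $M$ meet $\{y=0\}$ orthogonally, the curves $M_n\cap\{y=0,\,x>0\}$ converge smoothly to $M\cap\{y=0,\,x>0\}$ near $p_0$, so $\unup\to\uup$ smoothly near $x_0$, and the 1-dimensional implicit function theorem (applied to $(\uup)'$ with $(\uup)''(x_0)<0$) gives a nearby critical point $x_n$ of $\unup$, which is then a local maximum. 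This is precisely the paper's (terse) argument, and it bypasses the need to control $u_{yy}$.
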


\begin{proof}
Suppose that $M_n\in \Aa$ converge to $M\in \Ss$.
By Corollary~\ref{capped-critical}, $\uup$ has a nondegenerate local maximum (its absolute maximum).
Thus $\unup$ has a local maximum for all sufficiently large $n$.  For such $n$, $M_n$ is capped by 
Corollary~\ref{capped-critical}.
\end{proof}

\begin{theorem}\label{cutoffs-theorem}
Suppose $b\ge \pi/2$.
\begin{enumerate}[\upshape(1)]
\item\label{cutoffs-item-1}
 There is a $\lambda=\lambda(b)$ such that if $M\in \Aa(b)$ and $x(M)>\lambda$, then $B(M)>b$,
   and therefore $M$ is uncapped.
\item\label{cutoffs-item-2}
 If $b>\pi/2$, there is an $\eps=\eps(b)$ with the following property.  If $M\in \Aa(b)$ and if $x(M)<\eps$, 
   then $M$ is capped, and therefore 
   $B(M)=b$.
\end{enumerate}
\end{theorem}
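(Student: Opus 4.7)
The plan is to argue both assertions by contradiction, leveraging the limit descriptions of annuloids with large and small necks from Theorems~\ref{infinite-neck-theorem} and~\ref{small-neck-theorem}.

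For Assertion~\eqref{cutoffs-item-1}, suppose no such $\lambda$ exists. Then there is a sequence $M_n\in\Aa(b)$ with $x(M_n)\to\infty$ and $B(M_n)\le b$; since $B(M_n)\ge b(M_n)=b$, this forces $B(M_n)=b$ for all $n$. But Theorem~\ref{infinite-neck-theorem} asserts that whenever $b(M_n)\to b<\infty$ and $x(M_n)\to\infty$, any subsequential limit of $B(M_n)$ equals $b+\pi$. This contradicts $B(M_n)\equiv b$. Hence $B(M)>b$ for $x(M)$ sufficiently large, and Theorem~\ref{main-theorem-concluded}\eqref{uncapped-item} then yields that $M$ is uncapped.

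For Assertion~\eqref{cutoffs-item-2}, I first note that under $b>\pi/2$ we have $B(M)\ge b>\pi/2$, so the capped/uncapped dichotomy is always defined, and the contrapositive of Theorem~\ref{main-theorem-concluded}\eqref{uncapped-item} shows that $M$ capped forces $b(M)=B(M)=b$. Thus it suffices to produce $\eps(b)>0$ such that $x(M)<\eps$ implies $M$ is capped. Suppose instead there exist $M_n\in\Aa(b)$ uncapped with $x(M_n)\to 0$. Uncappedness combined with Corollary~\ref{capped-critical} gives $(\unup)'(x)>0$ for all $x>x(M_n)$. Meanwhile, Theorem~\ref{small-neck-theorem}\eqref{small-neck-4} provides smooth (multiplicity-$2$) convergence of $M_n$ to $\graph(f_b)$ away from the origin, so $(\unup)'(x_0)\to\partial_x f_b(x_0,0)$ for every fixed $x_0>0$. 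Since $b>\pi/2$, $f_b$ is a $\Delta$-wing, and the asymptotic description from Section~\ref{graphs-section}, namely $f_b(x+t,y)-f_b(t,0)\to w_b(x,y)$ as $t\to+\infty$ with $\partial w_b/\partial x\equiv -s(b)$, gives $\partial_x f_b(x_0,0)\to -s(b)<0$ as $x_0\to\infty$. Fixing $x_0$ large enough that $\partial_x f_b(x_0,0)<0$ then forces $(\unup)'(x_0)<0$ for all sufficiently large $n$ (for which also $x(M_n)<x_0$), the desired contradiction.

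The main conceptual hurdle is identifying the right quantity to track in the small-neck limit: the strict monotonicity of $\unup$ forced by uncappedness is incompatible with the eventual strict decrease of the $\Delta$-wing profile function $f_b(\cdot,0)$. This incompatibility requires $s(b)>0$, i.e.\ $b>\pi/2$; for $b=\pi/2$ the limit profile is the $x$-independent untilted grim reaper and $\partial_x f_{\pi/2}(x,0)\equiv 0$, so the argument collapses. This is consistent with the discussion in the paper noting that the analog of Assertion~\eqref{cutoffs-item-2} in the case $b=\pi/2$ is not known.
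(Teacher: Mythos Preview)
Your proof is correct and follows essentially the same approach as the paper's: both parts argue via the large-neck limit (Theorem~\ref{infinite-neck-theorem}, equivalently Theorem~\ref{prong-theorem}\eqref{pi-prong}) and the small-neck limit (Theorem~\ref{small-neck-theorem}\eqref{small-neck-4}), using Corollary~\ref{capped-critical} to translate the sign of $(\uup)'$ into capped/uncapped. One minor simplification: in Assertion~\eqref{cutoffs-item-2} you need not pass to the asymptotics of $f_b$ to find $x_0$ with $\partial_x f_b(x_0,0)<0$; since $b>\pi/2$, the $\Delta$-wing $f_b$ has $D^2 f_b$ everywhere negative definite (Section~\ref{graphs-section}), so $\partial_x f_b(x_0,0)<0$ for every $x_0>0$.
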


\begin{proof}
Let $M_n\in  \Aa(b)$ with $x(M_n)\to\infty$. 
 Then $B(M_n)\to b+\pi$ (by Theorem~\ref{prong-theorem}\eqref{pi-prong}), so $B(M_n)>b$ for all sufficiently
large $n$.
  If $B(M)>b(M)$, then $M$ is uncapped by Theorem~\ref{main-theorem-concluded}\eqref{uncapped-item}.
    This proves Assertion~\eqref{cutoffs-item-1}.

Now suppose that $b>\pi/2$, that $M_n\in \Aa(b)$, and that $x(M_n)\to 0$.
Then for each $x>0$, $(\unup{})'(x)<0$ for all sufficiently large $n$ by Theorem~\ref{small-neck-theorem}[5].
   If $\unup{}'(x)<0$ for some $x$, then $M_n$
is capped by Corollary~\ref{capped-critical} and therefore $B(M_n)=b$ by Assertion~\eqref{cutoffs-item-1}.
 This proves Assertion~\eqref{cutoffs-item-2}.
\end{proof}

\begin{theorem}\label{uncapping-theorem}
Suppose $b>\pi/2$.
There exist capped annuloids $M_n\in \Aa(b)$ that converge to an uncapped annuloid $M\in \Aa(b)$.
Let $p_n$ be the highest point on $\graph(\unup)$.  Then $M_n-p_n$ converges smoothly as $n\to\infty$
to the $\Delta$-wing in the slab $\{|y|<b\}$ whose highest point is the origin.
\end{theorem}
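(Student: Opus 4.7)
I will use the closed, connected family $\Ff(b) \subset \Aa(b)$ from Theorem~\ref{connected-family-theorem}, whose elements realize every necksize $x \in (0,\infty)$. The approach: inside $\Ff(b)$, the capped annuloids form a nonempty open subset whose complement (the uncapped annuloids) is also nonempty, so connectedness gives a boundary point $M \in \Ff(b)$ that is uncapped and a limit of capped $M_n \in \Ff(b)$. The cap of each $M_n$ must then escape to spatial infinity (otherwise $\uup$ would inherit a critical point), and after translating by $-p_n$ the classification of graphical translators identifies the limit as $f_b$.

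\textit{Step 1: construct $M_n\to M$.} By Theorem~\ref{cutoffs-theorem}, every $M'\in\Ff(b)$ with sufficiently small $x(M')$ is capped, and every $M'\in\Ff(b)$ with sufficiently large $x(M')$ is uncapped, so the capped subset $\Ss_c$ and its complement $\Ss_u$ in $\Ff(b)$ are both nonempty. Since $\Ss_c$ is open in $\Aa$ (the theorem stated immediately before Theorem~\ref{cutoffs-theorem}), connectedness of $\Ff(b)$ gives $M\in\Ss_u\cap\overline{\Ss_c}$ and $M_n\in\Ss_c$ with $M_n\to M$. Next, Theorem~\ref{main-theorem-concluded}\eqref{uncapped-item} says that $B>b$ forces uncappedness, so any capped annuloid in $\Aa(b)$ has $B=b$; hence $B(M_n)=b$ and, by the continuity Theorem~\ref{b-B-x-continuity-theorem}, also $B(M)=b$.

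\textit{Step 2: the cap escapes, i.e.\ $x(p_n)\to\infty$.} By Corollary~\ref{capped-critical} the function $\unup$ has a unique critical point $x_n^*\in(x(M_n),\infty)$, the site of its maximum, so $p_n=(x_n^*,0,\unup(x_n^*))$ and $\nu(M_n,p_n)=\ee_3$. Suppose, after passing to a subsequence, $x_n^*\to x^*\in[x(M),\infty)$. If $x^*>x(M)$, smooth convergence $M_n\to M$ near the point $(x^*,0,\uup(x^*))$ (valid by the uniform curvature bound of Theorem~\ref{curvature-bound-theorem}\eqref{in-addition-item}, as this point is bounded away from both the waist and $Z$) gives $(\uup)'(x^*)=\lim_n(\unup)'(x_n^*)=0$, contradicting Corollary~\ref{capped-critical} for the uncapped limit. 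If $x^*=x(M)$, I consider two subcases. If $\unup(x_n^*)$ stays bounded, then $p_n$ has a subsequential limit $p=(x(M),0,z^*)\in M$ with $z^*\ge 0$; the only such point in $M$ is the waist $(x(M),0,0)$, whose tangent plane is vertical, contradicting $\nu(M_n,p_n)=\ee_3$. If $\unup(x_n^*)\to\infty$, fix $z_0>0$ and use continuity of $\unup$ to find $x_n^{**}\in(x(M_n),x_n^*)$ with $\unup(x_n^{**})=z_0$; then $(x_n^{**},0,z_0)\in M_n$ has a subsequential limit $(x(M),0,z_0)\in M$, impossible since $\uup(x(M))=0$. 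Hence $x_n^*\to\infty$.

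\textit{Step 3: identify the limit.} By Theorem~\ref{main-theorem-concluded}\eqref{upper-graph-item}, $\Mup_n\cap\{x\ge x(M_n)+\pi\}$ is the graph of a function $u_n:[x(M_n)+\pi,\infty)\times(-B_n,B_n)\to\RR$ satisfying the gradient bound $(B_n-|y|)\sqrt{1+|Du_n|^2}\le\eta B_n$ with $\eta=\eta(\lambda)$ for any fixed $\lambda>b$. The translated function $\tilde u_n(\xi,\eta):=u_n(x_n^*+\xi,\eta)-u_n(x_n^*,0)$ satisfies $\tilde u_n(0,0)=0$ and $D\tilde u_n(0,0)=0$, inherits the same gradient bound, and its domain exhausts $\RR\times(-b,b)$ since $x_n^*\to\infty$ and $B_n=b$. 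Arzel\`a--Ascoli then produces a subsequential smooth limit $\tilde u:\RR\times(-b,b)\to\RR$, which is a complete graphical translator with horizontal tangent plane at the origin; by Theorem~\ref{classification-theorem} and the characterization of $f_b$ in Section~\ref{graphs-section}, $\tilde u=f_b$. Uniqueness of the limit upgrades subsequential to full convergence, and the finite-type curvature bounds (Theorem~\ref{better-curvature-theorem}) promote the smooth convergence of $\graph(u_n)-p_n$ to $\graph(f_b)$ to smooth convergence of all of $M_n-p_n$; the remaining pieces, namely $\Mlow_n$ and a fixed neighborhood of the waist of $M_n$, translate into $\{z\to-\infty\}$ and contribute nothing to the limit.

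\textit{Main obstacle.} The most delicate point is Step~2: excluding the possibility that the cap collapses back toward the waist rather than escaping to infinity. The two subcases (bounded versus unbounded $\unup(x_n^*)$ along $x_n^*\to x(M)$) require slightly different arguments, and the unbounded case relies on using set-theoretic convergence $M_n\to M$ via intermediate-value points to produce a point of $M$ where no point can exist.
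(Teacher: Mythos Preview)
Your proof is correct and follows essentially the same route as the paper's: use the connected family $\Ff(b)$ together with openness of the capped set to find capped $M_n\to$ uncapped $M$ in $\Ff(b)$, note that cappedness forces $B(M_n)=b$, and then use the gradient bound from Theorem~\ref{main-theorem-concluded}\eqref{upper-graph-item} plus Arzel\`a--Ascoli to identify the limit of the translated upper graphs as $f_b$ via $Du(0,0)=0$.

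Your Step~2 makes explicit a point the paper leaves tacit: the paper simply asserts that the limit function $u$ is defined on all of $\RR\times(-b,b)$, which already presupposes $x_n^*-x(M_n)\to\infty$. Your case analysis is correct; in the subcase $x_n^*\to x(M)$ with $\unup(x_n^*)\to\infty$, the intermediate-value argument producing a spurious point $(x(M),0,z_0)\in M$ is exactly what is needed. Likewise, your closing remark that $\Mlow_n$ and the neck region contribute nothing to the limit is correct (using $\Mlow_n\cap\{x\ge x(M_n)+\pi\}\subset\{z\le\unlow(x)\}$ and the fact that $\unlow(x+x_n^*)-\unup(x_n^*)\to-\infty$, while the region $\{x\le x(M_n)\}$ translates into $\{x\to-\infty\}$), and again goes slightly beyond what the paper writes out.
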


See   Figure~\ref{peaked-figure}  for a sketch of $M_i\cap \{y=0\}\cap\{x>0\}$ for large $i$.
\begin{figure}[htbp]
\begin{center}
\includegraphics[width=.33\textwidth]{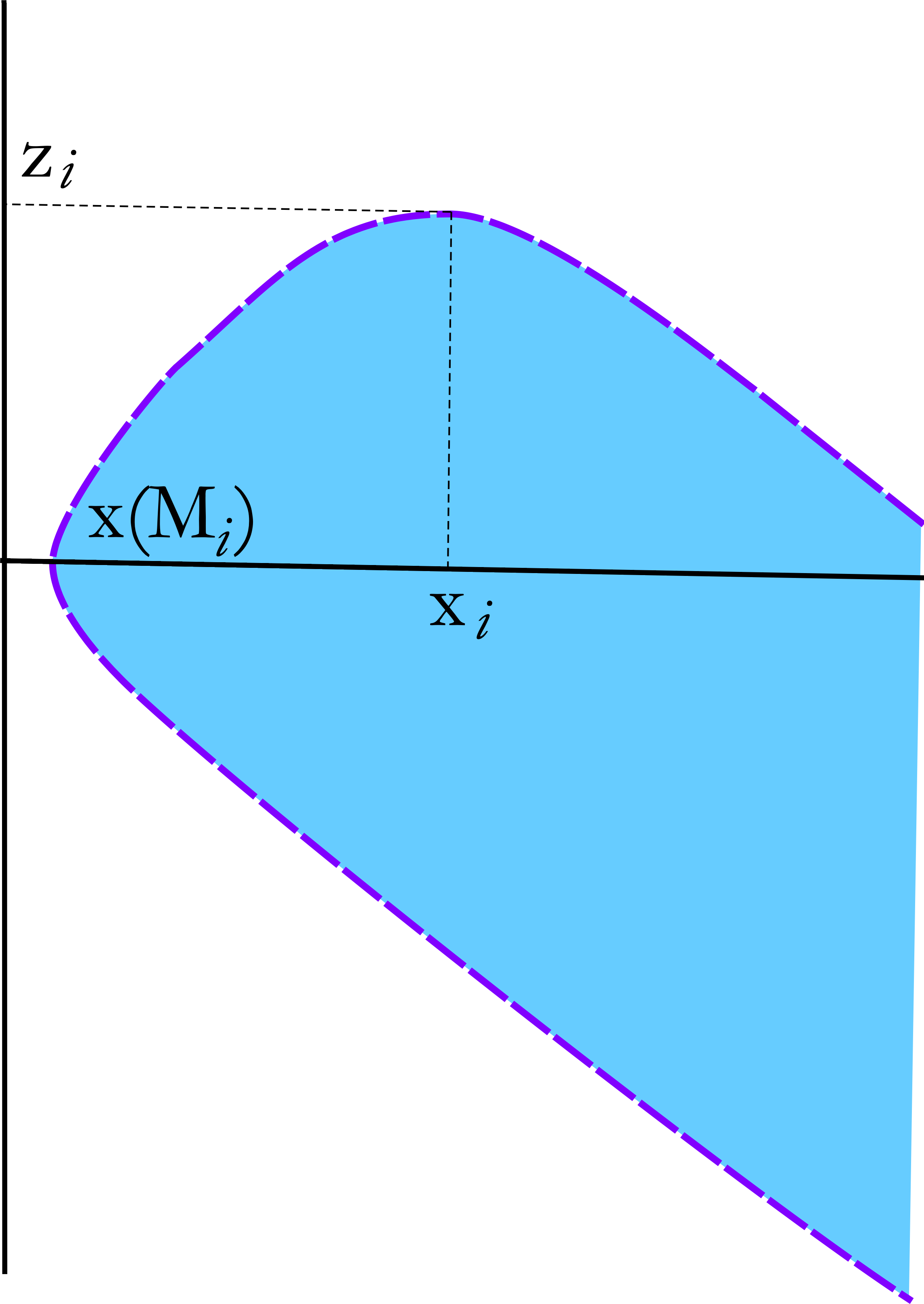}
\caption{\small The curve $M_i\cap \{y=0\}$ in Theorem \ref{uncapping-theorem}.}
\label{peaked-figure}
\end{center}
\end{figure}

\begin{proof}
Recall (Theorem~\ref{connected-family-theorem})
that $\Aa(b)$ has a connected subset $\Ff(b)$ containing annuloids $M$ of every necksize.
Thus by Theorem~\ref{cutoffs-theorem}, $\Ff(b)$ contains both capped and uncapped annuloids.
Since the set of capped $M\in \Ff(b)$ is relatively open, it cannot be closed.  Thus there are capped $M_n\in \Ff(b)$
that converge to an uncapped $M$ in $\Ff$.  Since the $M_n$ are capped,
\[
  B(M_n)=b.
\]

Let $p_n=(x_n,0,\unup(x_n))$.  Recall (see Theorem~\ref{R-lambda-theorem}) 
 that $M_n\cap \{x\ge x(M_n)+\pi\}$ is the graph of a function
\[
  u_n: [x(M_n)+\pi,\infty)\times (-b,b) \to \RR.
\]
The gradient bound~\eqref{R-lambda-slope-item} in Theorem~\ref{R-lambda-theorem}) implies that $u_n(x_n+x,y) - u_n(x_n,0)$ converges smoothly (perhaps after passing to a subsequence)
to a smooth translator
\[
  u: \RR\times (-b,b)\to \RR.
\]
Since $\pdf{}xu_n(x_n,0)=0$, we see that $\pdf{}x u(0,0)=0$ and thus that $u$ is a $\Delta$-wing.
(We are assuming that $b>\pi/2$, so $u$ cannot be an untilted grim reaper surface.)
\end{proof}

\begin{corollary}\label{uncapped-but-squeezed}
If $b>\pi/2$, then there exist an uncapped annuloid $M\in \Aa(b)$ for which $B(M)=b$.
\end{corollary}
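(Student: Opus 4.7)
The plan is to extract this corollary directly from Theorem~\ref{uncapping-theorem} together with the continuity of the width function. Fix $b>\pi/2$. Theorem~\ref{uncapping-theorem} produces a sequence of capped annuloids $M_n\in \Aa(b)$ that converges (in the topology of $\KK(\RR^3)$) to an uncapped annuloid $M\in \Aa(b)$. First I would verify that $B(M_n)=b$ for every $n$: since each $M_n$ is capped and belongs to $\Aa(b)$, Theorem~\ref{cutoffs-theorem}\eqref{cutoffs-item-1} (contrapositive) forces $B(M_n)\le b$, and since $B(M_n)\ge b(M_n)=b$ by definition, we get $B(M_n)=b$.

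Next, I would invoke Theorem~\ref{b-B-x-continuity-theorem}, which asserts that $M\mapsto B(M)$ is continuous on $\Aa$. Since $M_n\to M$ in $\Aa$, we obtain
\[
B(M)=\lim_{n\to\infty} B(M_n)=b.
\]
Combined with the fact that $M$ is uncapped (guaranteed by Theorem~\ref{uncapping-theorem}), this produces the desired annuloid: $M\in \Aa(b)$, $M$ is uncapped, and $B(M)=b$.

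There is no real obstacle here, since the two ingredients (existence of such a limit and continuity of $B$) are already established. The only thing to double-check is that the notion of ``uncapped'' is defined for $M$, that is, that $B(M)>\pi/2$; this holds because $B(M)=b>\pi/2$ by hypothesis. Thus $M$ is the desired uncapped annuloid in $\Aa(b)$ with $B(M)=b$.
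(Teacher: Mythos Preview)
Your proof is correct and follows essentially the same approach as the paper: apply Theorem~\ref{uncapping-theorem}, use that the capped $M_n$ satisfy $B(M_n)=b$, and invoke continuity of $B$ (Theorem~\ref{b-B-x-continuity-theorem}). The only minor point is that your citation for ``capped $\Rightarrow B(M_n)\le b$'' is slightly indirect: the implication $B(M)>b(M)\Rightarrow M$ uncapped is really Theorem~\ref{main-theorem-concluded}\eqref{uncapped-item}, and the fact $B(M_n)=b$ is already recorded inside the proof of Theorem~\ref{uncapping-theorem}, so you could simply quote it from there.
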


\begin{proof}
Let $M\in \Aa(b)$ be as in Theorem~\ref{uncapping-theorem}.  Since $B(M_n)=b$, we see that $B(M)=b$
(by Theorem~\ref{b-B-x-continuity-theorem}).
\end{proof}


\section{A graphical property of uncapped annuloids and prongs}\label{final-graphicality-section}

\newcommand{\tM}{\tilde M}

Here we show that if $M$ is an uncapped annuloid in $\Aa$, then $M\cap\{x>0\}$
is a sideways graph $x=x(y,z)$.  It follows (Theorem~\ref{prong-graph-theorem}) that if $M$ is a prong, then $M$ is also a sideways graph.

For $c\in\RR$, let 
\begin{align*}
   I_c &= (c-\pi/2,c+\pi/2), \\
   W_c &= \RR\times I_c\times \RR.
\end{align*}
Let
$
  h_c:\RR\times I_c \to \RR
$
be the untilted grim reaper surface with $h_c(x,c)\equiv 0$, and let
\begin{align*}
&H_c: W_c\to\RR, \\
&H_c(x,y,z)= z - h_c(x,y).
\end{align*}

For $M\in \Rr$, we will use the formula (see Theorem~\ref{morse-rado-theorem})
\begin{equation}\label{S-T}
\mathsf{N}(H_c|M) \le |S| - |T| - \chi(M\cap W_c).
\end{equation}
where $S$ is the set of local minima of $H_c|\partial M$ that are also local minima of $H_c|M$,
and where $T$ is the set of local maxima of $H_c|\partial M$ that are {\bf not} local maxima of $H_c|M$.

By symmetry,
\[
  \mathsf{N}(H_c|M) = \mathsf{N}(H_c|M\cap \{x=0\}) + 2 \mathsf{N}(H_c|M\cap \{x>0\}).
\]

Thus we can rewrite~\eqref{S-T} as
\begin{equation}\label{double}
\chi(M\cap W_c) + \mathsf{N}(H_c|M\cap \{x=0\})  +   2 \mathsf{N}(H_c|M\cap \{x>0\}) 
\le
|S| - |T|.
\end{equation}

Recall that
\[
  y(M): = \inf \{|y|: (0,y,z)\in M\}.
\]

\begin{lemma}\label{easy-lemma}
Let $M\in \Rr$ and suppose $c\ge 0$.
If $-y(M)\in I_c$, then $H_c|M$ has a critical point in $M\cap \{x=0\}\cap \{y<0\}$.
\end{lemma}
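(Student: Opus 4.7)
The plan is to reduce the question to a one-dimensional problem on the curve $\Gamma_- := M\cap\{x=0\}\cap\{y<0\}$ and then produce a critical point as an interior minimum of $H_c$ on $\Gamma_-\cap W_c$. Because $\{x=0\}$ is a plane of symmetry of $M$ (Definition~\ref{Cc-definition}), $M$ meets $\{x=0\}$ orthogonally, so $\Gamma := M\cap\{x=0\}$ is a smooth $1$-manifold with the orthogonal splitting $T_pM = T_p\Gamma\oplus \mathbb{R}\mathbf{e}_1$ at each interior point $p\in\Gamma$. Since $h_c$ is independent of $x$, $\nabla H_c\perp\mathbf{e}_1$, so the tangential gradients of $H_c$ along $\Gamma$ and along $M$ agree on $\Gamma$. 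Consequently, any critical point of $H_c|\Gamma_-$ is automatically a critical point of $H_c|M$.

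Next, I would use that the hypothesis $-y(M)\in I_c=(c-\pi/2,c+\pi/2)$ together with $c\ge 0$ and $y(M)>0$ forces $y(M)<\pi/2-c\le \pi/2\le b$. Hence the $(x,y)$-swapped version of Theorem~\ref{finite-y-slice-theorem} applies: $\Gamma_-$ is a smooth embedded arc from $(0,-b,0)$ to $(0,-B,0)$ whose $y$-coordinate first increases monotonically from $-b$ up to its unique maximum value $-y(M)$, attained at a single point $p_*=(0,-y(M),\zeta_*)$ with $\zeta_*>0$ (by the strong maximum principle applied to the superharmonic function $z$ on $M$), and then decreases monotonically from $-y(M)$ down to $-B$.

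Finally, I would analyze $H_c$ on the intersection $\Gamma_-\cap W_c$. Since $y<0\le c$ automatically yields $y<c+\pi/2$, the condition $y\in I_c$ reduces to $y>c-\pi/2$; and because $b,B\ge \pi/2$ with $c\ge 0$, both endpoints $-b$ and $-B$ satisfy $-b,-B\le c-\pi/2$. Combined with the monotone-then-monotone behaviour of $y$ along $\Gamma_-$, this shows that $\Gamma_-\cap W_c$ is a single connected open sub-arc of $\Gamma_-$ containing $p_*$, whose two ends both approach points where $y\to (c-\pi/2)^+$. At these ends $h_c(0,y)=\log\cos(y-c)\to -\infty$, while $z$ remains bounded on $\Gamma_-$ (by Corollary~\ref{bounded-corollary}), so $H_c=z-h_c\to+\infty$ at both ends; meanwhile $H_c(p_*)$ is finite. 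Therefore $H_c$ attains its infimum on this open sub-arc at some interior point $q$, which is a critical point of $H_c|\Gamma_-$ and hence, by the first paragraph, of $H_c|M$, lying in $M\cap\{x=0\}\cap\{y<0\}$ as required. There is no real obstacle here: the lemma is an easy consequence of symmetry, the graphical description of $\Gamma_-$, and the blow-up of $h_c$ at the edges of $W_c$.
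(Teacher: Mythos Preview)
Your proof is correct and follows essentially the same approach as the paper's: restrict to the arc $\Gamma_-\cap W_c$, observe that $H_c\to+\infty$ at both ends where $y\to(c-\pi/2)^+$, extract an interior minimum, and invoke the orthogonal intersection with $\{x=0\}$ to conclude it is a critical point of $H_c|M$. You have simply supplied more structural detail (via the $x\leftrightarrow y$ version of Theorem~\ref{finite-y-slice-theorem}) where the paper's proof is terse.
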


\begin{proof}
Let 
\[
  \Gamma= M\cap\{x=0\}\cap \{ (c-\pi/2) < y < 0\}.
\]
Note that $\Gamma$ is a compact curve with both endpoints on the line $\{(0,c-\pi/2)\}\times\RR$.
Now $H|\Gamma$ is a continuous map to $(-\infty,\infty]$, and $H=\infty$ on each of the endpoints
of $\Gamma$.  Thus $H|\Gamma$ has an interior minimum.  By symmetry, that point is a critical point of $H|M$.
\end{proof}

\begin{corollary}\label{double-corollary}
Suppose $M\in \Rr$ and $M\cap W_c$ is nonempty.
Then
\begin{equation}\label{double-corollary-inequality}
  \chi(M\cap W_c) + \mathsf{N}(H_c|M\cap \{x=0\}) \ge 1,
\end{equation}
and thus (by~\eqref{double})
\begin{equation}\label{special-counting}
 1 + 2 \mathsf{N}(H_c|M\cap\{x>0\}) \le |S| - |T|.
\end{equation}
\end{corollary}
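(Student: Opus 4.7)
The plan is to establish \eqref{double-corollary-inequality} directly via a topological argument, then derive \eqref{special-counting} by substitution into \eqref{double}. I would split the argument into two cases based on whether $-y(M) \in I_c$.

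In the first case ($-y(M) \in I_c$), Lemma~\ref{easy-lemma} applies directly and produces a critical point of $H_c|M$ on $M \cap \{x=0\}\cap\{y<0\}$, giving $\mathsf{N}(H_c|M \cap \{x=0\}) \ge 1$, which immediately yields \eqref{double-corollary-inequality}.

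In the second case ($-y(M) \notin I_c$), I would prove that $\chi(M \cap W_c) \ge 1$. The key preliminary observation is that since $c \ge 0$ forces $-y(M) < 0 < c + \pi/2$, the condition $-y(M) \notin (c - \pi/2, c + \pi/2)$ is equivalent to $-y(M) \le c - \pi/2$. Every point $(0,y,z) \in M$ with $y < 0$ therefore satisfies $y \le -y(M) \le c - \pi/2$, so it lies outside $W_c$; that is,
\[
   M \cap W_c \cap \{x=0, \, y<0\} = \emptyset.
\]
Now suppose that some component $A$ of $M \cap W_c$ were an annulus. Its core curve $\gamma$ is homotopically nontrivial in $A$, hence homotopically nontrivial in $M$ by Lemma~\ref{topology-lemma} applied to the convex set $U = I_c \times \RR$. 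By Lemma~\ref{topology-lemma-1}, $\gamma$ is then homotopically nontrivial in $\RR^3 \setminus Z$, so it winds around $Z$. Consequently the lift of the angular coordinate $\theta$ along $\gamma$ must change by a nonzero multiple of $2\pi$, so it attains every value, including $\theta = -\pi/2$; hence $\gamma$ crosses the half-plane $\{x=0, \, y<0\}$. This contradicts the emptiness above. Thus no component of $M \cap W_c$ is an annulus, so (by Lemma~\ref{topology-lemma}) every component is a disk, and since $M \cap W_c$ is nonempty, $\chi(M \cap W_c) \ge 1$.

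With the first inequality in hand, \eqref{special-counting} follows from \eqref{double} by replacing the sum $\chi(M \cap W_c) + \mathsf{N}(H_c|M\cap\{x=0\})$ with its lower bound $1$. The main obstacle is the Case-B topological argument: one must rule out an annular cross-section by chaining Lemma~\ref{topology-lemma} (to lift homotopical nontriviality from $A$ to $M$) and Lemma~\ref{topology-lemma-1} (to lift it from $M$ to $\RR^3 \setminus Z$), and then exploit the fact that winding around $Z$ forces the curve into a region the slab $W_c$ has been arranged to avoid.
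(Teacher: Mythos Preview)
Your approach is essentially the paper's own proof: split on whether $-y(M)\in I_c$, invoke Lemma~\ref{easy-lemma} in one case, and argue topologically that $M\cap W_c$ consists of disks in the other. Two small gaps, however, need patching.

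First, you silently assume $c\ge 0$ (it appears in ``since $c\ge 0$ forces\ldots'' and is a hypothesis of Lemma~\ref{easy-lemma}). The corollary is stated for arbitrary $c$, so you should open with the observation that, by the $(x,y,z)\mapsto(x,-y,z)$ symmetry of $M$ and of the grim reaper foliation, one may reduce to $c\ge 0$.

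Second, in your first case you say that $\mathsf{N}(H_c|M\cap\{x=0\})\ge 1$ ``immediately yields'' \eqref{double-corollary-inequality}. It does not: you still need $\chi(M\cap W_c)\ge 0$. This holds because $M$ is an annulus and, by Lemma~\ref{topology-lemma}, each component of $M\cap W_c$ is a disk or an annulus; the paper makes exactly this remark. With these two points inserted, your proof is complete and matches the paper's.
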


\begin{proof}
By symmetry, we can assume that $c\ge 0$.
If $-y(M)\notin I_c$, then $M\cap W_c$ does not contain a curve that winds around $Z$,
so $M\cap W_c$ is a union of disks.  Thus (in this case) $\chi(M\cap W_c)\ge 1$,
 so the inequality~\eqref{double-corollary-inequality} holds.
If $-y(M)\in I_c$, then (by Lemma~\ref{easy-lemma}) $\mathsf{N}(H_c|M\cap\{x=0\})\ge 1$, so the inequality~\eqref{double-corollary-inequality} holds.
\end{proof}

In the rest of this section, $M$ will be in $\Aa$ and $M_n\in \Rr$ will be a sequence such that suitable vertical
translates of the $M_n$ converge to $M$.   We write $a_n$, $b_n$, $A_n$, and $B_n$ for $a(M_n)$, $b(M_n)$,
$A(M_n)$, and $B(M_n)$.  We let $L_n$ be the edge
\[
   [-A_n, A_n] \times \{B_n\} \times \{0\}
\]
and $\ell_n$ be the edge
\[
 [-a_n,a_n] \times \{b_n\} \times \{0\}.
\]

\begin{proposition}\label{straddling-proposition}
Suppose $M\in \Aa$.
If $B\in \overline{I_c}$, then
\[
   \mathsf{N}(M\cap\{x>0\}) = 0.
\]
\end{proposition}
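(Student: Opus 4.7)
The plan is to apply the Morse-Rad\'o counting inequality \eqref{special-counting} to approximating surfaces $M_n \in \Rr$, then pass to the limit via lower semicontinuity (Theorem~\ref{semicontinuity-theorem}). By Definition~\ref{Aa-definition}, I would pick $M_n \in \Rr$ with $M_n - (0,0,z(M_n)) \to M$ smoothly, so that $B_n := B(M_n) \to B$ and $b_n := b(M_n) \to b$. By Remark~\ref{general-position-remark}, it suffices to establish the bound for generic $c \in [B-\pi/2, B+\pi/2]$, so we may assume $c \pm \pi/2 \notin \{\pm b_n, \pm B_n\}$ for all $n$.

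For each such $M_n$, inequality~\eqref{special-counting} gives $1 + 2\mathsf{N}(H_c | M_n \cap \{x > 0\}) \leq |S_n| - |T_n|$, with $S_n,T_n$ as in Corollary~\ref{double-corollary} (employing the pinching of Remark~\ref{re-one} for the constant horizontal edges). The local minima of $H_c | \partial M_n$ in $W_c$ arise either from $y=c$ on vertical edges (when $c$ lies in the edge's $y$-range) or from pinched horizontal edges (when $c$ lies outside). The geometric crux is that the hypothesis $B \in \overline{I_c}$ places the outer horizontal edges $\{y = \pm B_n\}$ close to $\partial W_c$, where $|\tan(y-c)|$ is large. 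Combined with the uniform slope bound on $\Mup$ from Theorem~\ref{R-lambda-theorem}(\ref{R-lambda-graph-item}), an inward step on $M_n$ from such an edge forces $\Delta H_c < 0$; hence each pinched outer edge is a local maximum of $H_c|M_n$ and contributes $0$ to $|T_n|$. Inner edges at $\{y = \pm b_n\}$ require a case analysis (on whether $\pm b_n \in W_c$ and whether $c \in [-b_n, b_n]$) to determine their contribution to $|S_n|$ or $|T_n|$.

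The main obstacle is this case analysis, together with the careful use of the full inequality
\[
\chi(M_n \cap W_c) + \mathsf{N}(H_c|M_n \cap \{x=0\}) + 2\mathsf{N}(H_c|M_n \cap \{x>0\}) \leq |S_n| - |T_n|,
\]
invoking Lemma~\ref{topology-lemma} and Lemma~\ref{easy-lemma} to control the Euler-characteristic and slice-critical contributions on the left. In particular, in the near-capped subcase ($b$ close to $B$ with $c$ near $B - \pi/2$), the naive count yields $|S_n| - |T_n| = 3$, so one must also verify $\chi(M_n \cap W_c) + \mathsf{N}(H_c | M_n \cap \{x=0\}) \ge 2$ by a topological analysis of $M_n \cap W_c$ (its components are disks when $Z \not\subset W_c$). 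Once the per-$n$ bound $\mathsf{N}(H_c | M_n \cap \{x>0\}) = 0$ is established, Theorem~\ref{semicontinuity-theorem} delivers $\mathsf{N}(H_c | M \cap \{x > 0\}) \leq \liminf_n \mathsf{N}(H_c | M_n \cap \{x > 0\}) = 0$, completing the proof.
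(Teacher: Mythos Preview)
Your geometric crux is mistaken on two counts. First, the hypothesis $B\in\overline{I_c}=[c-\pi/2,c+\pi/2]$ does \emph{not} force $B_n$ close to $\partial W_c$: taking $c=B$ gives $B$ at the center of $I_c$, and then $|\tan(B_n-c)|\to 0$, not $\to\infty$. Since you must prove the statement for a \emph{dense} set of such $c$, you cannot restrict to $c$ near $B\pm\pi/2$. Second, the gradient bound you invoke, $(B-|y|)\sqrt{1+|Du|^2}\le\eta B$, is vacuous at $|y|=B_n$ (the factor $B-|y|$ vanishes), so it yields no slope control along the outer edge $L_n=[-A_n,A_n]\times\{B_n\}\times\{0\}$. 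In fact the tangent plane to $M_n$ along $L_n$ is nearly \emph{vertical} (close to $\{y=B_n\}$) for large $n$, which is the opposite of what you need.

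Worse, your conclusion points the wrong way for the inequality. If $L_n$ were a local maximum of $H_c|M_n$, then $L_n\notin T_n$, so in the case $b<c<B$ one gets $|S_n|-|T_n|\le 3-0=3$, hence only $\mathsf N(H_c|M_n\cap\{x>0\})\le 1$. Your proposed rescue, proving $\chi(M_n\cap W_c)+\mathsf N(H_c|M_n\cap\{x=0\})\ge 2$, is neither carried out nor evidently true. The paper's argument is the reverse: because the tangent along $L_n$ is nearly vertical, moving into $M_n$ from $L_n$ increases $z$ steeply while $y$ moves only slightly, so $H_c=z-h_c(y)$ \emph{increases}; hence $L_n$ is \emph{not} a local max of $H_c|M_n$, i.e.\ $L_n\in T_n$. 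The same reasoning applies to $\ell_n$ when $c<b_n$. This gives $|T_n|\ge 1$ (Case $b<c<B$) or $|T_n|\ge 2$ (Case $c<b$), and in each case $|S_n|-|T_n|\le 2$, whence $\mathsf N(H_c|M_n\cap\{x>0\})=0$ and the result follows by lower semicontinuity.
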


Note that this proposition is true for capped and uncapped annuloids.

\begin{proof}
The condition $B\in\overline{I_c}$ is equivalent to $|c-B|\le \pi/2$.
By semicontinuity, it suffices to prove it for a dense set of $c$ with $|c-B|<\pi/2$.
Thus we can assume that $B\in I_c$ and that $c$ is not equal to $b$ or to $B$.
By~\eqref{special-counting},
\begin{equation}\label{special-counting-n}
1 + 2\mathsf{N}(H_c|M_n\cap \{x>0\}) \le |S_n| - |T_n|.
\end{equation}

\noindent
{\bf Claim}: $|S_n| - |T_n| \le 2$ for all sufficiently large $n$.

Assuming the claim, we see that
\[
  2 \mathsf{N}(H_c|M_n\cap \{x>0\}) \le 1
\]
for large $n$.  Thus for such $n$, 
\[
  \mathsf{N}(H_c|M_n\cap \{x>0\}) = 0.
\]
Now letting $n\to\infty$ gives $\mathsf{N}(M\cap\{x>0\}=0$.
Thus it remain only to prove the claim.

\begin{proof}[Proof of Claim]
We divide the proof of the claim into the three cases $c>B$, $b< c < B$, and $c<b$.

{\bf Case 1}: $c>B$.  Then $c>B_n$ for large $n$, and thus $S_n$ consists of $L_n$  and (if $b_n \in I_c$)
 also of $\ell_n$.
Thus $|S_n|\le |S_n| - |T_n| \le 2$.

{\bf Case 2}: $b< c< B$.  Then $b_n < c < B_n$ for large $n$.
In this case, the local minima of $H_c|\partial M_n$ are the two points $(\pm A_n,c,0)$ and also (if $b_n\in I_c$) the ``point'' $\ell_n$.
So $|S_n|\le 3$.

Now $L_n$ is a local maximum of $H_c|\partial M_n$.   But for large $n$, $L_n$ is not a local maximum of 
$H_c| M_n$.

(To see that, note that $\Tan(M_n,(A_n,0,0))$ is nearly vertical.  It follows (for large $n$) that $H_c$ restricted to the
curve $M_n\cap \{x=0\}$ has a strict local minimum at the point $(A_n,0,0)$.

Thus $|T_n|\ge 1$, so
\[
  |S_n| - |T_n| \le 3-1 \le 2.
\]

{\bf Case 3}: $0\le c < b$.
Then $0<c< b_n$ for large $n$.

In this case, $H_c|\partial M_n$ has exactly four local minima, namely
   $(\pm a_n,c,0)$ and $(\pm A_n,c,0)$.  Thus $|S_n|\le 4$.

Also, $H_c| \partial M_n$ has two local maxima, $\ell_n$ and $L_n$.
As in the case 2, when $n$ is large, those points are not local maxima of $H_c|M_n$. 
Thus $|T_n|\ge 2$.
Thus (in case 3)
\[
 |S_n| - |T_n| \le 4 - 2 = 2.
\]
This completes the proof of the claim, and therefore the proof of Proposition~\ref{straddling-proposition}.
\end{proof}
\let\qed\relax
\end{proof}

\begin{theorem}\label{fundamental-graph-theorem}
Suppose $M\in \Aa$.
Suppose also that
\begin{enumerate}
\item\label{graph-item-1} $B=\pi/2$, or 
\item\label{graph-item-2} $B>\pi/2$ and $M$ is uncapped.
\end{enumerate}
Then $\mathsf{N}(H_c|M\cap \{x>0\})=0$.
\end{theorem}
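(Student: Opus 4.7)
By the $y \mapsto -y$ symmetry of $M$, we may assume $c \ge 0$; and if $M \cap W_c = \emptyset$, there is nothing to prove. If $c \ge B - \pi/2$, then $B \in \overline{I_c}$ and Proposition~\ref{straddling-proposition} immediately yields $\mathsf{N}(H_c | M \cap \{x>0\}) = 0$. Under hypothesis~(1), $B = \pi/2$ forces every admissible $c \ge 0$ into this range, so the theorem follows from Proposition~\ref{straddling-proposition} alone.

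Under hypothesis~(2), the outstanding case is $0 \le c < B - \pi/2$ with $B > \pi/2$ and $M$ uncapped. The plan is to approximate $M$ by surfaces $M_n \in \Rr$ provided by Definition~\ref{Aa-definition}, apply the Morse--Rad\'o counting inequality~\eqref{special-counting} to each $M_n$,
\[
1 + 2\,\mathsf{N}(H_c | M_n \cap \{x > 0\}) \le |S_n| - |T_n|,
\]
and show that $|S_n| - |T_n| \le 1$ for all sufficiently large $n$. Lower semicontinuity (Theorem~\ref{semicontinuity-theorem}) then transfers the resulting bound $\mathsf{N}(H_c | M_n \cap \{x > 0\}) = 0$ to $M$.

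For large $n$, the constraint $0 \le c < B_n - \pi/2$ keeps the outer horizontal edges of $\partial M_n$ outside $W_c$, so $|S_n|$ picks up two local minima at $(\pm A_n, c, 0)$ on the outer vertical edges, together with up to two further contributions from the inner rectangle depending on the position of $c$ relative to $b_n$, and on whether the inner horizontal edge $\ell_n$ lies in $W_c$ (the subcases $0 \le c < b_n - \pi/2$, $b_n - \pi/2 < c < b_n$, and $b_n \le c < b_n + \pi/2$ must be treated separately). The uncapped hypothesis, combined with the graphical representation of the outer wing from Theorem~\ref{main-theorem-concluded}\eqref{upper-graph-item} and the resulting strict monotonicity of $\uup$, implies that the maximum of $\unup$ on $[x(M_n), A_n]$ is attained at a point $X_n \to \infty$; paralleling the corner-tangent-plane argument in Case~2 of the proof of Proposition~\ref{straddling-proposition}, this forces sufficiently many candidate extrema of $H_c|\partial M_n$ (namely $\ell_n$ when in $W_c$, as well as certain of the inner vertical-edge minima) to fail to be extrema of $H_c|M_n$, producing the required bound on $|T_n|$. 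The main obstacle is the delicate stratified bookkeeping in the subrange $0 \le c < b_n - \pi/2$, where the inner horizontal edge is unavailable and the absorption into $|T_n|$ must instead come from a global consequence of uncappedness, e.g.\ via the sign of $\nu \cdot \ee_1$ on the approximating graphical wings supplied by Theorem~\ref{R-lambda-theorem}.
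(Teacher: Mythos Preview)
Your reduction via Proposition~\ref{straddling-proposition} is correct, and setting up the approximation by $M_n\in\Rr$ with the counting inequality~\eqref{special-counting} is the right framework. However, the approach of applying~\eqref{special-counting} to $M_n$ itself cannot succeed in the subrange $0\le c<b_n-\pi/2$, and your proposal does not close this gap.

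In that subrange, $\overline{I_c}\subset(-b_n,b_n)\subset(-B_n,B_n)$, so $\partial M_n\cap W_c$ consists of exactly four vertical segments $\{\pm a_n\}\times I_c\times\{0\}$ and $\{\pm A_n\}\times I_c\times\{0\}$. On each of these, $H_c(\cdot,y,0)=-\log\cos(y-c)$ has a single interior minimum at $y=c$ and tends to $+\infty$ at both endpoints; there are \emph{no} local maxima at all. Moreover, since $z\ge 0$ on $M_n$ and $-\log\cos(y-c)\ge 0$, one has $H_c\ge 0$ everywhere on $M_n\cap W_c$, with equality exactly at the four points $(\pm a_n,c,0)$ and $(\pm A_n,c,0)$; hence all four are global minima of $H_c|M_n$. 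This forces $|S_n|=4$ and $|T_n|=0$, so~\eqref{special-counting} yields only $\mathsf{N}(H_c|M_n\cap\{x>0\})\le 1$. No ``absorption into $|T_n|$'' is available, because $T_n$ is empty by construction; the sign of $\nu\cdot\ee_1$ on the interior of $M_n$ cannot manufacture boundary local maxima where none exist.

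The idea you are missing is a \emph{truncation}: the paper removes $\Mup_n\cap\{|x|>\lambda\}$ from $M_n$ for a suitably large $\lambda$, obtaining a surface $\tilde M_n$ whose boundary no longer contains the outer vertical edges $\{\pm A_n\}\times[-B_n,B_n]\times\{0\}$. The uncapped hypothesis enters precisely here: since $\partial u/\partial x\to s(B)>0$ as $x\to\infty$ (uniformly on $\overline{I_c}$), one can choose $\lambda$ so that $\partial u_n/\partial x>0$ on $\{\lambda\}\times\overline{I_c}$ for large $n$. This guarantees that the new boundary arcs $\Gamma_n^\pm=\Mup_n\cap\{x=\pm\lambda\}$ contribute no local minima of $H_c|\tilde M_n$ (moving into $\tilde M_n$ means decreasing $x$, which strictly decreases $H_c$). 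The remaining boundary in $\{z=0\}$ is just the inner rectangle together with the portion of the outer horizontal edges at $|y|=B_n\notin I_c$, so $|S_n|\le 2$ with no appeal to $|T_n|$ at all. Then~\eqref{special-counting} applied to $\tilde M_n$ gives $\mathsf{N}(H_c|\tilde M_n\cap\{x>0\})=0$, hence $\mathsf{N}(H_c|M\cap\{0<x<\lambda\})=0$ by lower semicontinuity, and one finishes by letting $\lambda\to\infty$.
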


\begin{proof}
By Proposition~\ref{straddling-proposition}, it suffices to prove it when $\overline{I_c}$ is contained in $(-B,B)$.
In particular, this means we are in Case~\eqref{graph-item-2} of the theorem: $B>\pi/2$ and $M$ is uncapped.

Let $\alpha> x(M)+ \pi$.   
Then $\alpha> x(M_n) + \pi$ for all large $n$.  We may suppose that $\alpha>x(M_n)+\pi$
for all $n$.

Then $\Mup\cap \{x\ge \alpha\}$ is the graph of a function
\[
   u : [\alpha,\infty) \times (-B,B) \to \RR.
\]
Likewise, $\Mup_n\cap \{x\ge \alpha\}$ is the graph of
\[
   u_n: [\alpha,A_n]\times (-B_n,B_n) \to \RR.
\]

Recall that 
\[
   y \mapsto \pdf{}xu(x,y)
\]
converges to $s(B)$ uniformly on compact subsets of $(-B,B)$.

Thus there is a $\Lambda$ such that 
\[
   \pdf{}xu > s(B)/2 > 0
\]
on
\[
  \overline{I_c} \times [\Lambda,\infty).
\]
Now let $\lambda>\Lambda$.
Then for all sufficiently large $n$,
\begin{equation}\label{uppity}
 \text{$\pdf{}xu_n > 0$ on $\{\lambda\} \times \overline{I_c}$.}
\end{equation}

Let 
\[
  E_n^+ := \Mup_n\cap \{x>\lambda\}
\]
and let $E_n^-$ be the image of $E_n$ under reflection in the plane $\{x=0\}$.
Let
\[
  \tM_n:= M_n\setminus (E_n^+\cap E_n^-),
\]

Now we wish to apply the formula (see Theorem~\ref{morse-rado-theorem})
\begin{align*}
\chi(\tM_n\cap W_c)  + \mathsf{N}(H_c|\tM_n) 
&\le |S_n| 
\end{align*}
(where $S_n$ is the set of local minima of $H_c|\partial M_n$ that are also local
minima of $H_c|M_n$), which we can rewrite as
\[
\chi(\tM_n\cap W_c) + \mathsf{N}(\tM_n\cap \{x=0\}) + 2 \mathsf{N}( \tM_n\cap\{x>0\}) \le |S_n|.
\]
By Corollary~\ref{double-corollary}, the sum of the first two terms on the left is at least $1$. Thus
\begin{equation}\label{charles}
1 + 2 \mathsf{N}(\tM_n\cap \{x>0\}) \le |S_n|.
\end{equation}

The boundary of $\tM_n$ contains  two curved portions, namely
\[
\Gamma_n^+:= \Mup_n\cap \{x=  \lambda\} 
\]
and its mirror image $\Gamma_n^-$ under reflection it the plane $\{x=0\}$.
These portions $\Gamma_n^\pm$ are the portions of $\partial \tM_n$ in $\{z>0\}$.
Because 
\[
  \pdf{}x u_n  > 0
\]
on $\overline{I_n}\times \{\lambda\}$, we see that $H_c|\tM_n$ has {\em no} local minima on $\Gamma_n^+$.
By symmetry, there are none on $\Gamma_n^-$.   Thus
\begin{equation}\label{curved-ok}
  |S_n\cap\{z>0\}| = 0.
\end{equation}

If $c<b_n$, then $S_n\cap\{z=0\}$ has just the $2$ points $(\pm a_n, c, 0)$.
If $b_n\le c$, then $S_n\cap \{z=0\}$
 has either no points (if $b_n\notin I_c$) or one point (namely $\ell_n$) if $b_n\in I_c$.
Thus
\begin{equation}\label{zero-level}
  |S_n\cap\{z=0\}| \le 2.
\end{equation}
Thus in either case ($c<b_n$ or $c\ge b_n$), we have~\eqref{zero-level},
so by~\eqref{curved-ok}, we see that $|S_n|\le 2$.

Thus by~\eqref{charles},
\[
1 +   2 \mathsf{N}(H_c | \tM_n\cap \{x>0\}) \le 2.
\]
which implies that 
\[
  \mathsf{N}(H_c|\tM_n\cap \{x>0\}) = 0
\]
for all sufficiently large $n$.  Thus
\begin{align*}
\mathsf{N}(H_c|M \cap \{0 < x < \lambda\})
&\le
\liminf \mathsf{N}(H_c| M_n\cap \{0< x < \lambda \} \\
&=
\liminf \mathsf{N}(H_c |\tM_n \cap \{0<x\}) \\
&= 0.
\end{align*}
This holds for each $\lambda\ge \Lambda$.
Hence $\mathsf{N}(H_c|M)=0$.
\end{proof}

\begin{theorem}\label{main-graph-theorem}
Suppose $M\in \Aa$.
\begin{enumerate}
\item if $b=\pi/2$, or
\item if $b>\pi/2$ and $M$ is uncapped, 
\end{enumerate}
then $M\cap\{x>0\}$ is the graph of a function $x=x(y,z)$ over a domain in the $yz$-plane.
\end{theorem}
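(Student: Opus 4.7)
The plan is to apply Theorem~\ref{fundamental-graph-theorem}, which supplies $\mathsf{N}(H_c \mid M \cap \{x > 0\}) = 0$ for every $c \in \RR$, convert this into local graphicality of the projection $\pi : M \cap \{x > 0\} \to \RR^2$, $(x, y, z) \mapsto (y, z)$, and then upgrade to global injectivity by a degree argument. First I would check that the hypotheses of Theorem~\ref{main-graph-theorem} imply those of Theorem~\ref{fundamental-graph-theorem}: when $b = \pi/2$ and $B > \pi/2$, the strict inequality $b < B$ forces $M$ to be uncapped by Theorem~\ref{main-theorem-concluded}\eqref{uncapped-item}; the cases $b = B = \pi/2$ and $b > \pi/2$ uncapped are already in the required form.

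For the local step, suppose $p \in M \cap \{x > 0\}$ has $\ee_1 \in T_pM$, so $\nu(p) = (0, \beta, \gamma)$. If $\gamma \neq 0$, setting $c = y(p) - \arctan(\beta/\gamma)$ places $y(p) \in I_c$ and makes $\nabla H_c(p) \parallel \nu(p)$, so $p \in W_c$ is a critical point of $H_c \mid M$, contradicting Theorem~\ref{fundamental-graph-theorem}. If $\gamma = 0$, then $\nu(p) = \pm \ee_2$, but Theorem~\ref{x-critical-theorem}, extended to $M \in \Aa$ by Corollary~\ref{x-critical-corollary} (which implies that any point with $\nu = \pm \ee_2$ lies in the symmetry plane $\{x=0\}$), forces $x(p) = 0$, again a contradiction. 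Hence $\pi$ is a local diffeomorphism throughout $M \cap \{x > 0\}$.

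For global injectivity, the preimage count $N(y, z) := \#(\pi^{-1}(y, z) \cap (M \cap \{x > 0\}))$ is locally constant on the open image $\pi(M \cap \{x > 0\})$. Since $M$ is a connected annulus invariant under $x \to -x$, the fixed set $M \cap \{x = 0\}$ separates $M$ into the two symmetric halves $M \cap \{x > 0\}$ and $M \cap \{x < 0\}$; thus $M \cap \{x > 0\}$ is connected, its image is connected, and $N$ is globally constant on that image. To conclude I would evaluate $N$ at the convenient point $(0, z(M))$: by Theorem~\ref{y-slice-theorem} the preimages in $M \cap \{x > 0\}$ lie in $\graph(\uup) \cup \graph(\ulow)$; $\ulow$ is strictly decreasing from $z(M)$ by Theorem~\ref{slopes-theorem}, so $\ulow(x) = z(M)$ only at $x = x(M)$; and $\uup$ is strictly increasing from $z(M)$, because $\uup'(x(M)^+) = +\infty$ from the vertical tangent plane of $M$ at the waist, while any interior critical point of $\uup$ at some $x_0 > x(M)$ would force $\ee_1 \in T_pM$ at $(x_0, 0, \uup(x_0)) \in M$, contradicting the previous step. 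The unique preimage of $(0, z(M))$ in $M \cap \{x > 0\}$ is therefore the waist point itself, so $N \equiv 1$, $\pi$ is a global bijection onto its image, and $M \cap \{x > 0\}$ is the sideways graph of a function $x = x(y, z)$.

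The main obstacle is the connectedness of $M \cap \{x > 0\}$. Although intuitive from the $x \to -x$ symmetry, it requires some care because $M$ is non-compact and $M \cap \{x = 0\}$ is a non-compact $1$-manifold with four ends running to $z = +\infty$ along the asymptotic planes $\{y = \pm b, \pm B\}$. The verification combines the fact that $\pm x(M)$ are the only critical values of $x \mid M$ (so $0$ is a regular value and $M \cap \{x = 0\}$ is a smooth $1$-manifold) with the saddle structure at the two waist points and the asymptotic behavior of $M$ at infinity given by Theorem~\ref{some-properties-theorem}, which together force $M \cap \{x=0\}$ to separate the annulus into exactly two symmetric halves.
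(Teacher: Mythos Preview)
Your local step is correct and matches the paper's: Theorem~\ref{fundamental-graph-theorem} handles the case of a non-vertical tangent plane containing $\ee_1$, and Corollary~\ref{x-critical-corollary} forces any point with $\nu=\pm\ee_2$ into the symmetry plane $\{x=0\}$. Thus $\pi:(x,y,z)\mapsto(y,z)$ is indeed a local diffeomorphism on $M\cap\{x>0\}$, and your reduction of the hypotheses of Theorem~\ref{main-graph-theorem} to those of Theorem~\ref{fundamental-graph-theorem} is also fine.

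The gap is in your global argument. The assertion that $N(y,z)$ is locally constant on the image is not automatic for a local diffeomorphism from a connected surface; it requires $\pi$ to be proper (equivalently, a covering map onto its image), which you do not establish. Without properness one gets only lower semicontinuity of $N$, and non-constant fibers really occur: $z\mapsto z^2$ from the slit plane $\CC\setminus(-\infty,0]$ onto $\CC\setminus\{0\}$ is a local diffeomorphism from a connected domain with $N=1$ on the positive real axis and $N=2$ elsewhere. In your situation properness is in doubt precisely when $b=\pi/2$: then $s(b)=0$, so $(\ulow)'(\infty)=0$ and $\ulow$ need not diverge to $-\infty$; a sequence $(x_n,0,\ulow(x_n))$ with $x_n\to\infty$ could then have bounded $\pi$-image and accumulate at a point of the image with a strictly smaller fiber. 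The paper bypasses this with a different global argument. Once $\nu\cdot\ee_1\neq 0$ on $M\cap\{x>0\}$, connectedness of $M\cap\{x>0\}$ together with $\nu\cdot\ee_1=-1$ at $(x(M),0,0)$ forces $\nu\cdot\ee_1<0$ throughout. Since $M$ is connected and properly embedded, it separates $\RR^3$ into two components, and along any line parallel to $\ee_1$ the sign of $\nu\cdot\ee_1$ must alternate at successive transverse crossings of $M$; hence there can be at most one crossing in $\{x>0\}$. This argument is insensitive to the asymptotic slopes and to properness of $\pi$.
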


\begin{proof}
We claim that $\ee_1\cdot \nu$ never vanishes in $M\cap\{x>0\}$.
For suppose to the contrary that $\ee_1\cdot\nu=0$ at point $p$ with $x(p)>0$.  If $\Tan(M,p)$ were vertical,
then $\vv:=\nu(M,p)$ would be $\pm \ee_2$, and then $(-x(p),y(p),z(p))$ would be a second point
with $\nu=\vv$, contrary to Corollary~\ref{x-critical-corollary}.
Thus $\Tan(M,p)$ is not vertical, so
 there is a strip $\RR\times (c-\pi/2,c+\pi/2)$ and a grim reaper surface over that strip that is tangent
to $M$ at $p$.
  But then $\mathsf{N}(H_c|M\cap\{x>0\})\ge 1$, contrary to Theorem~\ref{fundamental-graph-theorem}.

Since $M\cap\{x>0\}$ is connected and since $\nu(x(M),0,0)\cdot\ee_1=-1<0$, it follows that $\nu\cdot\ee_1<0$
at all points of $M\cap\{x>0\}$.

Since $M$ is connected and properly embedded in $\RR^3$, $M$ divides $\RR^3$ into two components.
Let $K$ be the component such that $\nu$ is the unit normal to $M$ that points out of $K$.
Let $L$ be a line parallel to the $x$-axis.  Noe $L$ cannot $M\cap\{x>0\}$ in more than one point, since
if it did, the sign of $\nu\cdot\ee_1$ would alternate from one point to the next.

Thus $L$ intersects $M\cap\{x>0\}$ in at most one point, and the intersection is transverse.
\end{proof}

\begin{theorem}\label{prong-graph-theorem}
Suppose that $M$ is a prong.  Then $M$ is the graph of a function $x=x(y,z)$ over
an open subset of the $yz$-plane.
\end{theorem}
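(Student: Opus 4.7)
The plan is to deduce sideways graphicality of the prong $M$ from the corresponding property of its approximating annuloids via Theorem~\ref{main-graph-theorem}. By definition of a prong, $M$ is the smooth limit of a sequence $M_n - (x(M_n), 0, 0)$, where $M_n \in \Aa$ satisfy $b_n := b(M_n) \to b < \infty$ and $x(M_n) \to \infty$. By Theorem~\ref{prong-theorem}\eqref{pi-prong}, $B_n := B(M_n) \to b + \pi$, so $B_n > b_n$ for all sufficiently large $n$. Therefore, by Theorem~\ref{main-theorem-concluded}\eqref{uncapped-item}, $M_n$ is uncapped for large $n$, and Theorem~\ref{main-graph-theorem} applies: $M_n \cap \{x > 0\}$ is the graph of a function $x = x(y, z)$. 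Since $M_n \cap \{x > 0\}$ is connected and contains the point $(x(M_n), 0, 0)$ at which $\nu = -\ee_1$, it follows that $\nu(M_n) \cdot \ee_1 < 0$ everywhere on $M_n \cap \{x > 0\}$.

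Next I would pass to the limit. For any compact set $K \subset \RR^3$, once $x(M_n)$ exceeds the diameter of $K$, we have $K + (x(M_n), 0, 0) \subset \{x > 0\}$, so the translated surface $M_n - (x(M_n), 0, 0)$ restricted to $K$ has $\nu \cdot \ee_1 < 0$. Smooth convergence (guaranteed by Theorem~\ref{prong-existence-theorem}) yields $\nu(M) \cdot \ee_1 \le 0$ on $M \cap K$, and since $K$ is arbitrary, $\nu \cdot \ee_1 \le 0$ everywhere on $M$.

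To upgrade this to strict inequality, I would invoke the Jacobi field argument used elsewhere in the paper: since $\ee_1$ is a Killing field for the translator metric $g = e^{-z}(dx^2 + dy^2 + dz^2)$, the function $\nu \cdot \ee_1$ is a Jacobi field on $M$. It is nonpositive and equals $-1$ at the origin (which lies in $M$ by Theorem~\ref{prong-theorem}\eqref{y-slice-item}), so by the strong maximum principle (together with unique continuation, since $M$ is connected by Theorem~\ref{prong-theorem}\eqref{up-low-item}), $\nu \cdot \ee_1 < 0$ everywhere on $M$. Finally, arguing as in the last paragraph of the proof of Theorem~\ref{main-graph-theorem}: $M$ is a connected, properly embedded surface dividing $\RR^3$ into two components, and no line parallel to the $x$-axis can meet $M$ in more than one point, since $\nu \cdot \ee_1$ has a fixed sign along $M$ and each transverse intersection would force a crossing from one side to the other in a consistent direction. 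Hence $M$ is the graph of a function $x = x(y, z)$ over an open subset of the $yz$-plane.

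The main point of the argument is really the identification of $M_n$ as uncapped (which follows immediately from $B_n > b_n$), after which Theorem~\ref{main-graph-theorem} does the heavy lifting; the passage to the limit and the upgrade via the strong maximum principle are routine, so I do not anticipate any serious obstacle.
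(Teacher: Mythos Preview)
Your proposal is correct and follows essentially the same route as the paper: identify the approximating $M_n$ as uncapped (since $B_n\to b+\pi>b$), invoke Theorem~\ref{main-graph-theorem} to get $\nu\cdot\ee_1<0$ on $M_n\cap\{x>0\}$, pass to the limit to get $\nu\cdot\ee_1\le 0$ on $M$, and upgrade to strict inequality via the Jacobi field/strong maximum principle argument. The only minor difference is in the final step: the paper concludes that a line $L$ parallel to the $x$-axis meets $M$ at most once by contradiction against the graphicality of $M_n\cap\{x>0\}$ (if $L$ met $M$ twice, it would meet $M_n\cap\{x>0\}$ twice for large $n$), whereas you argue directly from the sign of $\nu\cdot\ee_1$ on $M$ together with the fact that $M$ separates $\RR^3$, exactly as in the proof of Theorem~\ref{main-graph-theorem}. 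Both arguments are valid and equally short.
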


\begin{proof}
Recall that $M$ is a limit of $M_n':=M_n - (x(M_n),0,0)$ where $M_n\in \Aa(b)$ and $x(M_n)\to \infty$.
Since $B_n\to b+\pi$, we see that the $M_n$ are uncapped for large $n$.
For such $n$, $\nu\cdot \ee_1<0$ everywhere in $M_n'\cap\{x>-x(M_n)\}$.
Thus $\nu\cdot\ee_1\le 0$ everywhere on $M$.  Now $M$ is connected, and $\nu\cdot\ee_1$ is a Jacobi field,
so if it vanished anywhere, it would vanish everywhere by the strong maximum principle.  Since
$\nu\cdot\ee_1=-1$ at the origin, we see that $\nu\cdot\ee_1<0$ everywhere on $M$.

Let $L$ be a line parallel to the $x$-axis.  If $L$ intersected $M$ in more than one point, then for large $n$,
it would intersect $M_n\cap\{x>0\}$ in more than one point, contrary to Theorem~\ref{main-graph-theorem}.
\end{proof}


\appendix

\section{Some Useful Barriers}

\begin{theorem}\label{short-barrier-theorem}
For every $a>0$ and $b>0$, there is a translator 
\[
    u=u_{a,b}: [0,a]\times [-b,b] \to [0,\infty]
\]
such that 
\begin{align*}
&u(0,\cdot) = u(\cdot, \pm b) = 0, \quad\text{and}\\
&u(a,\cdot) = \infty.
\end{align*}
Suppose $M$ is a translator in $[0,a]\times [-b,b]\times \RR$.
\begin{enumerate}[\upshape(i)]
\item\label{under}
If $\partial M$ lies in $\{z\le u(x,y)\}$, then $M$ lies in $\{z\le u(x,y)\}$.
\item\label{over}
If $\partial M$ lies in $\{z\ge u(x,y)\}$, then $M$ lies in $\{z \ge u(x,y)\}$.
\end{enumerate}
Furthermore, if $M$ is any translator in $[0,a]\times [-b,b]\times\RR$ with $\partial M=\partial (\graph(u))$,
then $M=\graph(u)$.
\end{theorem}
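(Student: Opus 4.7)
The plan is to obtain $u$ as a monotone limit of Dirichlet solutions with finite boundary data, deduce the barrier properties \eqref{under} and \eqref{over} from the strong maximum principle, and derive the uniqueness statement as an immediate corollary of those barrier properties.

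First I would, for each positive integer $n$, solve the Dirichlet problem for the translator equation on $[0,a]\times[-b,b]$ with continuous boundary data $u_n\equiv n$ on $\{a\}\times[-b,b]$ and $u_n\equiv 0$ on the other three edges. Existence and uniqueness of such a $u_n$ is standard for the translator equation on a bounded convex domain with continuous boundary values (compare \cite{graphs}). The maximum principle applied to vertical translates gives $u_n\le u_{n+1}$, so the pointwise limit $u:=\lim_n u_n$ exists with values in $[0,\infty]$. The monotonicity also gives, via the curvature and area bounds of Theorem~\ref{curvature-bound-theorem}, local $C^\infty$ convergence on any compact subset of the interior where $u$ is locally bounded.

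The central step is to verify that $u<\infty$ on $[0,a)\times[-b,b]$. For this I would construct (or invoke) an upper barrier: a translator on a domain containing $[0,a)\times[-b,b]$ taking the value $+\infty$ on $\{a\}\times[-b,b]$ and finite otherwise. Such a barrier is produced by a Jenkins--Serrin--type existence theorem for the translator equation, applied to the rectangle with the single infinite edge $\{a\}\times[-b,b]$ (the Jenkins--Serrin side-length inequalities are trivially satisfied when only one edge carries infinite data). With this uniform upper bound on compact subsets of $[0,a)\times[-b,b]$, monotone convergence plus interior elliptic regularity yield that $u$ is smooth and solves the translator equation on $[0,a)\times[-b,b]$. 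The boundary value $u\equiv 0$ on the three zero-data edges follows by comparison against vertically translated copies of the graphs $f_{\alpha,\beta}$ from \S\ref{graphs-section} used as barriers near those edges, and $u\equiv \infty$ on $\{a\}\times[-b,b]$ is immediate since $u\ge u_n\ge n$ there for every $n$.

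For \eqref{under}, let $t\ge 0$ be the smallest value with $M\subset \{z\le u(x,y)+t\}$; such a minimum exists because $u\equiv \infty$ on $\{a\}\times[-b,b]$ forces any excess height of $M$ over $\graph(u)$ to be attained in the interior of the rectangle (not on the infinite edge). If $t>0$, then $M$ is internally tangent to $\graph(u)+t\ee_3$ at some interior point, violating the strong maximum principle for translators; hence $t=0$. Assertion \eqref{over} is symmetric. For the concluding uniqueness statement, if $\partial M=\partial(\graph(u))$, then $\partial M$ is simultaneously contained in $\{z\le u\}$ and $\{z\ge u\}$, so by \eqref{under} and \eqref{over} the same holds for $M$; this forces $M\subset \graph(u)$, and equality follows from the fact that both are properly embedded translators with matching boundary. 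The main obstacle is the interior upper barrier needed to prove finiteness of $u$: the rest of the argument is a routine sliding/maximum-principle exercise.
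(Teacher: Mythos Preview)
Your construction of $u$ as a monotone limit of Dirichlet solutions is exactly what the paper does (it cites a Jenkins--Serrin result of Gama et al.\ for the finiteness on $[0,a)\times[-b,b]$), and the uniqueness statement at the end is derived in the same way from \eqref{under} and \eqref{over}.

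The gap is in your proof of \eqref{under}. You write that the smallest $t\ge 0$ with $M\subset\{z\le u(x,y)+t\}$ exists ``because $u\equiv\infty$ on $\{a\}\times[-b,b]$ forces any excess height of $M$ over $\graph(u)$ to be attained in the interior.'' But this only argues that \emph{if} $\sup_M(z-u(x,y))$ is finite then it is attained away from the infinite edge; it does not show that the supremum is finite. Nothing in the hypotheses prevents $M$ from being unbounded above in $z$ while staying in a region $\{x\le a-\delta\}$ where $u$ is bounded, in which case no finite $t$ works and the vertical sliding argument never starts. This is precisely the obstacle the paper isolates: before running any maximum-principle argument it proves the claim that if $(x_i,y_i,z_i)\in M$ with $z_i\to\infty$ then $x_i\to a$ (using either rotationally symmetric translating annuli as barriers, or the strong maximum principle for $(2,0)$-sets from \cite{white16}). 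Only with this in hand can one guarantee that a suitable sliding family eventually separates $M$ from $\graph(u)$; the paper in fact uses a \emph{diagonal} translation $M+(t,0,-t)$ rather than a purely vertical one, exploiting that the high part of $M$ is pushed past $x=a$ while the rest is pushed below $\graph(u)$.

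Your claim that \eqref{over} is ``symmetric'' inherits the same issue (one must rule out $M$ escaping to $z=-\infty$), and the paper's diagonal translation $M+(-t,0,t)$ handles it. So the missing idea is exactly this control on where $M$ can go to infinity; the rest of your outline is fine.
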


\begin{proof}
Existence of $u$ can be proved by solving the translator equation on $[0,a]\times [-b,b]$ with boundary values
$n$ on the side $\{a\}\times (-b,b)$ and $0$ on the other three sides, and then letting $n\to\infty$; 
see \cite{Gama}*{Theorem 9}.  
\begin{figure}[htbp]
\begin{center}
\includegraphics[width=.33\textwidth]{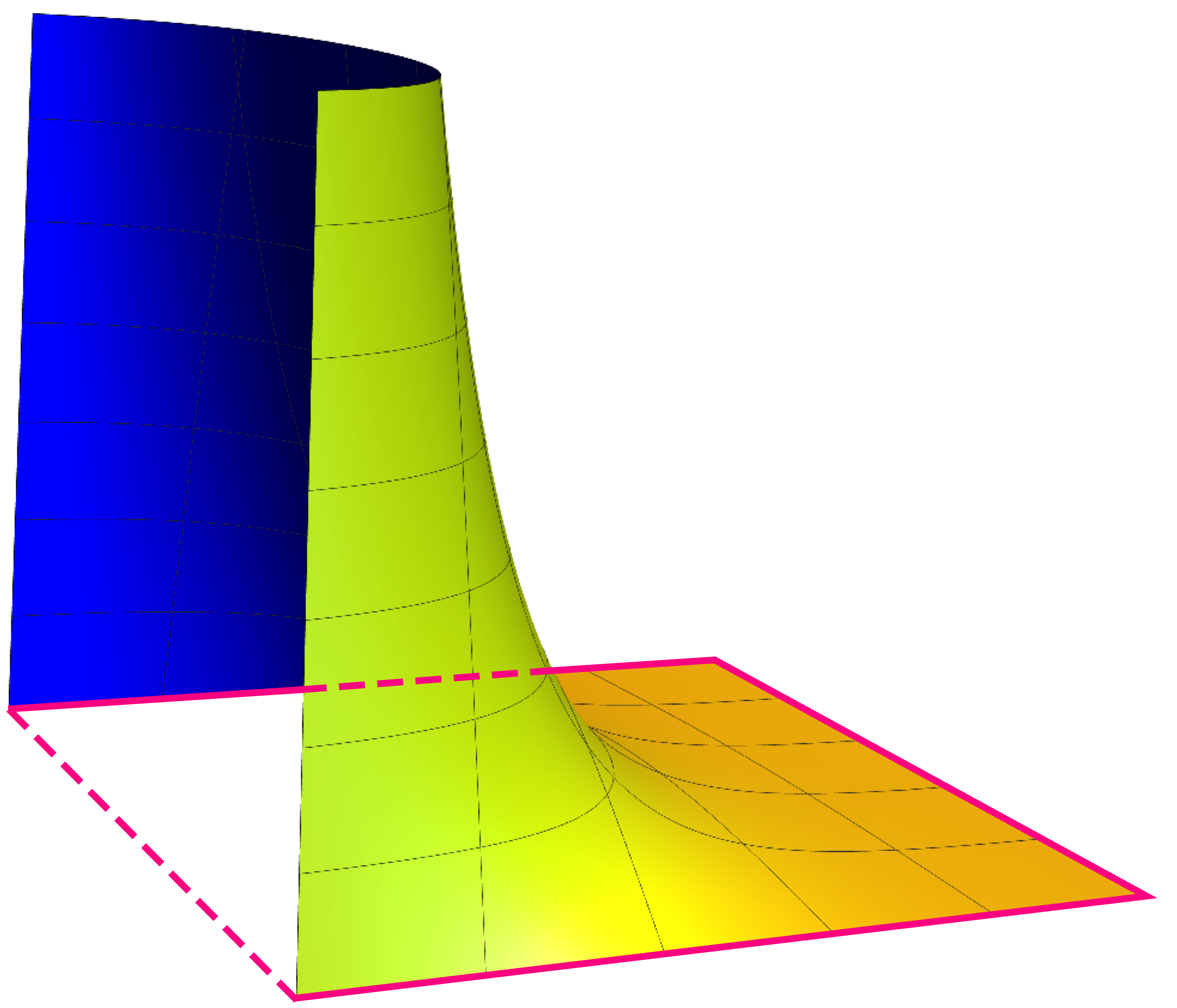}
\caption{\small The barrier $u$.}
\label{fig:barrier}
\end{center}
\end{figure}

Suppose that $\partial M$ is contained in $\{z\le u(x,y)\}$. 
We claim that
\begin{equation}\label{vertical-end}
\text{If $(x_i,y_i,z_i)\in M$ and $z_i\to \infty$, then $x_i\to a$.}
\end{equation}
This can be proved using rotationally symmetric translating annuli as barriers.
Alternatively, let 
\[
   \tilde x = \lim_{\zeta\to\infty} \inf_{M\cap\{z\ge \zeta\}} x(\cdot).
\]
Let $(x_i,y_i,z_i)\in M$ with $z_i\to\infty$ and $x_i\to \tilde x$.
Then, after passing to a subsequence, $(x_i,y_i,0)$ converges to a point $(\hat x, \hat y, 0)$
and the sets $M-(0,0,z_i)$ converges (as sets) to a limit set $M'$. 
(In the language of~\cite{white16}, $M'\cap\{x<a\}$ is a ``$(2,0)$-set'' with respect to the translator metric.)
 Note that $x(\cdot)|M'$ attains its minimum
at the point $(\hat x,\hat y, 0)$.  If $\hat x< a$, then by the strong maximum principle in~\cite{white16},
 $M'$ would contain the plane $\{x=\hat x\}$, which is impossible since $M'$ is contained in 
  $[0,a]\times [-b,b]\times \RR$.  This completes the proof of~\eqref{vertical-end}.
  
Suppose, contrary to~\eqref{under}, that $M$ contains points with $z>u(x,y)$.
By~\eqref{vertical-end}, there would be a largest $t>0$ such that $M+(t,0,-t)$ intersects $\graph(f)$.
At  the point of contact, the maximum principle would be violated.  Thus $M$ lies in $\{z\le u(x,y)\}$.
This completes the proof of~\eqref{under}.

Now suppose, contrary to~\eqref{over}, that $\partial M$ is contained in $\{z\ge u(x,y)\}$
but that $M$ contains points with $z<u(x,y)$.
Then there would be a largest $t>0$ such that $M + (-t,0,t)$ intersects $\graph(u)$.
At the point of contact, the maximum principle would be violated. 
Thus $M$ lies in $\{z\ge u(x,y)\}$.  

The last statement of the theorem follows immediately from~\eqref{over} and~\eqref{under}.
\end{proof}

\begin{corollary}
For each $(x,y)\in [0,a)\times [-b,b]$, 
\[
   u_{a,b}(x,y)=u_{a,b}(x,-y),
\]
and $u(x,y)$ is a decreasing function of $|y|$.  In particular, 
\[
  u_{a,b}(x,y)\le u_{a,b}(x,0)
\]
on $[0,a)\times [-b,b]$.
\end{corollary}

\begin{proof}
Let $M$ be the graph of $u_{a,b}(x,-y)$.  By the last statement of Theorem~\ref{short-barrier-theorem}, $M=\graph(u_{a,b})$
and thus $u(x,-y)\equiv u(x,y)$.

Let $0\le y_1 < y_2\le b$ and let $\hat y=(y_1+y_2)/2$.  Let $M$
be the image of  $\graph(u_{a,b})| [0,a]\times [\hat y,b]$ under reflection in the plane $\{y=\hat y\}$.
Then $\partial M$ lies in $\{z\le u_{a,b}(x,y)\}$, so, by Theorem~\ref{short-barrier-theorem},
 $M$ lies in $\{z\le u_{a,b}\}$.   Thus 
\[
    u_{a,b}(x,y_2) \le u_{a,b}(x,y_1).
\]
Hence $u_{a,b}(x,y)$ is a decreasing function of $|y|$ for $y\in [0,b]$.  
\end{proof}

\begin{corollary}\label{half-slab-corollary}
Let $M$ be a  translator in a slab $\{|y|\le b\}$.  If $\inf_M x(\cdot)> -\infty$, then
\[
  \inf_Mx(\cdot) = \inf_{\partial M} x(\cdot).
\]
Likewise, if $\sup_M x(\cdot)<\infty$, then $\sup_M x(\cdot)=\sup_{\partial M} x(\cdot)$.

In particular, if $M$ is a complete, nonempty translator in a slab $\{|y|\le b\}$, 
then $M\cap\{x=t\}$ is nonempty for every $t$.
\end{corollary}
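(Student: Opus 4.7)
The first and second assertions are interchanged by the reflection $(x,y,z)\mapsto(-x,y,z)$, which sends translators to translators, so I would prove only the first. The ``in particular'' statement then follows by applying both to each connected component $M_0$ of $M$: a complete nonempty component has $\partial M_0=\emptyset$, so the assertions force $\inf_{M_0}x(\cdot)=-\infty$ and $\sup_{M_0}x(\cdot)=+\infty$; by connectedness of $M_0$ and continuity of $x(\cdot)$, the map $x(\cdot)|M_0$ is then surjective onto $\RR$.

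For the first assertion, set $c:=\inf_M x(\cdot)$; the bound $c\le \inf_{\partial M}x(\cdot)$ is automatic. I would argue by contradiction: fixing $t_0\in(c,\inf_{\partial M}x(\cdot))$, the set $K:=M\cap\{x\le t_0\}$ is nonempty, closed in $M$, and disjoint from $\partial M$. The plan has two steps. First, show $K$ is bounded in $z$, hence compact (since $M$ is properly embedded). Second, $x(\cdot)|K$ then attains its minimum $c$ at some interior point $p$ of $M$; at $p$, $\Tan(M,p)$ is perpendicular to $\ee_1$ and so coincides with the vertical plane $\{x=c\}$, while $M$ lies locally in the halfspace $\{x\ge c\}$. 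Since $\{x=c\}$ is itself a translator, the strong maximum principle (in the translator metric) forces $M$ and $\{x=c\}$ to agree near $p$; by unique continuation, the component of $M$ through $p$ is the entire plane $\{x=c\}$, contradicting $M\subset\{|y|\le b\}$.

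The hard step is the compactness of $K$: without it the minimizer $p$ might escape to $|z|=\infty$ and no finite contact would arise. To establish it, I would adapt the alternative vertical-end argument used in the proof of Theorem~\ref{short-barrier-theorem}. Suppose $K$ were unbounded in $z$; after reflecting in $\{z=0\}$ if necessary one may assume there is a sequence $p_i=(x_i,y_i,z_i)\in K$ with $z_i\to+\infty$, chosen so that $x_i\to \tilde x:=\lim_{\zeta\to\infty}\inf\{x(q):q\in M,\,z(q)\ge\zeta,\,x(q)\le t_0\}$ and $y_i\to\tilde y$. The vertical translates $M-z_i\ee_3$ are all translators, and their Kuratowski set limit $M_\infty$ is a $(2,0)$-set in the translator metric contained in $\{|y|\le b\}\cap\{x\ge\tilde x\}$, with $(\tilde x,\tilde y,0)\in M_\infty$. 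Thus $x(\cdot)|M_\infty$ attains its minimum at $(\tilde x,\tilde y,0)$. The strong maximum principle at infinity for $(2,0)$-sets from~\cite{white16} then forces $M_\infty$ to contain the vertical plane $\{x=\tilde x\}$, which is impossible since that plane meets $\{|y|>b\}$. The case $z_i\to-\infty$ is handled identically using translates $M-z_i\ee_3$ with $z_i\to-\infty$, which are again translators.

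The main obstacle is this compactness step; everything else is routine. Given compactness, the concluding strong maximum principle plus unique continuation argument is standard, and the reduction to connected components handles the ``in particular'' conclusion without additional work.
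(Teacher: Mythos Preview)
Your proof is correct and takes a genuinely different route from the paper's. The paper's argument is a two-line application of the barrier $u_{a,b}$ from Theorem~\ref{short-barrier-theorem}: after translating so that $0<\inf_M x<a<\inf_{\partial M}x$, part~(i) of that theorem forces $M\cap\{x\le a\}$ to lie under $\graph(u_{a,b})$; since the same bound holds for every vertical translate $M+c\ee_3$, one concludes $M\cap\{x<a\}=\emptyset$. No compactness of $K$ is ever established, and the plane $\{x=c\}$ never appears. Your approach instead proves compactness of $K=M\cap\{x\le t_0\}$ directly via the $(2,0)$-set strong maximum principle of~\cite{white16}, then touches $M$ with the plane $\{x=c\}$. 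This is essentially the ``vertical-end'' argument that the paper already packaged inside the proof of Theorem~\ref{short-barrier-theorem}, so both proofs ultimately rest on the same tool from~\cite{white16}; the paper's is shorter because the barrier is already in hand, while yours is independent of that barrier construction.

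One correction: your aside ``after reflecting in $\{z=0\}$ if necessary'' is wrong, since $(x,y,z)\mapsto(x,y,-z)$ does \emph{not} send translators to translators (the translator metric $e^{-z}(dx^2+dy^2+dz^2)$ is not invariant under it). You do treat the case $z_i\to-\infty$ separately at the end, so the argument survives; just delete the reflection remark.
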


\begin{proof}
Suppose the statement about infima is false.  By translating, we can assume that
\begin{equation}\label{a-in-between}
   0 < \inf_M x(\cdot) < a < \inf_{\partial M} x(\cdot)  
\end{equation}
for some $0<a<\infty$.   By Theorem~\ref{short-barrier-theorem}, $M\cap \{x\le a\}$ lies 
below the graph of $u_{a,b}$.  The same is true for any vertical translate
of $M$.  Thus $M\cap\{x<a\}$ is empty, contrary to~\eqref{a-in-between}.

The statement about suprema follows by reflection.
\end{proof}

\begin{theorem}\label{lipschitz-theorem}
Let
\[
   s_b = \inf_{a>0} \pdf{}x u_{a,b}(0,0).
\]
Then $s_b<\infty$.  

Suppose $M$ is a translator in $\{|y|\le b\}$, and let
\[
  \psi_M(t):= \sup_{M\cap \{x=t\}} z(\cdot),
\]
If $\partial M\subset \{x=x_0\}$, then
\begin{equation}\label{lipschitz}
   \psi_M(x_0+h) \le \psi(x_0) + s_b |h|   \tag{*}
\end{equation}
for all $h$.
In particular, if $M$ has no boundary, then~\eqref{lipschitz} holds for all $x_0$ and $h$.
\end{theorem}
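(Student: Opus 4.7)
The proof naturally splits into (i) $s_b<\infty$ and (ii) the Lipschitz estimate for $\psi_M$. Part (i) is essentially boundary regularity: for any fixed $a>0$, $u_{a,b}$ solves the translator equation on $[0,a]\times[-b,b]$ with smooth boundary data (identically zero) along the open edge $\{0\}\times(-b,b)$, so by standard elliptic Schauder estimates $u_{a,b}$ extends smoothly up to this edge away from the corners $(0,\pm b)$; hence $\partial_x u_{a,b}(0,0)<\infty$, and $s_b\le\partial_x u_{1,b}(0,0)<\infty$. (Monotonicity $u_{a_2,b}\le u_{a_1,b}$ on $[0,a_1]\times[-b,b]$ for $a_1\le a_2$, which follows from Theorem~\ref{short-barrier-theorem}\eqref{under}, actually shows $s_b=\lim_{a\to\infty}\partial_x u_{a,b}(0,0)$.)

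For part (ii), reflecting $M$ across the vertical plane $\{x=x_0\}$ (an isometry preserving both the slab and the translator equation) reduces the assertion to $h>0$. Fix $a>0$ and let $v(x,y):=u_{a,b}(x-x_0,y)+\psi_M(x_0)+\eps$ for small $\eps>0$, defined on $[x_0,x_0+a]\times[-b,b]$. I would first establish the one-step estimate
\begin{equation}\label{sketch-one-step}
\psi_M(x_0+h')\le\psi_M(x_0)+u_{a,b}(h',0)\qquad\text{for all }0<h'<a.
\end{equation}
Its proof is the sliding/maximum-principle argument behind Theorem~\ref{short-barrier-theorem}\eqref{under}: $\graph(v)$ strictly dominates $M$ on the boundary of the region $U=\{x_0<x<x_0+a,\,|y|<b\}$, because (a) at $x=x_0$, $z\le\psi_M(x_0)<v$ on $M\cap\{x=x_0\}$ by definition of $\psi_M$, (b) $v\to+\infty$ as $x\to x_0+a$, and (c) by the strong maximum principle applied to the translators $\{y=\pm b\}$, either $M$ coincides with one of them (making the theorem trivial with $\psi_M\equiv+\infty$) or $M$ is disjoint from them. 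Sliding $v$ vertically until first contact with $M$, the contact must be interior and violates the strong maximum principle for translators unless $\eps=0$; letting $\eps\to 0$ and using $u_{a,b}(h',y)\le u_{a,b}(h',0)$ (from the corollary following Theorem~\ref{short-barrier-theorem}) gives \eqref{sketch-one-step}.

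To conclude, iterate \eqref{sketch-one-step}: partition $[x_0,x_0+h]$ into $N$ subintervals of length $h/N<a$, apply \eqref{sketch-one-step} to each (the estimate is translation-invariant in $x$), and telescope to obtain $\psi_M(x_0+h)\le\psi_M(x_0)+N\,u_{a,b}(h/N,0)$. Since $u_{a,b}$ is smooth at $(0,0)$ with $u_{a,b}(0,0)=0$, $N\,u_{a,b}(h/N,0)\to h\,\partial_x u_{a,b}(0,0)$ as $N\to\infty$, so $\psi_M(x_0+h)\le\psi_M(x_0)+h\,\partial_x u_{a,b}(0,0)$ for every $a>0$; taking the infimum over $a$ yields \eqref{lipschitz}. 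The main technical hurdle is the sliding argument for non-compact $M$: ruling out that first contact between $\graph(v)$ and $M$ occurs ``at infinity'' requires properness of $M$ together with the blow-up of $v$ at $\{x=x_0+a\}$, which pins the comparison on that side and forces any first contact to lie in the interior.
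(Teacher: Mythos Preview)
Your approach is essentially the same as the paper's: establish a one-step barrier estimate $\psi_M(x'+h')\le\psi_M(x')+u_{a,b}(h',0)$, iterate over a partition of $[x_0,x_0+h]$ into $N$ pieces, telescope, let $N\to\infty$, and then take the infimum over $a$.

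The one place where the paper is cleaner is the proof of the one-step estimate. Rather than redo the sliding argument from scratch, the paper translates: given $x'\ge x_0$ and $\zeta>\psi_M(x')$, set $M'=(M-(x',0,\zeta))\cap\{0\le x\le a\}$ and observe that $\partial M'\subset\{z\le u_{a,b}(x,y)\}$, so Theorem~\ref{short-barrier-theorem}\eqref{under} gives $M'\subset\{z\le u_{a,b}\}$ directly. This sidesteps both of your stated technical worries: there is no ``first contact at infinity'' to rule out, and there is no need to argue separately about $M\cap\{|y|=b\}$ (your item~(c)). Since Theorem~\ref{short-barrier-theorem} already absorbed those issues (via its diagonal slide $M+(t,0,-t)$), you may as well invoke it. Also note that for the iteration you need the one-step estimate at every base point $x'\ge x_0$, not just at $x_0$; you gesture at this with ``translation-invariant in $x$'', but the paper states and proves it for all $x'\ge x_0$ explicitly.
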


\begin{proof}
Note that $s_{a,b}:=\pdf{}x u_{a,b}(0,0)<\infty$ by the strong maximum principle.  Thus $s_b<\infty$.

By symmetry, it suffices to prove~\eqref{lipschitz} for $h>0$.
First we claim that
\begin{equation}\label{for-h-small}
  \psi(x'+h)\le \psi(x') + u_{a,b}(h,0)  \quad\text{if $x'\ge x_0$ and $a>h\ge 0$}.
\end{equation}
We may assume that $\psi(x')<\infty$, as otherwise~\eqref{for-h-small} is trivially true.  Let $\psi_M(x')< \zeta < \infty$,
and let $M' = (M - (x',0,\zeta)) \cap \{0 \le x\le a\}$.
Then $\partial M'$ lies in $\{z\le u_{a,b}(x,y)\}$, so
  $M'$ lies in $\{z\le u_{a,b}(x,y)\}$ by Theorem~\ref{short-barrier-theorem}.
Hence
\[
  \psi_{M'}(h) \le u_{a,b}(h,0),
\]
which is equivalent (by translation) to
\[
  \psi_M(x'+h)  \le \zeta + u_{a,b}(h,0).
\]
This holds for all $\zeta> \psi_M(x')$, and therefore it holds for $\zeta=\psi_M(x')$.
Thus we have proved~\eqref{for-h-small}.

Now let $a>0$ and let $n$ be a positive integer such that $h/n < a$.
Then
\[
   \psi_M(x_0 + kh/n) \le \psi_M(x_0 + (k-1)h/n) + u_{a,b}(h/n,0)
\]
for all integers $k$, and, in particular, for $k=1,2,\dots, n$.
Thus
\begin{align*}
\psi_M(x_0 + h) 
&\le \psi_M(x_0) + n u_{a,b}(h/n,0) \\
&= \psi_M(x_0) + h (n/h) u_{a,b}(h/n,0).
\end{align*}
Letting $n\to\infty$ gives
\[
 \psi_M(x_0 + h) \le \psi_M(x_0) + h \, \pdf{}xu_{a,b}(0,0) = \psi_M(x_0) + h\,s_{a,b}.
\]
Taking the infimum over $b>0$ gives~\eqref{lipschitz}.
\end{proof}

\section{Translators in half-slabs}

In this section, we investigate the asymptotic behavior translators in a half-slab as $|x|\rightarrow\infty$.
Some of the ideas of this section are inspired  by~\cite{Chini}*{Theorem~10}.

\begin{definition}\label{Phi-definition}
If $M\subset \RR^3$, we let $\Phi(M)$ be the union of all subsequential limits of $M+(0,0,z)$ as $z\to\infty$.
Equivalently, $\Phi(M)$ is the set of $(x,y,z)$ such that there exists $(x_i,y_i,z_i)\in M$ with $(x_i,y_i)\to (x,y)$
and $z_i\to -\infty$.
\end{definition}

Note that if $K$ is a compact subset of $\RR^2$ with $K\times\RR$ disjoint from $\Phi(M)$, then
\[
   \inf_{M\cap (K\times \RR)} z(\cdot) >  -\infty.
\]

Let $b\ge \pi/2$, 
let $I=(c-b,c+b)$ and let $w: \RR\times I\to\RR$ be a complete translating graph.
Define $f: \RR^2\to\RR$ by
\[
f(x,y)
=
\begin{cases}
w(x,y) &\text{if $y\in I$}, \\
-\infty &\text{if $y\notin I$},
\end{cases}
\]
and define $F: \RR^2\times (-\infty,\infty] \to (-\infty,\infty]$ by
\[
F(x,y,z) = z - f(x,y).
\]
Note that $f$ and $F$ are continuous.

\begin{theorem}\label{chini-theorem-1}
Suppose $M\subset \RR^3$ is a translator such that
\[
   \inf_M y(\cdot) > - \infty,
\]
and such that 
\begin{equation}\label{a-ends}
\text{$(\partial M)\cap \{|x|\ge a\}$ is contained in $\{y\ge c\}$ for some $a\ge 0$.}
\end{equation}
Suppose also that
\begin{equation}\label{downer}
  \text{$\Phi(M)$ is contained in $\{y>c+b\}$.}
\end{equation}
Then
\begin{equation}\label{equal-infima}
  \inf_M F = \inf_{\partial M} F.
\end{equation}
\end{theorem}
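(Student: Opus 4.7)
The plan is a sliding argument on the foliation $\{F=t\}_{t\in\RR}$ combined with the strong maximum principle. Suppose for contradiction $t^{*}:=\inf_{M}F<m:=\inf_{\partial M}F$. The goal is to exhibit an interior point $p^{*}\in M\setminus\partial M$ at which the translator $\graph(w+t^{*})=\{F=t^{*}\}$ touches $M$ from the side $\{F\ge t^{*}\}$. Once such a $p^{*}$ exists, the strong maximum principle for minimal surfaces in the Ilmanen translator metric $g=e^{-z}(dx^{2}+dy^{2}+dz^{2})$ forces $M$ to agree with $\graph(w+t^{*})$ in a neighborhood of $p^{*}$; unique continuation extends this to $M\supseteq\graph(w+t^{*})$. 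But $\graph(w+t^{*})$ has $z\to-\infty$ as $y\to c-b$ along any bounded $x$-interval, so $\Phi(M)$ would then contain points with $y=c-b$, contradicting $\Phi(M)\subset\{y>c+b\}$.

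To produce $p^{*}$, take a minimizing sequence $p_{n}=(x_{n},y_{n},z_{n})\in M$ with $F(p_{n})\to t^{*}$. Because $F$ is lower semicontinuous with values in $\RR\cup\{+\infty\}$, any $\RR^{3}$-limit $p^{*}$ of a subsequence automatically satisfies $F(p^{*})=t^{*}<m$, so $p^{*}\notin\partial M$ and $y(p^{*})\in(c-b,c+b)$. The hypotheses are tailored to rule out each mode of escape to infinity. If $y_{n}\to+\infty$ then $F(p_{n})\to+\infty$ since $f\equiv-\infty$ outside the closed strip, and $y_{n}\to-\infty$ contradicts $\inf_{M}y>-\infty$. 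If $(x_{n},y_{n})$ is bounded then along a subsequence $(x_{n},y_{n})\to(x^{*},y^{*})$; the case $z_{n}\to+\infty$ forces $F(p_{n})\to+\infty$, while $z_{n}\to-\infty$ combined with $F(p_{n})\to t^{*}<\infty$ forces $y^{*}\in\{c-b,c+b\}$, placing $(x^{*},y^{*})$ in the projection of $\Phi(M)$ and violating $\Phi(M)\subset\{y>c+b\}$. This leaves the case $|x_{n}|\to\infty$ with $y_{n}\to y^{*}\in[c-b,c+b]$, which is the main obstacle.

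Here the boundary hypothesis $(\partial M)\cap\{|x|\ge a\}\subset\{y\ge c\}$ becomes essential: it says that in the lower half-strip $R:=\{|x|>a,\ c-b<y<c\}$, $M$ has no geometric boundary, so $F|M$ has no interior minima in $R$ by the strong maximum principle applied to the $F$-foliation. I plan to split into subcases according to whether $y^{*}$ lies below, on, or above the midline $y=c$, and in each subcase use an auxiliary translator as a barrier --- for instance an $x$-translate of $\graph(w)$ itself --- together with the $\Phi(M)$-constraint (which bounds $z$ below on compact $xy$-subsets of the strip) to force $F(p_{n})\to+\infty$ rather than $t^{*}$. Equivalently, one can apply the maximum principle for $F|M$ on $M\cap\{|x|\le L\}\cap(\RR\times(c-b,c+b)\times\RR)$, let $L\to\infty$, and check that the contributions from the slices $|x|=L$ have $F\to+\infty$, using the boundary hypothesis to control the lower-half-strip portion via the absence of geometric boundary there. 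Verifying this uniformly across the three possible profiles of $w$ (untilted grim reaper, tilted grim reaper, $\Delta$-wing) is the bulk of the technical work.

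Once $|x_{n}|\to\infty$ has been excluded, the point $p^{*}$ exists as described, and the strong maximum principle, unique continuation, and the $\Phi(M)$-contradiction from the first paragraph combine to close the argument.
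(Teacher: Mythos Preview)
Your reduction to a minimizing sequence and your handling of the cases $|y_n|\to\infty$ and $|z_n|\to\infty$ (with $(x_n,y_n)$ bounded) are fine. The gap is exactly where you flag it: the case $|x_n|\to\infty$. Neither of your two proposed fixes works. The suggestion to apply the maximum principle on $M\cap\{|x|\le L\}$ and ``check that the contributions from the slices $|x|=L$ have $F\to+\infty$'' is circular: if the infimum is not attained and the minimizing sequence has $|x_n|\to\infty$, then $\inf_{M\cap\{x=x_n\}}F$ stays near $t^{*}$ and does \emph{not} diverge. The suggestion to use ``an $x$-translate of $\graph(w)$'' as a barrier gives nothing new: for a tilted grim reaper, $x$-translation of $\graph(w)$ is a vertical translation, i.e.\ another leaf $\{F=t\}$ of the same foliation; for the untilted reaper $w$ is independent of $x$. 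Nothing in the hypotheses controls $M$ as $|x|\to\infty$ (there is no $x$-slab, and $\Phi(M)$ only constrains $z\to-\infty$), so a direct compactness argument in the original coordinates cannot close this case.

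The paper supplies the missing idea: a rotation trick. After normalizing $c=0$ and replacing $M$ by $M\cap\{y\le b\}$ (so $\Phi(M)=\emptyset$), set $M'=M\cap\{x\ge a\}$ and rotate it through a small angle $\theta\in(0,\pi/2)$ about a carefully chosen vertical line $L$ (through $(0,b,0)$ if $\partial_x w\le 0$ or $w$ is a $\Delta$-wing, through the origin if $\partial_x w>0$). The rotated surface $R(\theta)M'$ meets the open slab $\{|y|<b\}$ only over a \emph{compact} triangular region in the $xy$-plane, so Lemma~\ref{pre-chini-lemma} now applies: $F$ attains its infimum on $R(\theta)M'$, and by the strong maximum principle that infimum lies on $R(\theta)\partial M'$. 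The choice of $L$, together with the boundary hypothesis $(\partial M')\subset\{y\ge 0\}$, ensures $\theta\mapsto F(R(\theta)p)$ is nondecreasing for each $p\in\partial M'$, hence $\inf_{R(\theta)\partial M'}F\ge\inf_{\partial M'}F$. Letting $\theta\to 0$ yields $\inf_{M'}F\ge\inf_{\partial M'}F$. A short separate argument, using Lemma~\ref{pre-chini-lemma} on the compact rectangle $[-a,a]\times[-b,b]$, then combines the three pieces $\{x\le -a\}$, $\{|x|\le a\}$, $\{x\ge a\}$ to give $\inf_M F=\inf_{\partial M}F$. This rotation is the substantive step your outline is missing; it converts the noncompact $x$-direction into a compact one at the price of a boundary change that one can control.
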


\begin{lemma}\label{pre-chini-lemma}
Suppose $C$ is a closed set in $\RR^3$ such that $\Phi(C)$ is disjoint from the 
 the closed slab $\{y \in \overline{I}\}$.
Suppose also that $C\cap \{|y|<b\}$ is contained in $K\times\RR$ for some compact set $K$, 
 and that
\[
   \inf_C F < \lambda < \infty.
\]
Then $C'\cap\{F\le \lambda\}$ is compact, and therefore $F|C$ attains its minimum: there is a point $q\in C$
such that $F(q)=\inf_C F$.
\end{lemma}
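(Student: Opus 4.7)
The plan is to show that $C\cap\{F\le\lambda\}$ is both closed and bounded in $\RR^3$, hence compact; then because $F$ is continuous and real-valued on that set and coincides with $\inf_C F$ there, the minimum is attained.

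For closedness, the function $f:\RR^2\to[-\infty,\infty)$ is continuous (as $w$ tends to $-\infty$ at $\partial I$ since $w$ is a complete translating graph on the strip $\RR\times I$), so $F$ is a continuous map $\RR^3\to(-\infty,\infty]$. Hence $\{F\le\lambda\}$ is closed in $\RR^3$, and so is its intersection with the closed set $C$. Note also that $\{F\le\lambda\}\subset\{y\in I\}$, since $F\equiv+\infty$ on $\{y\notin I\}$.

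For boundedness I control each coordinate in turn. The $y$-coordinate is automatically bounded by the interval $I=(c-b,c+b)$. The $x$-coordinate is handled by the hypothesis on $K$: since $\{y\in I\}\subset\{|y|<b\}$ (after absorbing the shift by $c$ into the statement, as $C$ is compared against a strip-type hypothesis), every point of $C\cap\{F\le\lambda\}$ lies in $K\times\RR$, so $x$ is bounded. For the $z$-coordinate, the inequality $F\le\lambda$ reads $z\le\lambda+w(x,y)$, and since $(x,y)$ ranges over a relatively compact subset of $\RR\times I$, the function $w$ is bounded above there, giving an upper bound on $z$. For the lower bound, suppose toward a contradiction that there is a sequence $(x_n,y_n,z_n)\in C\cap\{F\le\lambda\}$ with $z_n\to-\infty$. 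After passing to a subsequence, $(x_n,y_n)\to(x^*,y^*)$ with $y^*\in\overline{I}$, and then by the definition of $\Phi(C)$ the entire vertical line $\{(x^*,y^*)\}\times\RR$ is contained in $\Phi(C)$. This places points of $\Phi(C)$ inside $\overline{I}\times\RR$, contradicting the disjointness hypothesis.

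Combining closedness with the three coordinatewise bounds, $C\cap\{F\le\lambda\}$ is compact. It is nonempty because $\inf_C F<\lambda$. Restricted to this compact set, $F$ is continuous and real-valued, so it attains a minimum at some $q$, and this minimum equals $\inf_C F$ since $F>\lambda>\inf_C F$ everywhere outside the set. The step I expect to do the real work is the lower $z$-bound, which is the only place the key assumption $\Phi(C)\cap(\overline{I}\times\RR)=\emptyset$ is used; the other steps are essentially bookkeeping against the boundedness assumption on $C\cap\{|y|<b\}$ and the continuity of $F$ up to $\partial I$ in the extended sense.
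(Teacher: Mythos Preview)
Your proof is correct and follows essentially the same approach as the paper's: bound $y$ via $\{F\le\lambda\}\subset\{y\in I\}$, bound $(x,y)$ via the hypothesis on $K$, bound $z$ above via $z\le\lambda+f(x,y)$ with $\sup_K f<\infty$, and bound $z$ below by using the disjointness of $\Phi(C)$ from $\{y\in\overline I\}$. One small imprecision: the set of relevant $(x,y)$ need not be relatively compact in $\RR\times I$ (since $y$ may approach $\partial I$); the clean argument, as in the paper, is that the extended $f:\RR^2\to[-\infty,\infty)$ is continuous and hence bounded above on the compact set $K$.
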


\begin{proof}[Proof of Lemma~\ref{pre-chini-lemma}]
Note that $\{y\in I\}= \{F<\infty\}$, so
\begin{equation}\label{KxR}
   C' \subset C\cap \{y\in I\} \subset K\times\RR.
\end{equation}
Note also that 
\begin{equation}\label{emptiness}
  \Phi(C') =\emptyset
\end{equation}
because $\Phi(C)$ is contained in $\{y\in \overline{I}\}$ and also in $\Phi(C)$.
Thus
\[    
   \zeta:= \inf_{C'} z(\cdot) > -\infty.
\]
by~\eqref{KxR} and~\eqref{emptiness}.
 Note that $f | K$ is bounded above, so $\eta:=\sup_K f <\infty$.
For $(x,y,z)\in C'$, we have (by definition of $F$)
\begin{align*}
z
&=
F(x,y,z) + f(x,y) \\
&\le
\lambda + \eta.
\end{align*}
Thus $C'$ is contained in $K\times [\zeta,\lambda+\eta]$, so $C'$ is compact.
\end{proof}

\begin{proof}[Proof of Theorem~\ref{chini-theorem-1}]
It suffices to prove the theorem for $c=0$.
Let $\beta$ be a negative number such that $\beta \le \inf_M y(\cdot)$.

By replacing $M$ by $M\cap\{y\le b\}$,
we can assume
 that
\begin{equation}\label{banded}
    \text{$M$ lies in the slab $\{\beta \le y \le b\}$.}
\end{equation}
Note that~\eqref{banded} and~\eqref{downer} imply that $M+(0,0,z)$ converges to the empty set as $z\to\infty$.
Equivalently,
\begin{equation}\label{bounded-below}
 \inf_{M\cap (K\times \RR)} z(\cdot) >  -\infty \quad\text{for every compact $K\subset \RR^2$.}
\end{equation}

To prove the theorem, it suffices to prove that
\begin{equation}\label{one-way}
\inf_M F \ge \inf_{\partial M} F,
\end{equation}
as the reverse inequality is trivially true.

By horizontal translation, we may assume that if $u$ is a $\Delta$-wing, then it is centered at the origin:
$Du(0,0)=0$.

We may choose the $a$ in~\eqref{a-ends} so that $a> b$.

\begin{claim}\label{M'-claim}
 Let $M' = M\cap \{x\ge a\}$.   Then 
\[
    \inf_{M'} F = \inf_{\partial M'}F.
\]
\end{claim}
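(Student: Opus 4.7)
The plan is a sweep-and-maximum-principle argument using the family of level sets $\{F = s\} = \graph(w + s)$, each of which is a translator (equivalently, a minimal surface in the translator metric $g = e^{-z}(dx^2+dy^2+dz^2)$). The key auxiliary tool is Lemma \ref{pre-chini-lemma}, which I will use to ensure that the infimum of $F$ is attained on suitable horizontal truncations of $M'$.

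Assume for contradiction that $s^* := \inf_{M'} F < s_0 := \inf_{\partial M'} F$. For each $R > a$ set $M'_R := M' \cap \{x \le R\}$. By the reductions already made in the proof, $\Phi(M) = \emptyset$, hence $\Phi(M'_R) = \emptyset$, and $M'_R \cap \{|y| < b\}$ is contained in the bounded horizontal box $[a,R] \times [-b,b] \times \RR$. Applying Lemma \ref{pre-chini-lemma} to $M'_R$ with some $\lambda \in (s^*, s_0)$ shows that $M'_R \cap \{F \le \lambda\}$ is compact, so $F$ attains its infimum on $M'_R$ at some point $q_R$.

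I now distinguish three cases for $q_R$. If $q_R \in \partial M'$, then $F(q_R) \ge s_0$ directly. If $q_R$ is interior to $M$ with $a < x(q_R) < R$, then $q_R$ is a critical point of $F|M$; since $M$ is tangent at $q_R$ to the translator $\graph(w + F(q_R))$ from the side $\{F \ge F(q_R)\}$, the strong maximum principle in the translator metric forces $M$ to coincide with $\graph(w + F(q_R))$ in a neighborhood of $q_R$. Unique continuation extends this coincidence throughout the connected component of $M$ through $q_R$; since $\graph(w + F(q_R))$ is defined on all of $\RR \times I$, that component must meet $\{x = a\} \subset \partial M'$, so $F$ takes the value $F(q_R)$ at a point of $\partial M'$, giving $F(q_R) \ge s_0$. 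In either of these cases $\inf_{M'_R} F \ge s_0$, and letting $R \to \infty$ yields $s^* \ge s_0$, contradicting the assumption.

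The main obstacle is the remaining case, in which $q_R$ lies on the artificial cut $\{x = R\}$ for all large $R$: here $q_R$ is merely a constrained minimizer of $F|M'_R$ and need not be a critical point of $F|M$. To handle this, I would pass to a limit of the horizontally translated surfaces $M - (R_n, 0, 0)$ for a suitable sequence $R_n \to \infty$: using $\Phi(M) = \emptyset$ to keep the vertical coordinate locally bounded and standard area and curvature estimates for translators in slabs, one extracts a smooth subsequential limit translator $\tilde M$ through the origin in which the appropriately shifted foliation function $\tilde F$ attains an interior critical value equal to $s^*$. The strong maximum principle applied to $\tilde M$ and $\{\tilde F = s^*\}$ then yields the contradiction exactly as in the interior case above. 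The delicate technical point is controlling how the foliation function $F$ transforms under horizontal translation (trivial when $w$ is an untilted grim reaper or a $\Delta$-wing, but requiring a linear correction when $w$ is a tilted grim reaper because of the linear growth $w \sim s(b)\, x$), which is what makes the compactness step the most subtle part of the argument.
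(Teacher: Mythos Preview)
Your approach has a genuine gap in the third case, where the minimizer $q_R$ sits on the artificial cut $\{x=R\}$ for all large $R$. The proposed remedy is to pass to a subsequential limit of the translated surfaces $M-(R_n,0,0)$ and apply the strong maximum principle there. But Theorem~\ref{chini-theorem-1} is stated for an arbitrary translator $M$ satisfying only the listed hypotheses; there are no assumptions of bounded area ratios, bounded curvature, or bounded genus, and no such estimates are available ``for translators in slabs'' in this generality. So there is no mechanism to extract a smooth limit surface on which to run the maximum principle. (Your hedge about ``the most subtle part'' is exactly where the argument breaks.) Your case~2 is fine, but case~3 is precisely the difficulty the claim is designed to overcome, and your compactness step does not go through.

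The paper avoids this entirely with a rotation trick. Choose a suitable vertical line $L$ (through $(0,b,0)$ or $Z$, depending on the sign of $\partial w/\partial x$) and let $R(\theta)$ be rotation by $\theta$ about $L$. For each $\theta\in(0,\pi/2)$ the rotated half-strip satisfies
\[
R(\theta)M' \cap \{|y|<b\} \subset T_\theta \times \RR
\]
with $T_\theta$ a \emph{compact} triangular region in the plane. Lemma~\ref{pre-chini-lemma} then applies directly to $R(\theta)M'$, giving an attained minimum of $F$ which, by the strong maximum principle, lies on $R(\theta)\partial M'$. A monotonicity check (the map $\theta\mapsto f(R(\theta)p)$ is decreasing for $p\in[a,\infty)\times[0,b]$) yields $\inf_{R(\theta)\partial M'}F\ge\inf_{\partial M'}F$, and letting $\theta\downarrow 0$ gives the claim. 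The point is that tilting $M'$ converts the unbounded region $\{x\ge a\}\cap\{|y|<b\}$ into a bounded one, so no compactness of $M$ itself is ever needed.
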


If $u$ is a grim reaper surface with $\pdf{u}x\le 0$ or if $u$ is a $\Delta$-wing (in which case $\pdf{u}x<0$ on $(0,\infty)\times (-b,b)$),
  let $L$ be the vertical line through $(0,b,0)$. 
If $u$ is a grim reaper surface with $\pdf{u}x> 0$,  let $L=Z$.   
Let $R(\theta)$ denote counterclockwise rotation about $L$ through angle $\theta$.  We also let $R(\theta)$ denote
the corresponding rotation in the $xy$-plane.   For each $p\in [a,\infty)\times [0,b]$, note that 
\[
   \theta\in [0,\pi/2) \mapsto f(R(\theta)p)
\]
is a decreasing function.  Thus, for each $p=(x,y,z)\in [a,\infty)\times[0,b]\times\RR$ 
(and thus, in particular, for each $p\in \partial M'$),
\[
  \theta \in [0,\pi/2)\mapsto F(R(\theta)p) = z - f(R(\theta)(x,y))
\]
is an increasing function.
 Consequently,
\begin{equation}\label{rotated-inf}
   \inf_{R(\theta)\partial M'} F \ge \inf_{\partial M'} F.
\end{equation}

For $\theta\in (0,\pi/2)$), we claim that 
\begin{equation}\label{theta-version}
  \inf_{R(\theta) M'} F = \inf_{R(\theta)\partial M'} F.
\end{equation}
To prove~\eqref{theta-version}, we may assume that the left side is $<\infty$, as otherwise~\eqref{theta-version} is trivially true.
Now
\[
   R(\theta)M' \cap \{|y|< b\}   \subset T\times \RR,
\]
where $T=T_\theta$ is the compact triangular region 
\[
  R(\theta) ( [0,\infty)\times [\beta,B] ) \cap \{y\le b\}.
\]
By Lemma~\ref{pre-chini-lemma}, the infimum of $F$ on $R(\theta)M'$ is attained at a point $q$. By the strong maximum principle, 
   $q$ is in the boundary $\partial R(\theta)M' = R(\theta)\partial M'$.
Hence~\eqref{theta-version} holds.

Now let $p\in M'$.  By~\eqref{theta-version} and~\eqref{rotated-inf},
\begin{align*}
F(R(\theta)p)
&\ge
\inf_{R(\theta)M'} F  \\
&=
\inf_{R(\theta)\partial M'} F \\
&\ge
\inf_{\partial M'}F.
\end{align*}
Thus
\[
  F(R(\theta)p) \ge \inf_{\partial M'} F.
\]
Letting $\theta\to 0$ gives
\[
  F(p)\ge \inf_{\partial M'}F.
\]
Taking the infimum over $p\in M'$ gives~\eqref{one-way}.  Thus we have
proved Claim~\ref{M'-claim}.

Using Claim~\ref{M'-claim}, we now prove that 
\begin{equation}\label{repeated}
\inf_M F= \inf_{\partial M} F.
\end{equation}
We may assume that $M$ contains a point $p_0$ with $F(p_0)<\infty$, as otherwise~\eqref{repeated} is trivially true.

Let
\[
  \alpha := \inf_{M\cap \{|x|\le a\}}F = \inf_{M\cap (K \times \RR)} F,
\]
where $K$ is the rectangle $[-a,a]\times[-b,b]$.
Since $p_0\in M\cap\{|x|\le a\}$,
\[
  \alpha \le F(p_0) < \infty.
\] 
By Lemma~\ref{pre-chini-lemma}, the infimum is attained: $M\cap \{|x|\le a\}$ contains a point $p'$ such that
\[
   F(p') = \inf_{M\cap \{|x|\le a\}} F.
\]

Suppose first that
\begin{equation}\label{nested-infs}
  \inf_M F = \inf_{M\cap\{|x|\le a\}} F.
\end{equation}
Then $F|M$ attains its minimum at $p'$, and therefore $p'\in \partial M$ by the strong maximum principle.
Thus we have proved~\eqref{repeated} in case~\eqref{nested-infs} holds.

Now suppose that~\eqref{nested-infs} does not hold.
Then
\begin{equation}\label{less-than-alpha}
  \inf_M F < \inf_{M\cap\{|x|\le a\}} F = \alpha.
\end{equation}
It follows that $\inf_M F$ is equal to one or both of $\inf_{M\cap\{x\ge a\}}F$ and $\inf_{M\cap\{x\le -a\}}F$.
By symmetry, it is enough to consider the case
\[
  \inf_M F = \inf_{M'} F.
\]
where $M'=M\cap \{x\ge a\}$.  By Claim~\ref{M'-claim}, 
\begin{align*}
\inf_{M'}F 
&= \inf_{\partial M'}F   \\
&\ge
\min \{   \inf_{M\cap \{x=a\}} F , \inf_{(\partial M)\cap \{x>a\}} F  \}             \\
&\ge
\min\{\alpha, \inf_{\partial M} F \}.
\end{align*}
Thus by~\eqref{less-than-alpha},
\[
 \alpha > \inf_M F \ge \min\{ \alpha, \inf_{\partial M} F \},
\]
which immediately implies that
\[
\inf_M F \ge \inf_{\partial M} F.
\]
The reverse inequality is trivially true.
\end{proof}

\begin{corollary}\label{first-chini-corollary}
In Theorem~\ref{chini-theorem-1},
\begin{enumerate}
\item\label{first-chini-1}
  If $M$ is complete or, more, generally, if $(\partial M)\cap \{y\in I\}$ is empty, 
then $M\cap \{y\in I\}$ is empty.
\item\label{first-chini-2}
 If $\sup_MF<\infty$  and if $\partial M\subset \{x=0\}$, then
  $F|M$ attains its minimum at a point $q\in (\partial M)\cap\{y\in I\}$, and thus
  \[
      \min_M F = F(q) > -\infty.
  \]
\item\label{first-chini-3}
 If $\partial M$ is contained in $\{x=0\}$,
 and if $J$ is a closed interval in $I$, then there is $\gamma=\gamma_J$ such that
\[
    z \ge s(b) x + \gamma \quad\text{for $(x,y,z)\in M\cap \{y\in J\}$.}
\]
\end{enumerate}
\end{corollary}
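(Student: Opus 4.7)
\textit{Assertion \eqref{first-chini-1}.} If $(\partial M)\cap\{y\in I\}$ is empty (automatic when $M$ is complete), then $f=-\infty$ on $\partial M$, so $F\equiv+\infty$ on $\partial M$. Hence $\inf_{\partial M}F=+\infty$, and Theorem~\ref{chini-theorem-1} forces $\inf_M F=+\infty$. Since $F$ is finite precisely on $\{y\in I\}$, this means $M\cap\{y\in I\}=\emptyset$.

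\textit{Assertion \eqref{first-chini-2}.} Theorem~\ref{chini-theorem-1} gives $\inf_M F=\inf_{\partial M}F\le\sup_M F<\infty$. The substantive step is to show the infimum on $\partial M$ is attained: apply Lemma~\ref{pre-chini-lemma} to the closed set $C=\partial M$. The hypothesis $\Phi(C)\cap\overline{I}=\emptyset$ follows from $\Phi(\partial M)\subset\Phi(M)\subset\{y>c+b\}$, and the compact-projection hypothesis is automatic because $\partial M\cap\{y\in I\}\subset\{0\}\times\overline{I}\times\RR$ thanks to $\partial M\subset\{x=0\}$. Thus $\partial M\cap\{F\le\lambda\}$ is compact for $\lambda$ slightly larger than $\inf_{\partial M}F$, so the continuous function $F$ attains its minimum on $\partial M$ at some point $q$; this $q$ also minimizes $F$ over $M$, and since $F(q)<\infty$ one has $q\in\partial M\cap\{y\in I\}$.

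\textit{Assertion \eqref{first-chini-3}.} Choose $w$ to be the grim reaper of slope $s(b)$ on $\RR\times I$, namely $w(x,y)=\sigma_b^{2}\log\cos((y-c)/\sigma_b)+s(b)\,x$, so that
\[
F(x,y,z)=z-s(b)\,x-\sigma_b^{2}\log\cos((y-c)/\sigma_b).
\]
By Theorem~\ref{chini-theorem-1}, $\inf_M F=\inf_{\partial M}F$. Applying Lemma~\ref{pre-chini-lemma} to $C=\partial M$ exactly as in (2) shows $\partial M\cap\{F\le\lambda\}$ is compact for every finite $\lambda$. This rules out $\inf_{\partial M}F=-\infty$: a minimizing sequence would, by compactness, cluster at a point where $F=-\infty$, contradicting finiteness of $F$ on $\{y\in I\}$. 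Set $\gamma':=\inf_M F>-\infty$. On a closed interval $J\subset I$, the continuous function $y\mapsto\sigma_b^{2}\log\cos((y-c)/\sigma_b)$ attains a finite minimum $\alpha_J$, so the bound $F\ge\gamma'$ on $M$ rewrites as $z\ge s(b)\,x+\gamma_J$ on $M\cap\{y\in J\}$ with $\gamma_J:=\gamma'+\alpha_J$.

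\textit{Main obstacle.} The heavy lifting is done by Theorem~\ref{chini-theorem-1} and Lemma~\ref{pre-chini-lemma}; the only real insight is recognizing that the hypothesis $\partial M\subset\{x=0\}$ lets one apply Lemma~\ref{pre-chini-lemma} to $\partial M$ itself (not merely to $M$), since the horizontal projection of $\partial M\cap\{y\in I\}$ is automatically contained in the compact set $\{0\}\times\overline{I}$. Once that is in hand, assertions \eqref{first-chini-1}--\eqref{first-chini-3} follow by selecting the appropriate foliation function $F$ and combining the Theorem--Lemma pair with the elementary rearrangement in~\eqref{first-chini-3}.
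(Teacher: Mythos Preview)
Your proof is correct and follows essentially the same route as the paper: invoke Theorem~\ref{chini-theorem-1} for the equality of infima, then apply Lemma~\ref{pre-chini-lemma} to $C=\partial M$ (using $\partial M\subset\{x=0\}$ to get the compact-projection hypothesis) to extract a minimizer and rule out $\inf_{\partial M}F=-\infty$. Two small cosmetic points: in~\eqref{first-chini-3} you should explicitly dispose of the trivial case $\inf_M F=+\infty$ (otherwise your formula $\gamma_J=\gamma'+\alpha_J$ is not a real number), and your minimizing-sequence argument in~\eqref{first-chini-3} can be shortened by observing that Lemma~\ref{pre-chini-lemma} already asserts the minimum is \emph{attained} at some $q$, so $F(q)=\inf_{\partial M}F$ is automatically finite.
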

  
\begin{proof}
If $(\partial M)\cap\{y\in I\}$ is empty, then $\inf_{\partial M}F=\infty$,
so $\inf_M F = \infty$ (by Theorem~\ref{chini-theorem-1}), 
and thus $M\cap\{y\in I\}$ is empty.  

Now suppose that $\partial M$ is contained in $\{x=0\}$
and that $\inf_M F <\infty$.
Then
\begin{align*}
\infty 
&> \inf_M F  \\
&=\inf_{\partial M} F.
\end{align*}
Now 
\[
  (\partial M)\cap \{y\in I \} \subset K\times \RR,
\]
where $K$ is the compact set $\{0\}\times \overline{I}$.
By Lemma~\ref{pre-chini-lemma}, the infimum is attained at some point $q$ in $\partial M$.
Since $F(q)<\infty$, the point $q$ lies in the slab $\{y\in I\}$.

To prove Assertion~\eqref{first-chini-3}, let $w:\RR\times I \to \RR$ be the grim reaper surface with $\pdf{w}x\equiv s(b)$.
We may assume that $\inf_MF<\infty$, as otherwise the assertion is trivially true.  
Let $q$ be as in Assertion~\eqref{first-chini-2}.  Then
\[
-\infty 
< F(q) 
\le F(x,y,z) 
= z - f(x,y),
\]
so
\begin{align*}
z 
&\ge F(q) + f(x,y) \\
&= F(q) + f(0,y) + s(b) x \\
&\ge  F(q) + \min_{y\in J}f(0,y) + s(b)x.
\end{align*}
\end{proof}


\begin{corollary}\label{complete-chini-corollary}
Suppose that $M$ is a translator such that
\[
  \inf_M y(\cdot) > -\infty
\]
 and that $\Phi(M)$ is contained
in the halfspace $\{y\ge d\}$. 
If $M$ is complete, or, more generally, if $\partial M$ is contained in $\{y\ge d\}$,
then $M$ is contained in  $\{y\ge d\}$.
\end{corollary}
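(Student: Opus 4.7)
The plan is to derive this corollary directly from Corollary~\ref{first-chini-corollary}\eqref{first-chini-1} by contradiction, choosing an appropriate complete translating graph whose strip lies just below the halfspace $\{y \ge d\}$.

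Suppose for contradiction that $M$ contains a point $p_0 = (x_0, y_0, z_0)$ with $y_0 < d$. Set $\varepsilon := d - y_0 > 0$, pick any $b \ge \pi/2$ with $b > \varepsilon/4$ (for instance $b = \pi/2$), and set $c := d - \varepsilon/2 - b$, so that
\[
   I := (c-b, c+b) = (d - \varepsilon/2 - 2b,\; d-\varepsilon/2).
\]
By construction $c + b = d - \varepsilon/2 < d$ and $y_0 = d - \varepsilon \in I$, because $y_0 < d - \varepsilon/2$ and $y_0 > d - \varepsilon/2 - 2b$ (the latter uses $b > \varepsilon/4$). Let $w : \RR \times I \to \RR$ be, say, the untilted grim reaper surface with $w(x, c) \equiv 0$, and let $f$ and $F$ be defined from $w$ as in \S\ref{Phi-definition}.

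Next I would verify the three hypotheses of Theorem~\ref{chini-theorem-1}. The bound $\inf_M y(\cdot) > -\infty$ is given. Since $\partial M \subset \{y \ge d\} \subset \{y \ge c\}$ (because $c < d$), condition~\eqref{a-ends} holds with $a = 0$. Since $\Phi(M) \subset \{y \ge d\} \subset \{y > d - \varepsilon/2\} = \{y > c+b\}$, condition~\eqref{downer} holds. Moreover $(\partial M) \cap \{y \in I\}$ is empty, because $I \subset (-\infty, d - \varepsilon/2)$ and $\partial M \subset \{y \ge d\}$.

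Thus Corollary~\ref{first-chini-corollary}\eqref{first-chini-1} applies and yields $M \cap \{y \in I\} = \emptyset$. This contradicts the presence of $p_0 \in M$ with $y_0 \in I$, completing the argument. The main (essentially the only) obstacle is choosing $c$ and $b$ correctly so that the strip $I$ sits strictly below $d$ while still capturing the hypothetical violating point; the trick is to leave a buffer of size $\varepsilon/2$ between $c+b$ and $d$, which lets the strict inequality $\Phi(M) \subset \{y > c+b\}$ follow from the weak inequality $\Phi(M) \subset \{y \ge d\}$ in the hypothesis.
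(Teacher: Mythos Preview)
Your argument is correct and is essentially the paper's proof, phrased as a contradiction on a single violating point rather than as a universal statement over all strips $I$ with $\overline{I}\subset(-\infty,d)$. One small slip: you allow any $b\ge\pi/2$ with $b>\varepsilon/4$, but then take $w$ to be the \emph{untilted} grim reaper, which only exists when $b=\pi/2$; since the example $b=\pi/2$ fails to satisfy $b>\varepsilon/4$ once $\varepsilon\ge 2\pi$, you should either center the strip near $y_0$ so that $b=\pi/2$ always suffices, or simply let $w$ be any complete translating graph on $\RR\times I$ as in the setup preceding Theorem~\ref{chini-theorem-1}.
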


\begin{proof}
Let $I$ be any open interval of width $\ge \pi$ whose closure is in $(-\infty, d)$.
Then by Theorem~\ref{chini-theorem-1}
 (applied to any translating graph $w: \RR\times I \to \RR$), $M$ is disjoint from the slab $\{y\in I\}$.
\end{proof}

\begin{corollary}\label{chini-corollary}
Suppose that $M$ is a translator in $\{x\ge 0\}$ with $\partial M$ in $\{x=0\}$.
Suppose also that 
\[
  \inf_M y(\cdot) > - \infty
\]
and that $\Phi(M)$ is contained in the halfspace $\{y\ge d\}$.
\begin{enumerate}[\upshape(1)]
\item\label{chini-corollary-A1} If $(x_i,y_i,z_i) \in M$ with $x_i\to\infty$ and $y_i\to \hat y<d$, then $z_i/x_i\to \infty$.
\item\label{chini-corollary-A2}
   If $M$ is contained in the region $\{z \le mx + c\}$ for some $m$ and $c$, and if $d'<d$, then $M\cap \{y\le d'\}$
  is compact.
\end{enumerate}
\end{corollary}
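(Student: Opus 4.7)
My plan is to derive both assertions from the linear lower bound in Corollary~\ref{first-chini-corollary}\eqref{first-chini-3}, exploiting that the grim reaper slope $s(b)=\sqrt{(2b/\pi)^2-1}$ tends to $\infty$ as $b\to\infty$. Tilted grim reaper surfaces defined on strips whose upper edge lies strictly below $d$ thus provide arbitrarily steep linear lower barriers for $M$ on any horizontal slice below $d$.

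For Assertion~\eqref{chini-corollary-A1}, fix $m>0$ and set $\delta=d-\hat y>0$. Choose $b\ge\pi/2$ with $s(b)>m$ and $2b>\delta$, and a small $\epsilon\in(0,\delta)$. Then $I:=(d-2b-\epsilon,\,d-\epsilon)$ is an open interval of width $2b$ containing $\hat y$ in its interior, with right endpoint strictly below $d$. The hypothesis $\Phi(M)\subset\{y\ge d\}$ therefore implies $\Phi(M)\subset\{y>d-\epsilon\}$, so Theorem~\ref{chini-theorem-1} (and with it Corollary~\ref{first-chini-corollary}\eqref{first-chini-3}) applies to any grim reaper on the strip $\RR\times I$. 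Choosing a closed interval $J\subset I$ with $\hat y$ in its interior yields a constant $\gamma_J$ with
\[
  z\ge s(b)\,x+\gamma_J\qquad\text{on }M\cap\{y\in J\}.
\]
For large $i$, $y_i\in J$, so $z_i/x_i\ge s(b)+\gamma_J/x_i\to s(b)>m$. Since $m$ is arbitrary, $z_i/x_i\to\infty$.

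For Assertion~\eqref{chini-corollary-A2}, the set $M\cap\{y\le d'\}$ is closed, so I need only bound its three coordinates. The coordinate $y$ is bounded above by $d'$ and below by $\inf_M y(\cdot)>-\infty$. To bound $x$, suppose some sequence $(x_i,y_i,z_i)\in M\cap\{y\le d'\}$ had $x_i\to\infty$; after passing to a subsequence $y_i\to\hat y\le d'<d$, Assertion~\eqref{chini-corollary-A1} would force $z_i/x_i\to\infty$, contradicting $z_i\le m x_i+c$. The upper bound on $z$ is then immediate from $z\le mx+c$. For the lower bound on $z$: if some sequence in $M\cap\{y\le d'\}$ had $z_i\to-\infty$, then $(x_i,y_i)$ would be bounded and any subsequential limit $(x,y)$ would satisfy $y\le d'<d$, placing $(x,y,0)\in\Phi(M)$ with $y<d$ and contradicting $\Phi(M)\subset\{y\ge d\}$.

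The only delicate point is the parameter calibration in Assertion~\eqref{chini-corollary-A1}: one must simultaneously ensure that $\hat y$ lies strictly in the open strip $I$, that the right edge of $I$ stays strictly below $d$ (so that the $\Phi$-hypothesis of Theorem~\ref{chini-theorem-1} holds with the required strict inequality), and that $b$ is large enough that $s(b)>m$. Choosing $\epsilon:=(d-\hat y)/2$ and $b>\max\bigl(\pi,\,(\pi/2)\sqrt{m^2+1}\bigr)$ resolves all three constraints with room to spare; once this barrier is in hand, the remaining steps are routine.
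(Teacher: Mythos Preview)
Your proof is correct and follows essentially the same route as the paper's: both arguments invoke Corollary~\ref{first-chini-corollary}\eqref{first-chini-3} on strips of width $2b$ whose closure lies below $\{y=d\}$ to obtain the linear lower bound $z\ge s(b)x+\gamma_J$, then let $b\to\infty$ to force $z_i/x_i\to\infty$; Assertion~\eqref{chini-corollary-A2} is then deduced from~\eqref{chini-corollary-A1} and the definition of $\Phi(M)$ exactly as you do. Your write-up is in fact more explicit than the paper's about the interval placement and the verification of the hypotheses of Theorem~\ref{chini-theorem-1} (the paper leaves these checks to the reader). One cosmetic remark: in your final paragraph the explicit choice $b>\max\bigl(\pi,(\pi/2)\sqrt{m^2+1}\bigr)$ does not by itself guarantee $2b>\delta$ when $\delta$ is large, but since you already stipulated $2b>\delta$ in the body of the argument this does not affect correctness.
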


\begin{proof}
To prove Assertion~\eqref{chini-corollary-A1}, 
note that for every $b\ge \pi/2$, there is an open interval $I$ of width $\ge 2b$
such that $\hat y\in I$ and such that $\overline{I}\subset (-\infty,d)$.
By Assertion~\eqref{first-chini-3} of Corollary~\ref{first-chini-corollary}, 
\[
  \limsup_{i\to\infty} z_i/x_i \ge s(b).
\]
Now let $b\to\infty$.

To prove Assertion~\eqref{chini-corollary-A2}, let $p_i=(x_i,y_i,z_i)$ be a sequence of  points in $M\cap \{y\le d'\}$.
Note that $y_i$ is a bounded sequence (since $\inf_M y(\cdot)> -\infty$).
If $x_i\to \infty$, then $z_i/x_i\to \infty$ by Assertion~\eqref{chini-corollary-A1}, which is impossible since $z_i\le mx_i+c$.
Thus $x_i$ is bounded.  Since $z_i\le mx_i+c$, $z_i$ is bounded above.
After passing to a subsequence, $(x_i,y_i)$ converges to point $(\hat x,\hat y)$ with $\hat y< d$.
Since $y(\cdot)\ge d$ on $\Phi(M)$, we see that $z_i$ is bounded below.  We have shown that every sequence
in $M\cap\{y\le d'\}$ is bounded.  Thus $M\cap\{y\le d'\}$ is compact.
\end{proof}

\begin{theorem}\label{chini-theorem-2}
Suppose $M$ is a translator in $\{x\ge 0\}$ such that
\begin{enumerate}[\upshape(1)]
\item $\partial M$ is contained in $\{x=0\}$.
\item $z(\cdot)$ is bounded above on $\partial M$.
\item $\sup_M |y(\cdot)| < \infty$.
\item Every subsequential limit as $z\to \infty$ of $M+(0,0,z)$
  is contained in the slab $\{|y|\le b\}$.
\end{enumerate}
Then
\begin{enumerate}[\upshape(i)]
\item\label{chini-compact} If $\beta>b$, then $M\cap\{|y|\ge \beta\}$ is compact.
\item\label{chini-thin}
If  $b< \pi/2$, then $M$ is contained in the plane $\{x=0\}$.
\end{enumerate}
\end{theorem}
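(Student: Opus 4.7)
The plan is to prove the two assertions separately: assertion (i) by combining a linear-in-$x$ upper bound on $z$ with the lower-tail compactness already available from Corollary~\ref{chini-corollary}(2), and assertion (ii) by using (i) to reduce to a translator in a slab of width strictly less than $\pi$, where a barrier/classification argument forces triviality.

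For (i), set $B := \sup_M |y|$, which is finite by (3), and $z^* := \sup_{\partial M} z$, finite by (2). Since $\partial M \subset \{x=0\}$ and $M$ lies in the slab $\{|y| \le B\}$, Theorem~\ref{lipschitz-theorem} gives the Lipschitz bound $\psi_M(x) \le z^* + s_B x$ for all $x \ge 0$, hence
\[
z \le z^* + s_B x \qquad \text{on } M.
\]
Hypothesis (4) yields $\Phi(M) \subset \{y \ge -b\}$, so Corollary~\ref{chini-corollary}(2) applies with $d = -b$, $d' = -\beta$ (needing $\beta > b$), $m = s_B$, $c = z^*$, and concludes that $M \cap \{y \le -\beta\}$ is compact. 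Reflecting $M$ through $\{y=0\}$ yields a translator $\tilde M$ satisfying the same four hypotheses (the reflection preserves the translator equation, the slab, and $\Phi$), so the identical argument applied to $\tilde M$ shows $M \cap \{y \ge \beta\}$ is compact. The union is compact, proving (i).

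For (ii), suppose for contradiction $M \not\subset \{x=0\}$. Since $M$ lies in the slab $\{|y| \le B\}$ with $\partial M \subset \{x=0\}$, Corollary~\ref{half-slab-corollary} forces $\sup_M x = \infty$ (the alternative $\sup_M x = 0$ would put $M$ in $\{x=0\}$). Using $b < \pi/2$, fix $\beta \in (b, \pi/2)$. By (i), the compact set $K := M \cap \{|y| \ge \beta\}$ is contained in some $\{x \le X_0\}$, so $N := M \cap \{x \ge X_0 + 1\}$ is a nonempty translator in the slab $\{|y| \le \beta\}$ of width $2\beta < \pi$, with $\partial N \subset \{x = X_0 + 1\}$ (where $z$ is bounded above via the Lipschitz estimate) and $\sup_N x = \infty$.

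The main obstacle, and the hard part of the proof, is ruling out such an $N$. The strategy I plan to pursue uses the short barrier $u_{\alpha,\beta}$ of Theorem~\ref{short-barrier-theorem}: after vertically translating $N$ so that $\partial N \subset \{z \le 0\}$, Theorem~\ref{short-barrier-theorem}\eqref{under} applied to $N$ on each rectangular slab $[X_0+1, X_0+1+\alpha] \times [-\beta,\beta]$ (noting $N$ is disjoint from the side faces $\{|y|=\beta\}$) gives $z \le u_{\alpha,\beta}(x - X_0 - 1, y)$. Combining this with a passage to the limit along translates $N - (x_n, 0, 0)$, $x_n \to \infty$, the resulting limit would be a complete translator lying in a slab of width $< \pi$; by Spruck-Xiao together with Theorem~\ref{classification-theorem}, no complete translating graph fits in width $< \pi$, so such a limit could only consist of vertical planes $\{y=\text{const}\}$, which is incompatible with the non-planar structure inherited from $N$. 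The subtle point---and the main technical difficulty---is extracting sufficient regularity for the limiting procedure without a priori finite-type hypotheses, which likely requires leveraging curvature estimates derived directly from the slab-width constraint $2\beta < \pi$ and the barrier upper bound just established.
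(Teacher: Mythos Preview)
Your argument for (i) is correct and essentially the same as the paper's: both invoke Theorem~\ref{lipschitz-theorem} to obtain a linear upper bound $z \le mx + c$ on $M$, then appeal to Corollary~\ref{chini-corollary}(2) (once directly and once after reflecting through $\{y=0\}$) to get compactness of each half.

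For (ii), however, your argument has a genuine gap, which you yourself flag. The barrier $u_{\alpha,\beta}$ of Theorem~\ref{short-barrier-theorem} gives only an upper bound on $z$ over $N$, and that alone does not yield a contradiction. The limiting argument you sketch (translate by $-(x_n,0,0)$, pass to a limit, invoke Spruck--Xiao and the classification) really does require curvature bounds that are unavailable under the bare hypotheses of the theorem; even if one works with weak limits as $(2,0)$-sets, a limit consisting of planes $\{y=\text{const}\}$ is perfectly compatible with what you know about $N$, so no contradiction emerges. The difficulty you identify is real, but the resolution is not a curvature estimate.

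The paper bypasses all of this with a different barrier. After choosing $b < \beta < \beta' < \pi/2$ and, via (i), an $a$ with $M \cap \{x \ge a\} \subset \{|y| < \beta\}$, it invokes (from~\cite{scherk}) a Scherk-type translator
\[
u : [a,a'] \times [-\beta',\beta'] \to \RR
\]
with $u(a,\cdot)=u(a',\cdot)=+\infty$ and $u(\cdot,\pm\beta')=-\infty$; such a $u$ exists precisely because $\beta' < \pi/2$. Since $M \cap \{a \le x \le a'\}$ lies in $\{|y| \le \beta\} \subset \{|y| < \beta'\}$ and $M$ has no boundary in $\{a < x < a'\}$, the function $z - u(x,y)$ would attain an interior maximum there if $M \cap \{a < x < a'\}$ were nonempty, contradicting the strong maximum principle. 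Hence $M \subset \{x \le a\} \cup \{x \ge a'\}$, and Corollary~\ref{half-slab-corollary} then forces $M \subset \{x=0\}$. The missing idea in your approach is this doubly-infinite barrier: it manufactures an actual \emph{gap} in the $x$-direction, rather than merely bounding $z$ from above.
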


\begin{proof}
Let $\beta>b$.  
By Theorem~\ref{lipschitz-theorem}, $M$ is contained in a region of the form $\{z\le mx + c\}$.
Thus, $M\cap \{y\le -\beta\}$ is compact by Corollary~\ref{chini-corollary}. 
Likewise, $M\cap \{y\ge \beta\}$ is compact.
Thus $M\cap\{|y|\ge \beta\}$ is compact.  
This completes the proof of Asssertion~\eqref{chini-compact}.

Now suppose that $b<\pi/2$ and let $b< \beta < \beta' < \pi/2$.
Since $M\cap \{|y|\ge \beta\}$ is compact, there is an $a\in [0,\infty)$ such that
\begin{equation}\label{thin-part}
   M\cap \{x\ge a\} \subset \{|y|<\beta\}.
\end{equation}
By \cite{scherk}, there is an $a'>a$ and a translator
\[
   u: [a,a' ] \times [-\beta', \beta'] \to \RR
\]
such that $u(a,\cdot)=u(a',\cdot)=\infty$ and $u(\cdot,-\beta')=u(\cdot,\beta')= -\infty$.

If $M\cap \{a< x < a'\}$ were nonempty, then $z-u(x,y)$ would attain a maximum on $M$, violating the strong maximum 
principle.

Thus $M\subset \{x\le a\} \cup \{x\ge a'\}$.

By Corollary~\ref{half-slab-corollary}, $M$ is contained in $\{x=0\}$.  
\end{proof}

\begin{corollary}\label{outermost-corollary}
Suppose $M$ is a complete translator in $\RR^3$ such that $\sup_M |y(\cdot)|< \infty$
and such that $M+(0,0,z)$ converges as $z\to \infty$ to planes
$\{y=b_i\}$, with $b_1\le b_2 \le \dots \le b_k$.
Then
\begin{align*}
b_1 &= \inf_M y(\cdot), \\
b_k &= \sup_M y(\cdot).
\end{align*}
\end{corollary}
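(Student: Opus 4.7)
The two inequalities $\inf_M y(\cdot) \le b_1$ and $\sup_M y(\cdot) \ge b_k$ are immediate from the assumption that $M+(0,0,z)$ converges to $\bigcup_i\{y=b_i\}$ as $z\to\infty$: any point $(x_0,b_i,z_0)$ of a limiting plane is the limit of points of the form $(x_n,y_n,z_0-z_n)$ in $M$, so $b_i \in [\inf_M y(\cdot),\sup_M y(\cdot)]$ for every $i$. The whole content of the corollary is the two reverse inequalities, and the plan is to deduce them directly from the half-slab result Corollary~\ref{complete-chini-corollary}.

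For the lower bound, set $d=b_1$. By hypothesis the translates of $M$ actually converge (not merely subsequentially) to $\bigcup_i\{y=b_i\}$, so $\Phi(M) = \bigcup_i\{y=b_i\} \subset \{y\ge b_1\}$. The hypothesis $\sup_M|y(\cdot)|<\infty$ gives $\inf_M y(\cdot)>-\infty$, and completeness of $M$ means $\partial M = \emptyset \subset \{y\ge b_1\}$ vacuously. Thus Corollary~\ref{complete-chini-corollary} applies and yields $M\subset\{y\ge b_1\}$, i.e.\ $\inf_M y(\cdot)\ge b_1$.

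For the upper bound, let $M' := \{(x,-y,z):(x,y,z)\in M\}$ be the reflection of $M$ in the plane $\{y=0\}$. Reflection in a vertical plane is an orientation-reversing isometry of $\RR^3$ that fixes $\ee_3$, so the translator equation $\vec H=(-\ee_3)^\perp$ is preserved and $M'$ is again a complete translator. Moreover $M'$ lies in the same $|y|$-slab as $M$, and $M'+(0,0,z)$ converges as $z\to\infty$ to $\bigcup_i\{y=-b_i\}$, so $\Phi(M') \subset \{y\ge -b_k\}$. Corollary~\ref{complete-chini-corollary} (with $d=-b_k$) then gives $M'\subset\{y\ge -b_k\}$, which is the same as $M\subset\{y\le b_k\}$, i.e.\ $\sup_M y(\cdot)\le b_k$.

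There is no genuinely hard step: once one notices that the hypothesis ``$\Phi(M)\subset\{y\ge d\}$'' in Corollary~\ref{complete-chini-corollary} is exactly what the convergence to parallel planes provides, the result follows by applying the corollary once for the lower bound and once more after a reflection for the upper bound. The only point requiring minor care is checking that reflection in $\{y=0\}$ preserves the class of translators, which it does because the translator metric $e^{-z}(dx^2+dy^2+dz^2)$ is invariant under $(x,y,z)\mapsto(x,-y,z)$.
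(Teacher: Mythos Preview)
Your proof is correct and follows essentially the same route as the paper's own proof. The paper is terser: it just says ``it suffices to prove the equation for $b_k$'' (invoking the obvious reflectional symmetry for $b_1$) and then appeals to Corollary~\ref{complete-chini-corollary} for the nontrivial inequality $\sup_M y(\cdot)\le b_k$; you spell out both halves and make the reflection step explicit, but the substance is identical.
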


\begin{proof}
It suffices to prove the equation for $b_k$.
Trivially $b_k\le \sup_M y(\cdot)$.  The reverse inequality holds by Corollary~\ref{complete-chini-corollary}.  
Hence~$b_k=\sup_M y(\cdot)$.  
\end{proof}

\begin{theorem}\label{wings-theorem}
Suppose $M$ is a finite-type translator in $\{x\ge 0\}$ such that
\begin{enumerate}[\upshape (1)]
\item $\partial M\subset \{x=0\}$.  
\item $B:=\sup_M y(\cdot) <\infty$. 
\item $M$ is invariant under reflection in the plane $\{y=0\}$. 
\item $M+(0,0,z)$ converges smoothly, as $z\to \infty$, to the halfplanes
$\{y=\pm b\}\cap \{x\ge 0\}$.
\item $M\cap\{y=0\}$ is graph of a function $z=\phi(x)$.
\end{enumerate}
Then the limit $L=\lim_{x\to\infty} \phi'(x)$ exists, and $L=\pm s(b)$.
\end{theorem}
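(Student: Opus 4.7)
The proof will follow the scheme of Theorem~\ref{main-theorem-concluded}\eqref{upper-slope-item}, adapted to the finite-type half-space setting. There are three main steps: (i) existence of $L$ via the second-symmetry theorem, (ii) blow-up along the $\{y=0\}$-slice to produce a limiting grim reaper of some width $\beta$, and (iii) identifying $\beta = b$.

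\textbf{Step 1 (Existence of $L$).} Apply Theorem~\ref{second-symmetry-theorem}(2) to $\Gamma := M\cap\{y=0\}$, which by hypothesis is the graph $z=\phi(x)$ and hence has a single end as $x\to\infty$. Reparametrizing by arclength gives $\gamma(t)=(x(t),0,\phi(x(t)))$ with $\gamma'(t)\to\uu=(x'(\infty),0,z'(\infty))$. Since $\phi'(x(t))=z'(t)/x'(t)$ has a limit, $L:=z'(\infty)/x'(\infty)$ exists and equals $\lim_{x\to\infty}\phi'(x)$.

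\textbf{Step 2 (Blow-up).} Choose $t_n\to\infty$ and set $p_n:=\gamma(t_n)$. Since $\partial M\subset\{x=0\}$ with $z$ bounded above, $\dist(p_n,\partial M)\ge x(p_n)\to\infty$. By Theorem~\ref{second-symmetry-theorem}(3), $M-p_n$ converges subsequentially to a limit $M'$ whose component $\Sigma$ through $0$ is the grim reaper surface symmetric about $\{y=0\}$ containing the line $\{s\uu:s\in\RR\}$. Hence $\Sigma=\graph(w)$ with $w:\RR\times(-\beta,\beta)\to\RR$ for some $\beta\ge\pi/2$, and $L=\pm s(\beta)$ (with $L=0$ iff $\beta=\pi/2$).

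\textbf{Step 3 (Identify $\beta=b$).} For the upper bound $\beta\le b$, note that the hypothesis guarantees every subsequential limit of $M+(0,0,z)$ as $z\to\infty$ is contained in the halfplanes $\{y=\pm b\}\cap\{x\ge 0\}\subset\{|y|\le b\}$. Theorem~\ref{chini-theorem-2}\eqref{chini-compact} then implies that for every $\beta'>b$, the set $M\cap\{|y|\ge\beta'\}$ is compact, hence contained in some $\{x\le X_{\beta'}\}$. Since $x(p_n)\to\infty$, on any fixed compact set $K\subset\RR^3$ the surface $M-p_n$ is eventually disjoint from $\{|y|\ge\beta'\}$ for each $\beta'>b$. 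Passing to the limit gives $\Sigma\subset\{|y|\le\beta'\}$ for every $\beta'>b$, whence $\beta\le b$. For the lower bound $\beta\ge b$, I would compare $\phi$ directly with the graphical translator $f_b:\RR\times(-b,b)\to\RR$ to establish a slope bound analogous to Theorem~\ref{slopes-theorem}, namely $\phi'(x)\le\pdf{}{x}f_b(x,0)$ for large $x$; letting $x\to\infty$ then yields $L\le -s(b)$, and with $L=\pm s(\beta)$ this forces $\beta\ge b$. This slope bound is to be obtained via a Morse-Rado foliation of $\{|y|<b\}$ by vertical translates of $\graph(f_b)$ and an application of Theorem~\ref{morse-rado-theorem} to the foliation function $H(x,y,z)=z-f_b(x,y)$, using the finite-type hypothesis~\eqref{H-item} to bound the number of interior tangencies and the asymptotic condition to control the boundary behavior of the Morse count.

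\emph{The hard part will be the lower bound.} The upper bound is essentially a direct consequence of Theorem~\ref{chini-theorem-2}. By contrast, the lower bound requires a slope comparison whose analog in Theorem~\ref{slopes-theorem} exploited the specific nested-rectangle boundary structure of surfaces in $\Rr$. In the present half-space setting with $\partial M\subset\{x=0\}$, one must adapt the foliation argument to a one-sided configuration and carefully verify that the Morse-Rado count, together with the asymptotic description of $\Phi(M)$ and the finite-type curvature bound, produces the desired pointwise inequality $\phi'(x)\le\partial_x f_b(x,0)$ out to infinity.
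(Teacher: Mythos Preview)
Your Steps 1--2 and the upper bound $\beta\le b$ in Step 3 match the paper's argument. The problem is your proposed \emph{lower} bound.

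The slope inequality $\phi'(x)\le\partial_x f_b(x,0)$ you aim for would yield $L\le -s(b)$, and combined with $|L|\le s(b)$ this forces $L=-s(b)$ in every case. That is strictly stronger than the theorem, which asserts only $L=\pm s(b)$, and it is false in general: the upper sheet $\Mup$ of an uncapped annuloid satisfies all the hypotheses of this theorem (with $b$ there equal to $B(M)$), yet has $\phi'(x)\to +s(B)>0$ while $\partial_x f_B(x,0)<0$ for $x>0$. So no Morse--Rad\'o foliation by translates of $\graph(f_b)$ can produce your inequality, because the inequality itself does not hold. (As a side issue, the finite-type hypothesis only controls tangencies with grim reapers, not $\Delta$-wings, so the count you appeal to is not directly available either.)

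The paper's argument for the remaining case is a barrier argument, not a counting argument. Having established $|L|\le s(b)$, one assumes $L\neq -s(b)$, so $L>-s(b)$. Pick $\beta\in(\pi/2,b)$ with $L>-s(\beta)$, translate $M$ vertically so that $\partial M\cap\{|y|\le\beta\}$ lies in $\{z\ge 0\}$, and for each $a\ge 0$ let $h_a:\RR\times(-\beta,\beta)\to\RR$ be the $\Delta$-wing with $h_a(0,0)=0$ and $Dh_a(a,0)=0$. The choice $L>-s(\beta)$ guarantees $H_a(x,0,\phi(x))=\phi(x)-h_a(x,0)\to+\infty$ as $x\to\infty$, so $\inf_{\partial(M\cap\{y\ge0\})}H_a$ is attained. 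Theorem~\ref{chini-theorem-1} then gives $\min_M H_a=H_a(q)$ for some $q\in\partial M\subset\{x=0\}$, and the normalization forces $H_a(q)\ge 0$. Hence $\phi(x)\ge h_a(x,0)$ for all $a$, and letting $a\to\infty$ gives $\phi(x)\ge s(\beta)x$, so $L\ge s(\beta)$; letting $\beta\uparrow b$ yields $L\ge s(b)$, whence $L=s(b)$. The dichotomy $L=\pm s(b)$ is thus built into the argument: the barrier step is only invoked after ruling out $L=-s(b)$.
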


\begin{proof}
Theorem~\ref{second-symmetry-theorem}, the limit $L$ exists,
and $M-(x,0,\phi(x))$ converges (as $x\to\infty$) to a grim reaper surface $G$ containing the 
 line $\{(x,0,Lx)\}$.
By Theorem~\ref{chini-theorem-2}\eqref{chini-compact}, $G$ is contained in the
slab $\{|y|\le b\}$.  Thus
\begin{equation}\label{slope-range}
 |L| \le s(b).
\end{equation}
If $b=\pi/2$, then $s(b)=0$, so $L=0$ by~\eqref{slope-range}.  Thus we may assume that $b>\pi/2$.

We prove the theorem by assuming that $L\ne -s(b)$ and then proving that $L=s(b)$.
Since $L\ne -s(b)$, 
\[
   L > -s(b)
\]
by~\eqref{slope-range}.  By choosing $\beta\in (\pi/2, b)$ close to $b$, we can ensure that
\begin{equation}\label{slopey}
   L > -s(\beta)> -s(b).
\end{equation}
By translating $M$ vertically, we can assume that
\begin{equation}\label{disjoint}
\text{$\{0\} \times [-\beta,\beta] \times \RR$ is disjoint from $\partial M$.}
\end{equation}
For $a\ge 0$, let $h_a: \RR\times (-\beta,\beta)\to \RR$ be the $\Delta$-wing
such that $h_a(0,0)=0$ and $Dh_a(a,0)=0$.  Let
\[
H_a(x,y,z)
=
\begin{cases} 
z - h_a(x,y)  &\text{if $|y|< \beta$, and} \\
\infty  &\text{if $|y|\ge \beta$}.
\end{cases}
\]

Let $M^+ := M\cap \{y\ge 0\}$.

Then the boundary $\partial M^+$ of $M^+$ consists of $(\partial M)\cap\{y\ge 0\}$ together with $\{(x,0,\phi(x))$, $x\ge 0$.
Now
\begin{align*}
H_a(x,0,\phi(x)) 
&= 
\phi(x) - h_a(x,0),
\end{align*}
which tends to $\infty$ as $x\to\infty$ by~\eqref{slopey}. (Note that $\phi(x)/x\to L$ and that $h_a(x,0)/x\to -s(\beta)$ as $x\to\infty$.)

Consequently, there is a $t<\infty$ such that
\[
   \inf_{\partial M^+} H_a = \inf_{(\partial M^+)\cap \{|x|\le t\}} H_a.
\]
By Lemma~\ref{pre-chini-lemma} (applied to the infimum on the right), 
the infimum is attained: there is a $q\in \partial M^+$ such that
\[
   \inf_{\partial M^+} H_a = H_a(q).
\]
Thus by Theorem~\ref{chini-theorem-1},
\[
  \min_{M^+}H_a = H_a(q).
\]
By symmetry, 
\begin{equation}
\min_M H_a = H_a(q). 
\end{equation}
 By the strong maximum principle, $q\in \partial M=M\cap\{x=0\}$.  
Thus
\begin{align*}
H_a(q) 
&= z(q) - h_a(0,y(q)) 
\ge 0,
\end{align*}
since $z(q)\ge 0$ by~\eqref{disjoint} and $h_a(0,y(q)) \le h_a(0,0)=0$.

Thus
\[
   H_a(p) \ge 0
\]
for all $p\in M$.
In particular, for $p=(x,0,\phi(x))$,
\[
  \phi(x) - h_a(x,0) \ge 0
\]
As $a\to\infty$, $h_a(x,0)\to s(\beta) x$.  Thus
\[
   \phi(x) \ge s(\beta)x
\]
for all $x$, so $L\ge s(\beta)$.
Letting $\beta\to b$ gives $L\ge s(b)$ and thus $L=s(b)$ by~\eqref{slope-range}.

\end{proof}


\subsection{Some properties of  annuloids}
\label{subsec:annuloid}
Recall that an {\it annuloid} is a complete, properly embedded translating annulus $M$ in 
 $\RR^3$ with the following properties:
\begin{enumerate}[(a)]
\item $M$ is contained in a slab $\{|y| < B\}$, for some $B>0$.
\item  $M$ is symmetric under reflection in the planes $\{x=0\}$ and $\{y=0 \}$.
\item $M$ is disjoint from the axis $Z$.
\item As $z\to -\infty$, $M$ is smoothly asymptotic to the planes $y=\pm b$ and $y=\pm B$, where $b=b(M)$ and $B=B(M)$ and $0<b\le B<\infty$.
\item $M-(0,0,z)$ converges, as $z \to \infty$, to the empty set.
\end{enumerate}

The aim of this subsection is to prove the following:

\begin{theorem} \label{th:b>pi2}
If $M$ is an annuloid of finite type, then $b(M) \geq \pi/2.$
\end{theorem}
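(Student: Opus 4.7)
The plan is to argue by contradiction using Theorem~\ref{chini-theorem-2}\eqref{chini-thin}. Assume $M$ is an annuloid of finite type with $b:=b(M)<\pi/2$ and write $B:=B(M)\ge b$. The proof splits naturally into two cases, $B=b$ and $B>b$.

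In the case $B=b$, the annuloid lies in $\{|y|\le b\}$ with $b<\pi/2$, and I would apply Theorem~\ref{chini-theorem-2}\eqref{chini-thin} directly to $M^+:=M\cap\{x\ge 0\}$. The four hypotheses are verified with $\partial M^+\subset\{x=0\}$ by construction, $z$ bounded above on $\partial M^+$ via the annuloid condition $M-(0,0,z)\to\emptyset$ applied to a bounded $(x,y)$-cylinder, $\sup|y|\le b<\infty$, and every subsequential limit of $M^++(0,0,z)$ as $z\to\infty$ equal to the half-planes $\{y=\pm b\}\cap\{x\ge 0\}\subset\{|y|\le b\}$. The conclusion $M^+\subset\{x=0\}$ contradicts the $2$-dimensionality of $M$.

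In the case $B>b$, the same Chini-type argument is applied to the ``inner lower wing'' $\Mlow$ of $M$ rather than to all of $M^+$. Using the finite-type bounds $\mathsf{N}(F_{\ee_1}|M)\le k$ and $\mathsf{N}(H_c|M)\le k$, the reflection symmetry, the slope-limit assertion of Theorem~\ref{second-symmetry-theorem} on the ends of $M\cap\{y=0,x>0\}$, and the fact that the inner and outer asymptotic planes are disjoint (since $B>b$), I would adapt the structural results of Sections~\ref{sec:slice}--\ref{sideways-section} to produce a lower graph $\ulow:[x_*,\infty)\to\RR$ in $M\cap\{y=0,x>0\}$ and a component $\Mlow$ of $M\cap\{x>x_*\}$ containing $\graph(\ulow)|_{(x_*,\infty)}$, which for $x$ sufficiently large is a $y$-graph with $|y|\le b$ that approaches the two planes $\{y=\pm b\}$ (and only these) as $z\to-\infty$. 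Then $\tilde M:=(\Mlow\cap\{x\ge x_0\})-(x_0,0,0)$ for large $x_0$ satisfies the hypotheses of Theorem~\ref{chini-theorem-2}, with limit set $\{y=\pm b\}\cap\{x\ge 0\}\subset\{|y|\le b\}$, and Theorem~\ref{chini-theorem-2}\eqref{chini-thin} yields $\tilde M\subset\{x=0\}$. This contradicts the fact that $\graph(\ulow)|_{[x_0,\infty)}$ lies in $\Mlow$ and extends to $x=+\infty$.

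The main obstacle is, in the case $B>b$, the construction of $\Mlow$ for a general annuloid of finite type. The relevant structural results (Theorems~\ref{y-slice-theorem}, \ref{components-theorem}, \ref{y-graph-theorem} and Proposition~\ref{first-y-graph-proposition}) are proved in the paper for $M\in\Aa$, where the sharp bound $\mathsf{N}(F_{\ee_1}|M)\le 2$ is available from the construction as a limit of compact annuli in $\Rr$. For a general annuloid of finite type only $\mathsf{N}(F_{\ee_1}|M)\le k$ is available, so $M\cap\{y=0,x>0\}$ may split into extra components and $M\cap\{x>x_*\}$ may have more than two wings; the key point allowing one to nonetheless isolate the correct inner lower piece is the four-plane asymptotic as $z\to-\infty$, which, combined with the separation $B>b$, forces the inner components (close to $\{y=\pm b\}$) to be disjoint from the outer components for $z$ sufficiently negative, whereupon the Morse-Rado counts via the grim-reaper foliation functions $H_c$ propagate the $y$-graphicality and the bound $\yin\le b$ to all $x$ sufficiently large.
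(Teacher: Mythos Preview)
Your case $B=b$ is correct: applying Theorem~\ref{chini-theorem-2}\eqref{chini-thin} to $M\cap\{x\ge 0\}$ forces $M\subset\{x=0\}$, which is incompatible with $M$ being a properly embedded annulus in a slab.

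The case $B>b$ has the genuine gap you yourself identify. The $\Mlow$ machinery (Theorems~\ref{y-slice-theorem}, \ref{components-theorem}, \ref{y-graph-theorem}) relies on the sharp bound $\mathsf{N}(F_{\ee_1}|M)\le 2$ that is specific to $M\in\Aa$; with only $\mathsf{N}(F_{\ee_1}|M)\le k$, the curve $M\cap\{y=0,x>0\}$ can have several components and several critical points of $x$, and your proposed fix (separate inner from outer as $z\to-\infty$, then ``propagate'' via the grim-reaper foliations $H_c$) is not developed enough to isolate a single inner wing with the required asymptotics.

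The paper sidesteps both the case split and this difficulty with a direct slice argument. From finite type one only needs that $x(\cdot)|M$ has finitely many critical points; choose $a$ beyond every critical value. For $t\ge a$ the slice $M\cap\{x=t\}$ is a smooth $1$-manifold with no closed components (Lemma~\ref{topology-lemma-1}) and exactly four ends as $z\to-\infty$ (one for each asymptotic plane $\{y=\pm b\},\{y=\pm B\}$), hence exactly two components, each diffeomorphic to $\RR$. A component $\Gamma$ of $M\cap\{y=0,x>0\}$ has both ends going to $x=+\infty$ (Theorem~\ref{second-symmetry-theorem}\eqref{second-symmetry-5}), so each such slice meets $\{y=0\}$; this forces the two slice components to be invariant under $y\mapsto -y$ rather than swapped, so one has end-pair $\{+b,-b\}$ and the other $\{+B,-B\}$. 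The union $\Sigma$ of the inner components over $t\ge a$ is then a translator with $\partial\Sigma\subset\{x=a\}$, $z$ bounded above on $\partial\Sigma$, and $\Sigma+(0,0,z)\to\{y=\pm b\}\cap\{x\ge a\}$, and Theorem~\ref{chini-theorem-2}\eqref{chini-thin} gives $b\ge\pi/2$ directly. No $y$-graphicality, no sharp Morse--Rad\'o bound, and no case analysis on $B$ versus $b$ are needed.
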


\begin{proof}
Since $M$ is symmetric about $\{y=0\}$, $M\cap\{y=0\}$ is a $1$-manifold.  Since $M$ is connected, $M\cap\{y=0\}$
is nonempty.  Let $\Gamma$ be a component of $M\cap\{y=0\}$.  Since $M\cap Z=\emptyset$, we can assume
(by symmetry) that $\Gamma$ is in $\{y=0, \, x>0\}$.
By Lemma \ref{topology-lemma-1}, $\Gamma$ is not compact, because that would violate the maximum principle.

Since $M$ is of finite type, $x(\cdot)|M$ has finitely many critical points.  Let $a$ be a number 
greater than every critical value of $x(\cdot)|M$.  Note that every critical point of $x(\cdot)|\Gamma$ is also a critical point
of $x(\cdot)|\Gamma$, so $a$ is greater than every critical value of $x(\cdot)|\Gamma$.

By Assertion~\eqref{second-symmetry-5} of Theorem~\ref{second-symmetry-theorem}, $x(\cdot)\to\infty$ on each end of $\Gamma$.
Thus for $t\ge a$,
 $\Gamma\cap\{x=t\}$ consists of two points $(t,0,z_1(t))$ and $(t,0,z_2(t))$, where
$z_1(t)< z_2(t)$.

For $t\ge a$, $M\cap \{x=t\}$ is a smooth manifold with exactly four ends, corresponding to the points at infinity
$(t, \pm b, -\infty)$ and $(t, \pm B,-\infty)$.  (By a slight abuse of notation, we regard these as four distinct 
points even if $b=B$.)   Reasoning as above, $M\cap\{x=t\}$ has no closed curve components.
Thus $M\cap\{x=t\}$ has exactly two components.  By symmetry and embeddedness, the component $C_1(t)$
containing $(t,0,z_1(t))$ has ends tending to $(t, \pm b, -\infty)$, and the component $C_2(t)$ containing $(t,0,z_2(t))$
has ends tending to $(t,\pm B, -\infty)$.

Let $\Sigma = \cup_{t\ge \tilde x} C_1(t)$.   Then $\partial \Sigma = C_1(a)$, a curve (in the plane $\{z=a\}$)
 on which $z(\cdot)$ is bounded above.
 Also, $\Sigma + (0,0,z)$ converges as $z\to\infty$ to the 
to the halfplanes $\{x\ge a\}\cap\{y= \pm b\}$. 
Thus $b\ge \pi/2$ by Theorem~\ref{chini-theorem-2}.
\end{proof}


\section{Convergence of Sets}\label{sets-appendix}

Here we describe basic facts about convergence of sets in metric spaces.
According to the \href{https://en.wikipedia.org/wiki/Kuratowski_convergence}{wikipedia article}
on Kuratowski convergence, these notions were introduced in lectures by Painlev\'e in 1902 and popularized in books by Hausdorff
and Kuratowski.

Let $X$ be a compact metric space.  Let $\KK(X)$
be the set of nonempty closed subsets of $X$.
We define a metric $d$ on $\KK(X)$ by
\begin{align*}
d(K_1,K_2) 
&= \| \dist(\cdot,K_1), \dist(\cdot,K_2)\|_{C_0} \\
&= \max_{p\in X}| \dist(p,K_1) - \dist(p,K_2)|.
\end{align*}
Note that $K_i\in \KK(X)$ converges to $K\in \KK(X)$ if and only if the following holds:
$\dist(p,K_i)\to 0$ for each $p\in K$ and $\liminf d(p,K_i)>0$  for each $p\notin K$.

Note that $\KK(X)$ is also compact, because if $K_i\in \KK(X)$, then by Arzela-Ascoli, 
 the functions $\dist(\cdot, K_i)$ will converge uniformly (after passing to a subsequence)
 to a limit $f$.  Let $K=\{p:f(p)=0\}$.  Then $f=\dist(\cdot,K)$ and so $K_i\to K$.

There are also useful notions of $\limsup$ and $\liminf$ in the space $\KK(X)$:

\begin{definition}\label{limsup-definition}
If $K_n$ is a sequence in $\KK(X)$, we let
\begin{align*}
\limsup_nK_n &:= \{p\in X: \liminf_n d(p,K_n) = 0\}, \\
\liminf_nK_n &:= \{p\in X: \limsup_n d(p,K_n)=0\}.
\end{align*}
\end{definition}
Thus $p\in \limsup_nK_n$ if and only there are $n(i)\to\infty$ and $p_{n(i)}\in K_{n(i)}$ such
the $p_{n(i)}$ converge to $p$.  Note that $\liminf_n K_n\subset \limsup_n K_n$. If equality holds, then $K_n$ converges,
and, conversely, if $K_n$ converges then
\[
 \liminf_n K_n = \limsup_n K_n = \lim_n K_n.
\]

\begin{theorem}\label{connected-limits-theorem}
Suppose that $K_n$ is a sequence of nonempty closed subsets of a compact metric space~$X$.
Then, after passing to subsequence, the $K_n$ converge to a closed set $K$.
Furthermore, if each $K_n$ is connected, then $K$ is also connected.
\end{theorem}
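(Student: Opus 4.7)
The plan is to prove the two statements separately, following the Arzela--Ascoli strategy already sketched in the paragraph defining the metric on $\KK(X)$, and then handling the preservation of connectedness by a standard separation argument.

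For the subsequential convergence, I would first note that for each closed nonempty $K \subset X$, the function $p \mapsto \dist(p,K)$ is $1$-Lipschitz and bounded by $\dia(X)$. Thus the sequence of functions $f_n(p) := \dist(p,K_n)$ is an equicontinuous, uniformly bounded family on the compact metric space $X$. By Arzela--Ascoli, a subsequence $f_{n(i)}$ converges uniformly to some $1$-Lipschitz function $f: X \to [0,\infty)$. Setting $K := \{p \in X : f(p) = 0\}$, I would verify that $K$ is closed (preimage of $\{0\}$), that $K$ is nonempty (pick any $p_n \in K_n$, pass to a convergent subsequence $p_{n(i)} \to p$; then $f(p) = \lim f_{n(i)}(p_{n(i)}) = 0$), and that $f(p) = \dist(p,K)$ for every $p \in X$. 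The last equality needs the most care: the inequality $f(p) \le \dist(p,K)$ follows from the fact that $f = 0$ on $K$ combined with the $1$-Lipschitz property, while the reverse inequality $\dist(p,K) \le f(p)$ comes from choosing, for each $n(i)$, a point $q_{n(i)} \in K_{n(i)}$ with $\dist(p, q_{n(i)}) \le f_{n(i)}(p) + 1/i$, extracting a limit $q \in K$, and using uniform convergence of $f_{n(i)} \to f$.

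For the connectedness statement, I would argue by contradiction. Suppose each $K_n$ is connected but the limit $K$ splits as $K = A \sqcup B$ with $A$ and $B$ nonempty, closed, and disjoint. Since $K$ is a closed subset of the compact space $X$, it is compact, and so are $A$ and $B$; thus $2\delta := \dist(A,B) > 0$. Let $U_A := \{p \in X : \dist(p,A) < \delta\}$ and $U_B := \{p \in X : \dist(p,B) < \delta\}$, which are disjoint open sets. I would then show:
\begin{enumerate}
\item For all sufficiently large $n$, $K_n \subset U_A \cup U_B$. Indeed, if not, there would be $n(i) \to \infty$ and $p_{n(i)} \in K_{n(i)} \setminus (U_A \cup U_B)$; a subsequential limit $p$ of these points would lie in $K$ but satisfy $\dist(p,A) \ge \delta$ and $\dist(p,B) \ge \delta$, contradicting $K = A \cup B$.
\item For all sufficiently large $n$, $K_n \cap U_A$ and $K_n \cap U_B$ are both nonempty. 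This uses that $\dist(a, K_n) \to 0$ for any fixed $a \in A$, and similarly for $B$.
\end{enumerate}
Combining these two points, for large $n$ the set $K_n$ is the union of two nonempty, disjoint, relatively open subsets, contradicting the connectedness of $K_n$.

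I do not expect any real obstacles: both parts are classical Kuratowski/Hausdorff--convergence facts. The only mild subtlety is verifying that the limit function $f$ obtained from Arzela--Ascoli genuinely equals $\dist(\cdot, K)$ rather than merely dominating it, which requires the selection argument described above. Everything else is bookkeeping.
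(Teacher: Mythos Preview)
Your proposal is correct. The first part (subsequential convergence via Arzela--Ascoli) is exactly the argument the paper sketches just before the theorem; you have simply filled in the verification that the limit function $f$ equals $\dist(\cdot,K)$, which the paper leaves implicit.

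For connectedness, your route differs from the paper's. You argue directly: separate $K=A\sqcup B$, thicken to disjoint open sets $U_A,U_B$, and show $K_n$ eventually lies in $U_A\cup U_B$ while meeting both. The paper instead picks a continuous $f:X\to\RR$ vanishing exactly on one piece $C$ of a purported separation, observes that each $f(K_n)$ is a compact interval in $\RR$, and hence so is the limit $f(K)$; this forces $f(K)=[0,r]$, and then $f(K\setminus C)=(0,r]$ must be compact (as the continuous image of the closed set $K\setminus C$), which is only possible if $r=0$. Your argument is the standard Hausdorff-convergence proof and is perhaps more transparent; the paper's argument is shorter and has the pleasant feature of reducing to the trivial one-dimensional case (limits of intervals are intervals), at the cost of being slightly slicker than necessary. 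Either is fine here.
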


\begin{proof} 
We already proved the first assertion.
Assume that each $K_n$ is connected.
Let $C$ be a nonempty closed subset of $K$ such that $K\setminus C$ is also closed.  We must show that $K\setminus C$ is empty.
Let $f:X\to \RR$ be a continuous function that is $0$ on $C$ and $>0$ at each point of $X\setminus C$.  
(For example, $f(p)=\dist(p,C)$).
Now $f(K_n)$ is a compact, connected subset of $\RR$, so $f(K)$ is also.  Thus $f(K)=[0,r]$ for some $r\ge 0$.
Now $f(K\setminus C)= [0,r]\setminus \{0\}$ is compact since $K\setminus C$ is compact.  Thus $r=0$ and so $K\setminus C$ is empty.
\end{proof}

We would like to apply this theory when the metric space $X$ is $\RR^3$, but $\RR^3$ is not compact.
We get around this by using the one-point compactification $\RR^3\cup\{\infty\}$, with the metric
that comes from stereographic projection.
Let $\KK(\RR^3)$ be the family of all closed subsets of $\RR^3$, including the empty set.
If $K\in \KK(\RR^3)$, we let
\[
\tilde K
:=
\begin{cases}
K  &\text{if $K$ is compact and nonempty}, \\
\{\infty\} &\text{if $K=\emptyset$,} \\
K\cup\{\infty\} &\text{if $K$ is non-compact}.
\end{cases}
\]
We define a metric $\delta(\cdot,\cdot)$ on $\KK(\RR^3)$ by setting letting
\[
  \delta(K,K') = d(\tilde K, \tilde K'),
\]
where on the right side, $d(\cdot,\cdot)$ is the metric on $\KK(\RR^3\cup\{\infty\})$.
Then $K_i\in \KK(\RR^3)$ converges to $K\in \KK(\RR^3)$ if and only if
the following holds: every $p\in K$ is a limit of $p_i\in K_i$, and every $q\notin K$
has a neighborhood $U$ such that $U\cap K_i$ is empty for all sufficiently large $i$.

The following theorem is an immediate consequence of Theorem~\ref{connected-limits-theorem}:

\begin{theorem}\label{R3-theorem}
Let $K_n$ be a sequence of closed subsets of $\RR^3$.
After passing to a subsequence, the $K_n$ converge to a closed $K$.
Suppose each $K_n$ is connected.  
If $K$ is compact, it is connected, and if $K$ is noncompact, then $K\cup\{\infty\}$
is connected.
\end{theorem}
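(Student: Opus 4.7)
The plan is to transfer the problem to the one-point compactification $X = \RR^3\cup\{\infty\}$ (with the stereographic metric), where the already-proved Theorem~\ref{connected-limits-theorem} applies. Recall that the map $K\mapsto \tilde K$ is defined precisely so that $K_n \to K$ in $(\KK(\RR^3),\delta)$ if and only if $\tilde K_n \to \tilde K$ in $(\KK(X),d)$, and this map is a bijection between $\KK(\RR^3)$ and $\KK(X)$.

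First I would invoke compactness of $\KK(X)$ (since $X$ is compact, as noted in the appendix) to extract a subsequence along which $\tilde K_n$ converges in $(\KK(X),d)$ to some $L\in \KK(X)$. Writing $L = \tilde K$ for the corresponding $K\in \KK(\RR^3)$, this gives $K_n\to K$ in $(\KK(\RR^3),\delta)$; this part uses nothing about connectedness.

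Next I would verify that each $\tilde K_n$ is connected as a subset of $X$. There are three cases: if $K_n$ is compact and nonempty, then $\tilde K_n = K_n$ is connected by hypothesis; if $K_n = \emptyset$, then $\tilde K_n = \{\infty\}$ is a single point, hence connected; and if $K_n$ is closed and non-compact in $\RR^3$, then $K_n$ is unbounded, so every neighborhood of $\infty$ in $X$ (which has the form $\{\infty\}\cup (\RR^3\setminus B)$ for some bounded $B$) meets $K_n$. Therefore $\infty$ lies in the closure of $K_n$ in $X$, and $\tilde K_n = K_n\cup\{\infty\}$ equals the closure of the connected set $K_n$ in $X$, which is connected. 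Applying Theorem~\ref{connected-limits-theorem} to the sequence $\tilde K_n$ in $X$ then yields that $\tilde K$ is connected in $X$.

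Finally I would unpack the conclusion via the definition of the tilde operation. If $K$ is compact and nonempty, then $\tilde K = K$, and the connectedness of $\tilde K$ in $X$ is exactly connectedness of $K$ in $\RR^3$. If $K$ is non-compact, then $\tilde K = K\cup\{\infty\}$, and the connectedness of $\tilde K$ in $X$ is exactly the claim that $K\cup\{\infty\}$ is connected. The remaining edge case $K = \emptyset$ is compact and vacuously connected. There is no real obstacle here; the only mild subtlety is the verification in step~two that $K_n\cup\{\infty\}$ is connected when $K_n$ is non-compact, which is exactly the reason the one-point compactification is the right object to use and which reflects the asymmetry in the theorem's statement between the compact and noncompact cases.
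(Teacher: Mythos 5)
Your proposal follows the route the paper intends: transfer to the one-point compactification $X=\RR^3\cup\{\infty\}$ and invoke Theorem~\ref{connected-limits-theorem}; the paper itself only calls Theorem~\ref{R3-theorem} ``an immediate consequence'' of that theorem, and your write-up supplies the verification that each $\tilde K_n$ is connected and correctly unpacks the tilde at the end.

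There is, however, a genuine error in your first step. The map $K\mapsto\tilde K$ is injective but \emph{not} a bijection from $\KK(\RR^3)$ onto $\KK(X)$, so the assertion that ``this part uses nothing about connectedness'' fails. For instance, $L=\{(0,0,0)\}\cup\{\infty\}$ lies in $\KK(X)$ but is not $\tilde K$ for any $K$: it is not a compact subset of $\RR^3$, it is not $\{\infty\}$, and if it were $K\cup\{\infty\}$ with $K$ noncompact then $K=\{(0,0,0)\}$ would have to be unbounded. Indeed, taking $K_n=\{(0,0,0)\}\cup\{(m,0,0):m=n,n+1,\dots\}$, the sets $\tilde K_n$ converge in $\KK(X)$ to $\{(0,0,0)\}\cup\{\infty\}$, which is not a $\tilde K$, so no subsequence of $K_n$ converges in $(\KK(\RR^3),\delta)$; subsequential convergence genuinely can fail without connectedness. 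The repair is to reorder your argument: first apply Theorem~\ref{connected-limits-theorem} to the connected sets $\tilde K_n$ to obtain a \emph{connected} limit $L\in\KK(X)$, and only then note that a connected $L$ is always of the form $\tilde K$: if $\infty\notin L$ then $L$ is a nonempty compact subset of $\RR^3$; if $L=\{\infty\}$ then $L=\tilde{\emptyset}$; and if $\infty\in L$ with $L\ne\{\infty\}$ then $\infty$ cannot be an isolated point of the connected set $L$, so $L\setminus\{\infty\}$ is closed and unbounded in $\RR^3$ and $L=\widetilde{L\setminus\{\infty\}}$. Thus the extraction of $K$ does use connectedness. (The phrasing of Theorem~\ref{R3-theorem} itself---asserting subsequential convergence in the sentence before the connectedness hypothesis is introduced---shares this imprecision, so the trap is easy to fall into.)
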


\begin{remark}
Theorem~\ref{R3-theorem} remains true, with the same proof, in any locally compact space whose one-point compactification is
metrizable.
\end{remark}

\begin{theorem}\label{two-point-theorem}
Let $p\ne q$ be two points in a compact, connected metric space $M$.
Then there is a compact, connected set $X$ containing $p$ and $q$
such that $X\setminus \{p,q\}$ is also connected.
\end{theorem}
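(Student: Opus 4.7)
The plan is to produce $X$ as an irreducible subcontinuum of $M$ between $p$ and $q$ via Zorn's lemma, and then to argue that removing $\{p,q\}$ from such an $X$ automatically leaves a connected set.

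First I would apply Zorn's lemma to the family of closed connected subsets of $M$ containing $\{p,q\}$, ordered by reverse inclusion. For a chain $\{Y_\alpha\}$ in this family, the intersection $Y := \bigcap_\alpha Y_\alpha$ still contains $\{p,q\}$, is closed, and is connected, because a decreasing intersection of continua in a compact Hausdorff space is again a continuum (nonemptiness follows from the finite intersection property and compactness; connectedness follows from a standard clopen-separation argument using compactness). Zorn therefore yields an $X$ with no proper subcontinuum containing both $p$ and $q$, i.e.\ irreducible between $p$ and $q$.

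Next I would assume for contradiction that $X\setminus\{p,q\} = U \sqcup V$ is a separation with $U,V$ nonempty and clopen in $X\setminus\{p,q\}$. Closedness of $U$ in $X\setminus\{p,q\}$ gives $\overline U \subset U\cup\{p,q\}$, and symmetrically $\overline V\subset V\cup\{p,q\}$; hence $\overline U\cap\overline V\subset\{p,q\}$. Connectedness of $X$ implies $\{p,q\}\subset \overline U\cup \overline V$ (otherwise one of $p,q$ would be an isolated point of $X$, giving a clopen singleton). A short case check on how $p,q$ can distribute between $\overline U$ and $\overline V$ shows that in every configuration except one, $X$ can be written immediately as a disjoint union of two nonempty closed sets, contradicting connectedness. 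After swapping $U$ and $V$ if needed, the surviving configuration is $\{p,q\}\subset \overline U$.

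Now the endgame. If $\overline U$ is connected, then it is a subcontinuum of $X$ containing $\{p,q\}$, so $\overline U = X$ by irreducibility, forcing $V=\emptyset$, a contradiction. Otherwise $\overline U$ is disconnected; let $K_p,K_q$ be its components through $p$ and $q$. If $K_p=K_q$, then $K_p$ is a proper subcontinuum containing $\{p,q\}$, contradicting irreducibility. So $K_p\ne K_q$; since $\overline U$ is compact Hausdorff, components coincide with quasi-components, so there is a clopen partition $\overline U = W_p \sqcup W_q$ with $K_p\subset W_p$ and $K_q\subset W_q$. I now split into sub-cases according to $\overline V \cap \{p,q\}$. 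If $\overline V$ contains neither of $p,q$ then $\overline V = V$ is closed, and $X = \overline U \sqcup V$ is a separation. If $\overline V$ contains exactly one of them, say $p$, then $Y_1 := W_p\cup \overline V$ and $Y_2 := W_q$ are disjoint closed sets whose union is $X$ (the cross-intersection $\overline V \cap W_q$ lies in $\overline U\cap\overline V\subset\{p,q\}$ and in fact is empty because $p\in W_p$). If $\{p,q\}\subset \overline V$, apply the same component/quasi-component decomposition to $\overline V$ to obtain $\overline V = W_p' \sqcup W_q'$, and then $Y_1 := W_p\cup W_p'$, $Y_2 := W_q\cup W_q'$ is a clopen partition of $X$. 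In each case connectedness of $X$ is violated.

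The main obstacle is the bookkeeping in the final case analysis: one must track which of $p,q$ lies in each of the four clopen pieces $W_p,W_q,W_p',W_q'$, and verify that the relevant cross-intersections collapse correctly using only $\overline U\subset U\cup\{p,q\}$, $\overline V\subset V\cup\{p,q\}$, and the disjointness of $W_p,W_q$ inside $\overline U$ (and similarly for $W_p',W_q'$). Once this tabulation is set up, each sub-case produces an explicit clopen partition of $X$, and the contradiction is immediate.
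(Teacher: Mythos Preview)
Your argument is correct. The only cosmetic point is that your phrase ``the surviving configuration is $\{p,q\}\subset\overline U$'' is slightly loose: the case $\{p,q\}\subset\overline U$ with $\overline V\cap\{p,q\}=\emptyset$ already yields a separation ($X=\overline U\sqcup V$), and in the final sub-case $\{p,q\}\subset\overline V$ you should say explicitly that if $\overline V$ is connected, or if the components of $p$ and $q$ in $\overline V$ coincide, you get a proper subcontinuum containing both points and hence a contradiction with irreducibility before invoking the clopen split. These are immediate fills, not gaps.

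Your route differs from the paper's. You minimize (via Zorn) over the family of \emph{continua} containing $\{p,q\}$ to obtain an irreducible continuum, and then argue by a component/quasi-component case analysis that $X\setminus\{p,q\}$ cannot split. The paper instead minimizes over a larger family: closed sets $S$ with $\{p,q\}\subset S$ in which every clopen subset containing one of $p,q$ contains the other (i.e.\ $p$ and $q$ lie in the same quasi-component of $S$). The paper's reduction step is then a single stroke: if $X\setminus\{p,q\}=Y\sqcup Z$, one shows directly that at least one of $\overline Y,\overline Z$ already lies in the family, so $X$ was not minimal. This avoids the component/quasi-component bookkeeping and the sub-case tabulation you carry out at the end. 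On the other hand, your approach lands immediately on a connected $X$ and uses the classical notion of an irreducible continuum, which some readers may find more familiar; the paper must argue separately (implicitly) that its minimal element is connected. Both proofs rest on Zorn and on the same compactness facts; the difference is in which family one minimizes over and, correspondingly, in how much case analysis the endgame requires.
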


\begin{proof}
Let $\Ff$ be the family of closed sets $S$ in $M$ with the following property:
$\{p,q\}\subset S$, and if $T$ is a clopen subset of $S$ containing
 one of the two points $p$ and $q$, then it also contains the other.
Then $\Ff$ is nonempty since $M\in \Ff$.

\begin{claim} If $X\in \Ff$ and if $X\setminus \{p,q\}$ is not connected, then there is an $X'\in \Ff$
with $X'\subsetneq X$.
\end{claim}

To prove the claim, note that, by hypothesis, $X\setminus \{p,q\}=Y \cup Z$, where $Y$ and $Z$ are nonempty, disjoint,
sets that are relatively closed in $X\setminus \{p,q\}$.
It suffices to show that at least one of the sets $\overline{Y}$ and $\overline{Z}$ belongs to $\Ff$,
since then we can let $X'$ be that set.

If every nonempty clopen subset of $\overline{Y}$ contains both $p$ and $q$, then $\overline{Y}$ is a connected
set containing $p$ and $q$, so $\overline{Y}$ is in $\Ff$, and we are done proving the claim.
Now suppose that there is a nonempty clopen subset $S$ of $\overline{Y}$ that does not contain both $p$ and $q$.
We may assume that $S$ does not contain $q$.  Let $T$ be a clopen subset of $\overline{Z}$ that contains $p$.
Then $\overline{Y}\cup T$ is a clopen subset of $X$ containing $p$, so it also contains $q$.  Since $q$ is not in $\overline{Y}$,
 $q\in T$.  Hence $\overline{Z}$ is in $\Ff$.  This completes the proof of the claim.

By Zorn's Lemma, there is a set $X$ in $\Ff$ such that no proper subset of $X$ is in $\Ff$.
By the claim, $X\setminus\{p,q\}$ is connected.

(Here is the Zorn's Lemma argument.  Let $\Ll$ be nonempty subcolletion of $\Ff$ that is linearly
ordered by inclusion.  Let $K=\cup_{X\in \Ll}X$.
We must show that $K\in \Ff$. It not, we could write $K$ as the disjoint union of two closed sets $S$ and $T$
with $p\in S$ and $q\in T$.  Let $f:M\to [0,1]$ be a continuous function such that $S=f^{-1}$ and $T=f^{-1}(1)$.
Since the sets $X\cap \{f=1/2\}$ (with $X\in \Ll$) are nested compact sets whose intersection $K\cap\{f=1/2\}$ is empty,
we see that $X\cap\{f=1/2\}$ is empty for some $X\in \Ll$.
But then the set $Q:=X\cap\{f\le 1/2\}=X\cap \{f<1/2\}$ is a clopen subset of $X$ containing $p$ but not $q$,
a contradiction.)
\end{proof}

\begin{theorem}\label{connected-extraction-theorem}
Let $X$ be a metric space and $f:X\to I$ be a proper, continuous map to an open interval $I\subset \RR$.
Suppose $X$ has the following property: if $J\subset I$ is a compact interval, then there is 
a compact, connected set $Y\subset X$ such that $f(Y)=J$.
Then there is a closed, connected subset $Q$ of $X$ such that $f(Q)=I$.
\end{theorem}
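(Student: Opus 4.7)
The plan is to exhaust $I$ by nested compact intervals $J_1 \subset J_2 \subset \cdots$ with $J_n$ contained in the interior of $J_{n+1}$ and $\bigcup_n J_n = I$. By hypothesis, for each $n$ there is a compact connected $Y_n \subset X$ with $f(Y_n) = J_n$. Since $f$ is proper and $I$ is locally compact and second countable, $X$ is locally compact and second countable as well, and the one-point compactification $X^* = X \cup \{\infty\}$ is a compact metric space; the rest of the argument takes place in $X^*$.

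Using compactness of $\KK(X^*)$, I would pass to a subsequence so that $Y_n \to Y^*$ in the Hausdorff metric on $X^*$. By Theorem~\ref{connected-limits-theorem}, $Y^*$ is closed and connected. Fixing $t_0 \in J_1$ and choosing $p_n \in Y_n \cap f^{-1}(t_0)$, the compactness of $f^{-1}(t_0)$ (from properness of $f$) allows me to assume $p_n \to p_* \in f^{-1}(t_0) \cap Y^*$, so $Y^* \cap X$ is nonempty. Repeating the argument for every $t \in I$ shows $f(Y^* \cap X) = I$, and since $I$ is non-compact this forces $\infty \in Y^*$.

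Next I would apply Zorn's lemma to the family of closed connected $F \subset Y^*$ satisfying $f(F \cap X) = I$, which is nonempty (it contains $Y^*$) and closed under decreasing nested intersections: the intersection of a decreasing chain of closed connected subsets of a compact Hausdorff space is connected, and the $f$-image condition survives a diagonal compactness argument (for each $t \in I$, pick $x_n \in F_n \cap f^{-1}(t)$ and extract a convergent subsequence in the compact set $f^{-1}(t)$). Let $F^*$ be a minimal element, and set $Q := F^* \cap X$; then $Q$ is closed in $X$ and $f(Q) = I$.

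The main obstacle is showing that $Q$ is connected. If $Q$ splits as $Q_1 \sqcup Q_2$ into nonempty disjoint closed sets in $X$, I would first observe that both $Q_i$ must be non-compact, since any compact component, being closed in $X^*$, would separate $F^*$ from the disjoint closed complement. Hence $\infty$ is a limit point of each $Q_i$, the proper subsets $Q_i \cup \{\infty\}$ of $F^*$ are closed and connected in $X^*$, and by minimality $f(Q_i) \subsetneq I$. The connected intervals $f(Q_1), f(Q_2)$ cover $I$, so they must overlap; picking $t_0 \in f(Q_1) \cap f(Q_2)$ and distinct points $p \in Q_1 \cap f^{-1}(t_0)$ and $q \in Q_2 \cap f^{-1}(t_0)$, the only route in $F^*$ joining $p$ to $q$ passes through $\infty$. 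I would apply Theorem~\ref{two-point-theorem} to the continuum $F^*$ with the pair $\{p, q\}$ to produce a compact connected $X' \subset F^*$ with $X' \setminus \{p, q\}$ connected, and exploit this rigidity, together with the overlap of the $f(Q_i)$, to carve out a proper closed connected subset of $F^*$ whose $f$-image is still $I$, contradicting minimality. Making this final cutting argument precise is the delicate step.
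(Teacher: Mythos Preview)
Your argument has a genuine gap at the end, and the difficulty is structural rather than merely technical. With the one-point compactification $X^*=X\cup\{\infty\}$, a minimal closed connected $F^*\subset X^*$ with $f(F^*\cap X)=I$ can very well have $Q=F^*\setminus\{\infty\}$ disconnected, and your proposed contradiction does not go through: you assert that $Q_i\cup\{\infty\}$ is connected, but $Q_i$ itself need not be connected (it might split further into compact and noncompact pieces), and you assert that $f(Q_i)$ is an interval, which again requires $Q_i$ connected. Even granting those points, applying Theorem~\ref{two-point-theorem} to points $p,q$ \emph{inside} $X$ produces a subcontinuum $X'$ with $X'\setminus\{p,q\}$ connected, but there is no evident reason $f(X'\cap X)$ should still be all of $I$, nor a clear way to carve a proper connected subset of $F^*$ whose $f$-image remains $I$. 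The ``delicate step'' you flag is, as far as I can see, a real obstruction.

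The paper's proof avoids this by compactifying with \emph{two} points rather than one: add $p,q$ to $X$ with $f(p)=0$, $f(q)=1$ (taking $I=(0,1)$), and metrize $\tilde X=X\cup\{p,q\}$ so that sequences in $X$ converge to $p$ (resp.\ $q$) exactly when their $f$-values tend to $0$ (resp.\ $1$). The subsequential limit $K$ of the $K_n$ is then a continuum in $\tilde X$ containing both $p$ and $q$, and Theorem~\ref{two-point-theorem} applied to the pair $\{p,q\}$ immediately yields a closed connected $K'\ni p,q$ with $K'\setminus\{p,q\}$ connected; this set $K'\setminus\{p,q\}$ is the desired $Q$. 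The point is that separating the two ends of $I$ in the compactification is exactly what makes the two-point theorem do the work for you---collapsing them to a single $\infty$ throws away the information you need.
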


\begin{proof}
We can assume that the interval is $(0,1)$.
Add two points $p$ and $q$ to $X$ to get $\tilde X = X\cup\{p,q\}$.

Add two points $p$ and $q$ to $X$ to get $\tilde X=X\cup\{p,q\}$.  Extend $f$ to $\tilde X$
by setting $f(p)=0$ and $f(q)=1$.   Let $\delta$ be a metric on $\tilde X$ such that $\tilde X$
is compact, $f:\tilde X\to [0,1]$ is continuous, and $\delta$ and $d$ induce the same topology on $X$.

(Such a metric $\delta$ can be defined as follows.  First we extend $d(\cdot, \cdot)$ to $X\cup\{p,q\}$
by setting $d(y,z)= |f(y)-f(z)|$ if either $y$ or $z$.  This $d(\cdot,\cdot)$ will not satisfy the triangle
inequality.  We define $\delta(y,y')$ to be the infimum of $\sum_{i=1}^n d( y_{i-1}, y_i)$
among finite sequences $y_0, y_1, \dots, y_n$ such that $y_0= y$ and $y_n=y'$.  
 It is straightforward to show that $\delta$ is a metric
with the asserted properties.)

Let $J_1, J_2, \dots$ be a sequence of compact subintervals of $(0,1)$ whose union is $(0,1)$.
By hypothesis, there are compact, connected sets $K_n\subset X$ such that $f(K_n)=J_n$.
By Theorem~\ref{connected-limits-theorem}, there is a subsequence $K_{n(i)}$ that converges to a compact, connected
subset $K$ of $\tilde X$. Note that $f(K)=[0,1]$. 
By Theorem~\ref{two-point-theorem},
 there is a closed connected set $K'$ containing $p$ and $q$ such that $X:=K'\setminus\{p,q\}$
is connected.  Since $f(K')=[0,1]$ (by the intermediate value theorem), $f(X)=(0,1)$.
\end{proof}

\begin{remark}
Theorem~\ref{connected-extraction-theorem} holds for any interval $I$, not just open intervals.
  It holds trivially if $I$ is a closed interval.
The case when $I$ is half-open reduces to the open case by doubling.
\end{remark}

\section{Notation}

For the reader's convenience, we provide a list of notation used in this paper.
Items are listed alphabetically according to how they are spelled in English.
Thus $\Gamma$ (``Gamma'') appears after $F_\vv$ and before ``Grim reaper surface''.
Some notations that are used in only a small portion of the paper are not listed here;
of course, those notations are explained where they occur.

\begin{enumerate}[\upshape{}]
\item $a(M)$, $A(M)$.  Definition~\ref{Cc-definition}.
\item $|A(M,p)|$: norm of the (Euclidean) second fundamental form of $M$ at $p$.
\item $\Aa$, $\Aa(b)$, $\Aa(b,\hat x)$, $\Aa(b,B,\hat x)$.    Definition~\ref{Aa-definition}.
\item Bowl solition.  \S\ref{graphs-section}
\item $\Cc$: a  space of translating annuli.  Definition~\ref{Cc-definition}.
\item $\Cc_{\Gamma(t)}$. Defined just before Lemma~\ref{lem:gap}.
\item $\cin(t)$, $\cout(t)$.  Defined just before Lemma~\ref{lem:gap}.
\item $b(M)$: inner width of $M$.  \S\ref{introduction}, and also Definition~\ref{Cc-definition}.
\item $B(M)$: (outer) width of $M$. \S\ref{introduction}, and  also Definition~\ref{Cc-definition}.
\item $\Delta$-wing. \S\ref{graphs-section}.
\item $f_b(x,y)$: $\Delta$-wing over $\RR\times (-b,b)$ with $f_b(0,0)=0$ and $Df_b(0,0)=0$. 
  \S\ref{graphs-section}.
\item $f_M$. Definition~\ref{definition-myin-myout}.
\item $F_\vv$:  The function $F_\vv(p)=\vv\cdot p$. See~\eqref{definition-F_v} in \S\ref{morse-rado-section}.
\item $\Gamma(t)$, $\Gamma_{\rm in}(t)$, $\Gamma_{\rm out}(t)$. Defined just
    before Lemma~\ref{lem:gap}.
\item Grim reaper surface. \S\ref{graphs-section}
\item Minimal foliation function.  Defined just before Theorem~\ref{morse-rado-theorem}.
\item  $\Mlow$, $\Mup$. Theorems~\ref{components-theorem} and~\ref{connected-theorem}.
\item $\Myin$, $\Myout$. Definition~\ref{definition-myin-myout}.
\item $\mathsf{N}(\Ff|M)$: number of critical points.  Definition~\ref{critical-point-definition}.
\item $\mathsf{N}(F|M)$.   Defined just before Theorem~\ref{morse-rado-theorem}.
\item $\Omegain$, $\Omegaout$.  Theorem~\ref{y-graph-theorem}.
\item $\Pairs$.  Defined just before Lemma~\ref{proper-lemma}.
\item $\partialin M$, $\partialout M$: inner and outer boundaries of $M$.  Definition~\ref{Cc-definition}.
\item $\uup(x)$, $\ulow(x)$.  Theorems~\ref{y-slice-theorem} and~\ref{finite-y-slice-theorem}.
\item $\Rr$: a space of translating annuli with rectangular boundaries.
    Definition~\ref{Rr-definition}.
\item $s(b)$:  slope $\partial u_b/\partial x$ of the grim reaper function $u_b$.  
    See Item~\eqref{u_b} in~\S\ref{graphs-section}.
\item Translator metric. Equation~\eqref{translator-metric}.
\item $u_b(x,y)$:   a grim reaper surface over $\RR\times(-b,b)$.   \S\ref{graphs-section}.
\item $u_M$.  Definition~\ref{u-definition}.
\item $\waist(M)$.  Definition~\ref{waist-definition}.
\item $w_b(x,y)$.  Definition~\ref{reaper-definition}.
\item $x(M)$: necksize of $M$. Definition~\ref{def:x(M)}.
\item $y(M)$.  Equation~\eqref{yM-definition}.
\item $\yin(x,z)$, $\yout(x,z)$.  Theorem~\ref{y-graph-theorem}.
\item $z(M)$.  Definition~\ref{z-of-M-definition}.
\end{enumerate}

\begin{bibdiv}
\begin{biblist}

\bib{bamler-kleiner}{article}{
      title={On the Multiplicity One Conjecture for Mean Curvature Flows of surfaces}, 
      author={Richard Bamler},
      author={Bruce Kleiner},
      year={2023},
      eprint={2312.02106},
      archivePrefix={arXiv},
      primaryClass={math.DG}
}

\bib{Chini}{article}{ 
author={Chini, F.},
doi = {10.1515/geofl-2020-0101},
url = {https://doi.org/10.1515/geofl-2020-0101},
title = {Simply connected translating solitons contained in slabs},
journal = {Geometric Flows},
number = {1},
volume = {5},
year = {2020},
pages = {102--120},}

\bib{CSS}{article}{ 
author={Clutterbuck, J.},
author={ Schn{\"u}rer, O.},
author={Schulze, F.}, 
title={Stability of translating solutions to mean curvature flow},
journal={Calc. Var. and Partial Differential Equations},
volume={29},
date={2007}, 
pages={281--293},}

\bib{del-pino}{article}{
   author={D\'{a}vila, J.},
   author={del Pino, M.},
   author={Nguyen, X. H.},
   title={Finite topology self-translating surfaces for the mean curvature
   flow in $\Bbb{R}^3$},
   journal={Adv. Math.},
   volume={320},
   date={2017},
   pages={674--729},
   issn={0001-8708},
   review={\MR{3709119}},
   doi={10.1016/j.aim.2017.09.014},
}

\bib{Gama}{article}{
   author={Gama, E. S.},
   author={Heinonen, E.},
   author={de Lira, J. H.},
   author={Mart\'{\i}n, F.},
   title={Jenkins-Serrin graphs in $M\times\RR$},
   conference={
      title={Minimal surfaces: integrable systems and visualisation},
   },
   book={
      series={Springer Proc. Math. Stat.},
      volume={349},
      publisher={Springer, Cham},
   },
   date={2021},
   pages={89--109},
}

\bib{GMM}{article}{
author={Gama, E.S.},
author={Mart\'{\i}n, F.},
author={M\o ller, N.M.},
   title={Finite entropy translating solitons in slabs},
   journal={Preprint 	arXiv:2209.01640 },
   date={2022},
}

\bib{graphs}{article}{
   author={Hoffman, D.},
   author={Ilmanen, T.},
   author={Mart\'{\i}n, F.},
   author={White, B.},
   title={Graphical translators for mean curvature flow},
   journal={Calc. Var. Partial Differential Equations},
   volume={58},
   date={2019},
   number={4},
   pages={Paper No. 117, 29},
   issn={0944-2669},
   review={\MR{3962912}},
   doi={10.1007/s00526-019-1560-x},
}

\bib{himw-correction}{article}{
   author={Hoffman, D.},
   author={Ilmanen, T.},
   author={Mart\'{\i}n, F.},
   author={White, B.},
   title={Correction to: Graphical translators for mean curvature flow},
   journal={Calc. Var. Partial Differential Equations},
   volume={58},
   date={2019},
   number={4},
   pages={Art. 158, 1},
   issn={0944-2669},
   review={\MR{4029723}},
   review={Zbl 07091751},
   doi={10.1007/s00526-019-1601-5},
}

\bib{himw-survey}{article}{
author={Hoffman, D.},
author={Ilmanen, T.},
author={Martín, F.},
author={White, B.},
title={Notes on Translating Solitons of the Mean Curvature Flow},
   conference={
      title={T. Hoffmann et al. (eds.), Minimal Surfaces: Integrable Systems and Visualisation},
   },
   book={
      series={Springer Proceedings in Mathematics \& Statistics},
      volume={349},
      publisher={Springer Nature Switzerland AG},
      },
     date={2021},
   pages={147--168},
   review={\MR{2167267}},
   doi={10.1007/978-3-030-68541-6\;9},
}

\bib{scherk}{article}{
   author={Hoffman, D.},
   author={Mart\'{\i}n, F.},
   author={White, B.},
   title={Scherk-like translators for mean curvature flow},
   journal={J. Differential Geom.},
   volume={122},
   date={2022},
   number={3},
   pages={421--465},
   issn={0022-040X},
   review={\MR{4544559}},
   doi={10.4310/jdg/1675712995},
}

\bib{morse-rado}{article}{
   author={Hoffman, David},
   author={Mart\'in, Francisco},
   author={White, Brian},
   title={Morse-Rad\'o{} theory for minimal surfaces},
   journal={J. Lond. Math. Soc. (2)},
   volume={108},
   date={2023},
   number={4},
   pages={1669--1700},
   issn={0024-6107},
   review={\MR{4655275}},
   doi={10.1112/jlms.12791},
}

\bib{annuloid-survey}{article}{
   author={Hoffman, David},
   author={Mart\'in, Francisco},
   author={White, Brian},
   title={Annuloids and $\Delta $-wings},
   journal={Adv. Nonlinear Stud.},
   volume={24},
   date={2024},
   number={1},
   pages={74--96},
   issn={1536-1365},
   review={\MR{4727581}},
   doi={10.1515/ans-2023-0111},
}

\bib{hopf}{article}{
   author={Hopf, E.},
   title={A remark on linear elliptic differential equations of second
   order},
   journal={Proc. Amer. Math. Soc.},
   volume={3},
   date={1952},
   pages={791--793},
   issn={0002-9939},
   review={\MR{50126}},
   doi={10.2307/2032182},
}

\bib{ilmanen_1994}{article}{
   author={Ilmanen, T.},
   title={Elliptic regularization and partial regularity for motion by mean
   curvature},
   journal={Mem. Amer. Math. Soc.},
   volume={108},
   date={1994},
   number={520},
   pages={x+90},
   review={\MR{1196160 (95d:49060)}},
   review={Zbl 0798.35066},
}

\bib{nguyen09}{article}{
   author={Nguyen, X. H.},
   title={Translating tridents},
   journal={Comm. Partial Differential Equations},
   volume={34},
   date={2009},
   number={1-3},
   pages={257--280},
   issn={0360-5302},
   review={\MR{2512861}},
   doi={10.1080/03605300902768685},
}

\bib{nguyen13}{article}{
   author={Nguyen, X. H.},
   title={Complete embedded self-translating surfaces under mean curvature
   flow},
   journal={J. Geom. Anal.},
   volume={23},
   date={2013},
   number={3},
   pages={1379--1426},
   issn={1050-6926},
   review={\MR{3078359}},
   doi={10.1007/s12220-011-9292-y},
}

\bib{nguyen-survey}{article}{
   author={Nguyen, X. H.},
   title={Gluing constructions for self-translating and self-shrinking
   surfaces under mean curvature flow},
   conference={
      title={Mean curvature flow},
   },
   book={
      series={De Gruyter Proc. Math.},
      publisher={De Gruyter, Berlin},
   },
   isbn={978-3-11-061836-5},
   isbn={978-3-11-061818-1},
   isbn={978-3-11-061822-8},
   date={2020},
   pages={123--131},
   review={\MR{4205020}},
   doi={10.1515/9783110618365-014},
}

\bib{smith21}{article}{
   author={Smith, Graham},
   title={On complete embedded translating solitons of the mean curvature
   flow that are of finite genus},
   journal={Anal. PDE},
   volume={17},
   date={2024},
   number={4},
   pages={1175--1236},
   issn={2157-5045},
   review={\MR{4746869}},
   doi={10.2140/apde.2024.17.1175},
}

\bib{spruck-xiao}{article}{
   author={Spruck, J.},
   author={Xiao, L.},
   title={Complete translating solitons to the mean curvature flow in $\RR^3$ with nonnegative mean curvature},
   journal={Amer. J. Math.},
   volume={142},
   date={2020},
   number={3},
   pages={993--1015},
   issn={0002-9327},
   review={\MR{4101337}},
   doi={10.1353/ajm.2020.0023},
}
\bib{white87}{article}{
author={White, B.},
   title={The space of $m$-dimensional surfaces that are stationary for a
   parametric elliptic functional},
   journal={Indiana Univ. Math. J.},
   volume={36},
   date={1987},
   number={3},
   pages={567--602},
   issn={0022-2518},
   review={\MR{905611}},
   doi={10.1512/iumj.1987.36.36031},
}

\bib{white-curv}{article}{
   author={White, B.},
   title={Curvature estimates and compactness theorems in $3$-manifolds for
   surfaces that are stationary for parametric elliptic functionals},
   journal={Invent. Math.},
   volume={88},
   date={1987},
   number={2},
   pages={243--256},
   issn={0020-9910},
   review={\MR{0880951}},
   doi={10.1007/BF01388908},
}

\bib{white16}{article}{
author={White, B.},
   title={Controlling area blow-up in minimal or bounded mean curvature varieties},
   journal={J. Differential Geom.},
   volume={102},
   date={2016},
   number={3},
   pages={501--535},
   issn={022-040X},
   review={\MR{3466806}},
   doi={10.4310/jdg/1456754017},
}

\bib{white18}{article}{
   author={White, B.},
   title={On the compactness theorem for embedded minimal surfaces in
   3-manifolds with locally bounded area and genus},
   journal={Comm. Anal. Geom.},
   volume={26},
   date={2018},
   number={3},
   pages={659--678},
   issn={1019-8385},
   review={\MR{3844118}},
   doi={10.4310/CAG.2018.v26.n3.a7},
}

\bib{white21}{article}{
   author={White, B.},
   title={Mean curvature flow with boundary},
   journal={Ars Inven. Anal.},
   date={2021},
   pages={Paper No. 4, 43},
   review={\MR{4462472}},
}

\bib{white-entropy}{article}{
author={White, B.},
   title={The Boundary Term in Huisken's Monotonicity Formula and the Entropy of Translators},
   journal={Comm. Anal. Geom.},
   status={to appear},
   note={Preprint arXiv:2204.01983},
   date={2022},
}

\end{biblist}

\end{bibdiv}

\end{document}